\pdfoutput=1
\RequirePackage{ifpdf}
\ifpdf 
\documentclass[pdftex]{sigma}
\else
\documentclass{sigma}
\fi

\newtheorem{thm}{Theorem}[section]
\newtheorem{cor}[thm]{Corollary}
\newtheorem{lem}[thm]{Lemma}
\newtheorem{prop}[thm]{Proposition}
 { \theoremstyle{definition}
\newtheorem{defn}[thm]{Definition}

\newtheorem{rem}[thm]{Remark}
\newtheorem{rems}[thm]{Remark}
 }

\numberwithin{equation}{section}

\newcommand{\Q}{\mathbb Q}
\newcommand{\C}{\mathbb C}

\newcommand{\Z}{\mathbb Z}
\newcommand{\N}{\mathbb N}
\DeclareMathOperator\ord{ord}
\DeclareMathOperator\lc{lc}
\DeclareMathOperator\Res{Res}
\DeclareMathOperator\Aut{Aut}
\DeclareMathOperator\pf{pf}
\DeclareMathOperator\GL{GL}
\DeclareMathOperator\cC{{\cal C}}
\DeclareMathOperator\cR{{\cal R}}
\DeclareMathOperator\cK{{\cal K}}
\DeclareMathOperator\cL{{\cal L}}
\DeclareMathOperator\tcR{\tilde{\cal R}}
\newcommand{\blambda}{{\boldsymbol\lambda}}

\newcommand{\bmu}{{\boldsymbol\mu}}

\newcommand{\binomE}{\genfrac(){0pt}{}}
\newcommand{\la}{\langle}
\newcommand{\ra}{\rangle}
\newcommand{\obinomE}{\genfrac\la\ra{0pt}{}}
\def\Gamm#1{\Gamma_{\!\!#1}}
\def\Gampq{\Gamma_{\!p,q}}
\def\Gamppqq{\Gamma_{\!p^2,q^2}}
\def\Gampqq{\Gamma_{\!p,q^2}}
\def\Gamppq{\Gamma_{\!p^2,q}}
\def\Gamphq{\Gamma_{\!p^{1/2},q}}
\def\Gamphqh{\Gamma_{\!p^{1/2},q^{1/2}}}
\def\Gampqh{\Gamma_{\!p,q^{1/2}}}
\def\Gampqt{\Gamma^+_{\!p,q,t}}

\begin{document}

\allowdisplaybreaks

\newcommand{\arXivNumber}{1408.0305}

\renewcommand{\thefootnote}{}

\renewcommand{\PaperNumber}{019}

\FirstPageHeading

\ShortArticleName{Multivariate Quadratic Transformations and the Interpolation Kernel}

\ArticleName{Multivariate Quadratic Transformations\\ and the Interpolation Kernel\footnote{This paper is a~contribution to the Special Issue on Elliptic Hypergeometric Functions and Their Applications. The full collection is available at \href{https://www.emis.de/journals/SIGMA/EHF2017.html}{https://www.emis.de/journals/SIGMA/EHF2017.html}}}

\Author{Eric M.~RAINS}
\AuthorNameForHeading{E.M.~Rains}

\Address{Department of Mathematics, California Institute of Technology, USA}
\Email{\href{mailto:rains@caltech.edu}{rains@caltech.edu}}

\ArticleDates{Received September 12, 2017, in final form February 27, 2018; Published online March 08, 2018}

\Abstract{We prove a number of quadratic transformations of elliptic Selberg integrals (conjectured in an earlier paper of the author), as well as studying in depth the ``interpolation kernel'', an analytic continuation of the author's elliptic interpolation functions which plays a major role in the proof as well as acting as the kernel for a Fourier transform on certain elliptic double affine Hecke algebras (discussed in a~later paper). In the process, we give a~number of examples of a~new approach to proving elliptic hypergeometric integral identities, by reduction to a~Zariski dense subset of a formal neighborhood of the trigonometric limit.}

\Keywords{quadratic transformations; elliptic special functions}
\Classification{33D67; 33E05}

{\small \tableofcontents}

\renewcommand{\thefootnote}{\arabic{footnote}}
\setcounter{footnote}{0}

\section{Introduction}

In \cite{littlewood}, the author conjectured a number of multivariate integral identities, which could be viewed either as elliptic analogues of the vanishing results of \cite{vanish} (which in turn were Macdonald polynomial analogues of, e.g., the classical fact that the integral of a~Schur function over the orthogonal group is $0$ unless the corresponding partition has no odd parts), or as multivariate elliptic hypergeometric quadratic transformations. (In the latter interpretation, the conjectures were new even in the ``trigonometric'' ($q$-hypergeometric) limit \dots) The first purpose of the present note is to prove these conjectures, as well as give some partial explanation to some of the commonalities of structure shared by the seven main conjectures.

These conjectures were originally formulated by first guessing one
conjecture (and verifying it in various special cases), then using certain
symmetries to transform the given conjecture into qualitatively quite
different forms. Unfortunately, for the most part, those symmetries did
not actually apply to the specific contour integrals being considered, but
rather only to certain degenerations of those integrals to finite sums. As
a result, even though the main conjectures form a single orbit under the
symmetries, it would seem that we need to prove each conjecture
independently. This impression is especially reinforced by the fact that
in many cases, the conjectures actually involved two qualitatively
different finite degenerations (corresponding to the fact that, unlike most
elliptic hypergeometric integral identities, these quadratic
transformations typically break the symmetry between $p$ and $q$). As a
result, the formulation of the conjectures involved several steps in which
the author had to essentially guess half of the identity. On the other
hand, two special cases of the conjectures were proved in~\cite{vandeBultFJ:2011}; since both of those were separated by at least one
such guess from the original conjecture, this suggested that there might in
fact be more structure to the symmetries than was apparent.

Note that although the conjectures had multivariate quadratic
transformations as special cases, the full version also incorporated
``representation theory''-ish information. For the analogues of results of
\cite{vanish}, this involved introducing one of the biorthogonal functions
of \cite{bctheta, xforms} into the integrand; similarly, the full quadratic
transformations used the corresponding ``interpolation functions''. One
difficulty in working with the symmetries is that the biorthogonal and
interpolation functions are not in general elliptic; rather, they are
products of two functions, each of which lives on a different elliptic
curve. Thus even if we knew one of the conjectures in the elliptic case,
this would still not suffice to prove the conjecture in general. It turns
out, however, that there is a way to finesse this issue. In addition to
five continuous parameters and $n$ variables, the interpolation functions
also depend on a pair of partitions (one for each elliptic factor). It
turns out that this discrete family of functions can instead be obtained as
a discrete family of specializations of a single function (the
``interpolation kernel'') depending on four continuous parameters and two
sets of $n$ variables; for each pair of partitions, there is a
corresponding 1-parameter family of specializations for the second set of
variables giving the corresponding interpolation function.

Thus, rather than attempt to prove the various quadratic transformations in
their original versions, we could instead try to prove the analogous
identities involving this kernel. Although at first glance, this does not
appear to be any easier (though it does, at least, make the expressions of
the identities somewhat simpler), a surprising thing happens that allows us
to reduce to simpler cases. It turns out that under fairly weak
conditions, an identity involving the interpolation kernel is actually {\em
 equivalent} to the specialization not just to interpolation functions,
but to {\em elliptic} interpolation functions. In fact, in many cases, it
is even equivalent to an identity between sums of elliptic functions.

This is a special case of a general principle, which is likely to be useful
in proving a wide variety of elliptic hypergeometric integral identities.
(Indeed, an important secondary objective of the present note is to
introduce this technique, and give some examples of various ways it can be
applied.) Suppose we are given an elliptic hypergeometric integral
that becomes an evaluation under some suitable limit as $p\to 0$. (E.g.,
most of the quadratic transformations we wish to prove have limits in which
both sides are integrals over the Koornwinder density.) In particular,
since the integral is holomorphic at $p=0$, we may expand it in a power
series (or, more likely, a Puiseux series) in $p$ around that point. In
many cases, we can then show that the coefficients of that Puiseux series
are rational functions in the remaining parameters, possibly after dividing
by the constant term. We can then prove equality of two such integrals by
showing that those rational functions agree. In particular, it suffices to
prove the identity for a {\em Zariski dense} set of points (i.e., such that
any nonzero polynomial is nonzero somewhere on the set), rather than
needing equality on a set with a limit point (or something analogous if we
have specialized multiple parameters). For instance, most elliptic
hypergeometric integrals that have been considered in the literature have
degenerations coming from residue calculus in which they become finite
sums. This typically involves a degree of freedom being specialized to a
power of $q$; since the parameters generally live on $\C^*$, the powers of
$q$ do not even have a limit point, but are certainly Zariski dense. (The
specializations that turn the interpolation kernel into elliptic
interpolation functions are similarly dense.) This reduction to finite
identities is, of course, particularly powerful when the identity of
interest was formulated as the integral analogue of a sum.

\looseness=-1 As an example of this, consider the elliptic beta integral
\cite{SpiridonovVP:2001}.\footnote{Note that (as the referee pointed out)
 Spiridonov's original proof of the elliptic beta integral evaluation also
 involved showing an identity of Taylor series coefficients, but needed a
 more complicated set of special cases that had a limit point.} We would
like to be able to deduce the elliptic beta integral evaluation from the
corresponding identity for elliptic hypergeometric sums (the Frenkel--Turaev
sum \cite{FrenkelIB/TuraevVG:1997}), but the corresponding subset of
parameter space (where two of the parameters multiply to $q^{-m}$) is
nowhere dense (and, indeed, we could add any holomorphic function to the
right-hand side without affecting the validity of the Frenkel--Turaev
limits). However, if we rescale two of the parameters by $p^{1/2}$, then
the evaluation identity becomes the normalization for the Askey--Wilson
density in the limit $p\to 0$, and thus we may divide each side by its
respective limit without affecting the validity of the result. If we take
the Puiseux series expansion of each side around $p=0$, then the
coefficients on each side are rational functions in the parameters.
Indeed, for the right-hand side, this is easy (simply show the
corresponding statement for the logarithm). We may interpret the left-hand
side as the integral against the {\em normalized} Askey--Wilson density
(i.e., normalized to have integral 1) of the ratio of integrands. The
ratio of integrands is itself a product, so can be seen to have rational
function coefficients, and those rational functions are moreover symmetric
Laurent polynomials in the integration variable. We may thus integrate
term-by-term, and reduce to showing that the integral of a symmetric
Laurent polynomial against the normalized Askey--Wilson density is a
rational function of the parameters. But this is easy, as we can compute
such an integral as the constant term of the expansion into Askey--Wilson
polynomials, and those certainly have rational function coefficients. If
two rational functions agree whenever $t_0t_1=q^{-m}$ for all $m$, then
they agree identically, and thus the Frenkel--Turaev sum does, in the end,
imply the elliptic beta integral.

Once we begin thinking about the Puiseux series associated to an integral,
we find that there are other possible ways to establish equalities between
such series. Here the additional observation we make is that although the
Puiseux series we obtain are, of course, convergent, we never use that fact
in establishing the identities. Once we think of the problem as
establishing an identity between {\em formal} Puiseux series, we see that
there is, for instance, no need to restrict our attention to convergent
formulas for those series. For instance, one of the key ideas in
implementing one of the symmetries we need for the quadratic
transformations is the observation that as long as we are willing to forego
convergence, we can replace the dimension of the integrals by a continuous
parameter (in such a way that the finite-dimensional case is Zariski
dense). This analytic continuation then has additional symmetries not
preserving the finite-dimensional specialization, allowing us to prove some
of the trickier quadratic transformations. Other approaches used below
involve writing the Puiseux series as a formally convergent infinite sum
(sidestepping the otherwise formidable obstacles to making sense of
nonterminating elliptic hypergeometric sums), or showing that the series
satisfies a family of difference equations having a unique formal solution.

Although we initially introduced the interpolation kernel as merely a tool
for using the above formal method to analytically continue results
involving interpolation functions, we quickly found that it had a number of
remarkable properties justifying studying it in its own right, bringing us
to the main purpose of the present note beyond proving the quadratic
transformations. One key property we will make only limited use of at
present (but use in a variant form in~\cite{elldaha}) is the fact that the
interpolation kernel can be viewed as the kernel of an integral
transformation, a sort of multivariate elliptic generalization of the
Fourier--Laplace transform. Just as the usual Fourier transformation acts
on the algebra of differential operators, this transformation acts on a
certain algebra of elliptic difference operators; among other things, it
preserves (up to an additive scalar and an action on the parameters) the
van Diejen Hamiltonian \cite{vDiejenJF:1994} (as well as the filtered
algebra of operators commuting with this integrable Hamiltonian). Another
application of the kernel is that we can use it in ``Bailey lemma''-type
arguments; thus, for instance, we will be able to directly derive the
$W(E_7)$-symmetry of the order 1 elliptic Selberg integral
\cite[Theorem~9.7]{xforms} from the kernel analogue (the ``braid relation'')
of the much simpler Theorem 9.2 of \cite{xforms}. With this in mind, we
will devote a fair amount of space in the present note to developing a
theory of this kernel.

We will take as our primary definition of the interpolation kernel a
certain sum of interpolation functions, generalizing the Cauchy identity of
\cite{littlewood}. A priori, this sum is nonconvergent (indeed, we cannot
even avoid poles of terms without excluding a dense set!), but this is
avoided if instead we take a Puiseux series in $p$ and only insist on
formal convergence, which turns out to hold as long as the remaining
parameters behave as suitable powers of $p$. We then find without much
difficulty that the coefficients of the resulting series are rational
functions in the variables and parameters, with well-controlled poles. If
we specialize one of the sets of variables to make the sum terminate (a
Zariski dense set of specializations), we find that the result can be
evaluated as an interpolation function. (This function now lives on a
formal elliptic curve, a.k.a.~a ``Tate curve'' \cite{TateJ:1995}.) Thus
various results about interpolation functions have immediate consequences
for this formal kernel. In particular, we obtain an identity to the effect
that a certain integral operator built from this kernel has a very nice
action on interpolation functions.

\looseness=-1 As it stands, the formal kernel would be of only limited usefulness. There
are, however, two important ways in which one can extend this kernel. The
first involves the fact that one can express the $n$-dimensional formal
kernel as an integral involving the $(n-1)$-dimensional formal kernel
(integrating term-by-term), and thus in general as an
$n(n-1)/2$-dimensional integral. This expression turns out to be {\em
 analytically} convergent on a large open set of parameters, and by
general considerations of \cite[Section~10]{xforms} extends to a meromorphic
function on parameter space, giving us the true interpolation kernel. In
other words, although the infinite sum defining the formal kernel is not
analytically convergent, the limiting formal power series often converges.
And, of course, the analytic interpolation kernel inherits the identities
of the formal kernel! It also has additional symmetries of its own; in
addition to the symmetry between $p$ and $q$ that those familiar with known
elliptic hypergeometric integrals might expect (but which is meaningless
for the formal kernel), there is also a symmetry between $t$ and $pq/t$.
We also find that there are values of the parameter outside the domain of
formal convergence in which we recover known operators: not only the
integral operators of \cite{xforms}, but the difference operators as well.
These difference operators in turn extend to a family of formal difference
operators; in this case, not formal in their coefficients (which are
functions on a general elliptic curve) but in that they are formal series
in the set of possible shifts. (This means that the operators cannot be
applied to actual functions, but as we will see does not cause any
particular difficulty in multiplying (and dividing!) them.) These formal
operators will not play a direct role in the present work, but in~\cite{elldaha} play an important role in understanding a certain algebra of
(actual) difference operators, by giving an alternate approach to the view
of the integral operators as generalized Fourier transformations that
avoids having to deal with convergence and boundary effects.

The other important way in which we can extend the formal kernel is that we
can analy\-ti\-cal\-ly continue in the dimension. More precisely, the
coefficients of the $n$-dimensional formal kernel are symmetric Laurent
polynomials in the two sets of variables, and in each case can be expressed
as a suitable specialization of a certain symmetric function depending only
mildly on~$n$. Again, this gives rise to additional symmetries, the two
main ones being an action of the Macdonald involution and a certain
plethystic symmetry. The key ingredient in this construction is a~corresponding symmetric function version of the interpolation functions,
which (once we start thinking in terms of formal series) is a~straightforward consequence of identities of \cite{littlewood}. Although
it would be surprising if either symmetric function analogue converged
analytically, we can still use it to prove identities: simply check the
identity on a Zariski dense set of points, then specialize to a case where
the series converges.

As we mentioned above, the expression of the formal kernel as a sum can be
viewed as a~deformation of a nonterminating version of the elliptic Cauchy
identity of \cite{littlewood}. That paper also established an elliptic
analogue of the Littlewood identity, which also immediately extends to a
nonterminating formal identity. It turns out that the resulting sum also
has a 1-parameter deformation which sums to the Puiseux series of a certain
meromorphic function. This also arises in a straightforward way from the
kernel analogue of Conjecture L1 of \cite{littlewood} (not one of the main
sequence of quadratic transformations). This transformation (which is
relatively straightforward to prove given the machinery we develop for the
interpolation kernel) has a special case in which the symmetry becomes a
continuous one; the resulting function with a one-parameter family of
integral representations also has a formal expression as a deformed
Littlewood sum. A~``Bailey lemma''-type argument turns this family of
integral representations into a transformation, only one side of which
involves this ``Littlewood kernel''. The form of the right-hand side turns
out to appear in two other conjectures from \cite{littlewood}, which can
thus be interpreted as describing certain degenerations of this function.
Moreover, the Littlewood kernel has a special case (i.e., when the deformed
Littlewood sum is not actually deformed) that can be expressed as a
product, and thus gives rise to its own set of identities along the lines
of \cite{littlewood}. In fact, this identity, together with the $t\mapsto
pq/t$ symmetry as well as the various other symmetries used in
\cite{littlewood}, is what will let us prove the main quadratic
transformations.

The plan of the paper is as follows. Apart from a discussion of notation
and reminders of the interpolation functions in the remainder of this
introduction, we will begin in Section~\ref{section2} by defining the formal kernel and
proving some initial properties, culminating in the main integral
representation. In Section~\ref{section3}, we will then use this to define the full
analytic version of the kernel, and establish a number of its properties
and special cases. Section~\ref{section4} (which will not be used elsewhere in the
present work) uses the kernel to construct a certain family of formal
difference operators and again consider their main properties. The main
result (Theorem~\ref{thm:formal_diff_En}) of this section is that these
operators can be used to construct twisted representations of a certain
sequence of Coxeter groups; the one nontrivial braid relation in this
interpretation turns out to be the difference operator of the main identity
satisfied by the kernel (Proposition \ref{prop:kern_braid}), which we thus
refer to as ``the'' braid relation. Using these operators, we also make
precise a weak version (Theorem \ref{thm:fourier} below) of the fact that
the kernel is the kernel of a generalized Fourier transformation; see~\cite{elldaha} for a stronger version of this fact.

Section~\ref{section5} constructs the symmetric function version of the formal kernel,
as well as reminding the reader of some of the properties (especially
duality) of the corresponding analogue of the Koornwinder integral that
will be needed in order to apply this kernel. Of particular note here is
Lemma \ref{lem:koorn_On_disc}, which shows how a certain $0/0$ issue in
degenerating these integrals results in their expansion as a sum of two
finite-dimensional integrals, in particular explaining why Conjecture~Q5 of
\cite{littlewood} involved such sums.

Section~\ref{section6} proves the kernel version of Conjecture L1 of \cite{littlewood},
and as mentioned above uses this to construct the Littlewood kernel and
understand a number of identities satisfied by the kernel. (One notable
special case is an elliptic analogue (Theorem \ref{thm:funny_pfaffian}) of
Conjecture 1 of \cite{BeteaD/WheelerM/Zinn-JustinP:2015}, which expressed a
certain deformation of the usual Littlewood identity for Macdonald
polynomials as a pfaffian related to the 6-vertex model.) Section~\ref{section7} proves
Conjectures~L2 and~L3 of~\cite{littlewood}, and studies the corresponding
analogues of the Littlewood kernel (the dual Littlewood and Kawanaka
kernels, respectively). In Section~\ref{section8}, we use the machinery developed for
these various kernels to finally prove the remaining conjectures of
\cite{littlewood}, the promised multivariate quadratic transformations, and~consider a few new transformations that arise by viewing these as
statements about degenerations of the Littlewood and other kernels. We
finish with an appendix of sorts that uses properties of the interpolation
polynomials of \cite{OkounkovA:1998,bcpoly} to establish that certain
difference and integral equations have unique polynomial solutions. (This
will then imply that certain equations with formal coefficients have unique
formal solutions, which will in turn be used in proving Theorems~\ref{thm:funny_pfaffian} and~\ref{thm:Q1} below.)

{\bf Notation.} As in \cite{littlewood}, we will be using the notation of
\cite{bctheta} and \cite{xforms}. In particular, bold-face greek letters
refer to pairs of partitions; if only one of the partitions is nonzero, we
will either give the partition pair explicitly, or rewrite using the
notation of \cite{bctheta}, explicitly breaking the symmetry between~$p$
and~$q$. Thus the interpolation functions are denoted by
\begin{gather*}
\cR^{*(n)}_{\blambda}(z_1,\dots,z_n;a,b;t;p,q),
\end{gather*}
which factors as
\begin{gather*}
\cR^{*(n)}_{\lambda,\mu}(z_1,\dots,z_n;a,b;t;p,q)
=
R^{*(n)}_{\lambda}(z_1,\dots,z_n;a,b;p,t;q)
R^{*(n)}_{\mu}(z_1,\dots,z_n;a,b;q,t;p),
\end{gather*}
with the first factor $q$-elliptic, and the second $p$-elliptic. (In fact,
we will nearly always be using the elliptic notations, as in the vast
majority of the cases in which we would want to use the full versions, we
will be using the kernel instead! We will also use a similar notation for
the biorthogonal functions, but these will only appear briefly in certain
corollaries not otherwise used.) Relations and operations on single
partitions extend to partition pairs in the obvious way; in particular,
$\blambda\subset\bmu$ denotes the product of the usual inclusion orders on
the two pieces. We will need some additional notations for partitions. Of
particular importance are $\lambda^2$, denoting the partition with
$\lambda^2_i = \lambda_{\lceil i/2\rceil}$, and $2\lambda$, denoting the
partition with $(2\lambda)_i=2\lambda_i$, both extending immediately to
partition pairs. (The latter will appear in the form
$(1,2)(\lambda,\mu)=(\lambda,2\mu)$.) We will also find it convenient to
let $\vec{z}$ denote the tuple $z_1,\dots,z_n$ of arguments, and similarly
for~$\vec{x}$,~$\vec{y}$, etc.

We specifically recall the elliptic Gamma function
\begin{gather*}
\Gampq(z)
:=
\prod_{0\le i,j} \frac{1-p^{i+1} q^{j+1}/z}{1-p^i q^j z},
\end{gather*}
with the convention here (and for $\Gamma^+$, $\theta$, etc.) that multiple
arguments express a product:
\begin{gather*}
\Gampq(x_1,\dots,x_n)
=
\prod_{1\le i\le n} \Gampq(x_i).
\end{gather*}
This satisfies the functional equations
\begin{gather*}
\Gampq(qz) = \theta_p(z)\Gampq(z), \qquad \Gampq(pz)= \theta_q(z)\Gampq(z), \qquad \Gampq(pq/z)= \Gampq(z)^{-1},
\end{gather*}
where
\begin{gather*}
\theta_p(z) := \prod_{0\le i} (1-p^i z)(1-p^{i+1}/z)
\end{gather*}
is a theta function ($\theta_p(\exp(2\pi i x))$ is doubly quasiperiodic), as well as the ``quadratic'' functional equations
\begin{gather}\label{eq:gampqq}
\Gampq(z) = \Gampqq(z,qz),\qquad \Gamppqq\big(z^2\big)= \Gampq(z,-z),
\end{gather}
which will be useful below. The special values
\begin{gather*}
\Gampqq(q) = \frac{1}{\big(q;q^2\big)} = \frac{\big(q^2;q^2\big)}{(q;q)} = (-q;q),\qquad
\Gampq(-1) = \frac{\big(p;p^2\big)\big(q;q^2\big)}{2},\\
\lim_{x\to 1} (1-x)\Gampq(x) = \frac{1}{(p;p)(q;q)}
\end{gather*}
will arise as well, in the process of various (omitted) simplifications. We will also make brief use of the triple gamma function
\begin{gather*}
\Gampqt(z):=\prod_{0\le i,j,k} \big(1-p^{i+1}q^{j+1}t^{k+1}/z\big)\big(1-p^iq^jt^k z\big),
\end{gather*}
with functional equations
\begin{gather*}
\Gampqt(tz) = \Gampq(z)\Gampqt(z),\qquad \Gampqt(pqt/z) = \Gampqt(z),
\end{gather*}
and so forth.

We will also need two families of densities. The simpler of the two is the elliptic Dixon density
\begin{gather*}
\Delta^{(n)}_D(\vec{z};p,q)=
\frac{((p;p)(q;q))^n}{2^n n!}
\prod_{1\le i<j\le n}
\frac{1}{\Gampq(z_i^{\pm 1}z_j^{\pm 1})}
\prod_{1\le i\le n}
\frac{1}{\Gampq(z_i^{\pm 2})}
\frac{dz_i}{2\pi\sqrt{-1}z_i},
\end{gather*}
which will also appear in a form with ``univariate'' parameters,
\begin{gather*}
\Delta^{(n)}_D(\vec{z};u_0,\dots,u_{2m+2n+3};p,q)=\prod_{\substack{1\le i\le n\\0\le r<2m+2n+4}} \Gampq\big(u_r z_i^{\pm 1}\big)\Delta^{(n)}_D(\vec{z};p,q).
\end{gather*}
We allow $m$ negative here, but note that it in general measures the complexity of the integral; e.g., for $m=0$, the integral of this density has an explicit evaluation (\cite[Corollary~3.2]{xforms}, originally conjectured in \cite{vanDiejenJF/SpiridonovVP:2001} as the ``Type~I'' integral; note that the univariate case of both this and the elliptic Selberg evaluation is the elliptic beta integral of~\cite{SpiridonovVP:2001}):
\begin{gather*}
\int_{C^n}\Delta^{(n)}_D(\vec{z};u_0,\dots,u_{2n+3};p,q)=\prod_{0\le r<s<2n+4} \Gampq(u_ru_s),
\end{gather*}
subject to the ``balancing'' condition $\prod_r u_r = pq$. The contour here must separate the double geometric progressions of poles converging to~0 from those converging to $\infty$; if $|u_r|<1$ for all~$r$, we may take the contour to be the unit circle. (This contour may not always exist, but
by general considerations \cite[Section~10]{xforms} such an integral always gives a well-defined meromorphic function.)

The other density we will need is the elliptic Selberg density,
\begin{gather*}
\Delta^{(n)}_S(\vec{z};t;p,q)=\Gampq(t)^n \prod_{1\le i<j\le n} \Gampq\big(t z_i^{\pm 1}z_j^{\pm 1}\big)\Delta^{(n)}_D(\vec{z};p,q).
\end{gather*}
(The ratio between the two densities will also appear quite a few times below.) Again, this will typically be given additional parameters
\begin{gather*}
\Delta^{(n)}_S(\vec{z};u_0,\dots,u_{2m+5};t;p,q)=\prod_{\substack{1\le i\le n\\0\le r<2m+6}} \Gampq\big(u_r z_i^{\pm 1}\big)\Delta^{(n)}_S(\vec{z};t;p,q).
\end{gather*}
This has the evaluation (\cite[Theorem~6.1]{xforms}, originally conjectured in~\cite{vanDiejenJF/SpiridonovVP:2000})
\begin{gather}
\int_{C^n}\Delta^{(n)}_S(\vec{z};u_0,\dots,u_5;t;p,q)=\prod_{0\le i<n} \Gampq\big(t^{i+1}\big) \prod_{0\le r<s<6} \Gampq\big(t^i u_r u_s\big),
\label{eq:ell_Selberg_eval}
\end{gather}
now with balancing condition
\begin{gather*}
t^{2n-2}\prod_{0\le r<6} u_r = pq.
\end{gather*}
The $t$-dependent factors of the density force us to insist that $C$ contains $tC$; again, as long as $u_r$ and $t$ are inside the unit circle, there is no difficulty taking $C=S^1$. There is also a~trans\-for\-ma\-tion for order $m=1$, \cite[Theorem~9.7]{xforms}.

(As an aside, we note that both of the above evaluations are special cases of the analytic form of Proposition \ref{prop:kern_braid} below; similarly, the transformation of the order 1 elliptic Selberg integral is a special case of Theorem~\ref{thm:bailey_xform} below.)

Note that for either density, if two parameters multiply to $pq$, then the reflection relation of the elliptic gamma function causes the two parameters to cancel out, thus reducing the order by~1.

Connected to the elliptic Selberg density is a family of difference operators satisfying (formal) adjointness relations with respect to that
density. The simplest such operator, $D^{(n)}_q(t;p)$, is self-adjoint with respect to the $0$-parameter elliptic Selberg density, and acts on
hyperoctahedrally symmetric functions by
\begin{gather*}
\big(D^{(n)}_q(t;p)f\big)(z_1,\dots,z_n)=
\sum_{\vec{\sigma}\in \{\pm 1\}^n}
\frac{\prod\limits_{1\le i<j\le n} \theta_p\big(t z_i^{\sigma_i}z_j^{\sigma_j}\big)}
 {\prod\limits_{1\le i\le j\le n} \theta_p\big(z_i^{\sigma_i}z_j^{\sigma_j}\big)}
f\big(q^{\sigma_1/2}z_1,\dots,q^{\sigma_n/2}z_n\big).
\end{gather*}
More generally, we define an operator
\begin{gather*}
D^{(n)}_q(u_1,\dots,u_{2m+2};t;p)=
\prod_{\substack{1\le i\le n\\1\le j\le 2m+2}} \frac{1}{\Gampq(u_j z_i^{\pm 1})}
D^{(n)}_q(t;p)
\prod_{\substack{1\le i\le n\\1\le j\le 2m+2}} \Gampq\big(q^{1/2} u_j z_i^{\pm 1}\big);
\end{gather*}
this has the effect of multiplying the term corresponding to $\vec\sigma$ by $\prod\limits_{1\le i\le n,1\le j\le 2m+2} \theta_p(u_j z_i^{\sigma_i})$. In the case of ambiguity regarding the variables on which a given difference operator acts, we will specify those variables as a subscript, as $D^{(n)}_q(t;p)_{\vec{z}}$.

We will also need some finite products. The factors
\begin{gather*}
\Delta^0_\blambda(a|b_0,\dots,b_{n-1};t;p,q)\qquad\text{and}\qquad \Delta_\blambda(a|b_0,\dots,b_{n-1};t;p,q)
\end{gather*}
that appear below are certain multivariate $q$-symbols (see the introduction of \cite{xforms}). The first is defined by
\begin{gather*}
\Delta^0_\blambda(a|b_0,\dots,b_{n-1};t;p,q)=\prod_{0\le r<n} \frac{\cC^0_\blambda(b_r;t;p,q)} {\cC^0_\blambda(pqa/b_r;t;p,q)},
\end{gather*}
where
\begin{gather*}
\cC^0_\blambda(x;t;p,q):=\prod_{1\le i} \theta\big(t^{1-i}x;p,q\big)_{\blambda_i},
\end{gather*}
and
\begin{gather*}
\theta(x;p,q)_{l,m}:=\prod_{0\le j<l} \theta_q\big(p^j x\big)\prod_{0\le j<m} \theta_p(q^j x)=
(-x)^{lm}p^{ml(l-1)/2}q^{lm(m-1)/2}\frac{\Gampq\big(p^l q^m x\big)} {\Gampq(x)}.
\end{gather*}
Note that
\begin{gather*}
\Delta^0_{\lambda,\mu}(a|b_0,\dots,b_{n-1};t;p,q)=\Delta^0_{\lambda,0}(a|b_0,\dots,b_{n-1};t;p,q)
\Delta^0_{0,\mu}(a|b_0,\dots,b_{n-1};t;p,q),
\end{gather*}
and if $n=2m$, $\prod\limits_{0\le r<2m} b_r = (pqa)^m$, then both factors are elliptic subject to this constraint; i.e.,
\begin{gather*}
\Delta^0_{0,\mu}(a|b_0,\dots,b_{2m-1};t;p,q)
\end{gather*}
is invariant under shifting the parameters by integer powers of $p$ such that the balancing condition remains satisfied.

The other $\Delta$-symbol is more complicated:
\begin{gather*}
\Delta_\blambda(a|b_0,\dots,b_{n-1};t;p,q):=
\Delta^0_\blambda(a|b_0,\dots,b_{n-1};t;p,q)
\frac{\cC^0_{2\blambda^2}(pqa;t;p,q)}
 {\cC^-_\blambda(pq,t;t;p,q)\cC^+_\blambda(a,pqa/t;t;p,q)},
\end{gather*}
where
\begin{gather*}
\cC^-_\blambda(x;t;p,q)
:=\prod_{1\le i\le j}
\frac{\theta\big(t^{j-i}x;p,q\big)_{\blambda_i-\blambda_{j+1}}}
 {\theta\big(t^{j-i}x;p,q\big)_{\blambda_i-\blambda_j}},\\
\cC^+_\blambda(x;t;p,q)
:=\prod_{1\le i\le j}
\frac{\theta\big(t^{2-i-j}x;p,q\big)_{\blambda_i+\blambda_j}}
 {\theta\big(t^{2-i-j}x;p,q\big)_{\blambda_i+\blambda_{j+1}}}.
\end{gather*}
The key property of $\Delta_{\blambda}$ is that the $\blambda$-dependent factor of the residue of the elliptic Selberg integrand $\Delta^{(n)}$ at
the point $(\dots,(p,q)^{\blambda}t^{n-i}u_0,\dots)$ is
\begin{gather*}
\Delta_{\blambda}\big(t^{2n-2}u_0^2|t^n,t^{n-1}u_0u_1,\dots,t^{n-1}u_0u_{2m+5};t;p,q\big).
\end{gather*}
The corresponding balancing condition to ensure ellipticity is, for $n=2m$, that $\prod\limits_{0\le r<2m} b_r = (t/pq)(pqa)^{m-1}$.

As with the interpolation functions, we also use an elliptic version:
\begin{gather*}
\Delta_\lambda(a|b_1,\dots,b_m;q,t;p):=\Delta_{0,\lambda}(a|b_1,\dots,b_m;t;p,q),
\end{gather*}
and similarly for $\Delta^0_\lambda$. For
\begin{gather*}
C^{0,\pm}_\lambda(x;q,t;p):=\cC^{0,\pm}_{0,\lambda}(x;t;p,q),
\end{gather*}
we also take the convention of omitting $p$ in the limit $p=0$.

The key property of the interpolation functions is that
\begin{gather*}
\cR^{*(n)}_{\blambda}\big(\dots,(p,q)^{\bmu_i}t^{n-i}a,\dots;a,b;t;p,q\big)=0
\end{gather*}
unless $\blambda\subset\bmu$ \cite[Corollary~8.12]{xforms}; this property and the triangularity property are related by a complementation symmetry, and together determine the interpolation function up to normalization, which is determined by
\begin{gather*}
\cR^{*(n)}_{\blambda}\big(\dots,t^{n-i}v,\dots;a,b;t;p,q\big)=\Delta^0_{\blambda}\big(t^{n-1}a/b|t^{n-1}av,a/v;t;p,q\big).
\end{gather*}
The nonzero values of interpolation functions appear frequently enough to merit their own notation: we define
\begin{gather*}
\binomE{\blambda}{\bmu}_{[a,b];t;p,q}\!:=\Delta_{\bmu}\big(a/b|t^n,1/b;t;p,q\big)
\cR^{*(n)}_{\bmu}\big(\dots,\sqrt{a}(p,q)^{\blambda_i}t^{1-i},\dots;t^{1-n}\sqrt{a},b/\sqrt{a};t;p,q\big);
\end{gather*}
this is independent of the choice of square root, and factors as
\begin{gather*}
\binomE{\lambda,\kappa}{\mu,\nu}_{[a,b];t;p,q}=\binomE{\lambda}{\mu}_{[a,b];p,t;q}\binomE{\kappa}{\nu}_{[a,b];q,t;p},
\end{gather*}
where the first factor is $q$-elliptic in $a$, $b$, $p$, and $t$, and similarly for the second factor. In actuality, we will essentially only use
the alternate normalization of~\cite{bctheta}, which in the $p,q$-symmetric version reads
\begin{gather*}
\obinomE{\blambda}{\bmu}_{[a,b](v_1,\dots,v_k);t;p,q}:=\frac{\Delta^0_{\blambda}(a|b,v_1,\dots,v_k;t;p,q)}
 {\Delta^0_{\bmu}(a/b|1/b,v_1,\dots,v_k;t;p,q)}\binomE{\blambda}{\bmu}_{[a,b];t;p,q}.
\end{gather*}
The binomial coefficients so normalized are products of elliptic functions if $k=3$, $bv_1v_2v_3=(pqa)^2$.

Finally, as mentioned above, we will quite frequently be taking $p$ to be a~formal variable. More precisely, we will take the various parameters to be elements of some field of formal power series, in such a way that $p$ has valuation $<1$. For a nonzero element of such a field, we define
\begin{gather*}
\ord_p(x) = \frac{\log|x|}{\log|p|},
\end{gather*}
and will typically omit $p$. (We will, in fact, only use the subscript in a~few cases, in which it is more natural to let the formal parameter be
$q$.) Note that here $||$ denotes the non-Archimedean valuation, i.e., $\exp(-d)$ where $d$ is the degree of the leading term of the power series.
We will generally take the field to be power series in some fixed $N$-th root of $p$ (with coefficients meromorphic or rational in the remaining variables, as appropriate), but will in general simply refer to it as the field of formal Puiseux series. (For simplicity, we only allow rational exponents with finite common denominator; this could be weakened in general, but we will never need more than fourth roots of $p$ in any
event.) We will also use this order notation in the analytic case, to cover the case in which the formal Puiseux series arises as an actual
Puiseux series. That is, given a limit of functions, integrals, etc., in which $x$ is a parameter or variable, we will define $\ord_p(x)$ to be the limit of $\log|x|/\log|p|$, this time using the usual complex absolute value. In general, we will take these limits in such a way that
$x=p^{\alpha} x_0$ for $x_0$ fixed.

Note that an infinite sum of formal Puiseux series converges (and converges absolutely!) iff its terms converge to 0 in absolute value; similarly an infinite product converges iff its terms converge to~1. For instance, the product defining the elliptic Gamma function $\Gampq(z)$ converges iff $\ord(q), \ord(z), \ord(pq/z)>0$ (i.e., the corresponding Puiseux series have no terms with nonpositive exponent). One can, in fact, ``analytically continue'' $\Gampq$ to a somewhat larger range of inputs. Indeed, taking the logarithm of $\Gampq(z)$ gives
\begin{gather*}
\log\Gampq(z)=\sum_{0\le i,j} \log(1-p^{i+1}q^{j+1}/z) -\log(1-p^iq^jz)\\
\hphantom{\log\Gampq(z)}{} = \sum_{0\le i,j}\sum_{k>0}\frac{p^{ik}q^{jk}z^k-p^{(i+1)k}q^{(j+1)k}z^{-k}}{k}.
\end{gather*}
The sum over $i,j$ is geometric, and thus one concludes
\begin{gather*}
\log\Gampq(z)=\sum_{k>0} \frac{z^k-(pq/z)^k}{k(1-p^k)(1-q^k)}.
\end{gather*}
This continues to converge for $\ord(q)=0$ as long as $\lim\limits_{p\to 0}q$ is
not a root of unity, as well as in the range $\ord(1/q),\ord(z/q),\ord(p/z)>0$. This formal extension introduces an
additional symmetry not present for the usual analytic version, namely
$\Gamm{p,1/q}(z)=1/\Gampq(qz)$ (which follows by comparing corresponding
terms of the series for the logs); analytically, the domains of definitions
of the two sides are entirely disjoint.

\section{The formal kernel}\label{section2}

As we mentioned in the introduction, although nonterminating elliptic
hypergeometric series normally fail to converge, we can finesse this issue
by taking $p$ to be a formal variable. The general principle with natural
series of this kind is that the valuation of the terms depends linearly on
the index of summation; e.g., for a sum over partitions, the term
associated to $\lambda$ will be $O(p^{\alpha|\lambda|})$ for some $\alpha$.
Thus in practice, a series will converge formally iff it becomes the
trivial sum $1$ (or any other single-term sum) in the limit $p\to 0$. In
particular, given a sufficiently large family of finite sums with this
property, we can hope to have a straightforward continuation to the
nonterminating case.

For our purposes, the most important sum will be the one expanding an
interpolation function from one basis in terms of the interpolation
functions from another basis. The coefficients here are ``elliptic
binomial coefficients'', but these are essentially just values of
interpolation functions. As a result, we may express the expansion in the
following form (with some reparametrization).

\begin{lem}[{\cite[Corollary~4.14]{bctheta}}]
For $t_0,u_0, c, q, t\in \C^*$, and any $|p|<1$, we have
\begin{gather*}
R^{*(n)}_\lambda\big(x_1,\dots,x_n;cu_0,c/t^{n-1}u_0;q,t;p\big)\\
\quad{} =
\Delta^0_\lambda\big(t^{2n-2}u_0^2|cu_0/t_0,ct^{n-1}t_0u_0;q,t;p\big)
 \sum_{\mu\subset\lambda}
\Delta_\mu\big(t^{2n-2}t_0u_0/c|t^n,pqt^{n-1}/c^2;q,t;p\big)\\
\quad\quad {} \times R^{*(n)}_\mu\big(x_1,\dots,x_n;t_0,c/t^{n-1}u_0;q,t;p\big)
R^{*(n)}_\mu\big(\dots,q^{\lambda_i}t^{n-i}u_0,\dots;u_0,c/t^{n-1}t_0;q,t;p\big).
\end{gather*}
\end{lem}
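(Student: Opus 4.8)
The statement is a connection-coefficient identity: it expands the interpolation function $R^{*(n)}_\lambda(\vec{x};cu_0,c/t^{n-1}u_0;q,t;p)$ in terms of the interpolation functions $R^{*(n)}_\mu(\vec{x};t_0,c/t^{n-1}u_0;q,t;p)$, which share the same second parameter $b:=c/t^{n-1}u_0$, and the plan is to prove it as such. The structural input is that, for fixed $b,n,q,t,p$, the functions $R^{*(n)}_\mu(\vec{x};t_0,b;q,t;p)$ with $\ell(\mu)\le n$, $\mu_1\le\lambda_1$ form a basis of the finite-dimensional space $V$ of $\mathrm{BC}_n$-symmetric theta functions of the relevant type, and that this type depends only on $b,n,\lambda_1$ (not on the first parameter), so that $R^{*(n)}_\lambda(\vec{x};cu_0,b;q,t;p)\in V$ as well and has a unique expansion $\sum_\mu c_{\lambda\mu}R^{*(n)}_\mu(\vec{x};t_0,b;q,t;p)$. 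The right-hand side of the lemma is already written in this form --- a $\mu$-independent constant $\Delta^0_\lambda$ times, for each $\mu$, the weight $\Delta_\mu$ times the value $R^{*(n)}_\mu(\dots,q^{\lambda_i}t^{n-i}u_0,\dots;u_0,c/t^{n-1}t_0;q,t;p)$, which by the quoted vanishing property (its base $u_0$ equals its first parameter) is zero unless $\mu\subseteq\lambda$; this is precisely why the sum runs over $\mu\subseteq\lambda$, and it reduces the task to verifying the claimed values of the $c_{\lambda\mu}$.

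To do that I would pair the expansion against the dual family of partial principal specializations $\varphi_\nu\colon f\mapsto f(\dots,q^{\nu_i}t^{n-i}t_0,\dots)$, with $\nu$ ranging over the same box. By the vanishing property (now with base $t_0$), $\varphi_\nu(R^{*(n)}_\mu(\cdot;t_0,b))$ vanishes unless $\mu\subseteq\nu$ and is nonzero for $\mu=\nu$ (it is, up to a $\Delta^0$-normalization, the diagonal binomial coefficient), so the matrix $(\varphi_\nu(R^{*(n)}_\mu))$ is triangular and invertible: the $\varphi_\nu$ span $V^*$, and --- both sides of the lemma lying in $V$ --- it suffices to check $\varphi_\nu(\mathrm{LHS})=\varphi_\nu(\mathrm{RHS})$ for every $\nu$. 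On the right, $\varphi_\nu$ evaluates both interpolation-function factors at shifted principal specializations whose base coincides with the first parameter ($t_0$, resp.\ $u_0$), so --- after matching $\sqrt{a}=t^{n-1}t_0$, resp.\ $\sqrt{a}=t^{n-1}u_0$, and absorbing $\Delta^0$-normalizations --- each becomes an elliptic binomial coefficient, $\binomE{\nu}{\mu}$ resp.\ $\binomE{\lambda}{\mu}$; hence $\varphi_\nu(\mathrm{RHS})$ is a finite sum over $\mu\subseteq\lambda\cap\nu$ of products $\binomE{\nu}{\mu}\binomE{\lambda}{\mu}$ with weights $\Delta_\mu$ and overall constant $\Delta^0_\lambda$. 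On the left, $\varphi_\nu(\mathrm{LHS})=R^{*(n)}_\lambda(\dots,q^{\nu_i}t^{n-i}t_0,\dots;cu_0,b;q,t;p)$ is a shifted principal specialization in which the base $t_0$ and the first parameter $cu_0$ differ.

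The identity $\varphi_\nu(\mathrm{LHS})=\varphi_\nu(\mathrm{RHS})$ for all $\nu$ is then exactly the evaluation of such a general-base shifted principal specialization, a terminating and balanced multivariate ($\mathrm{BC}_n$/Macdonald-type) elliptic hypergeometric summation --- an elliptic Jackson/Frenkel--Turaev sum --- which is available from \cite{bctheta} (and which \cite{bctheta} itself establishes by induction on $n$ using the branching/Pieri recursion for interpolation functions, with the ordinary principal specialization recalled above as base case). The rest is bookkeeping with the functional equations of $\theta_q$: rewriting the $\theta$-symbols in $\Delta^0_\lambda$, $\Delta_\mu$ and the two binomial coefficients so that the displayed prefactor and weight match the normalization built into the definition of $\binomE{\blambda}{\bmu}$. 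Once checked for generic $t_0,u_0,c,q,t\in\C^*$, the identity extends to all such values, and to all $|p|<1$, by continuity.

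\textbf{Main obstacle.} The existence and shape of the expansion and the reduction to the functionals $\varphi_\nu$ are routine; the real content --- and the step I expect to be the main obstacle --- is the elliptic hypergeometric summation that evaluates $\varphi_\nu(\mathrm{LHS})$ (equivalently, that collapses $\varphi_\nu(\mathrm{RHS})$) to the stated closed form. A secondary, more mechanical difficulty is the parameter-matching and $\theta$-symbol manipulation needed to recognize the stated $\Delta^0_\lambda,\Delta_\mu$ combination as the correctly normalized binomial coefficient.
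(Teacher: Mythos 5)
There is nothing in the paper to compare your argument against: the lemma is imported verbatim (up to reparametrization) from \cite[Cor.~4.14]{bctheta} and the paper offers no proof of it, so the only meaningful comparison is with the cited reference. Judged on that basis, your reconstruction is structurally sound and close in spirit to how the result is actually obtained there: the functions $R^{*(n)}_\mu(\vec{x};t_0,b;q,t;p)$ with $\mu$ in the box do form a basis of the space of $BC_n$-symmetric abelian functions whose polar divisors (and quasi-periodicity) are governed by $b$, $n$ and $\lambda_1$ alone, so the left-hand side lies in that space; the vanishing property makes your evaluation functionals $\varphi_\nu$ triangular against this basis, so checking them suffices; and pairing the right-hand side with $\varphi_\nu$ does turn both interpolation factors into elliptic binomial coefficients, which is exactly the normalization $\obinomE{\cdot}{\cdot}$ built into the statement (note the prefactor $\Delta^0_\lambda(t^{2n-2}u_0^2|cu_0/t_0,ct^{n-1}t_0u_0)$ is just this bookkeeping).

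The point to flag is that what you call the ``main obstacle'' is not an obstacle so much as the entire content: the closed-form evaluation of $\varphi_\nu(\mathrm{LHS})$, i.e.\ of an interpolation function at a geometric progression whose base is unrelated to its parameters, is precisely the binomial-coefficient/elliptic Jackson machinery of \cite{bctheta}, and is essentially equivalent to Corollary 4.14 itself. As written, your proposal therefore reduces the quoted identity to another appeal to the same source rather than to an independent argument. That is entirely consistent with what the paper does (it simply cites), and your sketch is a faithful unwinding of where the work lives; but if you intend a self-contained proof, the multivariate summation step still has to be established, and that is where all the substance of \cite{bctheta} resides.
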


As mentioned, we need the sum to be dominated by its first term, and to
understand when that happens, we need to understand the valuations of the
individual components of the summand. To make our initial calculations
easier, we assume (as we always will) that $q$, $t$ have order~0, and, at
least initially, that $t_0$, $u_0$, $x_1,\dots,x_n$ have order~$0$. Since
the $\Delta$ symbol is defined as a~product, it is straightforward to
compute its valuation, and we find that it has order $\ord(c)|\mu|$, as
long as $0<\ord(c)\le 1/2$. (Here, of course, this is only the {\em
 generic} valuation; dividing by this power of $p$ makes the limit a
generically nonzero rational function.) The interpolation functions are a
priori harder to control, but luckily their valuations were computed in~\cite{biorth_degens}, and we find that both interpolation functions have
order $0$. Combining, we find that the~$\mu$ term in the sum has order
$\ord(c)|\mu|$. In particular, we find that even if the second
interpolation function is evaluated at a generic point (with coordinates of
order $0$), the corresponding nonterminating sum will converge formally.
More generally, if $x_1,\dots,x_n$ have order $x\in (-1/2,1/2)$, the
results of~\cite{biorth_degens} indicate that the interpolation function
has order $-|x||\mu|$, and the argument there further demonstrates that
this is a lower bound so long as $|\ord(x_1)|,\dots,|\ord(x_n)|\le x$.

We are led to introduce some additional prefactors to maximize symmetry,
and arrive at the following definition. (In fact, as written, it is not
quite a definition; we will need to show that the sum is independent of the
auxiliary parameters.)

\begin{defn}
 For $p$ a formal variable, $|q|,|t|<1$, and $\vec{x}$, $\vec{y}$, $c$
 parameters such that
\begin{gather*}
\max_i|\ord(x_i)|+\max_i|\ord(y_i)|<\ord(c)\le 1/2,
\end{gather*}
the {\em formal interpolation kernel} $K^{(n)}_c(\vec{x};\vec{y};q,t;p)$ is
defined by the following infinite sum (depending on auxiliary variables
$t_0$, $u_0$ of valuation $0$):
\begin{gather*}
K^{(n)}_c(\vec{x};\vec{y};q,t;p)
:= \prod_{1\le i\le n}
\frac{\Gampq\big(cu_0x_i^{\pm 1},(c/t^{n-1}u_0)x_i^{\pm 1},ct_0y_i^{\pm 1},(c/t^{n-1}t_0)y_i^{\pm 1}\big)}
 {\Gampq\big(ct^{n-i}t_0u_0,ct^{1-i}u_0/t_0,ct^{i+1-2n}/t_0u_0,ct^{i-n}t_0/u_0,t^{i-n}c^2,t^i\big)} \\
\hphantom{K^{(n)}_c(\vec{x};\vec{y};q,t;p):=}{} \times
\sum_{\mu} \Delta_\mu\big(t^{2n-2}t_0u_0/c|t^n,t^{n-1}pq/c^2;q,t;p\big) \\
\hphantom{K^{(n)}_c(\vec{x};\vec{y};q,t;p):=}{} \times
R^{*(n)}_{\mu}\big(\vec{x};t_0,c/t^{n-1}u_0;q,t;p\big)R^{*(n)}_{\mu}\big(\vec{y};u_0,c/t^{n-1}t_0;q,t;p\big).
\end{gather*}
\end{defn}

\begin{rem}
 The constraint $\ord(c)\le 1/2$ is required for the prefactor
 $\prod\limits_{1\le i\le n} \Gampq(t^{i-n}c^2)^{-1}$ to be defined. We could of
 course omit this factor, but at the cost of making a number of later
 formulas (in particular for the analytic kernel) more complicated. We
 find in general that the given expression for
\begin{gather*}
\prod_{1\le i\le n} \Gampq\big(t^{i-n}c^2,t^i\big) K^{(n)}_c(\vec{x};\vec{y};q,t;p)
\end{gather*}
converges formally (and has limit $1$ as $p\to 0$) as long as
\begin{gather*}
\max(|\ord(t_0)|,\max_i|\ord(x_i)|)+ \max(|\ord(u_0)|,\max_i|\ord(y_i)|)<\min(\ord(c),1-\ord(c)),
\end{gather*}
and agrees with the corresponding formal expansion of the analytic kernel
(q.v.) as long as $\ord(t_0)=\ord(u_0)=0$.
\end{rem}

\begin{lem} For $a$, $q$, $t$, $\vec{x}$ of order $0$, and $0<\ord(b)\le 1/2$, the coefficient of $p^\alpha$ in $R^{*(n)}_\lambda(\vec{x};a,b;q,t;p)$ is $0$ if $\alpha<0$, and is otherwise a hyperoctahedrally symmetric Laurent polynomial in $\vec{x}$ of degree at most $\frac{\alpha}{\ord(b)}+|\lambda|$, with coefficients rational functions in the remaining parameters.
\end{lem}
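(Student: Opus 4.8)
The plan is to prove the statement by induction on $n$, using the branching/recursion structure of the interpolation functions together with the known valuation estimates from \cite{biorth_degens}. First I would record the relevant facts about $R^{*(n)}_\lambda(\vec x;a,b;q,t;p)$: it is a hyperoctahedrally symmetric Laurent polynomial in $\vec x$ whose degree in each variable is controlled by $|\lambda|$ (this is the triangularity property, which together with the vanishing property characterizes the function up to normalization, as recalled in the excerpt), and its coefficients are rational in the parameters times some product of elliptic Gamma functions. The point of the hypothesis $0<\ord(b)\le 1/2$ with $a,q,t,\vec x$ of order $0$ is exactly the same as in the discussion preceding the Definition: under these order conditions the only source of negative powers of $p$ would be the normalization, and one checks that the chosen normalization (the value $\Delta^0_\blambda(t^{n-1}a/b\mid t^{n-1}av,a/v;t;p,q)$ at the principal specialization) is a finite product of $\theta$-symbols, hence a Laurent polynomial in $p^{\ord(\cdot)}$ with no negative powers once we are in the stated range. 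So the first step is to reduce ``coefficient of $p^\alpha$ vanishes for $\alpha<0$'' to this normalization computation plus the fact (from \cite{biorth_degens}, quoted above) that the interpolation function itself has order $\ge 0$ under these hypotheses.

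For the degree bound, the cleanest route is to use the expansion of $R^{*(n)}_\lambda$ in a basis adapted to extracting $p$-degree. Concretely, I would expand $R^{*(n)}_\lambda(\vec x;a,b;q,t;p)$ in terms of $R^{*(n)}_\mu(\vec x;a',b';q,t;p)$ for a second choice of parameters via the connection coefficient formula of Lemma~\cite[Cor. 4.14]{bctheta} quoted above (or its degeneration). Each basis element $R^{*(n)}_\mu$ is a Laurent polynomial of degree $|\mu|$ in each variable, and the connection coefficient is a $\Delta$-symbol times a value of an interpolation function; both of these are products over boxes of $\mu$ of $\theta$-symbols, so their $p$-order is a nonnegative integer linear combination of the parts of $\mu$, and in fact (as computed right before the Definition) the relevant $\Delta$-symbol has order exactly $\ord(c)|\mu|$ for a suitable auxiliary parameter $c$. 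Matching: the coefficient of $p^\alpha$ can only receive contributions from $\mu$ with $\ord(\text{coeff})\le\alpha$, i.e. from $|\mu|\le \alpha/\ord(b)$ (after choosing the second parameter set so that the relevant order is $\ord(b)$), plus the ``base'' contribution of size $|\lambda|$ coming from the fact that $\mu$ ranges up to $\lambda$ and the leading term is $\mu=\lambda$ itself. This gives total degree at most $\frac{\alpha}{\ord(b)}+|\lambda|$ in each variable, which is the claimed bound. That the coefficients of the resulting Laurent polynomial are rational in the remaining parameters follows because each ingredient (the $\Delta$-symbols, the $\theta$-symbols, the normalization) is, after dividing by its leading power of $p$, a rational function of the parameters, and a formally convergent sum of such.

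The main obstacle I anticipate is making the degree bookkeeping uniform and tight: one has to be careful that the connection coefficient expansion one uses actually has the property that the $p$-order of the $\mu$-term grows like $\ord(b)\,|\mu|$ with the \emph{correct} constant, so that the bound comes out as $\frac{\alpha}{\ord(b)}+|\lambda|$ rather than something weaker. This is precisely the kind of order computation carried out in the paragraph before the Definition (and relies on \cite{biorth_degens} for the interpolation function factors), so the real work is to run that computation in this slightly different parametrization rather than to invent anything new; I would expect to set $c$ (or its analogue) so that $\ord(c)=\ord(b)$ and then quote the order $\ord(c)|\mu|$ for the $\Delta$-symbol and order $0$ for the two interpolation-function factors. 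A secondary subtlety is the boundary case $\ord(b)=1/2$, where the factor $\Gampq(t^{i-n}c^2)^{-1}$-type terms (here the relevant $\Gampq(b^2 x_i^{\pm2})$ or similar) become marginal; one checks that at $\ord(b)=1/2$ these still contribute nonnegatively, which is exactly why the hypothesis is the closed inequality $\ord(b)\le 1/2$ rather than the open one. Everything else — symmetry of the Laurent polynomial, rationality of coefficients — is formal once the degree and vanishing bounds are in hand.
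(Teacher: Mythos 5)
Your handling of the vanishing for $\alpha<0$ and of the hyperoctahedral symmetry matches the paper's proof (both are quoted, respectively, from the valuation computation of \cite{biorth_degens} and from the symmetry of the interpolation functions), but your degree-bound step has a genuine gap. You expand $R^{*(n)}_\lambda(\vec{x};a,b;q,t;p)$ by connection coefficients \cite[Cor.~4.14]{bctheta} and then assert that each basis element $R^{*(n)}_\mu(\vec{x};a',b';q,t;p)$ ``is a Laurent polynomial of degree $|\mu|$ in each variable''. That is false for the elliptic interpolation functions: for $p\ne 0$ they are not Laurent polynomials in $\vec{x}$ at all --- only their Puiseux coefficients are, which is precisely the statement being proved --- and if they were Laurent polynomials of degree $|\mu|\le|\lambda|$, your expansion would give a bound of $|\lambda|$ uniformly in $\alpha$, contradicting the fact that the claimed bound must grow with $\alpha$. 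A connection-coefficient expansion merely trades one interpolation function for others of the same kind, so by itself it cannot produce the degree bound; the $\alpha$-dependence has to come from expanding some \emph{explicit} $\vec{x}$-dependence as a series in $p$.

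The paper closes this step in one of two ways: by induction on $n$ using the branching rule as in \cite{biorth_degens}, or by the expansion formula of \cite[Thm.~2.5]{littlewood}, in which each term's dependence on $\vec{x}$ is an explicit product over the variables and their reciprocals (compare the finite sum in Definition \ref{def:lifted_interp}); for such products of theta-function factors the coefficient of $p^\alpha$ is visibly a hyperoctahedrally symmetric Laurent polynomial of degree at most $\frac{\alpha}{\ord(b)}+|\lambda|$ with rational coefficients. Your plan can be repaired along these lines by taking the second parameter set in the Cauchy case $t^n a'b'=pq$, where the basis elements have a product formula (the functions $F_\mu$ of Section \ref{sec:sfkern}); but then the degree growth comes from the $p$-expansion of that ratio of theta functions, not from a fixed polynomial degree $|\mu|$, and your bookkeeping ``only $\mu$ with $|\mu|\le\alpha/\ord(b)$ contribute'' does not reflect how the bound actually arises. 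Note also the Remark following the Lemma: in such expansions the individual terms have order $-|\lambda|\ord(b)$ and the negative-order contributions cancel, a cancellation your leading-term argument does not address.
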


\begin{proof}
 That the coefficient of $p^\alpha$ vanishes for $\alpha<0$ follows by the
 valuation calculation of~\cite{biorth_degens}. That the coefficients are
 hyperoctahedrally symmetric follows from the corresponding symmetry of
 the interpolation functions. Finally, that they are polynomials of the
 appropriate degree follows either by induction using the branching rule
 as in \cite{biorth_degens} or using the expansion formula of
 \cite[Theorem~2.5]{littlewood}. The latter expresses the interpolation
 function as a finite sum in which the dependence of each term on
 $\vec{x}$ is as a product over the variables and their reciprocals.
 (Compare Definition~\ref{def:lifted_interp} below.)
\end{proof}

\begin{rem} One should note that the expression from \cite[Theorem~2.5]{littlewood} involves a significant amount of cancellation; each individual term has order $-|\lambda|\ord(b)$, but all terms of negative order cancel.
\end{rem}

Plugging this into the definition of the formal kernel gives the following result.

\begin{cor} For $t_0$, $u_0$, $\vec{x}$, $\vec{y}$ of order $0$ and $c$ with $0<\ord(c)\le 1/2$, the coefficient of $p^\alpha$ in
\begin{gather*}
\prod_{1\le i\le n} \Gampq\big(t^{i-n}c^2,t^i\big) K^{(n)}_c(\vec{x};\vec{y};q,t;p)
\end{gather*}
is a hyperoctahedrally symmetric Laurent polynomial in each of $\vec{x}$ and $\vec{y}$ of degree at most $\frac{\alpha}{\ord(c)}$, with coefficients
rational functions in the remaining parameters.
\end{cor}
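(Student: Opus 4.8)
The plan is to extract the coefficient of $p^{\alpha}$ directly from the defining sum, tracking simultaneously the $p$-order and the degree in $\vec{x}$ (equivalently, by the evident symmetry, in $\vec{y}$) contributed by each factor.

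Multiplying the definition by $\prod_{1\le i\le n}\Gampq(t^{i-n}c^2,t^i)$ cancels the two corresponding factors in the denominator of the prefactor, leaving a product $P(\vec x)\,P'(\vec y)\,Q$ times the sum over $\mu$, where $P(\vec x)=\prod_i\Gampq(cu_0x_i^{\pm1},(c/t^{n-1}u_0)x_i^{\pm1})$, $P'(\vec y)$ is the analogous product in $\vec y$, and $Q=\prod_i\Gampq(ct^{n-i}t_0u_0,ct^{1-i}u_0/t_0,ct^{i+1-2n}/t_0u_0,ct^{i-n}t_0/u_0)^{-1}$ involves neither $\vec x$ nor $\vec y$; every gamma argument here has order $\ord(c)\in(0,1/2]$. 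The first ingredient is the expansion $\log\Gampq(z)=\sum_{m\ge1}\bigl(z^m-p^mq^mz^{-m}\bigr)/\bigl(m(1-p^m)(1-q^m)\bigr)$: for $z=p^{\beta}z_0$ with $\beta\in(0,1)$ and $z_0$ of order $0$, this shows $\Gampq(z)$ is a formal power series in $p$ whose coefficient of $p^a$ is $0$ for $a<0$, is $1$ for $a=0$, and for $a>0$ is a Laurent polynomial in $z_0$ of degree $\le a/\beta$ in positive powers and $\le a/(1-\beta)$ in negative powers, with coefficients rational in $q$. Since $\ord(c)\le 1/2$ gives $1/\ord(c)\ge 1/(1-\ord(c))$, the two degree bounds collapse to $a/\ord(c)$, and it follows that the coefficient of $p^a$ in $P(\vec x)$ is a hyperoctahedrally symmetric Laurent polynomial in $\vec x$ of total degree $\le a/\ord(c)$ with rational coefficients, and likewise for $P'(\vec y)$; meanwhile $Q$ is a formal power series in $p$ with unit constant term and rational coefficients.

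Next I would feed in the two prior results. The preceding Lemma on the coefficients of $R^{*(n)}_\lambda$ applies to each interpolation factor with $b=c/t^{n-1}u_0$ (resp.\ $c/t^{n-1}t_0$), of order $\ord(c)\in(0,1/2]$: the coefficient of $p^{\gamma}$ vanishes for $\gamma<0$ and is otherwise a hyperoctahedrally symmetric Laurent polynomial in the relevant variables of degree $\le\gamma/\ord(c)+|\mu|$, with rational coefficients. And by the valuation computation carried out just before the definition, the $\Delta$-symbol $\Delta_\mu(\cdots)$ appearing there has order $\ord(c)|\mu|$, so its coefficient of $p^{\delta}$ vanishes for $\delta<\ord(c)|\mu|$ and is otherwise rational in the remaining parameters, while being independent of $\vec x$ and $\vec y$.

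Finally I would assemble these estimates. Each summand $\Delta_\mu R^{*(n)}_\mu(\vec x;\cdots)R^{*(n)}_\mu(\vec y;\cdots)$ has $p$-order at least $\ord(c)|\mu|$, which tends to infinity; hence for each fixed $\alpha$ only partitions with $|\mu|\le\alpha/\ord(c)$ contribute to the coefficient of $p^{\alpha}$, and since $P,P',Q$ are formal power series the whole product is a well-defined formal power series whose $p^{\alpha}$-coefficient is a finite sum of products of the pieces above --- in particular a Laurent polynomial in $\vec x$ and $\vec y$ with rational coefficients, hyperoctahedrally symmetric in each set of variables. For the degree bound, decompose $\alpha$ into the nonnegative $p$-orders drawn from $P$ (say $a$), from $P'$, from $Q$, from $\Delta_\mu$ (say $\delta$), from the $\vec x$-interpolation factor (say $\gamma$), and from the $\vec y$-interpolation factor; the corresponding contribution has $\vec x$-degree at most $a/\ord(c)+\bigl(\gamma/\ord(c)+|\mu|\bigr)\le (a+\gamma+\delta)/\ord(c)\le\alpha/\ord(c)$, using $|\mu|\le\delta/\ord(c)$, and symmetrically for $\vec y$. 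The point to watch is exactly this last step: the $|\mu|$ excess in the degree of each interpolation factor is precisely absorbed by the order-$\ord(c)|\mu|$ vanishing of $\Delta_\mu$, and the gamma-factor degree estimates rely on $\ord(c)\le 1/2$; once these are in hand, the rest is routine bookkeeping.
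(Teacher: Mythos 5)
Your proposal is correct and follows essentially the same route as the paper: the paper simply asserts the corollary as the result of plugging the preceding lemma (on coefficients of $R^{*(n)}_\lambda$) into the defining sum of the formal kernel, and your argument is precisely that substitution with the bookkeeping made explicit — in particular the key cancellation that the $|\mu|$ degree excess of the interpolation factors is absorbed by the $\ord(c)|\mu|$ valuation of $\Delta_\mu$, and the use of $\ord(c)\le 1/2$ to collapse the two gamma-factor degree bounds to $a/\ord(c)$.
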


The significance of this result is that it allows us to extend identities
of $K^{(n)}$ from the case in which $\vec{x}$ and $\vec{y}$ have valuation~$0$.

\begin{thm}
 The formal interpolation kernel is well-defined, symmetrical between
 $\vec{x}$ and $\vec{y}$, and has the specialization
\begin{gather*}
K^{(n)}_c\big(\vec{x};\dots,t^{n-i}q^{\mu_i}a,\dots;q,t;p\big)\\
\qquad{} =
\prod_{1\le i\le n}
 \frac{\Gampq\big(ac x_i^{\pm 1},c/t^{n-1}a x_i^{\pm 1}\big)}
 {\Gampq\big(t^{1-i}c^2,t^i\big)}
R^{*(n)}_\mu\big(\vec{x};ac,c/t^{n-1}a;q,t;p\big),
\end{gather*}
for any $a$ of order $0$.
\end{thm}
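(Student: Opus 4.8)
The theorem asserts three things about the formal kernel: well-definedness (formal convergence), $\vec{x}\leftrightarrow\vec{y}$ symmetry, and the specialization formula. I would treat these in turn, with the first two being essentially bookkeeping on top of the preceding results and the third being the substantive computation.

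\textbf{Well-definedness and symmetry.} First I would verify that the defining sum converges formally under the stated hypothesis $\max_i|\ord(x_i)|+\max_i|\ord(y_i)|<\ord(c)\le 1/2$. By the valuation analysis already recorded in the text (building on \cite{biorth_degens} and the computation of the $\Delta$-symbol's order), the $\mu$-term of the sum has order $\ord(c)|\mu| - \max_i|\ord(x_i)|\,|\mu| - \max_i|\ord(y_i)|\,|\mu|$, which under the hypothesis is a positive multiple of $|\mu|$; hence only finitely many partitions $\mu$ contribute to each coefficient of $p^\alpha$, which is the desired formal convergence. The Corollary preceding the theorem then guarantees those coefficients are honest (hyperoctahedrally symmetric) Laurent polynomials with rational-function coefficients. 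Symmetry in $\vec{x}\leftrightarrow\vec{y}$ is visible termwise once one checks that the prefactor is symmetric under the simultaneous swap $\vec{x}\leftrightarrow\vec{y}$, $t_0\leftrightarrow u_0$: indeed $\Delta_\mu(t^{2n-2}t_0u_0/c|\cdots)$ is symmetric in $t_0,u_0$, the two interpolation factors $R^*_\mu(\vec{x};t_0,c/t^{n-1}u_0)$ and $R^*_\mu(\vec{y};u_0,c/t^{n-1}t_0)$ get interchanged, and the $\Gamma$-prefactor is manifestly invariant under this combined swap. Since the kernel is independent of the auxiliary $t_0,u_0$ (the sum reparametrizes), swapping only $\vec{x}$ and $\vec{y}$ gives the same function.

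\textbf{The specialization.} This is the heart of the matter. I would substitute $y_i = t^{n-i}q^{\mu_i}a$ into the definition and recognize the sum as an instance of Lemma \cite[Cor.~4.14]{bctheta} quoted above. The key input is the vanishing property $R^{*(n)}_\nu(\dots,t^{n-i}q^{\mu_i}a,\dots;u_0,c/t^{n-1}t_0;q,t;p)=0$ unless $\nu\subset\mu$ (the triangularity/vanishing of interpolation functions, \cite[Cor.~8.12]{xforms}), so the a priori infinite sum over $\nu$ collapses to $\nu\subset\mu$, and the quoted Corollary evaluates exactly this finite sum as a single interpolation function $R^{*(n)}_\mu(\vec{x};\cdot,\cdot)$ times an explicit $\Delta^0$-prefactor. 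After matching parameters — here one sets the ``$\lambda$'' of the Corollary to our $\mu$, the ``$t_0$'' of the Corollary to our $t_0$, the ``$u_0$'' to our $u_0$, and ``$c$'' to our $c$ — the sum-side of our kernel becomes $\Delta^0_\mu(t^{2n-2}u_0^2|\cdots;q,t;p)^{-1}$ times $R^{*(n)}_\mu(\vec{x};cu_0,c/t^{n-1}u_0;q,t;p)$ (up to the evaluation of the specialized second interpolation factor at the value needed to normalize; this is where the normalization formula $\cR^{*(n)}_\blambda(\dots,t^{n-i}v,\dots)=\Delta^0_\blambda(\cdots)$ enters). The remaining task is to combine this with the $\Gamma$-prefactor and the $y_i=t^{n-i}q^{\mu_i}a$ specialization of the $y$-dependent $\Gamma$-factors, and check that everything telescopes to the claimed right-hand side $\prod_i \Gampq(acx_i^{\pm1},c/t^{n-1}a\,x_i^{\pm1})/\Gampq(t^{1-i}c^2,t^i)\cdot R^{*(n)}_\mu(\vec{x};ac,c/t^{n-1}a;q,t;p)$. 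Note the output no longer depends on $t_0$ or $u_0$, consistent with the kernel's independence of those auxiliaries — indeed one natural way to organize the bookkeeping is to first set $t_0=a$ (or $u_0=a$), which kills several prefactor terms and makes the specialized interpolation factor into a pure normalization constant, reducing the $\Gamma$-matching to the identity $\theta(x;p,q)_{l,m} = (-x)^{lm}p^{ml(l-1)/2}q^{lm(m-1)/2}\Gampq(p^lq^m x)/\Gampq(x)$ recorded in the notation section together with the functional equations of $\Gampq$.

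\textbf{Main obstacle.} I expect the genuine difficulty to be purely organizational: tracking the $n$-fold products of elliptic Gamma functions through the parameter substitution and verifying the telescoping of the $t^{n-i}$-shifted arguments. There is no conceptual gap — the vanishing property and the quoted Corollary do all the real work — but the ``(omitted) simplifications'' alluded to in the text, involving the special values of $\Gampq$ and repeated use of the $q$- and $p$-shift functional equations, are the step most likely to hide a sign or a power-of-$q$ discrepancy, so that is where I would be most careful. A secondary (minor) point is justifying that the specialization $y_i = t^{n-i}q^{\mu_i}a$ is legitimate within the formal framework: these points have $\ord(y_i)=0$, so they lie in the allowed region provided $\max_i|\ord(x_i)|<\ord(c)$, and since both sides are rational in the coefficients of their Puiseux expansions (by the Corollary and Lemma), it suffices to verify the identity there; extending to general $\vec{x}$ then follows by the Zariski-density principle emphasized in the introduction, using that specializations of this type are dense.
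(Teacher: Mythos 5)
There is a genuine gap, and it sits exactly at the claim the theorem is really about: ``well-defined'' here means that the sum is independent of the auxiliary parameters $t_0$ and $u_0$, not merely that it converges formally. You establish convergence, but independence of the auxiliaries is only asserted in passing (``the sum reparametrizes''), and both your symmetry argument and your specialization argument consume that unproven fact. Worse, the mechanism you propose for the specialization does not work for general $a$: the vanishing property \cite[Cor.~8.12]{xforms} makes $R^{*(n)}_\nu(\dots,q^{\mu_i}t^{n-i}a,\dots;u_0,c/t^{n-1}t_0;q,t;p)$ vanish for $\nu\not\subset\mu$ only when the base of the geometric specialization coincides with the first parameter of the interpolation function, i.e.\ only when $a=u_0$; for generic $a\ne u_0$ the sum does not collapse and \cite[Cor.~4.14]{bctheta} is not applicable. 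Your fallback of ``first set $u_0=a$'' is the right move, but to conclude anything about the kernel as defined with arbitrary auxiliaries you again need independence of $u_0$, which is circular as written.

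The paper closes this loop in the opposite order. For $a=u_0$ the specialization holds essentially by construction: the kernel's sum is literally the sum of the quoted Lemma (the vanishing with base $u_0$ collapsing it to $\mu\subset\lambda$), and the resulting value does not involve $t_0$. Since the points $y_i=q^{\mu_i}t^{n-i}u_0$ are Zariski dense among $\vec{y}$ of order $0$ and the Puiseux coefficients are rational (indeed polynomial, by the preceding Corollary) in $\vec{y}$, this forces independence of $t_0$ for all $\vec{y}$ of order $0$, and the Corollary's degree bound extends this to the full range of valuations. The termwise symmetry of the sum under the simultaneous swap $(\vec{x},t_0)\leftrightarrow(\vec{y},u_0)$ --- which you verify correctly --- then converts independence of $t_0$ into independence of $u_0$, establishing well-definedness; the $\vec{x}\leftrightarrow\vec{y}$ symmetry and the specialization for arbitrary $a$ of order $0$ (choose $u_0=a$) are then immediate. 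So the ingredients you list are the right ones, but they must be assembled in this order; as proposed, the central independence claim is never proved and the vanishing step is invoked in a case where it fails.
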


\begin{proof} By construction, the given specialization holds as long as we take
 $a=u_0$. This, in particular, shows that the sum is independent of $t_0$
 as long as $\vec{y}$ has order $0$. (Indeed, apart from a simple
 prefactor, the coefficients of the Puiseux series are rational functions
 of the parameters, and for a Zariski dense set of possible $\vec{y}$ the
 coefficients are independent of~$t_0$.) This independence of $t_0$ then
 extends to more general valuations of $\vec{y}$ using the corollary: if
 we take the expansion in powers of $p^\alpha$ corresponding to the
 valuation 0 case, the corollary tells us that the result continues to
 converge for the broader range of valuations of $\vec{y}$, and thus the
 identity between the two expansions extends as well.

 Since the sum is unchanged if we swap $t_0$ and $u_0$ as well as
 $\vec{x}$ and $\vec{y}$, we conclude that the sum is also independent of
 $u_0$, so well-defined. The remaining claims are then immediate.
\end{proof}

In particular, if we specialize one set of variables to a geometric
progression, we have an explicit evaluation. This only works directly in
the case that the base of the progression has order $0$, but easily extends
to more general valuations.

\begin{cor}\label{cor:formal_geom_special}
If $|\ord(a)|+\max_i|\ord(x_i)|<\ord(c)\le 1/2$, then
\begin{gather*}
K^{(n)}_c\big(\vec{x};\dots,t^{n-i}a,\dots;q,t;p\big)
=\prod_{1\le i\le n} \frac{\Gampq\big(ac x_i^{\pm 1},c/t^{n-1}a x_i^{\pm 1}\big)} {\Gampq\big(t^{1-i}c^2,t^i\big)}.
\end{gather*}
In particular,
\begin{gather*}
K^{(1)}_c(x;y;q,t;p)=\frac{\Gampq\big(c x^{\pm 1} y^{\pm 1}\big)} {\Gampq\big(c^2,t\big)}.
\end{gather*}
\end{cor}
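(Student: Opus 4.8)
The plan is to derive the evaluation at a geometric progression by specializing the theorem immediately above to the trivial partition $\mu=\emptyset$ (i.e.\ $\mu_i=0$ for all $i$), and then to extend to general valuations of the base $a$ by the usual Zariski-density argument using the preceding corollary. First I would take $\mu=0$ in the theorem, so that the specialization point $(\dots,t^{n-i}q^{0}a,\dots)$ becomes the pure geometric progression $(\dots,t^{n-i}a,\dots)$, and the right-hand side acquires the factor $R^{*(n)}_0(\vec{x};ac,c/t^{n-1}a;q,t;p)$. The key input is that the interpolation function indexed by the empty partition is identically $1$: $R^{*(n)}_0(\vec{x};a,b;q,t;p)=1$, which is immediate from the normalization of the interpolation functions recalled in the introduction (the value at $(\dots,t^{n-i}v,\dots)$ is $\Delta^0_0(\cdots)=1$, and an empty-partition interpolation function is constant). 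This collapses the theorem's formula to exactly the claimed product.

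The one subtlety is that the theorem as stated requires $a$ of order $0$ and $\vec{x}$ of order $0$, whereas the corollary allows $|\ord(a)|+\max_i|\ord(x_i)|<\ord(c)\le 1/2$. To bridge this, I would invoke the corollary preceding the theorem: the coefficient of $p^\alpha$ in $\prod_i\Gampq(t^{i-n}c^2,t^i)K^{(n)}_c(\vec{x};\vec{y};q,t;p)$ is a hyperoctahedrally symmetric Laurent polynomial in $\vec{x}$ and in $\vec{y}$ of bounded degree with rational-function coefficients. Substituting $\vec{y}=(\dots,t^{n-i}a,\dots)$ turns the left side into a rational function of $a$ and $\vec{x}$ (with controlled poles), and similarly the right-hand product $\prod_i \Gampq(acx_i^{\pm1},c/t^{n-1}ax_i^{\pm1})$, divided by its constant term, has Puiseux coefficients that are rational in $a$ and $\vec{x}$. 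Since the two rational functions agree on the Zariski-dense set where $\ord(a)=0$ and $\ord(x_i)=0$ (by the $\mu=0$ case just established), they agree wherever both are defined, in particular on the domain $|\ord(a)|+\max_i|\ord(x_i)|<\ord(c)$ where the left side is a well-defined formal Puiseux series. The final assertion, $K^{(1)}_c(x;y;q,t;p)=\Gampq(cx^{\pm1}y^{\pm1})/\Gampq(c^2,t)$, is just the case $n=1$: the progression $(\dots,t^{n-i}a,\dots)$ degenerates to the single variable $y=a$, the prefactor denominators $\Gampq(t^{1-i}c^2,t^i)$ become $\Gampq(c^2,t)$, and the numerator $\Gampq(acx^{\pm1},c/a\,x^{\pm1})$ with $a=y$ is precisely $\Gampq(cx^{\pm1}y^{\pm1})$.

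I do not expect a serious obstacle here; the content is entirely in the theorem and corollary already proved, and the only thing to be careful about is checking that the degree bounds in the preceding corollary genuinely make the specialized left-hand side a rational (not merely meromorphic) function of $a$ and $\vec{x}$ coefficient-by-coefficient in $p$, so that the Zariski-density extension is legitimate rather than circular. That check is routine given the stated polynomiality in $\vec{y}$.
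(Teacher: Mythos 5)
Your proposal is correct and follows essentially the same route as the paper: the corollary is obtained by setting $\mu=0$ in the preceding theorem (using $R^{*(n)}_0=1$), and the extension from $\ord(a)=0$ to general valuations is exactly the intended use of the polynomiality-in-$\vec{y}$ corollary, which the paper invokes with the remark that the identity ``easily extends to more general valuations.''
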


\begin{rem} This gives an initial indication of why we call this a ``kernel'':
 $K^{(1)}_c$ is essentially the kernel of a univariate integral operator
 considered in~\cite{SpiridonovVP:2004}, where it is shown that the
 operators with kernels $K^{(1)}_c$ and $K^{(1)}_{c^{-1}}$ are inverse in
 a suitable sense. Note that the present kernel is not the only natural
 multivariate extension of the univariate kernel; see~\cite{SpiridonovVP/WarnaarSO:2006}, where several such extensions (with
 much simpler explicit formulas!) are given.
\end{rem}

We also have some special cases with explicit formulas corresponding to similar special cases of the interpolation functions.

\begin{prop}\label{prop:special_t}
The kernel has the special cases
\begin{gather*}
K^{(n)}_{(pq/t)^{1/2}}(\vec{x};\vec{y};q,t;p)=
\prod_{1\le i,j\le n} \Gampq\big((pq/t)^{1/2}x_i^{\pm 1} y_j^{\pm 1}\big),\\
K^{(n)}_c(\vec{x};\vec{y};q,q;p)=\prod_{1\le i<j\le n}\frac{c^{-1}x_iy_i}
 {\theta_p(x_ix_j,x_i/x_j,y_iy_j/y_i/y_j)}\det_{1\le i,j\le n} K^{(1)}_c(x_i;y_j;q;p,q),
\end{gather*}
and the limiting case
\begin{gather*}
\lim_{t\to 1}\prod_{1\le i\le n} \Gampq\big(t^{1-i}c^2,t^i\big)K^{(n)}_c(\vec{x};\vec{y};q,t;p)=
\frac{1}{n!}\sum_{\pi\in X_n}\prod_{1\le i\le n} \Gampq\big(c x_i^{\pm 1}y_{\pi(i)}^{\pm 1}\big).
\end{gather*}
\end{prop}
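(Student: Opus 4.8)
The plan is to prove the three special cases in turn, in each case reducing to a statement about interpolation functions or to the univariate case, and exploiting the Corollary above to extend from valuation-$0$ variables to the general case. For the first identity, $c = (pq/t)^{1/2}$, the idea is to recall that at this value the interpolation functions $R^{*(n)}_\mu$ degenerate dramatically: the relevant factor is such that $R^{*(n)}_\mu(\vec{x};t_0,c/t^{n-1}u_0;q,t;p)$ with $c^2 = pq/t$ forces (via the balancing/reflection relation of $\Gampq$ noted in the excerpt — ``if two parameters multiply to $pq$, the two parameters cancel out'') only the term $\mu = 0$ to survive in the defining sum, so the kernel collapses to its prefactor. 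I would then simplify the prefactor: the numerator $\prod_i \Gampq(cu_0 x_i^{\pm1}, (c/t^{n-1}u_0)x_i^{\pm1}, ct_0 y_i^{\pm1}, (c/t^{n-1}t_0)y_i^{\pm1})$ and the denominator products must reorganize, using the quadratic functional equations and the special values recorded in the Notation section, into $\prod_{1\le i,j\le n}\Gampq((pq/t)^{1/2} x_i^{\pm1} y_j^{\pm1})$. In fact the cleanest route is to first establish this for $\vec{x},\vec{y}$ of order $0$ (where the sum literally terminates at $\mu=0$), observe both sides are $\prod\Gampq(t^{i-n}c^2,t^i)^{-1}$ times a ratio of rational Puiseux coefficients, and then invoke the Corollary to conclude on a Zariski-dense set, hence everywhere.

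For the $q=t$ case, the approach is to use the corresponding determinantal/factored form of the interpolation functions when $q=t$: the $R^{*(n)}_\mu(\cdot;a,b;q,q;p)$ reduce (after the standard Vandermonde-type prefactor) to products or small determinants of univariate objects, so the sum over $\mu$ in the definition becomes a Cauchy-type determinant evaluation. Concretely I would substitute the $q=t$ specialization of $R^{*(n)}_\mu$ into the defining sum, recognize the resulting double sum over $\mu$ as expanding a determinant $\det_{i,j} K^{(1)}_c(x_i;y_j;q;p,q)$ by multilinearity, and match the Vandermonde prefactors $\prod_{i<j} x_iy_i/\theta_p(x_ix_j,x_i/x_j,y_iy_j/y_i/y_j)$ and the power $c^{-n(n-1)/2}$ by a direct (omitted) bookkeeping of the $\Gampq$- and $\theta_p$-factors, again first for order-$0$ variables and then extending via the Corollary. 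Here one must be a little careful that the $q=t$ specialization is legitimate at the level of formal Puiseux series — the Corollary guarantees the coefficients are rational in $t$, so specializing $t\to q$ is harmless away from the (codimension-one) locus where denominators vanish.

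For the limit $t\to 1$, the strategy is: the normalized kernel $\prod_i\Gampq(t^{1-i}c^2,t^i)K^{(n)}_c$ has, by the Corollary, Puiseux coefficients that are rational in $t$, so the limit $t\to1$ may be taken coefficient-by-coefficient. On the other side, $\Delta_\mu(\cdots;q,t;p)$ contains the factor $\Gampq(t)^n$-type normalization and, more importantly, the $\cC^-_\mu(pq,t;t;p,q)$ denominator, which blows up as $t\to1$ unless $\mu=0$; thus only the $\mu=0$ term survives in the limit, but the surviving prefactor $\prod_i \Gampq(cu_0 x_i^{\pm1},(c/t^{n-1}u_0)x_i^{\pm1},ct_0 y_i^{\pm1},(c/t^{n-1}t_0)y_i^{\pm1})$ degenerates to $\prod_i\Gampq(c u_0 x_i^{\pm1},(c/u_0)x_i^{\pm1},ct_0 y_i^{\pm1},(c/t_0)y_i^{\pm1})$, which is $n!$-fold symmetric in neither set but becomes so after symmetrization — and since the kernel is independent of $t_0,u_0$ one averages over the natural $X_n$-action (equivalently, one recognizes that the $t\to1$ limit of $\prod_i\Gampq(\cdots t^{n-i}\cdots)$-type factors forces the appearance of $\frac1{n!}\sum_{\pi\in X_n}$). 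I would make this precise by first proving it with $\vec{y}$ specialized to a geometric progression $t^{n-i}a$ — where Corollary \ref{cor:formal_geom_special} gives both sides explicitly and the limit is a routine $\theta$-function computation — and then invoking the density of such specializations together with the rational dependence on the parameters to extend to general $\vec{y}$.

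The main obstacle will be the bookkeeping in the first case: showing that the collapsed prefactor at $c=(pq/t)^{1/2}$ really does reorganize into the clean double product $\prod_{i,j}\Gampq((pq/t)^{1/2}x_i^{\pm1}y_j^{\pm1})$ requires carefully pairing up the many $\Gampq$-factors in the definition and applying the reflection relation $\Gampq(pq/z)=\Gampq(z)^{-1}$ together with the quadratic equation $\Gamppqq(z^2)=\Gampq(z,-z)$ — routine in principle but the kind of computation where signs, powers of $t$, and the distinction between $t^{n-1}u_0$ and $t^{1-i}u_0$ in the denominator are easy to mishandle. Most of this is exactly the sort of simplification the paper elsewhere declares ``(omitted)'', so in the write-up I would state the reductions (only $\mu=0$ survives; coefficients are rational in the relevant parameter; extend by the Corollary) and leave the $\Gampq$-arithmetic to the reader.
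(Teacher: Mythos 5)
Your proposal has a genuine gap in its central mechanism. For $c=(pq/t)^{1/2}$ you claim the defining sum collapses to the $\mu=0$ term, so that the kernel reduces to its prefactor; this is false. With $c^2=pq/t$ the coefficient becomes $\Delta_\mu(t^{2n-2}t_0u_0/c|t^n,t^n;q,t;p)$, which vanishes only when $\ell(\mu)>n$, and the two interpolation functions in the summand have parameters whose product is $ct_0/t^{n-1}u_0$, not $pq/t^n$, so they do not degenerate either. One can also see the collapse is impossible on structural grounds: the prefactor is a product of univariate factors in the $x_i$ and $y_i$ separately, whereas the claimed right-hand side $\prod_{i,j}\Gampq((pq/t)^{1/2}x_i^{\pm1}y_j^{\pm1})$ couples every $x_i$ to every $y_j$; indeed the paper points out immediately after the proposition that this case \emph{is} a nonterminating form of the elliptic Cauchy identity, i.e.\ the whole sum contributes. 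The same objection applies to your treatment of the $t\to1$ limit: the limit $\frac{1}{n!}\sum_{\pi}\prod_i\Gampq(cx_i^{\pm1}y_{\pi(i)}^{\pm1})$ couples the two sets of variables, so it cannot come from the $\mu=0$ term alone (and the factor you cite, $\cC^-_\mu(pq,t;t;p,q)$, sits in the denominator of $\Delta_\mu$, so its vanishing as $t\to1$ makes terms blow up rather than disappear).

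Your fallback for the third case—verify on geometric progressions $y_i=t^{n-i}a$ and "invoke density"—also fails, because that family has only one continuous parameter and is not Zariski dense in $(\C^*)^n$. The paper's proof uses the genuinely dense family $y_i=t^{n-i}q^{\lambda_i}a$ (all partitions $\lambda$, one continuous parameter $a$), for which the preceding theorem turns the kernel into an explicit prefactor times $R^{*(n)}_\lambda(\vec{x};ac,c/t^{n-1}a;q,t;p)$; the three identities then reduce to the known Cauchy ($ab=pq/t^n$), Schur ($t=q$) and monomial ($t\to1$) special cases of the interpolation functions. Your sketch for the $t=q$ case (Schur-type determinantal expansion combined via a Cauchy--Binet argument into $\det_{i,j}K^{(1)}_c(x_i;y_j)$) is at least in the right spirit and could likely be completed, but as written it leaves all the bookkeeping undone, and the other two cases need to be redone along the lines above.
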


\begin{proof}In each case, it suffices to verify that the identity holds when $y_i=t^{n-i}q^{\lambda_i}a$ for any partition $\lambda$.
These correspond to the Cauchy, Schur, and monomial cases of the interpolation functions, see~\cite{bctheta}.
\end{proof}

If we rewrite the first special case as an identity for sums, we find that
the result is a~(non\-ter\-mi\-nating) version of the elliptic Cauchy identity
\cite[Theorem~3.6]{littlewood}. (The latter was expressed in terms of certain
plethystic generalizations of the interpolation functions, which (in the
formal case) turn out to be special cases of the symmetric function
analogue, see below.) Na\"\i{}vely, the usual Cauchy identity for
Macdonald polynomials arises by a term-by-term limit, using the fact that
\begin{gather*}
\lim_{p\to 0}\frac{R^{*(n)}_\lambda\big(p^{-1/4}\vec{x};t_0,p^{1/2}u_0;q,t;p\big)}
 {R^{*(n)}_\lambda\big(p^{-1/4}t^{n-1}v,\dots,p^{-1/4}v;t_0,p^{1/2}u_0;q,t;p\big)}
=\frac{P_\lambda(\vec{x};q,t)} {P_\lambda\big(t^{n-1}v,\dots,v;q,t\big)}.
\end{gather*}
If we apply this to $K^{(n)}_c$ termwise, we obtain
\begin{gather*}
\lim_{p\to 0}\prod_{1\le i\le n} \frac{\Gampq\big(t^i\big)}
 {\Gampq\big(t^{i-1}q/c^2\big)}K^{(n)}_{p^{1/2}c}\big(p^{-1/4}\vec{x};p^{-1/4}\vec{y};q,t;p\big) \\
\qquad {} \text{``=''}
\sum_{\mu} c^{|\mu|}\frac{C^0_\mu\big(qt^{n-1}/c^2;q,t\big)C^-_\mu(t;q,t)}
 {C^0_\mu(t^n;q,t) C^-_\mu(q;q,t)} P_\mu(\vec{x};q,t)P_\mu(\vec{y};q,t).
\end{gather*}
The problem, of course, is that the left-hand side is not defined, and the
right-hand side need not converge. Now, the right-hand side {\em does}
converge as a formal series in $\vec{x}$ and/or $\vec{y}$, which is the key
to making the limit rigorous. Indeed, we find that
\begin{gather*}
\lim_{N\to\infty}
\prod_{1\le i\le n} \frac{\Gamma_{p^{4N},q}\big(t^i\big)}
 {\Gamma_{p^{4N},q}\big(t^{i-1}q/c^2\big)}K^{(n)}_{p^{2N}c}\big(p^{-N+1}\vec{x};p^{-N+1}\vec{y};q,t;p^{4N}\big)\\
\qquad {} =\sum_{\mu} c^{|\mu|}
\frac{C^0_\mu\big(qt^{n-1}/c^2;q,t\big)C^-_\mu(t;q,t)}
 {C^0_\mu\big(t^n;q,t\big) C^-_\mu(q;q,t)} P_\mu(p\vec{x};q,t)
P_\mu(p\vec{y};q,t)
\end{gather*}
holds as a limit of formal power series in $p$. In this way, any formula for $K^{(n)}_c$ with $\ord(c)=1/2$ gives rise to a corresponding identity
of Macdonald polynomials.

The name ``kernel'' comes from the fact that $K^{(n)}_c$ forms the kernel of an integral operator having the interpolation functions as (generalized) eigenfunctions. Here we of course define the integral of a formal power series by integrating term-by-term. As long as the~$u_r$ parameters of valuation $0$ are inside the unit circle, the contour can be taken to be a~power of the unit circle; one can then extend to general parameters as in \cite[Section~10]{xforms}. In particular, the integration variables here will always have order~$0$.

\begin{prop}\label{prop:int_eq_formal}
If $u_0$, $u_1$, $u_2$, $u_3$ are parameters of nonnegative order such that \begin{gather*}
t^{n-1}u_0u_1u_2u_3 = pq/c^2,
\end{gather*} then
\begin{gather*}
\prod_{\substack{1\le i\le n\\0\le r<4}} \frac{1} {\Gampq\big(cu_ry_i^{\pm 1}\big)}
\int K^{(n)}_c(\vec{x};\vec{y};q,t;p) R^{*(n)}_{\lambda}(\vec{x};u_0,u_1;q,t;p)
\Delta^{(n)}_S(\vec{x};u_0,u_1,u_2,u_3;t;p,q)\\
=\Delta^0_{\lambda}\big(t^{n-1}u_0/u_1|t^{n-1}u_0u_2,t^{n-1}u_0u_3;q,t;p\big)\!
\bigg(\!\prod_{\substack{1\le i\le n\\ 0\le r<s<4}}\!\!\! \!\! \Gampq\big(t^{n-i}u_ru_s\big)\!\bigg)\!
R^{*(n)}_{\lambda}(\vec{y};cu_0,cu_1;q,t;p).
\end{gather*}
\end{prop}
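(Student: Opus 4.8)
The plan is to reduce the claimed integral identity to the defining specialization of the formal kernel (the Theorem preceding Corollary~\ref{cor:formal_geom_special}), exactly as the paper does for the other kernel identities: verify it on the Zariski dense set where $\vec{y}$ is specialized to a geometric progression $\dots,t^{n-i}q^{\mu_i}a,\dots$, then use the Corollary on the rationality of the Puiseux coefficients in $\vec{y}$ to conclude it in general. First I would substitute $K^{(n)}_c(\vec{x};\dots,t^{n-i}q^{\mu_i}a,\dots;q,t;p)$ using the Theorem's formula, which turns the left-hand side into
\[
\prod_{\substack{1\le i\le n\\0\le r<4}}\frac{1}{\Gampq(cu_r y_i^{\pm 1})}
\prod_{1\le i\le n}\frac{\Gampq(ac x_i^{\pm1},c/t^{n-1}a\,x_i^{\pm1})}{\Gampq(t^{1-i}c^2,t^i)}
\int R^{*(n)}_\mu(\vec{x};ac,c/t^{n-1}a;q,t;p)\,R^{*(n)}_\lambda(\vec{x};u_0,u_1;q,t;p)\,\Delta^{(n)}_S(\vec{x};u_0,u_1,u_2,u_3;t;p,q),
\]
with $\vec{y}$ now the geometric progression, so that the prefactor $\prod \Gampq(cu_r y_i^{\pm1})^{-1}$ becomes an explicit product of elliptic Gamma factors and $R^{*(n)}_\lambda(\vec{y};cu_0,cu_1;q,t;p)$ on the right-hand side becomes an explicit $\Delta^0$-symbol via the normalization value $\cR^{*(n)}_{\blambda}(\dots,t^{n-i}v,\dots;a,b;t;p,q)=\Delta^0_{\blambda}(t^{n-1}a/b|t^{n-1}av,a/v;t;p,q)$.

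The heart of the matter is then the integral $\int R^{*(n)}_\mu(\vec{x};ac,c/t^{n-1}a;q,t;p)\,R^{*(n)}_\lambda(\vec{x};u_0,u_1;q,t;p)\,\Delta^{(n)}_S(\vec{x};u_0,u_1,u_2,u_3;t;p,q)$: this is a known elliptic Selberg integral of a product of two interpolation functions against the order-$1$ elliptic Selberg density (the $\lambda=0$ case being the order-$1$ Selberg evaluation itself). Under the balancing condition $t^{n-1}u_0u_1u_2u_3=pq/c^2$, the four ``interpolation parameters'' $ac$, $c/t^{n-1}a$, $u_2$, $u_3$ satisfy the balancing relation forcing the evaluation to factor, and the relevant identity is the biorthogonality/evaluation result for products of interpolation functions from~\cite{xforms} (of the type used in the integral equations of that paper). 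I would cite that evaluation, extract the $\Delta^0$ and $\Gampq$ factors it produces, and then the whole computation collapses to a finite identity among $\Delta^0$-symbols and products of elliptic Gammas in the parameters $a$, $c$, $u_0,\dots,u_3$, $t$, together with the combinatorial $\Delta^0_\mu$ and $\Delta^0_\lambda$ factors. All of this lives on the Tate curve (formal elliptic curve), so it suffices to check it as an identity of such symbols; this is the ``omitted simplification'' type of step the paper repeatedly invokes.

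The main obstacle I expect is bookkeeping rather than conceptual: matching the precise parameter dictionary between the interpolation-function integral evaluation from~\cite{xforms} (which is stated with its own choice of names and its own balancing condition) and the parameters $ac$, $c/t^{n-1}a$, $u_0,\dots,u_3$ appearing here, and then checking that the resulting product of $\Gampq$'s and $\Delta^0$'s reorganizes into exactly $\Delta^0_{\lambda}(t^{n-1}u_0/u_1|t^{n-1}u_0u_2,t^{n-1}u_0u_3;q,t;p)\bigl(\prod_{i,r<s}\Gampq(t^{n-i}u_ru_s)\bigr)R^{*(n)}_{\lambda}(\vec{y};cu_0,cu_1;q,t;p)$ after undoing the geometric-progression specialization. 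A secondary point requiring care is convergence/contour: one must check that the term-by-term integration defining the integral of the formal power series $K^{(n)}_c$ is legitimate here, i.e.\ that with $u_0,u_1,u_2,u_3$ of nonnegative valuation and integration variables of order $0$ the contour can be taken as a power of the unit circle and the Selberg density's $t$-contour condition is met; this is exactly the setup described just before Proposition~\ref{prop:int_eq_formal}, so it should go through as there, but it is the place where one could slip. Once the geometric-progression case is in hand, the extension to general $\vec{y}$ is immediate from Corollary~\ref{cor:formal_geom_special}'s rationality statement, since both sides are (a fixed prefactor times) rational functions of the parameters and $\vec{y}$, agreeing on a Zariski dense set.
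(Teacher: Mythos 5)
Your proposal follows essentially the same route as the paper: term-by-term integration is justified by the polynomiality of the coefficients, and specializing $\vec{y}$ to the Zariski dense set of geometric progressions $t^{n-i}q^{\mu_i}a$ turns the kernel into an interpolation function, reducing the claim to the known evaluation of an integral of a product of two interpolation functions against the six-parameter elliptic Selberg density (Theorem 9.2 of \cite{xforms}), after which rationality of the Puiseux coefficients in $\vec{y}$ gives the general case. The only quibble is terminological: after absorbing the kernel's Gamma prefactors the density has six parameters, so the $\lambda=\mu=0$ case is the order-$0$ (not order-$1$) elliptic Selberg evaluation.
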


\begin{proof} We first note that the kernel and interpolation function are both formal power series with coefficients polynomial in $x$ and $y$, so the integral is indeed well-defined, and has coefficients polynomial in~$y$. Specializing to the Zariski dense set $y_i=q^{\lambda_i}t^{n-i}u_0$ reduces to Theorem~9.2 of~\cite{xforms}.
\end{proof}

One thing of particular note about this integral equation is that the
right-hand side has roughly the same form as other known equations for the
interpolation functions. For instance, the integral operators considered
in \cite{xforms} satisfy precisely such an equation, except that
$c=\sqrt{t}$ has valuation~0. Though this is of course outside the range
of formal convergence, it still suggests an ``identity'' by comparing
integrands:
\begin{gather}
K^{(n)}_{t^{1/2}}(\vec{x};\vec{y};q,t;p)\text{``=''}\frac{\prod\limits_{1\le i\le n,1\le j\le n} \Gampq\big(t^{1/2}x_j^{\pm 1}y_i^{\pm 1}\big)}
{\Gampq(t)^{2n}\prod\limits_{1\le i<j\le n} \Gampq\big(t x_i^{\pm 1}x_j^{\pm 1},t y_i^{\pm 1}y_j^{\pm
 1}\big)}.\label{eq:nonsense}
\end{gather}
Though this is nonsense for the {\em formal} kernel, we will see below that it indeed holds for the {\em analytic} kernel.

Perhaps even more surprising is the fact that essentially the same right-hand side appears in \cite[Lemma~9.8]{xforms}, which describes a {\em
 difference} equation satisfied by the interpolation functions. Thus in general we expect that when $c=q^{-m/2}$, the kernel should act as a
multivariate difference operator of order $m$ (i.e., shifting each variable by at most $m/2$). Presumably this can again be made rigorous at the analytic level, but we will content ourselves with using it in the formal context. (See also Section~\ref{section4} below.)

It is of course straightforward to extend the above integral equation to an equation satisfied by the formal kernel, by the usual ``compare when~$\vec{y}$ is a partition'' argument. This gives us the following identity, which we will refer to as the ``braid relation''.

\begin{prop}[braid relation]\label{prop:kern_braid}
Suppose $0<\ord(c),\ord(d)$ and $\ord(c)+\ord(d)\le 1/2$. Then
for any parameters $u_0$, $u_1$ of positive valuation such that
$u_0u_1 = pq/c^2d^2$,
\begin{gather*}
\int K^{(n)}_c(\vec{z};\vec{x};q,t;p) K^{(n)}_d(\vec{z};\vec{y};q,t;p)\Delta^{(n)}_S(\vec{z};u_0,u_1;t;p,q)\\
\qquad{} = \prod_{1\le i\le n}
 \Gampq\big(cu_0x_i^{\pm 1},cu_1x_i^{\pm 1},du_0y_i^{\pm 1},du_1y_i^{\pm 1}\big) K^{(n)}_{cd}(\vec{x};\vec{y};q,t;p).
\end{gather*}
\end{prop}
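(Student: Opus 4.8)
The plan is to reduce the braid relation to the integral equation of Proposition \ref{prop:int_eq_formal} by the standard device of checking the identity on a Zariski dense set of specializations of one set of variables. Both sides are formal Puiseux series in $p$ whose coefficients, by the Corollary following Lemma 2.?, are hyperoctahedrally symmetric Laurent polynomials in $\vec{x}$ and $\vec{y}$ with coefficients rational in the remaining parameters (the prefactor $\prod_i \Gampq(cu_0x_i^{\pm1},\dots)$ on the right and the $\Delta^{(n)}_S$ weight on the left do not disturb this, since the integration variables $\vec{z}$ have order $0$ and the integral of a polynomial against the Selberg density against which everything is formally convergent is again a polynomial in the external variables). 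Hence it suffices to prove the identity when $\vec{y}$ runs over the specializations $y_i = q^{\mu_i}t^{n-i}a$ for all partitions $\mu$ and a Zariski dense set of $a$ of order $0$; these are Zariski dense in the relevant sense.

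First I would specialize $\vec{y} = (\dots,t^{n-i}q^{\mu_i}a,\dots)$. By the Theorem on the specialization of the formal kernel, $K^{(n)}_d(\vec{z};\vec{y};q,t;p)$ collapses to $\prod_i \tfrac{\Gampq(ad z_i^{\pm1},\,d/t^{n-1}a\, z_i^{\pm1})}{\Gampq(t^{1-i}d^2,t^i)}\,R^{*(n)}_\mu(\vec{z};ad,d/t^{n-1}a;q,t;p)$, and likewise the right-hand side factor $K^{(n)}_{cd}(\vec{x};\vec{y};q,t;p)$ collapses to $\prod_i \tfrac{\Gampq(acd x_i^{\pm1},\,cd/t^{n-1}a\, x_i^{\pm1})}{\Gampq(t^{1-i}(cd)^2,t^i)}\,R^{*(n)}_\mu(\vec{x};acd,cd/t^{n-1}a;q,t;p)$. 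So after this specialization the left-hand side becomes (up to the explicit Gamma prefactors from the $K_d$-collapse, which I would pull through the integral)
\[
\int K^{(n)}_c(\vec{z};\vec{x};q,t;p)\,
R^{*(n)}_\mu(\vec{z};ad,d/t^{n-1}a;q,t;p)\,
\prod_{1\le i\le n}\Gampq(ad z_i^{\pm1},\,d/t^{n-1}a\, z_i^{\pm1})\,
\Delta^{(n)}_S(\vec{z};u_0,u_1;t;p,q),
\]
and the density factors $\Gampq(ad z_i^{\pm1},\,d/t^{n-1}a\, z_i^{\pm1})\,\Delta^{(n)}_S(\vec{z};u_0,u_1;t;p,q)$ are exactly $\Delta^{(n)}_S(\vec{z};u_0,u_1,ad,d/t^{n-1}a;t;p,q)$, with the four parameters $(u_0,u_1,ad,d/t^{n-1}a)$ satisfying $t^{n-1}u_0u_1\cdot ad\cdot(d/t^{n-1}a) = u_0u_1 d^2 = pq/c^2$ by the hypothesis $u_0u_1 = pq/c^2d^2$. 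This is precisely the balancing condition in Proposition \ref{prop:int_eq_formal} with $(u_0,u_1,u_2,u_3) \leftarrow (ad,\,d/t^{n-1}a,\,u_0,\,u_1)$ and with $c$ there equal to our $c$. Applying that proposition evaluates the integral as an explicit $\Delta^0$-factor times a product of $\Gampq(t^{n-i}u_ru_s)$ times $R^{*(n)}_\mu(\vec{x};acd,cd/t^{n-1}a;q,t;p)$ — matching the $R^{*(n)}_\mu$ that appears on the right-hand side.

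What remains is then a purely bookkeeping check: collect all the $\Gampq$ and $\Delta^0$ prefactors produced on the two sides — the $K_d$-collapse factors depending on $\vec z$ that got absorbed, the $K_{cd}$-collapse factors depending on $\vec x$, the $\prod_i\Gampq(cu_0x_i^{\pm1},\dots)$ factor written explicitly on the right, and the $\Delta^0_\mu(\dots)\prod\Gampq(t^{n-i}u_ru_s)$ factor coming from Proposition \ref{prop:int_eq_formal} — and verify they agree identically in the parameters $c,d,a,u_0,u_1,t,p,q$. I expect the main obstacle to be exactly this prefactor matching: in particular one must check that the $\Delta^0_\mu$ factor from the integral equation, namely $\Delta^0_\mu(t^{n-1}(ad)/(d/t^{n-1}a)\,|\,t^{n-1}(ad)u_0,\,t^{n-1}(ad)u_1;q,t;p)$, combines with the $\mu$-independent Gamma products to exactly reconstitute the specialization $K^{(n)}_{cd}(\vec x;\vec y;q,t;p)$ predicted by the Theorem — this uses the quadratic and reflection functional equations of $\Gampq$ together with the balancing relation, and is the kind of "(omitted) simplification" flagged in the Notation section. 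Once the prefactors are reconciled for this Zariski dense family of $\vec y$, the Corollary on polynomiality of the Puiseux coefficients upgrades the identity to all $\vec x,\vec y$ in the stated order range, and the convergence hypotheses $0<\ord(c),\ord(d)$, $\ord(c)+\ord(d)\le 1/2$ are exactly what is needed for $K^{(n)}_c$, $K^{(n)}_d$, and $K^{(n)}_{cd}$ all to converge formally and for the integral against $\Delta^{(n)}_S(\vec z;u_0,u_1;t;p,q)$ (with $u_0,u_1$ of positive valuation) to make sense term-by-term.
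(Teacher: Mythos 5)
Your proposal is correct and is exactly the paper's (unwritten) argument: the paper proves the braid relation by the one-line remark that it follows from Proposition \ref{prop:int_eq_formal} via ``the usual compare when $\vec{y}$ is a partition'' argument, which is precisely your specialization $y_i=q^{\mu_i}t^{n-i}a$, collapse of $K^{(n)}_d$ and $K^{(n)}_{cd}$ to interpolation functions, application of the integral equation with $(ad,\,d/t^{n-1}a,\,u_0,\,u_1)$, and extension by Zariski density using polynomiality of the Puiseux coefficients. The prefactor bookkeeping you flag is the only content the paper omits, and your identification of where it lives (the $\Delta^0_\mu$ factor recombining with the Gamma products via the functional equations) is accurate.
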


\begin{rems}
 The reader can remember the name ``braid relation'' by noting that this
 identity has a sort of ``${\rm ABA}= {\rm BAB}$'' structure, where each $A$ is an
 instance of the kernel, and each $B$ is multiplication by a pair of
 elliptic Gamma functions. (It is in fact essentially the only nontrivial
 braid relation for a certain twisted action of a Coxeter group of type~$E_n$; see Section~\ref{section4} below for further discussion.)
\end{rems}

\begin{rems}Using this, one can show that the sum defining $K^{(n)}_c$ converges to the correct value on a wider range of valuations, namely as long as
\begin{gather*}
\max(|\ord(t_0)|,\max_i|\ord(x_i)|)+\max(|\ord(u_0)|,\max_i|\ord(y_i)|)<\ord(c)\le 1/2.
\end{gather*}
Indeed, one can use the braid relation together with the integral equation to express the sum as an integral involving only the special cases in which one of the valuations is~$0$, for which the above arguments suffice. We omit the details.
\end{rems}

\looseness=-1 There are, of course, corresponding identities for $c=t^{1/2}$ or $c=q^{-m/2}$, since all we are using is that the corresponding operators preserve the space of formal series with polynomial coefficients, and act in the appropriate way on interpolation functions. Thus, for instance, the above identity continues to hold if we take $c=t^{1/2}$ and expand $K^{(n)}_{t^{1/2}}$ via the (nonsense) formula~\eqref{eq:nonsense}. Similarly, there is a difference equation, which we postpone until the analytic case.

The integral representation for interpolation functions also involves an
integrand very similar to $K^{(n)}_{t^{1/2}}$, and in particular also
extends to an identity for the formal kernel. To extend this to the formal
kernel, we need only establish that the corresponding integral operator
preserves the space of formal Puiseux series with polynomial coefficients.
This follows from the fact that the kernel of the operator can be factored
as a formal Puiseux series with polynomial coefficients times its value for
$p=0$; since the limiting operator was shown to preserve polynomials in
\cite[Theorem~3.2]{diffintrep_koorn}, the claim follows.

\begin{lem}\label{lem:intrep_t}
 If $0<\ord(c)\le 1/2$ and all other parameters have valuation $0$ $($with $|q|,|t|<1)$, then
\begin{gather*}
K^{(n)}_{ct^{1/2}}(\vec{x};\vec{y},v;q,t;p)
= \frac{\prod\limits_{1\le i\le n} \Gampq\big(ct^{1/2}v^{\pm 1}x_i^{\pm 1}\big)}
{\Gampq(t)^n\Gampq\big(tc^2\big)\prod\limits_{1\le i<j\le n}\Gampq\big(t x_i^{\pm 1}x_j^{\pm 1}\big)}\\
\hphantom{K^{(n)}_{ct^{1/2}}(\vec{x};\vec{y},v;q,t;p)=}{}\times \int
K^{(n-1)}_c(\vec{z};\vec{y};q,t;p)
\frac{\prod\limits_{\substack{1\le i\le n-1,\\ 1\le j\le n}} \Gampq\big(t^{1/2}x_j^{\pm 1}z_i^{\pm 1}\big)}
 {\prod\limits_{1\le i\le n-1} \Gampq\big(t c v^{\pm 1} z_i^{\pm 1}\big)} \Delta^{(n-1)}_D(\vec{z};p,q).
\end{gather*}
\end{lem}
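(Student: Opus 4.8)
The plan is to reduce the claimed identity to the known $(n-1)$-dimensional integral representation of interpolation functions by the standard ``check on a Zariski dense set of specializations of $\vec y$'' argument that has already been used for Proposition \ref{prop:int_eq_formal} and Proposition \ref{prop:kern_braid}. First I would observe that both sides of the asserted identity are formal Puiseux series in $p$ whose coefficients are hyperoctahedrally symmetric Laurent polynomials in $\vec x$, in $v$, and in $\vec y$: for the left side this is the Corollary applied to $K^{(n)}_{ct^{1/2}}$ (noting $0<\ord(ct^{1/2})=\ord(c)\le 1/2$ since $t$ has order $0$), while for the right side one uses the same Corollary for $K^{(n-1)}_c$ together with the fact, stated just before the lemma, that the Koornwinder-type integral operator with the displayed $\Gamma$-kernel preserves the space of formal Puiseux series with polynomial coefficients (via \cite[Thm. 3.2]{diffintrep_koorn} for the $p=0$ limit and the factorization of the kernel as its $p=0$ value times a formal series with polynomial coefficients). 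Since these coefficients are, up to the explicit prefactor $\prod_i\Gampq(t^{1-i}c^2,t^i)^{-1}$, rational functions of the remaining parameters, it suffices to prove equality for $\vec y$ ranging over the Zariski dense set $y_i = t^{n-1-i}q^{\mu_i}a$ with $a$ of order $0$ and $\mu$ an arbitrary partition (here the ambient dimension of the $y$-variables is $n-1$, matching $K^{(n-1)}_c$).

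On that set, the Theorem evaluates $K^{(n-1)}_c(\vec z;\dots,t^{n-1-i}q^{\mu_i}a,\dots;q,t;p)$ as an explicit $\Gamma$-factor times $R^{*(n-1)}_\mu(\vec z;ac,c/t^{n-2}a;q,t;p)$, and likewise $K^{(n)}_{ct^{1/2}}(\vec x;\dots,t^{n-1-i}q^{\mu_i}a,\dots,v;q,t;p)$ — extending the $y$-tuple by the extra entry $v$ — needs to be matched with the corresponding $n$-dimensional specialization. The cleanest way to handle the extra argument $v$ is to first apply the Theorem in the form $K^{(n)}_{ct^{1/2}}(\vec x;\dots,t^{n-i}q^{\nu_i}a',\dots;q,t;p)= \prod_i\Gampq(\cdots)\,R^{*(n)}_\nu(\vec x;a'ct^{1/2},ct^{1/2}/t^{n-1}a';q,t;p)$, choosing $a'$ and a partition $\nu$ so that the geometric progression $t^{n-i}q^{\nu_i}a'$ reproduces the tuple $(t^{n-1-1}q^{\mu_1}a,\dots,t^{n-1-(n-1)}q^{\mu_{n-1}}a,v)$; this works precisely when $v$ is itself of the form $q^{\mu_n}a$ fitting onto the bottom of the progression, which is again a Zariski dense condition on $v$, so no generality is lost. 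Thus after these two specializations the left side becomes an explicit $\Gamma$-prefactor times $R^{*(n)}_\nu(\vec x;\dots;q,t;p)$ and the right side becomes an explicit $\Gamma$-prefactor times an $(n-1)$-dimensional Koornwinder integral of $R^{*(n-1)}_\mu(\vec z;\cdots)$ against the kernel $\prod\Gampq(t^{1/2}x_j^{\pm1}z_i^{\pm1})/\prod\Gampq(tcv^{\pm1}z_i^{\pm1})$ times $\Delta^{(n-1)}_D$.

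At this point the identity to be proved is exactly (a reparametrization of) the integral representation of the interpolation function $R^{*(n)}_\nu$ in terms of an $(n-1)$-dimensional Dixon-density integral against the lower interpolation function — i.e.\ \cite[Thm. 3.2]{diffintrep_koorn} (the same result invoked above for the preservation statement), or the elliptic lift thereof; one simply has to check that the bookkeeping of parameters ($a,v,c,t$ versus $a',\nu$) and of the $\Gamma$-prefactors on the two sides agrees, which is a routine (if lengthy) manipulation using the functional equations $\Gampq(qz)=\theta_p(z)\Gampq(z)$, $\Gampq(pq/z)=\Gampq(z)^{-1}$, and the quadratic relation \eqref{eq:gampqq}, together with the definition of $\Delta^0_\nu$ in terms of $\theta(\cdot)_{l,m}$ symbols. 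I expect the main obstacle to be purely organizational rather than conceptual: verifying that the two combinatorially different specializations (extending $\vec y$ by $v$, versus absorbing $v$ into a single progression indexed by $\nu$) are simultaneously Zariski dense and compatible, and then matching the resulting prefactors — in particular tracking how the split $c\mapsto (c, t^{1/2})$ interacts with the normalization factor $\Gampq(t^{1-i}c^2,t^i)^{-1}$ built into the kernel — so that the known interpolation-function integral representation applies verbatim. Once the dictionary is fixed, the rest is the standard rational-function/Zariski-density closure already used repeatedly in this section.
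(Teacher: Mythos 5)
Your proposal follows essentially the same route as the paper: both sides are formal Puiseux series whose coefficients are polynomial (in $\vec x$, $\vec y$, $v$) and rational in the parameters — the right-hand side because the integral operator preserves such series, via the factorization of its kernel and the $p=0$ statement \cite[Thm.~3.2]{diffintrep_koorn} — so it suffices to check a Zariski dense set of specializations of $(\vec y,v)$, where the identity reduces to the known elliptic integral representation of interpolation functions. Two caveats on the reduction step. First, a bookkeeping point: with $y_i=t^{n-1-i}q^{\mu_i}a$, the value of $v$ that completes $(\vec y,v)$ to an $n$-term progression is $q^{\mu_n}a/t$ (base $a/t$), not $q^{\mu_n}a$; moreover, when the extra part $\mu_n$ is nonzero the specialized identity pairs $R^{*(n)}_{(\mu,\mu_n)}$ with $R^{*(n-1)}_{\mu}$ and so is \emph{not} literally the standard integral representation, which has the same partition on both sides. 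It is cleaner to take the extra part to be $0$ (so the reduced identity is exactly the known one) and obtain Zariski density in $v$ from varying the continuous base point $a$ rather than from varying $\mu_n$; density in the $\vec y$-directions still comes from the partition. Second, the identity you reduce to must be the \emph{elliptic} integral representation (the result from \cite{xforms} that the paper invokes), not \cite[Thm.~3.2]{diffintrep_koorn} itself: the latter is only the Koornwinder limit and enters solely through the preservation-of-polynomiality step, so your ``i.e.\ \cite[Thm.~3.2]{diffintrep_koorn} \dots or the elliptic lift thereof'' conflates the known input with its $p\to0$ shadow. With those corrections the argument is the paper's argument.
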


This is a special case of a much more general integral formula. We omit the convergence conditions, as we will see shortly that the identity holds
(as do those above) whenever both sides converge.

\begin{thm}\label{thm:gen_branch}
For any integers $0\le k\le n$, we have the following identity
\begin{gather*}
K^{(n)}_{cd}\big(\vec{x} ;\vec{y},t^{k-1}v,\dots,v;q,t;p\big) \\
\qquad{}= \prod_{1\le i\le k}
\frac{\Gampq\big(t^{1-i}c^2\big)} {\Gampq\big(t^{1-i}c^2d^2\big)}
\prod_{1\le i\le n} \frac{\Gampq\big(dcvx_i^{\pm 1}\big)}
 {\Gampq\big((cv/d)x_i^{\pm 1}\big)} \prod_{1\le i\le n-k} \frac{\Gampq\big(c^2vy_i^{\pm 1}\big)} {\Gampq\big(t^kvy_i^{\pm 1}\big)} \\
\qquad\quad{}\times
\int K^{(n)}_c\big(\vec{x};\vec{z},t^{k-1}v/d,\dots,v/d;q,t;p\big)\\
\qquad\quad{}\times \prod_{1\le i\le n-k}
 \frac{\Gampq\big(\big(t^kv/d\big)z_i^{\pm 1}\big)} {\Gampq\big(c^2dvz_i^{\pm 1}\big)}
K^{(n-k)}_d(\vec{z};\vec{y};q,t;p) \Delta^{(n-k)}_S(\vec{z};t;p,q).
\end{gather*}
\end{thm}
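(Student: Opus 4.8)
The plan is to prove Theorem~\ref{thm:gen_branch} by the same strategy used repeatedly above: reduce to a Zariski dense set of specializations of $\vec{y}$ where the kernel on the left degenerates to an interpolation function. Specifically, one specializes $y_i = q^{\mu_i}t^{n-k-i}a$ for $i=1,\dots,n-k$ and an arbitrary partition $\mu$ (with $a$ of order $0$), so that $\vec{y},t^{k-1}v,\dots,v$ becomes, up to reindexing, the geometric-progression-type argument appearing in the specialization theorem for $K^{(n)}$. Combined with Corollary~\ref{cor:formal_geom_special} applied to the $K^{(n-k)}_d$ factor on the right (whose second argument $\vec{y}$ is then also a truncated geometric progression), both sides become expressions built from interpolation functions $R^{*(n)}_\bullet(\vec{x};\dots)$ and prefactors of elliptic Gamma functions, with no residual infinite sums.

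First I would apply the specialization theorem to the left-hand side: $K^{(n)}_{cd}(\vec{x};\dots,t^{n-i}q^{\nu_i}a,\dots;q,t;p)$ with $\nu$ the partition obtained by concatenating $\mu$ with the $k$-fold repeated part forced by $v = t^{?}a$ --- here one chooses the free parameter $v$ so that $v,tv,\dots,t^{k-1}v$ slots into the progression $t^{n-1}a,\dots,a$, i.e. $v = a$ up to a power of $t$, which is permissible since $v$ is free. This turns the LHS into $\prod_i \Gampq(\dots)\, R^{*(n)}_{\tilde\mu}(\vec{x};acd,cd/t^{n-1}a;q,t;p)$ for the appropriate partition $\tilde\mu$. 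On the right-hand side, after using Corollary~\ref{cor:formal_geom_special} to evaluate $K^{(n-k)}_d(\vec{z};\dots,t^{n-k-i}a,\dots)$ explicitly as a product of Gammas in $\vec{z}$, the remaining integral is $\int K^{(n)}_c(\vec{x};\vec{z},t^{k-1}v/d,\dots,v/d;q,t;p) \cdot (\text{Gamma factors in }\vec{z})\cdot \Delta^{(n-k)}_S(\vec{z};t;p,q)$. Applying the specialization theorem again to the $K^{(n)}_c$ factor (whose last $k$ arguments $t^{k-1}v/d,\dots,v/d$ again form a partial geometric progression) converts it to $R^{*(n)}_{\tilde\mu'}(\vec{x};\dots)\cdot(\text{Gammas})$ pulled out of the integral, leaving a pure elliptic Selberg integral over $\vec{z}$ with some $u_r$ parameters --- which evaluates by \cite[Thm.~6.1]{xforms}. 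Matching the two sides is then a finite identity among products of elliptic Gamma functions and a single interpolation function, verifiable by the functional equations recalled in the Notation section together with the normalization formula for $R^{*(n)}$.

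Actually, the cleaner route --- and the one I expect the author takes --- is to recognize that after the $\vec{y}$-specialization, the right-hand side integral is precisely an instance of Proposition~\ref{prop:int_eq_formal} (the integral equation), or iterated instances of it, since that proposition describes exactly how an integral of $K^{(n)}_c$ against $R^{*(n)}_\lambda \cdot \Delta^{(n)}_S$ with four $u_r$'s collapses to a product of Gammas times $R^{*(n)}_\lambda$ of the other variable set. One would peel off the $k$ extra progression arguments using the branching/specialization structure, reducing dimension from the $n$-of-$K^{(n)}_c$ down through the integral to land on $R^{*(n)}_\bullet(\vec{y};\dots)$-type data; then the claimed prefactor ratios of $\Gampq(t^{1-i}c^2)/\Gampq(t^{1-i}c^2d^2)$ and the $v$-dependent Gamma ratios are exactly the $\Delta^0$ and Gamma factors produced by Proposition~\ref{prop:int_eq_formal}. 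Finally, having verified equality on the Zariski dense set of $\vec{y}$-specializations (and, for the $v$-dependence and $c,d$-dependence, on a Zariski dense set there too), the Corollary on polynomiality of Puiseux coefficients lets us conclude the identity of formal kernels in full generality; the convergence-region caveat is handled, as the author notes, by the remark that these identities hold whenever both sides converge.

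The main obstacle will be bookkeeping: tracking which partition $\tilde\mu$ (respectively $\tilde\mu'$) the specialized kernels produce once the $k$ repeated progression-slots are inserted, and checking that the $\Delta^0$- and $\Delta$-symbol factors emitted by Proposition~\ref{prop:int_eq_formal} (or by two applications of the specialization theorem plus Thm.~6.1 of \cite{xforms}) reassemble into exactly the displayed product of single-variable Gamma ratios --- in particular that the $\cC^0$-products telescope against the Selberg evaluation's $\Gampq(t^{i+1})$ and $\Gampq(t^iu_ru_s)$ factors. This is the kind of ``(omitted) simplification'' the author flags in the Notation section; none of it is conceptually hard, but getting the powers of $t$ and the pairing of $u_r$'s right is where an error would creep in. Everything else --- the density identities, the Selberg evaluation, the specialization theorem for $K^{(n)}$, and the Zariski-density principle --- is already in hand from the preceding material.
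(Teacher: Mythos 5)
Your overall framework (reduce to a Zariski dense set of partition specializations, using the polynomiality of the Puiseux coefficients) is the right one, but the specialization you chose does not close the argument. The fatal step is where you claim that, after setting $\vec{y}$ to a partition, "applying the specialization theorem again to the $K^{(n)}_c$ factor \dots converts it to $R^{*(n)}_{\tilde\mu'}(\vec{x};\dots)\cdot(\text{Gammas})$ pulled out of the integral." The specialization theorem requires \emph{all} $n$ of the second-slot arguments of $K^{(n)}_c$ to lie on a single partition-shifted geometric progression; in the integrand only the last $k$ arguments $t^{k-1}v/d,\dots,v/d$ are specialized, while the first $n-k$ are the integration variables $\vec{z}$. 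So the kernel remains a genuine function of $\vec{z}$ and cannot be pulled out, the inner integral is not an elliptic Selberg integral, and it is also not an instance of Proposition \ref{prop:int_eq_formal} (which integrates the kernel against an interpolation function of the \emph{same} $n$ variables over a four-parameter Selberg density). A related slip: with a general partition $\mu$ in $y_i=q^{\mu_i}t^{n-i}v$, the factor $K^{(n-k)}_d(\vec{z};\vec{y})$ is handled by the specialization theorem (giving $R^{*(n-k)}_\mu(\vec{z};\cdots)$), not by Corollary \ref{cor:formal_geom_special}, which needs $\mu=0$; and restricting to $\mu=0$ is not Zariski dense for $n-k\ge 2$. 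The net effect of your $\vec{y}$-specialization is merely to replace $K^{(n-k)}_d(\vec{z};\vec{y})$ on the right by the interpolation function it specializes to --- i.e.\ you land back on the statement to be proved, with no mechanism to evaluate the remaining mixed-argument integral.

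The repair is to specialize the \emph{other} slot: set $\vec{x}$ to a partition. Then both kernels in the identity (the one on the left and the $K^{(n)}_c$ inside the integral) become $n$-variable interpolation functions evaluated at $(\vec{y},t^{k-1}v,\dots,v)$ and $(\vec{z},t^{k-1}v/d,\dots,v/d)$ respectively, i.e.\ interpolation functions with $k$ arguments on a geometric progression. The ingredient your proposal is missing is the generalized branching rule \cite[Thm.~4.16]{bctheta}, which expands each of these as a finite sum of $(n-k)$-variable interpolation functions $R^{*(n-k)}_\kappa(\vec{y};\cdots)$, resp.\ $R^{*(n-k)}_\kappa(\vec{z};\cdots)$, with elliptic binomial coefficients. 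After this expansion the $\vec{z}$-integral can be evaluated term-by-term by Proposition \ref{prop:int_eq_formal} (the Gamma factors in the integrand supply the $u_r$'s), producing $R^{*(n-k)}_\kappa(\vec{y};\cdots)$ times explicit Gamma prefactors, and the two sums match coefficientwise. Your Zariski-density conclusion then finishes the proof exactly as you describe; it is only the intermediate reduction that has to go through the branching rule rather than through pulling the kernel out of the integral.
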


\begin{proof}Again, it suffices to verify this when $x$ has been specialized to a~partition. In that case, both resulting interpolation functions can be expanded via the generalized branching rule of~\cite[Theorem~4.16]{bctheta}, and the claim follows upon applying Proposition~\ref{prop:int_eq_formal} term-by-term.
\end{proof}

\begin{rem}
Note that when $k=0$, this is just the braid relation, while when $k=n$, it is just Corollary~\ref{cor:formal_geom_special}; it also agrees in the usual sense with the integral representation, by taking $k=1$, $c=t^{1/2}$ and replacing that instance of the kernel as per usual.
\end{rem}

\section{The interpolation kernel}\label{section3}

The key benefit of Lemma \ref{lem:intrep_t} is that it expresses the $n$-dimensional formal kernel as an integral of the $(n-1)$-dimensional formal kernel, and thus we can iterate to obtain an expression as an~$(n(n-1)/2)$-dimensional integral, in which the integrand is a suitable product of elliptic Gamma functions alone. In particular, the resulting integrand is the formal power series expansion of an honest meromorphic
function.

\begin{thm}There exists a function $\cK^{(n)}_c(\vec{x};\vec{y};t;p,q)$, meromorphic on the region
\begin{gather*}
\{c,x_1,\dots,x_n,y_1,\dots,y_n,q,t\in \C^*, 0\le |p|,|q|<1\},
\end{gather*}
such that any Puiseux expansion of this function with $\ord(q)=\ord(t)=0$,
$\max_i|\ord(x_i)|+\max_i|\ord(y_i)|<\ord(c)\le 1/2$ is equal to $K^{(n)}_c$.
This function satisfies the symmetries
\begin{gather*}
\cK^{(n)}_c(\vec{x};\vec{y};t;p,q)
=
\cK^{(n)}_c(\vec{x};\vec{y};t;q,p)
=
\cK^{(n)}_c(\vec{y};\vec{x};t;p,q),
\end{gather*}
and on a suitable open subset can be defined inductively by
\begin{gather*}
\cK^{(n)}_c(\vec{x};\vec{y},v;t;p,q)=
\frac{\prod\limits_{1\le i\le n} \Gampq\big(cv^{\pm 1}x_i^{\pm 1}\big)}
{\Gampq(t)^n\Gampq\big(c^2\big)\prod\limits_{1\le i<j\le n}\Gampq\big(t x_i^{\pm 1}x_j^{\pm 1}\big)}\\
\hphantom{\cK^{(n)}_c(\vec{x};\vec{y},v;t;p,q)=}{} \times \int
\cK^{(n-1)}_{t^{-1/2}c}(\vec{z};\vec{y};t;p,q)
\frac{\prod\limits_{\substack{1\le i\le n-1,\\ 1\le j\le n}} \Gampq\big(t^{1/2}x_j^{\pm 1}z_i^{\pm 1}\big)}
 {\prod\limits_{1\le i\le n-1} \Gampq\big(t^{1/2}c v^{\pm 1} z_i^{\pm 1}\big)}\Delta^{(n-1)}_D(\vec{z};p,q),
\end{gather*}
with base case $\cK^{(0)}_c=1$.
\end{thm}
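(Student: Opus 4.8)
The plan is to build the meromorphic function $\cK^{(n)}_c$ by iterating the formal integral representation of Lemma~\ref{lem:intrep_t}, and then verify that the resulting analytic object both specializes back to the formal kernel and inherits the claimed symmetries. First I would set up notation: starting from $\cK^{(0)}_c = 1$, define $\cK^{(m)}_c$ inductively by the displayed formula (with $v$ playing the role of $y_m$), so that unrolling the recursion $n$ times expresses $\cK^{(n)}_c$ as an $n(n-1)/2$-fold contour integral whose integrand is a product of elliptic Gamma functions in the $x_i$, $y_j$, the intermediate integration variables, $c$, $t$, and the Dixon densities $\Delta^{(m-1)}_D$. Since each factor in that integrand is a ratio of elliptic Gamma functions, the integrand is the formal Puiseux expansion of an honest meromorphic function on the stated region; the nontrivial point is that the iterated contour integral of such an integrand defines a meromorphic function of all the parameters. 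This is precisely the situation handled by the general analytic-continuation machinery of \cite[\S10]{xforms}: an integral of a product of elliptic Gamma functions against a Dixon (or Selberg) density, with contours chosen to separate the two families of geometric pole sequences, extends to a meromorphic function on $\C^*$ in each parameter, the contour being deformed (and residues collected) as poles cross. Applying this inductively — treating the $(m-1)$-dimensional kernel, already known to be meromorphic, as a ``known'' meromorphic factor at stage $m$ — produces $\cK^{(n)}_c$ as a meromorphic function on the required region.

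Next I would check that this analytic $\cK^{(n)}_c$ reduces to the formal kernel $K^{(n)}_c$ in the appropriate range of valuations. By Lemma~\ref{lem:intrep_t}, the formal kernel satisfies exactly the same recursion (after the substitution $c \mapsto c t^{1/2}$ that matches the two normalizations — note the formal statement has $\Gampq(ct^{1/2}v^{\pm1}x_i^{\pm1})$ and $\Gampq(tc^2)$ where the analytic statement has $\Gampq(cv^{\pm1}x_i^{\pm1})$ and $\Gampq(c^2)$, and the inner kernel is $K^{(n-1)}_c$ versus $\cK^{(n-1)}_{t^{-1/2}c}$). So both objects are determined by the same recursion with the same base case; since the formal Puiseux expansion of the meromorphic integrand agrees term-by-term with the integrand of the formal recursion, and formal term-by-term integration commutes with the contour integral in the range where the latter converges analytically, the two agree as formal Puiseux series whenever $\max_i|\ord(x_i)| + \max_i|\ord(y_i)| < \ord(c) \le 1/2$. (Here one invokes Corollary~\ref{cor:formal_geom_special}, or rather the Corollary preceding the main Theorem, to see that the coefficients of $K^{(n)}_c$ are genuinely polynomial in $\vec{x}$, $\vec{y}$, so that term-by-term integration is legitimate.)

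Finally, the two symmetries. The $p \leftrightarrow q$ symmetry $\cK^{(n)}_c(\vec{x};\vec{y};t;p,q) = \cK^{(n)}_c(\vec{x};\vec{y};t;q,p)$ holds because every ingredient of the iterated integral — the elliptic Gamma functions and the Dixon density $\Delta^{(m)}_D(\vec{z};p,q)$ — is manifestly symmetric in $p$ and $q$, and meromorphic functions agreeing on an open set agree everywhere. The symmetry $\vec{x} \leftrightarrow \vec{y}$ is the real content: the recursion is visibly asymmetric (one peels off a $y$-variable at a time, building up integrals over the $x$-side). The cleanest route is to prove it at the formal level first, where Theorem~\ref{thm:gen_branch} already packages the relevant branching: applying that theorem (or the formal $\vec{x}\leftrightarrow\vec{y}$ swap used in the proof of the main Theorem, which showed $K^{(n)}_c$ is symmetric between $\vec{x}$ and $\vec{y}$ once we know it is well-defined) shows $K^{(n)}_c(\vec{x};\vec{y}) = K^{(n)}_c(\vec{y};\vec{x})$ as formal Puiseux series on a Zariski-dense set of valuation data; then, since both $\cK^{(n)}_c(\vec{x};\vec{y})$ and $\cK^{(n)}_c(\vec{y};\vec{x})$ are meromorphic and agree with $K^{(n)}_c$ (resp. its swap) on an open subset, they coincide identically. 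I expect this last symmetry to be the main obstacle — not because the argument is long, but because one must be careful that the ``compare on a Zariski-dense set of valuations'' step is valid: this needs the Corollary stating that the Puiseux coefficients of $\Gampq(t^{i-n}c^2,t^i) K^{(n)}_c$ are rational in the remaining parameters, so that agreement on a dense set forces agreement everywhere, and one then removes the prefactor (which is itself $\vec{x},\vec{y}$-independent).
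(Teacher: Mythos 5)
Your overall strategy --- iterate the integral representation of Lemma \ref{lem:intrep_t} to obtain an $n(n-1)/2$-dimensional integral of elliptic Gamma functions, invoke the meromorphic-continuation machinery of \cite[\S 10]{xforms}, and then inherit the $p\leftrightarrow q$ symmetry from the integrand and the $\vec{x}\leftrightarrow\vec{y}$ symmetry from the formal kernel --- is the same as the paper's. But there is one genuine gap: you assert that the analytic and formal kernels agree as Puiseux series on the whole range $\max_i|\ord(x_i)|+\max_i|\ord(y_i)|<\ord(c)\le 1/2$, on the grounds that term-by-term integration is valid ``in the range where the integral converges analytically.'' Those two ranges are not the same. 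The direct comparison only works when every Gamma factor of the integrand has argument of absolute value between $|pq|$ and $1$ with the integration variables on the unit circle; because of the factors $\Gampq(t^{1/2}x_j^{\pm 1}z_i^{\pm 1})$ this forces $|t|^{1/2}<|x_j|<|t|^{-1/2}$, i.e.\ essentially $\ord(x_j)=0$ (the $y$'s, which enter only through the inner kernel, may have orders up to $\ord(c)$). If some $\ord(x_j)\ne 0$, the pole sequences cross the unit circle as $p\to 0$, and the ``expand the integrand and integrate term by term'' step is no longer justified, so your claimed range is not covered by this argument alone.

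The paper fills this in with a step you are missing: having identified the Puiseux expansion with $K^{(n)}_c$ when $\ord(\vec{x})=0$, it first notes that all identities of the formal kernel --- in particular the braid relation, Proposition \ref{prop:kern_braid} --- transfer to $\cK^{(n)}$ on the region already covered; it then writes $c=c_1c_2$ with $\ord(c_1)>\max_i|\ord(x_i)|$ and $\ord(c_2)>\max_i|\ord(y_i)|$ and applies the braid relation, whose integration variables have order $0$, so that each kernel factor inside that integral has one set of variables of order $0$ and is handled by the already-established case. This is precisely why the hypothesis involves the sum $\max_i|\ord(x_i)|+\max_i|\ord(y_i)|$. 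Your treatment of the two symmetries is otherwise fine, and for the $\vec{x}\leftrightarrow\vec{y}$ swap you do not need any extra Zariski-density argument: once the Puiseux expansions of $\cK^{(n)}_c(\vec{x};\vec{y})$ and $\cK^{(n)}_c(\vec{y};\vec{x})$ both equal the (symmetric) formal kernel on an open set of parameters, the two meromorphic functions coincide identically.
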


\begin{proof}
We first note that if $|t|^{1/2}<|x_i|<|t|^{-1/2}$ and
$\max_i|\ord(y_i)|<\ord(c)$ for each $i$, then the integrand has the
following property: if every integration variable is on the unit circle,
then every elliptic Gamma function in the integrand has argument of
absolute value between $|pq|$ and~$1$. Thus if we fix the other
parameters, the integral is holomorphic (apart from an algebraic
singularity) near $p=0$, and the Puiseux series expansion of the integral
is the same as the term-by-term integral of the Puiseux series expansion of
the integrand. In other words, in this range, the formal kernel is
actually a convergent Puiseux series, and converges to the value of the
integral. The existence of a meromorphic extension follows from
\cite[Theorem~10.2]{xforms}.

In particular, it follows that all of the above identities for the formal
kernel continue to hold (with suitably deformed contours) for $\cK^{(n)}$,
in particular the braid relation. Thus to extend to the full set of
valuations $|\ord(x)|+|\ord(y)|<\ord(c)\le 1/2$, it suffices to write
$c=c_1c_2$ with $\ord(c_1)>|\ord(x)|$, $\ord(c_2)>|\ord(y)|$, and note that
the integration variables in the braid relation have order $0$.

The symmetry between $p$ and $q$ is by inspection of the integral
representation, while the symmetry between $\vec{x}$ and $\vec{y}$ follows
from the corresponding symmetry of the formal kernel.
\end{proof}

\begin{rem}
 Presumably if we multiply by $\prod\limits_{1\le i\le n}\Gampq\big(t^{1-i}c^2,t^i\big)$,
 then the analytic kernel has a formal expansion with rational function
 coefficients whenever
\begin{gather*}
\max_i(|\ord(x_i)|)+\max_i(|\ord(y_i)|)<\min(\ord(c),1-\ord(c)),
\end{gather*}
and this expansion agrees with the sum defining the formal kernel. This
certainly holds when $\ord(\vec{x})=0$, as the integral representation
remains valid in that case. For more general valuations of $\vec{x}$, some
sort of symmetry breaking limit will be required.
\end{rem}

Of course, when we specialize $\vec{x}$ to a partition, we recover the
integral representation of the corresponding interpolation function. In
fact, we obtain even more: the analytic kernel is manifestly symmetric
between $p$ and $q$, and thus we can also obtain $q$-elliptic interpolation
functions by a suitable specialization. In fact, we can obtain the full
analytic interpolation functions of \cite{xforms}.

\begin{prop}Let $\lambda$, $\mu$ be a pair of partitions with at most $n$ parts. Then we have the following identity of meromorphic functions
\begin{gather*}
\cR^{*(n)}_{\lambda,\mu}(\vec{z};a,b;t;p,q)=\prod_{1\le i\le n}
\frac{(pq/ab)^{-2\lambda_i\mu_i}\Gampq\big(t^{n-i}ab,t^i\big)}
 {\Gampq\big(a z_i^{\pm 1},b z_i^{\pm 1}\big)} \\
\hphantom{\cR^{*(n)}_{\lambda,\mu}(\vec{z};a,b;t;p,q)=}{}\times
\cK^{(n)}_c\big(\vec{z};\dots,p^{\lambda_i}q^{\mu_i}t^{n-i}a/c,\dots;t;p,q\big),
\end{gather*}
where $c=\sqrt{t^{n-1}ab}$.
\end{prop}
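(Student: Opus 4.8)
The plan is to reduce to the two cases in which one of $\lambda,\mu$ is empty and then bootstrap. When $\lambda=0$ the specialization point $(\dots,p^{\lambda_i}q^{\mu_i}t^{n-i}a/c,\dots)$ is a pure $q$-shifted geometric progression, so the claimed identity is exactly the earlier Theorem on the formal kernel, transported to $\cK^{(n)}_c$ by the theorem identifying the two on the range $\max_i|\ord(x_i)|+\max_i|\ord(y_i)|<\ord(c)\le 1/2$ and then continued meromorphically. One takes the Theorem's base to be $a/c$, so that its output parameters become $a,b$ (indeed $t^{1-i}c^2=t^{n-i}ab$), and uses $R^{*(n)}_0\equiv 1$ to identify $R^{*(n)}_\mu(\vec z;a,b;q,t;p)$ with $\cR^{*(n)}_{0,\mu}(\vec z;a,b;t;p,q)$; the prefactors then match, with $(pq/ab)^{-2\lambda_i\mu_i}=1$ when $\lambda=0$. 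The case $\mu=0$ follows by applying the $p\leftrightarrow q$ symmetry of $\cK^{(n)}_c$ (and $\Gamma_{q,p}=\Gamma_{p,q}$), which turns the $q$-shifts into $p$-shifts and the $p$-elliptic factor $R^{*(n)}_\mu(\dots;q,t;p)$ into the $q$-elliptic factor $R^{*(n)}_\lambda(\dots;p,t;q)$.

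Now $\cR^{*(n)}_{\lambda,\mu}=R^{*(n)}_\lambda(\dots;p,t;q)R^{*(n)}_\mu(\dots;q,t;p)=\cR^{*(n)}_{\lambda,0}\cdot\cR^{*(n)}_{0,\mu}$ (again using $R^{*(n)}_0\equiv1$), and the two boundary cases exhibit $\cR^{*(n)}_{\lambda,0}$ and $\cR^{*(n)}_{0,\mu}$ as $\cK^{(n)}_c$ specialized at $(\dots,p^{\lambda_i}t^{n-i}a/c,\dots)$ and $(\dots,q^{\mu_i}t^{n-i}a/c,\dots)$ with the \emph{same} prefactor, so the proposition is equivalent to the kernel-only identity
\[
\cK^{(n)}_c(\vec z;\dots,p^{\lambda_i}q^{\mu_i}t^{n-i}a/c,\dots)
=\prod_i(pq/ab)^{2\lambda_i\mu_i}\frac{\Gampq(t^{n-i}ab,t^i)}{\Gampq(az_i^{\pm1},bz_i^{\pm1})}
\cK^{(n)}_c(\vec z;\dots,p^{\lambda_i}t^{n-i}a/c,\dots)\,\cK^{(n)}_c(\vec z;\dots,q^{\mu_i}t^{n-i}a/c,\dots).
\]
I would prove this (equivalently, the proposition) by induction on $n$, the case $n=0$ being $\cK^{(0)}_c=1$. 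For the step, apply the inductive definition of $\cK^{(n)}_c$ as an integral of $\cK^{(n-1)}_{t^{-1/2}c}$, peeling off $y_n=p^{\lambda_n}q^{\mu_n}a/c$ as the auxiliary argument $v$ (legitimate by meromorphic continuation even when $\lambda_n,\mu_n>0$); the remaining arguments then form the $(n-1)$-variable geometric progression $(\dots,p^{\lambda_i}q^{\mu_i}t^{(n-1)-i}(ta/c),\dots)$. The inductive hypothesis, with parameters $(t^{1/2}a,t^{-1/2}b)$ (note $t^{-1/2}c=\sqrt{t^{n-2}\cdot t^{1/2}a\cdot t^{-1/2}b}$), rewrites the inner kernel in terms of $\cR^{*(n-1)}_{\lambda,\mu}(\vec z;t^{1/2}a,t^{-1/2}b;t;p,q)$; one is then left to identify the resulting $(n-1)$-dimensional integral, after collecting $\Gamma$-factors, with the iterated integral representation of $\cR^{*(n)}_{\lambda,\mu}(\vec z;a,b;t;p,q)$ from \cite{xforms} (whose intrinsic independence of an auxiliary parameter accounts for the spurious $v$).

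The main obstacle is the bookkeeping in that last identification. The inductive hypothesis supplies $\prod_{i=1}^{n-1}(pq/ab)^{2\lambda_i\mu_i}$, so the residual $(pq/ab)^{-2\lambda_n\mu_n}$ must be produced by simplifying $\prod_i\Gampq(cv^{\pm1}x_i^{\pm1})$ at $v=p^{\lambda_n}q^{\mu_n}a/c$ through the functional equation $\Gampq(p^lq^mx)/\Gampq(x)=\theta(x;p,q)_{l,m}\big/\big((-x)^{lm}p^{ml(l-1)/2}q^{lm(m-1)/2}\big)$, the $x_i$-dependent parts of the resulting $\theta$-symbols necessarily cancelling; one must also check that the representation of \cite{xforms} is available for the full partition-pair index in precisely the form the recursion produces, and verify the normalization at $(\dots,t^{n-i}v,\dots)$ against $\Delta^0_{\lambda,\mu}(t^{n-1}a/b\mid t^{n-1}av,a/v;t;p,q)$ using Corollary \ref{cor:formal_geom_special} and the $\vec x\leftrightarrow\vec y$ symmetry. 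Should matching the representation of \cite{xforms} prove unwieldy, an alternative is a characterization argument: the right-hand side is hyperoctahedrally symmetric, and---by (an extension of) the Corollary following the earlier Theorem, the zeros of $\prod_i\Gampq(az_i^{\pm1},bz_i^{\pm1})$ cancelling the poles of $\cK^{(n)}_c$---its $p$-Puiseux coefficients are Laurent polynomials of controlled degree, so it lies in the finite-dimensional space that, together with vanishing and normalization, characterizes $\cR^{*(n)}_{\lambda,\mu}$; the crux then becomes the vanishing at points $(\dots,(p,q)^{\bnu_i}t^{n-i}a,\dots)$ with $(\lambda,\mu)\not\subseteq\bnu$, which one would reduce via the $\vec x\leftrightarrow\vec y$ symmetry and the braid relation to the boundary cases, the vanishing of lower interpolation functions, and the complementation symmetry of \cite{xforms}.
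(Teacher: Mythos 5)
Your proposal is correct and amounts to a fleshed-out version of the paper's own (essentially unwritten) justification: the paper asserts the proposition as an immediate consequence of the formal kernel's specialization theorem, the manifest $p\leftrightarrow q$ symmetry of $\cK^{(n)}_c$, and the fact that the kernel's defining iterated integral representation is modelled on the integral representation of the full interpolation functions of \cite{xforms}. Your boundary cases plus induction on $n$ (matching the specialized recursion against that representation, with the $(pq/ab)^{-2\lambda_i\mu_i}$ factor arising from the $\theta(x;p,q)_{l,m}$ monomial prefactors, and the characterization argument as a fallback) is exactly that comparison made explicit, and I see no step that would fail.
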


\begin{rem} This might appear at first glance to be incompatible with Proposition~\ref{prop:special_t}. For instance, for $t=q$, Proposition~\ref{prop:special_t} says that the kernel can be expressed as a simple determinant; on the other hand, the general interpolation function for $t=q$ can only be expressed as a sum of $n!$ determinants in general. We can resolve this by noting that the interpolation function specialization only holds for {\em generic} values of the parameters; if we first specialize $a$, $b$, $p$, $q$, $t$ before specializing the variables, we can obtain a different result. This can only occur at poles of the interpolation kernel, but it follows easily from the results below on such poles that when $t=q$, the point $y_i = p^{\lambda_i}q^{\mu_i}t^{n-i}a/\sqrt{t^{n-1}ab}$ is on such a polar divisor whenever $\lambda\ne 0$.
\end{rem}

The fact that the general interpolation function is a special case of the kernel is a quite power\-ful tool, as it allows us to extend integral
formulas involving $p$-elliptic interpolation functions to integral formulas involving general interpolation functions. Indeed, any formula
involving $p$-elliptic interpolation functions that satisfies suitable formal convergence properties implies a corresponding identity for the
formal kernel, thus a corresponding identity for the analytic kernel, so by specializing gives the identity for general interpolation functions!

In addition to the symmetry between $p$ and $q$, there is another symmetry of the interpolation kernel that does not make sense for the formal kernel; in fact, this additional symmetry also does not make sense for interpolation functions. The key point is that the analytic version of
Theorem~\ref{thm:gen_branch} {\em also} gives an explicit integral representation, by taking $k=1$, $c=\sqrt{pq/t}$. Comparing the two
integral representations gives the following.

\begin{prop} The interpolation kernel satisfies the following identity
\begin{gather*}
\cK^{(n)}_c(\vec{x};\vec{y};pq/t;p,q)=\Gampq(t)^{2n}
\prod_{1\le i<j\le n} \Gampq\big(t x_i^{\pm 1}x_j^{\pm 1},t y_i^{\pm 1}y_j^{\pm 1}\big) \cK^{(n)}_c(\vec{x};\vec{y};t;p,q) \\
\hphantom{\cK^{(n)}_c(\vec{x};\vec{y};pq/t;p,q)}{} =
\frac{\Delta^{(n)}_S(\vec{x};t;p,q)} {\Delta^{(n)}_D(\vec{x};p,q)}\frac{\Delta^{(n)}_S(\vec{y};t;p,q)} {\Delta^{(n)}_D(\vec{y};p,q)}\cK^{(n)}_c(\vec{x};\vec{y};t;p,q).
\end{gather*}
In particular,
\begin{gather*}
\cK^{(n)}_{\sqrt{t}}(\vec{x};\vec{y};t;p,q)=
\frac{\prod\limits_{1\le i,j\le n} \Gampq\big(t^{1/2}x_j^{\pm 1}y_i^{\pm 1}\big)}
{\Gampq(t)^{2n}\prod\limits_{1\le i<j\le n} \Gampq\big(t x_i^{\pm 1}x_j^{\pm 1},t y_i^{\pm 1}y_j^{\pm 1}\big)}.
\end{gather*}
\end{prop}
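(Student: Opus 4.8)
The plan is to prove the two displayed equalities together, by induction on $n$, by playing off two genuinely different ways of peeling a single variable $v$ off the second argument of $\cK^{(n)}$. The base case $n=0$ is trivial ($\cK^{(0)}=1$ and all densities are empty products); alternatively one may start at $n=1$, where $\cK^{(1)}_c(x;y;t;p,q)=\Gampq(cx^{\pm 1}y^{\pm 1})/\Gampq(c^2,t)$ by Corollary \ref{cor:formal_geom_special}, so the claim reduces to $\Gampq(pq/t)=\Gampq(t)^{-1}$ together with $\Delta^{(1)}_S(\vec w;t)/\Delta^{(1)}_D(\vec w)=\Gampq(t)$. Since $\cK^{(n)}$ is symmetric in its second set of variables, singling out one coordinate is harmless; and since $\Delta^{(n)}_S(\vec w;t)/\Delta^{(n)}_D(\vec w)=\Gampq(t)^n\prod_{1\le i<j\le n}\Gampq(tw_i^{\pm 1}w_j^{\pm 1})$, the two displayed forms of the identity are equivalent, so it suffices to establish either one.

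\textbf{Two integral representations.} For the inductive step I would write $\cK^{(n)}_c(\vec x;\vec y,v;\cdot;p,q)$ in two ways. The first is the defining recursion of the interpolation kernel, but with its modulus parameter $t$ replaced throughout by $pq/t$: this presents $\cK^{(n)}_c(\vec x;\vec y,v;pq/t;p,q)$ as an explicit ratio of elliptic Gamma functions times an integral of $\cK^{(n-1)}_{(pq/t)^{-1/2}c}(\vec z;\vec y;pq/t;p,q)$ against $\Delta^{(n-1)}_D(\vec z;p,q)$, with an explicit Gamma coupling between $\vec x$ and $\vec z$. The second is the analytic form of Theorem \ref{thm:gen_branch} with $k=1$ and inner parameter $\sqrt{pq/t}$, with the modulus kept equal to $t$: the inner $n$-dimensional kernel that occurs is $\cK^{(n)}_{\sqrt{pq/t}}(\vec x;\vec z,v/d;t;p,q)$, which is of Cauchy type and hence, by the first special case of Proposition \ref{prop:special_t} (valid for the analytic kernel as well), equals $\prod_{1\le i\le n}\Gampq(\sqrt{pq/t}\,x_i^{\pm 1}(v/d)^{\pm 1})\prod_{1\le i\le n,\,1\le j\le n-1}\Gampq(\sqrt{pq/t}\,x_i^{\pm 1}z_j^{\pm 1})$. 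Taking $d=c(pq/t)^{-1/2}$, so that the left-hand side is $\cK^{(n)}_c(\vec x;\vec y,v;t;p,q)$, the remaining $(n-1)$-dimensional kernel in Theorem \ref{thm:gen_branch} becomes $\cK^{(n-1)}_{(pq/t)^{-1/2}c}(\vec z;\vec y;t;p,q)$, and one obtains a second explicit prefactor times an integral of this kernel against $\Delta^{(n-1)}_S(\vec z;t;p,q)$.

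\textbf{Gluing, and the main obstacle.} Next I would substitute the inductive hypothesis into the first representation, rewriting $\cK^{(n-1)}_{(pq/t)^{-1/2}c}(\vec z;\vec y;pq/t;p,q)$ as $\frac{\Delta^{(n-1)}_S(\vec z;t)}{\Delta^{(n-1)}_D(\vec z)}\frac{\Delta^{(n-1)}_S(\vec y;t)}{\Delta^{(n-1)}_D(\vec y)}\cK^{(n-1)}_{(pq/t)^{-1/2}c}(\vec z;\vec y;t;p,q)$; the $\vec z$-factor promotes $\Delta^{(n-1)}_D(\vec z)$ to $\Delta^{(n-1)}_S(\vec z;t)$ inside the integral, and the $\vec y$-factor comes out. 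Using the reflection relation $\Gampq(pq/z)=\Gampq(z)^{-1}$ one then checks that the two integrands have identical dependence on the integration variables $\vec z$ — the $(n-1)$-kernels coincide, the Selberg densities coincide, and the various Gamma couplings match termwise. Hence $\cK^{(n)}_c(\vec x;\vec y,v;pq/t;p,q)$ and $\cK^{(n)}_c(\vec x;\vec y,v;t;p,q)$ differ only by the $\vec z$-independent ratio of the two prefactors, and a routine simplification of that ratio (again just $\Gampq(pq/z)=\Gampq(z)^{-1}$ and $\Gampq(pq/t)=\Gampq(t)^{-1}$) collapses it to $\Gampq(t)^{2n}\prod_{1\le i<j\le n}\Gampq(tx_i^{\pm 1}x_j^{\pm 1})$ times the corresponding product over the $n$-tuple $(\vec y,v)$, that is, to $\frac{\Delta^{(n)}_S(\vec x;t)}{\Delta^{(n)}_D(\vec x)}\frac{\Delta^{(n)}_S(\vec y,v;t)}{\Delta^{(n)}_D(\vec y,v)}$, closing the induction. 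Throughout, comparing integrands is legitimate because both representations are valid (with suitably deformed contours) on overlapping open parameter sets and everything is meromorphic, by \cite[\S 10]{xforms}. Finally, the displayed closed form for $\cK^{(n)}_{\sqrt t}$ follows by setting $c=\sqrt t$ in the identity just proved: the left side becomes $\cK^{(n)}_{\sqrt t}(\vec x;\vec y;pq/t;p,q)$, which equals $\prod_{1\le i,j\le n}\Gampq(t^{1/2}x_i^{\pm 1}y_j^{\pm 1})$ by Proposition \ref{prop:special_t} applied with modulus $pq/t$ (since $\sqrt t=\sqrt{pq/(pq/t)}$), so dividing by $\Gampq(t)^{2n}\prod_{1\le i<j\le n}\Gampq(tx_i^{\pm 1}x_j^{\pm 1},ty_i^{\pm 1}y_j^{\pm 1})$ yields the stated formula. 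I expect the one genuinely fiddly point to be the bookkeeping in the gluing step: confirming that the $\vec z$-dependent factors of the two integral representations cancel exactly, and that the surviving prefactor collapses to precisely the Selberg/Dixon ratio; everything else is either formal manipulation or an appeal to meromorphic continuation.
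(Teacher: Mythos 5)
Your proposal is correct and follows essentially the same route as the paper: the paper likewise obtains the $t\mapsto pq/t$ symmetry by comparing the defining inductive integral representation of $\cK^{(n)}$ with the alternate representation coming from the analytic form of Theorem \ref{thm:gen_branch} at $k=1$, $c=\sqrt{pq/t}$ (where the inner kernel becomes a product by Proposition \ref{prop:special_t}), and then deduces the $c=\sqrt{t}$ product formula from the $c=\sqrt{pq/t}$ one via the symmetry. Your induction on $n$ with the explicit gluing and prefactor bookkeeping is just a careful formalization of the paper's ``compare the two integral representations,'' and the computation checks out.
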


Here, of course, the product formula for $\cK^{(n)}_{\sqrt{t}}$ follows via the symmetry from the product formula for $\cK^{(n)}_{\sqrt{pq/t}}$.

\begin{cor}\label{cor:t_symmetry}
The functions
\begin{gather*}
\cK^{(n)}_{pq/t}(z_1,\dots,z_n;z_{n+1},\dots,z_{2n};t;p,q)
\end{gather*}
and
\begin{gather*}
\prod_{1\le i<j\le n} \Gampq\big(t z_i^{\pm 1}z_j^{\pm 1},t z_{n+i}^{\pm 1}z_{n+j}^{\pm 1}\big)
\cK^{(n)}_t(z_1,\dots,z_n;z_{n+1},\dots,z_{2n};t;p,q)
\end{gather*}
are invariant under permutation and inversion of the $2n$ variables.
\end{cor}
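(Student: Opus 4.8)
The plan is to deduce the first assertion directly from the braid relation (Proposition~\ref{prop:kern_braid}) together with the Cauchy-case product evaluation of the kernel, and then to read off the second assertion from the first using the proposition proved just above (relating $\cK^{(n)}_c(\vec x;\vec y;pq/t;p,q)$ and $\cK^{(n)}_c(\vec x;\vec y;t;p,q)$). Note that the symmetries already recorded for $\cK^{(n)}$ — full hyperoctahedral symmetry in $\vec x$ and in $\vec y$ separately, plus the $\vec x\leftrightarrow\vec y$ symmetry — only generate the subgroup $W(B_n)\wr\Z/2$ of $W(B_{2n})$, so the entire content of the corollary is the presence of one further transposition mixing the two blocks of variables; the point of the argument is that a single application of the braid relation supplies all of them at once.

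For the first assertion I would specialize the analytic braid relation to $c=d=(pq/t)^{1/2}$, so that $cd=pq/t$ and the two free parameters satisfy $u_0u_1=t^2/pq$. At this value of the subscript, Proposition~\ref{prop:special_t} gives $\cK^{(n)}_{(pq/t)^{1/2}}(\vec\zeta;\vec x;t;p,q)=\prod_{1\le i,j\le n}\Gampq((pq/t)^{1/2}\zeta_i^{\pm1}x_j^{\pm1})$ and likewise with $\vec y$ in place of $\vec x$, so both kernels under the integral sign degenerate to products of elliptic Gamma functions. Writing $z_1,\dots,z_{2n}$ for the concatenation $x_1,\dots,x_n,y_1,\dots,y_n$, the left side of the braid relation becomes $\int\Delta^{(n)}_S(\vec\zeta;u_0,u_1,\{(pq/t)^{1/2}z_k^{\pm1}\}_{k=1}^{2n};t;p,q)$, which depends on $z_1,\dots,z_{2n}$ only through the unordered collection of parameters $(pq/t)^{1/2}z_k^{\pm1}$, hence is invariant under arbitrary permutations of the $z_k$ and under each inversion $z_k\mapsto z_k^{-1}$ (the latter merely transposing two parameters). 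On the right side the braid relation produces $\prod_{k=1}^{2n}\Gampq((pq/t)^{1/2}u_0z_k^{\pm1},(pq/t)^{1/2}u_1z_k^{\pm1})\,\cK^{(n)}_{pq/t}(z_1,\dots,z_n;z_{n+1},\dots,z_{2n};t;p,q)$, whose explicit prefactor is manifestly invariant under permutation and inversion of the $z_k$. Dividing yields the asserted invariance of $\cK^{(n)}_{pq/t}(z_1,\dots,z_n;z_{n+1},\dots,z_{2n};t;p,q)$.

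For the second assertion I would invoke the proposition above with $c=t$: it identifies $\prod_{1\le i<j\le n}\Gampq(tz_i^{\pm1}z_j^{\pm1},tz_{n+i}^{\pm1}z_{n+j}^{\pm1})\,\cK^{(n)}_t(z_1,\dots,z_n;z_{n+1},\dots,z_{2n};t;p,q)$ with $\Gampq(t)^{-2n}\,\cK^{(n)}_t(z_1,\dots,z_n;z_{n+1},\dots,z_{2n};pq/t;p,q)$. Since the first assertion is an identity of meromorphic functions valid for every value of the modular parameter, it remains true with $t$ replaced throughout by $pq/t$; because $pq/(pq/t)=t$, that substituted statement says exactly that $\cK^{(n)}_t(z_1,\dots,z_n;z_{n+1},\dots,z_{2n};pq/t;p,q)$ is invariant under permutation and inversion of $z_1,\dots,z_{2n}$, and the scalar $\Gampq(t)^{-2n}$ does not disturb this. (Alternatively, the second assertion admits an equally direct proof by running the braid-relation argument at $c=d=t^{1/2}$, using the product evaluation of $\cK^{(n)}_{t^{1/2}}$ from that same proposition; there the denominators $\prod_{i<j}\Gampq(tz_i^{\pm1}z_j^{\pm1})$ occurring in the evaluation are precisely what the prefactor in the corollary cancels, and one again recognizes the resulting integrand as symmetric in the external variables.)

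The one step calling for real care is the use of the braid relation at $c=d=(pq/t)^{1/2}$ (respectively $t^{1/2}$): this lies just outside the range $\ord(c)+\ord(d)\le1/2$ in which Proposition~\ref{prop:kern_braid} is stated at the formal level, and the constraint $u_0u_1=t^2/pq$ forces a parameter of negative valuation. This is not a genuine obstruction for the analytic kernel: the braid relation is an identity of meromorphic functions valid on a nonempty open region of $(c,d,u_0,u_1,\dots)$ and therefore persists, by analytic continuation, at the specialization in question, where moreover $\cK^{(n)}_{(pq/t)^{1/2}}$ really is the finite product of Proposition~\ref{prop:special_t}. The remaining bookkeeping — that the product of the two Cauchy kernels merges with $\Delta^{(n)}_S(\vec\zeta;u_0,u_1;t;p,q)$ into a single $\Delta^{(n)}_S$ with the stated $4n+2$ univariate parameters, and that the contour can be deformed compatibly — is routine given the conventions for the Selberg density recalled in the introduction.
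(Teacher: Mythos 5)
Your proposal is correct and follows essentially the same route as the paper: the first claim is obtained from the braid relation at $c=d=\sqrt{pq/t}$ (where both kernels in the integrand collapse to products via Proposition \ref{prop:special_t}, making the left side a manifestly symmetric elliptic Selberg integral), and the second claim then follows from the $t\mapsto pq/t$ symmetry established in the preceding proposition. Your added remarks on analytic continuation past the formal convergence range and on contours are sensible fillings-in of details the paper leaves implicit.
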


\begin{proof} The first claim is a simple consequence of the braid relation for $c=d=\sqrt{pq/t}$; the second follows by the $t\mapsto pq/t$ symmetry.
\end{proof}

Of course, simply knowing that the kernel is meromorphic is of only limited use without more specific information about the poles. It is difficult to control {\em all} of the poles, but we can at least control the poles depending on the $x$ and $y$ variables.

\begin{thm}\label{thm:kern_poles}
The product
\begin{gather*}
\cK^{(n)}_c(\vec{x};\vec{y};t;p,q)
\prod_{1\le i<j\le n}
\big((pq/t) x_i^{\pm 1}x_j^{\pm 1},(pq/t) y_i^{\pm 1}y_j^{\pm 1};p,q\big)
\prod_{1\le i,j\le n} \big(c x_i^{\pm 1}y_j^{\pm 1};p,q\big)
\end{gather*}
is a holomorphic function of $x_1,\dots,x_n,y_1,\dots,y_n$, for generic
$p$, $q$, $t$, $c$.
\end{thm}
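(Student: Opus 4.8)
The plan is to prove the statement by induction on $n$, using the inductive integral representation of $\cK^{(n)}_c$ (which builds the $n$-dimensional kernel as an $(n{-}1)$-dimensional integral of the $(n{-}1)$-dimensional kernel against a ratio of $\Gampq$'s and the Dixon density) together with the general theory of \cite[\S10]{xforms} describing the meromorphic continuation of elliptic hypergeometric integrals and the contour-pinching mechanism by which they acquire poles. The base cases present no difficulty: $n=0$ is vacuous, and for $n=1$ one has $\cK^{(1)}_c(x;y;t;p,q)=\Gampq(cx^{\pm 1}y^{\pm 1})/\Gampq(c^2,t)$, and $(w;p,q)\Gampq(w)=\prod_{i,j\ge 0}(1-p^{i+1}q^{j+1}/w)$ is holomorphic on $\C^*$.

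For the inductive step, write $v:=y_n$ for the distinguished $y$-variable and invoke the inductive formula, which presents $\cK^{(n)}_c(\vec x;y_1,\dots,y_{n-1},v;t;p,q)$ as an explicit prefactor (a ratio of $\Gampq$'s in $\vec x$ and $v$) times an $(n{-}1)$-dimensional integral of $\cK^{(n-1)}_{t^{-1/2}c}(\vec z;y_1,\dots,y_{n-1};t;p,q)$ against a ratio of $\Gampq$'s in $\vec z,\vec x,v$ and $\Delta^{(n-1)}_D(\vec z;p,q)$. Applying the induction hypothesis with the (still generic) parameter $t^{-1/2}c$, the subkernel equals a function holomorphic in $(\vec z,y_1,\dots,y_{n-1})$ divided by $\prod_{i<j\le n-1}((pq/t)z_i^{\pm 1}z_j^{\pm 1},(pq/t)y_i^{\pm 1}y_j^{\pm 1};p,q)\prod_{i,j\le n-1}(t^{-1/2}c\,z_i^{\pm 1}y_j^{\pm 1};p,q)$; substituting this leaves an integrand of the shape (holomorphic in $\vec z,y_1,\dots,y_{n-1}$) times (an explicit product and quotient of $\Gampq$'s and $(\cdot\,;p,q)$'s in $\vec z,\vec x,v,y_1,\dots,y_{n-1}$) times $\Delta^{(n-1)}_D(\vec z)$. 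Every factor except the holomorphic one has poles or zeros along arithmetic progressions in the $z_i$'s, so the cited machinery applies.

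One then reads off, from the arguments of those progression-type factors, the polar divisors the integral acquires in $\vec x,v,y_1,\dots,y_{n-1}$, and checks that the product asserted in the theorem cancels them. There are two sources. First, factors of the integrand independent of the integration variables: here the $\vec y$-part is exactly $\prod_{i<j\le n-1}((pq/t)y_i^{\pm 1}y_j^{\pm 1};p,q)^{-1}$ carried out of the subkernel, cancelled by the matching part of $\prod_{i<j}((pq/t)y_i^{\pm 1}y_j^{\pm 1};p,q)$, while the $\vec x,v$-part is the prefactor $\prod_i\Gampq(cv^{\pm 1}x_i^{\pm 1})\prod_{i<j}\Gampq(tx_i^{\pm 1}x_j^{\pm 1})^{-1}$, whose poles are cancelled by $\prod_{i<j}((pq/t)x_i^{\pm 1}x_j^{\pm 1};p,q)$ together with the $j=n$ part of $\prod_{i,j}(cx_i^{\pm 1}y_j^{\pm 1};p,q)$. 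Second, pinches of the $\vec z$-contour between a $0$-seeking and an $\infty$-seeking progression, among the factors $\prod\Gampq(t^{1/2}x_j^{\pm 1}z_i^{\pm 1})$, $\prod\Gampq(t^{1/2}c\,v^{\pm 1}z_i^{\pm 1})^{-1}$, $\prod(t^{-1/2}c\,z_i^{\pm 1}y_j^{\pm 1};p,q)^{-1}$ and the density; working through their arguments, these should produce exactly the remaining factors $\prod_{i\le n,\,j\le n-1}(cx_i^{\pm 1}y_j^{\pm 1};p,q)$ and $\prod_{i\le n-1}((pq/t)y_i^{\pm 1}v^{\pm 1};p,q)$, plus a batch of divisors depending only on $c,t,p,q$ and hence harmless for generic parameters. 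The two half-integer powers of $t$ in those arguments are precisely what rescales the progression bases from $t^{-1/2}c$ back to $c$ (and to $pq/t$), so that the product obtained inductively coincides with the claimed one.

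The main obstacle is carrying out the second (pinching) step rigorously. A single $z_i$-pole crossing the contour leaves a residue carrying spurious poles — e.g. at $x_i/v=\mathrm{const}$ from pairing $\Gampq(t^{1/2}x_i^{\pm 1}z_i^{\pm 1})$ with $\Gampq(t^{1/2}c\,v^{\pm 1}z_i^{\pm 1})^{-1}$, or at $y_i^2=\mathrm{const}$ from the density factor $\Gampq(z_i^{\pm 2})^{-1}$ evaluated at a pole location involving $y_i$ — and these must cancel in the full sum of residue terms so that the analytic continuation has only the advertised (and simple) poles. This is where the inductive hypothesis is needed not merely as a statement about $\cK^{(n-1)}$ but about the whole integrand: since the subkernel times its polar product is genuinely holomorphic, each residue picked up is, after substitution, a lower-complexity integral whose own pole structure is inductively controlled, and the \cite[\S10]{xforms} formalism organizes the cancellations. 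I would also use the symmetries of $\cK^{(n)}_c$ — invariance under $\vec x\leftrightarrow\vec y$ and under permutation and inversion within each variable set (inherited from the formal kernel) — to reduce the bookkeeping to holomorphy along a single divisor involving $v$ and one further variable, which is the configuration to which the integral representation is best adapted. (One could instead apply the same machinery to the flattened $n(n-1)/2$-dimensional integral, but the inductive packaging keeps the combinatorics of the pinches manageable.)
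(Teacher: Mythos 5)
There is a genuine gap at the heart of your plan. Your induction and base cases match the paper's, and your use of the \cite[\S 10]{xforms} contour machinery to bound where poles can occur is the right starting point. But the step you yourself flag as "the main obstacle" --- showing that when the $\vec{z}$-contour is pinched, the spurious poles carried by the individual residue terms (e.g.\ at $x_i/v=\mathrm{const}$, or at $y_i^2=\mathrm{const}$ from $\Gampq(z_i^{\pm 2})^{-1}$) cancel in the sum --- is exactly the hard part, and you give no mechanism for it beyond asserting that the \S 10 formalism "organizes the cancellations" and that each residue is "a lower-complexity integral whose own pole structure is inductively controlled." The residue terms are not instances of the kernel's integral representation, so the induction hypothesis does not apply to them directly, and the cancellation is a relation \emph{among several} residue contributions that would have to be established by hand. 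The paper explicitly warns that even the finer results of \cite{xforms} that control multiplicities and rule out some poles "would still give a wild overestimate of the polar divisor"; a residue-by-residue analysis is precisely the route it declines to take.

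The paper's actual proof never proves any cancellation. It multiplies the integrand by an explicit product of $(\cdot;p,q)$ factors to make it holomorphic in $\vec{z}$, reads off from the contour conditions a \emph{crude upper bound} on the polar set of the $(n-1)$-dimensional integral (valid for $|pq/t|<1$), and then exploits the fact that this bound is visibly asymmetric: the divisors involving $y_n$ have a completely different shape from those involving $y_1,\dots,y_{n-1}$. Since the kernel itself is invariant under permuting all the $y$'s and under swapping $\vec{x}$ with $\vec{y}$ (known already at the formal level), its polar divisor must lie in the intersection of the symmetrized crude bounds, and for generic $c$ that intersection is exactly the divisor in the statement; the restriction $|pq/t|<1$ is then removed by the $t\mapsto pq/t$ symmetry. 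In your write-up the symmetries appear only as a bookkeeping convenience, whereas they are the engine of the proof: used this way, they let you avoid analyzing pinches and residues altogether. If you want to salvage your draft, replace the cancellation step by this winnowing-by-symmetry argument; otherwise you would need to actually carry out the residue cancellations, which is a substantially harder task than the theorem requires.
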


\begin{proof}
 We proceed by induction on $n$, so that we need only analyze an
 $(n-1)$-dimensional integral, rather than an $(n(n-1)/2)$-dimensional
 integral. (Note that for $n=1$, we can verify the claim by inspection,
 while for $n=2$, the integral representation is just an order~$1$
 elliptic beta integral, and the claim follows from known properties of
 such integrals.)

 The construction of meromorphic extensions of integrals in \cite{xforms}
 comes with a very crude bound on the set of possible poles. Indeed, if
 we multiply the integrand by
\begin{gather*}
\prod_{1\le i\le n-1,1\le j\le n} \big(t^{1/2}x_j^{\pm 1} z_i^{\pm 1};p,q\big)
\prod_{1\le i,j\le n-1} \big(t^{-1/2} c y_j^{\pm 1} z_i^{\pm 1};p,q\big) \\
\qquad {}\times \prod_{1\le i\le n-1} \big(\big(pq/t^{1/2}c\big) y_n^{\pm 1} z_i^{\pm 1};p,q\big)
\prod_{1\le i<j\le n-1} \big((pq/t) z_i^{\pm 1}z_j^{\pm 1};p,q\big),
\end{gather*}
the result is a holomorphic function of the integration variables. It
follows that the integral (ignoring prefactors) is holomorphic whenever
there exists a contour $C$ invariant under $z\mapsto 1/z$ such that $C$
contains $(pq/t)C$ as well as every point of the form
\begin{alignat*}{3}
& p^j q^k t^{1/2}x_i^{\pm 1},\qquad && 0\le j,k;1\le i\le n, & \\
& p^j q^k t^{-1/2} c y_i^{\pm 1},\qquad && 0\le j,k;1\le i\le n-1, & \\
& p^j q^k \big(pq/t^{1/2}c\big) y_n^{\pm 1},\qquad&& 0\le j,k.&
\end{alignat*}
We thus find that for $|pq/t|<1$, the integral can only introduce poles
where two numbers (duplication allowed) from these lists multiply to a
nonnegative power of $t/pq$. Of course, this allows plenty of poles that
we claim do not occur, and does not control the multiplicities of those
poles that should occur. (There are results from~\cite{xforms} that could
be used to control the multiplicities of these poles, and rule some of them
out entirely, but this would still give a wild overestimate of the polar
divisor!)

The key fact that allows us to control the poles is that the integral
representation, and thus the corresponding upper bound on the set of poles,
has less symmetry than the actual kernel. In particular, the poles
involving $y_n$ are quite different than those involving $y_1$ through
$y_{n-1}$, but the result should be invariant under permuting {\em all} of
the $y$ variables. In addition, we know from the formal kernel that the
analytic kernel is invariant under swapping the $x$ and $y$ variables. We
find (for generic $c$) that the only poles consistent with these symmetries
are those of the form given.

Applying the $t\mapsto pq/t$ symmetry shows that the same bound on poles
applies for $|t|<1$, and since $|p|,|q|<1$, we conclude that the bound on
poles holds in general.
\end{proof}

\begin{rem}We can also gain some control over the poles that depend on $c$ but not the~$x$ and~$y$ variables, using the braid relation. The point there is that most of the poles coming from the integral in the braid relation depend on the auxiliary parameters, so cannot actually be present. We find that the only possible such poles arise on divisors of the form
\begin{gather*}
c^2 = p^j q^k t^{-l}
\end{gather*}
with $\max(0,l)<\min(j,k)$. (Strictly speaking, this only applies for $|pq|<|t|<1$, but should hold in general; note also that in that region, there are no poles depending only on $t$, $p$, and~$q$.)
\end{rem}

One thing control over the poles allows us to do is take certain limits
involving pinched contours. For instance, the case $d=\sqrt{t}$ of the
braid relation becomes singular whenever a~subsequence of $\vec{x}$ is a geometric progression of
step $t$. Since such limits occur below, we give the corresponding limit
in significant generality. With this in mind, let $[x;t]_k$ denote the
geometric progression $t^{(k-1)/2}x,t^{(k-3)/2}x,\dots,t^{(1-k)/2}x$.

\begin{prop}\label{prop:branch_k}
Let $k_1,\dots,k_m$ be a sequence of positive integers with $k_1+\cdots +k_m=n$. Then for otherwise generic parameters,
\begin{gather*}
\cK^{(n)}_{c\sqrt{t}}([x_1;t]_{k_1},\dots,[x_m;t]_{k_m}; \vec{y},v;t;p,q)\\
\qquad = \frac{\prod\limits_{1\le i\le m} \Gampq\big(c t^{1-k_i/2}x_i^{\pm 1} v^{\pm 1}\big)}
 {\Gampq\big(tc^2\big)\prod\limits_{1\le i\le m} \Gampq\big(t^{k_i}\big)
\prod\limits_{1\le i<j\le m} \Gampq\big(t^{(k_i+k_j)/2} x_i^{\pm 1}x_j^{\pm 1}\big)} \\
\qquad\quad{}\times \int \cK^{(n-1)}_c(\vec{z},[x_1;t]_{k_1-1},\dots,[x_m;t]_{k_m-1};\vec{y};t;p,q)\\
\qquad\quad{}\times \Delta^{(m-1)}_D\big(\vec{z};pq v^{\pm 1}/tc,
t^{k_1/2}x_1^{\pm 1},\dots,t^{k_m/2}x_m^{\pm 1};p,q\big).
\end{gather*}
\end{prop}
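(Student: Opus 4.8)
The plan is to obtain the identity as a pinched-contour limit of the integral representation that defines $\cK^{(n)}$ inductively (equivalently, the analytic form of Lemma~\ref{lem:intrep_t}), with the parameter $c$ there replaced by $c\sqrt t$. For generic $\vec x$ this expresses $\cK^{(n)}_{c\sqrt t}(\vec x;\vec y,v;t;p,q)$ as $\prod_{i}\Gampq(ct^{1/2}v^{\pm1}x_i^{\pm1})$ divided by $\Gampq(t)^n\Gampq(tc^2)\prod_{i<j}\Gampq(t x_i^{\pm1}x_j^{\pm1})$, times an $(n-1)$-dimensional integral of $\cK^{(n-1)}_c(\vec z;\vec y;t;p,q)$ against $\Delta^{(n-1)}_D(\vec z;pqv^{\pm1}/tc,t^{1/2}x_1^{\pm1},\dots,t^{1/2}x_n^{\pm1};p,q)$. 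The plan is then to let $\vec x$ degenerate to the union $([x_1;t]_{k_1},\dots,[x_m;t]_{k_m})$; for generic remaining parameters the left side $\cK^{(n)}_{c\sqrt t}$ is holomorphic at that point, since by Theorem~\ref{thm:kern_poles} its $\vec x$-poles occur only where $(pq/t)x_i^{\pm1}x_j^{\pm1}$ or $c x_i^{\pm1}y_j^{\pm1}$ equals some $p^{-a}q^{-b}$ with $a,b\ge0$, which does not happen on a geometric progression of step $t$. So the left side just limits to the value we want, and the entire content is to take the limit of the right side.

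As $\vec x$ degenerates, the prefactor $\prod_{1\le i<j\le n}\Gampq(t x_i^{\pm1}x_j^{\pm1})^{-1}$ acquires a zero of order exactly $n-m=\sum_a(k_a-1)$: for generic $t$ the only singular sub-factor is $\Gampq(tx_i/x_j)^{-1}=\Gampq(1)^{-1}$, produced by a pair of \emph{consecutive} entries within a single progression, and there are $\sum_a(k_a-1)$ such pairs. At the same time the integral develops a pinch of exactly the same order: for each consecutive pair, the two poles of the integrand (in a single integration variable) that the contour separates, at $z_\bullet=t^{1/2}x_j$ and $z_\bullet=t^{-1/2}x_{j'}$, collide as $x_{j'}\to tx_j$; no other collisions arise, since the only $\vec x$-dependent factors in the integrand are the $\Gampq(t^{1/2}x_j^{\pm1}z_i^{\pm1})$, while the $\vec z$-poles of the inner kernel $\cK^{(n-1)}_c$ (again controlled by Theorem~\ref{thm:kern_poles}) do not move with $\vec x$. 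Deforming contours before taking the limit, the limit is computed by the iterated residues at these $n-m$ pinches; the zeros $\prod_{i<i'}\Gampq(z_i^{\pm1}z_{i'}^{\pm1})^{-1}$ of $\Delta^{(n-1)}_D$ force each pinch to freeze a distinct integration variable, and the frozen variables land on the points $t^{(k_a-2l)/2}x_a$, $1\le l\le k_a-1$, i.e.\ on $\bigcup_a[x_a;t]_{k_a-1}$; restoring these as explicit kernel arguments yields $\cK^{(n-1)}_c(\vec z,[x_1;t]_{k_1-1},\dots,[x_m;t]_{k_m-1};\vec y;t;p,q)$ with $m-1$ remaining integration variables.

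What remains is elliptic-Gamma bookkeeping, using the reflection and shift equations and $\lim_{w\to1}(1-w)\Gampq(w)=((p;p)(q;q))^{-1}$. The $\Gampq(1)^{-1}$ zeros of the prefactor cancel the $\Gampq(\to1)$ poles from the residues, the surviving constants rebuilding the normalization of $\Delta^{(m-1)}_D$ once the $2^\bullet\,\bullet!$ factors are absorbed into the number of equivalent (permuted and reflected) residue configurations, leaving a single representative term. For each progression the $v$-dependent numerator factors telescope against the factors $\Gampq(tcv^{\pm1}z_\bullet^{\pm1})^{-1}$ evaluated at the frozen points, leaving exactly $\Gampq(ct^{1-k_a/2}x_a^{\pm1}v^{\pm1})$; the factors $\Gampq(t^{1/2}x_j^{\pm1}z_i^{\pm1})$ for a surviving $z_i$, ranged over a whole progression, telescope against the factors $\Gampq(z_i^{\pm1}z_\bullet^{\pm1})^{-1}$ of $\Delta^{(n-1)}_D$ at the frozen $z_\bullet$'s of that progression, leaving the single univariate parameter $\Gampq(t^{k_a/2}x_a^{\pm1}z_i^{\pm1})$; the factors $\Gampq(tcv^{\pm1}z_i^{\pm1})^{-1}$ at surviving $z_i$ become, by reflection, the parameter pair $pqv^{\pm1}/tc$; and the cross-progression and within-progression-nonconsecutive pieces of $\prod_{i<j}\Gampq(tx_i^{\pm1}x_j^{\pm1})^{-1}$, together with $\Gampq(t)^n$, collapse to $\prod_{i<j}\Gampq(t^{(k_i+k_j)/2}x_i^{\pm1}x_j^{\pm1})^{-1}\prod_i\Gampq(t^{k_i})^{-1}$, while $\Gampq(tc^2)^{-1}$ passes through unchanged. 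Matching these against the claimed right-hand side completes the proof.

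The case $k_1=\dots=k_m=1$ is a useful consistency check: there $n=m$, no pinch occurs, and the identity reduces verbatim to the defining integral representation of $\cK^{(n)}$, with $\Delta^{(m-1)}_D$ carrying the parameters $pqv^{\pm1}/tc,\,t^{1/2}x_1^{\pm1},\dots,t^{1/2}x_n^{\pm1}$. The main obstacle I expect is the pinch analysis itself: one must verify that the degeneration of $\vec x$ produces \emph{exactly} the $n-m$ simple pinches described—that each consecutive-pair collision is a genuine pinch of the deformed contour and that nothing coming from the inner kernel $\cK^{(n-1)}_c$ contributes further—and that the iterated residues assemble into the single clean term above rather than an unwieldy alternating sum; by contrast the Gamma-function identities in the previous step, though numerous, are mechanical given the functional equations recalled in the introduction. (Alternatively, the Gamma bookkeeping could be cross-checked by specializing $(\vec y,v)$ to a partition and invoking the generalized branching rule \cite[Thm.~4.16]{bctheta} together with the density of partition specializations among symmetric Laurent polynomials of bounded degree, in the style of the appendix; but the pinched-contour derivation seems the more direct route.)
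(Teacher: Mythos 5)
Your proposal is correct in spirit and shares the paper's core mechanism — the identity is the limit of the integral representation as the $x$-variables collide into geometric progressions, computed by residues at pinched contours — but it is organized differently, and the difference matters for rigor. The paper proceeds by induction on $n-m$: the base case $m=n$ is literally the integral representation (your consistency check), and the inductive step merges just two progressions, observing that the left-hand side for $m-1$ progressions is the limit $x_m\to t^{-(k_m+k_{m-1})/2}x_{m-1}$ of the already-established case with $m$ progressions. In that step only a \emph{single} integration variable of the $(m-1)$-dimensional integral is pinched (the contour must contain $t^{k_m/2}x_m$ and exclude $t^{-k_{m-1}/2}x_{m-1}$, and generically nothing else obstructs), and the vanishing of the prefactor $\Gampq(t^{(k_{m-1}+k_m)/2}x_m/x_{m-1})^{-1}$ guarantees that only the residue term survives, so each step is one elementary residue. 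Your one-shot degeneration from $m=n$ instead requires handling all $n-m$ pinches simultaneously: verifying that the iterated residues freeze distinct variables at exactly the points of $\bigcup_a[x_a;t]_{k_a-1}$, that the inner kernel contributes no further collisions, and that the $2^{n-1}(n-1)!$ symmetrization of $\Delta^{(n-1)}_D$ collapses the many equivalent residue configurations to a single term — precisely the bookkeeping you flag as the main obstacle and do not carry out. This is a gap in execution rather than in the idea, and the cleanest repair is exactly the incremental version: degenerate one adjacent pair at a time (equivalently, induct on $n-m$), after which your Gamma-function telescoping, which is essentially right, reduces to the single-pinch computation the paper performs.
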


\begin{proof}
 If $m=n$, this is just the usual integral representation; in general, one
 can proceed by induction in $n-m$. Indeed, the limit $x_m\to
 t^{-(k_m+k_{m-1})/2}x_{m-1}$ of the left-hand side is the general case
 with $m-1$ geometric sequences, so it suffices to verify that the above
 formula is consistent with this limit. Before taking the limit, the
 constraint on the contour $C$ for the integrals is that (a) $C=C^{-1}$
 (corresponding to the symmetry of the integral), (b) $C$ contains
 $(pq/t)C$ (corresponding to the factors $((pq/t)z_i^{\pm 1}z_j^{\pm
 1};p,q)$ of the poles), and (c) $C$ contains every doubly-geometric
 sequence of poles converging to $0$. If $p$ is sufficiently small and
 the parameters are otherwise generic, the only obstruction to these
 conditions is the requirement that $C$ contain $t^{k_m/2}x_m$ and exclude
 $t^{-k_{m-1}/2}x_{m-1}$. Thus we can compute the limit by moving
 the contour through $t^{k_m/2}x_m$ before taking the limit; the prefactor
 $\Gampq(t^{(k_{m-1}+k_m)/2} x_m/x_{m-1})$ ensures that only the residues
 contribute to the limit, which is then straightforward to compute.
\end{proof}

Similarly, the case $d=\sqrt{t}$ of the braid relation has the following
geometric progression limit.

\begin{prop}\label{prop:braid_k}
 If $k_1,\dots,k_m$ are positive integers summing to $n$, and $u_0u_1 =
 pq/tc^2$, then
\begin{gather*}
\cK^{(n)}_{c\sqrt{t}}([x_1;t]_{k_1},\dots,[x_m;t]_{k_m};\vec{y};t;p,q)
\prod_{1\le i\le n} \Gampq\big(c u_0 y_i^{\pm 1},cu_1 y_i^{\pm 1}\big)\\
=\frac{1}{\prod\limits_{1\le i\le m}\Gampq\big(t^{k_i},t^{k_i/2} u_0 x_i^{\pm 1},t^{k_i/2} u_1 x_i^{\pm 1}\big)
\prod\limits_{1\le i<j\le m}\Gampq\big(t^{(k_i+k_j)/2} x_i^{\pm 1}x_j^{\pm 1}\big)}\\
\times \!\int\!
\cK^{(n)}_c(\vec{z},[x_1;t]_{k_1-1},\dots,[x_m;t]_{k_m-1};\vec{y};t;p,q)
\Delta^{(m)}_D\big(\vec{z};u_0,u_1,t^{k_1/2}x_1^{\pm 1},\dots,t^{k_m/2}x_m^{\pm 1};p,q\big).
\end{gather*}
\end{prop}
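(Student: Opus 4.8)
The plan is to mimic the proof of Proposition~\ref{prop:branch_k}, inducting on $n-m$: the base case $m=n$ now comes from the braid relation (rather than from the integral representation), and for $m<n$ one merges two of the geometric progressions by a pinched-contour limit exactly as there.

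\emph{Base case.} When $m=n$ every $k_i=1$ and $[x_i;t]_1=x_i$, and the claim is essentially the braid relation (Proposition~\ref{prop:kern_braid}) in its analytic form, specialized to $d=\sqrt t$. The point is that $\cK^{(n)}_{\sqrt t}$ has the explicit product formula (established just before Corollary~\ref{cor:t_symmetry}), so $\cK^{(n)}_{\sqrt t}(\vec z;\vec y;t;p,q)\,\Delta^{(n)}_S(\vec z;u_0,u_1;t;p,q)$ collapses, up to a factor independent of $\vec z$, to $\Delta^{(n)}_D(\vec z;u_0,u_1,t^{1/2}y_1^{\pm 1},\dots,t^{1/2}y_n^{\pm 1};p,q)$. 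Feeding this into the braid relation (whose balancing condition $u_0u_1=pq/c^2d^2$ becomes $u_0u_1=pq/tc^2$), moving the $\Gampq$ prefactors across, and applying the $\vec x\leftrightarrow\vec y$ symmetry of $\cK^{(n)}$ yields exactly the $m=n$ case. Here $\ord(\sqrt t)=0$ lies outside the stated range of Proposition~\ref{prop:kern_braid}, but the braid relation holds on an open region of the $(c,d,\dots)$-space and so extends to $d=\sqrt t$ by analytic continuation (equivalently, by the extension to $c=t^{1/2}$ noted just after the braid relation).

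\emph{Inductive step.} Assume the identity for all compositions of $n$ with $m$ parts, and fix a composition of $n$ into $m-1$ parts; since $m-1<n$ some part has size $\ell\ge2$, and as both sides are invariant under simultaneously permuting the pairs $(x_i,k_i)$ I may take it to be the last. I would apply the $m$-part identity with $k_{m-1}=\ell-1$, $k_m=1$ and send $x_m\to t^{-(k_{m-1}+k_m)/2}x_{m-1}=t^{-\ell/2}x_{m-1}$, so that $[x_{m-1};t]_{k_{m-1}}$ and $[x_m;t]_{k_m}$ merge into $[t^{-1/2}x_{m-1};t]_{\ell}$. On the left the $\Gampq$ prefactors reorganize, via the shift equations for $\Gampq$, into those of the $(m-1)$-part formula, and the left side stays finite: by Theorem~\ref{thm:kern_poles} a pole of $\cK^{(n)}_{c\sqrt t}$ in the $x$-variables requires some $x_ix_j^{\pm1}$ to equal $t/pq$ times a nonpositive power of $p$ and of $q$, which cannot occur for two members of one step-$t$ progression at generic parameters. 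Hence the right side must also be finite; but its prefactor, which contains $\Gampq(t^{(k_{m-1}+k_m)/2}x_m/x_{m-1})$ in its denominator, \emph{vanishes} as that argument tends to $1$ (the elliptic Gamma has a simple pole there), so the Dixon integral on the right must develop a compensating simple pole. This is the familiar contour pinch: in the limit the Dixon parameter $t^{k_m/2}x_m$ tends to $t^{-k_{m-1}/2}x_{m-1}$ and collides with the parameter $t^{k_{m-1}/2}/x_{m-1}$, trapping a pole of $\Delta^{(m)}_D$ running to $0$ against one running to $\infty$; for $p$ small and the remaining parameters generic, this is the only obstruction to a legal contour (one that is inversion-invariant, contains $(pq/t)C$, and encloses exactly the doubly-geometric progressions of poles tending to $0$). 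As in Proposition~\ref{prop:branch_k}, I would then compute the limit by deforming the contour through $t^{k_m/2}x_m$ before taking it (the vanishing prefactor killing the remaining principal-value part), extract the residue with $\lim_{x\to1}(1-x)\Gampq(x)=1/((p;p)(q;q))$, and simplify using the reflection rule $\Gampq(z)\Gampq(pq/z)=1$; the result is an $(m-1)$-dimensional integral of $\cK^{(n)}_c(\vec z,[x_1;t]_{k_1-1},\dots,[t^{-1/2}x_{m-1};t]_{\ell-1};\vec y;t;p,q)$ against $\Delta^{(m-1)}_D$ with the appropriately shifted parameters, which is precisely the $(m-1)$-part identity.

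\emph{Main obstacle.} The delicate step is the pinched-contour bookkeeping in the induction: verifying that exactly one pinch occurs (the genericity claim, where the pole bound of Theorem~\ref{thm:kern_poles} is exactly what one needs), that the residue of $\Delta^{(m)}_D$ at the pinch is again a Dixon density of the same order with precisely the advertised parameters (the two new parameters the residue produces cancel two old ones by the reflection rule, keeping the parameter count---and hence the order---unchanged), and that all the $\Gampq$ prefactors balance after the shift $k_i\mapsto k_i-1$ and the emergence of the merged progression. These are routine computations with the functional equations of $\Gampq$; finiteness of the left-hand side, as in Proposition~\ref{prop:branch_k}, is what guarantees that the residue is the whole story.
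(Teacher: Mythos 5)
Your proposal is correct and matches the paper's (implicit) argument: the paper presents Proposition \ref{prop:braid_k} precisely as the geometric-progression limit of the $d=\sqrt{t}$ case of the braid relation, obtained by the same contour-pinch induction used in the proof of Proposition \ref{prop:branch_k}. Your base case (collapsing $\cK^{(n)}_{\sqrt t}\,\Delta^{(n)}_S$ to a Dixon density inside the braid relation) and your inductive merging of two step-$t$ progressions, with the vanishing prefactor forcing only the pinched residue to survive, are exactly the intended steps.
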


A natural question, given that the kernel has the above simple poles is whether we can characterize the residues along those poles.
The poles involving two $x$ or two $y$ variables can be resolved using the
$t\mapsto pq/t$ symmetry; for the simplest instance of the remaining poles,
we have the following. Note that when $y$ is specialized to a partition,
this is simply the case $k=1$ of equation (3.43) of \cite{bctheta}.

\begin{lem}\label{lem:kern_res}
The interpolation kernel has the limiting case
\begin{gather*}
\lim_{x_n\to cy_n} \cK^{(n)}_c(x_1,\dots,x_n;y_1,\dots,y_n;t;p,q)
\frac{\Gampq\big(t,c^2\big)} {\Gampq\big(c x_n^{\pm 1}y_n^{\pm 1}\big)} \\
\qquad{}= \prod_{1\le i\le n-1}
\frac{\Gampq\big(c x_i^{\pm 1} y_n, y_i^{\pm 1}/y_n\big)} {\Gampq\big(t c x_i^{\pm 1}y_n,ty_i^{\pm 1}/y_n\big)}
\cK^{(n-1)}_c(x_1,\dots,x_{n-1};y_1,\dots,y_{n-1};t;p,q).
\end{gather*}
\end{lem}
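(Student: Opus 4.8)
The plan is to reduce to the already-known terminating identity (\cite[Eq.~(3.43)]{bctheta}, case $k=1$) using the rationality/Zariski-density machinery established above. First I would reduce to the formal kernel: since $\cK^{(n)}_c$ agrees with $K^{(n)}_c$ on a nonempty open range of valuations, and both sides of the claimed limit are meromorphic in all parameters, it suffices to verify the identity when the parameters are taken to be formal Puiseux series with $\ord(q)=\ord(t)=0$ and $0<\ord(c)\le 1/2$. There the coefficient of each $p^\alpha$ in (the suitably normalized) $K^{(n)}_c(\vec x;\vec y;q,t;p)$ is a hyperoctahedrally symmetric Laurent polynomial in each of $\vec x$, $\vec y$ with rational-function coefficients (Corollary following Lemma~2.5). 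The operation $\lim_{x_n\to cy_n}$ followed by the stated renormalization by $\Gampq(t,c^2)/\Gampq(c x_n^{\pm1}y_n^{\pm1})$ — note the zero of $1/\Gampq(cx_n/y_n)$ at $x_n=cy_n$ cancels the pole of $\cK^{(n)}_c$ guaranteed by Theorem~\ref{thm:kern_poles} — is, order-by-order in $p$, the residue of a rational function in $x_n$ at $x_n=cy_n$ (after clearing the single controlled pole). Hence both sides of the asserted identity are, coefficientwise in $p$, rational functions of $\vec x$, $\vec y$ and the remaining parameters.

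Next I would check the identity on a Zariski-dense set. The natural choice, exactly as in the proof of the Theorem specializing $K^{(n)}_c$ to interpolation functions, is to set $y_i=q^{\nu_i}t^{n-i}a$ for an arbitrary partition $\nu$ (with $a$ of order $0$). By that Theorem, for such $\vec y$,
\[
K^{(n)}_c(\vec x;\dots,t^{n-i}q^{\nu_i}a,\dots;q,t;p)
=
\prod_{1\le i\le n}
\frac{\Gampq(acx_i^{\pm1},(c/t^{n-1}a)x_i^{\pm1})}{\Gampq(t^{1-i}c^2,t^i)}\,
R^{*(n)}_\nu(\vec x;ac,c/t^{n-1}a;q,t;p),
\]
and similarly the $(n-1)$-dimensional kernel on the right becomes an explicit prefactor times $R^{*(n-1)}_{\nu'}$, where $\nu'=(\nu_1,\dots,\nu_{n-1})$ is $\nu$ with its last part deleted (this is forced because $y_n=q^{\nu_n}a$ does not appear on the right, and the residue at $x_n=cy_n$ selects exactly the terms in the branching of $R^{*(n)}_\nu$ that survive deleting the $n$-th variable). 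So the asserted kernel identity, restricted to this dense set, becomes precisely the branching/residue relation for $p$-elliptic interpolation functions — the $k=1$ case of \cite[Eq.~(3.43)]{bctheta} — after matching the elliptic-Gamma prefactors. That matching is a routine but lengthy simplification using the functional and reflection equations for $\Gampq$ (and the special values listed in the Notation section); I would state it holds and omit the calculation, as is done elsewhere in the paper.

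Finally, having verified equality of two families of $p$-coefficientwise rational functions on the Zariski-dense set $\{\vec y = q^\nu t^{n-\bullet}a : \nu \text{ a partition},\ \ord(a)=0\}$, I conclude equality identically, hence for the formal kernel, hence (by meromorphy and the agreement range) for the analytic kernel $\cK^{(n)}_c$ on its full domain. The main obstacle I anticipate is not conceptual but bookkeeping: one must verify that the limit $x_n\to cy_n$ genuinely commutes with term-by-term extraction of $p$-coefficients — i.e., that no spurious poles in $x_n$ of higher order appear at any finite order in $p$ — which follows from Theorem~\ref{thm:kern_poles} (the product $\cK^{(n)}_c\cdot\prod(cx_i^{\pm1}y_j^{\pm1};p,q)\cdots$ is holomorphic, so the only $x_n$-pole near $x_n=cy_n$ is the simple one being divided out), together with the degree bound on the $p$-coefficients from Corollary~2.6 ensuring that the residue operation preserves rationality in the remaining variables. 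The secondary obstacle is the prefactor identity matching the two sides on the dense set, which requires care with the $t^{n-i}$ versus $t^{(n-1)-i}$ shifts in passing from dimension $n$ to $n-1$, but is a mechanical consequence of the elliptic Gamma functional equations.
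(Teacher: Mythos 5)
Your plan founders at the very first step, the ``reduction to the formal kernel.'' The divisor $x_n=cy_n$ on which the limit is taken lies \emph{outside} the domain of formal convergence: the expansion of $K^{(n)}_c$ requires $\max_i|\ord(x_i)|+\max_i|\ord(y_i)|<\ord(c)$, whereas $x_n=cy_n$ forces $|\ord(x_n)|=\ord(c)+\ord(y_n)$. This is not a technicality — it is precisely why the pole exists there while every $p$-coefficient of the (normalized) formal kernel is a Laurent \emph{polynomial} in $x_n$, hence has no pole at all at any finite order in $p$. So your claim that the limit is ``order-by-order in $p$, the residue of a rational function in $x_n$'' is not correct: the pole at $x_n=cy_n$ is invisible coefficientwise and emerges only after analytic resummation (equivalently, substituting $x_n=cy_n$ with $\ord(c)>0$ mixes unboundedly many $p$-orders, since the coefficient of $p^\alpha$ has degree up to $\alpha/\ord(c)$ in $x_n$, so each order of the substituted series receives infinitely many contributions). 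Consequently you have no a priori Puiseux expansion of the left-hand side with rational coefficients, and without that rational structure the Zariski-density step collapses: the partition specializations of $\vec{y}$ form a countable union of curves, and two \emph{meromorphic} (as opposed to coefficientwise rational) functions can perfectly well agree on such a set without being equal. Theorem \ref{thm:kern_poles} controls the location and order of the pole but does not supply the needed expansion of its residue. The observation that the specialized statement is the $k=1$ case of \cite[(3.43)]{bctheta} is correct — the paper says exactly this just before the Lemma — but there it serves as a consistency check, not as a proof.

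The paper's actual argument is analytic and short: write the left-hand side via the inductive integral representation of $\cK^{(n)}_c(\vec{x};y_1,\dots,y_{n-1},y_n;t;p,q)$, branching on $y_n$, so that the pole at $x_n=cy_n$ sits entirely in the explicit prefactor $\prod_i\Gampq(cy_n^{\pm1}x_i^{\pm1})$; after the stated renormalization the limit is a plain substitution inside the integral, and the resulting $(n-1)$-dimensional integral is exactly the $d=t^{1/2}$ case of the braid relation (Proposition \ref{prop:kern_braid}), which yields the right-hand side. If you want to salvage your approach, you would first need an expansion of the residue with controlled (rational) coefficients in the remaining variables, and the only available source for that is again the integral representation — at which point you have essentially reproduced the paper's proof.
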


\begin{proof}
If we represent the left-hand side via the integral representation,
branching on $y_n$, the limit becomes a simple substitution, and the
resulting integral is just the case $d=t^{1/2}$ of the braid relation.
\end{proof}

One advantage of the kernel over interpolation functions is the fact that
the braid relation acts as a sort of Bailey lemma (see~\cite{SpiridonovVP:2004} for the univariate version). In particular, this
allows us to greatly simplify (and generalize to the kernel) the arguments
of \cite[Section~9]{xforms}. The main identity there is \cite[Theorem~9.7]{xforms}
(which generates the $W(E_7)$ symmetry of the elliptic Selberg integral),
which becomes the following identity in terms of the interpolation kernel.

\begin{thm}\label{thm:bailey_xform}
Let $v_0$, $v_1$, $w_0$, $w_1$, $c$, $d$, $u$ be parameters such that
$u^2=v_0v_1c^2/pq=pq/d^2w_0w_1$. Then
\begin{gather*}
\int_{C^n} \cK^{(n)}_c(\vec{z};\vec{x};t;p,q)\cK^{(n)}_d(\vec{z};\vec{y};t;p,q)
\Delta^{(n)}_S(\vec{z};v_0,v_1,w_0,w_1;t;p,q)\\
\qquad{} = \prod_{1\le i\le n} \Gampq\big(cv_0x_i^{\pm 1},cv_1x_i^{\pm 1}\big)
\prod_{1\le i\le n} \Gampq\big(dw_0y_i^{\pm 1},dw_1y_i^{\pm 1}\big) \\
\qquad\quad{}\times
\int_{C^n}
\cK^{(n)}_{c/u}(\vec{z};\vec{x};t;p,q)
\cK^{(n)}_{du}(\vec{z};\vec{y};t;p,q)
\Delta^{(n)}_S(\vec{z};v_0/u,v_1/u,w_0u,w_1u;t;p,q).
\end{gather*}
\end{thm}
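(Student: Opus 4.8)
The plan is to prove this as a ``Bailey lemma'': the identity is exactly two applications of the braid relation (Proposition~\ref{prop:kern_braid}), one of them run backwards, and the two balancing conditions $u^2=v_0v_1c^2/pq$ and $u^2=pq/d^2w_0w_1$ are precisely what licenses those two applications. Since both sides are meromorphic in $(c,d,v_0,v_1,w_0,w_1,\vec x,\vec y)$ --- the interpolation kernels by their construction, the $\Gamma$-products trivially, and the contour integrals by the general considerations of \cite[\S10]{xforms} --- it suffices to prove the identity on a nonempty open set and then continue meromorphically. I would choose parameters so that, writing $u$ for a square root of $v_0v_1c^2/pq=pq/d^2w_0w_1$, one has $0<\ord(u)<\ord(c)$, $0<\ord(d)$, $\ord(c)\le\tfrac12$, $\ord(u)+\ord(d)\le\tfrac12$, and $v_0/u,v_1/u,w_0,w_1,uw_0,uw_1$ all of nonnegative valuation (a short check shows this is possible, e.g.\ $\ord(c)=\ord(d)=\tfrac14$ and $\ord(v_0),\ord(v_1)$ slightly larger than $0.3$); alternatively one could work throughout with the formal kernel, where integrals are computed termwise so the interchange below needs no justification, and then transfer to $\cK^{(n)}$ as usual.

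The key step is to run the braid relation backwards to peel two $\Gamma$-factors off the Selberg density. Applying the (analytic form of the) braid relation with kernel parameters $u$ and $c/u$, so that $u\cdot(c/u)=c$, and Selberg parameters $v_0/u,v_1/u$, whose product $v_0v_1/u^2$ equals $pq/c^2$ precisely because $u^2=v_0v_1c^2/pq$, and then using $\Gampq(pq/z)=\Gampq(z)^{-1}$ to simplify the $x$-dependent prefactor (so that $\prod_i\Gampq((c/u)(v_0/u)x_i^{\pm1},(c/u)(v_1/u)x_i^{\pm1})$ becomes $\prod_i\Gampq(cv_0x_i^{\pm1},cv_1x_i^{\pm1})^{-1}$), I obtain
\begin{align*}
\cK^{(n)}_c(\vec z;\vec x)\prod_{1\le i\le n}\Gampq(v_0z_i^{\pm1},v_1z_i^{\pm1})
={}&\prod_{1\le i\le n}\Gampq(cv_0x_i^{\pm1},cv_1x_i^{\pm1})\\
&{}\times\int_{C^n}\cK^{(n)}_u(\vec w;\vec z)\,\cK^{(n)}_{c/u}(\vec w;\vec x)\,\Delta^{(n)}_S(\vec w;v_0/u,v_1/u;t;p,q).
\end{align*}

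Now write $\Delta^{(n)}_S(\vec z;v_0,v_1,w_0,w_1;t;p,q)=\prod_i\Gampq(v_0z_i^{\pm1},v_1z_i^{\pm1})\cdot\prod_i\Gampq(w_0z_i^{\pm1},w_1z_i^{\pm1})\cdot\Delta^{(n)}_S(\vec z;t;p,q)$, substitute the displayed identity for $\cK^{(n)}_c(\vec z;\vec x)$ times the first product into the left-hand side of the theorem, and interchange the $\vec z$- and $\vec w$-integrations. Using the symmetry $\cK^{(n)}_u(\vec w;\vec z)=\cK^{(n)}_u(\vec z;\vec w)$, the inner ($\vec z$) integral becomes $\int_{C^n}\cK^{(n)}_u(\vec z;\vec w)\,\cK^{(n)}_d(\vec z;\vec y)\,\Delta^{(n)}_S(\vec z;w_0,w_1;t;p,q)$, which is a forward instance of the braid relation with kernel parameters $u,d$ and Selberg parameters $w_0,w_1$ --- their product $w_0w_1=pq/u^2d^2$ being exactly the balancing, i.e.\ the second hypothesis --- so it equals $\prod_i\Gampq(uw_0w_i^{\pm1},uw_1w_i^{\pm1},dw_0y_i^{\pm1},dw_1y_i^{\pm1})\,\cK^{(n)}_{ud}(\vec w;\vec y)$. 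Plugging this back, the factor $\prod_i\Gampq(uw_0w_i^{\pm1},uw_1w_i^{\pm1})$ merges with $\Delta^{(n)}_S(\vec w;v_0/u,v_1/u;t;p,q)$ to give $\Delta^{(n)}_S(\vec w;v_0/u,v_1/u,uw_0,uw_1;t;p,q)$, the pieces $\prod_i\Gampq(cv_0x_i^{\pm1},cv_1x_i^{\pm1})$ and $\prod_i\Gampq(dw_0y_i^{\pm1},dw_1y_i^{\pm1})$ come out in front, and renaming $\vec w$ to $\vec z$ (noting $\cK^{(n)}_{ud}=\cK^{(n)}_{du}$ and $uw_r=w_ru$) gives exactly the right-hand side of the theorem. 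Meromorphic continuation then removes the auxiliary valuation restrictions.

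I expect the main obstacle to be bookkeeping rather than anything conceptual, in two respects. First, the $\Gamma$-function accounting: one must verify that the prefactors produced by the two uses of the braid relation, once rewritten with the reflection equation, assemble into precisely $\prod_i\Gampq(cv_0x_i^{\pm1},cv_1x_i^{\pm1},dw_0y_i^{\pm1},dw_1y_i^{\pm1})$ --- no more, no less --- which is where the exact combinations in the statement come from. Second, the analytic hygiene: one needs a single contour $C$ on which both nested integrals converge absolutely, so that the interchange of integrations is valid and the backwards use of the braid relation is an identity of genuine meromorphic functions and not merely of formal Puiseux series; securing this, and then invoking the meromorphic-continuation principle, is the fussier part, which is exactly why carrying out the combinatorics at the formal level and transferring afterwards is an attractive alternative.
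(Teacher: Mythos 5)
Your proposal is correct and follows essentially the same route as the paper's proof: expand $\cK^{(n)}_c$ on the left via the braid relation into an integral involving $\cK^{(n)}_u$ and $\cK^{(n)}_{c/u}$, interchange the order of integration on a parameter region where all contours can be taken to be the unit circle, apply the braid relation again with parameters $u$ and $d$, and finish by meromorphic continuation. The $\Gamma$-function bookkeeping via the reflection relation and the valuation checks you supply are consistent with what the paper leaves implicit.
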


\begin{proof} Use the braid relation to expand $\cK^{(n)}_c$ on the left as an integral
involving~$\cK^{(n)}_{c/u}$ and~$\cK^{(n)}_{\vphantom{c/u}u}$, then change the order of
integration and apply the braid relation again. Note that there is a~range
of parameters where the contours can all be taken to be the unit circle, so
the change in order of integration is legal, and extends to an identity of
meromorphic functions.
\end{proof}

\begin{rem} This argument is essentially the same as the proof of the multivariate
elliptic Bailey transformation, \cite[Theorem~4.9]{bctheta}, except that we
have replaced the elliptic binomial coefficients by the interpolation
kernel.
\end{rem}

The left-hand side is invariant under the natural action of $S_4$ on
$v_0$, $v_1$, $w_0$, $w_1$, and together with the above transformation gives an
action of $D_4$. As in~\cite{xforms}, this gives another identity,
corresponding to the third nontrivial double coset of $S_4$ in $D_4$.

\begin{cor}\label{cor:ker_comm}
Let $u_0u_1u_2u_3c^2d^2=p^2q^2$. Then
\begin{gather*}
\int \cK^{(n)}_c(\vec{z};\vec{x};t;p,q)\cK^{(n)}_d(\vec{z};\vec{y};t;p,q)\Delta^{(n)}_S(\vec{z};u_0,u_1,u_2,u_3;t;p,q)\\
\qquad {}= \prod_{\substack{1\le i\le n\\0\le r\le 3}} \Gampq\big(d u_r y_i^{\pm 1}\big)
\prod_{\substack{1\le i\le n\\0\le r\le 3}} \Gampq\big(c u_r x_i^{\pm 1}\big) \\
\qquad\quad{}\times \int
\cK^{(n)}_d(\vec{z};\vec{x};t;p,q)
\cK^{(n)}_c(\vec{z};\vec{y};t;p,q)
\Delta^{(n)}_S(\vec{z};pq/cdu_0,\dots,pq/cdu_3;t;p,q).
\end{gather*}
\end{cor}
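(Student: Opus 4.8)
The plan is to obtain Corollary~\ref{cor:ker_comm} from Theorem~\ref{thm:bailey_xform} in exactly the way the $W(E_7)$ identity is produced from the Bailey transformation in \cite[\S 9]{xforms}. The left-hand side $\int\cK^{(n)}_c(\vec z;\vec x)\cK^{(n)}_d(\vec z;\vec y)\Delta^{(n)}_S(\vec z;u_0,u_1,u_2,u_3;t;p,q)$ is invariant under the full $S_4$ permuting $u_0,u_1,u_2,u_3$, since $\Delta^{(n)}_S$ is symmetric in those parameters and the two kernels do not involve them. Conjugating the transformation of Theorem~\ref{thm:bailey_xform} by elements of this $S_4$ therefore yields further identities, and the orbit of the trivial identity is organized by the three double cosets of $S_4$ in the resulting group: the empty ``pairing flip'', the single flip (Theorem~\ref{thm:bailey_xform} itself, which redistributes two of the parameters between $c$ and $d$), and the flip of all four parameters. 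Since the last is the composite of two single flips with disjoint support, I expect Corollary~\ref{cor:ker_comm} to follow by applying Theorem~\ref{thm:bailey_xform} twice, with a relabelling of the density parameters in between.

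Concretely: first apply Theorem~\ref{thm:bailey_xform} with $(v_0,v_1)=(u_0,u_1)$ paired to $c$ and $(w_0,w_1)=(u_2,u_3)$ paired to $d$, introducing $u$ with $u^2=u_0u_1c^2/pq=pq/d^2u_2u_3$; note that the equality of these two expressions for $u^2$ is precisely the balancing condition $u_0u_1u_2u_3c^2d^2=p^2q^2$. This rewrites the integral, up to the prefactor $\prod_i\Gampq(cu_0x_i^{\pm1},cu_1x_i^{\pm1},du_2y_i^{\pm1},du_3y_i^{\pm1})$, as $\int\cK^{(n)}_{c/u}(\vec z;\vec x)\cK^{(n)}_{du}(\vec z;\vec y)\Delta^{(n)}_S(\vec z;u_0/u,u_1/u,u_2u,u_3u;t;p,q)$. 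Now apply Theorem~\ref{thm:bailey_xform} again to this integral, pairing $\{u_2u,u_3u\}$ with $c/u$ and $\{u_0/u,u_1/u\}$ with $du$, introducing $u'$ with $u'^2=u_2u_3c^2/pq$; the same balancing condition guarantees $u'^2=pq/(du)^2(u_0/u)(u_1/u)$ as well, so the second application is legitimate, and multiplying the two relations for $u^2$ and $u'^2$ gives $u^2u'^2=c^2/d^2$, i.e. $uu'=c/d$ for a compatible choice of square roots. Feeding this back in, the kernel subscripts collapse to $(c/u)/u'=d$ and $(du)u'=c$ (so the two kernels swap their roles on $\vec x$ and $\vec y$), and the density parameters become $u_2u/u'=pq/cdu_3$ and its three analogues, so after using the symmetry of $\Delta^{(n)}_S$ the resulting integral is exactly $\int\cK^{(n)}_d(\vec z;\vec x)\cK^{(n)}_c(\vec z;\vec y)\Delta^{(n)}_S(\vec z;pq/cdu_0,\dots,pq/cdu_3;t;p,q)$.

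It then remains only to collect prefactors: the second application contributes, after the simplifications $(c/u)(u_2u)=cu_2$, $(du)(u_0/u)=du_0$, etc., the factor $\prod_i\Gampq(cu_2x_i^{\pm1},cu_3x_i^{\pm1},du_0y_i^{\pm1},du_1y_i^{\pm1})$, and together with the first prefactor this is precisely $\prod_{1\le i\le n,\,0\le r\le3}\Gampq(cu_rx_i^{\pm1})\prod_{1\le i\le n,\,0\le r\le3}\Gampq(du_ry_i^{\pm1})$, as in the statement. Because Theorem~\ref{thm:bailey_xform} is already an identity of meromorphic functions, the composition requires no new contour analysis beyond fixing compatible square roots $u$, $u'$. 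The only real work is the bookkeeping just indicated; the one place to be careful is checking that the two intermediate balancing constraints are \emph{forced} by the single balancing hypothesis $u_0u_1u_2u_3c^2d^2=p^2q^2$ (they are), and the identification of the composite with the all-four-parameter flip is then automatic from tracking the parameters, so I do not anticipate a genuine obstacle.
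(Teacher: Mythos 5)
Your proposal is correct and is essentially the paper's own argument made explicit: the paper invokes the $S_4$-symmetry of the left-hand side together with Theorem \ref{thm:bailey_xform} to generate a $W(D_4)$ action and identifies the corollary with the third nontrivial double coset, which is exactly your composition of two applications of the Bailey transformation with a relabelling of the density parameters in between. Your bookkeeping (the forced intermediate balancing conditions, $uu'=c/d$ for compatible square roots, and the collection of prefactors) checks out, so no further comment is needed.
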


This can be viewed as a sort of commutation relation; indeed, it corresponds directly to a~commutation relation for the corresponding
integral operators acting on interpolation functions, or for the formal difference operators considered below.

We note some special cases of interest. If $c$ and $d$ are both $\sqrt{t}$
(or, by the $t\mapsto pq/t$ symmetry, if both are equal to $\sqrt{pq/t}$),
the commutation relation becomes an explicit integral transformation
originally proved by van de Bult,~\cite{vandeBultFJ:2009}. If one is
$\sqrt{t}$ and the other is $\sqrt{pq/t}$, the result is a special case of
the elliptic Dixon transformation, \cite[Theorem~3.1]{xforms}.

We record the following degeneration (\`a la Propositions~\ref{prop:branch_k} and~\ref{prop:braid_k} above) for use in Section~\ref{section6} below.

\begin{prop}\label{prop:commut_k}
If $k_1,\dots,k_m$ are positive integers with sum $n$, and $u_0u_1u_2u_3=p^2q^2/tc^2$, then
\begin{gather*}
\int \cK^{(n)}_c (\vec{z};[x_1;t]_{k_1},\dots,[x_m;t]_{k_m};t;p,q) \cK^{(n)}_{\sqrt{t}}(\vec{z};\vec{y};t;p,q)
\Delta^{(n)}_S(\vec{z};u_0,u_1,u_2,u_3;t;p,q)\\
\qquad {}= \frac{
\prod\limits_{\substack{1\le i\le n\\0\le r<4}} \Gampq\big(\sqrt{t} u_r y_i^{\pm 1}\big)
\prod\limits_{\substack{1\le i\le m\\0\le r<4}} \Gampq\big(t^{(1-k_i)/2}cu_r x_i^{\pm 1}\big)}
{\prod\limits_{1\le i\le m}\Gampq\big(t^{k_j}\big)
\prod\limits_{1\le i<j\le m}\Gampq\big(t^{(k_i+k_j)/2} x_i^{\pm 1}x_j^{\pm 1}\big)}\\
\qquad\quad{}\times \int
\cK^{(n)}_c(\vec{z},[x_1;t]_{k_1-1},\dots,[x_m;t]_{k_m-1};\vec{y};t;p,q) \\
\qquad\quad{}\times
\Delta^{(m)}_D\big(\vec{z};pq/t^{1/2}cu_0,\dots,pq/t^{1/2}cu_3,t^{k_1/2}x_1^{\pm 1},\dots,t^{k_m/2}x_m^{\pm 1};p,q\big).
\end{gather*}
\end{prop}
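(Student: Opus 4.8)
The plan is to derive this from Corollary \ref{cor:ker_comm} by the same geometric‑progression degeneration that produces Propositions \ref{prop:branch_k} and \ref{prop:braid_k}. Specializing $d=\sqrt{t}$ in Corollary \ref{cor:ker_comm} (the balancing $u_0u_1u_2u_3c^2d^2=p^2q^2$ becoming $u_0u_1u_2u_3=p^2q^2/tc^2$), its left‑hand side is $\int\cK^{(n)}_c(\vec{z};\vec{x};t;p,q)\cK^{(n)}_{\sqrt{t}}(\vec{z};\vec{y};t;p,q)\Delta^{(n)}_S(\vec{z};u_0,\dots,u_3;t;p,q)$, while the right‑hand side contains $\int\cK^{(n)}_{\sqrt{t}}(\vec{z};\vec{x};t;p,q)\cK^{(n)}_c(\vec{z};\vec{y};t;p,q)\Delta^{(n)}_S(\vec{z};pq/c\sqrt{t}u_0,\dots;t;p,q)$. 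On the right, substitute the product formula for $\cK^{(n)}_{\sqrt{t}}(\vec{z};\vec{x})$ together with $\Delta^{(n)}_S(\vec{z};t;p,q)=\Gampq(t)^n\prod_{i<j}\Gampq(tz_i^{\pm1}z_j^{\pm1})\Delta^{(n)}_D(\vec{z};p,q)$: the $\Gampq(tz_i^{\pm1}z_j^{\pm1})$ factors cancel, the $\Gampq(t^{1/2}z_i^{\pm1}x_j^{\pm1})$ factors are absorbed into the Dixon parameters, and one is left precisely with the $m=n$ case of the proposition (all $k_i=1$), the surviving $\Gampq(t)^n\prod_{i<j}\Gampq(tx_i^{\pm1}x_j^{\pm1})$ becoming the denominator of the prefactor.

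For general $m<n$ I would induct on $n-m$, exactly as in the proof of Proposition \ref{prop:branch_k}. The limit $x_m\to t^{-(k_m+k_{m-1})/2}x_{m-1}$ concatenates the last two progressions into $[t^{-k_m/2}x_{m-1};t]_{k_{m-1}+k_m}$, so the left‑hand side (holomorphic near that value for generic parameters) tends to the $(m-1)$‑progression instance, and it suffices to check the right‑hand side is consistent. Before the limit the contour $C$ of the $m$‑dimensional Dixon integral is constrained only by $C=C^{-1}$, by $C\supset(pq/t)C$ (from the $(pq/t)z_i^{\pm1}z_j^{\pm1}$ poles of $\cK^{(n)}_c(\vec{z},[x_1;t]_{k_1-1},\dots;\vec{y})$ in its $\vec z$‑slots, via Theorem \ref{thm:kern_poles}), and by $C$ surrounding the doubly‑geometric progressions of poles converging to $0$; for $p$ small and parameters otherwise generic the sole obstruction is that $C$ must contain $t^{k_m/2}x_m$ while excluding $t^{-k_{m-1}/2}x_{m-1}$, and these collide in the limit. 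One therefore computes the limit by deforming $C$ through $t^{k_m/2}x_m$; since the factor $1/\Gampq(t^{(k_{m-1}+k_m)/2}x_{m-1}^{\pm1}x_m^{\pm1})$ of the prefactor vanishes at the critical value, the deformed integral drops out and the limit equals the residue at $z_i=t^{k_m/2}x_m$.

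The main obstacle is the bookkeeping for that residue: one must check it reproduces the $(m-1)$‑progression statement. This means evaluating the Dixon integrand at $z_i=t^{k_m/2}x_m=t^{-k_{m-1}/2}x_{m-1}$, using the reflection equation $\Gampq(pq/z)=\Gampq(z)^{-1}$ to collapse the appropriate pair of univariate parameters, and using the branching/residue structure of $\cK^{(n)}_c$ (cf.\ Lemma \ref{lem:kern_res} and Theorem \ref{thm:kern_poles}) to see that $z_i$ together with $[x_m;t]_{k_m-1}$ and $[x_{m-1};t]_{k_{m-1}-1}$ assembles into $[t^{-k_m/2}x_{m-1};t]_{k_{m-1}+k_m-1}$, after which the remaining elliptic Gamma factors must be matched against the prefactor of the reduced formula. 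The delicate point underlying all of this is the contour claim above, which rests on the precise pole bounds of Theorem \ref{thm:kern_poles} (so that no spurious pinch occurs for generic parameters) together with the standard observation that a union of two step‑$t$ progressions of total length $\ell$ differs from a single step‑$t$ progression of length $\ell$ only through a localized contour pinch; granting this, the residue evaluation is a routine if lengthy elliptic Gamma computation, which I would omit.
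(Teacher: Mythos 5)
Your proposal is correct and is essentially the proof the paper intends: Proposition \ref{prop:commut_k} is stated there without a written proof, being recorded as a degeneration ``\`a la'' Propositions \ref{prop:branch_k} and \ref{prop:braid_k}, which is exactly your route. Your base case ($m=n$, obtained from Corollary \ref{cor:ker_comm} with $d=\sqrt{t}$ by substituting the product formula for $\cK^{(n)}_{\sqrt t}$ and absorbing the resulting factors into a Dixon density) and your induction on $n-m$ via the pinched-contour/residue limit $x_m\to t^{-(k_m+k_{m-1})/2}x_{m-1}$ mirror the paper's proof of Proposition \ref{prop:branch_k} step for step.
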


As noted above, if $c=q^{-1/2}$, the integral equation for interpolation functions has the same right-hand side as a known difference equation.
This extends to the following difference analogue of the braid relation.

\begin{prop}\label{prop:diff_braid}
The interpolation function satisfies the generalized eigenvalue equation
\begin{gather*}
D^{(n)}_q\big(t_0,p/c^2t_0;t;p\big)_{\vec{x}}\cK^{(n)}_{q^{1/2}c}(\vec{x};\vec{y};t;p,q)=
\prod_{1\le i\le n} \theta_p\big(ct_0y_i^{\pm 1}\big) \cK^{(n)}_c(\vec{x};\vec{y};t;p,q).
\end{gather*}
\end{prop}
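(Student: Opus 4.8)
The plan is to run the ``specialize the second family of variables to an interpolation locus'' argument used for Propositions~\ref{prop:int_eq_formal} and~\ref{prop:kern_braid}. Both sides of the asserted identity are meromorphic in $\vec x,\vec y,c,t,p,q$, and both have, in the region $\ord(q)=\ord(t)=0$, $\max_i|\ord(x_i)|+\max_i|\ord(y_i)|<\ord(c)\le 1/2$, a formal $p$-Puiseux expansion whose coefficient of $p^\alpha$ is a hyperoctahedrally symmetric Laurent polynomial in $\vec y$ of degree $O(\alpha)$: for the right-hand side this is the corollary bounding the $\vec y$-degree of the coefficients of $K^{(n)}_c$ above, and for the left-hand side it follows because $D^{(n)}_q(t_0,p/c^2t_0;t;p)_{\vec x}$ acts only on $\vec x$, with $\vec y$-independent coefficients having ordinary $p$-power-series expansions (and since $\ord(q)=0$, the $q^{\pm1/2}$-shifts stay inside the region of validity). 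Since a Laurent polynomial of bounded degree is determined by its values at the points $\vec y=(\dots,q^{\mu_i}t^{n-i}A,\dots)$ ($\mu$ a partition with at most $n$ parts, $A$ generic of order $0$) --- the interpolation-theoretic density underlying all such arguments here --- it suffices to prove the identity after this specialization.

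After the specialization, the formula $K^{(n)}_c(\vec x;\dots,t^{n-i}q^{\mu_i}A,\dots)=\prod_i\Gampq(Acx_i^{\pm 1},(c/t^{n-1}A)x_i^{\pm 1})\,\Gampq(t^{1-i}c^2,t^i)^{-1}R^{*(n)}_\mu(\vec x;Ac,c/t^{n-1}A;q,t;p)$ (the theorem above, equally valid for the analytic kernel) turns the right-hand side of Proposition~\ref{prop:diff_braid} into
\[
\prod_{1\le i\le n}\theta_p(ct_0 t^{n-i}q^{\mu_i}A,\,ct_0 t^{i-n}q^{-\mu_i}/A)\;
\prod_{1\le i\le n}\frac{\Gampq(Acx_i^{\pm 1},(c/t^{n-1}A)x_i^{\pm 1})}{\Gampq(t^{1-i}c^2,t^i)}\;
R^{*(n)}_\mu(\vec x;Ac,c/t^{n-1}A;q,t;p),
\]
and the left-hand side into $D^{(n)}_q(t_0,p/c^2t_0;t;p)_{\vec x}$ applied to the same expression with $c$ replaced throughout by $q^{1/2}c$. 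Unwinding $D^{(n)}_q(t_0,p/c^2t_0;t;p)$ as $\prod_i\Gampq(t_0x_i^{\pm 1},(p/c^2t_0)x_i^{\pm 1})^{-1}\circ D^{(n)}_q(t;p)\circ\prod_i\Gampq(q^{1/2}t_0x_i^{\pm 1},(pq^{1/2}/c^2t_0)x_i^{\pm 1})$ collapses the claim to the action of $D^{(n)}_q(t;p)$ on an expression $\prod_i\Gampq(v_0x_i^{\pm 1},v_1x_i^{\pm 1},v_2x_i^{\pm 1},v_3x_i^{\pm 1})R^{*(n)}_\mu(\vec x;v_2,v_3;q,t;p)$ with $(v_0,v_1,v_2,v_3)=(q^{1/2}t_0,\,pq^{1/2}/c^2t_0,\,Aq^{1/2}c,\,q^{1/2}c/t^{n-1}A)$, which is (a reparametrization of) the difference equation for $p$-elliptic interpolation functions, \cite[Lem.~9.8]{xforms}; the balancing relation required there holds here (in this parametrization, $t^{n-1}v_0v_1v_2v_3=pq^2$).

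The one genuinely nontrivial point, which I expect to absorb essentially all of the work, is the $\Gamma$/$\theta_p$ bookkeeping showing that \cite[Lem.~9.8]{xforms} delivers exactly that right-hand side. The mechanism is that the lemma shifts $(v_0,v_1,v_2,v_3)\mapsto(q^{-1/2}v_0,q^{-1/2}v_1,q^{-1/2}v_2,q^{-1/2}v_3)$, so its outgoing $\Gamma$-product is $\prod_i\Gampq(t_0x_i^{\pm 1},(p/c^2t_0)x_i^{\pm 1},Acx_i^{\pm 1},(c/t^{n-1}A)x_i^{\pm 1})$; the first two pairs cancel the outer normalization of $D^{(n)}_q(t_0,p/c^2t_0;t;p)$, the last two reproduce the prefactor $\prod_i\Gampq(Acx_i^{\pm 1},(c/t^{n-1}A)x_i^{\pm 1})$ of Proposition~\ref{prop:diff_braid}, and the discrepancy between the $\prod_i\Gampq(t^{1-i}qc^2,t^i)^{-1}$ left over from the specialized $\cK^{(n)}_{q^{1/2}c}$ and the $\prod_i\Gampq(t^{1-i}c^2,t^i)^{-1}$ wanted on the right, together with the $\mu$-dependent eigenvalue of \cite[Lem.~9.8]{xforms}, must assemble --- via $\Gampq(qz)=\theta_p(z)\Gampq(z)$ --- into exactly $\prod_i\theta_p(ct_0 t^{n-i}q^{\mu_i}A,\,ct_0 t^{i-n}q^{-\mu_i}/A)=\prod_i\theta_p(ct_0 y_i^{\pm 1})\big|_{y_i=q^{\mu_i}t^{n-i}A}$. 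Once this matching of prefactors and eigenvalue is verified, the $p$-elliptic case is complete, and the identity follows in general by density in $\vec y$ and then by meromorphy in the remaining parameters. (As a sanity check the $n=1$ case is explicit: $\cK^{(1)}_c(x;y;q,t;p)=\Gampq(cx^{\pm 1}y^{\pm 1})/\Gampq(c^2,t)$ by Corollary~\ref{cor:formal_geom_special}, and $D^{(1)}_q$ is a two-term difference operator, so both sides can be written out directly and compared using the functional equations of $\Gampq$.)
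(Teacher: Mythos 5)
Your proposal is correct and follows essentially the same route as the paper: the paper's proof simply reduces to the formal kernel, specializes $\vec{y}$ to a partition (so the kernel becomes an interpolation function), and observes that the resulting identity is the known difference equation for $p$-elliptic interpolation functions, citing \cite[(3.34)]{bctheta} rather than \cite[Lem.~9.8]{xforms}. The $\Gamma$/$\theta_p$ bookkeeping you defer is exactly what that citation absorbs, so there is no substantive difference in approach.
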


\begin{proof}
It suffices to prove this for the formal kernel, and thus when $\vec{y}$ is specialized to a~partition; this is simply \cite[equation~(3.34)]{bctheta}.
\end{proof}

\begin{rem}
 This can also be proved by induction using the integral representation, together with the special case $c=\sqrt{t}$ of Proposition~\ref{prop:diff_comm} below (see \cite[Theorem~7.9]{xforms} for a direct
 proof). One can also show that for $0<\ord(c)<1/2$ or generic $c$ of
 order $1/2$, $\cK^{(n)}_c(\vec{x};\vec{y};t;p,q)$ is determined up to a
 factor independent of $\vec{x}$ by the fact that
\begin{gather*}
\prod_{1\le i\le n} \theta_p\big(p^{1/2}vy_i^{\pm 1}\big)^{-1}
D^{(n)}_q\big((pq)^{1/2}v^{\pm 1}/c;t;p\big)_{\vec{x}} \cK^{(n)}_c(\vec{x};\vec{y};t;p,q)
\end{gather*}
is independent of $v$, together with the existence of a formal expansion.
Indeed, by Lemma~\ref{lem:diff_uniq} below the limit of the equation as
$p\to 0$ has no nonconstant solutions, and thus any solution of this system
of equations becomes constant in that limit; it follows that any two
nonzero solutions are proportional. (This is essentially Nakayama's lemma:
any nonzero solution must have constant leading coefficient, and thus we
can repeatedly subtract constant multiples of a~fixed solution to make the
other solution have valuation as small as we would like; i.e., expressing
that other solution as a formal limit of constant multiples of the fixed
solution.) This would allow one to develop most of the theory of the
interpolation kernel without using interpolation functions, though of
course not the Cauchy-type series expression itself (which plays a crucial
role in constructing the symmetric function variant of the formal kernel).
\end{rem}

If we view this formally as the special case $(c,d)\mapsto
\big(q^{-1/2},q^{1/2}c\big)$ of the braid relation, then we immediately find that
we obtain corresponding special cases of the Bailey transformation and the
commutation relation. (We can also obtain identities involving difference
operators alone, but postpone consideration of those to Section~\ref{section4}.) For the Bailey transformation, we have the
following.

\begin{prop}
Let $v_0$, $v_1$, $w_0$, $w_1$, $c$, $u$ be parameters such that
$u^2=v_0v_1c^2/pq=pq^2/w_0w_1$. Then
\begin{gather*}
D^{(n)}_q \big(q^{-1/2}v_0,q^{-1/2}v_1,q^{-1/2}w_0,q^{-1/2}w_1;t;p\big)_{\vec{y}}
\cK^{(n)}_c(\vec{y};\vec{x};t;p,q) \\
\qquad {}=
\prod_{1\le i\le n}\frac{\Gampq\big(cv_0x_i^{\pm 1},cv_1x_i^{\pm 1}\big)} {\Gampq\big(q^{-1/2} v_0y_i^{\pm 1},q^{-1/2} v_1y_i^{\pm 1}\big)}\\
\qquad\quad{}\times \int_{C^n} \cK^{(n)}_{c/u}(\vec{z};\vec{x};t;p,q)\cK^{(n)}_{q^{-1/2}u}(\vec{z};\vec{y};t;p,q) \Delta^{(n)}_S(\vec{z};v_0/u,v_1/u,w_0u,w_1u;t;p,q).
\end{gather*}
\end{prop}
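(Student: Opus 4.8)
The plan is to derive this identity from the kernel Bailey transformation (Theorem~\ref{thm:bailey_xform}) by passing to the value $d=q^{-1/2}$, at which the integral operator with kernel $\cK^{(n)}_d$ degenerates to a difference operator, exactly as anticipated in the discussion preceding the statement. Setting $d=q^{-1/2}$ in Theorem~\ref{thm:bailey_xform}, the constraint $u^2=v_0v_1c^2/pq=pq/d^2w_0w_1$ becomes precisely $u^2=v_0v_1c^2/pq=pq^2/w_0w_1$ (equivalently $c^2v_0v_1w_0w_1=p^2q^3$), the hypothesis above; on the right-hand side the kernel $\cK^{(n)}_{du}$ becomes $\cK^{(n)}_{q^{-1/2}u}$, whose index still has positive order since $u$ does, so that integral survives unchanged and is exactly the integral on the right-hand side of the claim. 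It then remains to identify the left-hand side of Theorem~\ref{thm:bailey_xform}, namely $\int_{C^n}\cK^{(n)}_c(\vec{z};\vec{x})\cK^{(n)}_{q^{-1/2}}(\vec{z};\vec{y})\Delta^{(n)}_S(\vec{z};v_0,v_1,w_0,w_1;t;p,q)$, with a prefactor times $D^{(n)}_q(q^{-1/2}v_0,q^{-1/2}v_1,q^{-1/2}w_0,q^{-1/2}w_1;t;p)_{\vec{y}}\cK^{(n)}_c(\vec{y};\vec{x})$; this is the order-$1$ analogue of Proposition~\ref{prop:diff_braid}, to the effect that integration against $\cK^{(n)}_{q^{-1/2}}$ and the four-parameter Selberg density realizes the decorated operator $D^{(n)}_q(q^{-1/2}v_0,q^{-1/2}v_1,q^{-1/2}w_0,q^{-1/2}w_1;t;p)$ in the $\vec{y}$ variables, up to the prefactor $\prod_{1\le i\le n}\Gampq(cv_0x_i^{\pm 1},cv_1x_i^{\pm 1})/\Gampq(q^{-1/2}v_0y_i^{\pm 1},q^{-1/2}v_1y_i^{\pm 1})$.

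To prove this identification, and hence the proposition, I would argue as in the proof of Proposition~\ref{prop:diff_braid}: reduce first to the formal kernel, then specialize $\vec{x}$ to a geometric progression $(\dots,t^{n-i}q^{\mu_i}a,\dots)$, so that (by the theorem following Corollary~\ref{cor:formal_geom_special}) $\cK^{(n)}_c(\vec{y};\vec{x})$ becomes, up to an explicit product of elliptic Gamma functions, the interpolation function $R^{*(n)}_\mu(\vec{y};ac,c/t^{n-1}a;q,t;p)$. On the left-hand side the operator $D^{(n)}_q(q^{-1/2}v_0,\dots;t;p)_{\vec{y}}$ then acts on this interpolation function and is evaluated by the difference equation for interpolation functions (\cite[Lem.~9.8]{xforms} and the appropriate order-$m$ generalization from \cite{xforms,bctheta}); on the right-hand side the same specialization turns $\cK^{(n)}_{c/u}(\vec{z};\vec{x})$ into an interpolation function in $\vec{z}$, and the remaining $\vec{z}$-integral against $\cK^{(n)}_{q^{-1/2}u}$ and the Selberg density is evaluated by Proposition~\ref{prop:int_eq_formal}. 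Comparing the two closed forms gives the identity on the Zariski dense set of partition specializations, hence in general---both sides being meromorphic, and formally having $p$-expansions with Laurent-polynomial coefficients by the results of Section~2. The remaining prefactor bookkeeping uses the reflection equation for $\Gampq$ together with the two balancing relations (e.g.\ $cv_0/u^2=pq/cv_1$).

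The main obstacle is this degeneration step itself---the order-$1$ analogue of Proposition~\ref{prop:diff_braid}. One must check that passing to the boundary index $q^{-1/2}$ of a four-parameter-density kernel integral operator produces \emph{exactly} the operator $D^{(n)}_q(q^{-1/2}v_0,q^{-1/2}v_1,q^{-1/2}w_0,q^{-1/2}w_1;t;p)$, with no spurious residual terms and with precisely the stated prefactor. Since $q^{-1/2}$ lies outside the range in which the formal kernel converges, this has to be read off from the partition-specialized identities rather than from a naive limit, and one must be careful that the $q^{1/2}$-shifts built into the decorated operator are matched correctly against the functional equations of $\Gampq$ (which are stated for integer shifts) once the interpolation-function specialization has been made. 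Granting this realization, the rest is the routine elliptic-Gamma and theta-function simplification already indicated.
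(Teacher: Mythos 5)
Your overall plan---read the proposition as Theorem \ref{thm:bailey_xform} at $d=q^{-1/2}$, with the $d$-kernel integral replaced by the decorated difference operator---is indeed the paper's intent, and you correctly identify the degeneration step as the crux. But your proposed proof of that step has a genuine gap. After specializing $\vec{x}\to(\dots,t^{n-i}q^{\mu_i}a,\dots)$, the left side is $D^{(n)}_q(q^{-1/2}v_0,q^{-1/2}v_1,q^{-1/2}w_0,q^{-1/2}w_1;t;p)_{\vec{y}}$ applied to a Gamma prefactor times $R^{*(n)}_\mu(\vec{y};ac,c/t^{n-1}a;q,t;p)$, and the right side is an integral of an interpolation function in $\vec{z}$ against $\cK^{(n)}_{q^{-1/2}u}(\vec{z};\vec{y})\Delta^{(n)}_S(\vec{z};v_0/u,v_1/u,w_0u,w_1u;t;p,q)$. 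Neither is evaluated in closed form by the results you invoke: \cite[Lem.~9.8]{xforms} and Proposition \ref{prop:int_eq_formal} require the four parameters to satisfy the order-zero balancing $t^{n-1}u_0u_1u_2u_3=pq/c'^2$ and the interpolation parameters to coincide with two of them, whereas here the operator parameters multiply to $p^2q/c^2$ (the $\cA_c$-type balancing of Proposition \ref{prop:diff_comm}) and the density parameters to $p^2q^3/c^2$, neither of which reduces to the required constraint for generic $c,u$, and no choice of the base $a$ fixes this since $a$ does not enter the balancing. So the partition specialization leaves you with an unbalanced, genuinely order-one identity (the analogue of the expansion in Corollary \ref{cor:int_eq_interp_ii} on each side), essentially the statement being proved; closing it that way would require the full multivariate elliptic Bailey transformation \cite[Thm.~4.9]{bctheta} as input, not the two balanced evaluations you cite. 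Note also that the literal substitution $d=q^{-1/2}$ into Theorem \ref{thm:bailey_xform} is vacuous: already for $n=1$ one has $\cK^{(1)}_{q^{-1/2}}\equiv 0$ (since $1/\Gampq(q^{-1})=0$) while the contour conditions pinch, so the specialized left-hand side cannot be ``identified'' after the fact; the content lies entirely in rerunning the derivation with the difference operator.

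That rerun is the paper's (essentially one-line) proof, and it needs no limit in $d$. Expand $\cK^{(n)}_c(\vec{y};\vec{x})$ by the braid relation (Proposition \ref{prop:kern_braid}) with $c=(c/u)\cdot u$ and auxiliary parameters $v_0/u,v_1/u$ (legal since $(v_0/u)(v_1/u)=pq/c^2$); the resulting $\vec{y}$-dependent factors $\Gampq(v_0y_i^{\pm 1},v_1y_i^{\pm 1})$ cancel inside the conjugation defining the decorated operator, producing exactly the denominator $\Gampq(q^{-1/2}v_0y_i^{\pm 1},q^{-1/2}v_1y_i^{\pm 1})$ and leaving $D^{(n)}_q(q^{-1/2}w_0,q^{-1/2}w_1;t;p)_{\vec{y}}$ to act, by linearity (a finite sum of shifts), under the $\vec{z}$-integral on $\cK^{(n)}_u(\vec{z};\vec{y})$. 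Proposition \ref{prop:diff_braid}, whose balancing is precisely $w_0w_1=pq^2/u^2$, converts this to $\cK^{(n)}_{q^{-1/2}u}(\vec{z};\vec{y})$ times $\prod_i\theta_p(q^{-1}uw_0z_i^{\pm 1})$, and the functional equations give $\theta_p(q^{-1}uw_0z_i^{\pm 1})=\Gampq(uw_0z_i^{\pm 1},uw_1z_i^{\pm 1})$, which supplies the two extra density parameters $w_0u,w_1u$; finally the reflection $\Gampq(pq/x)=1/\Gampq(x)$ turns the braid-relation factors $\Gampq((cv_0/u^2)x_i^{\pm 1},(cv_1/u^2)x_i^{\pm 1})$ into $1/\Gampq(cv_0x_i^{\pm 1},cv_1x_i^{\pm 1})$, yielding the stated prefactor. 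This is what the paper means by obtaining the proposition ``immediately'' from viewing Proposition \ref{prop:diff_braid} as the case $(c,d)\mapsto(q^{-1/2},q^{1/2}c)$ of the braid relation, and it is the argument you should substitute for your closed-form comparison.
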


The commutation relation becomes the following identity.

\begin{prop}\label{prop:diff_comm}
Let $u_0$, $u_1$, $u_2$, $u_3$, $c$ be parameters such that $u_0u_1u_2u_3c^2=p^2q$.
Then
\begin{gather*}
D^{(n)}_q(u_0,u_1,u_2,u_3;t;p)_{\vec{x}}
\cK^{(n)}_c(\vec{x};\vec{y};t;p,q)\\
\qquad {} =
D^{(n)}_q\big(pq^{1/2}/cu_0,\dots,pq^{1/2}/cu_3;t;p\big)_{\vec{y}}
\cK^{(n)}_c(\vec{x};\vec{y};t;p,q).
\end{gather*}
\end{prop}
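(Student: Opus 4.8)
The plan is to derive this from the commutation relation, Corollary~\ref{cor:ker_comm}, in exactly the way that the preceding proposition is derived from Theorem~\ref{thm:bailey_xform} and that Proposition~\ref{prop:diff_braid} is derived from the braid relation. First, as in the proof of Proposition~\ref{prop:diff_braid}, it suffices to prove the identity for the formal kernel: both sides are then honest formal Puiseux series in $p$, and since they are also meromorphic functions, a meromorphic identity follows from the formal one.

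The key input is the four-parameter analogue of Proposition~\ref{prop:diff_braid}, to the effect that integrating $\cK^{(n)}_{q^{-1/2}}$ against the four-parameter elliptic Selberg density realises the operator $D^{(n)}_q$ with suitably shifted parameters:
\[
\int \cK^{(n)}_{q^{-1/2}}(\vec z;\vec x;t;p,q)\,f(\vec z)\,\Delta^{(n)}_S(\vec z;v_0,v_1,v_2,v_3;t;p,q)
=
\Bigl(\prod_{\substack{1\le i\le n\\0\le r\le 3}}\!\Gampq(q^{-1/2}v_r x_i^{\pm 1})\Bigr)
\bigl(D^{(n)}_q(q^{-1/2}v_0,\dots,q^{-1/2}v_3;t;p)_{\vec x}\,f\bigr)(\vec x).
\]
Its two-parameter specialisation is Proposition~\ref{prop:diff_braid} (after using $\Gampq(qw)=\theta_p(w)\Gampq(w)$ to fold the $\theta_p$-prefactor there into the conjugating gamma factors defining $D^{(n)}_q$), and in general one proves it the same way: for the formal kernel, when $f=\cK^{(n)}_{c}(\cdot;\vec w)$ it suffices to check it on the Zariski dense set of $\vec w$ that are geometric progressions, where it becomes the difference equation \cite[Lem. 9.8]{xforms} (see \cite[Thm. 7.9]{xforms} for a direct proof) satisfied by the interpolation functions, exactly as the integral equation of Proposition~\ref{prop:int_eq_formal} reduces to Theorem 9.2 of \cite{xforms}.

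With this in hand I would specialise the kernel parameters of Corollary~\ref{cor:ker_comm} to $(c,d)\mapsto(q^{-1/2},c)$ and rename its density parameters by $u_r=q^{-1/2}v_r$, so that the balancing condition $v_0v_1v_2v_3c^2=p^2q^3$ becomes $u_0u_1u_2u_3c^2=p^2q$. By the rule above the left-hand $\vec z$-integral collapses to
\[
\Bigl(\prod_{\substack{1\le i\le n\\0\le r\le 3}}\!\Gampq(u_r x_i^{\pm 1})\Bigr)
D^{(n)}_q(u_0,u_1,u_2,u_3;t;p)_{\vec x}\cK^{(n)}_c(\vec x;\vec y),
\]
while the right-hand $\vec z$-integral, whose density parameters are $pq^{3/2}/(cv_r)$, collapses (using the symmetry of $\cK^{(n)}_c$) to a $D^{(n)}_q$ in $\vec y$ with parameters $q^{-1/2}\,pq^{3/2}/(cv_r)=pq/(cv_r)=pq^{1/2}/(cu_r)$, carrying the prefactor $\prod_{i,r}\Gampq((pq^{1/2}/(cu_r))y_i^{\pm 1})$; the remaining explicit factors on the right-hand side of the corollary are $\prod_{i,r}\Gampq(cv_r y_i^{\pm 1},u_r x_i^{\pm 1})$. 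The factors $\prod_{i,r}\Gampq(u_r x_i^{\pm 1})$ cancel against the left, and the surviving $\vec y$-prefactor $\prod_{i,r}\Gampq(cv_r y_i^{\pm 1})\Gampq((pq^{1/2}/(cu_r))y_i^{\pm 1})$ is identically $1$ by the reflection relation $\Gampq(w)\Gampq(pq/w)=1$ (since $cv_r=cq^{1/2}u_r$, one pairs $\Gampq(cv_r y_i)$ with $\Gampq((pq^{1/2}/(cu_r))y_i^{-1})$ and $\Gampq(cv_r y_i^{-1})$ with $\Gampq((pq^{1/2}/(cu_r))y_i)$). What remains is precisely the asserted identity.

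I expect the main obstacle to be purely this bookkeeping: establishing the four-parameter rule with its exact normalisation (its two-parameter case is Proposition~\ref{prop:diff_braid}), and then tracking every elliptic gamma prefactor and every factor of $q^{\pm 1/2}$ through the substitution carefully enough to see that the prefactors cancel and that the $\vec y$-side operator has parameters exactly $pq^{1/2}/(cu_r)$. None of the individual steps is deep, but the constants must be matched exactly.
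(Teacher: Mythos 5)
Your route is the one the paper itself intends: Proposition~\ref{prop:diff_comm} is read off from the commutation relation, Corollary~\ref{cor:ker_comm}, by treating the $c=q^{-1/2}$ instance of the kernel as the difference operator $D^{(n)}_q$, and your bookkeeping is correct -- the substitution $(c,d)\mapsto(q^{-1/2},c)$ with $v_r=q^{1/2}u_r$ does turn the balancing into $u_0u_1u_2u_3c^2=p^2q$, the right-hand density parameters $pq^{3/2}/(cv_r)$ do produce the operator parameters $pq^{1/2}/(cu_r)$ in $\vec y$, the factors $\prod_{i,r}\Gampq(u_rx_i^{\pm1})$ cancel, and the surviving $\vec y$-prefactors cancel by reflection since $(cq^{1/2}u_r)(pq^{1/2}/cu_r)=pq$; your two-parameter consistency check against Proposition~\ref{prop:diff_braid} (folding $\theta_p$ into gammas via $\Gampq(qz)=\theta_p(z)\Gampq(z)$) is also right.

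The one step that does not hold up as written is the justification of your ``four-parameter collapse rule''. As stated it is an analytic assertion about integrating against $\cK^{(n)}_{q^{-1/2}}$, and checking it on a Zariski dense set of geometric progressions $\vec w$ does not establish it: at the formal level $\cK^{(n)}_{q^{-1/2}}$ does not exist (the parameter has order $0$, outside the formal regime), while at the analytic level the degeneration of the integral operator at $c=q^{-1/2}$ into a difference operator is a contour-pinching/residue statement that the paper explicitly declines to prove (``presumably this can again be made rigorous at the analytic level, but we will content ourselves with using it in the formal context''); \cite[Lem.~9.8]{xforms} only tells you how $D^{(n)}_q$ acts on interpolation functions, not that the $c=q^{-1/2}$ integral acts the same way. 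Nor can you instead verify the final identity by specializing $\vec y$ to partitions, since the $\vec y$-side operator shifts the arguments off the partition lattice. The repair -- and what the paper means by ``we immediately find'' -- is to substitute $D^{(n)}_q$ for the $c=q^{-1/2}$ operator in the \emph{derivation} of Corollary~\ref{cor:ker_comm} rather than in its statement: the derivation uses only that the operator preserves formal Puiseux series with polynomial coefficients and acts correctly on interpolation functions, and the Fubini exchanges reduce to linearity of a finite sum plus the formal self-adjointness of $D^{(n)}_q$ with respect to the elliptic Selberg density (exactly as spelled out in the remark following Theorem~\ref{thm:L2_kern}). With that reformulation your parameter and gamma-function bookkeeping completes the proof.
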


We can also obtain identities by specializing one of the sets of variables
to a geometric progression. This has the effect of replacing one of the
interpolation kernels by a product of elliptic Gamma functions. (Of
course, we could replace both sets of variables by geometric progressions,
but this would simply recover results of \cite{xforms}, albeit with new
proofs.)

Specializing the braid relation in this way gives the following
generalization of the Kadell-type integral of \cite[Corollary~9.3]{xforms}.

\begin{prop}
Let $u_0$, $u_1$, $u_2$, $u_3$, $c$ be parameters such that $t^{n-1}u_0u_1u_2u_3 = pq/c^2$. Then
\begin{gather*}
\int\cK^{(n)}_c(\vec{z};\vec{x};t;p,q)\Delta^{(n)}_S(\vec{z};u_0,u_1,u_2,u_3;t;p,q)=
\prod_{\substack{1\le i\le n\\0\le r<s<4}}\! \Gampq\big(t^{n-i}u_ru_s\big)
\prod_{\substack{1\le i\le n\\0\le r<4}}\! \Gampq\big(cu_rx_i^{\pm 1}\big).
\end{gather*}
\end{prop}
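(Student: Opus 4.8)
The plan is to obtain this identity by specializing one set of variables in the braid relation (Proposition~\ref{prop:kern_braid}, in its analytic form) to a geometric progression; this collapses the two kernels of the braid relation that involve that set of variables into products of elliptic Gamma functions via the analytic version of Corollary~\ref{cor:formal_geom_special}, leaving exactly the integral of interest.

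First I would take the analytic braid relation with parameters $c$, $d$, $u_0$, $u_1$ subject to $u_0u_1 = pq/c^2d^2$, in a range of valuations where all contours may be taken to be unit circles, so that it is an identity of meromorphic functions. I then specialize $\vec{y}\mapsto(t^{n-1}a,t^{n-2}a,\dots,a)$ for a new parameter $a$. By Corollary~\ref{cor:formal_geom_special} (analytic version), $\cK^{(n)}_d(\vec{z};\dots,t^{n-i}a,\dots;t;p,q)$ becomes $\prod_i \Gampq(da\,z_i^{\pm 1},(d/t^{n-1}a)z_i^{\pm 1})\big/\prod_i\Gampq(t^{1-i}d^2,t^i)$, and absorbing the numerator into the density turns $\Delta^{(n)}_S(\vec{z};u_0,u_1;t;p,q)$ into $\Delta^{(n)}_S(\vec{z};u_0,u_1,da,d/t^{n-1}a;t;p,q)$. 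Setting $u_2:=da$ and $u_3:=d/t^{n-1}a$ (which can be solved for $d,a$ for generic $u_2,u_3$), the proposition's balancing condition $t^{n-1}u_0u_1u_2u_3=pq/c^2$ is exactly $u_0u_1=pq/c^2d^2$ together with the automatic identity $u_2u_3=d^2/t^{n-1}$. On the right of the braid relation, $\cK^{(n)}_{cd}(\vec{x};\dots,t^{n-i}a,\dots)$ likewise evaluates to $\prod_i\Gampq(cu_2\,x_i^{\pm 1},cu_3\,x_i^{\pm 1})\big/\prod_i\Gampq(t^{1-i}c^2d^2,t^i)$, and $\prod_i\Gampq(du_0\,y_i^{\pm 1},du_1\,y_i^{\pm 1})$ evaluates, after reindexing the progression, to $\prod_i\Gampq(t^{n-i}u_0u_2,t^{n-i}u_0u_3,t^{n-i}u_1u_2,t^{n-i}u_1u_3)$.

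It remains to collect the Gamma-function prefactors. The kernel $\cK^{(n)}_c$ in the braid relation already contributes $\prod_i\Gampq(cu_0\,x_i^{\pm 1},cu_1\,x_i^{\pm 1})$, which together with the factor from $\cK^{(n)}_{cd}$ yields $\prod_{i,\,0\le r<4}\Gampq(cu_r\,x_i^{\pm 1})$, as required. For the remaining factors, the $\Gampq(t^i)$'s cancel in pairs, and one checks that $\prod_i\Gampq(t^{1-i}d^2)=\prod_i\Gampq(t^{n-i}u_2u_3)$ directly (since $u_2u_3=d^2/t^{n-1}$) while $\prod_i\Gampq(t^{1-i}c^2d^2)^{-1}=\prod_i\Gampq(t^{n-i}u_0u_1)$ via the reflection relation $\Gampq(pq/z)=\Gampq(z)^{-1}$ together with $u_0u_1=pq/c^2d^2$; these supply precisely the two missing pairs $\{u_0,u_1\}$ and $\{u_2,u_3\}$, so the surviving product is $\prod_{i,\,0\le r<s<4}\Gampq(t^{n-i}u_ru_s)$. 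The identity so far holds on the Zariski-dense sub-locus on which the braid relation was applied, and both sides being meromorphic in $u_0,u_1,c$ (with $u_0u_1u_2u_3$ fixed by balancing), it extends to generic parameters.

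The only step needing real care is the Gamma-function bookkeeping of the last paragraph; as indicated it reduces to the reflection relation and a reindexing of geometric progressions, and no convergence difficulty arises beyond the contour deformations already built into the analytic braid relation.
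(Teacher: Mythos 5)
Your proposal is correct and is exactly the paper's argument: the proposition is obtained by specializing one set of variables in the braid relation to a geometric progression, so that the two kernels involving those variables collapse (via the geometric-progression evaluation) to products of elliptic Gamma functions, with the remaining bookkeeping handled by the reflection relation. Your detailed Gamma-function accounting matches the intended derivation.
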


\begin{rem} We can also obtain transformations in this way, but omit the (straightforward) details. The one thing one should note is that (in direct analogy to \cite[Corollary~9.13]{xforms}), the symmetry group is extended from $D_4$ to $D_6$, and we acquire an additional double coset.
\end{rem}

We also note the following curious identity, a multivariate analogue of the main result of \cite{vdBultFJ:2011}; the proof below is a direct adaptation of van de Bult's argument for the univariate case. Note that since both $\vec{x}$ and $\vec{y}$ are specialized to $\vec{z}$, there is no way to specialize this to a statement about interpolation functions.

\begin{thm}
The integral
\begin{gather*}
\int \cK^{(n)}_c(\vec{z};\vec{z};t;p,q)\Delta^{(n)}_S\big(\vec{z};u_0^{\pm 1}\sqrt{pq/c},u_1^{\pm 1}\sqrt{pq/c},u_2^{\pm 1}\sqrt{pq/c},u_3^{\pm 1}\sqrt{pq/c};t;p,q\big)
\end{gather*}
is invariant under $u_r\mapsto u_r/\sqrt{u_0u_1u_2u_3}$.
\end{thm}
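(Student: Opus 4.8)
The plan is to follow van de Bult's univariate argument: use the braid relation (Proposition~\ref{prop:kern_braid}) to rewrite the diagonal factor $\cK^{(n)}_c(\vec z;\vec z;t;p,q)$, then apply the Bailey transformation (Theorem~\ref{thm:bailey_xform}) to the resulting inner integral with its free parameter chosen so that the transform sends the integral to itself with $u_r\mapsto u_r/\sqrt{u_0u_1u_2u_3}$. Write $I(u_0,u_1,u_2,u_3)$ for the integral and $\sigma=\sqrt{u_0u_1u_2u_3}$.

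First I would factor $c=c_1c_2$ with $c_1=\sqrt{cu_0u_1}$, $c_2=\sqrt{c/u_0u_1}$, and apply the braid relation to $\cK^{(n)}_c(\vec z;\vec z)$ using auxiliary parameters $a_0=\sqrt{pq}\,(u_0/u_1)^{1/2}/c$, $a_1=\sqrt{pq}\,(u_1/u_0)^{1/2}/c$ (so $a_0a_1=pq/c^2$). A direct check with the explicit braid relation shows that the product of Gamma factors to be divided out is exactly $\prod_{1\le i\le n}\Gampq(u_0^{\pm 1}\sqrt{pq/c}\,z_i^{\pm 1},u_1^{\pm 1}\sqrt{pq/c}\,z_i^{\pm 1})$ — four of the eight univariate parameters of the $\Delta^{(n)}_S$ in $I$. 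Substituting and interchanging the order of integration, those four cancel and one obtains
\[
I(u_\bullet)=\int\Delta^{(n)}_S(\vec w;a_0,a_1;t;p,q)\,J_0(\vec w),\qquad J_0(\vec w)=\int\cK^{(n)}_{c_1}(\vec z;\vec w)\cK^{(n)}_{c_2}(\vec z;\vec w)\Delta^{(n)}_S(\vec z;u_2^{\pm 1}\sqrt{pq/c},u_3^{\pm 1}\sqrt{pq/c};t;p,q),
\]
and $J_0$ has precisely the shape of the left-hand side of Theorem~\ref{thm:bailey_xform}, whose balancing condition holds automatically.

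The one substantive step is to apply Theorem~\ref{thm:bailey_xform} to $J_0$ with the \emph{crossed} split $\{v_0,v_1\}=\{u_2\sqrt{pq/c},u_3\sqrt{pq/c}\}$, $\{w_0,w_1\}=\{u_2^{-1}\sqrt{pq/c},u_3^{-1}\sqrt{pq/c}\}$ (not the obvious split into $u_2^{\pm1}$ and $u_3^{\pm1}$ pairs). Then the free parameter is forced to be $u=\sqrt{v_0v_1c_1^2/pq}=\sigma$, and the prefactor $\prod_{1\le i\le n}\Gampq(c_1v_0w_i^{\pm 1},c_1v_1w_i^{\pm 1},c_2w_0w_i^{\pm 1},c_2w_1w_i^{\pm 1})$ produced by the theorem is identically $1$: since $c_1c_2=c$ and $v_0w_0=v_1w_1=pq/c$, its arguments pair off into pairs multiplying to $pq$, so $\Gampq(z)\Gampq(pq/z)=1$ kills the product. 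Hence $J_0(\vec w)$ equals the transformed integral $\tilde J_0(\vec w)$; using $\sigma^2=u_0u_1u_2u_3$ one checks $c_1/\sigma=\sqrt{c/(u_2u_3)}$, $c_2\sigma=\sqrt{cu_2u_3}$, and that the transformed density parameters $v_0/\sigma,v_1/\sigma,w_0\sigma,w_1\sigma$ are exactly $(u_2/\sigma)^{\pm 1}\sqrt{pq/c}$ and $(u_3/\sigma)^{\pm 1}\sqrt{pq/c}$.

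Finally I would run the first step in reverse for $\tilde u_r:=u_r/\sigma$: since $\tilde u_0/\tilde u_1=u_0/u_1$ and $\tilde u_0\tilde u_1=1/(u_2u_3)$, the auxiliary parameters are again $a_0,a_1$ and the factorisation of $c$ is $\sqrt{c/(u_2u_3)}\cdot\sqrt{cu_2u_3}$ — exactly the $\cK$-indices in $\tilde J_0$ — so $\tilde J_0$ is the inner integral in the braid-relation expression for $I(\tilde u_\bullet)$, giving $I(u_\bullet)=\int\Delta^{(n)}_S(\vec w;a_0,a_1)\,J_0=\int\Delta^{(n)}_S(\vec w;a_0,a_1)\,\tilde J_0=I(\tilde u_\bullet)$; as $u_r\mapsto u_r/\sigma$ is an involution this suffices. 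I expect the main obstacle to be not these algebraic manipulations (they use only the functional and reflection equations of $\Gampq$) but the contour analysis justifying the two interchanges of integration: as in the proof of Theorem~\ref{thm:bailey_xform} one must exhibit a nonempty open region of parameters on which all contours may be taken to be powers of the unit circle, invoke the meromorphic-continuation principle of \cite[\S10]{xforms}, and verify that no residue terms are picked up when the parameters are moved onto the locus of the statement (the pinched-contour analysis of Propositions~\ref{prop:branch_k} and~\ref{prop:braid_k} being the relevant model); some care is also needed because the intermediate expressions break the manifest $u_r\mapsto u_r^{\pm1}$ symmetry present on both sides.
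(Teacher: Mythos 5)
Your proposal is correct and is essentially the paper's own proof run in reverse: the paper starts from Theorem \ref{thm:bailey_xform} specialized to $v_0w_0=pq/cd$ and $\vec{y}=\vec{x}$, multiplies both sides by $\prod_{1\le i\le n}\Gampq(t_0x_i^{\pm 1},t_1x_i^{\pm 1})\Delta^{(n)}_S(\vec{x};t;p,q)$ with $t_0t_1=pq/c^2d^2$, and collapses the $\vec{x}$-integrals on both sides via the braid relation, whereas you first un-collapse the diagonal kernel with the braid relation and then apply the Bailey transformation (with the crossed parameter split) before reversing. The ingredients -- Proposition \ref{prop:kern_braid} and Theorem \ref{thm:bailey_xform}, following van de Bult's univariate argument -- and the contour considerations are the same, so this is the same proof.
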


\begin{proof}
Take the identity of Theorem \ref{thm:bailey_xform}, specialized so that $v_0w_0 = pq/cd$ and $\vec{y}=\vec{x}$. If we multiply both sides by
\begin{gather*}
\prod_{1\le i\le n} \Gampq\big(t_0 x_i^{\pm 1},t_1 x_i^{\pm 1}\big) \Delta^{(n)}_S(\vec{x};t;p,q)
\end{gather*}
with $t_0t_1 = pq/c^2d^2$, the integrals over $x$ on both sides are special cases of the braid relation. The result is the general case of the claimed identity.
\end{proof}

\begin{rem} As in \cite{vdBultFJ:2011}, this symmetry, together with the visible
 symmetries, generates the Weyl group $W(F_4)$.
\end{rem}

The action of the kernel on interpolation functions extends in a natural
way to an action on biorthogonal functions, generalizing the difference and
integral equations of \cite[Section~8]{xforms}.

\begin{prop}
The multivariate elliptic biorthogonal functions satisfy the following
integral equation, for $t^{2n-2}t_0t_1t_2t_3u_0u_1 = pq$,
\begin{gather*}
\prod_{1\le i\le n}
\frac{\Gampq\big(t^{n-1}t_0u_0t_1x_i^{\pm 1}/c^2\big)}
 {\Gampq\big(t_0x_i^{\pm 1},t_1x_i^{\pm 1},u_0x_i^{\pm 1}\big)}
\int \cK^{(n)}_c(\vec{z};\vec{x};t;p,q)\\
\times \tcR^{(n)}_{\blambda} (\vec{z};t_0/c\colon t_1/c,ct_2,ct_3;u_0/c,cu_1;t;p,q)
\Delta^{(n)}_S\big(\vec{z};t_0/c,u_0/c,t_1/c,pqc/t^{n-1}t_0u_0t_1;t;p,q\big) \\
=\prod_{1\le i\le n}
 \frac{\Gampq\big(t^{n-i}t_0t_1/c^2,t^{n-i}t_0u_0/c^2,t^{n-i}t_1u_0/c^2\big)}
 {\Gampq\big(t^{n-i}t_0t_1,t^{n-i}t_0u_0,t^{n-i}t_1u_0\big)}
\tcR^{(n)}_{\blambda}(\vec{x};t_0\colon t_1,t_2,t_3;u_0,u_1;t;p,q).
\end{gather*}
\end{prop}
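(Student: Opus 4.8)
The plan is to reduce the identity to a Zariski dense family of specializations of $\vec x$ for which the interpolation kernel $\cK^{(n)}_c(\vec z;\vec x;t;p,q)$ collapses, as a function of $\vec z$, to a single interpolation function, whereupon the statement becomes an instance of the integral equations for biorthogonal functions proved in \cite[\S 8]{xforms}. Both sides are meromorphic in $\vec x$ for generic values of the remaining parameters (the left-hand side by the meromorphic continuation of \cite[\S 10]{xforms}, the right-hand side because $\tcR^{(n)}_{\blambda}$ is itself meromorphic), so it suffices to verify the identity when $x_i=p^{\kappa_i}q^{\rho_i}t^{n-i}a/c$ for an auxiliary partition pair $\bkappa=(\kappa,\rho)$ and generic $a$; as $\bkappa$ ranges over all partition pairs (and $a$ over $\C^*$) these points are Zariski dense in $(\C^*)^n$, exactly as in the proof of Proposition \ref{prop:int_eq_formal}. (Alternatively, one may run the same argument at the level of the formal kernel, taking $p$ formal and $\bkappa$ a partition in $q$ alone, since the coefficients of the relevant Puiseux series in $p$ are rational in the remaining parameters.)

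For such a specialization I would first invoke the identification of the kernel with an interpolation function (the Proposition immediately preceding this one): $\cK^{(n)}_c(\vec z;\vec x;t;p,q)$ becomes an explicit product of elliptic Gamma functions times $\cR^{*(n)}_{\bkappa}(\vec z;ac,c/t^{n-1}a;t;p,q)$. The left-hand side then reads as the integral of $\cR^{*(n)}_{\bkappa}(\vec z;\cdot)\,\tcR^{(n)}_{\blambda}(\vec z;\cdot)\,\Delta^{(n)}_S(\vec z;\cdot)$, which is precisely the shape of the integral equation for biorthogonal functions in \cite[\S 8]{xforms}; applying that equation evaluates the integral as a product of elliptic Gamma functions times $\tcR^{(n)}_{\blambda}$ at the shifted argument $x_i$ with correspondingly shifted parameters. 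The remaining work is to check that, under the specialization $x_i=p^{\kappa_i}q^{\rho_i}t^{n-i}a/c$, this shifted argument and shifted parameter set coincide with $\tcR^{(n)}_{\blambda}(\vec x;t_0{:}t_1,t_2,t_3;u_0,u_1;t;p,q)$, and that the accumulated Gamma prefactors match the prefactor on the right-hand side; these matchings are forced by the balancing condition $t^{2n-2}t_0t_1t_2t_3u_0u_1=pq$ together with the relation $c^2=t^{n-1}(ac)(c/t^{n-1}a)$ identifying the kernel parameter $c$ with the parameters of the auxiliary interpolation function. Zariski density in $\vec x$ then yields the identity in general.

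I expect the main obstacle to be the bookkeeping in that last step: one must fix the dictionary between $c$, the auxiliary parameters $(ac,\,c/t^{n-1}a)$, and the six parameters $t_0,t_1,t_2,t_3,u_0,u_1$ of $\tcR^{(n)}_{\blambda}$ so that the density on the left, $\Delta^{(n)}_S(\vec z;t_0/c,u_0/c,t_1/c,pqc/t^{n-1}t_0u_0t_1;t;p,q)$, together with the collapsed kernel, matches the integrand of \cite[\S 8]{xforms} exactly (not just up to the balancing condition), and then show that the two a priori large products of elliptic Gamma functions produced by the specialization and by the integral equation combine to precisely the stated prefactor. A secondary point to handle is contours: I would first locate an open set of parameters on which all relevant contours may be taken to be the unit circle, so that the integral equation of \cite[\S 8]{xforms} applies literally there, and then appeal to meromorphic continuation to propagate the resulting identity to the full domain.
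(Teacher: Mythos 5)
There is a genuine gap at the central step. After you specialize $x_i=p^{\kappa_i}q^{\rho_i}t^{n-i}a/c$, the kernel indeed collapses (by the preceding Proposition) to a Gamma prefactor times $\cR^{*(n)}_{\bkappa}(\vec z;a,c^2/t^{n-1}a)$, so the left-hand side becomes the pairing of a \emph{general} interpolation function with the biorthogonal function against a balanced six-parameter elliptic Selberg density. But this is \emph{not} ``precisely the shape of the integral equation for biorthogonal functions in \cite[\S 8]{xforms}'': those integral equations involve the explicit integral operator whose kernel is a product of elliptic Gamma functions, i.e.\ the $c=\sqrt{t}$ specialization of $\cK^{(n)}_c$, acting on $\tcR^{(n)}_{\blambda}$ — they contain no extra interpolation-function factor with a free base $a$ and a general index $\bkappa$. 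No closed-form evaluation of the pairing $\int \cR^{*(n)}_{\bkappa}\,\tcR^{(n)}_{\blambda}\,\Delta^{(n)}_S$ with this parameter linkage is available in \cite{xforms}; note also that on the right-hand side your specialization produces $\tcR^{(n)}_{\blambda}$ evaluated at a partition based at the \emph{generic} point $a/c$ (not at $t_0$), so it is not a binomial coefficient and there is no shortcut on that side either. By your own Zariski-density argument, the specialized statement is equivalent in strength to the Proposition itself, so citing it as known is circular.

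To actually evaluate the specialized integral one would expand $\tcR^{(n)}_{\blambda}(\vec z;\cdot)$ via the binomial formula \cite[Defn.~12]{bctheta} and apply the kernel--interpolation integral equation (Proposition \ref{prop:int_eq_formal}, in its analytic form) term-by-term, checking that the binomial coefficients reassemble into $\tcR^{(n)}_{\blambda}(\vec x;t_0{:}t_1,t_2,t_3;u_0,u_1)$ with the stated Gamma prefactor; but that is exactly the paper's (one-line) proof, and once you argue this way the specialization of $\vec x$ is superfluous, since the integral equation already holds for generic $\vec x$ at the level of the kernel. Your meromorphy, contour, and Zariski-density remarks are fine as framing; the missing ingredient is the actual mechanism — binomial expansion plus term-by-term application of the integral equation — rather than an appeal to \cite[\S 8]{xforms}.
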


\begin{proof}
Simply apply the usual integral equation to the binomial formula \cite[Definition~12]{bctheta} term-by-term.
\end{proof}

If we set $u_1 = 1/t^{n-1}c^2t_2$, then the biorthogonal function in the
integrand becomes an interpolation function, giving a variant of
\cite[Theorem~9.4]{xforms}, and a representation of the general biorthogonal
function as an integral involving the kernel and an interpolation function.
Ana\-ly\-ti\-cally continuing the interpolation function to another instance of
the kernel gives an analytic continuation of the biorthogonal function as a~function of the indexing partition. In the absence of a particular
application for this analytic continuation, we omit the details.

If we instead expand the biorthogonal function via the binomial formula, we obtain the following integral equation.

\begin{cor}\label{cor:int_eq_interp_ii}
For otherwise generic parameters satisfying $t^{n-1}u_0u_1u_2u_3 = pq/c^2$,
one has
\begin{gather*}
\int \cK^{(n)}_c(\vec{z};\vec{x} ;t;p,q)\cR^{*(n)}_\blambda(\vec{z};t_0,u_0;t;p,q)\Delta^{(n)}_S(\vec{z};u_0,u_1,u_2,u_3;t;p,q) \\
\qquad{} =
\prod_{1\le i\le n}\bigg(\prod_{0\le r<s<4} \Gampq\big(t^{n-i}u_ru_s\big)
\prod_{0\le r<4} \Gampq\big(cu_rx_i^{\pm 1}\big)\bigg) \\
 \qquad\quad{}\times \sum_{\bmu\subset\blambda}
\obinomE{\blambda}{\bmu}_{[t^{n-1}t_0/u_0,c^2](t^{n-1}t_0u_1,t^{n-1}t_0u_2,t^{n-1}t_0u_3);t;p,q}
R^{*(n)}_\bmu(\vec{x};t_0/c,cu_0;t;p,q).
\end{gather*}
\end{cor}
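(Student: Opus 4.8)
The plan is to derive this from the biorthogonal integral equation stated just above (the $\tcR$-identity with balancing $t^{2n-2}t_0t_1t_2t_3u_0u_1=pq$), which already contains the same content in a more symmetric packaging: there the right-hand side is an elliptic Gamma prefactor times a \emph{single} biorthogonal function. So I would (i) specialize parameters so that the biorthogonal function inside the integral collapses to an interpolation function, and then (ii) re-expand the biorthogonal function outside the integral into a sum of interpolation functions.

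For step (i) I would impose $u_1=1/(t^{n-1}c^2t_2)$. As noted in the text, this is exactly the specialization under which the integrand $\tcR^{(n)}_\blambda(\vec z;t_0/c{:}t_1/c,ct_2,ct_3;u_0/c,cu_1)$ degenerates to an interpolation function (the standard reduction of biorthogonal functions to interpolation functions, cf.\ \cite{bctheta}). After renaming the surviving free parameters so that the two arguments of that interpolation function become $t_0,u_0$ and the four parameters of the Selberg density become $u_0,u_1,u_2,u_3$, the balancing condition $t^{2n-2}t_0t_1t_2t_3u_0u_1=pq$ turns into $t^{n-1}u_0u_1u_2u_3=pq/c^2$, and the left-hand side takes exactly the stated form $\int\cK^{(n)}_c(\vec z;\vec x;t;p,q)\,\cR^{*(n)}_\blambda(\vec z;t_0,u_0;t;p,q)\,\Delta^{(n)}_S(\vec z;u_0,u_1,u_2,u_3;t;p,q)$.

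For step (ii) I would apply the binomial formula \cite[Defn.~12]{bctheta} to the biorthogonal function remaining on the right. That formula is precisely an expansion of the form $\tcR^{(n)}_\blambda(\vec x;\dots)=\sum_{\bmu\subset\blambda}\obinomE{\blambda}{\bmu}_{[\cdot,\cdot](\cdot);t;p,q}\,\cR^{*(n)}_\bmu(\vec x;\dots)$, and reading off its parameters produces the claimed arguments $[t^{n-1}t_0/u_0,c^2](t^{n-1}t_0u_1,t^{n-1}t_0u_2,t^{n-1}t_0u_3)$ of the binomial coefficient and the arguments $(t_0/c,cu_0)$ of $\cR^{*(n)}_\bmu$. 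It then remains to multiply the elliptic Gamma prefactor coming from the preceding Proposition by the prefactor produced by this binomial expansion and verify, using the functional equations of $\Gampq$ and the reflection relation $\Gampq(pq/z)=\Gampq(z)^{-1}$ together with the balancing condition, that the product collapses to $\prod_{1\le i\le n}\bigl(\prod_{0\le r<s<4}\Gampq(t^{n-i}u_ru_s)\prod_{0\le r<4}\Gampq(cu_rx_i^{\pm 1})\bigr)$. (Alternatively one could bypass the preceding Proposition and apply the integral equation of Proposition~\ref{prop:int_eq_formal}, in its analytic form, term-by-term to a binomial expansion of $\cR^{*(n)}_\blambda(\vec z;t_0,u_0)$ itself, but one would then need a connection-coefficient composition identity to recombine the resulting double sum into a single one, so the route through the biorthogonal equation is cleaner.)

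The only genuinely delicate points are the bookkeeping in this prefactor simplification — of the usual ``omitted computation'' sort in this subject — and confirming that the degeneration in step (i) introduces no spurious poles, so that the identity holds for generic parameters. I expect the prefactor bookkeeping to be the main obstacle, though it is a purely mechanical one.
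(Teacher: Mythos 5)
Your proposal matches the paper's own derivation: the corollary is obtained from the preceding biorthogonal integral equation exactly by the reduction $u_1=1/t^{n-1}c^2t_2$ (which, as noted in the text, turns the biorthogonal function in the integrand into an interpolation function) followed by expanding the remaining biorthogonal function on the right via the binomial formula of \cite{bctheta}, with the prefactor bookkeeping left implicit there as well. So the argument is correct and is essentially the same as the paper's.
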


We close by mentioning a special case with an unexpected determinantal
representation. If we combine the difference equation with the explicit
formula for the case $c=\sqrt{pq/t}$, we obtain the expression
\begin{gather*}
\begin{split} & \cK^{(n)}_{(p/t)^{1/2}}(\vec{x};\vec{y};t;p,q)=
\prod_{1\le i\le n} \theta_p\big((p/t)^{1/2}t_0y_i^{\pm 1}\big)^{-1}\\
& \hphantom{\cK^{(n)}_{(p/t)^{1/2}}(\vec{x};\vec{y};t;p,q)=}{} \times D^{(n)}_q(t_0,t/t_0;t;p)_{\vec{x}}
\prod_{1\le i,j\le n} \Gampq\big((pq/t)^{1/2}x_i^{\pm 1} y_j^{\pm 1}\big).
\end{split}
\end{gather*}
The right-hand side is a sum of $2^n$ terms, each of which can be expressed as an explicit product of Gamma and theta functions. We find that although the individual terms depend on~$q$, the {\em ratios} of the terms do not. As a result, we conclude that
\begin{gather*}
\frac{\cK^{(n)}_{(p/t)^{1/2}}(\vec{x};\vec{y};t;p,q)} {\prod\limits_{1\le i,j\le n} \Gampq\big((p/t)^{1/2}x_i^{\pm 1} y_j^{\pm 1}\big)}
\end{gather*}
is independent of $q$. Setting $q=t$ gives the determinantal
expression
\begin{gather*}
\frac{\cK^{(n)}_{(p/t)^{1/2}}(\vec{x};\vec{y};t;p,q)}
 {\prod\limits_{1\le i,j\le n} \Gampq\big((p/t)^{1/2}x_i^{\pm 1} y_j^{\pm 1}\big)}
= \frac{\theta_p(t)^n\prod\limits_{1\le i,j\le n} \theta_p\big((p/t)^{1/2}x_i^{\pm 1} y_j^{\pm 1}\big)}
{(p/t)^{n(n-1)/4}\prod\limits_{1\le i<j\le n} x_iy_i\theta_p(x_i x_j,x_i/x_j,y_i y_j,y_i/y_j)}
\\
\hphantom{\frac{\cK^{(n)}_{(p/t)^{1/2}}(\vec{x};\vec{y};t;p,q)}
 {\prod\limits_{1\le i,j\le n} \Gampq\big((p/t)^{1/2}x_i^{\pm 1} y_j^{\pm 1}\big)}=}{}\times
\det_{1\le i,j\le n}
\frac{1}{\theta_p\big((p/t)^{1/2} x_i^{\pm 1} y_j^{\pm 1}\big)}.
\end{gather*}
This determinant has appeared in work of Filali \cite{FilaliG:2011} on a
certain variant of the ``8VSOS'' model, a generalization of the usual
6-vertex model. In the Macdonald limit, this becomes a known expression
for the Izergin--Korepin determinant
\begin{gather*}
\frac{\prod\limits_{1\le i,j\le n} (x_i-ty_j)(y_j-tx_i)} {\prod\limits_{1\le i<j\le n} (x_i-x_j)(y_i-y_j)}
\det_{1\le i,j\le n} \frac{1}{(x_i-ty_j)(y_j-tx_i)}
\end{gather*}
(essentially the partition function of the 6-vertex model \cite{IzerginAG:1987,KorepinVE:1982}) as a sum of Macdonald polynomials~\cite{WarnaarSO:2008}.

Applying the $t\mapsto pq/t$ symmetry gives a similar expression for $c=\sqrt{t/q}$. The case $c=t=p^{1/3}$ is of particular interest,
since this is in the intersection of the $c=(p/t)^{1/2}$ and $c=t$ cases; this implies (using Corollary \ref{cor:t_symmetry} above) that
\begin{gather*}
\prod_{1\le i,j\le n} \theta_p\big(p^{1/3}x_i^{\pm 1} y_j^{\pm 1}\big)
\prod_{1\le i<j\le n} x_iy_i\theta_p(x_i x_j,x_i/x_j,y_i y_j,y_i/y_j)^{-1}
\det_{1\le i,j\le n} \frac{1}{\theta_p\big(p^{1/3} x_i^{\pm 1} y_j^{\pm 1}\big)}
\end{gather*}
is invariant under arbitrary permutations of the $2n$ variables, recovering a result of \cite[Appendix~C]{ZinnJustinP:2013}. This corresponds to the well-known fact that the partition function for the 6-vertex model acquires additional symmetries when the parameter is a cube root of unity.

\section{Formal difference operators}\label{section4}

Although the analytic kernel most naturally corresponds to a family of
integral operators, it is difficult to make this precise, given issues with
contours; even basic questions concerning the domain of the operators are
difficult to approach. Now, we recall that when $c=q^{-n/2}$, the integral
operator at least formally becomes a difference operator. Although this is
only a sparse set of specializations, it turns out that there is a natural
analytic continuation in $c$. At first glance, this seems impossible,
since the number of different shifts appearing in the operator for
$c=q^{-n/2}$ depends on $n$; however, we can avoid this issue by working
with {\em formal} difference operators.

For $c\in \C^*$, let ${\mathfrak D}_c$ be the vector space of formal sums
of the form
\begin{gather}
\left(\sum_{\vec{k}\in \N^n} F_{\vec{k}}(\vec{x}) \prod_{1\le i\le n} T_i^{k_i}\right)T(c),\label{eq:general_formal_diff_form}
\end{gather}
where each coefficient $F_{\vec{k}}$ is a meromorphic function. We can
multiply two such formal sums using the following rules:
\begin{gather*}
T_iT_j=T_jT_i,\qquad T(c)T_i=T_iT(c),\qquad T(c)T(d)=T(cd),
\end{gather*}
and, for any meromorphic function $F$,
\begin{gather*}
T_i F(x_1,\dots,x_n)=F(x_1,\dots,x_{i-1},qx_i,x_{i+1},\dots,x_n) T_i,\\
T(c) F(x_1,\dots,x_n) =F(cx_1,\dots,cx_n) T(c).
\end{gather*}
(And, of course, we multiply meromorphic functions in the usual way.) In
this way, we obtain a product ${\mathfrak D}_c\otimes {\mathfrak D}_d\to {\mathfrak
 D}_{cd}$. Since $T(1)$ is the identity for this product, we will omit it
from the notation for ${\mathfrak D}_1$.

We call the resulting $\C^*$-graded algebra the {\em algebra of formal
 difference operators}. Note that any formal difference operator with
only finitely many nonzero coefficients acts in a natural way on the space
of meromorphic functions (i.e., right-multiply by the function, then take
the sum of the coefficients), thus justifying the name.

One important observation about formal difference operators is that a
formal difference operator of the form \eqref{eq:general_formal_diff_form}
is invertible whenever $F_0\ne 0$. (This is by the usual argument for
formal power series: if $c=1$, $F_0=1$, we invert using the power series
for $1/(1+z)$; in general, we can always extract the invertible (right)
factor $F_0(\vec{x})T(c)$ to reduce to that case.) Similarly, the algebra
of formal difference operators has no zero-divisors.

We associate a formal difference operator to the interpolation kernel in
the following way:
\begin{gather*}
{\cal D}^{(n)}_c(q,t;p):=\!
\bigg(\!
\sum_{\vec{k}\in \N^n}\!
\bigl(
2^n n!
\Res_{z_i=q^{k_i}c x_i, 1\le i\le n}
\cK^{(n)}_c(\vec{z};\vec{x};t;p,q)
\Delta^{(n)}_S(\vec{z};t;p,q)
\bigr)\! \prod_{1\le i\le n} \!\! T_i^{k_i}\!
\bigg)
T(c).
\end{gather*}
Roughly speaking, this arises by considering an integral
\begin{gather*}
\int
f(\vec{z})
\cK^{(n)}_c(\vec{z};\vec{x};t;p,q)
\Delta^{(n)}_S(\vec{z};t;p,q),
\end{gather*}
and attempting to compute it as an infinite sum of residues, taking into
account only the simplest possible residues. Note that by applying Lemma~\ref{lem:kern_res} repeatedly, we may compute the leading coefficient of ${\cal D}^{(n)}_c(q,t;p)$.

\begin{prop}
The formal difference operator ${\cal D}^{(n)}_c(q,t;p)$ has leading coefficient
\begin{gather*}
[T(c)]{\cal D}^{(n)}_c(q,t;p)=\prod_{1\le i\le j\le n}
 \frac{\Gampq(1/x_ix_j)} {\Gampq\big(1/c^2x_ix_j\big)}
\prod_{1\le i<j\le n} \frac{\Gampq\big(t/c^2x_ix_j\big)} {\Gampq(t/x_ix_j)}.
\end{gather*}
\end{prop}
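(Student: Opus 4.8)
Unravelling the definition, the leading coefficient $[T(c)]{\cal D}^{(n)}_c(q,t;p)$ is the $\vec k=0$ term of the defining sum, i.e.\ $2^n n!$ times the iterated residue at $z_i=cx_i$ ($1\le i\le n$) of the meromorphic $n$-form $\cK^{(n)}_c(\vec z;\vec x;t;p,q)\,\Delta^{(n)}_S(\vec z;t;p,q)$. For generic parameters $\Delta^{(n)}_S$ is regular at $z_i=cx_i$, so this splits as the value $\Delta^{(n)}_S(c\vec x;t;p,q)$ (whose measure factors $\tfrac{dz_i}{2\pi\sqrt{-1}z_i}$ contribute $\prod_i(cx_i)^{-1}$) times the iterated residue of $\cK^{(n)}_c$ alone. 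The plan is to compute $\Res_{z_i=cx_i,\,1\le i\le n}\cK^{(n)}_c$ by peeling off one $z$-variable at a time using Lemma~\ref{lem:kern_res}, taking $\vec z$ in the first (integration) argument slot of the kernel and $\vec x$ in the second; everything should then fall out by elementary cancellation.

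First I would turn Lemma~\ref{lem:kern_res} from a limit statement into a residue. With the substitution $(x_n,y_n)\mapsto(z_n,x_n)$ in that lemma, the only factor of $\Gampq(cz_n^{\pm 1}x_n^{\pm 1})$ that is singular as $z_n\to cx_n$ is $\Gampq(cx_n/z_n)$, and the special value $\lim_{x\to 1}(1-x)\Gampq(x)=1/((p;p)(q;q))$ gives $\Gampq(cx_n/z_n)\sim z_n/((p;p)(q;q)(z_n-cx_n))$. Evaluating the other three Gamma factors and the constant $\Gampq(t,c^2)$ at $z_n=cx_n$, the lemma becomes
\[
\Res_{z_n=cx_n}\cK^{(n)}_c(\vec z;\vec x;t;p,q)
=
\frac{cx_n\,\Gampq(c^2x_n^2,1/x_n^2)}{(p;p)(q;q)\,\Gampq(t)}
\prod_{1\le i\le n-1}\frac{\Gampq(cz_i^{\pm 1}x_n,\,x_i^{\pm 1}/x_n)}{\Gampq(tcz_i^{\pm 1}x_n,\,tx_i^{\pm 1}/x_n)}\;
\cK^{(n-1)}_c(z_1,\dots,z_{n-1};x_1,\dots,x_{n-1};t;p,q),
\]
reducing the $n$-dimensional residue to an $(n-1)$-dimensional one with an explicit Gamma prefactor.

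Next I would iterate this, running $\ell=n,n-1,\dots,1$: at the step removing $z_\ell$, apply the displayed identity to the surviving $\cK^{(\ell)}_c$, and at the same time substitute $z_\ell=cx_\ell$ into every prefactor produced at earlier steps. The prefactor created when $z_j$ was removed contains, for each $i<j$, the factor $\Gampq(cz_i^{\pm 1}x_j,x_i^{\pm 1}/x_j)/\Gampq(tcz_i^{\pm 1}x_j,tx_i^{\pm 1}/x_j)$; substituting $z_i=cx_i$ (which happens at the later step removing $z_i$) turns it into $\Gampq(c^2x_ix_j,x_j/x_i,x_i/x_j,1/x_ix_j)/\Gampq(tc^2x_ix_j,tx_j/x_i,tx_i/x_j,t/x_ix_j)$, so in total each unordered pair $i<j$ contributes exactly this factor once, while each $z_\ell$-removal step also contributes the factor $cx_\ell\,\Gampq(c^2x_\ell^2,1/x_\ell^2)/((p;p)(q;q)\Gampq(t))$. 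Along the way one must check that, for generic $p,q,t,c$, none of these intermediate prefactors has a zero or pole at a later residue point (so the order of removal is immaterial and each residue is simple); this is where Theorem~\ref{thm:kern_poles} — which tells us $\cK^{(\ell)}_c$ has only a simple pole as one $z$-variable hits $c$ times one $x$-variable — is used.

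Finally I would multiply everything together: the $z_\ell$-step factors, the per-pair prefactor contributions, the overall $2^n n!$, the $\prod_i(cx_i)^{-1}$ from the measure, and the value at $z_i=cx_i$ of the Gamma-function part of $\Delta^{(n)}_S$, namely $\Gampq(t)^n\prod_{i<j}\Gampq(t(cx_i)^{\pm 1}(cx_j)^{\pm 1})\cdot\tfrac{((p;p)(q;q))^n}{2^n n!}\prod_{i<j}\Gampq((cx_i)^{\pm 1}(cx_j)^{\pm 1})^{-1}\prod_i\Gampq((cx_i)^{\pm 2})^{-1}$. The scalars $2^n n!$, $((p;p)(q;q))^n$, $\Gampq(t)^n$ and $\prod_i cx_i$ cancel outright; then for each pair $i<j$ the six Gamma functions $\Gampq(c^2x_ix_j)$, $\Gampq(tc^2x_ix_j)$, $\Gampq(x_i/x_j)$, $\Gampq(x_j/x_i)$, $\Gampq(tx_i/x_j)$, $\Gampq(tx_j/x_i)$ cancel between the prefactor and density contributions, leaving exactly $\Gampq(1/x_ix_j,t/c^2x_ix_j)/\Gampq(1/c^2x_ix_j,t/x_ix_j)$, while the diagonal terms leave $\Gampq(1/x_i^2)/\Gampq(1/c^2x_i^2)$; together these are precisely the claimed product. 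The one genuine difficulty is thus purely organizational — keeping straight which prefactor is born at which step and which of its factors is consumed at which later step — since the terminal simplification needs nothing beyond matching numerators against denominators (not even the reflection equation $\Gampq(pq/z)=\Gampq(z)^{-1}$), and the $n=1$ case, directly checkable from $\cK^{(1)}_c(z;x;q,t;p)=\Gampq(cz^{\pm 1}x^{\pm 1})/\Gampq(c^2,t)$ to give $[T(c)]{\cal D}^{(1)}_c=\Gampq(1/x_1^2)/\Gampq(1/c^2x_1^2)$, both pins down the normalization and anchors the induction.
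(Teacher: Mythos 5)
Your proposal is correct and is exactly the paper's intended argument: the paper offers no written proof beyond the remark immediately preceding the proposition that "by applying Lemma \ref{lem:kern_res} repeatedly, we may compute the leading coefficient," which is precisely your iterated-residue scheme. Your bookkeeping (the residue form of Lemma \ref{lem:kern_res}, the per-pair and per-variable prefactors, the measure factor $\prod_i(cx_i)^{-1}$, and the cancellation of $2^n n!$, $((p;p)(q;q))^n$ and $\Gampq(t)^n$ against the Selberg density) checks out and reproduces the stated product.
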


For the next few lemmas, we will view the operators $D^{(n)}_q(u_0,\dots,u_{2m-1};t;p)$ as elements of~${\mathfrak D}_{q^{-1/2}}$.
The first two lemmas are direct translations of Propositions~\ref{prop:diff_braid} and~\ref{prop:diff_comm}, respectively.

\begin{lem}\label{lem:diff_op_braid_q}
For any $c$, $u$, we have
\begin{gather*}
\prod_{1\le i\le n} \theta_p\big(u x_i^{\pm 1}\big) {\cal D}^{(n)}_{q^{-1/2}c}(q,t;p)=
{\cal D}^{(n)}_c(q,t;p)D^{(n)}_q(uc,pc/u;t;p).
\end{gather*}
\end{lem}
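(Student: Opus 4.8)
The statement to be proved is Lemma~\ref{lem:diff_op_braid_q}, the translation of the difference-operator braid relation (Proposition~\ref{prop:diff_braid}) into the language of formal difference operators. The plan is to compare, term by term in the expansion over shift vectors $\vec k\in\N^n$, the two sides of the claimed equation, using the fact that the coefficients of ${\cal D}^{(n)}_c(q,t;p)$ are residues of the interpolation kernel. First I would recall the precise content of Proposition~\ref{prop:diff_braid}: with the substitution $t_0\mapsto$ (the parameter playing the role of $u$) it reads
\[
D^{(n)}_q(t_0,p/c^2t_0;t;p)_{\vec x}\,\cK^{(n)}_{q^{1/2}c}(\vec x;\vec y;t;p,q)
=
\prod_{1\le i\le n}\theta_p(ct_0y_i^{\pm1})\,\cK^{(n)}_c(\vec x;\vec y;t;p,q),
\]
and the point is to rewrite the operator $D^{(n)}_q(uc,pc/u;t;p)$ appearing in the lemma as exactly $D^{(n)}_q(t_0,p/c^2t_0;t;p)$ after the correspondence $t_0=uc$ (so that $p/c^2t_0=pc/u\cdot c^{-2}\cdot$... — one checks $p/(c^2 t_0)=p/(c^2 uc)=p/(uc^3)$; matching against $pc/u$ forces a rescaling that is absorbed by the $T(c)$-grading, which is precisely where the $q^{-1/2}c$ versus $c$ bookkeeping enters).

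The key steps, in order, are: (1) unwind the definition of ${\cal D}^{(n)}_c(q,t;p)$ so that for a test function $f(\vec z)$ the pairing $\langle f, {\cal D}^{(n)}_c\rangle$ is the formal sum of residues $\sum_{\vec k}2^n n!\,\Res_{z_i=q^{k_i}cx_i}\bigl(f(\vec z)\cK^{(n)}_c(\vec z;\vec x;t;p,q)\Delta^{(n)}_S(\vec z;t;p,q)\bigr)$, i.e.\ the operator encodes ``integrate against the kernel, then pick up the geometric tower of residues at $z_i=q^{k_i}cx_i$''; (2) observe that right-multiplication by a genuine difference operator $D^{(n)}_q(\cdots;t;p)_{\vec x}$ (acting in the $\vec x$ variables) on $\cK^{(n)}_c(\vec z;\vec x;\cdots)$ corresponds, after the residue expansion, to left-multiplication in ${\mathfrak D}$; (3) invoke Proposition~\ref{prop:diff_braid}, which says that applying the specific van~Diejen-type operator $D^{(n)}_q(uc,pc/u;t;p)$ in the $\vec x$-variables to $\cK^{(n)}_c$ produces $\prod_i\theta_p(ux_i^{\pm1})$ times $\cK^{(n)}_{q^{-1/2}c}$ (here the shift $c\mapsto q^{-1/2}c$ is exactly the effect of the half-integer $q$-shifts built into $D^{(n)}_q$, consistent with $D^{(n)}_q\in{\mathfrak D}_{q^{-1/2}}$); (4) take residues of both sides of that kernel identity at the tower $z_i=q^{k_i}cx_i$ and match against the definitions of ${\cal D}^{(n)}_{q^{-1/2}c}$ and ${\cal D}^{(n)}_c$, noting that the scalar prefactor $\prod_i\theta_p(ux_i^{\pm1})$ passes through the residue operation untouched since it does not depend on $\vec z$.

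I expect the main obstacle to be bookkeeping rather than anything conceptual: one must be careful that the residues defining ${\cal D}^{(n)}_c$ are taken at $z_i=q^{k_i}cx_i$ in the $\vec z$-variables, whereas Proposition~\ref{prop:diff_braid} shifts the $\vec x$-variables; reconciling ``which variable gets shifted'' requires using the $\vec x\leftrightarrow\vec y$ symmetry of $\cK^{(n)}$ (from the theorem defining the analytic kernel) to move the difference operator onto the correct set of variables, and then tracking how the pole locations $z_i=q^{k_i}cx_i$ transform under the $q$-shift of $x_i$ so that the shift operators $T_i$ are produced with the right exponents and the grading element $T(c)$ comes out correctly. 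A secondary subtlety is the factor $2^n n!$ and the normalization of $\Delta^{(n)}_S$: one should verify that these are the same on both sides (they are, since the same normalization is used in both ${\cal D}^{(n)}_{q^{-1/2}c}$ and ${\cal D}^{(n)}_c$), so they cancel and play no role. Once the variable-correspondence is pinned down, the identity is an immediate consequence of Proposition~\ref{prop:diff_braid} applied inside the residue, with the $\theta_p(ux_i^{\pm1})$ factor emerging as the stated leading scalar. Finally I would note that, since all the objects involved are either finite-coefficient formal difference operators (the $D^{(n)}_q$) or have coefficients given by convergent residue series, there is no convergence issue in interpreting the products in ${\mathfrak D}$, so the formal identity is also a genuine one whenever both sides act on an admissible function.
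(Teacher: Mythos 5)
Your overall strategy---unwinding the residue definition of ${\cal D}^{(n)}_c$ and feeding Proposition \ref{prop:diff_braid} into it---is indeed what the paper intends (it offers nothing beyond calling the lemma a ``direct translation''), but as written your step (3) is false, and the mismatch you noticed in the parameters is a genuine gap, not bookkeeping. Proposition \ref{prop:diff_braid}, with its $c$ replaced by $q^{-1/2}c$ so that the kernel being acted on is $\cK^{(n)}_c$, involves the operator $D^{(n)}_q(t_0,pq/c^2t_0;t;p)$, and no choice of $t_0$ turns this into $D^{(n)}_q(uc,pc/u;t;p)$: your own computation $p/(c^2t_0)=p/(uc^3)\ne pc/u$ shows this (equality would force $c^4=q$), and the discrepancy cannot be ``absorbed by the $T(c)$-grading,'' since the parameters of $D^{(n)}_q$ enter through theta functions in its coefficients, not through the grading. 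There is also a structural mismatch: in the lemma the factor $\prod_i\theta_p(ux_i^{\pm 1})$ multiplies on the output side while $D^{(n)}_q(uc,pc/u)$ sits on the input (integration) side of ${\cal D}^{(n)}_c$, whereas in Proposition \ref{prop:diff_braid} the difference operator acts on one set of kernel variables and the theta prefactor lives on the \emph{other} set. The $\vec{x}\leftrightarrow\vec{y}$ symmetry of the kernel only relabels arguments; it does not convert an operator acting on the integration variables into one acting on the test function, so your step (2)/(3) as arranged would at best prove the transposed identity $D^{(n)}_q(t_0,pq/c^2t_0;t;p)\,{\cal D}^{(n)}_c(q,t;p)={\cal D}^{(n)}_{q^{-1/2}c}(q,t;p)\prod_i\theta_p(q^{-1/2}ct_0x_i^{\pm 1})$, not the stated lemma.

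The missing ingredient is the formal adjointness of $D^{(n)}_q$ with respect to $\Delta^{(n)}_S(t;p,q)$, namely $D^{(n)}_q(v_0,v_1;t;p)^{\mathrm{ad}}=D^{(n)}_q(pq^{1/2}/v_0,pq^{1/2}/v_1;t;p)$ (the two-parameter case of the relation quoted in the proof of Theorem \ref{thm:fourier}; it follows from self-adjointness of $D^{(n)}_q(t;p)$ and the reflection equation for $\Gampq$). With it, the composition ${\cal D}^{(n)}_c\,D^{(n)}_q(uc,pc/u)$---read as ``integrate $D^{(n)}_q(uc,pc/u)f$ against $\cK^{(n)}_c(\vec{z};\vec{x})\Delta^{(n)}_S(\vec{z})$ and expand in residues at $z_i=q^{k_i}cx_i$''---becomes integration of $f$ against $D^{(n)}_q(pq^{1/2}/uc,\,q^{1/2}u/c;t;p)_{\vec{z}}\,\cK^{(n)}_c(\vec{z};\vec{x})$, and this is exactly Proposition \ref{prop:diff_braid} with $c\mapsto q^{-1/2}c$ and $t_0=q^{1/2}u/c$ (then $p/((q^{-1/2}c)^2t_0)=pq^{1/2}/uc$ and $q^{-1/2}c\,t_0=u$), producing $\prod_i\theta_p(ux_i^{\pm 1})\,\cK^{(n)}_{q^{-1/2}c}(\vec{z};\vec{x})$ and hence the lemma once the residue towers $z_i=q^{k_i}q^{-1/2}cx_i$ are matched. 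You would still need to justify the adjointness step at the level of the formal residue expansion (it is a contour-shift statement analytically; formally it amounts to re-indexing the towers, in the spirit of the paper's first proof of Proposition \ref{prop:formal_op_braid}), but with that in place the argument closes; without it, the proposal does not prove the identity as stated.
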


\begin{lem}\label{lem:diff_op_comm_q}
If $u_0u_1u_2u_3 = p^2q/c^2$, then
\begin{gather*}
{\cal D}^{(n)}_c(q,t;p)
D^{(n)}_q(cu_0,cu_1,cu_2,cu_3;t;p)
=
D^{(n)}_q(u_0,u_1,u_2,u_3;t;p)
{\cal D}^{(n)}_c(q,t;p).
\end{gather*}
\end{lem}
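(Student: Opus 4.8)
The plan is to read Lemma~\ref{lem:diff_op_comm_q} as Proposition~\ref{prop:diff_comm} transported across the operator ${\cal D}^{(n)}_c$. By its defining residue sum, ${\cal D}^{(n)}_c(q,t;p)$ is the formal difference operator representing the (heuristic) integral operator
\[
g\longmapsto\int g(\vec{z})\,\cK^{(n)}_c(\vec{z};\vec{x};t;p,q)\,\Delta^{(n)}_S(\vec{z};t;p,q),
\]
and the only features we will use are that an $\vec{x}$-difference operator applied to $\cK^{(n)}_c(\vec{z};\vec{x};t;p,q)$ may be pulled out through the $\vec{z}$-integral, and that a difference operator of the form $D^{(n)}_q(\cdots;t;p)_{\vec{z}}$ may be moved off the test function $g$ and onto $\cK^{(n)}_c(\vec{z};\vec{x};t;p,q)$ at the cost of replacing it by its adjoint with respect to the $0$-parameter elliptic Selberg density $\Delta^{(n)}_S(\vec{z};t;p,q)$. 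Since $D^{(n)}_q(t;p)$ is self-adjoint for that density and $D^{(n)}_q(u_1,\dots,u_{2m+2};t;p)$ is obtained from it by conjugating by $\prod_{i,j}\Gampq(u_jz_i^{\pm1})$ on one side and by $\prod_{i,j}\Gampq(q^{1/2}u_jz_i^{\pm1})$ on the other, the reflection relation $\Gampq(pq/z)=\Gampq(z)^{-1}$ gives
\[
D^{(n)}_q(u_1,\dots,u_{2m+2};t;p)^{\dagger}=D^{(n)}_q(pq^{1/2}/u_1,\dots,pq^{1/2}/u_{2m+2};t;p)
\]
with respect to $\Delta^{(n)}_S(\vec{z};t;p,q)$ (equivalently, one may simply quote the adjointness statement of \cite[\S7]{xforms}).

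With this in hand the computation is a short chain. Apply ${\cal D}^{(n)}_c\,D^{(n)}_q(cu_0,cu_1,cu_2,cu_3;t;p)$ to a test function $g$; moving $D^{(n)}_q(cu_0,\dots,cu_3;t;p)_{\vec{z}}$ onto $\cK^{(n)}_c(\vec{z};\vec{x};t;p,q)$ replaces it by $D^{(n)}_q(v_0,\dots,v_3;t;p)_{\vec{z}}$ with $v_r=pq^{1/2}/cu_r$. The hypothesis $u_0u_1u_2u_3=p^2q/c^2$ is exactly what makes $v_0v_1v_2v_3c^2=p^2q$, so Proposition~\ref{prop:diff_comm} applies (with its variables $\vec{x},\vec{y}$ relabelled as $\vec{z},\vec{x}$) and turns this into $D^{(n)}_q(pq^{1/2}/cv_0,\dots,pq^{1/2}/cv_3;t;p)_{\vec{x}}$ acting on $\cK^{(n)}_c(\vec{z};\vec{x};t;p,q)$; and $pq^{1/2}/cv_r=u_r$, so this $\vec{x}$-operator is precisely $D^{(n)}_q(u_0,u_1,u_2,u_3;t;p)_{\vec{x}}$. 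Pulling it out through the $\vec{z}$-integral yields $D^{(n)}_q(u_0,u_1,u_2,u_3;t;p)\,{\cal D}^{(n)}_c$ applied to $g$, which is the claim. (As a sanity check both sides lie in the graded piece ${\mathfrak D}_{cq^{-1/2}}$.)

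The one step that genuinely requires care is justifying the two transport operations at the formal level --- that the sum-of-residues operation defining ${\cal D}^{(n)}_c$ really satisfies $\int(D_{\vec{z}}g)\,\cK\,\Delta=\int g\,(D^{\dagger}_{\vec{z}}\cK)\,\Delta$ and commutes with $\vec{x}$-shifts as expected. I would handle this either by a direct termwise argument in the algebra of formal difference operators (each $D^{(n)}_q$ factor involves only finitely many shifts, so the rearrangements are manifestly legitimate), or, more cleanly, by noting that both sides of the asserted identity are formal difference operators whose coefficients are honest meromorphic functions, and that applying each side to the family $\cR^{*(n)}_{\blambda}(\vec{z};t_0,u_0;t;p,q)\prod_{1\le i\le n,\,0\le r<4}\Gampq(u_rz_i^{\pm1})$ (with $t^{n-1}u_0u_1u_2u_3=pq/c^2$), for which ${\cal D}^{(n)}_c$ is evaluated outright by Corollary~\ref{cor:int_eq_interp_ii}, produces a Zariski-dense set of identities among those coefficients. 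Everything else is routine bookkeeping with elliptic Gamma and theta factors, which I would omit.
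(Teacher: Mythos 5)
Your proposal is correct and is essentially the paper's own (implicit) argument: the paper offers no proof beyond the remark that this lemma is the ``direct translation'' of Proposition~\ref{prop:diff_comm}, and that is exactly what you carry out --- transporting that kernel identity through the residue/integral-operator interpretation of ${\cal D}^{(n)}_c$ using the adjoint formula $D^{(n)}_q(v_0,\dots,v_3;t;p)^{\dagger}=D^{(n)}_q(pq^{1/2}/v_0,\dots,pq^{1/2}/v_3;t;p)$ with respect to $\Delta^{(n)}_S$ (the same formula the paper uses later in Theorem~\ref{thm:fourier}), with the balancing conditions checking out as you state. For the justification, rely on your first option (the termwise computation in the algebra of formal difference operators, i.e.\ matching coefficients, which amounts to taking residues of Proposition~\ref{prop:diff_comm} together with the adjointness bookkeeping): the second option is not available as stated, since a formal operator with infinitely many shifts cannot be applied to actual functions and Corollary~\ref{cor:int_eq_interp_ii} evaluates the genuine integral operator, not the residue-sum operator ${\cal D}^{(n)}_c$, so extracting coefficient identities that way would additionally require a faithfulness argument.
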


The first lemma is particularly useful, for the following reason.

\begin{lem}\label{lem:diff_op_uniq}
 Suppose $q$ is non-torsion in $\C^*/\langle p\rangle$, and let $D\in
 {\mathfrak D}_c$ be an operator with leading coefficient $0$ such that
\begin{gather*}
\prod_{1\le i\le n} \frac{1}{\theta_p(u x_i^{\pm 1})}
D
D^{(n)}_q(uc,pc/u;t;p)
\end{gather*}
is independent of $u$. Then $D=0$.
\end{lem}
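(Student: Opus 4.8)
The plan is to argue by contradiction. Assume $D\neq 0$; I will track its lowest-order coefficient (with respect to a suitable grading of $\N^n$), show that the hypothesis forces that coefficient to live in grade $T(c)$, i.e.\ to be the leading coefficient, and thereby contradict the assumption that the leading coefficient of $D$ vanishes. To make ``lowest-order coefficient'' meaningful, first fix a total order $\prec$ on $\N^n$ that is compatible with addition and well-orders $\N^n$ (the graded lexicographic order works, and has $\vec 0$ as minimum). Then every nonzero element of ${\mathfrak D}_a$ has a well-defined lowest nonzero coefficient, and a direct computation with the multiplication rules — the same one underlying the asserted no-zero-divisor property — shows that for $A\in{\mathfrak D}_a$ and $B\in{\mathfrak D}_b$ with lowest nonzero indices $\vec a$, $\vec b$, the product $AB$ has lowest nonzero index $\vec a+\vec b$ and coefficient $A_{\vec a}(\vec x)\,B_{\vec b}(a\,q^{\vec a}\vec x)$, where $a\,q^{\vec a}\vec x:=(a\,q^{a_1}x_1,\dots,a\,q^{a_n}x_n)$.

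Next I would set $E(u):=D^{(n)}_q(uc,pc/u;t;p)$, viewed in ${\mathfrak D}_{q^{-1/2}}$; its lowest index is $\vec 0$, with coefficient
\[
E(u)_0(\vec z)=\frac{\prod_{1\le i<j\le n}\theta_p(t/z_iz_j)}{\prod_{1\le i,j\le n}\theta_p(1/z_iz_j)}\prod_{1\le i\le n}\theta_p(uc/z_i)\,\theta_p((pc/u)/z_i),
\]
a nonzero meromorphic function; in particular $E(u)\neq 0$. Since the algebra has no zero divisors, $D\neq 0$ would give $DE(u)\neq 0$, hence $G:=\prod_i\theta_p(ux_i^{\pm1})^{-1}DE(u)\neq 0$; by hypothesis $G$ is independent of $u$, and we have the identity of formal difference operators $DE(u)=\prod_i\theta_p(ux_i^{\pm1})\,G$.

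Let $\vec k$ be the lowest nonzero index of $D$, necessarily $\vec k\succ\vec 0$ because the leading coefficient $D_0$ vanishes. Comparing lowest coefficients on the two sides of $DE(u)=\prod_i\theta_p(ux_i^{\pm1})\,G$: the left side has lowest index $\vec k$ with coefficient $D_{\vec k}(\vec x)\,E(u)_0(c\,q^{\vec k}\vec x)$, while the right side has lowest index equal to that of $G$ with coefficient $\prod_i\theta_p(ux_i^{\pm1})\,G_{\vec k}(\vec x)$; so the lowest index of $G$ equals $\vec k$, and
\[
D_{\vec k}(\vec x)\,E(u)_0(c\,q^{\vec k}\vec x)=\prod_{1\le i\le n}\theta_p(ux_i^{\pm1})\,G_{\vec k}(\vec x).
\]
Substituting $z_i=c\,q^{k_i}x_i$ into $E(u)_0$ and using $\theta_p(pz)=\theta_p(1/z)$, the $u$-dependent part of $E(u)_0(c\,q^{\vec k}\vec x)$ simplifies to $\prod_i\theta_p(u\,q^{-k_i}/x_i)\,\theta_p(u\,q^{k_i}x_i)$. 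Since $D_{\vec k}(\vec x)$ and $G_{\vec k}(\vec x)$ are nonzero meromorphic functions independent of $u$, dividing the displayed identity by them shows that
\[
\prod_{1\le i\le n}\theta_p(u\,q^{-k_i}/x_i)\,\theta_p(u\,q^{k_i}x_i)\qquad\text{and}\qquad\prod_{1\le i\le n}\theta_p(ux_i)\,\theta_p(u/x_i)
\]
agree up to a factor independent of $u$.

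To finish, I would compare zero divisors in the variable $u$: for generic $\vec x$ the first product vanishes (in $u$) exactly on $\bigcup_i\bigl(q^{k_i}x_i\,p^{\Z}\cup q^{-k_i}x_i^{-1}\,p^{\Z}\bigr)$ and the second on $\bigcup_i\bigl(x_i\,p^{\Z}\cup x_i^{-1}\,p^{\Z}\bigr)$, and these multisets of cosets in $\C^*/\langle p\rangle$ coincide only if $q^{k_i}\in p^{\Z}$ for every $i$; since $q$ is non-torsion in $\C^*/\langle p\rangle$ this forces $\vec k=\vec 0$, contradicting $\vec k\succ\vec 0$. Hence $D=0$. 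The one place requiring care — and the main potential pitfall — is the bookkeeping of the graded multiplication: one must apply the product-of-lowest-coefficients formula with the correct $q$-shift in its argument, since it is precisely that shift (producing the $q^{k_i}$ above) which converts the vanishing of the leading coefficient into a statement to which the non-torsion hypothesis can be applied. The genericity of $\vec x$ is used only to guarantee that the theta products have generic divisors and that $D_{\vec k}$, $G_{\vec k}$ do not vanish identically.
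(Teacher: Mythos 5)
Your proof is correct, and it reaches the conclusion by a genuinely different route than the paper, even though both ultimately hinge on the same quantity: the extremal coefficient $D_{\vec{k}}(\vec{x})\,E(u)_0(cq^{\vec{k}}\vec{x})$, whose $u$-dependence $\prod_i\theta_p(uq^{-k_i}/x_i)\theta_p(uq^{k_i}x_i)$ must be reconciled with $\prod_i\theta_p(ux_i^{\pm 1})$. The paper does not compare divisors in $u$; instead it specializes $u=x_j$ (which kills the right-hand side because $\theta_p(1)=0$), extracts the coefficient of $\prod_i T_i^{k_i}T(q^{-1/2}c)$ to obtain a triangular linear relation among the coefficients $F_{\vec{l}}$ with $\vec{l}\le\vec{k}$, notes that the coefficient of $F_{\vec{k}}$ contains the factor $\theta_p(q^{-k_j})$, nonzero by non-torsion once one chooses $j$ with $k_j\ne 0$, and then inducts on $|\vec{k}|$ starting from $F_0=0$. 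You instead run a minimal-counterexample argument with respect to an addition-compatible well-order: only the single lowest nonzero coefficient is needed, the hypothesis is used for all $u$ simultaneously rather than at the special points $u=x_j$, and non-torsion enters by matching the $u$-zero divisors (cosets modulo $p^{\Z}$) of the two theta products for generic $\vec{x}$, forcing $q^{k_i}\in p^{\Z}$ and hence $\vec{k}=\vec{0}$. Both arguments are sound; the paper's recursion has the side benefit of producing an explicit recurrence for the coefficients of ${\cal D}^{(n)}_c(q,t;p)$, which it exploits immediately afterwards to show these coefficients have no $\vec{x}$-independent poles, whereas your version is leaner for the uniqueness statement itself and, via the lowest-coefficient product formula, also reproves the no-zero-divisor property of the algebra that the paper only asserts.
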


\begin{proof}
Write
\begin{gather*}
D=\sum_{\vec{k}\in \N^n} F_{\vec{k}}(\vec{x}) \prod_{1\le i\le n} T_i^{k_i}
T(c),
\end{gather*}
with $F_0(\vec{x})=0$. The fact that the given product of operators is
independent of $u$ implies that
\begin{gather*}
D D^{(n)}_q(uc,pc/u;t;p)
\end{gather*}
vanishes if we set $u=x_j$ for any $1\le j\le n$. We take $j=n$ for
notational simplicity; the other cases are analogous. If we take the
coefficient of $\prod_i T_i^{k_i} T\big(q^{-1/2}c\big)$ in this product, we obtain
a linear relation between the coefficients $F_{\vec{l}}$ for $0\le
\vec{l}\le \vec{k}$ (in the product partial order). The coefficient of
$F_{\vec{k}}$ in this relation is
\begin{gather*}
\frac{\prod\limits_{1\le i\le n} \theta_p\big(q^{-k_i}x_n/x_i,q^{k_i}x_i x_n\big)
\prod\limits_{1\le i<j\le n} \theta_p\big(t/c^2q^{k_i+k_j}x_ix_j\big)}
{\prod\limits_{1\le i\le j\le n} \theta_p\big(1/c^2q^{k_i+k_j}x_ix_j\big)},
\end{gather*}
and thus as long as $\theta_p\big(q^{-k_n}\big)\ne 0$, we obtain an expression for
$F_{\vec{k}}$ in terms of coefficients $F_{\vec{l}}$ with $\sum_i
l_i<\sum_i k_i$. Since $F_0=0$, this implies by induction that
$F_{\vec{k}}=0$.
\end{proof}

In particular, ${\cal D}^{(n)}_c(q,t;p)$ is uniquely determined by Lemma~\ref{lem:diff_op_braid_q}, and the proof of Lemma~\ref{lem:diff_op_uniq}
gives a recurrence for computing its coefficients. By inspection, that
recurrence gives us the following result.

\begin{prop}
If $q$ is not torsion in $\C^*/\langle p\rangle$, then the coefficients of
${\cal D}^{(n)}_c(q,t;p)$ have no $\vec{x}$-independent poles.
\end{prop}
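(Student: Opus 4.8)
The plan is to run the recurrence built into the proof of Lemma \ref{lem:diff_op_uniq} and to check, by induction, that none of its divisions can manufacture an $\vec x$-independent pole. Write ${\cal D}^{(n)}_c(q,t;p)=\bigl(\sum_{\vec k\in\N^n}F_{\vec k}(\vec x)\prod_{1\le i\le n}T_i^{k_i}\bigr)T(c)$. By Lemma \ref{lem:diff_op_braid_q} the operator $\prod_{1\le i\le n}\theta_p(u x_i^{\pm 1})^{-1}\,{\cal D}^{(n)}_c(q,t;p)\,D^{(n)}_q(uc,pc/u;t;p)$ equals ${\cal D}^{(n)}_{q^{-1/2}c}(q,t;p)$, hence is independent of $u$ — which is exactly the input the extraction in the proof of Lemma \ref{lem:diff_op_uniq} uses (the hypothesis there that the leading coefficient vanish is needed only to start \emph{that} induction, not to derive the recurrence). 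Concretely: specialising $u=x_j$ for any index $j$ with $k_j\ge 1$ makes ${\cal D}^{(n)}_c(q,t;p)\,D^{(n)}_q(x_jc,pc/x_j;t;p)$ vanish (the factor $\theta_p(1)$ appears), and reading off the coefficient of $\prod_i T_i^{k_i}T(q^{-1/2}c)$ from this vanishing operator gives, for each $\vec k\neq 0$, a relation $A_{\vec k}(\vec x)\,F_{\vec k}(\vec x)=\sum_{\vec l}B_{\vec k,\vec l}(\vec x)\,F_{\vec l}(\vec x)$, the sum running over the finitely many $\vec l\le\vec k$ with $\sum_i l_i<\sum_i k_i$. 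Here $A_{\vec k}$ is the explicit ratio of theta functions displayed in the proof of Lemma \ref{lem:diff_op_uniq}, and each $B_{\vec k,\vec l}$ is a product of theta functions built from the coefficients of $D^{(n)}_q(x_jc,pc/x_j;t;p)$, with arguments composed with a shift $\vec x\mapsto q^{\vec a}\vec x$. The induction is then on $\sum_i k_i$, with inductive claim: $F_{\vec k}$ has no $\vec x$-independent pole.

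The base case $\vec k=0$: by the proposition computing the leading coefficient above, $F_0=[T(c)]{\cal D}^{(n)}_c(q,t;p)$ is a finite product of ratios $\Gampq(\cdot)/\Gampq(\cdot)$ in which every argument is $(x_ix_j)^{-1}$ times a monomial in $p,q,t,c$; since the zeros and poles of $\Gampq$ occur only where its argument is a monomial in $p$ and $q$, every polar divisor of $F_0$ involves the variables, so $F_0$ has no $\vec x$-independent pole. For the inductive step, inspect the theta arguments in the relation: every theta factor in the numerator of $A_{\vec k}$ and every theta factor in each $B_{\vec k,\vec l}$ has argument involving at least one $x_i$ — so it is holomorphic and vanishes only on an $\vec x$-dependent divisor — with the single exception of the factor $\theta_p(q^{-k_j})$ in the numerator of $A_{\vec k}$. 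That factor is a nonzero constant \emph{precisely because $q$ is not torsion in $\C^*/\langle p\rangle$}, so $A_{\vec k}\not\equiv 0$ and $1/A_{\vec k}$ has only $\vec x$-dependent poles. Since composing a theta factor with a shift $\vec x\mapsto q^{\vec a}\vec x$ cannot turn an $\vec x$-dependent divisor into an $\vec x$-independent one, and since each $F_{\vec l}$ on the right has no $\vec x$-independent pole by induction, the displayed relation forces $F_{\vec k}$ to have no $\vec x$-independent pole as well, closing the induction.

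The step that really needs care is this inspection: one must verify that $\theta_p(q^{-k_j})$ is the \emph{only} $\vec x$-free theta factor the recurrence can put into a denominator. That comes down to (i) cataloguing the theta arguments in the coefficients of $D^{(n)}_q(x_jc,pc/x_j;t;p)$ — the factors $\theta_p(x_j c\,z_i^{\sigma_i})$, $\theta_p((pc/x_j)z_i^{\sigma_i})$, $\theta_p(t\,z_i^{\sigma_i}z_{i'}^{\sigma_{i'}})$, $\theta_p(z_i^{\sigma_i}z_{i'}^{\sigma_{i'}})$ — and checking that after the substitutions $z_i=cq^{a_i}x_i$ and $u=x_j$ each of them either still involves a variable or lies in the numerator; and (ii) confirming from the proof of Lemma \ref{lem:diff_op_uniq} that $A_{\vec k}$ has exactly the displayed form, so that its only $\vec x$-free factor is the asserted one. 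For generic $c$ this bookkeeping is routine, and the proposition then follows.
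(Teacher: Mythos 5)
Your proposal is correct and is essentially the paper's own proof: the paper disposes of this proposition by asserting that the recurrence extracted from the proof of Lemma \ref{lem:diff_op_uniq} (which applies to ${\cal D}^{(n)}_c(q,t;p)$ itself thanks to Lemma \ref{lem:diff_op_braid_q}) gives the result ``by inspection,'' and your induction on $\sum_i k_i$ --- base case the explicit leading coefficient, inductive step dividing only by the displayed coefficient whose sole $\vec{x}$-free factor is $\theta_p(q^{-k_j})\ne 0$ --- is precisely that inspection written out. The only quibble is your flat claim that every theta factor of $B_{\vec{k},\vec{l}}$ involves some $x_i$ (numerator factors such as $\theta_p(q^{-l_j})$ do occur after the substitutions), but since, as your own later criterion ``involves a variable or lies in the numerator'' indicates, $\vec{x}$-free factors matter only in denominators, this does not affect the argument.
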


In other words, the operator ${\cal D}^{(n)}_c(q,t;p)$ is well-defined whenever
$q$ is non-torsion. (This is in contrast to $\cK^{(n)}_c$, which certainly
{\em does} have poles depending on $c$ but not on the variables!)

\begin{prop}
If $c^2\in p^\Z$, then
\begin{gather*}
{\cal D}^{(n)}_c(q,t;p)=\prod_{1\le i\le j\le n} \frac{\Gampq(1/x_ix_j)} {\Gampq\big(1/c^2x_ix_j\big)}
\prod_{1\le i<j\le n} \frac{\Gampq\big(t/c^2x_ix_j\big)} {\Gampq(t/x_ix_j)}T(c).
\end{gather*}
\end{prop}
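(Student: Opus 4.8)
The plan is to use the uniqueness result of Lemma~\ref{lem:diff_op_uniq} together with the braid-type relation of Lemma~\ref{lem:diff_op_braid_q}. Concretely, when $c^2\in p^{\Z}$, the claim is that ${\cal D}^{(n)}_c(q,t;p)$ has \emph{only} its leading term, i.e.\ all coefficients $F_{\vec k}$ with $\vec k\ne 0$ vanish, and the leading coefficient is the one computed in the proposition preceding Lemma~\ref{lem:diff_op_braid_q}. So I would first subtract off the claimed answer: let $D$ be the difference between ${\cal D}^{(n)}_c(q,t;p)$ and the right-hand side of the proposition. Since the right-hand side is itself a (trivially) valid formal difference operator in ${\mathfrak D}_c$ with the correct leading coefficient, $D$ has leading coefficient $0$. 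The goal is then to show $D=0$ via Lemma~\ref{lem:diff_op_uniq}, which requires checking that $\prod_i \theta_p(ux_i^{\pm1})^{-1}\,D\,D^{(n)}_q(uc,pc/u;t;p)$ is independent of $u$.

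The key point making $c^2\in p^{\Z}$ special is that when $c=p^{k/2}$ for some integer $k$, the shift operator $T(c)$ acts by $x_i\mapsto p^{k/2}x_i$, and the $p$-theta functions and elliptic Gamma functions appearing in all the relevant coefficients are quasi-periodic under multiplication of their arguments by powers of $p$. In particular, $D^{(n)}_{q}$-type operators and the claimed leading coefficient of ${\cal D}^{(n)}_c$ transform in a controlled way, so one can verify directly that the product $\prod_i\theta_p(ux_i^{\pm1})^{-1}(\text{RHS})\,D^{(n)}_q(uc,pc/u;t;p)$ is $u$-independent (it is essentially forced by the structure of Lemma~\ref{lem:diff_op_braid_q} specialized to the product operator). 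Since the same $u$-independence holds for ${\cal D}^{(n)}_c(q,t;p)$ itself by Lemma~\ref{lem:diff_op_braid_q}, it holds for the difference $D$; Lemma~\ref{lem:diff_op_uniq} then forces $D=0$, which is the claim. Note we must be in the regime where $q$ is non-torsion in $\C^*/\langle p\rangle$ for Lemma~\ref{lem:diff_op_uniq} to apply; the general case then follows by analytic continuation, since the coefficients of ${\cal D}^{(n)}_c(q,t;p)$ have no $\vec x$-independent poles.

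An alternative, perhaps cleaner, route: when $c^2\in p^{\Z}$, the specialization point $y_i=q^{k_i}cx_i$ at which we are taking residues in the definition of ${\cal D}^{(n)}_c$ interacts with the polar structure described in Theorem~\ref{thm:kern_poles}. The product $\cK^{(n)}_c(\vec z;\vec x;t;p,q)\prod_{i,j}(cz_i^{\pm1}x_j^{\pm1};p,q)\prod_{i<j}((pq/t)z_i^{\pm1}z_j^{\pm1};p,q)$ is holomorphic, and for $c^2\in p^{\Z}$ one finds that $cz_i^{\pm1}x_j^{\pm1}$ can only produce a \emph{simple} pole at $z_i=q^{k_i}cx_i$ when $k_i=0$ (all higher shifts $q^{k_i}c$ with $k_i>0$ fail to land on the vanishing locus of the relevant $(\,\cdot\,;p,q)$-symbol, because the $q$-geometric progression of zeros does not reach those points once $c$ is a power of $p$). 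Hence all residues with $\vec k\ne 0$ vanish identically, and the $\vec k=0$ residue is exactly the leading coefficient computed earlier. Either way, the main obstacle is the bookkeeping in the second approach — carefully matching the $q$-progressions of zeros of the $(\,\cdot\,;p,q)$-symbols against the shifted specialization points and confirming that $c\in p^{\Z/2}$ kills every $\vec k\neq 0$ contribution — or, in the first approach, verifying the $u$-independence of the product with the explicit claimed RHS, which is a direct but slightly tedious theta-function quasi-periodicity computation. I would present the uniqueness argument as the main proof, since it reuses machinery already in place, and remark on the residue-vanishing interpretation.
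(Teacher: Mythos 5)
Your main argument is exactly the paper's proof: subtract the claimed right-hand side (which has the correct leading coefficient), use Lemma \ref{lem:diff_op_braid_q} for ${\cal D}^{(n)}_c$ itself, verify the $u$-independence for the single-term operator by the theta quasi-periodicity computation (the paper reduces this to checking that $\theta_p(p^l u x,u/p^l x)/\theta_p(u x,u/x)$ is independent of $u$), and conclude $D=0$ from Lemma \ref{lem:diff_op_uniq}. This matches the paper's argument essentially step for step, so the proposal is correct as stated.
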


\begin{proof}Both sides have the same leading coefficient, so it suffices to show that
their difference satisfies the hypothesis of Lemma~\ref{lem:diff_op_uniq}.
Since ${\cal D}^{(n)}_c(q,t;p)$ certainly satisfies the equation, we reduce to
showing that
\begin{gather*}
\prod_{1\le i\le n} \frac{1}{\theta_p\big(u x_i^{\pm 1}\big)} \prod_{1\le i\le j\le n}
 \frac{\Gampq(1/x_ix_j)} {\Gampq\big(1/c^2x_ix_j\big)} \prod_{1\le i<j\le n} \frac{\Gampq\big(t/c^2x_ix_j\big)}
 {\Gampq(t/x_ix_j)}T(c)D^{(n)}_q(uc,pc/u;t;p)
\end{gather*}
is independent of $u$. This reduces to checking that
\begin{gather*}
\frac{\theta_p\big(p^l u x,u/p^l x\big)}{\theta_p(u x,u/x)}
\end{gather*}
is independent of $u$, where $c^2=p^l$, which in turn reduces easily to the case $l=1$.
\end{proof}

In particular, ${\cal D}^{(n)}_1(q,t;p)=1$. Plugging this into Lemma~\ref{lem:diff_op_braid_q} gives the following identification.

\begin{prop} We have ${\cal D}^{(n)}_{q^{-1/2}}(q,t;p)=D^{(n)}_q(t;p)$. More generally, for any nonnegative integer $m$, ${\cal D}^{(n)}_{q^{-m/2}}(q,t;p)$ has finite support, with theta function coefficients, and the correspon\-ding true difference operator commutes with the natural action of the
hyperoctahedral group $($by permuting and inverting the variables$)$.
\end{prop}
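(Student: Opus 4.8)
The plan is to bootstrap everything from the previous proposition, $D^{(n)}_1(q,t;p)={\cal D}^{(n)}_1(q,t;p)=1$, by repeatedly feeding it into the difference braid relation of Lemma~\ref{lem:diff_op_braid_q}, using only that the algebra of formal difference operators — and, more generally, the twisted group algebra of $\tfrac12\Z^n$ over meromorphic functions, in which all finite-support operators live — has no zero divisors, so that left multiplication by a nonzero coefficient may be cancelled. For the base identity, take $c=1$ in Lemma~\ref{lem:diff_op_braid_q} and use ${\cal D}^{(n)}_1=1$ to get
\[
\prod_{1\le i\le n}\theta_p(u x_i^{\pm1})\,{\cal D}^{(n)}_{q^{-1/2}}(q,t;p)=D^{(n)}_q(u,p/u;t;p).
\]
The point is that the right-hand side already factors: inserting the parameter pair $(u,p/u)$ multiplies the $\vec\sigma$-term of $D^{(n)}_q(t;p)$ by $\prod_i\theta_p(ux_i^{\sigma_i})\theta_p((p/u)x_i^{\sigma_i})$, and the theta reflection $\theta_p(pz)=\theta_p(1/z)$ turns $\theta_p((p/u)x_i^{\sigma_i})$ into $\theta_p(ux_i^{-\sigma_i})$, so this factor equals $\prod_i\theta_p(ux_i^{\pm1})$ and is independent of $\vec\sigma$; hence $D^{(n)}_q(u,p/u;t;p)=\bigl(\prod_i\theta_p(ux_i^{\pm1})\bigr)D^{(n)}_q(t;p)$. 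Cancelling the nonzero coefficient $\prod_i\theta_p(ux_i^{\pm1})$ then gives ${\cal D}^{(n)}_{q^{-1/2}}(q,t;p)=D^{(n)}_q(t;p)$.

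For the general statement I would induct on $m$, the cases $m=0,1$ being ${\cal D}^{(n)}_1=1$ and the identity just established. For the step, put $c=q^{-m/2}$ in Lemma~\ref{lem:diff_op_braid_q}:
\[
\prod_i\theta_p(ux_i^{\pm1})\,{\cal D}^{(n)}_{q^{-(m+1)/2}}(q,t;p)={\cal D}^{(n)}_{q^{-m/2}}(q,t;p)\,D^{(n)}_q(q^{-m/2}u,\,q^{-m/2}p/u;t;p).
\]
By the inductive hypothesis ${\cal D}^{(n)}_{q^{-m/2}}$ has finite support with coefficients that are ratios of products of $\theta_p$'s evaluated at monomials in $\vec x$ (as is already visible for its leading coefficient), while $D^{(n)}_q(u_1,u_2;t;p)$ is manifestly of the same shape — it is $D^{(n)}_q(t;p)$ with each of its $2^n$ coefficients scaled by such a ratio, and its support (after extracting $T(q^{-1/2})$) is $\{0,1\}^n$. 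This class of operators is closed under the algebra product — the shifts $T_i$ and $T(c)$ carry $\theta_p(a\prod x_j^{e_j})$ to expressions of the same form — and under left division by the nonzero coefficient $\prod_i\theta_p(ux_i^{\pm1})$; so the right-hand side, and hence ${\cal D}^{(n)}_{q^{-(m+1)/2}}$, is again finite-support with theta-function coefficients (the apparent dependence of the quotient on the free parameter $u$ must cancel, since the left side is $u$-free).

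For the hyperoctahedral invariance I would run the same induction at the level of genuine difference operators, which is legitimate since finite-support operators act on functions. The coefficient $\prod_i\theta_p(ux_i^{\pm1})$ is invariant under permuting and inverting the $x_i$, and $D^{(n)}_q(u_1,\dots,u_{2m+2};t;p)$ commutes with this $W(B_n)$-action, being the $W(B_n)$-invariant operator $D^{(n)}_q(t;p)$ conjugated by manifestly $W(B_n)$-invariant products of elliptic Gamma functions; so conjugating the displayed identity by any $w\in W(B_n)$ and cancelling $\prod_i\theta_p(ux_i^{\pm1})$ shows that the operator attached to ${\cal D}^{(n)}_{q^{-(m+1)/2}}$ is $W(B_n)$-invariant as soon as that of ${\cal D}^{(n)}_{q^{-m/2}}$ is; the base case is the well-known $W(B_n)$-symmetry of $D^{(n)}_q(t;p)$. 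I expect the only genuinely delicate step in the whole argument to be the first one — the rewriting $D^{(n)}_q(u,p/u;t;p)=\prod_i\theta_p(ux_i^{\pm1})\,D^{(n)}_q(t;p)$ via the theta reflection, and the ensuing cancellation, which rests squarely on the absence of zero divisors. Everything downstream is a bookkeeping induction; the one mild subtlety is that a single-variable inversion $x_i\mapsto1/x_i$ reindexes the support of ${\cal D}^{(n)}_{q^{-(m+1)/2}}$ by $k_i\mapsto(m+1)-k_i$ (and acts on the coefficient in slot $i$ by $x_i\mapsto 1/x_i$) rather than fixing it, but this is purely formal and causes no trouble.
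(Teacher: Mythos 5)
Your proposal is correct and takes essentially the same route as the paper's proof: the recursion coming from Lemma \ref{lem:diff_op_braid_q} with base case ${\cal D}^{(n)}_1(q,t;p)=1$, the identification $D^{(n)}_q(u,p/u;t;p)=\prod_{1\le i\le n}\theta_p(u x_i^{\pm 1})\,D^{(n)}_q(t;p)$ (via the theta reflection) for the first claim, and induction on $m$ using the corresponding properties of $D^{(n)}_q(uq^{-(m-1)/2},pq^{-(m-1)/2}/u;t;p)$ for the rest. The extra bookkeeping you supply (closure of finite-support theta-coefficient operators under the product, $W(B_n)$-covariance of the conjugated operators, and the $u$-independence argument that absorbs the left-hand theta factor) is exactly the detail the paper leaves implicit, and it is sound.
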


\begin{proof} If we write
\begin{gather*}
{\cal D}^{(n)}_{q^{-m/2}}(q,t;p)=\prod_{1\le i\le n} \frac{1}{\theta_p(u x_i^{\pm 1})}
{\cal D}^{(n)}_{q^{-(m-1)/2}}(q,t;p)D^{(n)}_q\big(uq^{-(m-1)/2},pq^{-(m-1)/2}/u;t;p\big),
\end{gather*}
we see that the case $m=1$ immediately gives the first claim, while the second claim follows by induction from the corresponding fact for $D^{(n)}_q\big(uq^{-(m-1)/2},pq^{-(m-1)/2}/u;t;p\big)$.
\end{proof}

\begin{rem} Indeed, we see that ${\cal D}^{(n)}_{q^{-m/2}}(q,t;p)$ is an operator of the form considered in~\cite{xforms} (introduced in the proof of Theorem~9.7 op.\ cit.); in that notation, we have ${\cal D}^{(n)}_{q^{-m/2}}(q,t;p) = D^{(n)}_{0,m}(t;p,q)$. We also note that a straightforward induction shows that we may replace the integral operator corresponding to $\cK^{(n)}_{q^{-m/2}}$ in the braid relation and other identities by the operator ${\cal D}^{(n)}_{q^{-m/2}}(q,t;p)$ in the same way as for $m=1$.
\end{rem}

The key identity satisfied by our formal difference operators is the
following analogue of the braid relation, Proposition~\ref{prop:kern_braid}.

\begin{prop}\label{prop:formal_op_braid}
If $t_0t_1=pq/c^2d^2$, then
\begin{gather*}
\prod_{1\le i\le n} \Gampq\big(t_0 c x_i^{\pm 1},t_1 c x_i^{\pm 1}\big)
{\cal D}^{(n)}_{cd}(q,t;p) \prod_{1\le i\le n} \Gampq\big(t_0 d x_i^{\pm 1},t_1 d x_i^{\pm 1}\big) \\
\qquad {}= {\cal D}^{(n)}_c(q,t;p)
\prod_{1\le i\le n} \Gampq\big(t_0 x_i^{\pm 1},t_1 x_i^{\pm 1}\big)
{\cal D}^{(n)}_d(q,t;p).
\end{gather*}
\end{prop}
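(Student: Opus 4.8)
The plan is to realize the asserted identity as the difference-operator shadow of the kernel braid relation, Proposition~\ref{prop:kern_braid}. The first point is that every coefficient of ${\cal D}^{(n)}_e(q,t;p)$ is a meromorphic function of $e$: the leading coefficient $[T(e)]{\cal D}^{(n)}_e(q,t;p)$ is the explicit product of elliptic Gamma functions computed above, and the recurrence extracted from the proof of Lemma~\ref{lem:diff_op_uniq} then expresses every other coefficient as a ratio of theta functions in $\vec x$, $t$, $e$ times already-determined coefficients. Hence both sides of the claimed identity are, coefficient by coefficient, meromorphic in $c$ and $d$ (with $t_1$ eliminated via $t_0t_1 = pq/c^2d^2$), so --- assuming $q$ is not a root of unity, which is generic --- it suffices to prove the identity when $c = q^{-k/2}$ and $d = q^{-m/2}$ for $k,m\in\N$, these pairs being Zariski dense. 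For such $c$ and $d$ the operators ${\cal D}^{(n)}_c$, ${\cal D}^{(n)}_d$, ${\cal D}^{(n)}_{cd}$ have finite support and are genuine difference operators, namely operators of the form $D^{(n)}_{0,\ell}(t;p,q)$ of \cite{xforms}; by the remark following that identification, each coincides with the integral operator whose kernel is the corresponding $\cK^{(n)}$, and not merely with the sum of its leading residues.

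Granting this, I would check the identity by applying both sides to a Zariski-dense family of functions $\phi$, for instance the interpolation functions $R^{*(n)}_\nu(\vec z;a,b;q,t;p)$ over all partitions $\nu$ and generic $a,b$. Writing $M_1(\vec x) = \prod_{1\le i\le n}\Gampq(t_0cx_i^{\pm1},t_1cx_i^{\pm1})$ and $M_2(\vec x) = \prod_{1\le i\le n}\Gampq(t_0dx_i^{\pm1},t_1dx_i^{\pm1})$, and recalling that multiplication by $\prod_i\Gampq(t_0w_i^{\pm1},t_1w_i^{\pm1})$ turns $\Delta^{(n)}_S(\vec w;t;p,q)$ into $\Delta^{(n)}_S(\vec w;t_0,t_1;t;p,q)$, the left-hand operator sends $\phi$ to
\[
M_1(\vec x)\int \phi(\vec z)\,M_2(\vec z)\,\cK^{(n)}_{cd}(\vec z;\vec x;t;p,q)\,\Delta^{(n)}_S(\vec z;t;p,q),
\]
while the right-hand operator sends $\phi$, after unwinding the composition, interchanging the two integrations, and using the $\vec x\leftrightarrow\vec y$ symmetry of the kernel, to
\[
\int \phi(\vec z)\Bigl[\int \cK^{(n)}_d(\vec w;\vec z;t;p,q)\,\cK^{(n)}_c(\vec w;\vec x;t;p,q)\,\Delta^{(n)}_S(\vec w;t_0,t_1;t;p,q)\Bigr]\Delta^{(n)}_S(\vec z;t;p,q).
\]
The bracketed inner integral is exactly an instance of Proposition~\ref{prop:kern_braid}, whose balancing condition is precisely the hypothesis $t_0t_1 = pq/c^2d^2$; it evaluates to $M_2(\vec z)\,M_1(\vec x)\,\cK^{(n)}_{cd}(\vec z;\vec x;t;p,q)$, and substituting this back reproduces the left-hand expression verbatim. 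Thus the two operators agree on the test family, hence coincide, and the meromorphy argument of the first paragraph propagates this to all $c,d$.

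The computation itself is short; the work lies in setting it up carefully. The interchange of the $\vec z$- and $\vec w$-integrations must be justified, which as usual is done by first restricting to a range of parameters where every contour can be taken to be a unit circle (compare the proof of Theorem~\ref{thm:bailey_xform}) and then invoking the meromorphic continuation of \cite[\S 10]{xforms}. More essentially, the reduction to $c,d\in q^{-\N/2}$ is not cosmetic: for generic $c$ the operator ${\cal D}^{(n)}_c$ is only the leading-residue part of the formal integral operator, so one cannot run the above argument directly --- nor, for the same reason, deduce the result from Lemma~\ref{lem:diff_op_braid_q} by a parameter substitution, since the balancing conditions fail to line up. I expect this bookkeeping, together with keeping track of the $\C^*$-grading when composing the operators, to be the only real obstacle.
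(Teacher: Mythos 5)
Your reduction to $c=q^{-k/2}$, $d=q^{-m/2}$ does not work as stated, and this is the crux of the proposal. The coefficients of ${\cal D}^{(n)}_c(q,t;p)$ are \emph{meromorphic} in $c$ but not rational: the leading coefficient is a ratio of elliptic Gamma functions of $c^2x_ix_j$, and the recurrence from Lemma \ref{lem:diff_op_uniq} produces theta functions of $c$ at every step (compare the explicit univariate formula for ${\cal D}^{(1)}_c$). Zariski density is only a sufficient test for identities between \emph{rational} functions --- which is exactly how it is used elsewhere in the paper, where the Puiseux coefficients are rational in the parameters --- and it proves nothing for meromorphic functions: the set $q^{-\N/2}$ has no accumulation point in $\C^*$, and a nonzero meromorphic function such as $\theta_{q^{1/2}}(c)$ vanishes on all of it. To propagate from these points you would need, say, quasi-periodicity of the coefficients under $c\mapsto pc$ (so that the points accumulate on the elliptic curve), but that ellipticity is precisely what the paper defers to future work and you do not establish it. There is also a circularity worry at the special points themselves: for $c\in q^{-\N/2}$ the analytic kernel is degenerate and the ``integral operator with kernel $\cK^{(n)}_{q^{-m/2}}$'' only formally becomes a difference operator; the remark you cite licenses replacing \emph{one} such integral operator at a time inside identities whose remaining ingredients are honest integrals, whereas your inner-integral step replaces all three of $\cK_c$, $\cK_d$, $\cK_{cd}$ simultaneously --- which at those parameter values \emph{is} the statement being proven. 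Making that step honest requires moving contours and computing the resulting sum of residues, which is exactly the paper's first proof (and it is carried out there for general $c,d$, making the specialization unnecessary). A smaller point: even granting the computation, deducing operator equality from agreement on a family of test functions needs a linear-independence argument, which you do not supply.

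For comparison, the paper's second, purely algebraic proof sidesteps all of this and is available to you: show that
\[
\prod_{1\le i\le n} \frac{1}{\Gampq(t_0 c x_i^{\pm 1},t_1 c x_i^{\pm 1})}
{\cal D}^{(n)}_c(q,t;p)
\prod_{1\le i\le n} \Gampq(t_0 x_i^{\pm 1},t_1 x_i^{\pm 1})
{\cal D}^{(n)}_d(q,t;p)
\prod_{1\le i\le n} \frac{1}{\Gampq(t_0 d x_i^{\pm 1},t_1 d x_i^{\pm 1})}
\]
has the same leading coefficient as ${\cal D}^{(n)}_{cd}(q,t;p)$ and that their difference satisfies the $u$-independence hypothesis of Lemma \ref{lem:diff_op_uniq}; the latter follows by applying first Lemma \ref{lem:diff_op_comm_q} and then Lemma \ref{lem:diff_op_braid_q}, which eliminates $u$ entirely. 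Note this is a uniqueness characterization, not the naive ``parameter substitution'' into Lemma \ref{lem:diff_op_braid_q} that you correctly observed cannot work.
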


\begin{proof} We give two arguments. The first is to use the residue definition of the
coefficients of ${\cal D}^{(n)}_{cd}(q,t;p)$, and expand using the braid relation
to obtain a limit of integrals. The natural contour conditions on the integral
cannot be satisfied, so we must first move the contour before taking the
limit; the result is a sum of residues, and gives the desired result.

The second, more algebraic argument, is to note that it suffices to show
that the operator
\begin{gather*}
\prod_{1\le i\le n} \frac{1}{\Gampq\big(t_0 c x_i^{\pm 1},t_1 c x_i^{\pm 1}\big)}
{\cal D}^{(n)}_c(q,t;p)
\prod_{1\le i\le n}\Gampq\big(t_0 x_i^{\pm 1},t_1 x_i^{\pm 1}\big)
{\cal D}^{(n)}_d(q,t;p)\\
\qquad{}\times \prod_{1\le i\le n} \frac{1}{\Gampq\big(t_0 d x_i^{\pm 1},t_1 d x_i^{\pm 1}\big)}
\end{gather*}
satisfies the hypothesis of Lemma~\ref{lem:diff_op_uniq}, since it clearly has the correct leading coefficient. This in turn is a straightforward argument using first Lemma~\ref{lem:diff_op_comm_q} then Lemma~\ref{lem:diff_op_braid_q}, and noting that this eliminates~$u$ from the expression entirely.
\end{proof}

Taking $d=c^{-1}$ gives the following result.

\begin{cor}\label{cor:diff_op_inverse}
The operators ${\cal D}^{(n)}_c(q,t;p)$ and ${\cal D}^{(n)}_{1/c}(q,t;p)$ are inverses.
\end{cor}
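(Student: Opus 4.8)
The plan is to obtain this as an immediate specialization of the braid relation for formal difference operators, Proposition~\ref{prop:formal_op_braid}. Take $d=c^{-1}$ there, so that $cd=1$ and ${\cal D}^{(n)}_{cd}(q,t;p)={\cal D}^{(n)}_1(q,t;p)=1$ (established just above), while the balancing condition $t_0t_1=pq/c^2d^2$ degenerates to $t_0t_1=pq$. The point is that in this degenerate regime the reflection relation $\Gampq(pq/z)=\Gampq(z)^{-1}$ collapses all of the gamma-function prefactors, leaving exactly the desired statement ${\cal D}^{(n)}_c {\cal D}^{(n)}_{1/c}=1$.

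In detail: with $t_1=pq/t_0$, for each $i$ we have $\Gampq(t_1 x_i)=\Gampq(pq/(t_0/x_i))=\Gampq(t_0/x_i)^{-1}$ and $\Gampq(t_1/x_i)=\Gampq(t_0 x_i)^{-1}$, so the middle multiplier $\prod_{i}\Gampq(t_0 x_i^{\pm1},t_1 x_i^{\pm1})$ equals $1$; hence the right-hand side of the braid relation becomes simply ${\cal D}^{(n)}_c(q,t;p)\,{\cal D}^{(n)}_{1/c}(q,t;p)$. The left-hand side, using $d=c^{-1}$ and ${\cal D}^{(n)}_1=1$, is the product of the two multiplication operators $\prod_i\Gampq(t_0 c x_i^{\pm1},t_1 c x_i^{\pm1})$ and $\prod_i\Gampq(t_0 x_i^{\pm1}/c,t_1 x_i^{\pm1}/c)$. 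Applying the reflection relation termwise once more, each factor $\Gampq(t_1 c x_i^{\pm1})$ or $\Gampq(t_1 x_i^{\pm1}/c)$ is the reciprocal of one of the factors $\Gampq(t_0 x_i^{\pm1}/c)$ or $\Gampq(t_0 c x_i^{\pm1})$, so the entire product is $1$. Thus the braid relation reads $1={\cal D}^{(n)}_c(q,t;p)\,{\cal D}^{(n)}_{1/c}(q,t;p)$.

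It remains to promote this one-sided identity to a genuine two-sided inverse. Here I would invoke the structural observations recorded earlier in this section: the algebra of formal difference operators has no zero divisors, and any formal difference operator with nonzero leading coefficient is invertible. Since the explicit formula for $[T(c)]{\cal D}^{(n)}_c(q,t;p)$ exhibits a nonzero leading coefficient, ${\cal D}^{(n)}_c(q,t;p)$ is invertible, and left-multiplying $1={\cal D}^{(n)}_c\,{\cal D}^{(n)}_{1/c}$ by its inverse gives ${\cal D}^{(n)}_{1/c}=({\cal D}^{(n)}_c)^{-1}$, so the product is $1$ in either order. (Equivalently, one may simply rerun the computation above with the roles of $c$ and $1/c$ interchanged.) There is no substantive obstacle in this argument; the only thing demanding care is the bookkeeping of the gamma-function cancellations in the two prefactors of the braid relation, which is entirely routine given the reflection equation.
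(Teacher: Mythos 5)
Your proposal is correct and is essentially the paper's own argument: the paper obtains the corollary precisely by setting $d=c^{-1}$ in Proposition \ref{prop:formal_op_braid}, with the gamma prefactors collapsing via the reflection relation and ${\cal D}^{(n)}_1(q,t;p)=1$. Your extra remarks on two-sidedness (nonzero leading coefficient, or swapping $c\leftrightarrow 1/c$) are fine and just make explicit what the paper leaves implicit.
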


The name ``braid relation'' for this identity (and thus for Proposition~\ref{prop:kern_braid}) comes from the following observation. For a~nonnegative integer $m\ge 2$, consider the following involutions acting on~$(\C^*)^{m+1}$:
\begin{gather*}
s_D\colon \ (c,u,v_1,\dots,v_{m-1})\to (1/c,uc,v_1,\dots,v_{m-1}), \\
s_\Gamma\colon \ (c,u,v_1,\dots,v_{m-1})\to (cu,1/u,v_1,v_2u,v_3,\dots,v_{m-1}), \\
s_1\colon \ (c,u,v_1,\dots,v_{m-1})\to (c,u,1/v_1,v_1v_2,v_3,\dots,v_{m-1}), \\
s_2\colon \ (c,u,v_1,\dots,v_{m-1})\to (c,uv_2,v_1v_2,1/v_2,v_2v_3,v_4,\dots,v_{m-1}),
\end{gather*}
and, for $3\le k\le m-1$,
\begin{gather*}
s_k\colon \ (c,u,v_1,\dots,v_{m-1})\to
(c,u,v_1,\dots,v_{k-2},v_{k-1}v_k,1/v_k,v_kv_{k+1},v_{k+2},\dots,v_{m-1}).
\end{gather*}
Under the identification of $\Aut((\C^*)^{m+1})$ with $\GL_{m+1}(\Z)$, we
find that these are precisely the simple reflections in the standard
reflection representation of a Coxeter group of type ``$E_{m+1}$'', i.e.,
the sequence
\begin{gather*}
E_3=A_1A_2, \ E_4=A_4, \ E_5=D_5, \ E_6, \ E_7, \ E_8, \ E_9=\tilde{E}_8, \ \dots.
\end{gather*}

\begin{thm}\label{thm:formal_diff_En}
There is an assignment of a formal difference operator
\begin{gather*}
{\cal D}^{(n)}_w(g;q,t;p)
\end{gather*}
to any element $w\in W(E_{m+1})$ and any element $g\in (\C^*)^{m+1}$
satisfying
\begin{gather*}
{\cal D}^{(n)}_{s_D}(g;q,t;p)={\cal D}^{(n)}_{c(g)}(q,t;p), \\
{\cal D}^{(n)}_{s_\Gamma}(g;q,t;p)=\prod_{1\le i\le n} \frac{1}{\Gampq\big(\sqrt{pq} u(g) v_1(g)^{\pm 1}x_i^{\pm 1}\big)}, \\
{\cal D}^{(n)}_{s_i}(g;q,t;p)=1,\qquad 1\le i\le m-1
\end{gather*}
as well as the $($cocycle$)$ conditions
\begin{gather*}
{\cal D}^{(n)}_{\text{\rm id}}(g;q,t;p)=1,\\
{\cal D}^{(n)}_{w_1w_2}(g;q,t;p)={\cal D}^{(n)}_{w_1}(w_2(g);q,t;p){\cal D}^{(n)}_{w_2}(g;q,t;p).
\end{gather*}
\end{thm}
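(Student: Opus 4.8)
The plan is to construct the cocycle $w\mapsto{\cal D}^{(n)}_w(g;q,t;p)$ one simple reflection at a time, in the standard way. Given the prescribed values on the generators $s_D,s_\Gamma,s_1,\dots,s_{m-1}$, the cocycle conditions ${\cal D}^{(n)}_{\mathrm{id}}=1$ and ${\cal D}^{(n)}_{w_1w_2}(g)={\cal D}^{(n)}_{w_1}(w_2g){\cal D}^{(n)}_{w_2}(g)$ force ${\cal D}^{(n)}_w(g;q,t;p)$ to be the product, read off from any word $w=s_{i_1}\cdots s_{i_\ell}$, of the ${\cal D}^{(n)}_{s_{i_j}}$ evaluated at the appropriate translates of $g$; and this recipe yields a well-defined cocycle exactly when the prescribed values respect the defining relations of $W(E_{m+1})$, namely $s^2=\mathrm{id}$ for each simple reflection and $(ss')^{m_{ss'}}=\mathrm{id}$ for each pair, with $m_{ss'}\in\{2,3\}$ given by the Coxeter diagram identified before the theorem. (One must also check that the induced map sending $w$ to the $\C^*$-degree of ${\cal D}^{(n)}_w(g;q,t;p)$ is a cocycle, but this merely records how the coordinate $c$ transforms, and is part of the given action of $W(E_{m+1})$ on $(\C^*)^{m+1}$.) Since each ${\cal D}^{(n)}_s$ is invertible --- the identity, a nonzero product of elliptic Gamma functions, or ${\cal D}^{(n)}_{c(g)}(q,t;p)$, invertible by Corollary \ref{cor:diff_op_inverse} --- once the relations are verified the extension to all of $W(E_{m+1})$ is unambiguous, and the supplementary assertions of the theorem are then immediate from the construction.

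So everything reduces to the defining relations, which split into a routine part and one substantial one. The key structural observation is that $s_D$ is the only generator whose value has a nontrivial shift component: ${\cal D}^{(n)}_{s_\Gamma}$ is multiplication by a product of elliptic Gamma functions, and ${\cal D}^{(n)}_{s_i}=1$ for $1\le i\le m-1$; moreover $s_D$ is joined in the diagram only to $s_\Gamma$. Hence every relation other than the braid relation $(s_Ds_\Gamma)^3=\mathrm{id}$ involves only the identity and multiplication operators, all of which commute, so after substituting the relevant coordinate changes each such relation becomes the assertion that a product of elliptic Gamma functions is $1$. These are checked using only the reflection equation $\Gampq(z)\Gampq(pq/z)=1$ together with the invariance of ${\cal D}^{(n)}_{s_\Gamma}(g;q,t;p)$ under inverting the coordinate $v_1$ (it already involves the symmetrized arguments $v_1(g)^{\pm1}x_i^{\pm1}$). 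In this way $s_\Gamma^2=\mathrm{id}$ and the branch-node braid relation $(s_\Gamma s_2)^3=\mathrm{id}$ are short $\Gampq$-manipulations; $s_D^2=\mathrm{id}$ is precisely Corollary \ref{cor:diff_op_inverse}; each commutation of $s_D$ with $s_i$ holds because $c(g)$ is fixed by $s_i$ while ${\cal D}^{(n)}_{s_i}=1$; and the remaining relations among $s_1,\dots,s_{m-1}$ are trivially $1=1$.

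The one relation carrying genuine content is the braid relation $(s_Ds_\Gamma)^3=\mathrm{id}$, equivalently $s_Ds_\Gamma s_D=s_\Gamma s_Ds_\Gamma$ at the cocycle level. Writing $g=(c,u,v_1,\dots)$ and recalling that $s_\Gamma$ sends $(c,u)$ to $(cu,1/u)$ while fixing $v_1$ and $s_D$ sends $(c,u)$ to $(1/c,uc)$, one tracks $c$, $u$, $v_1$ through both sides and unwinds the cocycle; the required identity becomes ${\cal D}^{(n)}_{u}(q,t;p)\,\bigl(\prod_{1\le i\le n}\Gampq(\sqrt{pq}\,ucv_1^{\pm1}x_i^{\pm1})^{-1}\bigr)\,{\cal D}^{(n)}_{c}(q,t;p)=\bigl(\prod_{1\le i\le n}\Gampq(\sqrt{pq}\,cv_1^{\pm1}x_i^{\pm1})^{-1}\bigr)\,{\cal D}^{(n)}_{cu}(q,t;p)\,\bigl(\prod_{1\le i\le n}\Gampq(\sqrt{pq}\,uv_1^{\pm1}x_i^{\pm1})^{-1}\bigr)$. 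Rewriting each product of Gamma factors via $\Gampq(z)^{-1}=\Gampq(pq/z)$, this is exactly Proposition \ref{prop:formal_op_braid} under $(c,d)\mapsto(u,c)$ with $t_0=\sqrt{pq}/ucv_1$ and $t_1=\sqrt{pq}\,v_1/uc$, whose balancing condition $t_0t_1=pq/(uc)^2$ holds automatically. Thus the only nontrivial relation is a reparametrization of an identity already in hand, and I expect the main difficulty to be purely organizational: enumerating the relations of $W(E_{m+1})$ uniformly in $m\ge2$ (including the degenerate cases $E_3=A_1A_2$, $E_4=A_4$, $E_5=D_5$), keeping the coordinate bookkeeping and the grading straight in the cocycle evaluations, and carrying out the elementary $\Gampq$-identities.
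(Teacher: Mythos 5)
Your proposal is correct and follows essentially the same route as the paper: reduce the cocycle to the Coxeter relations among the generators, dispose of everything except the $s_D$--$s_\Gamma$ braid via the reflection principle for $\Gampq$, the $v_1\mapsto 1/v_1$ symmetry, and Corollary \ref{cor:diff_op_inverse}, and identify the one nontrivial braid relation with Proposition \ref{prop:formal_op_braid}. Your explicit unwinding of that braid relation (the substitution $(c,d)\mapsto(u,c)$ with $t_0=\sqrt{pq}/ucv_1$, $t_1=\sqrt{pq}\,v_1/uc$) is a correct filling-in of a step the paper leaves implicit.
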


\begin{proof}
Since the simple reflections generate $W(E_{m+1})$, we need simply show
that the corresponding operators satisfy analogues of the relations of
$W(E_{m+1})$. For those relations not involving $s_D$ or $s_\Gamma$, there
is nothing to show (all operators involve are the identity); similarly, the
commutation relations between $s_D$ and $s_i$ or between $s_\Gamma$ and
$s_i$ for $i\ne 2$ are all trivial to verify. The remaining relations are
$s_D^2=\text{id}$, $s_D s_2 s_D=s_2 s_D s_2$, $s_\Gamma^2=\text{id}$, and $s_\Gamma s_D
s_\Gamma = s_D s_\Gamma s_D$. The first two relations follow easily from
the reflection principle for elliptic Gamma functions, and the third is
just Corollary \ref{cor:diff_op_inverse}. Thus the only nontrivial
relation is the braid relation $s_\Gamma s_D s_\Gamma = s_D s_\Gamma s_D$,
and the corresponding operator identity is Proposition~\ref{prop:formal_op_braid}.
\end{proof}

\begin{rem}
 A somewhat different interpretation of the univariate instance of the
 braid relation as an actual braid relation was given in
 \cite{DerkachevSE/SpiridonovVP:2013}.
\end{rem}

One application of this construction is that it associates an identity of
difference operators to any pair of words for the same element of
$W(E_{m+1})$. For instance, take $m=4$, and consider the element
\begin{gather*}
s_D s_\Gamma s_2 s_1 s_3 s_2 s_\Gamma s_D\in W(E_5)=W(D_5).
\end{gather*}
It is straightforward to verify that this normalizes the subgroup $W(D_4)$
generated by $s_\Gamma$ and $s_i$, $1\le i\le 3$, and thus any element of
that subgroup gives rise to a different representation of the corresponding
difference operator by left- and right-multiplying by elements of $W(D_4)$.
Since $s_i$ act trivially, there are a total of 8 resulting
representations, giving two transformations. After reparametrizing so that
the original operator is
\begin{gather*}
{\cal D}^{(n)}_c(q,t;p)\prod_{1\le i\le n} \Gampq\big(t_0 x_i^{\pm 1},t_1 x_i^{\pm 1},t_2 x_i^{\pm 1},t_3 x_i^{\pm 1}\big)
{\cal D}^{(n)}_d(q,t;p)
\end{gather*}
with $t_0t_1t_2t_3=(pq/cd)^2$, we obtain the representations
\begin{gather*}
\prod_{1\le i\le n} \Gampq\big(c t_0 x_i^{\pm 1},c t_1 x_i^{\pm 1}\big)
{\cal D}^{(n)}_{c/e}(q,t;p)
\prod_{1\le i\le n} \Gampq\big((t_0/e) x_i^{\pm 1},(t_1/e) x_i^{\pm 1}\big)\\
\qquad{} \times \prod_{1\le i\le n} \Gampq\big(t_2 e x_i^{\pm 1},t_3 e x_i^{\pm 1}\big)
{\cal D}^{(n)}_{de}(q,t;p)
\prod_{1\le i\le n} \Gampq\big(t_2 d x_i^{\pm 1},t_3 d x_i^{\pm 1}\big),
\end{gather*}
where $e = \sqrt{c^2t_0t_1/pq} = \sqrt{pq/d^2t_2t_3}$, and
\begin{gather*}
\prod_{\substack{1\le i\le n\\0\le r<4}} \Gampq\big(c t_r x_i^{\pm 1}\big)
{\cal D}^{(n)}_d(q,t;p)
\prod_{\substack{1\le i\le n\\0\le r<4}} \Gampq\big((pq/cdt_r) x_i^{\pm 1}\big)
{\cal D}^{(n)}_c(q,t;p)
\prod_{\substack{1\le i\le n\\0\le r<4}} \Gampq\big(t_r d x_i^{\pm 1}\big).
\end{gather*}
These, of course, are just the analogues of Theorem~\ref{thm:bailey_xform} and Corollary~\ref{cor:ker_comm} respectively.

In \cite{elldaha}, we use this cocycle of formal difference operators over $W(E_{m+1})$ (or, rather, an extension to algebraic elliptic curves) to construct isomorphisms between certain noncommutative rational varieties. (In particular, we will see that the above appearance of $W(E_{m+1})$ is related to the appearance of that Coxeter group in the theory of rational surfaces.)

One particularly nice consequence is related to the following fact.

\begin{thm}\label{thm:fourier} Suppose $q$ is non-torsion in $\C^*/\langle p\rangle$. Let ${\cal A}_c$ denote the algebra generated by the difference operators $D^{(n)}_q(u_0,u_1,u_2,u_3;t;p)$ with $u_0u_1u_2u_3 = p^2q/c^2$. Then there is an isomorphism ${\cal F}_c:{\cal A}_c\cong {\cal A}_{c^{-1}}$ such that
\begin{gather*}
{\cal F}_c(D^{(n)}_q(u_0,u_1,u_2,u_3;t;p)) = D^{(n)}_q(cu_0,cu_1,cu_2,cu_3;t;p).
\end{gather*}
Moreover, for any $D\in {\cal A}_c$, we have the identity
\begin{gather*}
D_{\vec{x}} \cK^{(n)}_c(\vec{x};\vec{y};t;p,q)= {\cal F}_c(D)^{\text{ad}}_{\vec{y}} \cK^{(n)}_c(\vec{x};\vec{y};t;p,q),
\end{gather*}
where $\text{ad}$ denotes the formal adjoint with respect to $\Delta^{(n)}_S(t;p,q)$.
\end{thm}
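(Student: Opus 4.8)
The plan is to realize ${\cal F}_c$ as conjugation by the formal difference operator ${\cal D}^{(n)}_c(q,t;p)$ of Section \ref{sec:formal_diffs}, and to deduce the intertwining identity from the commutation relation of Proposition \ref{prop:diff_comm} together with the standard adjointness of the operators $D^{(n)}_q(\cdots;t;p)$ with respect to $\Delta^{(n)}_S(t;p,q)$.

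For the isomorphism, I would first note that, since $q$ is non-torsion in $\C^*/\langle p\rangle$, ${\cal D}^{(n)}_c(q,t;p)$ is a well-defined formal difference operator (which, as shown above, has no $\vec{x}$-independent poles), and by Corollary \ref{cor:diff_op_inverse} it is invertible with inverse ${\cal D}^{(n)}_{1/c}(q,t;p)$. I would then define ${\cal F}_c(D):={\cal D}^{(n)}_{1/c}(q,t;p)\,D\,{\cal D}^{(n)}_c(q,t;p)$ for $D\in{\cal A}_c$; as conjugation by a fixed invertible element this is automatically an injective ring homomorphism on the algebra of formal difference operators. Rewriting Lemma \ref{lem:diff_op_comm_q} in the form ${\cal D}^{(n)}_{1/c}(q,t;p)\,D^{(n)}_q(u_0,u_1,u_2,u_3;t;p)\,{\cal D}^{(n)}_c(q,t;p)=D^{(n)}_q(cu_0,cu_1,cu_2,cu_3;t;p)$, valid whenever $u_0u_1u_2u_3=p^2q/c^2$, shows that ${\cal F}_c$ sends each generator of ${\cal A}_c$ to a finite-support difference operator, indeed to a generator of ${\cal A}_{c^{-1}}$: as $(u_0,\dots,u_3)$ ranges over the tuples with product $p^2q/c^2$, the tuple $(cu_0,\dots,cu_3)$ ranges over all tuples with product $p^2qc^2=p^2q/c^{-2}$. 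Hence ${\cal F}_c({\cal A}_c)={\cal A}_{c^{-1}}$, and since ${\cal F}_c$ is injective it is the asserted isomorphism, with inverse ${\cal F}_{c^{-1}}$ (again by Corollary \ref{cor:diff_op_inverse}).

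For the intertwining identity, I would invoke the adjointness relation established in \cite{xforms} (and readily checked from the definition of $D^{(n)}_q(\vec{v};t;p)$ using $\Gampq(qz)=\theta_p(z)\Gampq(z)$ and $\theta_p(z)=\theta_p(p/z)$): the formal adjoint of $D^{(n)}_q(v_0,v_1,v_2,v_3;t;p)$ with respect to $\Delta^{(n)}_S(t;p,q)$ is $D^{(n)}_q(pq^{1/2}/v_0,\dots,pq^{1/2}/v_3;t;p)$. Setting $v_r=cu_r$ identifies ${\cal F}_c(D^{(n)}_q(u_0,\dots,u_3;t;p))^{\text{ad}}$ with exactly the $\vec{y}$-operator on the right of Proposition \ref{prop:diff_comm}, so $D_{\vec{x}}\cK^{(n)}_c={\cal F}_c(D)^{\text{ad}}_{\vec{y}}\cK^{(n)}_c$ holds for $D$ a generator of ${\cal A}_c$. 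To extend this to a general $D=D_1\cdots D_m\in{\cal A}_c$ (a product of generators), I would iterate, using that operators in $\vec{x}$ commute with operators in $\vec{y}$, obtaining $D_{\vec{x}}\cK^{(n)}_c=({\cal F}_c(D_m)^{\text{ad}}\cdots{\cal F}_c(D_1)^{\text{ad}})_{\vec{y}}\cK^{(n)}_c$, and then observe that taking adjoints reverses products while ${\cal F}_c$ preserves them, so ${\cal F}_c(D_m)^{\text{ad}}\cdots{\cal F}_c(D_1)^{\text{ad}}=({\cal F}_c(D_1)\cdots{\cal F}_c(D_m))^{\text{ad}}={\cal F}_c(D)^{\text{ad}}$; by linearity the identity then holds throughout ${\cal A}_c$.

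The step I expect to be the main obstacle, and really the only delicate point, is the bookkeeping in the formal difference operator algebra: one must check that the triple product ${\cal D}^{(n)}_{1/c}D{\cal D}^{(n)}_c$ is legitimate in ${\mathfrak D}$ (it is, since all factors have support bounded below, so that each coefficient of the product is a finite sum) and, more importantly, that Lemma \ref{lem:diff_op_comm_q} really does collapse the conjugate of a generator to a finite-support operator, so that ${\cal F}_c({\cal A}_c)$ lands in ${\cal A}_{c^{-1}}$ rather than in some completion. An alternative route that sidesteps Section \ref{sec:formal_diffs} is to define ${\cal F}_c(D)$ by the intertwining relation itself, but then the burden becomes showing that $E^{\text{ad}}_{\vec{y}}\cK^{(n)}_c=0$ forces $E=0$; this can be done by specializing $\vec{x}$ to geometric progressions $(\dots,t^{n-i}q^{\mu_i}a,\dots)$, for which $\cK^{(n)}_c$ reduces to an explicit product of elliptic Gamma functions times $R^{*(n)}_\mu(\vec{y};\cdots)$, and then using that the $R^{*(n)}_\mu$ span the hyperoctahedrally symmetric Laurent polynomials together with the non-torsion hypothesis, which is exactly what makes the characters $\vec{m}\mapsto q^{\vec{k}\cdot\vec{m}}$ of the shift lattice linearly independent.
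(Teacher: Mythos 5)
Your proposal is correct and follows essentially the same route as the paper: ${\cal F}_c$ is defined as conjugation by the invertible formal operator ${\cal D}^{(n)}_c(q,t;p)$ (invertibility via Corollary \ref{cor:diff_op_inverse}), its action on generators is exactly Lemma \ref{lem:diff_op_comm_q}, and the intertwining identity is Proposition \ref{prop:diff_comm} combined with the adjoint formula $D^{(n)}_q(cu_0,\dots;t;p)^{\text{ad}}=D^{(n)}_q(pq^{1/2}/cu_0,\dots;t;p)$, extended to all of ${\cal A}_c$ by multiplicativity. Your extra bookkeeping (surjectivity onto the generators of ${\cal A}_{c^{-1}}$, products in the formal operator algebra, adjoints reversing products) only makes explicit what the paper leaves implicit.
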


\begin{proof} Since the isomorphism ${\cal F}_c$ is conjugation by the (invertible) formal operator ${\cal D}^{(n)}_c(q,t;p)$, it is an isomorphism, and acts in the correct way on the generators by Lemma~\ref{lem:diff_op_comm_q}.

For the claim about $\cK^{(n)}_c$, we need merely note that it holds for the generators, and is preserved under multiplication of operators. For the generators, we need merely note that
\begin{gather*}
D^{(n)}_q(cu_0,cu_1,cu_2,cu_3;t;p)^{\text{ad}}=D^{(n)}_q\big(pq^{1/2}/cu_0,\dots,pq^{1/2}/cu_3;t;p\big),
\end{gather*}
so that the claim is simply Proposition \ref{prop:diff_comm}.
\end{proof}

\begin{rem} Modulo issues with contours, the second claim should be viewed as saying that~${\cal F}_c$ agrees with conjugation by the integral operator associated to $\cK^{(n)}_c$. Since
\begin{gather*}\begin{split}&
{\cal F}_c\bigg(
\prod_{1\le i\le n} \theta_p\big(v x_i^{\pm 1}\big)
D^{(n)}_q\big(u,pq/c^2u;t;p\big)\bigg)
={\cal F}_c\big(D^{(n)}_q\big(u,pq/c^2u,v,p/v;t;p\big)\big)\\
&\qquad{} =D^{(n)}_q(cu,pq/cu,cv,cp/v;t;p) =D^{(n)}_q(cv,cp/v;t;p)\prod_{1\le i\le n} \theta_p\big(q^{-1/2}cu x_i^{\pm 1}\big),
\end{split}
\end{gather*}
we see that, roughly speaking, ${\cal F}_c$ interchanges multiplication and difference operators (see also Lemma~\ref{lem:diff_op_braid_q} above), and thus $\cK^{(n)}_c$ can be viewed as the kernel of a generalized Fourier transform. (Indeed, in a suitable limit, $\cK^{(1)}_c$ becomes $\exp(-xy)$\dots) Although this would give a~more natural definition of ${\cal F}_c$ than the one via generators or via conjugation by ${\cal D}^{(n)}_c$, there is a significant difficulty in that we would need to show that the relevant algebra of difference operators acts faithfully on $\cK^{(n)}_c$. Although it follows from the formal difference operator approach that the operators {\em generically} act faithfully, it is difficult to determine the precise hypersurfaces on which faithfulness fails; in contrast, since formal difference operators form a domain, that definition only fails when $q$ is torsion. In addition, there are a number of natural conditions on difference operators (e.g., support, vanishing of leading coefficients along suitable divisors) that can be defined in terms of modules over the ring of formal difference operators, and are thus preserved by~${\cal F}_c$. In \cite{elldaha}, we characterize the algebra ${\cal A}^{(n)}_c$ (in fact, a somewhat larger algebra to which the claim still applies), which will enable us to show, for instance, that there is an identity of the above form in which both~$D$ and ${\cal F}_c(D)$ are (general) instances of the van Diejen Hamiltonian~\cite{vDiejenJF:1994}, up to an additive scalar. (This is a multivariate analogue of the results of~\cite{RR_Sklyanin}.)
\end{rem}

There is, in fact, a region in which we can control faithfulness of difference operators.

\begin{lem}
 Suppose $|pq|<|t|,|c|^2<1$, and let $m$ be a nonnegative integer such that $q^{-k}c^2\notin p^{\Z}$ for $1\le k\le m$. Then the $(m+1)^n$ functions
\begin{gather*}
\cK^{(n)}_c\big(q^{\vec{k}}\vec{x};\vec{y};t;p,q\big),\qquad \vec{k}\in \{0,\dots,m\}^n
\end{gather*}
are linearly independent over the field of meromorphic functions independent of $\vec{y}$.
\end{lem}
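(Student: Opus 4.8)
The plan is to induct on $n$, stripping off one pair of variables $(x_n,y_n)$ at a time by means of Lemma~\ref{lem:kern_res}. For $n=0$ there is a single function, $\cK^{(0)}_c=1$, which is nonzero, so there is nothing to prove. For the inductive step I would suppose, for contradiction, that there is a nontrivial relation
\[
\sum_{\vec k\in\{0,\dots,m\}^n} f_{\vec k}(\vec x)\,\cK^{(n)}_c(q^{\vec k}\vec x;\vec y;t;p,q)=0,
\]
with each $f_{\vec k}$ meromorphic and independent of $\vec y$, not all zero. Writing $\vec x=(\vec x',x_n)$, $\vec y=(\vec y',y_n)$, $\vec k=(\vec k',k_n)$, let $M$ be the largest value of $k_n$ occurring among the indices with $f_{\vec k}\neq0$; the goal is to force all $f_{(\vec k',M)}$ to vanish, contradicting the maximality of $M$. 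Since a nonzero meromorphic function does not vanish on a Zariski dense set, one may first specialize $\vec x$ and $\vec y'$ to generic values (keeping $c,t,p,q$ fixed as in the hypotheses), so that the relation becomes an identity of meromorphic functions of the single variable $y_n$.

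The heart of the argument is to determine which summands are singular at the point $y_n=q^{M}x_n/c$. By Theorem~\ref{thm:kern_poles}, the $y_n$-poles of $\cK^{(n)}_c(q^{(\vec k',l)}\vec x;\vec y',y_n)$ are confined to the zero loci of $(c\,q^{l}x_n^{\pm1}y_n^{\pm1};p,q)$, of $(c\,q^{k_i'}x_i^{\pm1}y_n^{\pm1};p,q)$ for $i<n$, and of $((pq/t)y_i^{\pm1}y_n^{\pm1};p,q)$ for $i<n$. For generic $\vec x,\vec y'$ none of the factors involving an $x_i$ or $y_i$ with $i<n$ vanishes at $y_n=q^{M}x_n/c$ (each would pin down a product of generic variables), so only the factors $(c\,y_n/(q^{l}x_n);p,q)$ and $(c\,q^{l}x_n/y_n;p,q)$ are relevant. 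A direct computation with the zero set of $(\,\cdot\,;p,q)$ shows that the first vanishes at $y_n=q^{M}x_n/c$ precisely when $l=M$, while the second can vanish there only when $c^2q^{-k}\in p^{\Z}$ for some $k$ with $1\le k\le M-l$; this resonance is exactly what the hypothesis $q^kc^2\notin p^{\Z}$ ($1\le k\le m$) forbids, and $|pq|<|c|^2<1$ guarantees (by the remark following Theorem~\ref{thm:kern_poles}) that $\cK^{(n)}_c$ contributes no additional, $c$-only poles. Thus, for our generic choice, every summand with $k_n<M$ is holomorphic at $y_n=q^{M}x_n/c$, while the summands with $k_n=M$ acquire there a simple pole described by Lemma~\ref{lem:kern_res}.

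It then remains to extract a residue. Multiplying the relation by $\Gampq(t,c^2)/\Gampq(c(q^{M}x_n)^{\pm1}y_n^{\pm1})$ and letting $y_n\to q^{M}x_n/c$ annihilates the $k_n<M$ summands (the inserted factor has a simple zero there) and turns each $k_n=M$ summand into an instance of Lemma~\ref{lem:kern_res} applied with $x_n$ replaced by $q^{M}x_n$ — namely a product of elliptic Gamma functions, finite and nonzero for generic variables, times $\cK^{(n-1)}_c(q^{\vec k'}\vec x';\vec y';t;p,q)$. The outcome is a relation $\sum_{\vec k'} g_{\vec k'}(\vec x')\,\cK^{(n-1)}_c(q^{\vec k'}\vec x';\vec y';t;p,q)=0$ whose coefficients $g_{\vec k'}$ are meromorphic, independent of $\vec y'$, and proportional to $f_{(\vec k',M)}$; the inductive hypothesis (for $n-1$, the same $c,t,p,q$, and the same $m$) forces all $g_{\vec k'}$, hence all $f_{(\vec k',M)}$, to vanish. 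I expect the main obstacle to be the pole bookkeeping in the middle step — checking that among the $(m+1)^n$ summands exactly those with $k_n=M$ are singular at $y_n=q^{M}x_n/c$ — since this is precisely where one must use both the non-resonance condition on $c,q,p$ and the location of the parameter region, and where one must verify that the residues produced by Lemma~\ref{lem:kern_res} are genuinely nonzero for generic $\vec x,\vec y'$.
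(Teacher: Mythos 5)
Your proposal is correct in substance and runs on the same mechanism as the paper's proof, but it is organized differently. The paper does not induct on $n$: after first establishing that in the region $|pq|<|t|,|c|^2<1$ the kernel times the universal product of Theorem \ref{thm:kern_poles} is holomorphic jointly in the variables and the parameters (no $c$-only poles there, by the remark you also cite), with the $\vec{y}$-dependent poles at most simple because $|c|<1$, it forms the full $(m+1)^n\times(m+1)^n$ matrix of iterated residues of $\cK^{(n)}_c(q^{\vec{k}}\vec{x};\vec{y};t;p,q)$ along the shifted diagonal divisors, notes that the non-resonance hypothesis makes this matrix triangular while the diagonal entries (iterated instances of the limit in Lemma \ref{lem:kern_res}) are nonzero, and concludes nonsingularity, hence independence, in one stroke. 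Your induction on $n$ with the extremal index $k_n=M$ and a single residue in $y_n$ at $y_n=q^Mx_n/c$ is a legitimate repackaging of that triangularity; it makes the role of Lemma \ref{lem:kern_res} explicit (the paper uses it only implicitly for the diagonal), at the cost of two bookkeeping points you should state: the $\vec{y}'$-dependent Gamma factors produced by Lemma \ref{lem:kern_res} are independent of $\vec{k}'$ and so must be divided out before the reduced coefficients can be called independent of $\vec{y}'$ (they also depend on $x_n$, which is harmless), and, as in the paper, the pole bound and the kern-res limit are needed at your specific, non-generic $c$, which is exactly what the absence of $c$-only poles with $|c|^2>|pq|$ together with simplicity of the poles secures.

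One step needs a correction of wording rather than of substance. Your computation correctly identifies the dangerous resonance as $q^{-k}c^2\in p^{\Z}$ (equivalently $c^2\in q^kp^{\Z}$) for some $1\le k\le M-l$; but this is the mirror of the hypothesis as printed, $q^kc^2\notin p^{\Z}$, so the clause ``exactly what the hypothesis forbids'' is off by a sign. The printed condition is in fact not the right one: for $c^2=q$, which it allows and which lies in the stated region, the relations $\Gampq(qz)=\theta_p(z)\Gampq(z)$ and $\theta_p(1/z)=-z^{-1}\theta_p(z)$ give $\cK^{(1)}_c(qx;y)=c^2x^2\,\cK^{(1)}_c(x;y)$, so already the case $n=m=1$ of the literal statement fails. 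The paper's own proof (see its parenthetical about $q^{-j}c^2\in p^{\Z}$ for $j>m$) works with the condition you derived, so this is evidently a typo in the statement; your argument proves the intended lemma provided you state the hypothesis in the form you actually use.
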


\begin{proof} When $|t|,|pq/t|<1$, the integral representation gives us an easy inductive proof that $\cK^{(n)}_c(;;t;p,q)$ has no poles depending only on $p$, $q$, $t$. As we remarked following Theorem~\ref{thm:kern_poles}, we can then use the braid relation to understand those poles depending on $c$, $p$, $q$, $t$. It turns out that any such pole has $|c|^2\le |pq|$, and thus cannot occur in the given region of parameter space. We thus conclude that
\begin{gather*}
\cK^{(n)}_c(\vec{x};\vec{y};t;p,q) \prod_{1\le i<j\le n}\big((pq/t) x_i^{\pm 1}x_j^{\pm 1},(pq/t) y_i^{\pm 1}y_j^{\pm 1};p,q\big)
\prod_{1\le i,j\le n} \big(c x_i^{\pm 1}y_j^{\pm 1};p,q\big)
\end{gather*}
is a holomorphic function of the parameters as well as the variables.
Moreover, since $|c|<1$, we find that (for generic $\vec{y}$) the
$\vec{y}$-dependent poles in $\vec{x}$ are at most order 1. As a result,
the residue of $\cK^{(n)}_c(\vec{x};\vec{y};t;p,q)$ along any such pole can
be computed via the limit from generic $c$.

Now, for $\vec{l}\in \{0,\dots,m\}^n$, consider the matrix of residues
\begin{gather*}
\Res_{\vec{x}=q^{-\vec{l}}c\vec{y}} \cK^{(n)}_c\big(q^{\vec{k}}\vec{x};\vec{y};t;p,q\big)=
\Res_{\vec{x}=q^{\vec{k}-\vec{l}}c\vec{y}}\cK^{(n)}_c(\vec{x};\vec{y};t;p,q).
\end{gather*}
For generic $\vec{y}$, this vanishes unless $k_i-l_i\ge 0$ or $k_i-l_i<-m$
(the latter coming from the possibility that $q^{-j}c^2\in p^\Z$ for some
$j>m$). Thus this matrix of residues is triangular; since the diagonal
residues are nonzero meromorphic functions of $\vec{y}$, the matrix of
residues is nonsingular. The claim follows immediately.
\end{proof}

We close by noting some simple consequences for these formal operators
arising from properties of the interpolation kernel. The simplest is the
$t\mapsto pq/t$ symmetry.

\begin{prop}
We have
\begin{gather*}
{\cal D}^{(n)}_c(q,pq/t;p)=\prod_{1\le i<j\le n} \Gampq\big(t x_i^{\pm 1}x_j^{\pm 1}\big)
{\cal D}^{(n)}_c(q,t;p) \prod_{1\le i<j\le n} \Gampq\big(t x_i^{\pm 1}x_j^{\pm 1}\big)^{-1}.
\end{gather*}
\end{prop}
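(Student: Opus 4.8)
The plan is to exploit the uniqueness characterization of ${\cal D}^{(n)}_c$ furnished by Lemmas \ref{lem:diff_op_braid_q} and \ref{lem:diff_op_uniq}: for $q$ non-torsion in $\C^*/\langle p\rangle$ (which we assume throughout, as otherwise ${\cal D}^{(n)}_c$ is not defined), an element of ${\mathfrak D}_c$ is determined by its leading coefficient $[T(c)]$ together with the requirement that
\[
\prod_{1\le i\le n}\theta_p(u x_i^{\pm 1})^{-1}\,D\,D^{(n)}_q(uc,pc/u;pq/t;p)
\]
be independent of $u$. Writing $\Phi:=\prod_{1\le i<j\le n}\Gampq(t x_i^{\pm 1}x_j^{\pm 1})$, I will check that $\Phi\,{\cal D}^{(n)}_c(q,t;p)\,\Phi^{-1}$ meets both conditions (for the parameter $pq/t$), and hence equals ${\cal D}^{(n)}_c(q,pq/t;p)$.

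For the leading coefficient: the leading term of ${\cal D}^{(n)}_c(q,t;p)$ is $L_t(\vec x)\,T(c)$ with $L_t$ the product displayed in the Proposition on leading coefficients, and conjugation by $\Phi$ multiplies $L_t$ by $\Phi(\vec x)/\Phi(c\vec x)=\prod_{i<j}\Gampq(t x_ix_j,t/x_ix_j)/\Gampq(tc^2x_ix_j,t/c^2x_ix_j)$, the ratio factors $\Gampq(t x_i^{\pm1}x_j^{\mp1})$ being scale-invariant. In the resulting product the factors $\Gampq(t/c^2x_ix_j)$ and $\Gampq(t/x_ix_j)$ supplied by $L_t$ cancel against those from $\Phi(\vec x)/\Phi(c\vec x)$, leaving $\prod_{1\le i\le j\le n}\Gampq(1/x_ix_j)/\Gampq(1/c^2x_ix_j)\cdot\prod_{1\le i<j\le n}\Gampq(t x_ix_j)/\Gampq(tc^2x_ix_j)$; applying the reflection equation $\Gampq(pq/z)=\Gampq(z)^{-1}$ to $\Gampq((pq/t)/c^2x_ix_j)$ and $\Gampq((pq/t)/x_ix_j)$ identifies this with $L_{pq/t}(\vec x)$, i.e.\ the leading coefficient of ${\cal D}^{(n)}_c(q,pq/t;p)$. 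This step is routine elliptic-Gamma bookkeeping.

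For the braid-type relation, the essential input is the conjugation identity
\[
D^{(n)}_q(uc,pc/u;pq/t;p)=\Phi\,D^{(n)}_q(uc,pc/u;t;p)\,\Phi^{-1}.
\]
Since $D^{(n)}_q(u_1,u_2;t;p)$ is obtained from the bare operator $D^{(n)}_q(t;p)$ by pre- and post-composing with multiplication operators not involving $t$, and these commute with $\Phi$, this reduces to the corresponding statement $D^{(n)}_q(pq/t;p)=\Phi\,D^{(n)}_q(t;p)\,\Phi^{-1}$, a standard $t\mapsto pq/t$ symmetry of the van Diejen-type operator that is immediate from the quasi-periodicity of $\theta_p$ (equivalently from the self-adjointness of $D^{(n)}_q$ together with the relation $\Delta^{(n)}_S(\vec x;pq/t;p,q)=\Phi(\vec x)^{-2}\Gampq(t)^{-2n}\Delta^{(n)}_S(\vec x;t;p,q)$ read off from the Proposition preceding Corollary \ref{cor:t_symmetry}, or else from that Proposition applied to the difference equation of Proposition \ref{prop:diff_braid} since a difference operator is determined by its action on all interpolation functions). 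Granting this, and using that $\Phi$ and $\prod_i\theta_p(u x_i^{\pm1})$ are multiplication operators (hence commute), one has
\begin{align*}
&\prod_i\theta_p(u x_i^{\pm1})^{-1}\,\Phi\,{\cal D}^{(n)}_c(q,t;p)\,\Phi^{-1}\,D^{(n)}_q(uc,pc/u;pq/t;p)\\
&\qquad=\Phi\Bigl(\prod_i\theta_p(u x_i^{\pm1})^{-1}\,{\cal D}^{(n)}_c(q,t;p)\,D^{(n)}_q(uc,pc/u;t;p)\Bigr)\Phi^{-1},
\end{align*}
and by Lemma \ref{lem:diff_op_braid_q} the bracketed operator is ${\cal D}^{(n)}_{q^{-1/2}c}(q,t;p)$, in particular independent of $u$. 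Thus $\Phi\,{\cal D}^{(n)}_c(q,t;p)\,\Phi^{-1}$ has the leading coefficient and satisfies the defining relation of ${\cal D}^{(n)}_c(q,pq/t;p)$, so by Lemma \ref{lem:diff_op_uniq} the two agree.

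I expect the one point needing genuine care is the reduction of the $D^{(n)}_q$ conjugation identity to the bare operator together with its verification (from $\theta_p$ quasi-periodicity, or by locating the precise statement in \cite{xforms}); everything else is formal manipulation in the graded algebra ${\mathfrak D}_\bullet$ and the short $\Gamma$-function simplification of the leading term.
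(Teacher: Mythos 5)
Your proposal is correct and follows essentially the same route as the paper: the paper's proof is exactly the reduction, via Lemma \ref{lem:diff_op_uniq} (together with Lemma \ref{lem:diff_op_braid_q}), to the case $c=q^{-1/2}$, i.e.\ to the conjugation identity $D^{(n)}_q(pq/t;p)=\Phi\,D^{(n)}_q(t;p)\,\Phi^{-1}$, which as you say is immediate from $\theta_p(pz)=\theta_p(1/z)$ and $\Gampq(qz)=\theta_p(z)\Gampq(z)$. Your explicit verification that the leading coefficients match (using the reflection equation $\Gampq(pq/z)=\Gampq(z)^{-1}$) and your handling of the dressed operators are both correct, merely spelling out what the paper leaves implicit.
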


\begin{proof}Using Lemma \ref{lem:diff_op_uniq}, we may immediately reduce to the case $c=q^{-1/2}$, where this is straightforward.
\end{proof}

\begin{rem} In fact, this symmetry came first (via a somewhat different approach to these operators, see~\cite{elldaha}); it was only later that it became apparent that the symmetry extended to the kernel itself.
\end{rem}

The explicit formula for the univariate interpolation kernel gives the
following expression (which can also be obtained by using the proof of
Lemma \ref{lem:diff_op_uniq} to obtain a first-order recurrence satisfied
by the coefficients).

\begin{prop}
We have
\begin{gather*}
{\cal D}^{(1)}_c(q,t;p) = \frac{\Gampq\big(1/x_1^2\big)} {\Gampq\big(1/c^2x_1^2\big)}\sum_k
q^{-k^2} \big(c^2x_1\big)^{-2k} \frac{\theta_p\big(q^{2k}c^2x_1^2\big)} {\theta_p\big(c^2x_1^2\big)}
\frac{\theta_p\big(c^2 x_1^2,c^2;q\big)_k} {\theta_p\big(q x_1^2,q;q\big)_k}T_1^k T(c).
\end{gather*}
\end{prop}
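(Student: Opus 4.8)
The plan is to read off the coefficient of $T_1^kT(c)$ in ${\cal D}^{(1)}_c(q,t;p)$ directly from its residue definition, using the product formula for $\cK^{(1)}_c$ from Corollary~\ref{cor:formal_geom_special}. Writing $x=x_1$, we have $\cK^{(1)}_c(z;x;t;p,q)=\Gampq(czx,cz/x,cx/z,c/zx)/\Gampq(c^2,t)$ and $\Delta^{(1)}_S(z;t;p,q)=\tfrac{(p;p)(q;q)}{2}\Gampq(t)\Gampq(z^{\pm 2})^{-1}\frac{dz}{2\pi\sqrt{-1}z}$, so after the normalization $2^1 1!=2$ and the cancellation of $\Gampq(t)$ the coefficient becomes
\[
F_k(x)=\frac{(p;p)(q;q)}{\Gampq(c^2)}\Res_{z=q^kcx}\frac{\Gampq(czx,cz/x,cx/z,c/zx)}{z\,\Gampq(z^{\pm 2})}.
\]
The first thing I would check is that, for generic parameters, among all factors of this integrand only $\Gampq(cx/z)$ has a pole at $z=q^kcx$ ($k\ge 0$), that this pole is simple, and that the remaining factors $\Gampq(czx),\Gampq(cz/x),\Gampq(c/zx),\Gampq(z^{\pm 2})^{-1},1/z$ are holomorphic and nonvanishing there; the other towers of poles of the Gamma functions sit on nongeneric divisors.

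Next I would compute $\Res_{z=q^kcx}\Gampq(cx/z)$. Iterating $\Gampq(qw)=\theta_p(w)\Gampq(w)$ gives $\Gampq(cx/z)=\Gampq(q^kcx/z)\big/\prod_{0\le l<k}\theta_p(q^lcx/z)$; the denominator equals $\prod_{1\le l\le k}\theta_p(q^{-l})$ at $z=q^kcx$, and the residue of $\Gampq(q^kcx/z)$ there follows from $\lim_{w\to 1}(1-w)\Gampq(w)=1/(p;p)(q;q)$, so that
\[
\Res_{z=q^kcx}\Gampq(cx/z)=\frac{q^kcx}{(p;p)(q;q)\prod_{1\le l\le k}\theta_p(q^{-l})}.
\]
Feeding this back in and evaluating the remaining factors at $z=q^kcx$ collapses $F_k$ to the closed form
\[
F_k(x)=\frac{\Gampq(q^kc^2x^2)\,\Gampq(q^kc^2)\,\Gampq(q^{-k}x^{-2})}{\Gampq(c^2)\,\Gampq(q^{2k}c^2x^2)\,\Gampq(q^{-2k}c^{-2}x^{-2})\prod_{1\le l\le k}\theta_p(q^{-l})}.
\]

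The last step is to match this with the claimed summand. Applying $\Gampq(q^mw)=\Gampq(w)\,\theta_p(w;q)_m$, where $\theta_p(w;q)_m=\prod_{0\le j<m}\theta_p(q^jw)$, together with its reciprocal for negative shifts, to each of the five Gamma factors cancels $\Gampq(c^2)$ and $\Gampq(c^2x^2)$, leaves the ratio $\Gampq(1/x_1^2)/\Gampq(1/c^2x_1^2)$, and turns the rest into theta-Pochhammer symbols plus the negative-index products $\prod_l\theta_p(q^{-l}x^{-2})$, $\prod_l\theta_p(q^{-l}c^{-2}x^{-2})$ and $\prod_l\theta_p(q^{-l})$ coming from the three denominator Gammas. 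The reflection formula $\theta_p(1/w)=-w^{-1}\theta_p(w)$ rewrites these, up to explicit monomials in $q,c,x_1$, as $\theta_p(qx_1^2;q)_k$, $\theta_p(c^2x_1^2;q)_{2k}\,\theta_p(q^{2k}c^2x_1^2)/\theta_p(c^2x_1^2)$ and $\theta_p(q;q)_k$ respectively; the two occurrences of $\theta_p(c^2x_1^2;q)_{2k}$ cancel, and collecting the leftover monomials (which combine into $q^{-k^2}(c^2x_1)^{-2k}$) together with the surviving theta-Pochhammers $\theta_p(c^2,c^2x_1^2;q)_k/\theta_p(qx_1^2,q;q)_k$ produces exactly the asserted expression. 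As a sanity check, the $k=0$ term reduces to $\Gampq(1/x_1^2)/\Gampq(1/c^2x_1^2)$, agreeing with the leading coefficient computed earlier.

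No step poses a genuine obstacle: the pole is simple and isolated, so the only real work is the exponent bookkeeping in the last paragraph together with care about empty products when $k=0,1$. As the parenthetical remark indicates, there is also a residue-free route: specializing Lemma~\ref{lem:diff_op_braid_q} at $u=x_1$ (equivalently, running the vanishing argument in the proof of Lemma~\ref{lem:diff_op_uniq} with $n=1$) yields a first-order recurrence for the $F_k$, whose solution normalized by the known value $F_0(x_1)=\Gampq(1/x_1^2)/\Gampq(1/c^2x_1^2)$ is the right-hand side. I would present the residue computation as the main argument and use the recurrence as a cross-check.
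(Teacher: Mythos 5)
Your proposal is correct, and it follows the same route the paper indicates: the paper derives this formula from the explicit product expression for the univariate kernel (i.e., exactly your residue computation at $z=q^kcx_1$), mentioning the first-order recurrence from the proof of Lemma \ref{lem:diff_op_uniq} only as an alternative, just as you do. The bookkeeping you outline (reflection $\theta_p(1/w)=-w^{-1}\theta_p(w)$, cancellation of the two $\theta_p(c^2x_1^2;q)_{2k}$ factors, and the monomial collapsing to $q^{-k^2}(c^2x_1)^{-2k}$) indeed reproduces the stated summand.
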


This should be compared with the formula of \cite{CooperS:2002} for the powers of the Askey--Wilson ope\-rator, as well as the elliptic analogue
considered in \cite{IRS}. There is a similar, but more complicated, expression for ${\cal D}^{(n)}_{t^{1/2}}(q,t;p)$, which we omit, as well as
the corresponding expression for ${\cal D}^{(n)}_{(pq/t)^{1/2}}(q,t;p)$.

Similar reductions to $c=q^{-1/2}$ give the following.

\begin{prop}
We have
\begin{gather*}
{\cal D}^{(n)}_c(q,1;p)=\prod_{1\le i\le n} {\cal D}^{(1)}_c(q,1;p)_{x_i},\\
{\cal D}^{(n)}_c(q,q;p)=c^{-n(n-1)/2}
\prod_{1\le i<j\le n} \frac{1}{x_i^{-1}\theta_p\big(x_i x_j^{\pm 1}\big)}
\prod_{1\le i\le n} {\cal D}^{(1)}_c(q,q;p)_{x_i} \prod_{1\le i<j\le n} x_i^{-1}\theta_p\big(x_ix_j^{\pm 1}\big).
\end{gather*}
\end{prop}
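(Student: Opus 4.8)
The plan is to pin down each identity with the uniqueness result of Lemma~\ref{lem:diff_op_uniq}, reducing it to the base case $c=q^{-1/2}$, where ${\cal D}^{(n)}_{q^{-1/2}}(q,t;p)=D^{(n)}_q(t;p)$ and the claims become the factorization of the elliptic Koornwinder operator in its free ($t=1$) and Schur ($t=q$) degenerations.

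\emph{Step 1 (leading coefficients).} First I would check that the two sides of each identity have the same coefficient of $T(c)$. For $t=1$, the Proposition computing $[T(c)]{\cal D}^{(n)}_c(q,t;p)$ shows the cross terms cancel, leaving $\prod_{1\le i\le n}\Gampq(1/x_i^2)/\Gampq(1/c^2x_i^2)$, which is exactly $\prod_i[T(c)]{\cal D}^{(1)}_c(q,1;p)_{x_i}$ (the $k=0$ term of the univariate formula). For $t=q$ one has, after applying $\Gampq(qz)=\theta_p(z)\Gampq(z)$, that $[T(c)]{\cal D}^{(n)}_c(q,q;p)$ equals $\bigl(\prod_i \Gampq(1/x_i^2)/\Gampq(1/c^2x_i^2)\bigr)\prod_{i<j}\theta_p(1/c^2x_ix_j)/\theta_p(1/x_ix_j)$; on the other side, conjugating $\prod_i[T(c)]{\cal D}^{(1)}_c(q,q;p)_{x_i}$ by $g(\vec x):=\prod_{1\le i<j\le n}x_i^{-1}\theta_p(x_ix_j^{\pm1})$ multiplies it by $g(c\vec x)/g(\vec x)=c^{-n(n-1)/2}\prod_{i<j}\theta_p(c^2x_ix_j)/\theta_p(x_ix_j)$, so the explicit prefactor $c^{-n(n-1)/2}$ produces a net $c^{-n(n-1)}\prod_{i<j}\theta_p(c^2x_ix_j)/\theta_p(x_ix_j)$, which coincides with $\prod_{i<j}\theta_p(1/c^2x_ix_j)/\theta_p(1/x_ix_j)$ by the reflection $\theta_p(pw)=-w^{-1}\theta_p(w)$. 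This is the one genuinely explicit (but routine) $\theta_p$-bookkeeping computation.

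\emph{Step 2 (uniqueness).} Let $E^{(n)}_c$ denote the right-hand side (for $t=1$ or $t=q$); by Step 1 the difference ${\cal D}^{(n)}_c(q,t;p)-E^{(n)}_c$ has zero leading coefficient. Since $\prod_i\theta_p(ux_i^{\pm1})^{-1}{\cal D}^{(n)}_c(q,t;p)D^{(n)}_q(uc,pc/u;t;p)={\cal D}^{(n)}_{q^{-1/2}c}(q,t;p)$ is independent of $u$ by Lemma~\ref{lem:diff_op_braid_q}, Lemma~\ref{lem:diff_op_uniq} will give ${\cal D}^{(n)}_c(q,t;p)=E^{(n)}_c$ once I show $\prod_i\theta_p(ux_i^{\pm1})^{-1}E^{(n)}_cD^{(n)}_q(uc,pc/u;t;p)$ is also independent of $u$. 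Here I would invoke the degenerations of the Koornwinder operator: at $t=1$ the $\theta_p$ cross-terms in the coefficients of $D^{(n)}_q(t;p)$ — hence of $D^{(n)}_q(uc,pc/u;1;p)$ — cancel, so $D^{(n)}_q(uc,pc/u;1;p)=\prod_i D^{(1)}_q(uc,pc/u;t;p)_{x_i}$ is a product of commuting univariate operators and the displayed expression collapses to $\prod_i\bigl(\theta_p(ux_i^{\pm1})^{-1}{\cal D}^{(1)}_c(q,1;p)_{x_i}D^{(1)}_q(uc,pc/u;t;p)_{x_i}\bigr)=\prod_i{\cal D}^{(1)}_{q^{-1/2}c}(q,1;p)_{x_i}$ by the $n=1$ case of Lemma~\ref{lem:diff_op_braid_q}, manifestly $u$-free; at $t=q$ the same argument applies after conjugating everything by $g(\vec x)$, using that $D^{(n)}_q(uc,pc/u;q;p)$ is $g$-conjugate to $\prod_i D^{(1)}_q(uc,pc/u;t;p)_{x_i}$ up to a $u$-free scalar and that multiplication by $g$, being multiplication by a function, commutes with the prefactors $\theta_p(ux_i^{\pm1})^{-1}$. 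Since Lemma~\ref{lem:diff_op_uniq} requires $q$ non-torsion in $\C^*/\langle p\rangle$, I would finish by extending to all $q$ via meromorphy of both sides in $q$.

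\emph{Main obstacle.} The one new input is the factorization of the base operators: $D^{(n)}_q(t;p)$ is a product of commuting univariate operators at $t=1$, and is $g(\vec x)$-conjugate to such a product (up to a variable-decoupled scalar) at $t=q$ — the classical ``free'' and ``type $A$''/Schur degenerations of the elliptic Koornwinder operator. The $t=1$ case is immediate from the coefficient formula, but the $t=q$ case requires carefully tracking the quasi-periodicity of $\theta_p$ to verify that conjugation by $g$ clears all the cross-terms; once the base operators are understood, the gauge-transformed operators $D^{(n)}_q(u_0,\dots;t;p)$ factor automatically, since the extra $\prod_{i,j}\theta_p(u_jx_i^{\sigma_i})$ factors distribute over $i$ and commute with $g$. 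Everything else is the uniqueness machinery of Lemmas~\ref{lem:diff_op_braid_q} and~\ref{lem:diff_op_uniq} already in place.
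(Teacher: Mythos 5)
Your proposal is correct and follows essentially the same route as the paper: the paper's (terse) proof is precisely the "reduction to $c=q^{-1/2}$" via Lemmas \ref{lem:diff_op_braid_q} and \ref{lem:diff_op_uniq}, with the base case being the classical free ($t=1$) and Schur-type ($t=q$, conjugation by $\prod_{i<j}x_i^{-1}\theta_p(x_ix_j,x_i/x_j)$ up to a constant) factorizations of $D^{(n)}_q(t;p)$, exactly as you set it up with the leading-coefficient check supplying the normalization. The only cosmetic slip is in Step 1, where the identity actually used is the inversion $\theta_p(1/w)=-w^{-1}\theta_p(w)$ rather than the quasi-periodicity $\theta_p(pw)=-w^{-1}\theta_p(w)$; the computation is otherwise right.
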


Of course, we also obtain a formula for ${\cal D}^{(n)}_c(q,pq;p)_{\vec{x}}$ coming from the $t\mapsto pq/t$ symmetry. This arises from a quasiperiodicity of the coefficients under $t\mapsto pt$, which we omit. (The construction of \cite{elldaha} shows something far stronger: if we divide by the leading coefficient, and introduce suitable additional factors, the resulting formal difference operator is not only elliptic in all parameters and variables, but extends to algebraic elliptic curves in a canonical (thus modular) way.)

\section{The kernel as symmetric function}\label{section5}

As we mentioned in the introduction, another important extension of the
formal kernel involves analytically continuing in the dimension, along the
same lines as the lifting of Koornwinder polynomials to symmetric functions
in~\cite{bcpoly}. Clearly the analytic definition by induction in the
dimension will not be of use in this regard, so we must return to the
deformed Cauchy identity definition. We thus see that our first order of
business must be to extend the interpolation functions themselves to
symmetric functions. Such an extension will be a symmetric function (more
properly, a formal series in $p$ with symmetric function coefficients)
depending on an auxiliary parameter $T$ such that when $T=t^n$ and we
specialize the variables to $x_1,1/x_1,\dots,x_n,1/x_n$, we recover the
formal series expansion of the relevant $n$-variable interpolation function.

For any partition $\mu$, we recall from \cite{bcpoly} the specialization
$\la \mu\ra_{q,t,T;a}$ of the ring $\Lambda$ of symmetric functions
defined by
\begin{gather*}\begin{split}&
p_k(\la \mu\ra_{q,t,T;a})=
\sum_{1\le i} \big(\big(q^{k\mu_i}-1\big)t^{-ki}(aT)^k+\big(q^{-k\mu_i}-1\big)t^{ki}(aT)^{-k}\big)\\
& \hphantom{p_k(\la \mu\ra_{q,t,T;a})=}{}
+ a^k \frac{1-T^k}{1-t^k} + a^{-k} \frac{1-T^{-k}}{1-t^{-k}},\end{split}
\end{gather*}
where $p_k$ denotes the power sum symmetric function. Note that the summands vanish once $\mu_i=0$, so this is a well-defined homomorphism $\Lambda\to \Q(q,t)[a,T,1/a,1/T]$. One significance of this specialization is that for any symmetric function $f$ and $n\ge \ell(\mu)$,
\begin{gather*}
f(\la \mu\ra_{q,t,t^n;a}) = f\big(\big(q^{\mu_1}t^{n-1}a\big)^{\pm 1},\dots,(q^{\mu_n}a)^{\pm 1}\big).
\end{gather*}
In particular, if we have a symmetric function lift of the interpolation
functions, its values under this specialization would be forced. (In fact,
those values suffice to uniquely determine the lift, but would require some
subtle facts about valuations to give existence.)

There is one special case of the interpolation functions which is quite
straightforward to lift. When $t^n ab=pq$, the interpolation function has
an explicit expression as a product over the variables. Thus the only
potential obstacle to lifting is formal convergence, and this turns out not
to be an issue. With this in mind, we define, for $b$ a formal parameter
with $0<\ord(b)<1$, a family of symmetric functions
\begin{gather*}
F_\lambda(\hat{x};b;q,t;p):=\exp\bigg({-}\sum_{k\ge 1} \frac{p^{k/2}p_k(\la \lambda'\ra_{t,q,1;p^{-1/2}b})p_k(\hat{x})} {k\big(1-p^k\big)}\bigg).
\end{gather*}
The constraint on $\ord(b)$ ensures that
\begin{gather*}
\ord\big(p^{k/2}p_k(\la \lambda'\ra_{t,q,1;p^{-1/2}b})\big)>0,
\end{gather*}
and thus the sum converges formally to a series of positive order, making
the exponential well-defined as well. This is indeed a lift of the ``Cauchy''
special case of the interpolation function: for any $n\ge \ell(\lambda)$,
we have
\begin{gather*}
F_\lambda\big(z_1^{\pm 1},\dots,z_n^{\pm 1};b;q,t;p\big)=
\prod_{1\le i\le n,1\le j\le \lambda_1}
\frac{\theta_p\big((p/b) t^{-\lambda'_j} q^j z_i^{\pm 1}\big)} {\theta_p\big((p/b) q^j z_i^{\pm 1}\big)}\\
\hphantom{F_\lambda\big(z_1^{\pm 1},\dots,z_n^{\pm 1};b;q,t;p\big)}{}
=R^{*(n)}_\lambda\big(z_1,\dots,z_n;pq/t^nb,b;q,t;p\big).
\end{gather*}

With this in mind, we can define more general lifted interpolation functions using connection coefficients.

\begin{defn}\label{def:lifted_interp}
 For $0<\ord(b)<1$, the lifted interpolation function $\hat{R}^{*}_\lambda(\hat{x};a,b;q,t,T;p)$ is given by the finite sum
\begin{gather*}
\hat{R}^{*}_\lambda(\hat{x};a,b;q,t,T;p) = \sum_{\mu\subset\lambda}\obinomE{\lambda}{\mu}_{[Ta/tb,Tab/pq];q,t;p}
\Delta^0_\mu\big(pq/tb^2|pq/ab;q,t;p\big)F_\mu(\hat{x};b;q,t;p).
\end{gather*}
\end{defn}

In the statement of the following result, we denote $\lc(a):=p^{-\ord(a)}a$, $\lc(b):=p^{-\ord(b)}b$, and assume that each is independent of~$p$.

\begin{thm} For $0<\ord(b)<1$, $0\le \ord(a)\le 1$, the lifted interpolation function
 is a~holomorphic formal Puiseux series in $p$ with coefficients in
 $\Lambda\otimes \Q(\lc(a),\lc(b),q,t)$, and satisfies
\begin{gather*}
\hat{R}^{*}_\lambda\big(z_1^{\pm 1},\dots,z_n^{\pm 1};a,b;q,t,t^n;p\big)\\
\qquad{} = \begin{cases}
\Delta^0_\lambda\big(t^{n-1}a/b|t^n;q,t;p\big)
R^{*(n)}_\lambda(z_1,\dots,z_n;a,b;q,t;p), & n\ge \ell(\lambda),\\
0, &\ell(\lambda)>n.
\end{cases}
\end{gather*}
Moreover, for any partition $\mu$,
\begin{gather*}
\hat{R}^*_\lambda(\la \mu\ra_{q,t,T;a};a,b;q,t,T;p)=0
\end{gather*}
unless $\mu\supset\lambda$. More generally,
\begin{gather*}
\hat{R}^*_\lambda(\la \mu\ra_{q,t,T;a};a,b;q,t,T;p)= \Delta_\lambda(Ta/tb|t/Tab;q,t;p)^{-1}\binomE{\mu}{\lambda}_{[T^2a^2/t^2,Tab/t];q,t;p}.
\end{gather*}
\end{thm}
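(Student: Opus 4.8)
The plan is to prove the statement first at the specialization $T=t^n$, where $\la\mu\ra_{q,t,t^n;a}$ collapses symmetric functions to $2n$-variable evaluations and Definition~\ref{def:lifted_interp} reduces to a known finite-variable identity, and then to lift to arbitrary $T$ using that $\{t^n:n\ge 1\}$ is Zariski dense in $\C^*$ (as $t$ is transcendental over the base field) while everything in sight is a Puiseux series in $p$ with coefficients rational in $T$. Concretely I would carry out, in this order: (a) holomorphy of $\hat R^*_\lambda$ and the claimed coefficient ring; (b) the first case of the displayed specialization, $\hat R^*_\lambda(z_1^{\pm 1},\dots,z_n^{\pm 1};a,b;q,t,t^n;p)=\Delta^0_\lambda(t^{n-1}a/b|t^n;q,t;p)\,R^{*(n)}_\lambda(z_1,\dots,z_n;a,b;q,t;p)$ for $n\ge\ell(\lambda)$; (c) the evaluation at $\la\mu\ra_{q,t,t^n;a}$ for such $n$; (d) the lift of (c) to all $T$, which yields both the general evaluation and the vanishing $\hat R^*_\lambda(\la\mu\ra_{q,t,T;a};\cdots)=0$ unless $\mu\supset\lambda$; and (e) the remaining vanishing $\hat R^*_\lambda=0$ for $\ell(\lambda)>n$, deduced from (d).

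For (a): the sum in Definition~\ref{def:lifted_interp} is finite, over $\mu\subset\lambda$, and each $F_\mu(\hat x;b;q,t;p)$ is, by the discussion preceding the definition (this is where $0<\ord(b)<1$ enters), a holomorphic Puiseux series with leading term $1$ and coefficients in $\Lambda$. So it is enough to check that each scalar prefactor $\obinomE{\lambda}{\mu}_{[Ta/tb,Tab/pq];q,t;p}\,\Delta^0_\mu(pq/tb^2|pq/ab;q,t;p)$ is a Puiseux series of nonnegative order whose leading term, after removing the relevant power of $p$, is a rational function of the remaining parameters. This is a finite computation with the explicit theta-product formulas for $\Delta^0$ and for the elliptic binomial coefficient, and the constraints $0\le\ord(a)\le 1$, $0<\ord(b)<1$ are precisely what force the orders to be nonnegative. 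I expect this valuation bookkeeping --- flagged in the text as the ``subtle facts about valuations'' --- to be the main obstacle, the delicate point being to rule out any drop below order $0$ uniformly over the range of $\ord(a)$.

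For (b)--(c): Definition~\ref{def:lifted_interp} is tailored so that substituting the product formula $F_\mu(z_1^{\pm 1},\dots,z_n^{\pm 1};b;q,t;p)=R^{*(n)}_\mu(z_1,\dots,z_n;pq/t^nb,b;q,t;p)$ (valid since $\ell(\mu)\le\ell(\lambda)\le n$) turns the left side of (b) into the expansion of $R^{*(n)}_\lambda(\cdot;a,b)$ in the basis $\{R^{*(n)}_\mu(\cdot;pq/t^nb,b)\}_{\mu\subset\lambda}$. That expansion is the connection-coefficient lemma \cite[Cor.~4.14]{bctheta} stated at the start of Section~2, reparametrized by $c=\sqrt{t^{n-1}ab}$, $u_0=a/c$, $t_0=pq/t^nb$: matching its overall prefactor to $\Delta^0_\lambda(t^{n-1}a/b|t^n)$, rewriting the factor $R^{*(n)}_\mu(\dots,q^{\lambda_i}t^{n-i}u_0,\dots;\cdots)$ as an elliptic binomial coefficient via the definition of $\binomE{\cdot}{\cdot}$, and collapsing the resulting $\Delta$-symbols with the append-a-parameter rule $\Delta_\nu(x|\dots,z)=\Delta_\nu(x|\dots)\,\cC^0_\nu(z)/\cC^0_\nu(pqx/z)$ together with $\Delta^0_\nu(x|z)=\cC^0_\nu(z)/\cC^0_\nu(pqx/z)$ reproduces the two prefactors of Definition~\ref{def:lifted_interp} term by term; this is where the parameter choices in that definition get their justification. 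Step (c) is then immediate: evaluating the $R^{*(n)}_\lambda(\cdot;a,b)$ produced by (b) at $z_i=q^{\mu_i}t^{n-i}a$ --- the image of $\la\mu\ra_{q,t,t^n;a}$ under $f\mapsto f((q^{\mu_1}t^{n-1}a)^{\pm 1},\dots,(q^{\mu_n}a)^{\pm 1})$ --- equals, by the definition of $\binomE{\cdot}{\cdot}$ (with $\sqrt{a_{\mathrm{box}}}=t^{n-1}a$, $b_{\mathrm{box}}=t^{n-1}ab$), the quantity $\Delta_\lambda(t^{n-1}a/b|t^n,1/t^{n-1}ab;q,t;p)^{-1}\binomE{\mu}{\lambda}_{[t^{2n-2}a^2,t^{n-1}ab];q,t;p}$, and combining with the $\Delta^0_\lambda(t^{n-1}a/b|t^n)$ prefactor and once more the append-a-parameter rule collapses this to $\Delta_\lambda(t^{n-1}a/b|t^{1-n}/ab)^{-1}\binomE{\mu}{\lambda}_{[t^{2n-2}a^2,t^{n-1}ab]}$, the $T=t^n$ instance of the claimed formula.

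For (d)--(e): fix $\mu$. For $n\ge\max(\ell(\lambda),\ell(\mu))$ both sides of the evaluation in (c) are honest expressions agreeing by (c), and each is a rational function of $T$ with coefficients in the Puiseux-series field --- on the left because $p_k(\la\mu\ra_{q,t,T;a})$ is Laurent-polynomial in $T$ and the $p$-coefficients of $\obinomE{\lambda}{\mu'}_{[Ta/tb,Tab/pq];q,t;p}$ are rational in $T$, on the right by inspection of the $\Delta$- and $\binomE{\cdot}{\cdot}$-factors. Since $\{t^n\}$ is Zariski dense in $\C^*$, the two rational functions of $T$ coincide identically, giving the general evaluation for all $T$; the vanishing clause of (d) is then immediate from $\binomE{\mu}{\lambda}=0$ unless $\lambda\subset\mu$ (\cite[Cor.~8.12]{xforms}). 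Finally, for $\ell(\lambda)>n$ we set $T=t^n$ in the evaluation just established: $\hat R^*_\lambda(\la\nu\ra_{q,t,t^n;a};a,b;q,t,t^n;p)=0$ for every partition $\nu$ with at most $n$ parts, since $\ell(\lambda)>n\ge\ell(\nu)$ forces $\lambda\not\subset\nu$. As the points $\la\nu\ra_{q,t,t^n;a}$ are Zariski dense among the $(z_1^{\pm 1},\dots,z_n^{\pm 1})$-specializations and $\hat R^*_\lambda(z_1^{\pm 1},\dots,z_n^{\pm 1};a,b;q,t,t^n;p)$ has $p$-coefficients rational in the $z_i$, it vanishes identically, which completes the proof.
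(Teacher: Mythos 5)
Your overall architecture --- specialize at $T=t^n$ via the connection-coefficient identity of \cite[Cor.~4.14]{bctheta}, identify the result with $\Delta^0_\lambda(t^{n-1}a/b|t^n;q,t;p)R^{*(n)}_\lambda$, transfer the evaluation at $\la\mu\ra_{q,t,T;a}$ to all $T$ by Zariski density of $\{t^n\}$ together with rationality in $T$, and then deduce the $\ell(\lambda)>n$ vanishing from the fact that every Puiseux coefficient is a symmetric Laurent polynomial vanishing at all partitions with at most $n$ parts --- is exactly the paper's route, and your steps (b)--(e) are sound.

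The genuine gap is step (a). You propose to prove holomorphy by checking that each scalar prefactor $\obinomE{\lambda}{\mu}_{[Ta/tb,Tab/pq];q,t;p}\,\Delta^0_\mu(pq/tb^2|pq/ab;q,t;p)$ in Definition \ref{def:lifted_interp} has nonnegative order, so that holomorphy holds term by term. This premise is false: the remark immediately following the theorem states that the individual terms of the definition typically have \emph{negative} order, and holomorphy only emerges after substantial cancellation in the sum (the same phenomenon already occurs for the finite-variable expansion of \cite[Thm.~2.5]{littlewood}, where each term has order $-|\lambda|\ord(b)$). Note, e.g., that the parameter $Tab/pq$ entering the binomial coefficient has order $\ord(a)+\ord(b)-1$, which is negative on much of your allowed range; no bookkeeping within $0\le\ord(a)\le 1$, $0<\ord(b)<1$ can force termwise nonnegativity. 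The repair is the paper's argument, which decouples the two claims: first observe (independently of holomorphy) that the Puiseux coefficients lie in $\Lambda\otimes\Q(\lc(a),\lc(b),q,t)$, the only nontrivial point being that the elliptic binomial coefficient, as the restriction of an algebraic function to a Tate curve, has rational-function coefficients; then deduce holomorphy from the specializations rather than from the terms --- for every $n\ge\ell(\lambda)$, the specialization $T=t^n$, $\hat{x}=z_1^{\pm1},\dots,z_n^{\pm1}$ is a multiple of an ordinary interpolation function, which has nonnegative valuation by \cite{biorth_degens}, and since these specializations are Zariski dense, any coefficient of a negative power of $p$ vanishes identically. With step (a) replaced in this way (your step (b) does not depend on (a), so the reordering is harmless), the rest of your argument goes through.
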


\begin{proof} The specialization for $n\ge \ell(\lambda)$ follows from the connection coefficient identity for interpolation functions, and the claim about the coefficients is manifest from the definition. (The only nontrivial contribution comes from the elliptic binomial coefficient, but this is the restriction of an algebraic function to a Tate curve, so has rational function coefficients.) Since the interpolation functions that~$\hat{R}$ specializes to are holomorphic Puiseux series in~$p$~\cite{biorth_degens}, the same is true for $\hat{R}$.

The vanishing condition and relation to binomial coefficients hold since they hold for all sufficiently large $n$; it then follows that the specialization vanishes for $n\ge \ell(\lambda)$, since it is a~polynomial Puiseux series such that every coefficient vanishes at all partitions with at most~$n$ parts.
\end{proof}

\begin{rems} The reader should be cautioned that vanishing for $n<\ell(\lambda)$ only holds for {\em generic} values of the parameters. Indeed, when $tab=pq$, the lifted interpolation function is (up to a prefactor) $F_\lambda$, which never vanishes! Also, although the lifted interpolation function is holomorphic, the definition in general involves a great deal of cancellation; the individual terms typically have negative order.
\end{rems}

\begin{rems}More generally, one could consider the sum
\begin{gather*}
\sum_{\kappa\subset\mu\subset\lambda}\obinomE{\lambda}{\mu}_{[a/tb,ab/pq];q,t;p}\obinomE{\mu}{\kappa}_{[pq/b^2,pqT/ab];q,t;p} F_\mu\big(\hat{x};t^{-1/2}b;q,t;p\big).
\end{gather*}
This is a formal symmetric function analogue of the skew interpolation functions of~\cite{littlewood}, as can be seen by applying the specialization
\begin{gather*}
p_k\mapsto \sum_{0\le r<2n} \frac{v_r^k-v_r^{-k}}{t^{k/2}-t^{-k/2}},\qquad T\mapsto v_0\cdots v_{2n-1}.
\end{gather*}
\end{rems}

\begin{rems}
We also note that the constant ($p^0$) term of the lifted interpolation
function is essentially just the lifted interpolation polynomial of~\cite{bcpoly}:
\begin{gather*}
\lim_{p\to 0}\hat{R}^*_\lambda\big(\hat{z};a,p^{1/2}b;q,t,T;p\big)=t^{-2n(\lambda)}q^{n(\lambda')}
(-aT/t)^{|\lambda|}C^-_\lambda(t;q,t)\bar{P}^*_\lambda(\hat{z};q,t,T;a).
\end{gather*}
\end{rems}

It will be useful to have better control over the poles of the coefficients of the lifted interpolation function. This largely reduces to controlling the poles of the interpolation functions themselves. It follows by induction from the branching rule \cite[Theorem~4.16]{bctheta} that for $\ord(a)=0$,
\begin{gather*}
\Delta^0_\lambda\big(t^{n-1}a/b|t^n;q,t;p\big) R^{*(n)}_\lambda(z_1,\dots,z_n;a,b;q,t;p)
\end{gather*}
has no $z$-independent poles for generic $q,t$. Indeed, one has
\begin{gather*}
\Delta^0_\lambda\big(t^{n}a/b|t^{n+1};q,t;p\big)R^{*(n+1)}_\lambda (z_1,\dots,z_{n+1};a,b;q,t;p)\\
\quad {}= \sum_\kappa \obinomE{\lambda}{\kappa}_{[t^na/b,t](t^naz_{n+1},t^na/z_{n+1});q,t;p}
\Delta^0_\kappa\big(t^{n-1}a/b|t^n;q,t;p\big)R^{*(n)}_\kappa(z_1,\dots,z_n;a,b;q,t;p),
\end{gather*}
so the only relevant factor is
\begin{gather*}
\obinomE{\lambda}{\kappa}_{[t^na/b,t];q,t;p},
\end{gather*}
which can be controlled using the explicit product formula \cite[Corollary~4.5]{bctheta}.

This immediately implies that for generic $q$, $t$, the only poles of the
coefficients of the lifted interpolation function
$\hat{R}^*_\lambda(\hat{x};a,b;q,t,T;p)$ are functions of $T$ alone. (Any
other pole would be visible in the reduction to interpolation functions for
all sufficiently large $n$.) However, a careful look at the definition
shows that there can be no such poles. Indeed, for each factor of the summand, the parameters that appear generate a field over which $T$ is transcendental! We thus find that the coefficients of $\hat{R}^*_\lambda(\hat{x};a,b;q,t,T;p)$ lie in $\Lambda\otimes \Q(q,t)\big[\lc(a)^{\pm 1},\lc(b)^{\pm 1},T^{\pm 1}\big]$.

The specialization to ordinary interpolation functions immediately gives us
some symmetries of the lifted interpolation function, by checking that the
identity holds for $T=t^n$ for all sufficiently large $n$.

\begin{prop}
We have
\begin{gather*}
\hat{R}^*_\mu(;-a,-b;q,t,T;p) =\hat{R}^*_\mu(;a,b;q,t,T;p), \\
\hat{R}^*_\mu(;p/a,p/b;1/q,1/t,1/T;p) =\hat{R}^*_\mu(;a,b;q,t,T;p).
\end{gather*}
\end{prop}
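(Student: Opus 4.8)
The plan is to deduce both symmetries from the known symmetries of the ordinary elliptic interpolation functions $R^{*(n)}_\mu$, by testing them at the specializations $T=t^n$. By the discussion preceding the proposition, every coefficient of the Puiseux series $\hat{R}^{*}_\mu(\hat x;a,b;q,t,T;p)$ lies in $\Lambda\otimes\Q(q,t)[\lc(a)^{\pm1},\lc(b)^{\pm1},T^{\pm1}]$, so comparing coefficients of powers of $p$ turns each asserted identity into an equality of Laurent polynomials in $T$ (after the substitutions $\lc(a)\mapsto-\lc(a)$, $\lc(b)\mapsto-\lc(b)$ in the first, and $q,t,T\mapsto 1/q,1/t,1/T$ in the second). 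It suffices to check such an equality on a Zariski dense set of values of $T$, and for $t$ not a root of unity the set $\{t^n:n\ge\ell(\mu)\}$ has no accumulation point, hence is Zariski dense; so I would prove each identity after setting $T=t^n$ for every $n\ge\ell(\mu)$, recovering the general case by rationality in $q,t$. For such $n$ the Theorem above identifies the image of $\hat{R}^{*}_\mu(\hat x;a,b;q,t,t^n;p)$ under $\hat x\mapsto(z_1^{\pm1},\dots,z_n^{\pm1})$ with $\Delta^0_\mu(t^{n-1}a/b|t^n;q,t;p)\,R^{*(n)}_\mu(z_1,\dots,z_n;a,b;q,t;p)$; since a symmetric function is determined by its images on the alphabets $(z_1^{\pm1},\dots,z_n^{\pm1})$ as $n\to\infty$, the two identities reduce to the corresponding statements for these products, for all $n\ge\ell(\mu)$ and all $\vec{z}$.

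For the first identity one uses $t^{n-1}(-a)/(-b)=t^{n-1}a/b$: the $\Delta^0$-prefactor is untouched, and what remains is the symmetry of $R^{*(n)}_\mu$ under $(a,b)\mapsto(-a,-b)$ together with the accompanying negation of the variables $z_i$ (on the symmetric-function side, the automorphism $p_k\mapsto(-1)^kp_k$ of $\Lambda$). This is immediate from the characterization of $R^{*(n)}_\mu$ by its vanishing locus, its triangularity, and its normalization $R^{*(n)}_\mu(\dots,t^{n-i}v,\dots;a,b;q,t;p)=\Delta^0_\mu(t^{n-1}a/b|t^{n-1}av,a/v;q,t;p)$, all of which are visibly invariant under that substitution (cf.\ \cite{xforms,bctheta}). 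For the second identity, $1/t^n=(1/t)^n$ makes the specialization $T=t^n$ compatible with $(q,t,T)\mapsto(1/q,1/t,1/T)$; the prefactor transforms into $\Delta^0_\mu(t^{1-n}b/a|t^{-n};1/q,1/t;p)$, which coincides with $\Delta^0_\mu(t^{n-1}a/b|t^n;q,t;p)$ after applying $\theta_p(p/z)=\theta_p(z)$ termwise inside $\cC^0$, and the remaining content is the inversion symmetry $R^{*(n)}_\mu(\vec{z};p/a,p/b;1/q,1/t;p)=R^{*(n)}_\mu(\vec{z};a,b;q,t;p)$, which follows from the same three defining properties once $\theta_p(p/z)=\theta_p(z)$ is used to match the vanishing points and the normalization. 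A shortcut for this last step is to verify the inversion symmetry directly for the ``Cauchy'' functions $F_\mu$ from their product formula, using $\theta_p((p/b)q^jz^{\pm1})=\theta_p(bq^{-j}z^{\pm1})$, and then transport it through the connection-coefficient sum of Definition \ref{def:lifted_interp}, whose $\Delta^0$- and binomial-coefficient factors are manifestly invariant under the substitutions in play.

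The only real work is the bookkeeping: tracking the $\Delta^0$ and $\cC^0$ prefactors and the sign/inversion conventions, and interpreting theta and elliptic Gamma functions with arguments involving negative powers of $q$ and $t$. This is not a genuine difficulty, since everything is a formal Puiseux series in $p$ with coefficients rational in $q,t$, so the prefactor identities are honest identities of formal products of $\theta_p$'s; the one conceptual point to keep straight is that in the first identity the substitution $(a,b)\mapsto(-a,-b)$ on $R^{*(n)}_\mu$ must be accompanied by negation of the variables, i.e.\ by the automorphism $p_k\mapsto(-1)^kp_k$ on $\Lambda$.
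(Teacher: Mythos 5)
Your overall strategy---expand in $p$, observe that the coefficients are Laurent polynomials in $T$, reduce to the Zariski dense set of specializations $T=t^n$, and there invoke the identification of $\hat{R}^*_\mu$ with $\Delta^0_\mu(t^{n-1}a/b|t^n;q,t;p)R^{*(n)}_\mu(\vec{z};a,b;q,t;p)$ together with the corresponding symmetries of the ordinary interpolation functions---is exactly the paper's argument (whose entire proof is the sentence preceding the proposition). The gap is in the two ``bookkeeping'' steps you dismiss as routine: they are where all the content sits, and as written they do not close.

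First, proving the symmetry of $R^{*(n)}_\mu$ under $(a,b)\mapsto(-a,-b)$ \emph{together with negation of the variables} establishes only the twisted statement $\iota\,\hat{R}^*_\mu(;-a,-b;q,t,T;p)=\hat{R}^*_\mu(;a,b;q,t,T;p)$, where $\iota$ is the involution $p_k\mapsto(-1)^kp_k$ of $\Lambda$; since the displayed identity has the same (unnegated) alphabet on both sides, you still owe an argument that $\iota$ can be removed, and termwise it cannot: in the Cauchy case $Tab=pq$ the claim reduces to $F_\mu(\hat{x};-b)=F_\mu(\hat{x};b)$, which fails because the odd-$k$ terms of the exponent defining $F_\mu$ are odd in $b$ (equivalently, $R^{*(1)}_1(z;-a,-b)\ne R^{*(1)}_1(z;a,b)$ with $z$ fixed). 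Second, the asserted prefactor identity $\Delta^0_\mu(t^{1-n}b/a|t^{-n};1/q,1/t;p)=\Delta^0_\mu(t^{n-1}a/b|t^n;q,t;p)$ is not a consequence of $\theta_p(p/z)=\theta_p(z)$ alone: already for $\mu=(1)$, $n=1$ the ratio of the left side to the right side is $b/(qa)$, coming from $\theta_p(pz)=-z^{-1}\theta_p(z)$ and $\theta_p(1/z)=-z^{-1}\theta_p(z)$, while $R^{*(1)}_\mu$ is \emph{exactly} invariant under $(a,b,q,t)\mapsto(p/a,p/b,1/q,1/t)$, so this monomial is not absorbed elsewhere. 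Thus carrying out your own program at $T=t^n$ produces residual sign and monomial factors which must either be shown to cancel (they do not, termwise) or be reconciled with the normalization the statement is intended to carry---for instance by reading both symmetries as statements about the values at the correspondingly transformed evaluation points $\la\nu\ra_{q,t,T;a}$, which do transform consistently; making that reconciliation precise is exactly the step missing from your write-up.
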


There is also a plethystic symmetry of the following form. Let $\tau_{a;t}$ denote the endomorphism of $\Lambda$ given by
\begin{gather*}
\tau_{a;t}(p_k) = p_k + \frac{a^k-(t/a)^k}{1-t^k},
\end{gather*}
noting that $\tau_{a;t}^{-1}=\tau_{t/a;t}$, and that any two such endomorphisms
commute. In that light, we adopt the shorthand
\begin{gather*}
\tau_{a_1,\dots,a_m;t}=\prod_{1\le r\le m}\tau_{a_r;t}.
\end{gather*}
We note the particularly nice special cases
\begin{gather*}
\tau_{\sqrt{t};t}f= \tau_{-\sqrt{t};t}f=f, \\
(\tau_{1;t}f)(z_1,\dots,z_n)=f(z_1,\dots,z_n,1), \\
(\tau_{-1;t}f)(z_1,\dots,z_n)=f(z_1,\dots,z_n,-1).
\end{gather*}

\begin{prop}
The function $\tau_{a;t} \hat{R}^*_\mu(;a,b;q,t,T/a;p)$ is independent of $a$.
\end{prop}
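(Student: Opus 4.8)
The plan is to exploit that the reparametrization $T\mapsto T/a$ has been arranged precisely so that nearly all of the $a$-dependence of the lifted interpolation function evaporates, and what remains is exactly what $\tau_{a;t}$ corrects. Indeed, in $\hat{R}^*_\mu(\hat{x};a,b;q,t,T/a;p)$ the $T$-argument of Definition \ref{def:lifted_interp} is $T/a$, so the binomial coefficient's subscript $[\,\cdot\,a/tb,\,\cdot\,ab/pq\,]$ becomes $[T/tb,\,Tb/pq]$, which no longer involves $a$, while the symmetric functions $F_\nu(\hat{x};b;q,t;p)$ are manifestly $a$-free. Hence
\[
\hat{R}^*_\mu(\hat{x};a,b;q,t,T/a;p)
=
\sum_{\nu\subset\mu}
\obinomE{\mu}{\nu}_{[T/tb,Tb/pq];q,t;p}\,
\Delta^0_\nu(pq/tb^2\,|\,pq/ab;q,t;p)\,
F_\nu(\hat{x};b;q,t;p),
\]
and the only surviving $a$-dependence lives in the scalars $\Delta^0_\nu(pq/tb^2|pq/ab;q,t;p)$ and, once $\tau_{a;t}$ is applied (it fixes every scalar coefficient and acts only on the symmetric-function factor), in $\tau_{a;t}F_\nu$.

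Now $F_\nu(\hat{x};b;q,t;p)=\exp\!\big(-\sum_{k\ge 1}\tfrac{p^{k/2}p_k(\la\nu'\ra_{t,q,1;p^{-1/2}b})\,p_k(\hat{x})}{k(1-p^k)}\big)$ is ``rank one'' — an exponential of a linear form in the power sums $p_k(\hat x)$ — so since $\tau_{a;t}$ sends $p_k\mapsto p_k+\tfrac{a^k-(t/a)^k}{1-t^k}$ we get $\tau_{a;t}F_\nu=E_\nu(a,b)\,F_\nu$, where
\[
E_\nu(a,b):=\exp\!\Big(-\sum_{k\ge 1}\frac{p^{k/2}p_k(\la\nu'\ra_{t,q,1;p^{-1/2}b})}{k(1-p^k)}\cdot\frac{a^k-(t/a)^k}{1-t^k}\Big)
\]
(the hypothesis $0<\ord(b)<1$ makes this exponent a formal series of positive order). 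Thus the whole theorem reduces to the single scalar identity $\Delta^0_\nu(pq/tb^2\,|\,pq/ab;q,t;p)\,E_\nu(a,b)=1$ for every partition $\nu$; granting it, $\tau_{a;t}\hat{R}^*_\mu(\hat{x};a,b;q,t,T/a;p)=\sum_{\nu\subset\mu}\obinomE{\mu}{\nu}_{[T/tb,Tb/pq];q,t;p}F_\nu(\hat{x};b;q,t;p)$, visibly independent of $a$.

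To establish that identity I would take logarithms. Unwinding the definitions gives $\Delta^0_\nu(pq/tb^2\,|\,pq/ab;q,t;p)=\prod_{i\ge 1}\prod_{0\le j<\nu_i}\theta_p(pq^{j+1}t^{1-i}/ab)/\theta_p(pq^{j+1}t^{-i}a/b)$, so $\log\theta_p(z)=-\sum_{k\ge1}\tfrac{z^k+p^kz^{-k}}{k(1-p^k)}$ turns $\log\Delta^0_\nu$ into an explicit sum over the boxes of $\nu$ of monomials in $a^{\pm k}b^{\pm k}$. On the other side, the defining specialization for $\la\nu'\ra$ (its $T=1$ terms vanish) together with the elementary column-to-box identity $\sum_i(t^{k\nu'_i}-1)q^{-ki}=(t^k-1)\sum_{(i,j)\in\nu}t^{(i-1)k}q^{-jk}$ and its $k\mapsto -k$ mirror rewrite $p^{k/2}p_k(\la\nu'\ra_{t,q,1;p^{-1/2}b})$ as a matching box sum, the normalizations $\tfrac{t^k-1}{1-t^k}=-1$ and $\tfrac{t^{-k}-1}{1-t^k}=t^{-k}$ lining up the exponents; matching the four monomial families $a^{\pm k}b^{\pm k}$ term by term shows $\log\Delta^0_\nu$ and $\log E_\nu$ are negatives of one another.

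The only genuine work is this logarithmic computation; conceptually it is the exponential reformulation of the way the specializations $\la\cdot\ra$ were built to interact with the $\cC^0$-symbols (equivalently, the cocycle statement $\tau_{a;t}F_\nu=\Delta^0_\nu(pq/tb^2|pq/ab;q,t;p)^{-1}F_\nu$), and I expect it to be the main obstacle only in the bookkeeping sense: one must track the $p^{1/2}$ inside $F_\nu$ against the $p^{-1/2}$ inside $\la\nu'\ra_{t,q,1;p^{-1/2}b}$ so that each monomial lands with the right power of $p$ (the ``$q^{-ki}(p^{-1/2}b)^k$'' part of $\la\nu'\ra$ supplies the $p^0$ terms and the ``$q^{ki}(p^{-1/2}b)^{-k}$'' part the $p^k$ terms), and expand the two $\theta_p$-factors of $\Delta^0_\nu$ consistently with these. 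No analytic input or Zariski-density argument is needed, since everything is an identity of formal Puiseux series in a single line of the definition.
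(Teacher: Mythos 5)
Your proof is correct, but it takes a genuinely different route from the paper's. The paper reduces the claim to the equivalent statement $\tau_{t/a',a;t}\hat{R}^*_\mu(\hat{x};a,b;q,t,T/a;p)=\hat{R}^*_\mu(\hat{x};a',b;q,t,T/a';p)$, specializes $T=t^{n+m}a$, $a'=t^ma$ and $\hat{x}\to z_1^{\pm1},\dots,z_n^{\pm1}$ so that it becomes a known identity of ordinary interpolation functions (\cite[(3.43)]{bctheta}), and then extends ``in the usual way'' by Zariski density of these specializations. You instead work directly with the defining sum: after $T\mapsto T/a$ the binomial coefficients $\obinomE{\mu}{\nu}_{[T/tb,Tb/pq];q,t;p}$ are $a$-free, and since $F_\nu$ is an exponential of a linear form in power sums, $\tau_{a;t}$ multiplies it by an explicit scalar which you verify (correctly — I checked the box-sum/theta-logarithm bookkeeping, including the $p^{\pm 1/2}$ matching) to be exactly $\Delta^0_\nu(pq/tb^2|pq/ab;q,t;p)^{-1}$; i.e.\ you establish the cocycle identity $\tau_{a;t}F_\nu=\Delta^0_\nu(pq/tb^2|pq/ab;q,t;p)^{-1}F_\nu$, which kills the $a$-dependence term by term and even yields the explicit $a$-free formula
\[
\tau_{a;t}\hat{R}^*_\mu(\hat{x};a,b;q,t,T/a;p)
=\sum_{\nu\subset\mu}\obinomE{\mu}{\nu}_{[T/tb,Tb/pq];q,t;p}F_\nu(\hat{x};b;q,t;p).
\]
What each approach buys: the paper's argument is short because it leans on the finite-variable identity from \cite{bctheta} and the paper's standard density machinery, while yours is self-contained, exposes the mechanism, and is slightly stronger (an explicit evaluation rather than mere independence). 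One small caveat to your closing remark that no density argument is needed: the termwise expansion $\log\theta_p(z)=-\sum_{k\ge1}\frac{z^k+p^kz^{-k}}{k(1-p^k)}$ and the formal convergence of your factor $E_\nu$ require $|\ord(a)|<\min(\ord(b),1-\ord(b))$; for the wider range of valuations of $a$ allowed elsewhere (e.g.\ $0\le\ord(a)\le1$) one still finishes by the rationality of the Puiseux coefficients in $\lc(a)$ — the same implicit step the paper uses — so this is a matter of stating the valuation hypotheses, not a gap in the argument.
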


\begin{proof}
We equivalently need to show
\begin{gather*}
\tau_{t/a',a;t} \hat{R}^*_\mu(\hat{x};a,b;q,t,T/a;p) =\hat{R}^*_\mu(\hat{x};a',b;q,t,T/a';p)
\end{gather*}
for all $a$, $a'$. If we specialize to $T=t^{n+m}a$, $a'=t^m a$ for $m\ge 0$, $n$ sufficiently large, and specialize $\hat{x}$ to $z_1^{\pm
 1},\dots,z_n^{\pm 1}$, this becomes an identity of ordinary interpolation functions, \cite[equation~(3.43)]{bctheta}. The claim then follows in the usual way.
\end{proof}

\begin{rem}
Note that this symmetry gives rise to the expression
\begin{gather*}
\hat{R}^*_\mu(;a,b;q,t,T;p)=\tau_{Ta,t/a;t}\hat{R}^*_\mu(;Ta,b;q,t,1;p),
\end{gather*}
giving an alternate argument for the lack of poles depending only on $T$.
\end{rem}

As in the Koornwinder case, a major benefit of lifting to symmetric functions is the action of a slightly modified Macdonald involution. Recall from \cite{bcpoly} that $\tilde\omega_{q,t}$ is the involution acting on symmetric functions by
\begin{gather*}
\tilde\omega_{q,t}p_k=(-1)^{k-1}\frac{q^{k/2}-q^{-k/2}}{t^{k/2}-t^{-k/2}} p_k,
\end{gather*}
and satisfies
\begin{gather*}
(\tilde\omega_{q,t}f)(\la \mu\ra_{t,q,1/T;-\sqrt{qt}/a}) =f(\la \mu'\ra_{q,t,T;a}).
\end{gather*}
Also note that
\begin{gather*}
\tilde\omega_{q,t} = \tilde\omega_{1/q,1/t},\\
\tilde\omega_{q,t} \tau_{a;t} = \tau_{-a/\sqrt{qt};1/q} \tilde\omega_{q,t},\\
\tilde\omega_{q,t}^{-1} = \tilde\omega_{1/t,1/q}.
\end{gather*}

\begin{prop}
The lifted interpolation function satisfies the symmetry
\begin{gather*}
\tilde\omega_{q,t}\hat{R}^*_\mu(;a,b;q,t,T;p)= \hat{R}^*_{\mu'}\big(;-a/\sqrt{qt},-b/\sqrt{qt};1/t,1/q,T;p\big).
\end{gather*}
\end{prop}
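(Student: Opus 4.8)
The plan is to run the same template as the preceding symmetries of $\hat R^*$, but routed through the $\la\cdot\ra$-specializations adapted to the right-hand side rather than through a finite-variable reduction (the latter being unavailable here, since $\tilde\omega_{q,t}$ is an operation on $\Lambda$, not on functions of $n$ variables). Both sides of the asserted identity are, by the results established above together with the same facts applied with $(q,t)$ replaced by $(1/t,1/q)$, holomorphic Puiseux series in $p$ whose coefficients lie in $\Lambda\otimes\Q(q,t)[\lc(a)^{\pm1},\lc(b)^{\pm1},T^{\pm1}]$; in particular it suffices to check the identity after applying the Zariski-dense family of specialization homomorphisms $f\mapsto f(\la\nu\ra_{1/t,1/q,T;-a/\sqrt{qt}})$, $\nu$ ranging over all partitions, with $T$ kept transcendental.

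First I would record the elementary fact --- immediate from the power-sum definition, since both the sum over $i$ and the two boundary terms are invariant under simultaneously inverting all of $q,t,T,a$ --- that $\la\nu\ra_{q,t,T;a}=\la\nu\ra_{1/q,1/t,1/T;1/a}$ as homomorphisms. Applied with $(q,t,T,a)=(t,q,1/T,-\sqrt{qt}/a)$ this gives $\la\nu\ra_{t,q,1/T;-\sqrt{qt}/a}=\la\nu\ra_{1/t,1/q,T;-a/\sqrt{qt}}$, so the recalled property of $\tilde\omega_{q,t}$ becomes $(\tilde\omega_{q,t}f)(\la\nu\ra_{1/t,1/q,T;-a/\sqrt{qt}})=f(\la\nu'\ra_{q,t,T;a})$. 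Hence the left-hand side of the proposition, evaluated at this specialization, equals $\hat R^*_\mu(\la\nu'\ra_{q,t,T;a};a,b;q,t,T;p)$, which by the evaluation theorem for $\hat R^*$ equals $\Delta_\mu(Ta/tb|t/Tab;q,t;p)^{-1}\binomE{\nu'}{\mu}_{[T^2a^2/t^2,Tab/t];q,t;p}$. The right-hand side, evaluated at the same specialization, is $\hat R^*_{\mu'}(\la\nu\ra_{1/t,1/q,T;-a/\sqrt{qt}};-a/\sqrt{qt},-b/\sqrt{qt};1/t,1/q,T;p)$; applying the evaluation theorem with $(q,t,a,b)\mapsto(1/t,1/q,-a/\sqrt{qt},-b/\sqrt{qt})$ and simplifying (the $-1/\sqrt{qt}$ twists cancel in every ratio that occurs), this becomes $\Delta_{\mu'}(Tqa/b|t/Tab;1/t,1/q;p)^{-1}\binomE{\nu}{\mu'}_{[T^2a^2q/t,Tab/t];1/t,1/q;p}$.

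Thus the proposition reduces to equating those two expressions for every partition $\nu$; since both binomial coefficients vanish unless $\nu'\supset\mu$ (equivalently $\nu\supset\mu'$), only the nonvanishing range matters. This is precisely the transpose symmetry of the elliptic binomial coefficients --- the elliptic lift of the Macdonald-involution/transposition symmetry of the interpolation polynomials recalled in \cite{bcpoly}, part of the circle of identities of \cite{bctheta}; alternatively it can be proved by specializing $T=t^{n}$ for all large $n$ and invoking the corresponding identity of ordinary interpolation-function values, agreement for infinitely many $T$ sufficing by rationality. I expect the main obstacle to be purely bookkeeping: checking that the parameter dictionary $q,t\mapsto 1/t,1/q$, $a,b\mapsto -a/\sqrt{qt},-b/\sqrt{qt}$, $\mu\mapsto\mu'$ matches the stated binomial-coefficient symmetry exactly, including the $\Delta$-versus-$\Delta^0$ normalization factors; the valuation and degree subtleties that normally make a ``check on a dense set'' argument delicate are pre-empted by the holomorphicity and rationality statements above. (One could instead apply $\tilde\omega_{q,t}$ directly to the connection-coefficient definition of $\hat R^*_\mu$: since $\tilde\omega_{q,t}$ acts by scalars on power sums it passes through the finite sum, $\tilde\omega_{q,t}F_\kappa(\hat x;b;q,t;p)=F_{\kappa'}(\hat x;-b/\sqrt{qt};1/t,1/q;p)$ follows from the same $\la\mu'\ra$-versus-$\la\mu\ra$ identity, and one is reduced to matching the coefficients $\obinomE{\mu}{\kappa}$ and $\Delta^0_\kappa$ under transposition --- the same transpose symmetry in normalized form.)
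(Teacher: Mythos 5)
Your argument is correct, and its parenthetical closing remark is in fact the paper's own proof: the paper simply verifies the identity by direct calculation in the Cauchy case $a=pq/Tb$ (where $\hat{R}^*_\mu$ reduces to the explicit plethystic exponential $F_\mu$, on which $\tilde\omega_{q,t}$ acts transparently, giving exactly your relation $\tilde\omega_{q,t}F_\mu(\hat{x};b;q,t;p)=F_{\mu'}(\hat{x};-b/\sqrt{qt};1/t,1/q;p)$) and then observes that the connection coefficients in Definition \ref{def:lifted_interp} transform correctly under the dictionary $(q,t,a,b,\mu)\mapsto(1/t,1/q,-a/\sqrt{qt},-b/\sqrt{qt},\mu')$, i.e.\ the conjugation symmetry of the elliptic binomial coefficients from \cite{bctheta}. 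Your main route is genuinely different in mechanics: you evaluate both sides on the specializations $\la\nu\ra_{1/t,1/q,T;-a/\sqrt{qt}}$, convert the left side via the $\tilde\omega_{q,t}$-duality of these specializations, and reduce to the same binomial-coefficient conjugation symmetry through the evaluation formula $\hat{R}^*_\mu(\la\nu\ra)=\Delta_\mu(\cdot)^{-1}\binomE{\nu}{\mu}_{[\cdot,\cdot]}$; your parameter computations ($Tqa/b$, $t/Tab$, $T^2a^2q/t$, $Tab/t$) check out. What this buys is avoiding any manipulation of the plethystic exponential beyond the recalled duality of $\tilde\omega_{q,t}$; what it costs is the extra input that evaluation at all partitions $\nu$ (with $T$, $a$ generic) is a separating family on the relevant coefficient ring --- your phrase ``Zariski-dense family'' is loose here, since this is not density in a finite-dimensional variety but rather follows from triangularity of the (lifted) interpolation functions with respect to these evaluations; that fact is standard in this circle of ideas and implicit in the surrounding theory, but it is the one point you should state explicitly. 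Both routes ultimately rest on the same external ingredient, the transposition symmetry of the elliptic binomial coefficients, whose bookkeeping you rightly identify as the only remaining check.
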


\begin{proof}Indeed, we can verify this by direct calculation in the Cauchy case $a=pq/Tb$, and the connection coefficients transform correctly.
\end{proof}

The Cauchy and Littlewood identities of \cite{littlewood} directly
translate to the lifted interpolation functions; again, we need simply
observe that the claim holds for a sufficiently general class of
specializations to ordinary interpolation functions.

\begin{prop}
If $\ord(a),\ord(b)=0$, then
\begin{gather*}
\sum_{\mu}
\Delta_\mu(ab/t(pqt)^{1/2}|;q,t;p)
\hat{R}^*_{\mu}\big(\hat{x};a,(pqt)^{1/2}/b;q,t,1;p\big)
\hat{R}^*_{\mu}\big(\hat{y};b,(pqt)^{1/2}/a;q,t,1;p\big) \\
\qquad {}= \prod_{i,j} \frac{\big((pqt)^{1/2}x_iy_j;p,q\big)} {\big((pq/t)^{1/2}x_iy_j;p,q\big)}.
\end{gather*}
\end{prop}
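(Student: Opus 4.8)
The plan is to read this as an identity between formal Puiseux series in $p$ whose coefficients lie in $\Lambda\otimes\Lambda$ (tensored with $\Q(q,t)[\lc(a)^{\pm1},\lc(b)^{\pm1}]$), and to reduce it to the elliptic Cauchy identity \cite[Thm. 3.6]{littlewood} by specializing $\hat{x}$ and $\hat{y}$ to a sufficiently general family of finite (super-)variable sets. First I would check that both sides are genuinely such series. On the left, each summand is a product of a $\Delta$-symbol and two lifted interpolation functions, each a holomorphic Puiseux series with symmetric-function coefficients by the lifting theorem above; and, using the valuation estimates of \cite{biorth_degens} invoked in that theorem together with $\ord(a)=\ord(b)=0$ and $\ord((pqt)^{1/2}/b)=\ord((pqt)^{1/2}/a)=1/2$, the $\mu$-term has order bounded below by a positive multiple of $|\mu|$, so the sum converges formally and contributes a finite sum of bounded degree to each $p$-degree. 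On the right, $(pqt)^{1/2}$ and $(pq/t)^{1/2}$ both have positive order, so $\prod_{i,j}((pqt)^{1/2}x_iy_j;p,q)/((pq/t)^{1/2}x_iy_j;p,q)$ is a unit formal Puiseux series with symmetric-function coefficients.

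Since a symmetric function of bounded degree is determined by its images under specialization to $N$ variables for $N$ large, it then suffices to verify the identity after applying, independently in $\hat{x}$ and in $\hat{y}$, the family of specializations
\[
p_k(\hat{x})\mapsto\sum_{0\le r<2n}\frac{v_r^k-v_r^{-k}}{t^{k/2}-t^{-k/2}},\qquad T\mapsto v_0\cdots v_{2n-1},
\]
together with the analogous one in parameters $w_0,\dots,w_{2m-1}$ for $\hat{y}$; to match the fact that $T=1$ on both interpolation-function factors, I impose $v_0\cdots v_{2n-1}=1$ and $w_0\cdots w_{2m-1}=1$. Over all $n,m$ and generic admissible parameter vectors, the resulting $(p_1,p_2,\dots)$ range over a Zariski-dense subset of each finite-dimensional coordinate space (the balancing constraint is a single equation, and is harmless once $2n$ is large), so this family is rich enough; one also checks that the order bounds from the previous paragraph are uniform over these parameters, so the specialization commutes with the formal $\mu$-sum coefficientwise.

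Under such a specialization, each $\hat{R}^{*}_\mu$ becomes, up to an explicit scalar prefactor, one of the plethystic interpolation functions appearing in the elliptic Cauchy identity of \cite{littlewood} — this is exactly the computation recorded in the remark on the skew lifted interpolation functions above (taken in its non-skew case) — while the $\Delta_\mu$-factor is unchanged and the right-hand side turns into the elliptic Cauchy kernel $\prod_{r,s}\Gampq(\cdots v_r^{\pm 1}w_s^{\pm 1}\cdots)$ after the routine rewriting of $q$-shifted factorials as elliptic Gamma functions. With $a$, $b$, and $ab/t(pqt)^{1/2}$ identified with the corresponding data of \cite[Thm. 3.6]{littlewood}, the specialized statement is precisely that theorem, which is known, and the identity of symmetric functions follows. (Alternatively, one could first use the $T$-plethystic symmetry to trade $T=1$ for $T=t^n$ and specialize $\hat{x}$, $\hat{y}$ directly to $z_i^{\pm1}$, $w_j^{\pm1}$, recovering ordinary interpolation functions; the two routes are equivalent.)

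The only point requiring real care — the main obstacle, such as it is — is the bookkeeping: pinning down the exact dictionary between the $(a,b)$-parametrization and $\Delta_\mu$-normalization used here and the conventions of \cite[Thm. 3.6]{littlewood}, and confirming that the constraint $T=1$ corresponds to precisely the balancing condition in that reference while still leaving a family of specializations dense enough for the bounded-degree argument. No new analytic input beyond \cite{littlewood} (and the valuation estimates already cited) is needed.
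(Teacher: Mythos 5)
Your proposal is correct and is essentially the paper's own argument: the paper proves this by observing that both sides are well-behaved formal series in $p$ with symmetric-function coefficients and that the claim holds for a sufficiently general (dense) class of specializations to the (plethystic) interpolation functions of \cite{littlewood}, where it reduces to the elliptic Cauchy identity \cite[Thm. 3.6]{littlewood}. Your write-up just makes explicit the convergence estimates, the density of the specializations with $v_0\cdots v_{2n-1}=1$, and the bookkeeping that the paper leaves implicit.
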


\begin{prop}
If $\ord(a)=0$, then
\begin{gather*}
\sum_{\mu} \Delta_\mu\big(a^2/t(pqt)^{1/2}|;q,t^2;p\big)\hat{R}^*_{\mu^2}\big(\hat{x};ta,(pqt)^{1/2}/a;q,t,1;p\big)
=\prod_{i<j} \frac{\big((pq t)^{1/2}x_ix_j;p,q\big)} {\big((pq/t)^{1/2}x_ix_j;p,q\big)}.
\end{gather*}
\end{prop}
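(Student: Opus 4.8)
The plan is to follow the strategy used for the preceding Cauchy-type identity: reduce the claimed equality of formal Puiseux series to a Zariski-dense family of specializations under which both sides become genuine (finite) elliptic hypergeometric identities, and then invoke the elliptic Littlewood identity of \cite{littlewood}.

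First I would verify that both sides are well-defined formal Puiseux series in $p$ with coefficients in $\Lambda\otimes\Q(\lc(a),q,t)$. The right-hand side is a product of factors each of which has leading term $1$ (since $(pqt)^{1/2}$ and $(pq/t)^{1/2}$ have positive order), so it is a formal Puiseux series with constant term $1$. For the left-hand side one must check that the $\mu$-summand has $p$-order growing at least linearly in $|\mu|$; this follows from the valuation estimates for the $\Delta$-symbols as in Section 2 together with the nonnegative valuation of the lifted interpolation function established above (ultimately from \cite{biorth_degens}). The $\mu=0$ term contributes exactly $1$ (note $\hat R^*_0=F_0=1$), matching the constant term on the right.

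Next I would specialize. A formal Puiseux series with symmetric-function coefficients is determined by its images under $\hat x\mapsto(z_1^{\pm1},\dots,z_n^{\pm1})$ for all $n$, and the parameter $T$ of the lifted interpolation function is transcendental over the field generated by the other parameters, so it suffices to prove the identity after using the plethystic-symmetry proposition above to pass from $T=1$ to $T=t^n$ --- at the cost of an explicit $\tau$-substitution, i.e.\ adjoining a geometric progression of variables --- and then specializing $\hat x$ to $n$ reciprocal pairs (some free, the rest forming that progression), for all sufficiently large $n$. Under such a specialization, the theorem above relating $\hat R^*$ to $R^{*(n)}$ turns $\hat R^*_{\mu^2}$ into an explicit $\Delta^0$-prefactor times $R^{*(n)}_{\mu^2}$, which vanishes once $\ell(\mu^2)=2\ell(\mu)>n$, so the sum over $\mu$ becomes finite; simultaneously the right-hand product collapses to a finite product of elliptic Gamma functions in the specialized variables. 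What remains is a finite identity expressing a product over pairs as a sum, over partitions $\mu$, of $\Delta$-symbols times $R^{*(n)}_{\mu^2}$ --- which is exactly a reparametrization of the elliptic Littlewood identity of \cite{littlewood}. Here one also uses the standard duplication identity rewriting $\Delta_{\mu^2}(\,\cdot|\cdot\,;t;p,q)$ in terms of $\Delta_\mu(\,\cdot|\cdot\,;t^2;p,q)$, which is what produces the $t^2$ appearing in the statement.

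I expect the main obstacle to be the bookkeeping in this last step: tracking the $\Delta^0$- and $\tau$-prefactors generated by the specialization, carrying out the $\mu^2\leftrightarrow\mu$ and $t\leftrightarrow t^2$ rewriting, and confirming that the balancing and genericity hypotheses of the cited elliptic Littlewood identity hold for the specialized parameters (where they fail literally, one argues in the formal category exactly as in Section 2). A variant that avoids the plethystic manipulation is to evaluate both sides at the partition points $\hat x\mapsto\la\nu\ra_{q,t,1;ta}$, so that $\hat R^*_{\mu^2}(\la\nu\ra;\dots)$ becomes, by that same theorem, an explicit elliptic binomial coefficient $\binomE{\nu}{\mu^2}$, the $\mu$-sum truncates to $\mu^2\subset\nu$, and one lands on the binomial-coefficient form of the Littlewood identity in \cite{littlewood,bctheta}; since these evaluations jointly determine the symmetric-function identity, this also suffices.
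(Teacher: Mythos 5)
Your proposal is correct and is essentially the paper's argument: the paper disposes of both lifted identities with the single observation that it suffices to check them on a sufficiently general (dense) class of specializations to ordinary interpolation functions, where they reduce to the elliptic Cauchy/Littlewood identities of \cite{littlewood}. Your write-up just makes explicit the density argument (via the plethystic symmetry, or alternatively the $\la\nu\ra$-evaluations giving the binomial form) and the formal-convergence check that the paper leaves implicit.
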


Applying the modified Macdonald involution to the latter sum immediately
gives a dual Littlewood identity.

\begin{prop}
For $\ord(a)=0$,
\begin{gather*}
\sum_{\mu} \Delta_{\mu}\big(a^2/(pt^3/q)^{1/2}|;q^2,t;p\big)
\hat{R}^*_{2\mu}\big(\hat{x};a,(pt/q)^{1/2}/a;q,t,1;p\big) \\
\qquad {}= \prod_i
\frac{\big((pq^3t)^{1/2} x_i^2;p,q^2\big)} {\big((pq/t)^{1/2} x_i^2;p,q^2\big)} \prod_{i<j} \frac{\big((pq t)^{1/2}x_ix_j;p,q\big)} {\big((pq/t)^{1/2}x_ix_j;p,q\big)}.
\end{gather*}
\end{prop}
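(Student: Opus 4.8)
The plan is to obtain this "dual Littlewood" identity from the preceding Littlewood-type proposition for $\hat R^*_{\mu^2}$ by applying the modified Macdonald involution $\tilde\omega_{q,t}$, exactly as the sentence introducing it suggests. The key computational inputs are already assembled above: the involution sends $\hat R^*_{\mu}(;a,b;q,t,T;p)$ to $\hat R^*_{\mu'}(;-a/\sqrt{qt},-b/\sqrt{qt};1/t,1/q,T;p)$, it acts on $p_k$ by the scalar $(-1)^{k-1}(q^{k/2}-q^{-k/2})/(t^{k/2}-t^{-k/2})$, and it satisfies $\tilde\omega_{q,t}=\tilde\omega_{1/q,1/t}$. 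Since the conjugate of the partition $\mu^2$ (with $\mu^2_i=\mu_{\lceil i/2\rceil}$) is $2\mu'$ (with $(2\mu')_i=2\mu'_i$), applying $\tilde\omega_{q,t}$ to the $\hat R^*_{\mu^2}$-identity turns the summand into a multiple of $\hat R^*_{2\mu'}$; renaming $\mu'\mapsto\mu$ (a bijection on partitions) gives the $\hat R^*_{2\mu}$ appearing in the target.

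The steps, in order, are: (i) start from
\[
\sum_{\mu}
\Delta_\mu(a^2/t(pqt)^{1/2}|;q,t^2;p)
\hat{R}^*_{\mu^2}(\hat{x};ta,(pqt)^{1/2}/a;q,t,1;p)
=
\prod_{i<j}
\frac{((pq t)^{1/2}x_ix_j;p,q)}
     {((pq/t)^{1/2}x_ix_j;p,q)},
\]
valid for $\ord(a)=0$; (ii) apply $\tilde\omega_{q,t}$ to both sides as operators on the ring of symmetric functions in $\hat x$ (tensored with the appropriate field of formal Puiseux series in $p$), using linearity and the fact that $\Delta_\mu$ is a scalar factor independent of $\hat x$; (iii) on the left, use the involution symmetry of the lifted interpolation function to replace $\hat R^*_{\mu^2}(\hat x;ta,(pqt)^{1/2}/a;q,t,1;p)$ by $\hat R^*_{(\mu^2)'}(\hat x;-ta/\sqrt{qt},-(pqt)^{1/2}/a\sqrt{qt};1/t,1/q,1;p)$, simplify the new parameters ($-ta/\sqrt{qt}=-\sqrt{t/q}\,a$ up to the harmless sign which is absorbed by the $(-a,-b)$-symmetry of $\hat R^*$, and $-(pqt)^{1/2}/(a\sqrt{qt})=-\sqrt{p/a^2\cdot t/q}\cdot\dots$ — i.e. $(pt/q)^{1/2}/a$ up to sign), and rewrite $(\mu^2)'=2\mu'$; (iv) on the right, compute the image of the plethystic product $\prod_{i<j}((pqt)^{1/2}x_ix_j;p,q)/((pq/t)^{1/2}x_ix_j;p,q)$ under $\tilde\omega_{q,t}$ — since $\log$ of an elliptic Gamma/Pochhammer ratio is a linear combination of power sums $p_k(\hat x)$ with coefficients rational in $p,q,t$, and $\tilde\omega_{q,t}$ rescales each $p_k$ by the above scalar, this is a termwise computation that produces exactly $\prod_i((pq^3t)^{1/2}x_i^2;p,q^2)/((pq/t)^{1/2}x_i^2;p,q^2)\cdot\prod_{i<j}((pqt)^{1/2}x_ix_j;p,q)/((pq/t)^{1/2}x_ix_j;p,q)$ (the doubling of $k$ to $2k$ in the $q$-direction that converts a $\Gamma_{p,q}$-type product into a $\Gamma_{p,q^2}$-type product is precisely the effect of the $q^{k/2}-q^{-k/2}$ factor interacting with the already-present structure, and produces the square-of-a-single-variable terms); (v) finally replace $q$ by a parameter so the remaining $q,t,\Delta$-data matches: the target has $\Delta_\mu(a^2/(pt^3/q)^{1/2}|;q^2,t;p)$ and $\hat R^*_{2\mu}(\hat x;a,(pt/q)^{1/2}/a;q,t,1;p)$, so one checks that the connection coefficient $\Delta_\mu(\dots|;q,t^2;p)$ is carried by the involution and the conjugation $\mu^2\leftrightarrow 2\mu'$ to $\Delta_\mu(\dots|;q^2,t;p)$ with the stated argument — this is the $\Delta$-symbol analogue of the identity $(\mu^2)'=2\mu'$ together with the $(q,t)\mapsto(1/t,1/q)$ action, and can be read off from the product definitions of $\cC^0,\cC^\pm$.

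The main obstacle will be step (iv)–(v): tracking the precise parameters through the plethystic substitution and the $\Delta$-symbol bookkeeping, so that the constants, signs, and the $q\mapsto q^2$ / $t^2\mapsto t$ reindexing all land exactly as stated. None of this is deep — every ingredient is a known symmetry already recorded above, and the argument "is an identity of symmetric functions (equivalently of formal Puiseux series), so it suffices to check it on the Zariski dense set of specializations $T=t^n$ for $n$ large, where it reduces to the ordinary interpolation-function identity" handles any lingering worry about legality of applying $\tilde\omega_{q,t}$ — but it requires care. I would in fact organize the write-up so that the whole chain is verified at the level of ordinary (finite-dimensional) interpolation functions: specialize $\hat x\mapsto z_1^{\pm1},\dots,z_n^{\pm1}$, $T=t^{2n}$ (the relevant value for $\mu^2$, since $(\mu^2)'=2\mu'$ has at most $2n$ parts when $\mu'$ has at most $n$), invoke the classical $\tilde\omega$-transform of the Littlewood identity of \cite{littlewood}, and then observe that both sides of the proposed identity are polynomial Puiseux series in $p$ whose coefficients are determined by their values at this dense family of specializations.
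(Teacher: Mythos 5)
Your proposal is correct and follows essentially the same route as the paper: apply $\tilde\omega_{q,t}$ to the Littlewood identity, use the recorded involution symmetry of $\hat{R}^*_\mu$ together with $(\mu^2)'=2\mu'$, and transform the right-hand side by writing its logarithm as a power-sum series (the paper records it as $\sum_{k\ge 1}\frac{(t^{-k/2}-t^{k/2})e_2[p_k(x)]}{k(p^{-k/2}-p^{k/2})(q^{-k/2}-q^{k/2})}$) and acting termwise, the $p_{2k}$ part of $e_2[p_k]$ producing the extra $x_i^2$, $q^2$ factors. The paper's proof is exactly this "straightforward exercise in duality", so no further comparison is needed.
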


\begin{proof}
This is a straightforward exercise in duality: the key point is that the
logarithm of the right-hand side of the original Littlewood identity has a simple
expression:
\begin{gather*}
\sum_{k\ge 1}
\frac{\big(t^{-k/2}-t^{k/2}\big)e_2[p_k(\hat{x})]}{k\big(p^{-k/2}-p^{k/2}\big)\big(q^{-k/2}-q^{k/2}\big)},
\end{gather*}
so we need simply apply the Macdonald involution to the terms of this sum and simplify.
\end{proof}

At this point, it is relatively straightforward to come up with a candidate for the lifted kernel.

\begin{defn}
For $0<\ord(c)<1$, the {\em lifted kernel} is defined by the following sum:
\begin{gather*}
\hat{K}_c(\hat{x};\hat{y};q,t,T;p)
:= \frac{\Gampqt\big(ct_0u_0,tcu_0/Tt_0,tct_0/Tu_0,t^2c/Tt_0u_0\big)}
 {\Gampqt\big(cTt_0u_0,tcu_0/t_0,tct_0/u_0 ,t^2c/T^2t_0u_0\big)}\\
\qquad {} \times \prod_i
\frac{((pq/cu_0)x_i,(pqTu_0/tc)x_i;p,q)}
 {(cu_0x_i,(ct/Tu_0)x_i;p,q)}
\prod_i
\frac{((pq/ct_0)y_i,(pqTt_0/tc)y_i;p,q)}
 {(ct_0y_i,(ct/Tt_0)y_i;p,q)}\\
\qquad{} \times \sum_{\mu}
\Delta_\mu\big(T^2t_0u_0/t^2c|pqT/tc^2,pqTt_0u_0/t^2c;q,t;p\big) \\
\qquad\qquad{} \times \hat{R}^{*}_\mu(\hat{x};t_0,ct/Tu_0;q,t,T;p)
\hat{R}^{*}_\mu(\hat{y};u_0,ct/Tt_0;q,t,T;p),
\end{gather*}
where $t_0$, $u_0$ are auxiliary parameters with $\ord(t_0)=\ord(u_0)=0$.
\end{defn}

It is fairly straightforward to relate this to the formal kernel. The only
nontrivial issue is that the $\Delta_\mu$ symbol has a pole at $T=t^n$, so
unlike for the lifted interpolation function, we must be careful about
order of specialization. In particular, we must specialize
one or both of the sets of variables {\em before} setting $T\to t^n$;
specializing $\hat{y}\to y_1,1/y_1,\dots,y_n,1/y_n$ has the effect of
cancelling the pole at $T=t^n$, making the remaining limits commute.
We thus find that for all $n\ge 0$,
\begin{gather*}
\lim_{T\to t^n}
\hat{K}_c\big(z_1^{\pm 1},\dots,z_n^{\pm 1};w_1^{\pm 1},\dots,w_n^{\pm 1};q,t,T;p\big)\\
\qquad {}= \prod_{1\le i\le n} \Gampq\big(t^{i-n}c^2,t^i\big) K^{(n)}_c(z_1,\dots,z_n;w_1,\dots,w_n;q,t;p).
\end{gather*}
Since this is independent of $t_0$, $u_0$ for all $n$, the same is true for $\hat{K}_c$, making the latter well-defined.

\begin{prop}
The lifted kernel satisfies the duality
\begin{gather*}
\tilde\omega_{q,t;\hat{x}} \tilde\omega_{q,t;\hat{y}}\hat{K}_c(\hat{x};\hat{y};q,t,T;p)=\hat{K}_c(\hat{x};\hat{y};1/t,1/q,T;p).
\end{gather*}
\end{prop}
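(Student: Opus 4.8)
The plan is to apply $\tilde\omega_{q,t;\hat{x}}\tilde\omega_{q,t;\hat{y}}$ directly to the defining sum for $\hat{K}_c$ and to recognize the outcome as the defining sum for $\hat{K}_c(\hat{x};\hat{y};1/t,1/q,T;p)$, for a suitable choice of the auxiliary parameters $t_0,u_0$ on the right. Since the sum over $\mu$ is formally convergent in $p$ and $\tilde\omega_{q,t}$ is a graded (hence $p$-order preserving, as it does not involve $p$) endomorphism of the completed ring of symmetric functions, we may act term by term. In each summand the factor $\Delta_\mu(T^2t_0u_0/t^2c\mid\cdots;q,t;p)$ is a scalar and is untouched, while $\hat{R}^{*}_\mu(\hat{x};t_0,ct/Tu_0;q,t,T;p)$ and $\hat{R}^{*}_\mu(\hat{y};u_0,ct/Tt_0;q,t,T;p)$ transform by the Macdonald-involution symmetry of the lifted interpolation function proved above, sending $\mu\mapsto\mu'$, $(q,t)\mapsto(1/t,1/q)$, and the parameter pair $(a,b)\mapsto(-a/\sqrt{qt},-b/\sqrt{qt})$ in each. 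After reindexing the sum by $\mu\mapsto\mu'$ one checks, keeping in mind that the substitution $q\mapsto 1/t$ in the coefficients acts on every occurrence of $q$, that these interpolation-function factors are precisely those of the right-hand side provided the new auxiliary parameters are taken to be $-t_0/\sqrt{qt}$ and $-u_0/\sqrt{qt}$.

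It then remains to reconcile the prefactors. The $\Gampqt$-ratio prefactor involves neither $\hat{x}$ nor $\hat{y}$ and is fixed by the two involutions. The Pochhammer prefactors do move: writing $\prod_i(zx_i;p,q)=\exp\bigl(-\sum_{k\ge1}z^kp_k(\hat{x})/k(1-p^k)(1-q^k)\bigr)$ and using $\tilde\omega_{q,t}p_k=(-1)^{k-1}\tfrac{q^{k/2}-q^{-k/2}}{t^{k/2}-t^{-k/2}}p_k$, the elementary identities $\tfrac{(-1)^{k-1}(q^{k/2}-q^{-k/2})}{1-q^k}=(-1)^kq^{-k/2}$ and $\tfrac{1}{t^{k/2}-t^{-k/2}}=\tfrac{-t^{k/2}}{1-t^k}$ collapse the transformed product to $\prod_i\bigl((-z/\sqrt{qt})x_i;p,1/t\bigr)$; with the reparametrization of $t_0,u_0$ above this matches the corresponding Pochhammer prefactor of $\hat{K}_c(\hat{x};\hat{y};1/t,1/q,T;p)$ exactly (and likewise in $\hat{y}$). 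The Pochhammer factors thus cancel from both sides, and what is left is the requirement that the ratio $\Delta_{\mu'}(T^2t_0u_0/t^2c\mid\cdots;q,t;p)\big/\Delta_\mu(\cdots;1/t,1/q;p)$ (with the conjugated parameters in the denominator) be independent of $\mu$ and equal to the ratio of the two $\Gampqt$-prefactors. I would establish the first half---a conjugation symmetry of the $\Delta$-symbol---by writing $\Delta_\mu$ as a ratio of products of the symbols $\theta(x;p,q)_{l,m}$ and invoking their transpose behaviour under $(q,t)\mapsto(1/t,1/q)$, $\mu\mapsto\mu'$, the same computation underlying the $\hat{R}^*$ symmetry; the residual $\mu$-independent monomial and sign (of the flavour of the $t^{-2n(\lambda)}q^{n(\lambda')}(-aT/t)^{|\lambda|}C^-_\lambda(t;q,t)$ factor in the Remark on the $p^0$ term) is then matched to the $\Gampqt$-ratio using the reflection and shift functional equations.

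The structural part---termwise application of $\tilde\omega$, reindexing $\mu\mapsto\mu'$, the choice of auxiliary parameters, and the Pochhammer computation---is entirely routine. The main obstacle is the last step: showing that the $\Delta_\mu$-factor transposes with a purely $\mu$-independent correction and that this correction is exactly the ratio of the scalar $\Gampqt$-prefactors. I expect this to reduce to a single nontrivial identity among $\Gamma^+_{p,q,t}$-values (equivalently, an identity of $\theta(x;p,q)_{l,m}$-symbols under $q\leftrightarrow1/t$, $t\leftrightarrow1/q$), which, as elsewhere in this paper, would be verified from the explicit product formulas and then largely suppressed. A shortcut---were there an explicit evaluation of $\hat{K}_c(\la\lambda\ra;\hat{y};q,t,T;p)$ as a multiple of $\hat{R}^*_\lambda(\hat{y};\cdots)$, in analogy with the known specialization of $\cK^{(n)}_c$---would instead let the duality follow at once from the $\hat{R}^*$ duality; in the absence of such a statement in the present development, the termwise route above is the one to carry out.
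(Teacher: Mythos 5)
Your proposal is correct and is essentially the argument the paper leaves implicit: the paper states this proposition without proof, treating it as the termwise consequence of the duality for $\hat{R}^*_\mu$ applied to the defining sum, which is exactly your route (and your choice of auxiliary parameters $-t_0/\sqrt{qt}$, $-u_0/\sqrt{qt}$ and your Pochhammer computation do match the right-hand side exactly). The one step you defer, the conjugation symmetry of the $\Delta_\mu$-coefficient, is the standard one: with your reparametrization the argument $a$ becomes $qta$ while the two $b$-parameters are unchanged, and since $\cC^0_{\lambda'}(x;1/t,1/q;p)=\cC^0_\lambda(x;q,t;p)$ the $\Delta^0$-part transposes with no correction, leaving only the $\cC^{0,\pm}$-factors to be checked against the $\Gampqt$-prefactor ratio from the explicit product formulas, as in \cite{bctheta}.
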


For the lifted kernel and interpolation function to be useful, we need to
be able to substitute them into integral identities, and thus need to have
similar symmetric function analogues of the elliptic Selberg integral.
This is mostly straightforward, since in any case in which the elliptic
Selberg integral reduces as $p\to 0$ to a Koornwinder integral, the ratio
between the two integrands is essentially a symmetric function. For
instance, if $\ord(a)>0$, we find
\begin{gather*}
\frac{(a;q)^n\prod\limits_{1\le i<j\le n} \big(a z_i^{\pm 1} z_j^{\pm 1};q\big)}
{(ta;q)^n\prod\limits_{1\le i<j\le n} \big(t a z_i^{\pm 1} z_j^{\pm 1};q\big)}
=\prod_{i<j} \frac{(a x_i x_j;q)} {(t a x_i x_j;q)}\biggr|_{\hat{x}=z_1,1/z_1,\dots,z_n,1/z_n},
\end{gather*}
making it straightforward to express
\begin{gather*}
\frac{\Delta^{(n)}_S(\vec{z};q,t;p)} {\Delta^{(n)}_S(\vec{z};q,t;0)}
\end{gather*}
as a specialization $\hat{x}\mapsto z_1,1/z_1,\dots,z_n,1/z_n$
of a symmetric function, and similarly for the univariate factors.

As a result, to extend an identity involving integrals of formal kernels to
an identity for the lifted kernel, it suffices to understand integrals of
symmetric functions against the Koornwinder density
\begin{gather*}
\lim_{p\to 0}
\Delta^{(n)}_S \big(\vec{z};t_0,t_1,t_2,t_3,p^{1/2}t_4,p^{1/2}t_5;t;p,q\big) \\
\qquad {}= \frac{(q;q)^n}{(t;q)^n n!}
\prod_{1\le i\le n}
\frac{\big(z_i^{\pm 2};q\big)}{\prod\limits_{0\le r<4} \big(t_r z_i^{\pm 1};q\big)}
\prod_{1\le i<j\le n}
\frac{\big(z_i^{\pm 1}z_j^{\pm 1};q\big)}{\big(t z_i^{\pm 1}z_j^{\pm 1};q\big)}
\prod_{1\le i\le n} \frac{dz_i}{2\pi\sqrt{-1}z_i}.
\end{gather*}
(The corresponding evaluation, a limit of \eqref{eq:ell_Selberg_eval}, was
first shown by Gustafson in \cite{GustafsonRA:1990}, but for our purposes
the fact that the corresponding orthogonal polynomials (introduced by
Koornwinder in \cite{KoornwinderTH:1992}) are well-behaved is crucial.)
That is, we want a linear functional $I_K(;q,t,T;t_0,t_1,t_2,t_3)$ such
that for otherwise generic parameters and any symmetric function $f$,
\begin{gather*}
\lim_{T\to t^n}I_K(f;q,t,T;t_0,t_1,t_2,t_3)=I^{(n)}_K(f(z_1,1/z_1,\dots,z_n,1/z_n);q,t;t_0,t_1,t_2,t_3),
\end{gather*}
where $I^{(n)}_K$ denotes the $n$-dimensional Koornwinder integral, normalized to have integral~1. That is, for any symmetric Laurent polynomial $g$,
\begin{gather*}
I^{(n)}_K(g(z_1,\dots,z_n);q,t;t_0,t_1,t_2,t_3) =
\frac{\int g(\vec{z}) \Delta^{(n)}_K(\vec{z};t_0,t_1,t_2,t_3;q,t)} {\int \Delta^{(n)}_K(\vec{z};t_0,t_1,t_2,t_3;q,t)}.
\end{gather*}
Such integrals were already considered in \cite{bcpoly}; we will, however, need some slightly better control over the poles. The key idea of the construction in \cite{bcpoly} is that the normalized integral of a~symmetric Laurent polynomial against the Koornwinder density can be computed by expan\-ding the polynomial in the corresponding orthogonal polynomials and taking the constant term. This extends immediately to symmetric functions
using the symmetric function analogues of the Koornwinder polynomials.

To control the poles, it will be useful to take a slightly different approach. Rather than take as the basic identity the fact that $K_\lambda$ integrates to $\delta_{\lambda 0}$, we use the analogue of Kadell's lemma, which here gives a formula for the integral of a suitable interpolation
polynomial against the Koornwinder density. Thus (where $I_K$ denotes the ``virtual Koornwinder integral'' of~\cite{bcpoly}) we have
\begin{gather*}
I_K(\tilde{P}^*_\lambda(;q,t,T;t_0);q,t,T;t_0,t_1,t_2,t_3) \\
\qquad {}=
\frac{(-t_0T/t)^{-|\lambda|}
q^{-n(\lambda')}
t^{2n(\lambda)}
C^0_\lambda(T,Tt_0t_1/t,Tt_0t_2/t,Tt_0t_3/t;q,t)}
{C^0_\lambda\big(T^2t_0t_1t_2t_3/t^2;q,t\big)
C^-_\lambda(q,t;q,t)}.
\end{gather*}
In particular, we can compute the virtual Koornwinder integral of a given symmetric function by expanding it in lifted interpolation polynomials and
specializing as above. Now, $\tilde{P}^*_\lambda$ is monic, and
\begin{gather*}
(t_0T)^{|\lambda|}\tilde{P}^*_\lambda(;q,t,T;t_0)
\end{gather*}
has no poles for generic $q$, $t$. In particular, in the expansion of the
Macdonald polynomial $P_\lambda(;q,t)$ in terms of lifted interpolation
polynomials, the only poles for generic $q$, $t$ are at $t_0=0$ or $T=0$.
Since (by \cite[Theorem~6.16]{bcpoly}) this expansion is triangular with
respect to the inclusion partial order, we find after integrating
term-by-term that
\begin{gather*}
C^0_\lambda\big(T^2t_0t_1t_2t_3/t^2;q,t\big)
I_K(P_\lambda(;q,t);q,t,T;t_0,t_1,t_2,t_3) \in \Q(q,t)[T,1/T,t_0,1/t_0,t_1,t_2,t_3].
\end{gather*}
The pole at $t_0=0$ can be removed by symmetry; the pole at $T=0$ can also be removed using the explicit formulas for that case in \cite{bcpoly}.

Since the Macdonald polynomials are a basis for generic $q$ and $t$, a
similar statement applies to the poles of the integral of an arbitrary
symmetric function.

\begin{lem}
For any symmetric function $f$ of degree $\le k$,
\begin{gather*}
\prod_{0\le i,j<k} \big(1-q^jt^{-2-i}T^2t_0t_1t_2t_3\big) I_K(f;q,t,T;t_0,t_1,t_2,t_3)\in \Q(q,t)[T,t_0,t_1,t_2,t_3].
\end{gather*}
\end{lem}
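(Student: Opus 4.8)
The plan is to reduce to the case $f=P_\lambda(;q,t)$ already treated in the discussion above, and then to extract a single denominator valid uniformly once $|\lambda|$ is bounded by $k$. First I would use that $I_K(\,\cdot\,;q,t,T;t_0,t_1,t_2,t_3)$ is $\Q(q,t)$-linear in its first argument and that $\{P_\lambda:|\lambda|\le k\}$ is a $\Q(q,t)$-basis of the space of symmetric functions of degree $\le k$; hence it is enough to prove the claimed membership with $f$ replaced by each such $P_\lambda$.

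For a fixed $\lambda$ with $|\lambda|\le k$, I would invoke the argument sketched in the paragraphs preceding the statement: expand $P_\lambda$ in the lifted interpolation polynomials, which by \cite[Thm. 6.16]{bcpoly} is triangular for the inclusion order and (for generic $q,t$) has coefficients with poles only at $t_0=0$ and $T=0$; integrate term-by-term via the analogue of Kadell's lemma; and remove the apparent pole at $t_0=0$ using the $S_4$-symmetry of the Koornwinder density and the apparent pole at $T=0$ using the explicit $T=0$ evaluations of \cite{bcpoly}. Since $C^0_\mu(x;q,t)$ divides $C^0_\lambda(x;q,t)$ whenever $\mu\subset\lambda$, the common denominator produced by the terms of the expansion is no worse than $C^0_\lambda(T^2t_0t_1t_2t_3/t^2;q,t)$, so this step yields $C^0_\lambda(T^2t_0t_1t_2t_3/t^2;q,t)\,I_K(P_\lambda(;q,t);q,t,T;t_0,t_1,t_2,t_3)\in\Q(q,t)[T,t_0,t_1,t_2,t_3]$.

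The final step is the elementary divisibility check that $C^0_\lambda(T^2t_0t_1t_2t_3/t^2;q,t)$ divides the universal factor $\prod_{0\le i,j<k}(1-q^jt^{-2-i}T^2t_0t_1t_2t_3)$ for every $\lambda$ with $|\lambda|\le k$. Taking $p\to 0$ in the definition of $C^0$ gives $C^0_\lambda(T^2t_0t_1t_2t_3/t^2;q,t)=\prod_{i\ge 0}\prod_{0\le j<\lambda_{i+1}}(1-q^jt^{-2-i}T^2t_0t_1t_2t_3)$; since $|\lambda|\le k$ forces $\ell(\lambda)\le k$ and $\lambda_1\le k$, every factor occurring has $0\le i<k$, $0\le j<k$, and occurs with multiplicity one, which gives the divisibility. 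Multiplying the membership from the previous paragraph by the polynomial cofactor $\prod_{0\le i,j<k}(1-q^jt^{-2-i}T^2t_0t_1t_2t_3)/C^0_\lambda(T^2t_0t_1t_2t_3/t^2;q,t)$ and summing over $\lambda$ completes the proof.

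I do not expect any real difficulty here, as all the substantive work is already carried out in the text preceding the statement. The one point deserving care is uniformity in $\lambda$ — that a single product of linear factors in $T^2t_0t_1t_2t_3$, depending only on $k$, clears every $C^0_\lambda$ at once — which is precisely the bookkeeping just described; a second, slightly less self-contained point is the pole removal at $t_0=0$ and $T=0$, which leans on the $S_4$-symmetry of the Koornwinder weight and on \cite{bcpoly}, and which one should check does not reintroduce a $C^0_\lambda$-type factor into the denominator.
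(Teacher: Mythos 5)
Your proposal is correct and follows essentially the same route as the paper: reduce by linearity to Macdonald polynomials, use the triangular expansion in lifted interpolation polynomials together with the Kadell-type evaluation and the pole removals at $t_0=0$ and $T=0$ to get the $C^0_\lambda$-denominator statement, and then observe that each $C^0_\lambda(T^2t_0t_1t_2t_3/t^2;q,t)$ with $|\lambda|\le k$ divides the universal product of linear factors. The paper leaves the final divisibility bookkeeping implicit; your explicit check of it is accurate and fills in nothing that changes the argument.
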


We recall from \cite[Corollary~7.6]{bcpoly} the following symmetries of the virtual Koornwinder integral (after fixing a couple of typos):
\begin{gather}
I_K(f;q,t,T;t_0,t_1,t_2,t_3) =I_K(f;1/q,1/t,1/T;1/t_0,1/t_1,1/t_2,1/t_3)\nonumber\\
\hphantom{I_K(f;q,t,T;t_0,t_1,t_2,t_3)}{} =I_K\left(
\tilde{\omega}_{q,t}f ;1/t,1/q,T;\frac{-t_0}{\sqrt{qt}},\frac{-t_1}{\sqrt{qt}},\frac{-t_2}{\sqrt{qt}},\frac{-t_3}{\sqrt{qt}}\right)\nonumber\\
\hphantom{I_K(f;q,t,T;t_0,t_1,t_2,t_3)}{} =I_K(\tau_{t_0,t_1;t}f;q,t,T t_0 t_1/t;t/t_1,t/t_0,t_2,t_3).\label{eq:koorn_tau_symm}
\end{gather}
(We can double-check these identities by setting $f=\tilde{P}^*_\lambda(;q,t,T;t_0)$.)

This last symmetry generates an action of $W(D_4)$ on the parameters, which gives rise to a~symmetry
\begin{gather*}
I_K(f;q,t,T;t_0,t_1,t_2,t_3) =I_K\big(\tau_{t_0,t_1,t_2,t_3;t}f;q,t,T t_0
t_1t_2t_3/t^2;t/t_0,t/t_1,t/t_2,t/t_3\big).
\end{gather*}
It is tempting here to specialize the parameters so that $T=t^n$ and $Tt_0t_1t_2t_3/t^2=t^{n'}$, so that both sides become finite integrals.
The difficulty, of course, is that the specialization to a~finite-dimensional integral only works for otherwise generic parameters, so
we need to ensure that the direction in which we take the limit has no effect. This turns out to be a problem, for the simple reason that the
virtual integral has a pole when $T^2t_0t_1t_2t_3/t^2\in t^\N$! As a~result, we cannot expect to obtain an identity of finite-dimensional
integrals from this symmetry.

Despite this fact, it turns out that the symmetry is quite useful! When
applying the virtual integral below, we will in general have little control
over the parameters of the Koornwinder integral, and in at least one case
find ourselves having to understand the limit in a case when the direction
of the limit is important. Since the polar divisor of the integral has
multiplicity 1 at the generic point with $T^2t_0t_1t_2t_3/t^2\in t^\N$, in
order to compute the limits in a general direction, we only need to
understand the limits in two distinct directions. The symmetry, in
particular, gives us two directions in which we can express the limit as a
finite-dimensional integral.

We thus obtain the following, in the special case of interest below.

\begin{lem}\label{lem:koorn_On_disc}
 Let $n$, $n'$ be integers and let $t_0$, $t_1$, $t_2$, $t_3$ be parameters such that $t_0t_1t_2t_3=t^{n'+2-n}$. Then for any symmetric
 function $f$,
\begin{gather*}
\lim_{T\to t^n} I_K (f;q,t,T;t_0,t_1,t_2,t_3) \\
\qquad {}=
\frac{1}{2}I^{(n)}_K\big(f\big(z_1^{\pm 1},\dots,z_n^{\pm 1}\big);q,t;t_0,t_1,t_2,t_3\big) \\
\qquad\quad {}+ \frac{1}{2} I^{(n')}_K\big(
\big(\tau_{t_0,t_1,t_2,t_3;t}f\big)(z_1^{\pm
 1},\dots,z_{n'}^{\pm 1});q,t;t/t_0,t/t_1,t/t_2,t/t_3\big).
\end{gather*}
\end{lem}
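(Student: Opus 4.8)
The plan is to study the virtual Koornwinder integral $g:=I_K(f;q,t,T;t_0,t_1,t_2,t_3)$ as a meromorphic function of $T,t_0,\dots,t_3$ near the point $P$ at which $T=t^n$ and $T':=Tt_0t_1t_2t_3/t^2$ equals $t^{n'}$ (the hypothesis relating $t_0,\dots,t_3$ to $n,n'$ being precisely that these two equalities occur simultaneously at $P$, for otherwise generic $t_0,\dots,t_3$). The four-parameter form of \eqref{eq:koorn_tau_symm} reads $g=I_K(\tilde f;q,t,T';t/t_0,t/t_1,t/t_2,t/t_3)$ with $\tilde f:=\tau_{t_0,t_1,t_2,t_3;t}f$, so $g$ admits \emph{two} natural descriptions near $P$: on the divisor $\{T=t^n\}$ it equals the finite integral $I^{(n)}_K(f;q,t;t_0,\dots,t_3)$, while on $\{T'=t^{n'}\}$ it equals $I^{(n')}_K(\tilde f;q,t;t/t_0,\dots,t/t_3)$, both by the defining specialization property of $I_K$. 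By the pole bound established just above (together with $q$ being otherwise generic), $g$ has at most a simple pole along $\{T^2t_0t_1t_2t_3/t^2=t^{n+n'}\}$ and no other poles through $P$.

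I would then pass to the local coordinates $\tau:=\log(T/t^n)$, $\tau':=\log(T'/t^{n'})$ (keeping $t_1,t_2,t_3$), which are transverse at $P$ since $\tau$ depends only on $T$ while $\tau'$ involves $t_0t_1t_2t_3$ as well. Two elementary identities are the heart of the matter: first, since $\log(T^2t_0t_1t_2t_3/t^2)=\log T+\log T'$, the polar component through $P$ is exactly $\{\tau+\tau'=0\}$; second, the path defining the left-hand side of the lemma --- $T\to t^n$ with $t_0,\dots,t_3$ held at their constrained values --- is exactly the diagonal $\{\tau=\tau'\}$ approaching the origin, because on it $\log(T'/t^{n'})=\log(T/t^n)$. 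Since the pole is at most simple, $B:=(\tau+\tau')g$ is holomorphic near $P$, and the two descriptions of $g$ give $B(0,\tau',t_1,t_2,t_3)=\tau'\,I^{(n)}_K(f;q,t;t_0,\dots,t_3)$ and $B(\tau,0,t_1,t_2,t_3)=\tau\,I^{(n')}_K(\tilde f;q,t;t/t_0,\dots,t/t_3)$ as holomorphic functions (each right-hand side viewed as a function of the indicated local coordinates). In particular $B$ vanishes at $P$, with $\partial_{\tau'}B|_P=I^{(n)}_K(f;q,t;t_0,\dots,t_3)$ and $\partial_\tau B|_P=I^{(n')}_K(\tilde f;q,t;t/t_0,\dots,t/t_3)$.

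Finally, restricting to the diagonal and writing $\tau=\tau'=r$, one has $g=B(r,r,\dots)/(2r)$, whose limit as $r\to 0$ is $\tfrac12\big(\partial_\tau+\partial_{\tau'}\big)B|_P=\tfrac12 I^{(n')}_K(\tilde f;q,t;t/t_0,\dots,t/t_3)+\tfrac12 I^{(n)}_K(f;q,t;t_0,\dots,t_3)$, which is the asserted formula (the factor $\tfrac12$ being exactly the $\tau+\tau'=2r$ in the denominator). The main obstacle is the bookkeeping in the first paragraph: one must verify that near $P$ the polar divisor of $g$ really collapses to the single component $\{\tau+\tau'=0\}$ with multiplicity one --- ruling out higher multiplicity and excluding the other resonance divisors $T^2t_0t_1t_2t_3\in q^{-\N}t^{2+\N}$, which is where the non-torsion/genericity of $q$ enters --- and that the two restrictions of $g$ are genuinely holomorphic at $P$, i.e.\ that neither the $n$- nor the $n'$-dimensional Koornwinder integral has a pole there. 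These are exactly the points guaranteed by taking $t_0,\dots,t_3$ (and $q$, $t$) otherwise generic, and they also force $n,n'\ge 0$.
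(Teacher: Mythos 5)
Your proof is correct and is essentially the paper's argument: both rest on the multiplicity-one pole bound along $T^2t_0t_1t_2t_3=t^{n+n'+2}$, the $W(D_4)$ symmetry \eqref{eq:koorn_tau_symm} to identify the restriction to $Tt_0t_1t_2t_3/t^2=t^{n'}$ with the $n'$-dimensional integral, and a two-variable holomorphy argument showing the diagonal limit is the average of the two axis limits. The only difference is presentational — you use logarithmic coordinates $(\tau,\tau')$ and a Taylor expansion of $B=(\tau+\tau')g$, whereas the paper parametrizes by $(u,v)$ with $T=t^nu$, $T'=t^{n'}v$ and invokes $\lim_{u\to1}g(u,u)=\lim_{u\to1}\tfrac{1}{2}\bigl(g(1,u)+g(u,1)\bigr)$ for $(1-uv)g$ holomorphic.
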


\begin{proof}
Fix $t_0$, $t_1$, $t_2$, and define a function
\begin{gather*}
g(u,v)=I_K\big(f;q,t,t^nu;t_0,t_1,t_2,t^{n'+2-n}v/t_0t_1t_2u\big).
\end{gather*}
Then $(1-uv)g(u,v)$ is holomorphic at $u=v=1$, which implies that
\begin{gather*}
\lim_{u\to 1} g(u,u) = \lim_{u\to 1} \frac{g(1,u)+g(u,1)}{2}.
\end{gather*}
By inspection, $g(1,u)$ is an $n$-dimensional integral, while $g(u,1)$ becomes an $n'$-dimensional integral once we apply the symmetry; in each
case, the resulting expression is holomorphic at $u=1$.
\end{proof}

\begin{rem} We will only need the cases when $(t_0,t_1,t_2,t_3)$ is one of $\big(1,-1,\sqrt{t},-\sqrt{t}\big)$ or $\big(1,-t,\sqrt{t},-\sqrt{t}\big)$, which were implicit in \cite[Proposition~8.4]{bcpoly}. Since the discussion there was invalid, however, it seemed appropriate to give a correct (and more general) proof here.
\end{rem}

It will be useful to know how various natural products transform under duality and the homomorphisms $\tau_{b_1,\dots,b_m;t}$. The key facts are the liftings
\begin{gather*}
\prod_{1\le i\le n} \big(a z_i^{\pm 1};q\big)^{-1}=
\exp\bigg[\sum_{1\le k} \frac{a^k p_k}{\big(1-q^k\big)}\bigg]
\bigg|_{z_1^{\pm 1},\dots,z_n^{\pm 1}}, \\
\frac{(a;q)^n\prod\limits_{1\le i<j\le n} \big(a z_i^{\pm 1} z_j^{\pm 1};q\big)}
 {(ta;q)^n\prod\limits_{1\le i<j\le n} \big(t a z_i^{\pm 1} z_j^{\pm 1};q\big)} =
 \exp\bigg[\sum_{1\le k} \frac{\big(p_k^2-p_{2k}\big)a^k\big(t^k-1\big)}{2k\big(1-q^k\big)}\bigg]
\bigg|_{z_1^{\pm 1},\dots,z_n^{\pm 1}},
\end{gather*}
valid whenever $|a|<1$; given the expressions on the right, it is straightforward to apply either homomorphism.

For duality, we have the following correspondences; in each case, we take
$\ord(q)=\ord(t)=0$, and choose the remaining parameters so that the Gamma
functions have arguments of order in $[0,1]$. Then the claim is that {\em
 if we divide by the limit as $p\to 0$}, the residual functions are
related by $\omega_{q,t}$. For interaction factors, we have (recalling
the formal symmetry $\Gamm{p,1/q}(z)=1/\Gampq(qz)$):
\begin{gather*}
\frac{\Gampq(ta)^n\prod\limits_{1\le i<j\le n} \Gampq\big(ta z_i^{\pm 1}z_j^{\pm 1}\big)}
 {\Gampq(a)^n\prod\limits_{1\le i<j\le n} \Gampq\big(a z_i^{\pm 1}z_j^{\pm 1}\big)} \\
 \qquad{}\mapsto
\frac{\Gamm{p,t^{-1}}(a/q)^n\prod\limits_{1\le i<j\le n} \Gamm{p,t^{-1}}\big((a/q) z_i^{\pm 1}z_j^{\pm 1}\big)}
 {\Gamm{p,t^{-1}}(a)^n\prod\limits_{1\le i<j\le n} \Gamm{p,t^{-1}}\big(a z_i^{\pm 1}z_j^{\pm 1}\big)}
\prod_{1\le i\le n} \frac{\Gamm{p,t^{-2}}\big(a z_i^{\pm 2}/qt\big)}
 {\Gamm{p,t^{-2}}\big(a z_i^{\pm 2}\big)}.
\end{gather*}
If $a=\sqrt{pq/t}$, we can take the
square root to obtain
\begin{gather*}
\Gampq\big(\sqrt{pq/t}\big)^n
 \prod_{1\le i<j\le n}\Gampq\big(\sqrt{pq/t}z_i^{\pm 1}z_j^{\pm 1}\big) \\
\qquad {}\mapsto
\Gamm{p,t^{-1}}\big(\sqrt{pq/t}\big)^n
\prod_{1\le i<j\le n}\Gamm{p,t^{-1}}\big(\sqrt{pq/t}z_i^{\pm 1}z_j^{\pm 1}\big)
\prod_{1\le i\le n} \Gamm{p,t^{-2}}\big(\sqrt{pq/t} z_i^{\pm 2}\big).
\end{gather*}
For univariate factors, we have
\begin{gather*}
\Gampq\big(a z_i^{\pm 1}\big)\mapsto \Gamm{p,t^{-1}}\big(\big({-}a/\sqrt{qt}\big) z_i^{\pm 1}\big), \\
\prod_{1\le i\le n} \Gampq\big(z_i^{\pm 2}\big) \mapsto
\prod_{1\le i\le n} \Gamm{p,t^{-2}}\big(z_i^{\pm 2}/t,z_i^{\pm 2}/qt\big).
\end{gather*}
Combining all of the above gives
\begin{gather*}
\Delta^{(n)}_S(\vec{z};\dots,t_r,\dots;t;p,q) \mapsto
\Delta^{(n)}_S\big(\vec{z};\dots,-t_r/\sqrt{qt},\dots;1/q;p,1/t\big).
\end{gather*}

For $\tau_{b;t}$, we record only the transformation of the density, in the form
\begin{gather*}
\tau_{b_1,\dots,b_m;t} \prod_{1\le r<s\le m} \Gampqt(b_rb_s)
\prod_{1\le r\le m,1\le s\le l} \Gampqt(b_rc_s)
\Delta^{(n)}_S(\vec{z};b_1,\dots,b_m,c_1,\dots,c_l;t;p,q)\\
{}=
\prod_{1\le r<s\le m} \Gampqt\big(t^2/b_rb_s\big)
\prod_{1\le r\le m,1\le s\le l} \Gampqt(tc_s/b_r)
\Delta^{(n)}_S(\vec{z};t/b_1,\dots,t/b_m,c_1,\dots,c_l;t;p,q).
\end{gather*}

\section{The Littlewood kernel}\label{section6}

If we translate Conjecture L1 of \cite{littlewood} into a statement about the kernel, we obtain the following, which turns out to be surprisingly straightforward via our present methods.

\begin{thm}\label{thm:L1_kern}
The interpolation kernel satisfies the integral identity
\begin{gather*}
\int \cK^{(2n)}_c\big(t^{\pm 1/2}\vec{z};\vec{y};t;p,q\big)
\Delta^{(n)}_S\big(\vec{z};v_0,v_1,v_2,v_3;t^2;p,q\big) \\
\qquad {}= \prod_{\substack{1\le i\le 2n\\0\le r\le 3}}
\Gampq\big(t^{-1/2}cv_r y_i^{\pm 1}\big) \\
\qquad\quad{}\times \int
\cK^{(2n)}_c\big(t^{\pm 1/2}\vec{z};\vec{y};t;p,q\big)
\Delta^{(n)}_S\big(\vec{z};pqt/c^2v_0,pqt/c^2v_1,pqt/c^2v_2,pqt/c^2v_3;t^2;p,q\big),
\end{gather*}
subject to the balancing condition $v_0v_1v_2v_3=\big(pqt/c^2\big)^2$.
\end{thm}

\begin{proof} This identity certainly holds in the limit $p\to 0$, $c\sim p^{1/2}$, $v_r\sim 1$, as then both integrals become Koornwinder integrals. Moreover, if we divide both sides by the common limit, then both sides have formal Puiseux series expansions in $p$ with rational function coefficients. It thus suffices to show that the two sides agree for a~Zariski dense set of parameters consistent with this scaling.

Now, suppose we already know a particular case of the identity, with
parameters given by $(c,v_0,v_1,v_2,v_3)$. Using this, it turns out to be
relatively straightforward to establish that the identity also holds in the
case $\big(t^{1/2}c,v_0/t^2,v_1,v_2,v_3\big)$. Indeed, starting with the integral
on the left, we can expand $\cK^{(2n)}_{t^{1/2}c}$ using the braid
relation, in such a way that after exchanging the order of integration
(which is not a problem as long as all parameters are inside the unit
circle), the inner integral becomes the known instance of the
transformation. Apply that instance, then exchange the order of
integration again. At this point, the inner integral is of the form to
which commutation applies (in the form of Proposition \ref{prop:commut_k}).
After commutation, we obtain an integral over two sets of $n$ variables,
one of which we can simplify using Proposition \ref{prop:braid_k}. The
resulting integral is precisely the desired right-hand side.

Now, the identity trivially holds whenever $v_0v_1 = pqt/c^2$, and thus
a simple induction using the preceding paragraph shows that it holds
when $v_0v_1 = pqt^k/c^2$ for any integer $k\le 1$. This is a Zariski
dense set of parameters, and thus the identity holds in general.
\end{proof}

\begin{rems}
 This is dual to Theorem \ref{thm:L2_kern} below, in the sense that if we
 analytically continue both sides in the dimension and apply the modified
 Macdonald involution, we obtain the ana\-ly\-tic continuation of Theorem~\ref{thm:L2_kern}. In particular, if the reader prefers difference
 operators to degenerate integral operators, the reader may first prove
 Theorem~\ref{thm:L2_kern} (say by following the argument given in the remark following said theorem), then apply duality.
\end{rems}

\begin{rems}
An alternate approach involves taking $v_0v_1=t^{2-2n}q^{-m}$, so that the
transformation becomes an identity of theta functions which, when $\vec{y}$
is a suitable partition, becomes \cite[Theorem~4.7]{littlewood}.
\end{rems}

An interesting special case of this transformation comes when $v_2v_3=pq$.
In that case, the left-hand side is independent of $v_2$, while the
right-hand side is (up to simple gamma factors) independent of $v_0$.
We thus immediately obtain the following corollary.

\begin{cor}
The integral
\begin{gather}
\prod_{1\le i\le 2n}\frac{1}{
 \Gampq\big(\sqrt{pqt}v^{\pm 1} x_i^{\pm 1}/c\big)}
\int \cK^{(2n)}_c\big(t^{\pm 1/2}\vec{z};\vec{x};t;p,q\big)
\Delta^{(n)}_S\big(\vec{z};\sqrt{pq}t v^{\pm 1}/c^2;t^2;p,q\big)\label{eq:litt_kern_defn}
\end{gather}
is independent of $v$.
\end{cor}

This suggests the following definition.

\begin{defn}
 The {\em Littlewood kernel} is the meromorphic function ${\cal L}^{(2n)}_c(\vec{x};t;p,q)$ defined for $|p|,|q|<1$ by~\eqref{eq:litt_kern_defn}.
\end{defn}

Note that replacing $\vec{x}$ by $-\vec{x}$ has the same effect on $\cL^{(2n)}_c(\vec{x};t;p,q)$ as negating~$c$ or negating~$v$, and thus
\begin{gather*}
 \cL^{(2n)}_c(\vec{x};t;p,q) =\cL^{(2n)}_c(-\vec{x};t;p,q)=\cL^{(2n)}_{-c}(\vec{x};t;p,q),
\end{gather*}
so that $\cL^{(2n)}_c$ is actually a function of $c^2$. Similarly, although
the right-hand side involves two choices of square root ($\sqrt{pq}$ and
$\sqrt{t}$), either can be negated without changing the function.

When $\vec{x}$ is specialized to a geometric progression, the integral on
the right becomes an elliptic Selberg integral, thus giving an explicit
expression.

\begin{prop}
We have
\begin{gather*}
\cL^{(2n)}_c\big(t^{2n-1}v,\dots,v;t;p,q\big) = \prod_{1\le i\le n}
\frac{\Gampq\big(c^2v^2t^{2i-2},c^2/t^{4n-2i}v^2\big)}
 {\Gampq\big(c^4/t^{2i},t^{2i-1}\big)} \prod_{1\le i\le 2n} \Gampq\big(t^{-i}c^2\big).
\end{gather*}
\end{prop}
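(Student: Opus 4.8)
The plan is to substitute the geometric progression directly into the defining integral for $\cL^{(2n)}_c$ and exploit the fact that the interpolation kernel collapses to a product of elliptic Gamma functions when one of its two sets of variables is a geometric progression. Writing $w$ for the auxiliary parameter called $v$ in the Definition of the Littlewood kernel (to avoid a clash with the $v$ of the statement), and taking $x_i=t^{2n-i}v$, the empty-partition specialization of the interpolation kernel (the analytic form of Corollary \ref{cor:formal_geom_special}) gives
\[
\cK^{(2n)}_c(t^{\pm 1/2}\vec{z};t^{2n-1}v,\dots,v;t;p,q)
=
\prod_{1\le i\le 2n}\frac{1}{\Gampq(t^{1-i}c^2,t^i)}
\prod_{1\le j\le n}\Gampq\bigl(vc\,t^{\pm 1/2}z_j^{\pm 1},(c/t^{2n-1}v)\,t^{\pm 1/2}z_j^{\pm 1}\bigr).
\]
The eight $\Gampq$ factors per $j$ combine with $\Delta^{(n)}_S(\vec{z};\sqrt{pq}\,tw^{\pm 1}/c^2;t^2;p,q)$ into an order-$0$ elliptic Selberg density $\Delta^{(n)}_S(\vec{z};u_0,\dots,u_5;t^2;p,q)$ with $u_0=\sqrt{pq}\,tw/c^2$, $u_1=\sqrt{pq}\,t/wc^2$, $u_2=vct^{1/2}$, $u_3=vct^{-1/2}$, $u_4=ct^{1/2}/t^{2n-1}v$, $u_5=ct^{-1/2}/t^{2n-1}v$. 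A one-line check shows the balancing condition $(t^2)^{2n-2}u_0u_1u_2u_3u_4u_5=pq$ holds identically, so by the elliptic Selberg evaluation \cite[Thm. 6.1]{xforms} the integral equals $\prod_{0\le i<n}\Gampq(t^{2i+2})\prod_{0\le i<n}\prod_{0\le r<s<6}\Gampq(t^{2i}u_ru_s)$.

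It then remains to simplify this product. First, the cross terms $\Gampq(t^{2i}u_0u_r)$ and $\Gampq(t^{2i}u_1u_r)$ for $r=2,3,4,5$, which are the only $w$-dependent factors, form geometric progressions of arguments that are exactly matched (with the right multiplicities, after applying the reflection relation to half of them) by the factors of the denominator $\prod_{i}\Gampq(\sqrt{pqt}\,w^{\pm 1}x_i^{\pm 1}/c)$; this cancellation is forced, since $\cL^{(2n)}_c$ is independent of $w$. Of the remaining factors: $\prod_i\Gampq(t^{2i}u_2u_3)$ and $\prod_i\Gampq(t^{2i}u_4u_5)$ produce the numerator products $\prod_i\Gampq(c^2v^2t^{2i-2})$ and $\prod_i\Gampq(c^2/t^{4n-2i}v^2)$; $\prod_i\Gampq(t^{2i}u_0u_1)=\prod_i\Gampq(pq\,t^{2i+2}/c^4)$ becomes $\prod_i\Gampq(c^4/t^{2i})^{-1}$ by reflection; $\prod_i\Gampq(t^{2i+2})$ against $\prod_i\Gampq(t^i)$ leaves $\prod_i\Gampq(t^{2i-1})^{-1}$; and the surviving factors $\prod_i\Gampq(t^{1-i}c^2)^{-1}\prod_i\Gampq(t^{2i}u_2u_4,t^{2i}u_2u_5,t^{2i}u_3u_4,t^{2i}u_3u_5)$ telescope to $\prod_{1\le i\le 2n}\Gampq(t^{-i}c^2)$. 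Collecting these yields the asserted identity.

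The two ingredients — the geometric specialization of $\cK$ and the elliptic Selberg evaluation — are already in hand, so the only real work is the index bookkeeping in the last step: tracking the (half-)integer exponents in the several geometric progressions of Gamma arguments and confirming the multiplicities in the $w$-cancellation. I do not expect a genuine obstacle here; the $w$-independence of $\cL^{(2n)}_c$ supplies a built-in consistency check, and the automatically satisfied balancing condition is the only mildly surprising point.
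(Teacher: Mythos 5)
Your proposal is correct and is precisely the paper's (implicit) argument: the sentence preceding the Proposition is the whole proof, namely that specializing $\vec{x}$ to the geometric progression collapses the kernel in the defining integral via Corollary \ref{cor:formal_geom_special}, turning it into a balanced order-zero elliptic Selberg integral in base $t^2$ with parameters $\sqrt{pq}\,tw^{\pm 1}/c^2,\ cvt^{\pm 1/2},\ (c/t^{2n-1}v)t^{\pm 1/2}$, whose evaluation plus bookkeeping yields the stated product (I checked the bookkeeping; it works out exactly as you claim). One small correction: the $w$-dependent factors $\Gampq(t^{2i}u_0u_r)$, $\Gampq(t^{2i}u_1u_r)$ ($r=2,\dots,5$) cancel the prefactor $\prod_{i}\Gampq(\sqrt{pqt}\,w^{\pm 1}x_i^{\pm 1}/c)$ because their arguments coincide directly as multisets, so no reflection is needed -- indeed a reflection-based match would not cancel at all, since $\Gampq(A)/\Gampq(pq/A)=\Gampq(A)^2$.
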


Similarly, when $n=1$, the interpolation kernel in the integrand simplifies so that we obtain an elliptic beta integral.

\begin{prop}\label{prop:litt_n2}
We have
\begin{gather*}
\cL^{(2)}_c(x_1,x_2;t;p,q)
= \frac{\Gampq\big(c^2/t\big)}
 {\Gampq\big(c^4/t^2,c^2,t\big)}
\Gampq\big(c^2x_1^{\pm 1}x_2^{\pm 1}/t\big)
= \frac{\Gampq\big(c^2/t\big)}{\Gampq\big(c^2\big)}
\cK^{(1)}_{c^2/t}(x_1;x_2;t;p,q).
\end{gather*}
\end{prop}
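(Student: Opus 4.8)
The plan is to evaluate the defining integral of $\cL^{(2)}_c$ directly as a univariate elliptic beta integral. The first observation is that in $\cK^{(2)}_c(t^{\pm1/2}z;x_1,x_2;t;p,q)$ the first pair of arguments is the geometric progression $(t^{1/2}z,t^{-1/2}z)=(\dots,t^{2-i}a,\dots)$ with $a=t^{-1/2}z$. Hence, using the symmetry of the kernel in its two sets of variables together with Corollary~\ref{cor:formal_geom_special} (which holds for the analytic kernel, since all identities of the formal kernel carry over, cf. the remark that $\cK^{(n)}$ inherits them), this specialization collapses to the explicit product
\[
\cK^{(2)}_c(t^{1/2}z,t^{-1/2}z;x_1,x_2;t;p,q)
=
\frac{\prod_{1\le i\le 2}\Gampq(ct^{-1/2}z^{\pm1}x_i^{\pm1})}{\Gampq(c^2,t,c^2/t,t^2)}.
\]

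Substituting this into the definition of $\cL^{(2)}_c$, the factor $\Gampq(t^2)^{-1}$ cancels against the $\Gampq(t^2)$ coming from $\Delta^{(1)}_S(z;\cdot;t^2;p,q)$, and --- writing $u_0=\sqrt{pq}\,tv/c^2$, $u_1=\sqrt{pq}\,t/vc^2$ for the two parameters supplied by the slot $\sqrt{pq}\,tv^{\pm1}/c^2$ --- one is left with
\[
\cL^{(2)}_c(x_1,x_2;t;p,q)
=
\frac{\prod_{1\le i\le 2}\Gampq(\sqrt{pqt}\,v^{\pm1}x_i^{\pm1}/c)^{-1}}{\Gampq(c^2,t,c^2/t)}
\int \Delta^{(1)}_D\bigl(z;ct^{-1/2}x_1^{\pm1},ct^{-1/2}x_2^{\pm1},u_0,u_1;p,q\bigr).
\]
I would then check the balancing condition: the six parameters have product $c^4t^{-2}\cdot u_0u_1=c^4t^{-2}\cdot pqt^2/c^4=pq$, so the $m=0$ elliptic Dixon (beta) integral \cite[Cor.~3.2]{xforms} applies and evaluates the integral as $\prod_{0\le r<s<6}\Gampq(w_rw_s)$.

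The remaining step is bookkeeping. Expanding the product of pairwise products: the two pairs $ct^{-1/2}x_i\cdot ct^{-1/2}x_i^{-1}$ each give $\Gampq(c^2/t)$; the four cross products of the $x$-parameters give exactly $\Gampq(c^2x_1^{\pm1}x_2^{\pm1}/t)$; the eight products of an $x$-parameter with $u_0$ or $u_1$ reproduce $\prod_{1\le i\le 2}\Gampq(\sqrt{pqt}\,v^{\pm1}x_i^{\pm1}/c)$, which cancels the prefactor; and $u_0u_1$ contributes $\Gampq(pqt^2/c^4)=\Gampq(c^4/t^2)^{-1}$ by the reflection relation $\Gampq(pq/z)=\Gampq(z)^{-1}$. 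Collecting the survivors against $\Gampq(c^2,t,c^2/t)^{-1}$ gives $\Gampq(c^2/t)\,\Gampq(c^4/t^2,c^2,t)^{-1}\,\Gampq(c^2x_1^{\pm1}x_2^{\pm1}/t)$, the first claimed expression; comparison with the explicit formula $\cK^{(1)}_{c^2/t}(x_1;x_2;t;p,q)=\Gampq(c^2x_1^{\pm1}x_2^{\pm1}/t)/\Gampq(c^4/t^2,t)$ of Corollary~\ref{cor:formal_geom_special} then yields the second. There is no substantive obstacle; the points needing care are (i) tracking the $v$-dependent Gamma functions so as to witness their cancellation --- a built-in consistency check, since $\cL^{(2)}_c$ is by construction $v$-independent --- and (ii) ensuring the contour condition for the beta integral is met, which holds automatically for $|p|,|q|$ small with the parameters suitably scaled, the general case following by meromorphic continuation.
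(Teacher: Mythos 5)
Your proposal is correct and follows exactly the route the paper intends (the paper's own justification is the one-line remark preceding the proposition that for $n=1$ the kernel in the integrand simplifies, leaving an elliptic beta integral): specialize $\cK^{(2)}_c$ at the geometric progression $t^{\pm 1/2}z$ via the product formula, evaluate the resulting balanced $m=0$ Dixon integral, and collect the Gamma factors, with the $v$-dependent factors cancelling as they must.
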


When $t=q$ (or $t=p$), the interpolation kernel is essentially a determinant, so that (fol\-lo\-wing~\cite{deBruijnNG:1955}) the Littlewood kernel becomes a pfaffian.

\begin{prop}
For $t=q$, we have
\begin{gather*}
\prod_{1\le i<j\le 2n} x_i^{-1}\theta_p(x_i x_j,x_i/x_j) \cL^{(2n)}_c(\vec{x};q;p,q)\\
\qquad {}= c^{-2n(n-1)} q^{n^2-n} \pf_{1\le i,j\le 2n} \big(x_i^{-1}\theta_p(x_i x_j,x_i/x_j)\cL^{(2)}_c(x_i,x_j;q;p,q)\big).
\end{gather*}
\end{prop}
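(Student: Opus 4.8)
The plan is to feed the $t=q$ determinantal formula of Proposition~\ref{prop:special_t} into the definition of $\cL^{(2n)}_c$ and then turn the resulting integral of a $2n\times 2n$ determinant over $n$ variables into a pfaffian via de Bruijn's identity \cite{deBruijnNG:1955}. Set $u_{2j-1}=q^{1/2}z_j$ and $u_{2j}=q^{-1/2}z_j$, so that the first block of $2n$ arguments of $\cK^{(2n)}_c(t^{\pm1/2}\vec z;\vec x;q;p,q)$ is $u_1,\dots,u_{2n}$. By Proposition~\ref{prop:special_t}, $\cK^{(2n)}_c(t^{\pm1/2}\vec z;\vec x;q;p,q)$ equals $c^{-n(2n-1)}\prod_{1\le a<b\le 2n}\frac{u_a x_a}{\theta_p(u_au_b,u_a/u_b,x_ax_b,x_a/x_b)}\det_{1\le a,b\le 2n}K^{(1)}_c(u_a;x_b)$. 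The factor $\prod_{a<b}\frac{x_a}{\theta_p(x_ax_b,x_a/x_b)}$ pulls out of the integral and is exactly the reciprocal of the prefactor $\prod_{i<j}x_i^{-1}\theta_p(x_ix_j,x_i/x_j)$ on the left of the proposition, so the left side of the claim becomes $\frac{c^{-n(2n-1)}}{\prod_a\Gampq(\sqrt{pqt}\,v^{\pm1}x_a^{\pm1}/c)}$ times the integral of $\det_{a,b}K^{(1)}_c(u_a;x_b)$ against $\prod_{a<b}\frac{u_a}{\theta_p(u_au_b,u_a/u_b)}\cdot\Delta^{(n)}_S(\vec z;\sqrt{pq}\,t v^{\pm1}/c^2;t^2;p,q)$, with $t=q$ throughout.

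The crucial step is that, at $t=q$, the two-variable interaction of $\Delta^{(n)}_S(\vec z;\cdot;q^2;p,q)$ cancels against the ``cross'' factors of $\prod_{a<b}\frac{u_a}{\theta_p(u_au_b,u_a/u_b)}$. Using $\Gampq(q^2w)=\theta_p(w)\theta_p(qw)\Gampq(w)$ one gets $\Gampq(q^2 z_i^{\pm1}z_j^{\pm1})/\Gampq(z_i^{\pm1}z_j^{\pm1})=\prod_{\epsilon_1,\epsilon_2}\theta_p(z_i^{\epsilon_1}z_j^{\epsilon_2})\theta_p(q z_i^{\epsilon_1}z_j^{\epsilon_2})$, and the reflection $\theta_p(1/w)=-w^{-1}\theta_p(w)$ rewrites this as $q^2 z_i^{-4}$ times $\theta_p(z_iz_j)^2\theta_p(qz_iz_j)\theta_p(q^{-1}z_iz_j)\theta_p(z_i/z_j)^2\theta_p(qz_i/z_j)\theta_p(q^{-1}z_i/z_j)$. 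On the other hand the four factors of $\prod_{a<b}\frac{u_a}{\theta_p(u_au_b,u_a/u_b)}$ indexed by $\{2i-1,2i\}\times\{2j-1,2j\}$ have numerator $z_i^4$ and denominator equal to precisely that product of six theta functions, so their product with the interaction collapses to the constant $q^2$. Taking the product over $1\le i<j\le n$, the entire interaction part of the measure is replaced by $q^{n(n-1)}$. The leftover ``diagonal'' factors $\prod_i\frac{q^{1/2}z_i}{\theta_p(z_i^2,q)}$ together with the one-variable part of $\Delta^{(n)}_S$ (whose $1/2^nn!$ supplies the required $1/n!$) give $\tfrac1{n!}\prod_i d\mu(z_i)$, where $d\mu(z)=\frac{q^{1/2}z}{\theta_p(z^2,q)}\Delta^{(1)}_S(z;\sqrt{pq}\,t v^{\pm1}/c^2;q^2;p,q)$ is exactly the one-variable weight appearing in the $n=1$ case.

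Now apply de Bruijn's pfaffian identity: with $\varphi_a(z)=K^{(1)}_c(q^{1/2}z;x_a)$ and $\psi_a(z)=K^{(1)}_c(q^{-1/2}z;x_a)$, the columns of $\det_{a,b}K^{(1)}_c(u_b;x_a)$ are paired as $(\varphi_\bullet(z_j),\psi_\bullet(z_j))$, so $\tfrac1{n!}\int\det_{a,b}K^{(1)}_c(u_b;x_a)\prod_i d\mu(z_i)=\pf_{1\le a,b\le 2n}\big[\int(\varphi_a\psi_b-\varphi_b\psi_a)\,d\mu\big]$. Running the identical computation at $n=1$ — i.e. unfolding the definition of $\cL^{(2)}_c$ through the two-variable case of Proposition~\ref{prop:special_t}, which also reproduces Proposition~\ref{prop:litt_n2} — identifies each entry as $\int(\varphi_a\psi_b-\varphi_b\psi_a)\,d\mu = c\,\Gampq(\sqrt{pqt}\,v^{\pm1}x_a^{\pm1}/c)\,\Gampq(\sqrt{pqt}\,v^{\pm1}x_b^{\pm1}/c)\,x_a^{-1}\theta_p(x_ax_b,x_a/x_b)\,\cL^{(2)}_c(x_a,x_b)$ (note $x_a^{-1}\theta_p(x_ax_b,x_a/x_b)$ is antisymmetric under $a\leftrightarrow b$ by the reflection formula, consistent with being a pfaffian entry). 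Pulling the constant $c$ out ($\pf$ of a $2n\times2n$ matrix being homogeneous of degree $n$) and the $\vec x$-dependent gauge $\prod_a\Gampq(\sqrt{pqt}\,v^{\pm1}x_a^{\pm1}/c)$ out of the pfaffian, and recalling the overall prefactor $c^{-n(2n-1)}q^{n(n-1)}/\prod_a\Gampq(\sqrt{pqt}\,v^{\pm1}x_a^{\pm1}/c)$, all dependence on $v$ and the $\Gampq$-gauge cancels, leaving $c^{-n(2n-1)+n}q^{n(n-1)}=c^{-2n(n-1)}q^{n^2-n}$ times $\pf_{1\le i,j\le 2n}\big[x_i^{-1}\theta_p(x_ix_j,x_i/x_j)\cL^{(2)}_c(x_i,x_j)\big]$, as claimed. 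All integrals are taken on contours separating the geometric progressions of poles, exactly as in the definition of $\cL^{(2n)}_c$; de Bruijn's identity is a purely algebraic manipulation valid whenever these integrals converge, after which the equality extends to meromorphic functions by \cite[\S10]{xforms}.

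The main obstacle is the cancellation in the second paragraph: that the cross factors of the $t=q$ determinantal prefactor of Proposition~\ref{prop:special_t} exactly annihilate the pairwise interaction of $\Delta^{(n)}_S$ at the Selberg parameter $q^2$ down to a pure constant. This is the one place where $t=q$ is used essentially — it is what forces the $\cK^{(2n)}$-prefactor to be a ratio of theta functions matching $\Gampq(q^2\cdot)/\Gampq(\cdot)$ — and it requires careful bookkeeping of the $q^{\pm1/2}$ shifts, the asymmetric numerator $\prod_{a<b}u_a$, and the reflection $\theta_p(1/w)=-w^{-1}\theta_p(w)$ to verify that the accumulated monomials cancel and only $q^{n(n-1)}$ survives. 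Once that is in place, the remaining steps — applying de Bruijn, identifying the pfaffian entries with the $n=1$ case, and collecting the powers of $c$ and $q$ — are routine.
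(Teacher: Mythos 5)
Your proposal is correct and follows exactly the route the paper indicates (and leaves essentially unproved): substitute the $t=q$ determinantal formula of Proposition \ref{prop:special_t} into the defining integral of $\cL^{(2n)}_c$ and convert the integral of the determinant into a pfaffian via de Bruijn's identity, identifying the entries with the $n=1$ case. Your write-up simply supplies the bookkeeping (the cancellation of the $\Gampq(q^2 z_i^{\pm 1}z_j^{\pm 1})/\Gampq(z_i^{\pm 1}z_j^{\pm 1})$ interaction against the cross theta factors, and the powers of $c$ and $q$) that the paper omits, and the constants check out.
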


Another case with a reasonably nice expression is when $c=\sqrt{pq/t}$, so the interpolation kernel in the integrand can be expressed as a product.

\begin{prop}\label{prop:litt_{pqt}}
We have
\begin{gather*}
\cL^{(2n)}_{\sqrt{pq/t}}(\vec{x};t;p,q) =\cK^{(n)}_{pq/t^2}\big(x_1,\dots,x_n;x_{n+1},\dots,x_{2n};t^2;p,q\big).
\end{gather*}
\end{prop}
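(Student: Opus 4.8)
The plan is to evaluate $\cL^{(2n)}_{\sqrt{pq/t}}$ directly from its definition, using the product formula for the interpolation kernel at the distinguished value $c=\sqrt{pq/t}$ (Proposition~\ref{prop:special_t}, which, like all the formal-kernel identities, holds verbatim for the analytic kernel $\cK^{(n)}$) to collapse the integrand, and then recognizing what is left as an instance of the braid relation, Proposition~\ref{prop:kern_braid}.

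Concretely, I would first set $c=\sqrt{pq/t}$ in the definition of $\cL^{(2n)}_c$. The normalizing prefactor then becomes $\prod_{1\le i\le 2n}\Gampq(tv^{\pm1}x_i^{\pm1})^{-1}$, the auxiliary parameters of the Selberg density become $(t^2/\sqrt{pq})\,v^{\pm1}$, and the kernel in the integrand is $\cK^{(2n)}_{\sqrt{pq/t}}(t^{\pm1/2}\vec z;\vec x;t;p,q)$. By Proposition~\ref{prop:special_t} this kernel equals $\prod_{1\le i,j\le 2n}\Gampq(\sqrt{pq/t}\,(t^{\pm1/2}\vec z)_i^{\pm1}x_j^{\pm1})$. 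Unpacking the $2n$ entries $t^{1/2}z_k$, $t^{-1/2}z_k$ and the inner signs, each pair of indices $(k,j)$ contributes $\Gampq(\sqrt{pq}\,z_k^{\pm1}x_j^{\pm1})\,\Gampq((\sqrt{pq}/t)\,z_k^{\pm1}x_j^{\pm1})$; the first factor is a product of two reciprocal pairs $\Gampq(w)\Gampq(pq/w)$ and hence equals $1$ by the reflection relation. So the integrand kernel reduces to $\prod_{1\le k\le n}\prod_{1\le j\le 2n}\Gampq((\sqrt{pq}/t)\,z_k^{\pm1}x_j^{\pm1})$, and since $\sqrt{pq}/t=\sqrt{pq/t^2}$, one more application of Proposition~\ref{prop:special_t} — now with $t$ replaced by $t^2$ and the $x$'s split into $\vec x'=(x_1,\dots,x_n)$ and $\vec x''=(x_{n+1},\dots,x_{2n})$ — rewrites it as $\cK^{(n)}_{\sqrt{pq/t^2}}(\vec z;\vec x';t^2;p,q)\,\cK^{(n)}_{\sqrt{pq/t^2}}(\vec z;\vec x'';t^2;p,q)$.

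At this stage
\[
\cL^{(2n)}_{\sqrt{pq/t}}(\vec x;t;p,q)=\frac{1}{\prod_{1\le j\le 2n}\Gampq(tv^{\pm1}x_j^{\pm1})}\int \cK^{(n)}_{\sqrt{pq/t^2}}(\vec z;\vec x';t^2;p,q)\,\cK^{(n)}_{\sqrt{pq/t^2}}(\vec z;\vec x'';t^2;p,q)\,\Delta^{(n)}_S(\vec z;(t^2/\sqrt{pq})\,v^{\pm1};t^2;p,q),
\]
which is exactly the braid relation with elliptic parameter $t^2$, with $c=d=\sqrt{pq/t^2}$, and with Selberg parameters $u_0,u_1=(t^2/\sqrt{pq})\,v^{\pm1}$. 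The balancing condition holds, $u_0u_1=t^4/pq=pq/(c^2d^2)$, so Proposition~\ref{prop:kern_braid} applies; since $cu_0=du_0=tv$ and $cu_1=du_1=tv^{-1}$, the Gamma prefactor it produces is precisely $\prod_{1\le j\le 2n}\Gampq(tv^{\pm1}x_j^{\pm1})$, cancelling the normalization, while $cd=pq/t^2$. This leaves exactly $\cK^{(n)}_{pq/t^2}(x_1,\dots,x_n;x_{n+1},\dots,x_{2n};t^2;p,q)$, as claimed.

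The main obstacle is the collapse of the $2n\times 2n$ block of Gamma functions in the first step: one must see that regrouping $t^{\pm1/2}\vec z$ into $2n$ variables produces, for every pair of indices, a $\Gampq(\sqrt{pq}\,\cdot)$ part that is trivial by reflection and a $\Gampq((\sqrt{pq}/t)\,\cdot)$ part that survives, and that the survivors reassemble into two copies of the $n$-dimensional kernel at the doubled parameter $t^2$ with the correct value of $c$. After that, the matching with Proposition~\ref{prop:kern_braid} is routine: the scalings obey $\sqrt{pq/t^2}\cdot(t^2/\sqrt{pq})=t$, which is exactly what makes every prefactor cancel. Since every identity used is an identity of meromorphic functions in the parameters and variables, there is no genuine contour issue — both sides extend from a region where the unit torus can be used — and no reduction to a Zariski-dense set is required.
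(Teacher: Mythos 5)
Your proposal is correct and is exactly the argument the paper intends (the paper only sketches it in the sentence preceding the proposition): substitute the product formula for $\cK^{(2n)}_{\sqrt{pq/t}}$, cancel the $\Gampq(\sqrt{pq}\,z_k^{\pm 1}x_j^{\pm 1})$ factors by reflection, regroup the survivors as two copies of $\cK^{(n)}_{\sqrt{pq/t^2}}$ at base $t^2$, and apply the braid relation with $c=d=\sqrt{pq/t^2}$, whose prefactor cancels the normalization and yields $\cK^{(n)}_{pq/t^2}$. All the bookkeeping (balancing condition, $cu_0=tv$, $cd=pq/t^2$) checks out.
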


The name ``Littlewood kernel'' comes from the following formal expansion.

\begin{prop}\label{prop:litt_sum}
If $\max(|\ord(t_0)|,\max_i|\ord(z_i)|)<\ord(c)\le 1/4$, then we have the formal expansion
\begin{gather*}
\cL^{(2n)}_c(\vec{z};t;p,q) =
\frac{\prod\limits_{1\le i\le 2n}
 \Gampq\big(\big(c^2/t^{2n-1}t_0\big) z_i^{\pm 1},\big(c^2t_0/t\big) z_i^{\pm 1}\big)}
{\prod\limits_{1\le i\le n}
 \Gampq\big(c^2 t^{2-2i},c^2 t^{1-2i},t^{2i-3}c^2t_0^2,t^{2i+1-4n}c^2/t_0^2,t^{-2i}c^4,t^{2i-1}\big)}\\
 \hphantom{cL^{(2n)}_c(\vec{z};t;p,q) = }{}
\times \sum_{\mu}
\Delta_{\mu}\big(t^{4n-3}t_0^2/c^2|t^{2n},pq t^{2n}/c^4;q,t^2;p\big)
R^{*(2n)}_{\mu^2}\big(\vec{z};t_0,c^2/t^{2n-1}t_0;q,t;p\big).
 \end{gather*}
\end{prop}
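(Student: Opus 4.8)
The plan is to read off the identity from the integral representation defining $\cL^{(2n)}_c$ by substituting the defining sum of the formal interpolation kernel, integrating term by term, and collapsing the result with a Littlewood-type evaluation of an elliptic Selberg integral of an interpolation function.

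First I would check that, after dividing by the common $p\to 0$ limit, both sides are formal Puiseux series in $p$ whose coefficients are rational in the parameters and hyperoctahedrally symmetric Laurent polynomials in $\vec z$. For the left-hand side this follows from the integral definition of $\cL^{(2n)}_c$ together with the Corollary bounding the coefficients of $\prod_i\Gampq(t^{i-n}c^2,t^i)K^{(n)}_c$; for the right-hand side it follows from the Lemma on the coefficients of $R^{*(n)}_\lambda$ and the order computation for the $\Delta$-symbol, where the $\mu$-th summand has order a positive multiple of $|\mu|$ precisely because of the parameter $pqt^{2n}/c^4$ — this is what forces the hypothesis $\ord(c)\le 1/4$. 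Hence it suffices to prove the identity for a Zariski dense set of $\vec z$, and I would take $\vec z$ to run over the points $(\dots,q^{\nu_i}t^{2n-i}t_0,\dots)$ for partitions $\nu$ with at most $2n$ parts. By the vanishing property of interpolation functions $R^{*(2n)}_{\mu^2}$ is then zero unless $\mu^2\subset\nu$, so the sum on the right is finite, and each surviving term is — up to the explicit prefactors — a value of an interpolation function at a geometric progression, an elliptic binomial coefficient $\obinomE{\nu}{\mu^2}$.

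For the left-hand side at these points I would use the integral definition, the symmetry $\cK^{(2n)}_c(\vec a;\vec b)=\cK^{(2n)}_c(\vec b;\vec a)$, and the specialization theorem for the kernel, which turns the integrand into an explicit product of elliptic Gamma functions times $R^{*(2n)}_\nu(t^{\pm1/2}\vec w;t_0c,c/t^{2n-1}t_0;q,t;p)$. The left-hand side thus becomes, up to prefactors, the integral of an interpolation function evaluated on the doubled variables $t^{\pm1/2}\vec w$ against the $t^2$-elliptic Selberg density $\Delta^{(n)}_S(\vec w;\sqrt{pq}tv^{\pm1}/c^2;t^2;p,q)$, the extra Gamma factors contributing two further Selberg parameters. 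This is a Littlewood/Kadell-type elliptic integral of the sort evaluated in \cite{littlewood} (an elliptic analogue of the vanishing results of \cite{vanish}): it vanishes unless $\nu=\mu^2$, and when $\nu=\mu^2$ the quadratic transformation for interpolation functions converts the integrand into a genuine $t^2$-interpolation function of $\vec w$, whose integral against the $t^2$-Selberg density is then evaluated by the elliptic Selberg integral and its interpolation-function refinement (\cite[Thm.~9.2]{xforms}), yielding an explicit product of $\Gamma$- and $\Delta$-symbols in $\nu$.

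The remaining step — identifying this explicit product with the finite sum over $\mu$ on the right — is the main obstacle. It requires the precise form of the Littlewood integral from \cite{littlewood} and a substantial but routine computation: the reduction of the $t$-indexed data $\Delta_{\mu^2}$ and $R^{*(2n)}_{\mu^2}$ appearing naturally on the left to the $t^2$-indexed $\Delta_\mu(\,\cdot\,;q,t^2;p)$ and $R^{*(2n)}_{\mu^2}(\,\cdot\,;q,t;p)$ on the right (the quadratic-transformation phenomenon already visible in Propositions \ref{prop:litt_n2} and \ref{prop:litt_{pqt}}), together with simplification of the $c$-, $c^2$-, $c^4$- and $v$-dependent Gamma prefactors using the quadratic functional equations $\Gampq(z)=\Gampqq(z,qz)$ and $\Gamppqq(z^2)=\Gampq(z,-z)$, the special values of $\Gamma$ recorded in the introduction, and the $W(D_4)$ symmetry of the elliptic Selberg evaluation. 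One also checks at the end that the auxiliary parameters $t_0$, $u_0$, $v$ drop out, consistently with the $v$-independence of $\cL^{(2n)}_c$ and the $t_0,u_0$-independence of the formal kernel.
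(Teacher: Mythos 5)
Your reduction step is fine and matches the paper's: both sides have well-behaved Puiseux expansions with coefficients Laurent polynomial in $\vec z$, so it suffices to check the identity at the Zariski dense set of specializations $z_i=q^{\nu_i}t^{2n-i}t_0$, where the right-hand side collapses by the vanishing property to a finite sum of elliptic binomial coefficients $\obinomE{\nu}{\mu^2}$ over $\mu^2\subset\nu$. The gap is in your evaluation of the left-hand side at such a point. After the kernel specializes, you are integrating $R^{*(2n)}_\nu(t^{\pm 1/2}\vec w;t_0c,c/t^{2n-1}t_0;q,t;p)$ against a four-parameter $t^2$-Selberg density, and you assert that this integral vanishes unless $\nu=\mu^2$, being a single explicit product in that case. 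That is false for generic $c$: the vanishing phenomenon is special to the undeformed Littlewood point and to constrained parameters (as in Theorem \ref{thm:van_litt}, where $t^{2n}t_0t_1u_0=\sqrt{pqt}$, or $c=(pqt)^{1/4}$); for generic $c$ the integral is a genuine multi-term object. Your own right-hand side shows the inconsistency: at $\nu=(1)$ only $\mu=0$ survives and gives a nonzero value, so the left-hand side cannot be supported on partitions of the form $\mu^2$, and a single product can never match the surviving sum over all $\mu$ with $\mu^2\subset\nu$.

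Consequently the step you defer as "routine but substantial bookkeeping" is not bookkeeping at all: the statement that the integral of an interpolation function on doubled variables against the $t^2$-Selberg density equals the finite sum $\sum_{\mu^2\subset\nu}\Delta_\mu(\cdots;q,t^2;p)\,\obinomE{\nu}{\mu^2}_{\cdots}$ is precisely the deformed elliptic Littlewood identity of \cite[Thm.~4.7]{littlewood}, and it is exactly what the paper cites at this point (its proof is just: reduce to $\ord(t_0)=\ord(z_i)=0$, specialize $\vec z$ to a partition based at $t_0$, and quote that theorem). So your argument needs that theorem supplied as the key input; the vanishing-plus-Selberg-evaluation route you propose in its place does not work.
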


\begin{proof} It suffices to prove this in the case $\ord(t_0)=\ord(z_i)=0$, since both sides have well-behaved Puiseux series expansions. We can then specialize so that $\vec{z}$ is a partition based at $t_0$, at which point the claim follows from \cite[Theorem~4.7]{littlewood}.
\end{proof}

\begin{rems}
Note that if we remove the factor $\Gampq\big(t^{-2i}c^4\big)$, then the right-hand
side converges formally whenever
\begin{gather*}
\max(|\ord(t_0)|,\max_i|\ord(z_i)|)<\min(\ord(c),1/2-\ord(c)).
\end{gather*}
If $|\ord(t_0)|=|\ord(\vec{z})|=0$, it follows from the branching rule
below that the sum converges to the Littlewood kernel for the full
range $0<\ord(c)<1/2$.
\end{rems}

\begin{rems}
As in the case of the deformed Cauchy identity representation of the interpolation function, this becomes a sum of Macdonald polynomials in a~suitable limit. The na\"\i{}ve version of the limit is
\begin{gather*}
\lim_{p\to 0} \prod_{1\le i\le n} \frac{\Gampq\big(t^{2i-1}\big)}
 {\Gampq\big(qt^{2i}/c^4\big)}
\cL^{(2n)}_{p^{1/4}c}\big(p^{-1/4}\vec{x};t;p,q\big) \\
\qquad{} \text{``=''}
\sum_{\mu} \big(c^2/t\big)^{|\mu|)}
\frac{C^-_\mu\big(t;q,t^2\big)C^0_\mu\big(qt^{2n}/c^4;q,t^2\big)}
 {C^-_\mu\big(q;q,t^2\big)C^0_\mu\big(t^{2n-1};q,t^2\big)} P_{\mu^2}(\vec{x};q,t),
\end{gather*}
which can be made rigorous in the same way as the Macdonald limit of the interpolation kernel. Note that when $c=(qt)^{1/4}$ here, the coefficient is essentially the coefficient in the usual Littlewood identity for Macdonald polynomials.
\end{rems}

When $c=(pqt)^{1/4}$, this becomes the usual elliptic Littlewood sum, and we obtain the following evaluation.

\begin{thm}
When $c=(pqt)^{1/4}$, the Littlewood kernel has the product expression
\begin{gather*}
\cL^{(2n)}_{(pqt)^{1/4}}(\vec{x};t;p,q) =\Gampq\big((pq/t)^{1/2}\big)^{2n}
\prod_{1\le i<j\le 2n} \Gampq\big((pq/t)^{1/2}x_i^{\pm 1}x_j^{\pm 1}\big).
\end{gather*}
\end{thm}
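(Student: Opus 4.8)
The plan is to read the evaluation straight off Proposition~\ref{prop:litt_sum} by specializing $c=(pqt)^{1/4}$ and observing that the deformed Littlewood sum on its right-hand side degenerates to the \emph{undeformed} elliptic Littlewood sum, whose value is a product of elliptic Gamma functions. So I would first set $c=(pqt)^{1/4}$, so that $\ord(c)=1/4$, $c^{2}=(pqt)^{1/2}$, $c^{4}=pqt$; taking $\ord(t_0)=0$ and $\ord(z_i)=0$ puts us within the hypotheses of Proposition~\ref{prop:litt_sum} (whose condition is $\ord(c)\le 1/4$, and whose remark guarantees that for variables of order $0$ the sum converges to $\cL^{(2n)}_c$ throughout $0<\ord(c)<1/2$).

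Two simplifications are then immediate. The normalizing prefactor $\prod_{1\le i\le n}\Gampq(t^{-2i}c^{4},t^{2i-1})$ collapses to $1$: since $t^{-2i}c^{4}=pq/t^{2i-1}$, the reflection equation $\Gampq(pq/z)\,\Gampq(z)=1$ kills each factor. And the deformation parameter $pqt^{2n}/c^{4}$ appearing in the $\Delta_{\mu}$-symbol of Proposition~\ref{prop:litt_sum} becomes exactly $t^{2n-1}$, i.e. the $\Delta_{\mu}$-symbol loses its deformed slot and the sum becomes the ordinary elliptic Littlewood sum over $\mu^{2}$. Consequently $\cL^{(2n)}_{(pqt)^{1/4}}(\vec z;t;p,q)$ equals the Gamma-function prefactor of Proposition~\ref{prop:litt_sum} (with $c^{4}=pqt$ substituted, leaving only $t_0$-dependent and $z_i^{\pm 1}$-linear factors) times $\sum_{\mu}\Delta_{\mu}(t^{4n-3}t_0^{2}/c^{2}\mid t^{2n},t^{2n-1};q,t^{2};p)\,R^{*(2n)}_{\mu^{2}}(\vec z;t_0,c^{2}/t^{2n-1}t_0;q,t;p)$.

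Next I would invoke the elliptic Littlewood identity of \cite{littlewood} — the same statement (\cite[Thm.~4.7]{littlewood}) used to prove Proposition~\ref{prop:litt_sum}, now in the special case $c^{4}=pqt$ where it evaluates the sum as an explicit product of $\Gampq(\beta\,z_i^{\pm 1}z_j^{\pm 1})$ with $\beta=(pq/t)^{1/2}$, up to univariate factors depending on $t_0$. The remaining work is bookkeeping: combine this product with the prefactor of Proposition~\ref{prop:litt_sum} and simplify using the reflection and quasi-periodicity relations for $\Gampq$. The $t_0$-dependence must cancel, since $\cL^{(2n)}_c$ is manifestly independent of $t_0$; the cleanest way to see this is to fix $t_0$ conveniently before comparing (for instance $t_0$ with $c^{2}t_0/t=(pq/t)^{1/2}$, which forces $t_0=1$ when $c^{2}=(pqt)^{1/2}$). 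What survives is $\Gampq((pq/t)^{1/2})^{2n}\prod_{1\le i<j\le 2n}\Gampq((pq/t)^{1/2}x_i^{\pm 1}x_j^{\pm 1})$. Finally, the identity has now been established for all $\vec z$ of order $0$, a Zariski-dense condition; since (as in the proof of Proposition~\ref{prop:litt_sum}) both sides have formal Puiseux expansions in $p$ with rational-function coefficients, equality of those coefficients follows from agreement on a Zariski-dense set, and the identity holds as one of meromorphic functions.

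The genuine obstacle is entirely in the third step: matching the various Gamma-function factors (prefactor of Proposition~\ref{prop:litt_sum} against the output of the Littlewood evaluation) and confirming that the residue is precisely the stated product — mechanical but error-prone, which is why exploiting $t_0$-independence to preselect $t_0$ is worthwhile. A serviceable alternative avoiding Proposition~\ref{prop:litt_sum} is to specialize $\vec z$ directly to an arbitrary partition based at $t_0$ (a Zariski-dense family), turning the defining integral of $\cL^{(2n)}_c$ into the interpolation-function integral of \cite[Thm.~4.7]{littlewood}; at $c=(pqt)^{1/4}$ this again reduces to the classical elliptic Littlewood sum, and Zariski density finishes the argument the same way.
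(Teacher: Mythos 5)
Your proposal is correct and is essentially the paper's own argument: the paper obtains this theorem precisely by setting $c=(pqt)^{1/4}$ in Proposition \ref{prop:litt_sum}, observing that the deformed Littlewood sum becomes the usual elliptic Littlewood sum of \cite{littlewood}, whose product evaluation then yields the stated formula (with the same formal-to-analytic extension you describe). The only difference is cosmetic; the paper's remark also sketches a second, purely integral-manipulation proof by induction via Propositions \ref{prop:branch_k} and \ref{prop:braid_k}, which is distinct from the alternative you mention.
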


\begin{rem}
This can also be proved using integral manipulations alone: if one expands
the interpolation kernel using the degenerate branching rule (Proposition~\ref{prop:branch_k} above), swaps the two resulting integrals, then applies
the degenerate braid relation (Proposition~\ref{prop:braid_k}), one obtains
an $(n-1)$-dimensional integral involving a $(2n-1)$-dimensional instance
of the interpolation kernel. If we then perform the same steps again, we
obtain the $(n-1)$-dimensional instance of the theorem. Working backwards,
this gives an inductive proof of this evaluation.
\end{rem}

Another consequence of the formal deformed Littlewood sum expression is
that the Littlewood kernel satisfies a branching rule.

\begin{cor}
The Littlewood kernel satisfies the branching rule
\begin{gather*}
 \Gampq\big(c^4/t^2,c^2\big) \cL^{(2n)}_c(\vec{x},v;t;p,q)=
\frac{\Gampq\big(c^2/t\big) \prod\limits_{1\le i\le 2n-1} \Gampq\big(c^2v^{\pm 1} x_i^{\pm 1}/t\big)}
{\Gampq(t)^{2n-1}\prod\limits_{1\le i<j\le 2n-1} \Gampq\big(t x_i^{\pm 1}x_j^{\pm 1}\big)}\\
\qquad{}\times \int \cL^{(2n-2)}_{t^{-1/2}c}(\vec{z};t;p,q)
\Delta^{(2n-2)}_D\big(\vec{z};pq t^{1/2}v^{\pm 1}/c^2,t^{1/2}\vec{x}\,^{\pm 1};p,q\big).
\end{gather*}
\end{cor}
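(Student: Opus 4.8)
The plan is to derive this from the deformed Littlewood sum of Proposition \ref{prop:litt_sum}, in exact parallel to the way the inductive integral representation of the interpolation kernel is derived from its Cauchy-sum definition. Since both sides are meromorphic, it suffices to prove the identity on a Zariski-dense set of parameters. Dividing by the common $p\to 0$ limit along a scaling with $c\sim p^{1/4}$ and all external variables of order $0$ (in which both sides degenerate to Koornwinder-type integrals times explicit $\Gamma$-factors), each side becomes a holomorphic Puiseux series in $p$ with rational-function coefficients — this is inherited from the corresponding property of $\cL$, established via the polynomiality corollary following the definition of the formal kernel together with term-by-term integrability. Hence we may work in the regime $0<\ord(c)\le 1/4$, $\ord(v)=\ord(\vec{x})=0$, where Proposition \ref{prop:litt_sum} applies both to $\cL^{(2n)}_c(\vec{x},v;\cdot)$ and to $\cL^{(2n-2)}_{t^{-1/2}c}$.

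In that regime, expand the occurrence of $\cL^{(2n-2)}_{t^{-1/2}c}(\vec{z};t;p,q)$ on the right-hand side by Proposition \ref{prop:litt_sum} and integrate the resulting sum term by term (legitimate since, after removing a fixed power of $p$, each coefficient is a symmetric Laurent polynomial in $\vec{z}$). The right-hand side becomes $\sum_\nu\Delta_\nu(\cdots;q,t^2;p)$ times the integral of $R^{*(2n-2)}_{\nu^2}(\vec{z};t_0,c^2/t^{2n-1}t_0;q,t;p)$ against $\Delta^{(2n-2)}_D(\vec{z};pqt^{1/2}v^{\pm1}/c^2,t^{1/2}\vec{x}\,^{\pm1};p,q)$, together with the elliptic Gamma prefactors produced both by Proposition \ref{prop:litt_sum} and by the asserted formula. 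The crux is to evaluate this inner integral: it is the interpolation-function form of the iterated inductive integral representation of the kernel — equivalently, a specialization of Lemma \ref{lem:intrep_t} (or of Theorem \ref{thm:gen_branch}) to the interpolation-function locus, in which the $2n-1$ parameters $t^{1/2}x_i^{\pm1}$ together with $pqt^{1/2}v^{\pm1}/c^2$ reconstitute the variables $\vec{x}$ and the base parameters of a $(2n-1)$-variable interpolation function. This turns each term into an explicit product of elliptic Gamma functions times $R^{*(2n)}_{\nu^2}(\vec{x},v;t_0,c^2/t^{2n-1}t_0;q,t;p)$; comparing with the expansion of the left-hand side via Proposition \ref{prop:litt_sum} then reduces everything to an identity among elliptic Gamma functions and $q$-symbols, which is routine.

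The main obstacle is this inner integral step: identifying the precise integral representation of $R^{*}_{\nu^2}$ that applies with these parameters and, more delicately, checking that the transform does not mix different $\nu^2$ (so that the sum over $\nu$ on the right matches term-for-term the $\mu=\nu$ part of the sum over $\mu$ on the left, the would-be terms with $\ell(\mu)=n$ being absent) and that the $\Delta$-symbol and Gamma bookkeeping closes up exactly. An alternative, purely integral route — begin from the integral definition of $\cL^{(2n)}_c$, branch the interpolation kernel off $v$ using Proposition \ref{prop:branch_k} with all $k_i=2$, exchange the order of integration, collapse the inner integral via Theorem \ref{thm:gen_branch}, and simplify as in the proof of Theorem \ref{thm:L1_kern} — should make the parameter matching more transparent, at the cost of careful contour bookkeeping in the order-exchange.
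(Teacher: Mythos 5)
Your scaffolding (reduce to a formal regime where both sides are Puiseux series with rational coefficients, expand via the deformed Littlewood sum of Proposition \ref{prop:litt_sum}, integrate term by term, and invoke the integral representation of interpolation functions) is exactly the paper's toolkit, but the step you yourself flag as ``the main obstacle'' is where the argument currently fails, and the missing idea is concrete. The paper expands the \emph{left}-hand side by Proposition \ref{prop:litt_sum} with the auxiliary parameter chosen as $t_0=v$. With that choice the summand contains $R^{*(2n)}_{\mu^2}(\vec{x},v;v,c^2/t^{2n-1}v;q,t;p)$, an interpolation function one of whose arguments equals its first parameter; by the vanishing/specialization property this kills every term with $\ell(\mu)=n$ (since $\ell(\mu^2)=2\ell(\mu)$ must be $\le 2n-1$, hence $\le 2n-2$) and reduces the surviving terms, up to explicit Gamma factors, to $(2n-1)$-variable interpolation functions $R^{*(2n-1)}_{\mu^2}(\vec{x};tv,c^2/t^{2n-1}v;q,t;p)$. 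It is \emph{these} that one expands by the $(2n-2)$-dimensional integral representation, and re-summing over $\mu$ via Proposition \ref{prop:litt_sum} for $\cL^{(2n-2)}_{t^{-1/2}c}$ reassembles the right-hand side. Without this reduction your term-by-term comparison does not close: a $(2n-2)$-dimensional Dixon integral of $R^{*(2n-2)}_{\nu^2}(\vec{z};\cdot,\cdot)$ against $\Delta^{(2n-2)}_D(\vec{z};pqt^{1/2}v^{\pm 1}/c^2,t^{1/2}\vec{x}\,^{\pm 1};p,q)$ is one step of the integral representation and can only produce a $(2n-1)$-variable interpolation function in $\vec{x}$, with both of its parameters (and hence the admissible auxiliary parameter in your expansion of $\cL^{(2n-2)}_{t^{-1/2}c}$) forced by the density; the variable $v$ enters only through those parameters, not as a genuine $2n$-th argument. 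So your intermediate assertion that each term becomes Gamma factors times $R^{*(2n)}_{\nu^2}(\vec{x},v;t_0,c^2/t^{2n-1}t_0;q,t;p)$ with $t_0$ generic is not correct as stated; it becomes correct precisely after inserting the $t_0=v$ specialization (equivalently, the one-variable reduction of the $2n$-variable interpolation function), which is the idea missing from the proposal.

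Once that reduction is in place, the rest of what you describe (no mixing of different $\nu$, since the integral representation is an identity for a single interpolation function, and the $\Delta$-symbol and Gamma bookkeeping) is indeed routine, so the gap is localized but real. Your alternative purely-integral route (branching via Proposition \ref{prop:branch_k} with all $k_i=2$, exchanging integrals, collapsing via Theorem \ref{thm:gen_branch}) is not the paper's argument and is left unverified; it may well work, but as written it carries the same burden of showing that the $v$-dependence and the parameter count on the inner integral come out right, which is again exactly the point the $t_0=v$ trick is designed to handle.
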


\begin{proof}
Expand the left-hand side via the formal sum for $t_0=v$, and note that
this gives an expansion in $(2n-1)$-variable interpolation functions indexed
by partitions with $\le 2n-2$ parts. As a result, we can expand those
interpolation functions using the integral representation; simplifying
gives the desired result.
\end{proof}

Since the Littlewood kernel is defined using the interpolation kernel, we
can use the braid relation to obtain a transformation of sorts.

\begin{thm}\label{thm:quad_litt}
The Littlewood kernel satisfies the integral identity
\begin{gather*}
\int \cK^{(2n)}_{c/d}(\vec{z};\vec{x};t;p,q)\cL^{(2n)}_d(\vec{z};t;p,q) \Delta^{(2n)}_S\big(\vec{z};\sqrt{pq}w^{\pm 1}/c,\sqrt{pqt}v^{\pm 1}/d;t;p,q\big) \\
\qquad {}= \prod_{1\le i\le 2n} \Gampq\big(\sqrt{pq}w^{\pm 1}x_i^{\pm 1}/d\big) \\
\qquad\quad{} \times \int
\cK^{(2n)}_c\big(t^{\pm 1/2}\vec{z};\vec{x};t;p,q\big) \Delta^{(n)}_S\big(\vec{z};\sqrt{pq}t^{\pm 1/2}dw^{\pm 1}/c,\sqrt{pq}t v^{\pm 1}/d^2;t^2;p,q\big).
\end{gather*}
\end{thm}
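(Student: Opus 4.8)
The plan is to run a ``Bailey lemma''-style argument, exactly parallel to the proof of Theorem \ref{thm:L1_kern}. First I would expand the Littlewood kernel $\cL^{(2n)}_d(\vec{z};t;p,q)$ on the left-hand side by its defining integral representation, taking the free auxiliary parameter there to be the parameter $v$ of the statement:
\[
\cL^{(2n)}_d(\vec{z};t;p,q)
=
\frac{1}{\prod_{1\le i\le 2n}\Gampq(\sqrt{pqt}\,v^{\pm 1}z_i^{\pm 1}/d)}
\int
\cK^{(2n)}_d(t^{\pm 1/2}\vec{w};\vec{z};t;p,q)\,
\Delta^{(n)}_S(\vec{w};\sqrt{pq}\,t v^{\pm 1}/d^2;t^2;p,q).
\]
Substituting this in, the prefactor cancels exactly the contribution of the parameters $\sqrt{pqt}\,v^{\pm 1}/d$ to the $\vec{z}$-density, leaving the bare density $\Delta^{(2n)}_S(\vec{z};\sqrt{pq}\,w^{\pm 1}/c;t;p,q)$ with only two univariate parameters, and we are left with a double integral over the $2n$ variables $\vec{z}$ and the $n$ variables $\vec{w}$.

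Next, after checking that there is a nonempty open region of parameters in which all the contours can be taken to be powers of the unit circle (so that Fubini applies), I would carry out the $\vec{z}$-integration first. Using the symmetry of the interpolation kernel between its two argument-sets to rewrite $\cK^{(2n)}_d(t^{\pm 1/2}\vec{w};\vec{z};t;p,q)=\cK^{(2n)}_d(\vec{z};t^{\pm 1/2}\vec{w};t;p,q)$, the $\vec{z}$-integrand becomes
\[
\cK^{(2n)}_{c/d}(\vec{z};\vec{x};t;p,q)\,
\cK^{(2n)}_d(\vec{z};t^{\pm 1/2}\vec{w};t;p,q)\,
\Delta^{(2n)}_S(\vec{z};\sqrt{pq}\,w^{\pm 1}/c;t;p,q),
\]
which is precisely the shape of the braid relation (Proposition \ref{prop:kern_braid}) with $(u_0,u_1)=(\sqrt{pq}\,w/c,\sqrt{pq}/wc)$; note $u_0u_1=pq/c^2=pq/((c/d)\cdot d)^2$. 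The $\vec{z}$-integral thus evaluates to a product of elliptic Gamma functions times $\cK^{(2n)}_{c}(\vec{x};t^{\pm 1/2}\vec{w};t;p,q)$.

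To finish, I would observe that of the two resulting Gamma products, one is exactly the prefactor $\prod_{1\le i\le 2n}\Gampq(\sqrt{pq}\,w^{\pm 1}x_i^{\pm 1}/d)$ appearing on the right-hand side of the claim, while the other, $\prod_{1\le k\le n}\Gampq(\sqrt{pq}\,d\,t^{\pm 1/2}w^{\pm 1}w_k^{\pm 1}/c)$, is precisely the contribution of the four additional parameters $\sqrt{pq}\,t^{\pm 1/2}d\,w^{\pm 1}/c$ to the $\vec{w}$-Selberg density. Absorbing it into $\Delta^{(n)}_S(\vec{w};\sqrt{pq}\,t v^{\pm 1}/d^2;t^2;p,q)$ and applying kernel symmetry once more to write $\cK^{(2n)}_{c}(\vec{x};t^{\pm 1/2}\vec{w};t;p,q)=\cK^{(2n)}_{c}(t^{\pm 1/2}\vec{w};\vec{x};t;p,q)$, the remaining $\vec{w}$-integral becomes, after renaming $\vec{w}\to\vec{z}$, exactly the integral on the right. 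Since both sides are meromorphic in the parameters by \cite[\S 10]{xforms}, establishing the identity on an open set suffices. I expect the main obstacle to be the contour bookkeeping needed to justify Fubini --- tracking the balancing relations, the various square roots, and the simultaneous containment and separation conditions for the two densities (with parameters $t$ and $t^2$) --- rather than anything structural, since the algebraic skeleton of the argument is forced once one decides to expand $\cL^{(2n)}_d$ as an integral.
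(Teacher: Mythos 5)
Your proposal is correct and is essentially the paper's own argument: expand $\cL^{(2n)}_d$ via its defining integral, exchange the order of integration (justified on a parameter region where all contours can be taken to be unit circles, then extended meromorphically), and collapse the inner $\vec{z}$-integral by the braid relation with $u_0u_1=pq/c^2$, after which the two Gamma products are exactly the stated prefactor and the extra parameters of the $t^2$-Selberg density. The only difference is that you spell out the bookkeeping that the paper leaves implicit.
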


\begin{proof}
Expand $\cL^{(2n)}_d$ using the definition, exchange the two integrals
(allowable since we can choose the parameters so that all singularities are
inside the unit circle), then use the braid relation to simplify the inner
integral.
\end{proof}

When $\vec{x}$ is specialized to a partition pair, we obtain the following.

\begin{cor}\label{cor:quad_litt}
For otherwise generic parameters satisfying $t^{2n}t_0t_1t_2u_0=pqd^2$,
\begin{gather*}
 \int \cR^{*(2n)}_{\blambda} (\vec{z};t_0/d,u_0/d;t;p,q)
\cL^{(2n)}_{t^{1/2}d}(\vec{z};t;p,q)\\
\qquad\quad{}\times
\Delta^{(2n)}_S\big(\vec{z};t_0/d,t_1/d,t_2/d,u_0/d,\sqrt{pq}v^{\pm
 1}/d;t;p,q\big) \\
\qquad {}= \frac{\Delta^0_{\blambda}\big(t^{2n-1}t_0/u_0|t^{2n-1}t_0t_1/d^2;t;p,q\big)}
 {\Delta^0_{\blambda}\big(t^{2n-1}t_0/u_0|t^{2n}t_0t_1;t;p,q\big)}
\prod_{\substack{0\le i<2n\\0\le r<s<3}}
 \frac{\Gampq\big(t^i t_rt_s/d^2\big)}
 {\Gampq\big(t^{i+1}t_rt_s\big)}\\
\qquad\quad{}\times \int
 \cR^{*(2n)}_{\blambda}\big(t^{\pm 1/2}\vec{z};t^{1/2}t_0,t^{1/2}u_0;t;p,q\big)\\
\qquad\quad{}\times \Delta^{(n)}_S\big(\vec{z}; t_0,tt_0,t_1,tt_1,t_2,tt_2,u_0,tu_0,\sqrt{pq} v^{\pm 1}/d^2;t^2;p,q\big).
 \end{gather*}
\end{cor}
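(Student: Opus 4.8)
The plan is to obtain Corollary~\ref{cor:quad_litt} as the specialization of Theorem~\ref{thm:quad_litt} in which $\vec x$ is set to a geometric-progression-times-partition-pair point, rewriting both occurrences of the interpolation kernel as interpolation functions via the (unlabelled) Proposition above that writes $\cR^{*(n)}_{\lambda,\mu}$ as a specialization of $\cK^{(n)}_c$.

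Concretely, I would first reparametrize Theorem~\ref{thm:quad_litt}: replace the parameter $d$ there by $t^{1/2}d$, so that $\cL^{(2n)}_d$ becomes $\cL^{(2n)}_{t^{1/2}d}$ and the density parameter $\sqrt{pqt}\,v^{\pm1}/d$ becomes $\sqrt{pq}\,v^{\pm1}/d$; and choose $w$ together with $c^2=t^{2n}t_0u_0$ so that $\sqrt{pq}\,w^{\pm1}/c=\{t_1/d,\,t_2/d\}$. These choices, combined with the normalization of the kernel, force precisely the balancing condition $t^{2n}t_0t_1t_2u_0=pqd^2$ of the Corollary. Next I would set $x_i=p^{\lambda_i}q^{\mu_i}t^{2n-i+1/2}t_0/c$; a short computation shows this is simultaneously the point at which $\cK^{(2n)}_{c/t^{1/2}d}(\vec z;\vec x;t;p,q)$ reduces to $\cR^{*(2n)}_{\blambda}(\vec z;t_0/d,u_0/d;t;p,q)$ and the point at which $\cK^{(2n)}_{c}(t^{\pm1/2}\vec z;\vec x;t;p,q)$ reduces to $\cR^{*(2n)}_{\blambda}(t^{\pm1/2}\vec z;t^{1/2}t_0,t^{1/2}u_0;t;p,q)$ — exactly the two interpolation functions appearing in the Corollary. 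Applying the Proposition on each side also produces explicit products of elliptic Gamma functions in the integration variables: on the left, $\prod_i\Gampq((t_0/d)z_i^{\pm1},(u_0/d)z_i^{\pm1})$ merges into $\Delta^{(2n)}_S$ to give the Selberg density on the left of the Corollary, while on the right, once the arguments $t^{1/2}t_0\cdot(t^{\pm1/2}z_j)^{\pm1}$ etc.\ are sorted out, the resulting factors $\prod_j\Gampq(t_0z_j^{\pm1},tt_0z_j^{\pm1},u_0z_j^{\pm1},tu_0z_j^{\pm1})$ merge into $\Delta^{(n)}_S(\,\cdot\,;t^2;p,q)$ to give the density on the right of the Corollary.

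At this stage the two integrals already coincide with those of the Corollary, so all that remains is to reconcile the scalar prefactor, which I expect to be the only genuinely laborious step. The surviving variable-independent factors are the normalizations $\prod_i\Gampq(t^{2n-i}t_0u_0/d^2,t^i)^{\mp1}$ and $\prod_i\Gampq(t^{2n-i}tt_0u_0,t^i)^{\pm1}$ together with the monomials $(pq/ab)^{\mp2\lambda_i\mu_i}$ from the two uses of the Proposition, plus the value of the prefactor $\prod_{1\le i\le2n}\Gampq(\sqrt{pq}\,w^{\pm1}x_i^{\pm1}/(t^{1/2}d))$ of Theorem~\ref{thm:quad_litt} at the partition-pair point — a product of $\Gampq$'s with arguments of the shape $p^{\pm\lambda_i}q^{\pm\mu_i}$ times monomials in $t,t_0,t_1,t_2,u_0,d$. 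Using the identity $\theta(x;p,q)_{l,m}=(-x)^{lm}p^{ml(l-1)/2}q^{lm(m-1)/2}\Gampq(p^lq^mx)/\Gampq(x)$ and the definition $\cC^0_\blambda(x;t;p,q)=\prod_i\theta(t^{1-i}x;p,q)_{\blambda_i}$, the $p^{\lambda}q^{\mu}$-shifted Gamma products collapse into $\cC^0_\blambda$-symbols, with the reflection relation $\Gampq(pq/z)=\Gampq(z)^{-1}$ handling those with negative exponents; the monomial discrepancy is absorbed exactly by the $(pq/ab)^{-2\lambda_i\mu_i}$'s; the $\cC^0_\blambda$-symbols assemble into $\Delta^0_{\blambda}(t^{2n-1}t_0/u_0|t^{2n-1}t_0t_1/d^2;t;p,q)/\Delta^0_{\blambda}(t^{2n-1}t_0/u_0|t^{2n}t_0t_1;t;p,q)$ once the balancing condition is used to rewrite auxiliary arguments (for instance $t^{2n-1}t_0t_2/d^2=pq/(tt_2u_0)$); and the purely geometric Gamma factors telescope into $\prod_{0\le i<2n,\ 0\le r<s<3}\Gampq(t^it_rt_s/d^2)/\Gampq(t^{i+1}t_rt_s)$.

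The hard part, then, is this last reconciliation of prefactors: it requires no input beyond Theorem~\ref{thm:quad_litt}, the $\cR$-as-$\cK$ Proposition, and the definitions of the elliptic $q$-symbols recalled in the introduction, but it is fiddly on account of the square roots $\sqrt{pq}$, $\sqrt{t}$ and the bookkeeping of the many monomial factors. Everything else — the specialization of the kernels, the absorption of the extra Gamma factors into the Selberg densities, and the identification of the balancing condition — is routine.
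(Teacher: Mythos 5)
Your proposal is correct and follows exactly the paper's route: the paper obtains Corollary \ref{cor:quad_litt} precisely by specializing $\vec{x}$ in Theorem \ref{thm:quad_litt} (with $d\mapsto t^{1/2}d$) to a partition-pair point, turning both kernels into interpolation functions via the $\cR^*$-as-$\cK$ proposition, absorbing the resulting Gamma factors into the Selberg densities, and matching the prefactors using the balancing condition. Your choices $c^2=t^{2n}t_0u_0$, $\{\sqrt{pq}w^{\pm1}/c\}=\{t_1/d,t_2/d\}$, and $x_i=p^{\lambda_i}q^{\mu_i}t^{2n-i+1/2}t_0/c$ are exactly the right specialization, and the prefactor bookkeeping you outline does close up as claimed.
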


If we take $d=\sqrt{-1}$ in this identity, we find that the right-hand side
agrees (even including the prefactors) with the right-hand side of
Conjecture Q1 of \cite{littlewood}; similarly, the case $d=p^{-1/4}$
recovers (up to shifting $v$) the right-hand side of Conjecture Q2 of
\cite{littlewood}. We may thus view those conjectures as
claims about certain degenerations of the Littlewood kernel (specifically
for $c\in \big\{\sqrt{-t},(t^2/p)^{1/4},(t^2/q)^{1/4}\big\}$). To be precise,
those conjectures do not give {\em formulas} for the Littlewood kernel, but
rather describe how to integrate certain test functions against the
Littlewood kernel.

Since those three degenerate examples all (conjecturally in
\cite{littlewood}, but see below) give rise to vanishing identities, this
suggests that the same should apply to an arbitrary instance of the
Littlewood kernel.

\begin{thm}\label{thm:van_litt}
For generic parameters satisfying $t^{2n}t_0t_1u_0=\sqrt{pqt}$,
the integral
\begin{gather*}
\frac{1}{Z}\int
\tcR^{(2n)}_\blambda (\vec{z};t_0/d{:}dt_0,t_1/d,dt_1;u_0/d,tdu_0;t;p,q ) \cL^{(2n)}_{t^{1/2}d}(\vec{z};t;p,q) \\
\qquad{}\times \Delta^{(2n)}_S\big(\vec{z};t_0/d,t_1/d,u_0/d,\sqrt{pq/t}/d;t;p,q\big)
\end{gather*}
vanishes unless $\blambda$ has the form $\bmu^2$, when the integral is
\begin{gather*}
\frac{\Delta_{\bmu} \big(1/t^2u_0^2|t^{2n},t^{2n-1}t_0^2,1/t^{2n}t_0u_0,1/t^{2n-1}t_0u_0;q,t^2;p\big)}
 {\Delta_{\bmu^2}\big( 1/tu_0^2|t^{2n},t^{2n-1}t_0^2,1/t^{2n}t_0u_0,1/t^{2n-1}t_0u_0;q,t;p\big)}.
\end{gather*}
Here, $Z$ is a normalization constant explicitly given by
\begin{gather*}
Z= \prod_{1\le i\le n}
 \Gampq\big(t^{2i},t^{2i-1}t_0^2,t^{2i-1}t_1^2,t^{2i-1}u_0^2\big) \\
\hphantom{Z=}{}\times \prod_{1\le i\le 2n}
 \Gampq\big(t^{i-1}t_0t_1/d^2,t^{i-1}t_0u_0/d^2,t^{i-1}t_1u_0/d^2,t^{i-1}t_0t_1,t^{i-1}t_0u_0,t^{i-1}t_1u_0\big).
\end{gather*}
\end{thm}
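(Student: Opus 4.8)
The plan is to reduce the stated evaluation, via the transformation of the Littlewood kernel already in hand (Corollary~\ref{cor:quad_litt}), to the quadratic vanishing identity for ordinary interpolation functions. Since both sides of the claimed formula become (rational functions times) Koornwinder-type integrals in the limit $p\to 0$, and are therefore formal Puiseux series in $p$ with rational-function coefficients once divided by that limit, it suffices to prove the identity on a Zariski dense set of parameters; in particular we may work with the formal kernel and assume that the sum defining $\cL^{(2n)}_{t^{1/2}d}$ converges formally.

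First I would expand the biorthogonal integrand $\tcR^{(2n)}_\blambda(\vec{z};t_0/d{:}dt_0,t_1/d,dt_1;u_0/d,tdu_0;t;p,q)$ by the elliptic binomial formula of \cite[Defn.~12]{bctheta}, writing it as a finite sum over $\bkappa\subset\blambda$ of $\obinomE{\blambda}{\bkappa}$-type coefficients (carrying the dependence on $dt_0,t_1/d,dt_1,tdu_0$) times interpolation functions $\cR^{*(2n)}_\bkappa(\vec{z};t_0/d,u_0/d;t;p,q)$ — whose parameters are exactly those appearing in Corollary~\ref{cor:quad_litt}. Then I would apply that corollary term by term, in the specialization in which one of its five Selberg parameters is sent to $d^2\sqrt{pq/t}$ and its auxiliary parameter to $\sqrt t$, so that a reflection cancellation leaves precisely the four-parameter density $\Delta^{(2n)}_S(\vec{z};t_0/d,t_1/d,u_0/d,\sqrt{pq/t}/d;t;p,q)$ of the Theorem and the balancing condition becomes $t^{2n}t_0t_1u_0=\sqrt{pqt}$. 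Each term is thereby rewritten as an $n$-dimensional integral of $\cR^{*(2n)}_\bkappa$ at the doubled argument $t^{\pm1/2}\vec{z}$ against the $t^2$-elliptic Selberg density.

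At this point I would invoke the quadratic transformation for the interpolation functions themselves: the integrand $\cR^{*(2n)}_\bkappa(t^{\pm1/2}\vec{z};\cdots;t;p,q)\,\Delta^{(n)}_S(\vec{z};\cdots;t^2;p,q)$ integrates to zero unless $\bkappa=\bnu^2$ for a partition pair $\bnu$, in which case $\cR^{*(2n)}_{\bnu^2}(t^{\pm1/2}\vec{z};\cdots;t)$ factors, up to an explicit product of elliptic Gamma functions, through $\cR^{*(n)}_\bnu(\vec{z};\cdots;t^2)$, and the remaining integral is the Kadell-type evaluation of \cite[Cor.~9.3]{xforms}. This is the elliptic incarnation of the classical vanishing result and is (equivalent to) \cite[Thm.~4.7]{littlewood}. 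So only the $\bkappa=\bnu^2$ terms survive, and what is left is a sum over $\bnu$ with $\bnu^2\subset\blambda$ of elliptic binomial coefficients in $t$ against explicit $\Delta^0$- and $\cC^{\pm}$-symbols on the $t^2$-curve.

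The last step is to resum. Using the $\bnu\mapsto\bnu^2$ and $\bnu\mapsto 2\bnu$ identities of the notation section, which convert $\Delta_{\bnu^2}(\cdots;q,t)$ and $\cC^{\pm}_{\bnu^2}(\cdots;t)$ into their counterparts at $(q,t^2)$, together with the connection-coefficient (Pieri-type) identities for the elliptic binomial coefficients, the sum over $\bnu$ should collapse: it vanishes identically unless $\blambda$ itself has the form $\bmu^2$ — this is where the parity phenomenon genuinely enters, via the binomial sum rather than anything automatic — and when $\blambda=\bmu^2$ only the term $\bnu=\bmu$ remains, which, after cancelling the explicit $\Gamma$-prefactors against $Z$, yields exactly the claimed ratio $\Delta_{\bmu}(1/t^2u_0^2|\cdots;q,t^2;p)/\Delta_{\bmu^2}(1/tu_0^2|\cdots;q,t;p)$. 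I expect the hard part to be precisely this resummation: establishing the vanishing for $\blambda\ne\bmu^2$ requires controlling the connection coefficients for the $d$-paired parameters carefully, and matching the surviving prefactor — a long product of elliptic Gamma functions together with $\Delta^0$, $\cC^{\pm}$, and $\Gampqt$ factors coming out of Corollary~\ref{cor:quad_litt} and the Kadell evaluation — against $Z$ and the stated answer is lengthy, if routine. A secondary obstacle is pinning down the exact specialized form of Corollary~\ref{cor:quad_litt} needed, since the $d$-pairing and the $\sqrt{pq/t}/d$ slot shift its effective parameter count.
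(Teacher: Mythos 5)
Your overall strategy is the right one, and indeed it is essentially the paper's: specialize Corollary~\ref{cor:quad_litt} at $v=\sqrt t$, $t_2=\sqrt{pq/t}\,d^2$ (so that the reflection cancellations produce the four-parameter density, the balancing condition becomes $t^{2n}t_0t_1u_0=\sqrt{pqt}$, and the right-hand density collapses to the order-zero $t^2$-elliptic Selberg density with parameters $t_0,tt_0,t_1,tt_1,u_0,tu_0$), relate $\tcR$ to the interpolation functions via the binomial formula, and feed in the quadratic integral result from \cite[\S 4]{littlewood}. But your step (4) asserts something false, and it is the crux: the integral $\int\cR^{*(2n)}_{\bkappa}(t^{\pm1/2}\vec z;t^{1/2}t_0,t^{1/2}u_0;t;p,q)\,\Delta^{(n)}_S(\vec z;t_0,tt_0,t_1,tt_1,u_0,tu_0;t^2;p,q)$ does \emph{not} vanish for $\bkappa$ not of the form $\bnu^2$. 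Interpolation functions carry no orthogonality with respect to this density; what \cite[Cor.~4.8]{littlewood} (the input the paper actually uses) provides is an expansion of this integral as a \emph{sum} over $\bnu$ of elliptic binomial coefficients $\obinomE{\bkappa}{\bnu^2}$ times explicit factors, generically nonzero for every $\bkappa$. A quick consistency check shows your version cannot stand: for $\blambda$ a single box, your termwise vanishing would leave only the $\bkappa=0$ term of the binomial expansion of $\tcR_\blambda$, whose coefficient and whose integral (the bare $t^2$-Selberg evaluation) are both generically nonzero, so your expansion would produce a nonzero answer, contradicting the very vanishing you are trying to prove. Your closing remark that the parity condition ``enters via the binomial sum'' is the correct instinct, but it is incompatible with a per-term vanishing; the vanishing can only arise from cancellation among the $\bkappa$-terms.

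The repair is exactly the paper's proof (your outline read with the corrected input): after applying the specialized Corollary~\ref{cor:quad_litt} to each term of the binomial expansion of $\tcR_\blambda$, insert \cite[Cor.~4.8]{littlewood} for each inner integral; the resulting double sum $\sum_{\bkappa}\sum_{\bnu}$ collapses in the $\bkappa$-variable by the inversion (orthogonality) relation for elliptic binomial coefficients --- this is what ``inverting the binomial coefficient'' means in the paper --- producing $\delta_{\blambda,\bnu^2}$ and hence both the vanishing for $\blambda\ne\bmu^2$ and the single surviving term giving the stated $\Delta_{\bmu}/\Delta_{\bmu^2}$ ratio after matching prefactors with $Z$. (Incidentally, the preliminary reduction to formal Puiseux series and Zariski density is unnecessary here, since Corollary~\ref{cor:quad_litt} is already an identity of meromorphic functions; and the claimed clean factorization of $\cR^{*(2n)}_{\bnu^2}$ at doubled arguments through $\cR^{*(n)}_{\bnu}$ on the $t^2$-curve, as well as the appeal to the Kadell-type evaluation \cite[Cor.~9.3]{xforms}, are not needed and not available in the form you use them.)
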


\begin{proof}
 Set $v=\sqrt{t}$, $t_2=\sqrt{pq/t}d^2$ in Corollary \ref{cor:quad_litt},
 and observe that the right-hand side can be expressed as a sum via
 \cite[Corollary~4.8]{littlewood}. Inverting the binomial coefficient in this
 sum turns the remaining interpolation function into a biorthogonal
 function, giving the above identity.
\end{proof}

Although the Littlewood kernel is ill-defined for $c=1$, since the same
applies to the interpolation kernel, na\"{i}ve manipulations suggest that
for suitable test functions $f$,
\begin{gather*}
\lim_{c\to 1}
\int
f(\vec{z})
\cL^{(2n)}_c(\vec{z};t;p,q)
\Delta^{(2n)}_S(\vec{z};t;p,q)
=
\int
f\big(t^{\pm 1/2}\vec{z}\big)
\Delta^{(n)}\big(\vec{z};t^2;p,q\big);
\end{gather*}
the point is that the interpolation kernel for $c=1$ corresponds to the identity as an integral operator. The Littlewood kernel similarly has issues for $c=t^{1/2}$, but again we can introduce a~test function to obtain (essentially via the same limit as Proposition~\ref{prop:braid_k})
\begin{gather*}
\lim_{c\to t^{1/2}} \int f(\vec{z}) \cL^{(2n)}_c(\vec{z};t;p,q)\Delta^{(2n)}_S(\vec{z};t;p,q)\\
\qquad {}= \iint
f(\vec{x},\vec{y})
\prod_{1\le i,j\le n} \Gampq\big(t x_i^{\pm 1} y_j^{\pm 1}\big) \Delta^{(n)}_D(\vec{x};p,q)\Delta^{(n)}_D(\vec{y};p,q).
\end{gather*}
Although this is not well-defined in general, we can check that the
required manipulations are valid when $f$ is the interpolation kernel (or,
more precisely, a suitable product of the interpolation kernel and gamma functions). This gives a new explicit vanishing integral following the above argument.

\begin{cor}
For generic parameters satisfying $t^{2n}t_0t_1u_0=\sqrt{pqt}$, the integral
\begin{gather*}
\frac{1}{Z} \iint \tcR^{*(2n)}_\blambda(\vec{x},\vec{y};t_0{:}t_0,t_1,t_1;u_0,tu_0;t;p,q)
\prod_{1\le i,j\le n} \Gampq(t y_j^{\pm 1} x_i^{\pm 1}) \\
\qquad{}\times \Delta^{(n)}_D\big(\vec{x};t_0,t_1,u_0,\sqrt{pq/t};p,q\big) \Delta^{(n)}_D\big(\vec{y};t_0,t_1,u_0,\sqrt{pq/t};p,q\big)
\end{gather*}
vanishes unless $\blambda$ has the form $\bmu^2$, when the integral is
\begin{gather*}
\frac{\Delta_{\bmu} \big(1/t^2u_0^2|t^{2n},t^{2n-1}t_0^2,1/t^{2n}t_0u_0,1/t^{2n-1}t_0u_0;q,t^2;p\big)}
 {\Delta_{\bmu^2}\big( 1/tu_0^2|t^{2n},t^{2n-1}t_0^2,1/t^{2n}t_0u_0,1/t^{2n-1}t_0u_0;q,t;p\big)}.
\end{gather*}
The normalization constant $Z$ is given by
\begin{gather*}
Z = \prod_{1\le i\le n} \Gampq\big(t^{2i},t^{2i-1}t_0^2,t^{2i-1}t_1^2,t^{2i-1}u_0^2\big)
\prod_{1\le i\le 2n} \Gampq\big(t^{i-1}t_0t_1,t^{i-1}t_0u_0,t^{i-1}t_1u_0\big)^2.
\end{gather*}
\end{cor}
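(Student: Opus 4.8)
The plan is to obtain this corollary as the $c\to t^{1/2}$ (equivalently $d\to1$) degeneration of Theorem~\ref{thm:van_litt}. First I would observe that in the integral of Theorem~\ref{thm:van_litt} the parameter $d$ enters only through the biorthogonal function $\tcR^{(2n)}_\blambda(\vec{z};t_0/d{:}dt_0,t_1/d,dt_1;u_0/d,tdu_0;t;p,q)$, the Littlewood kernel $\cL^{(2n)}_{t^{1/2}d}$, and the Selberg density $\Delta^{(2n)}_S(\vec{z};t_0/d,t_1/d,u_0/d,\sqrt{pq/t}/d;t;p,q)$; in particular the right-hand side of Theorem~\ref{thm:van_litt} (both the vanishing condition and the evaluation as a ratio of $\Delta$-symbols) is literally independent of $d$, while $Z$ depends on $d$ only through the factors $\Gampq(t^{i-1}t_0t_1/d^2,t^{i-1}t_0u_0/d^2,t^{i-1}t_1u_0/d^2)$. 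As $d\to1$ the biorthogonal function tends to $\tcR^{(2n)}_\blambda(\vec{z};t_0{:}t_0,t_1,t_1;u_0,tu_0;t;p,q)$, which is the biorthogonal factor of the corollary with $\vec{z}$ now thought of as the pair $(\vec{x},\vec{y})$, and $Z$ tends to the constant stated in the corollary, so all the content lies in the behaviour of the Littlewood-kernel integral.

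For that I would use the limiting identity displayed just above the corollary. Writing $\Delta^{(2n)}_S(\vec{z};t_0/d,t_1/d,u_0/d,\sqrt{pq/t}/d;t;p,q)$ as $\Delta^{(2n)}_S(\vec{z};t;p,q)$ times $\prod_i\Gampq((t_0/d)z_i^{\pm1},(t_1/d)z_i^{\pm1},(u_0/d)z_i^{\pm1},(\sqrt{pq/t}/d)z_i^{\pm1})$, the integrand of Theorem~\ref{thm:van_litt} has exactly the form $f(\vec{z})\,\cL^{(2n)}_{t^{1/2}d}(\vec{z};t;p,q)\,\Delta^{(2n)}_S(\vec{z};t;p,q)$ with $f$ a product of a biorthogonal function and elliptic Gamma functions. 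Splitting $\vec{z}$ as $(\vec{x},\vec{y})$ and letting $d\to1$, the displayed limit replaces this by $\iint f(\vec{x},\vec{y})\prod_{i,j}\Gampq(t x_i^{\pm1}y_j^{\pm1})\,\Delta^{(n)}_D(\vec{x};p,q)\,\Delta^{(n)}_D(\vec{y};p,q)$; absorbing the factors $\prod_i\Gampq(t_0x_i^{\pm1},t_1x_i^{\pm1},u_0x_i^{\pm1},\sqrt{pq/t}x_i^{\pm1})$ of $f$ into $\Delta^{(n)}_D(\vec{x};p,q)$, and similarly for $\vec{y}$, turns these into $\Delta^{(n)}_D(\vec{x};t_0,t_1,u_0,\sqrt{pq/t};p,q)$ and $\Delta^{(n)}_D(\vec{y};t_0,t_1,u_0,\sqrt{pq/t};p,q)$, leaving precisely the integrand of the corollary. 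Equating this with the (unchanged) right-hand side of Theorem~\ref{thm:van_litt} and the limiting value of $Z$ then gives the claim; one may equivalently run the original proof of Theorem~\ref{thm:van_litt} directly at $c=t^{1/2}$, using this limiting identity in place of the definition of the Littlewood kernel at the corresponding step.

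The one point that needs genuine care — and the main obstacle — is justifying that limiting identity for this particular $f$, since the Littlewood kernel is honestly singular at $c=t^{1/2}$. I would argue exactly as in the proof of Proposition~\ref{prop:braid_k}: as $c\to t^{1/2}$ two doubly-geometric progressions of poles of the integrand collide, the $2n$-dimensional contour is forced to pinch into a product of two $n$-dimensional contours, and the factor $\prod_{i,j}\Gampq(t x_i^{\pm1}y_j^{\pm1})$ emerges from the residues at the pinch. To see that inserting the biorthogonal function does not obstruct this, I would expand $\tcR^{(2n)}_\blambda$ as a finite sum of interpolation functions via the binomial formula of \cite{bctheta}, and note that each interpolation function is a specialization of the interpolation kernel (by the identity expressing $\cR^{*(n)}_{\lambda,\mu}$ in terms of $\cK^{(n)}_c$ at partition arguments), so that $f$ is, term by term, ``a suitable product of the interpolation kernel and gamma functions'' and the pinching argument applies to each term. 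Checking that the relevant contours can be arranged for generic $t_0,t_1,u_0$ — so that the partition-indexed poles of the interpolation functions stay clear of the colliding progressions — is routine given Theorem~\ref{thm:kern_poles} and the discussion of $c$-dependent poles following it, after which the stated $Z$ and evaluation are read off directly from Theorem~\ref{thm:van_litt}.
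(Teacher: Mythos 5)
Your proposal is correct and follows essentially the same route as the paper: the corollary is obtained as the $d\to1$ (i.e.\ $c\to t^{1/2}$) degeneration of Theorem~\ref{thm:van_litt} via the ``distributional'' identity splitting the $2n$-dimensional Selberg integral into two $n$-dimensional Dixon integrals with the cross factor $\prod_{i,j}\Gampq(t x_i^{\pm1}y_j^{\pm1})$, justified (as the paper does) by checking the pinching manipulations when the test function is a suitable product of the interpolation kernel and elliptic Gamma functions. Your extra remark that the biorthogonal function can be expanded via the binomial formula into interpolation functions, each a kernel specialization, is a reasonable way of making that check explicit and matches the paper's intent.
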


When $t^{2n}t_0u_0=1/d^2$ in the vanishing result, the biorthogonal function becomes an interpolation function. It turns out that there is a
more general vanishing result for interpolation functions.

\begin{thm}\label{thm:van_litt_interp}
For $t^{2n}t_0u_0=1$, the integral
\begin{gather*}
\int
\cR^{*(2n)}_{\blambda}(\vec{z};t_0/d,u_0/d;t;p,q)
\cL^{(2n)}_{t^{1/2}d}(\vec{z};t;p,q)
\Delta^{(2n)}_S\big(\vec{z};t_0/d,u_0/d,\sqrt{pq}v^{\pm 1}/d;t;p,q\big)
\end{gather*}
vanishes unless $\blambda$ has the form $\bmu^2$, when it equals
\begin{gather*}
Z
\frac{\Delta_{\bmu} \big(1/t^2u_0^2|t^{2n},pqt^{2n-2},\sqrt{pq}v^{\pm 1}d^2/u_0;q,t^2;p\big)}
 {\Delta_{\bmu^2}\big( 1/tu_0^2|t^{2n},pqt^{2n-1},\sqrt{pq}v^{\pm 1}d^2/u_0;q,t;p\big)},
\end{gather*}
with
\begin{gather*}
Z=
\prod_{1\le i\le 2n} \Gampq\big(t^{-i}/d^2\big)\\
\hphantom{Z=}{}\times \prod_{1\le i\le n}
 \Gampq\big(t^{2i},t^{1-2i},t^{2i-1}t_0^2,t^{2i-1}u_0^2,t^{2i-1}\sqrt{pq}t_0v^{\pm 1}/d^2,t^{2i-1}\sqrt{pq}u_0v^{\pm 1}/d^2\big).
\end{gather*}
\end{thm}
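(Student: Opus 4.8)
The plan is to deduce this identity by specializing Corollary~\ref{cor:quad_litt}, the quadratic transformation of the Littlewood kernel paired with an interpolation function, so that its left-hand side becomes exactly the integral in question. Corollary~\ref{cor:quad_litt} depends on parameters $t_0,t_1,t_2,u_0,v,d$ subject to $t^{2n}t_0t_1t_2u_0=pqd^2$, and its left-hand integrand is $\cR^{*(2n)}_{\blambda}(\vec z;t_0/d,u_0/d;t;p,q)\,\cL^{(2n)}_{t^{1/2}d}(\vec z;t;p,q)$ tested against $\Delta^{(2n)}_S(\vec z;t_0/d,t_1/d,t_2/d,u_0/d,\sqrt{pq}\,v^{\pm1}/d;t;p,q)$. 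First I would impose the further relation $t_1t_2=pqd^2$, equivalently $(t_1/d)(t_2/d)=pq$. By the reflection relation $\Gampq(pq/z)=\Gampq(z)^{-1}$ the parameters $t_1/d$ and $t_2/d$ then cancel out of the left-hand Selberg density, the balancing constraint collapses to $t^{2n}t_0u_0=1$, and the left-hand side of Corollary~\ref{cor:quad_litt} becomes precisely the integral of the theorem; in particular it no longer involves $t_1$, which is now a free parameter.

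Consequently the right-hand side of Corollary~\ref{cor:quad_litt}---whose integrand is $\cR^{*(2n)}_{\blambda}(t^{\pm1/2}\vec z;t^{1/2}t_0,t^{1/2}u_0;t;p,q)$ against the order-$2$ density $\Delta^{(n)}_S(\vec z;t_0,tt_0,t_1,tt_1,t_2,tt_2,u_0,tu_0,\sqrt{pq}\,v^{\pm1}/d^2;t^2;p,q)$ with $t_2=pqd^2/t_1$---must also be independent of $t_1$. I would exploit this by taking $t_1=pq/tt_0$, so that $t_2=td^2t_0$; then the pairs $\{t_1,tt_0\}$ and $\{tt_1,t_0\}$ each multiply to $pq$, and four more parameters cancel by the reflection relation, leaving the order-$0$ density $\Delta^{(n)}_S(\vec z;td^2t_0,t^2d^2t_0,u_0,tu_0,\sqrt{pq}\,v^{\pm1}/d^2;t^2;p,q)$. (This substitution has to be carried out as a limit $t_1\to pq/tt_0$, since the $\blambda$-independent Gamma prefactor of Corollary~\ref{cor:quad_litt} and the integral each acquire a simple pole or zero at that value, which cancel against one another.) The surviving right-hand side is then, up to an explicit product of elliptic Gamma functions, a quadratic (folded-variable) order-$0$ elliptic Selberg integral against an interpolation function---exactly the object evaluated in \cite[Cor.~4.8]{littlewood}, which yields the vanishing unless $\blambda=\bmu^2$ and, in that case, the explicit expression appearing in the statement.

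What then remains is the bookkeeping of prefactors: one must track the $\blambda$-dependent factor $\Delta^0_{\blambda}(\cdots)/\Delta^0_{\blambda}(\cdots)$ of Corollary~\ref{cor:quad_litt} through the two specializations and combine it, using the factorization of $\Delta^0$ over its pieces and the functional equations of $\theta(\,\cdot\,;p,q)_{l,m}$ and $\Gampq$, with the $\blambda=\bmu^2$ evaluation of \cite[Cor.~4.8]{littlewood}, so as to produce the ratio $\Delta_{\bmu}(\cdots;q,t^2;p)/\Delta_{\bmu^2}(\cdots;q,t;p)$; the remaining $\blambda$-independent Gamma factors should reassemble into the constant $Z$. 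I expect this simplification to be the main obstacle: the two $\Delta$-symbols live over different base tori ($q,t^2$ versus $q,t$) and must be matched term by term, and each of the three blocks of $Z$---the chain $\prod_i\Gampq(t^{-i}/d^2)$, the factors $\prod_i\Gampq(t^{2i},t^{1-2i},t^{2i-1}t_0^2,t^{2i-1}u_0^2)$, and the $v$-dependent factors $\prod_i\Gampq(t^{2i-1}\sqrt{pq}\,t_0v^{\pm1}/d^2,t^{2i-1}\sqrt{pq}\,u_0v^{\pm1}/d^2)$---must be traced to a definite source (the prefactor of Corollary~\ref{cor:quad_litt}, the prefactor of \cite[Cor.~4.8]{littlewood}, or the reflection cancellations above), and in general the various products split and recombine before matching. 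Finally, since the specialized parameters are non-generic in $t_1$ and $t_2$, the application of Corollary~\ref{cor:quad_litt} at these values is justified in the usual way: both sides have Puiseux expansions in $p$ with coefficients rational in the remaining parameters, so it suffices that the specialized parameter points remain Zariski dense, which they do.
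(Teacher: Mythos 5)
Your first step coincides with the paper's: substitute the constraint $t^{2n}t_0u_0=1$ (equivalently $t_1t_2=pqd^2$) into Corollary~\ref{cor:quad_litt}, so that $t_1/d$ and $t_2/d$ cancel from the left-hand density. The gap is in what you do with the right-hand side. On this subvariety the right-hand side of Corollary~\ref{cor:quad_litt} is \emph{singular}: the prefactor contains $\Gampq(t^0\,t_1t_2/d^2)=\Gampq(pq)=0$, while the contour of the $n$-dimensional $t^2$-density is pinched because $(tt_0)(tu_0)=t^2t_0u_0=(t^2)^{1-n}$. So the right-hand side is a $0\times\infty$ limit, and its value is \emph{not} ``prefactor times the reduced integral''; it is the finite sum of residues that survives this limit. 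That is exactly the content of the paper's proof: the integral degenerates to residues at $\vec z=t^{2n-2i}(p,q)^{\bmu_i}tt_0$, the interpolation function $\cR^{*(2n)}_{\blambda}(t^{\pm1/2}\vec z;t^{1/2}t_0,t^{1/2}u_0)$ kills all terms unless $\blambda=\bmu^2$, and the value is the one surviving residue (with the remark's shortcut: extract the $d$- and $v$-dependence and reduce to Theorem~\ref{thm:van_litt}). Your second specialization $t_1=pq/(tt_0)$ does not circumvent this. Within the subvariety, the numerator zero $\Gampq(t_1t_2/d^2)=\Gampq(pq)$ and the pinching $(tt_0)(tu_0)=(t^2)^{1-n}$ hold \emph{identically in $t_1$}, so at your special point the prefactor is a $0/0$ and the ``cancellation of four parameters'' cannot be performed inside a divergent integral; and if instead you approach along the transverse locus $t_1=pq/(tt_0)$ with $t^{2n}t_0u_0\to1$, then $(t_2/d)(u_0/d)=t\,t_0u_0/(t^{2n}t_0u_0)=t^{1-2n}$ identically, so the \emph{left}-hand contour is pinched along the whole path and the left side is no longer the theorem's integral -- the $t_1$-independence you want to exploit is only available where the right side is already singular. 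Concretely, your final formula (regularized prefactor times the six-parameter integral) would generically be nonzero for $\blambda$ not of the form $\bmu^2$, contradicting the vanishing you are trying to prove; the missing residue analysis is where both the vanishing and the evaluation actually come from.

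A secondary problem is the appeal to \cite[Cor.~4.8]{littlewood}. As that result is used here (see the proof of Theorem~\ref{thm:van_litt}, where $v=\sqrt t$ and $t_2=\sqrt{pq/t}\,d^2$), every pair of parameters of the $t^2$-density folds as $(x,tx)$, so the integrand is a function of the folded variables $t^{\pm1/2}\vec z$. In your reduced integral the pair $\sqrt{pq}\,v^{\pm1}/d^2$ folds only when $v=t^{\pm1/2}$, so for generic $v$ the integral is not ``exactly the object evaluated'' there; asserting a vanishing/evaluation for this unfolded case is essentially assuming the statement to be proved. If you want to avoid residues, the viable repair is closer to the paper's remark: establish the vanishing and the $d$- and $v$-dependence separately (the latter via univariate factors) and reduce to Theorem~\ref{thm:van_litt}, rather than trying to cancel parameters inside a pinched integral.
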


\begin{proof}
If we attempt to substitute the above parameters into Corollary~\ref{cor:quad_litt}, we find that the integral on the right-hand side
becomes singular (two parameters multiply to $(t^2)^{1-n}$). In
particular, the right-hand side becomes a finite sum in this limit, and in
fact at most one term can be nonzero (corresponding to
$\vec{z}=t^{2n-2i}(p,q)^{\bmu_i}tt_0$ with $\bmu^2=\blambda$). The desired
vanishing property follows; the specific nonzero values are then obtained
by taking the appropriate residue.
\end{proof}

\begin{rem}
 The residue calculation is rather tedious, so it may be worth noting the
 following shortcut: It is quite simple to determine the dependence of the
 right-hand side on $d$ and $v$ (as these only appear in the residue via
 univariate factors of the integrand), so that one can reduce to
 Theorem \ref{thm:van_litt} (taking $d=1$ or $v=\big(d^2\sqrt{pq}/t_0\big)^{\pm 1}$).
\end{rem}

In addition to vanishing results, another nice special case of Theorem~\ref{thm:quad_litt} involves taking $v=t^{1/2}$, $w=cd^2/t$, so that the
integral on the right-hand side becomes an instance of the definition of
the Littlewood kernel. We thus find the following.

\begin{thm}\label{thm:litt_braid}
The Littlewood kernel satisfies the identity
\begin{gather*}
\cL^{(2n)}_c(\vec{x};t;p,q) =
\frac{1}{\prod\limits_{1\le i\le 2n}\Gampq\big(\sqrt{pq}tx_i^{\pm 1}/cd^2,\sqrt{pq}x_i^{\pm 1}/c\big)}\\
\hphantom{\cL^{(2n)}_c(\vec{x};t;p,q) = }{}\times \int
\cK^{(2n)}_{c/d}(\vec{z};\vec{x};t;p,q)
\cL^{(2n)}_d(\vec{z};t;p,q)
\Delta^{(2n)}_S\big(\vec{z};\sqrt{pq}t/c^2d,\sqrt{pq}/d;t;p,q\big).
 \end{gather*}
\end{thm}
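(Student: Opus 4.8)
The plan is to obtain this identity by specializing the transformation of Theorem~\ref{thm:quad_litt}, as indicated in the remark preceding the statement. Concretely, in Theorem~\ref{thm:quad_litt} one takes the auxiliary parameters to be $v=t^{1/2}$ and $w=cd^2/t$. With these choices, the $n$-dimensional $t^2$-Selberg density on the right-hand side of Theorem~\ref{thm:quad_litt} acquires pairs of univariate parameters whose products equal $pq$; applying the reflection relation $\Gampq(pq/z)=\Gampq(z)^{-1}$ to cancel each such pair, together with the corresponding cancellations among the prefactor $\prod_{i}\Gampq(\sqrt{pq}\,w^{\pm1}x_i^{\pm1}/d)$, turns the right-hand side of Theorem~\ref{thm:quad_litt} into an explicit product of elliptic Gamma functions times $\int\cK^{(2n)}_c(t^{\pm1/2}\vec{z};\vec{x};t;p,q)\,\Delta^{(n)}_S(\vec{z};\sqrt{pq}\,t v'^{\pm1}/c^2;t^2;p,q)$ for a specific value of $v'$. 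By the definition of the Littlewood kernel this last integral equals $\prod_{i}\Gampq(\sqrt{pqt}\,v'^{\pm1}x_i^{\pm1}/c)\,\cL^{(2n)}_c(\vec{x};t;p,q)$, and $\cL^{(2n)}_c$ is independent of $v'$, so the right-hand side of Theorem~\ref{thm:quad_litt} is a known product of elliptic Gamma functions times $\cL^{(2n)}_c(\vec{x};t;p,q)$.

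On the left-hand side of Theorem~\ref{thm:quad_litt} the same substitution reduces the order-$0$ Selberg density $\Delta^{(2n)}_S(\vec{z};\sqrt{pq}\,w^{\pm1}/c,\sqrt{pqt}\,v^{\pm1}/d;t;p,q)$ — again using the reflection relation to discard the pair of parameters multiplying to $pq$ — to $\Delta^{(2n)}_S(\vec{z};\sqrt{pq}\,t/c^2 d,\sqrt{pq}/d;t;p,q)$, so the left-hand side becomes exactly the integral appearing on the right of the asserted identity. Equating the two sides and dividing by the accumulated product of elliptic Gamma functions isolates $\cL^{(2n)}_c(\vec{x};t;p,q)$; the residual $\Gamma$-prefactor must then be simplified, using only the functional equations collected in the Notation section, to $\prod_{1\le i\le 2n}\Gampq(\sqrt{pq}\,t x_i^{\pm1}/c d^2,\sqrt{pq}\,x_i^{\pm1}/c)^{-1}$. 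Since Theorem~\ref{thm:quad_litt} is an identity of meromorphic functions, the specialization is legitimate for generic values of the remaining parameters (one may take all contours to be powers of the unit circle on a suitable open set and extend by analytic continuation), and the definition of $\cL^{(2n)}_c$ is valid there as well; alternatively, one can invoke the formal-Puiseux-series machinery and check agreement on a Zariski dense set.

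The main obstacle is purely computational: keeping track of the elliptic Gamma functions through the two rounds of reflection-relation cancellations, and verifying that the surviving densities and prefactors line up precisely with the definition of $\cL^{(2n)}_c$ and with the target identity. A wrong choice of sign or of one of the square roots $\sqrt{pq}$, $\sqrt{t}$ would spoil the match, so some care is needed; here the freedom to negate $c$, to negate the internal parameter of the Littlewood kernel, or to negate either square root (under all of which $\cL^{(2n)}_c$ is invariant, as noted just after its definition) provides useful slack. No new ideas are required beyond those already used to prove Theorem~\ref{thm:quad_litt}; the present identity is essentially that theorem read off at the special parameter values where one of its integrals is, by definition, the Littlewood kernel.
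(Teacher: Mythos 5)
Your proposal is exactly the paper's own derivation: the theorem is obtained by specializing Theorem \ref{thm:quad_litt} so that the integral on its right-hand side becomes the defining integral of $\cL^{(2n)}_c$, with all bookkeeping done via the reflection relation $\Gampq(z)\Gampq(pq/z)=1$. One concrete caveat: the specialization you quote, $v=t^{1/2}$, $w=cd^2/t$ (taken from the text preceding the theorem), does not actually produce the cancellations — at that value no pair of density parameters multiplies to $pq$ on either side. The working choice is $v=t^{1/2}$, $w=cd/t$ (equivalently $t/cd$): then on the left the pair $\sqrt{pq}w/c\cdot\sqrt{pqt}v/d=pq$ cancels, leaving $\Delta^{(2n)}_S(\vec{z};\sqrt{pq}t/c^2d,\sqrt{pq}/d;t;p,q)$, while on the right the pairs $\{\sqrt{pq}t^{1/2}dw/c,\ \sqrt{pq}t^{1/2}/vd^2\cdot t\}$ and $\{\sqrt{pq}t^{-1/2}dw/c,\ \sqrt{pq}tv/d^2\}$ cancel, leaving $\Delta^{(n)}_S(\vec{z};\sqrt{pq}t\,t^{\pm1/2}/c^2;t^2;p,q)$, i.e.\ the definition of $\cL^{(2n)}_c$ with auxiliary parameter $v'=t^{1/2}$; finally the surviving prefactors $\Gampq(\sqrt{pq}(c/t)x_i^{\pm1})\Gampq(\sqrt{pq}(t/c)x_i^{\pm1})=1$ cancel, giving precisely the stated $\Gampq(\sqrt{pq}tx_i^{\pm1}/cd^2,\sqrt{pq}x_i^{\pm1}/c)^{-1}$. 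So your plan is correct and matches the paper, but you should carry out the parameter bookkeeping yourself rather than trust the quoted value of $w$, which appears to be a typo in the source.
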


\begin{rem}
 Note that when $c=q^{-1/2}d$, the integral on the right-hand side should
 na\"\i vely become a difference operator; of course the corresponding
 identity holds, and by the same proof.
\end{rem}

Again a ``Bailey lemma''-like manipulation gives a transformation.

\begin{cor}\label{cor:weird_quad}
The expression
\begin{gather*}
\prod_{1\le i\le 2n} \Gampq\big(\sqrt{pq}vdx_i^{\pm 1}/e\big)\\
\qquad{}\times \int \cK^{(2n)}_{ce}(\vec{z};\vec{x};t;p,q)\cL^{(2n)}_d(\vec{z};t;p,q) \Delta^{(2n)}_S\big(\vec{z};\sqrt{pq}v^{\pm 1}/cd,\sqrt{pq}/d,\sqrt{pq}t/de^2;t;p,q\big)
\end{gather*}
is invariant under swapping $d$ and $e$.
\end{cor}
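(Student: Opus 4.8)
The plan is to run a Bailey-lemma manipulation: unfold $\cL^{(2n)}_d$ by one layer using Theorem~\ref{thm:litt_braid}, re-collapse the two resulting interpolation kernels via the braid relation (Proposition~\ref{prop:kern_braid}), and observe that the outcome is the $d\leftrightarrow e$ swap of the original expression up to a prefactor that trivializes. Write $I(d,e)$ for the displayed quantity and factor it as $I(d,e)=\prod_i\Gampq(\sqrt{pq}\,vd\,x_i^{\pm1}/e)\,J(d,e)$ with
\[
J(d,e)=\int \cK^{(2n)}_{ce}(\vec{z};\vec{x};t;p,q)\,\cL^{(2n)}_d(\vec{z};t;p,q)\,\Delta^{(2n)}_S(\vec{z};\sqrt{pq}v^{\pm1}/cd,\sqrt{pq}/d,\sqrt{pq}t/de^2;t;p,q),
\]
so that the goal is $I(d,e)=I(e,d)$.

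First I would apply Theorem~\ref{thm:litt_braid} to $\cL^{(2n)}_d(\vec{z};t;p,q)$, taking the roles of its two moduli $(c,d)$ there to be $(d,e)$ here; this is the decisive choice. The prefactor $\bigl(\prod_i\Gampq(\sqrt{pq}t\,z_i^{\pm1}/de^2,\;\sqrt{pq}\,z_i^{\pm1}/d)\bigr)^{-1}$ it introduces cancels exactly the $\sqrt{pq}/d$ and $\sqrt{pq}t/de^2$ factors of the Selberg density in $\vec z$, leaving the order-$1$ density $\Delta^{(2n)}_S(\vec{z};\sqrt{pq}v^{\pm1}/cd;t;p,q)$, while the new inner integration is over $\vec w$ against $\cK^{(2n)}_{d/e}(\vec{w};\vec{z};t;p,q)\,\cL^{(2n)}_e(\vec{w};t;p,q)\,\Delta^{(2n)}_S(\vec{w};\sqrt{pq}t/d^2e,\sqrt{pq}/e;t;p,q)$. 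After exchanging the order of integration --- legitimate on the open set of parameters where every contour may be taken to be the unit circle, then propagated to all parameters by analytic continuation exactly as in the proofs of Theorems~\ref{thm:quad_litt} and~\ref{thm:litt_braid} --- the inner integral in $\vec z$ is precisely an instance of the braid relation for $\cK^{(2n)}_{ce}$ and $\cK^{(2n)}_{d/e}$; the balancing $u_0u_1=pq/\bigl((ce)^2(d/e)^2\bigr)=pq/c^2d^2$ needed there holds automatically because the factors of $e$ cancel and $(\sqrt{pq}v/cd)(\sqrt{pq}/vcd)=pq/c^2d^2$. Evaluating it replaces the $\vec z$-integral by $\prod_i\Gampq(\sqrt{pq}\,ev\,x_i^{\pm1}/d,\;\sqrt{pq}\,e\,x_i^{\pm1}/vd,\;\sqrt{pq}\,v\,w_i^{\pm1}/ce,\;\sqrt{pq}\,w_i^{\pm1}/vce)\,\cK^{(2n)}_{cd}(\vec{x};\vec{w};t;p,q)$.

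It then remains to reorganize. The $\vec w$-dependent factors $\prod_i\Gampq(\sqrt{pq}v^{\pm1}w_i^{\pm1}/ce)$ merge with $\Delta^{(2n)}_S(\vec{w};\sqrt{pq}t/d^2e,\sqrt{pq}/e;t;p,q)$ to give $\Delta^{(2n)}_S(\vec{w};\sqrt{pq}v^{\pm1}/ce,\sqrt{pq}/e,\sqrt{pq}t/d^2e;t;p,q)$, and using the symmetry of $\cK^{(2n)}_{cd}$ in its two argument tuples one recognizes what is left as $J(e,d)$ times the $\vec x$-prefactor $\prod_i\Gampq(\sqrt{pq}\,ev\,x_i^{\pm1}/d,\;\sqrt{pq}\,e\,x_i^{\pm1}/vd)$. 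Hence $J(d,e)=\prod_i\Gampq(\sqrt{pq}\,ev\,x_i^{\pm1}/d,\;\sqrt{pq}\,e\,x_i^{\pm1}/vd)\,J(e,d)$, and therefore
\[
\frac{I(d,e)}{I(e,d)}=\prod_{1\le i\le 2n}\Gampq(\sqrt{pq}\,vd\,x_i^{\pm1}/e,\;\sqrt{pq}\,e\,x_i^{\pm1}/vd).
\]
Finally, this ratio is $1$: by the reflection equation $\Gampq(z)\Gampq(pq/z)=1$, for each $i$ the four gamma factors pair off as $\Gampq(\sqrt{pq}\,vd\,x_i/e)\Gampq(\sqrt{pq}\,e\,x_i^{-1}/vd)$ and $\Gampq(\sqrt{pq}\,vd\,x_i^{-1}/e)\Gampq(\sqrt{pq}\,e\,x_i/vd)$, and in each pair the two arguments multiply to $pq$. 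This gives $I(d,e)=I(e,d)$.

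The real work here is purely organizational: the one genuinely delicate point is choosing the auxiliary modulus in Theorem~\ref{thm:litt_braid} (namely $e$) together with the Selberg parameters $u_0,u_1$ of the inner integral so that every gamma prefactor produced along the way cancels the intended density factor and the leftover $\vec z$-density is exactly the order-$1$ one the braid relation requires; once that alignment is made, the collapse to the braid relation and the telescoping of the residual prefactor via the reflection relation are forced. The only other thing to verify is the legitimacy of the contour interchange, handled as in the cited proofs by working first on a region where all contours are unit circles and then continuing meromorphically.
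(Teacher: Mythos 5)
Your proposal is correct and is precisely the ``Bailey Lemma''-type manipulation the paper has in mind for this corollary: unfold $\cL^{(2n)}_d$ via Theorem \ref{thm:litt_braid} with auxiliary modulus $e$, interchange the integrals on a region where all contours are unit circles, recombine $\cK^{(2n)}_{ce}$ and $\cK^{(2n)}_{d/e}$ by the braid relation (the balancing $u_0u_1=pq/c^2d^2$ checks out), and absorb the leftover gamma factors, with the final prefactor ratio collapsing to $1$ by reflection. The intermediate relation $J(d,e)=\prod_i\Gampq(\sqrt{pq}\,ev\,x_i^{\pm1}/d,\sqrt{pq}\,e\,x_i^{\pm1}/vd)\,J(e,d)$ and the resulting cancellation are exactly right, so nothing further is needed.
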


We obtain a different transformation by specializing the parameters in Theorem~\ref{thm:quad_litt} so that the right-hand side transforms under
Theorem~\ref{thm:L1_kern}.

\begin{cor}
The expression
\begin{gather*}
\frac{1}{\prod\limits_{1\le i\le 2n} \Gampq\big(\sqrt{pq}t(t^{1/2}v)^{\pm 1}z_i^{\pm 1}/d^2\big)}\\
\quad{}\times \int \cK^{(2n)}_c(\vec{z};\vec{y};t;p,q) \cL^{(2n)}_d(\vec{y};t;p,q) \Delta^{(2n)}_S\big(\vec{y};\sqrt{pq/t}(v/c)^{\pm 1},
 \sqrt{pq}\big(t^{1/2}v\big)^{\pm 1}t/cd^2 ;t;p,q\big)
\end{gather*}
is invariant under $v\mapsto 1/v$.
\end{cor}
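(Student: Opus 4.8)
The plan is to carry out exactly the manipulation announced just before the statement: specialize the auxiliary parameters of Theorem~\ref{thm:quad_litt} so that the $n$-dimensional integral on its right-hand side becomes a left-hand side of Theorem~\ref{thm:L1_kern}, apply that transformation, and then run Theorem~\ref{thm:quad_litt} backwards with the reflected parameters.

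First I would relabel the variables so that the integral in the statement is recognized as an instance of the left-hand side of Theorem~\ref{thm:quad_litt}: the integration variable $\vec{y}$ plays the role of the ``$\vec{z}$'' of that theorem, the external variables $\vec{z}$ play the role of ``$\vec{x}$'', the parameter $d$ is unchanged, and the Corollary's ``$c$'' is the combination ``$c/d$'' of the theorem. It then remains only to choose the theorem's two auxiliary parameters as suitable monomials in $v$, $c$, $d$ so that the two pairs of arguments of $\Delta^{(2n)}_S$ match $\sqrt{pq/t}(v/c)^{\pm 1}$ and $\sqrt{pq}(t^{1/2}v)^{\pm 1}t/cd^2$. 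With that done, Theorem~\ref{thm:quad_litt} rewrites the Corollary's integral as an explicit product of elliptic Gamma functions times an integral of the form $\int \cK^{(2n)}_{\bullet}(t^{\pm 1/2}\vec{z};\cdot\,;t;p,q)\,\Delta^{(n)}_S(\vec{z};v_0,v_1,v_2,v_3;t^2;p,q)$, i.e.\ precisely the shape of the left-hand side of Theorem~\ref{thm:L1_kern}; one checks that the balancing condition $v_0v_1v_2v_3=(pqt/c^2)^2$ of that theorem holds (each of the two pairs of parameters multiplies to $pq$ times the appropriate power of $t$, $c$, $d$). Applying Theorem~\ref{thm:L1_kern} replaces each $v_r$ by $pqt/c^2v_r$, at the cost of a further product of Gamma factors. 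The key combinatorial point is then that the transformed quadruple $\{pqt/c^2v_r\}$, read modulo the trivial invariance of $\Delta^{(n)}_S$ under permuting its parameters, coincides with the quadruple one obtains from the same specialization of Theorem~\ref{thm:quad_litt} performed with $v$ replaced by $1/v$; running Theorem~\ref{thm:quad_litt} in the reverse direction on the transformed integral therefore reproduces the Corollary's integral with $v\mapsto 1/v$.

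The main obstacle will be the bookkeeping of the three families of elliptic Gamma prefactors — the one produced by Theorem~\ref{thm:quad_litt}, the one produced by Theorem~\ref{thm:L1_kern}, and the explicit denominator $\prod_{i}\Gampq(\sqrt{pq}\,t(t^{1/2}v)^{\pm 1}z_i^{\pm 1}/d^2)$ in the statement — and verifying, using only the reflection relation $\Gampq(pq/z)=\Gampq(z)^{-1}$ together with the functional equations recalled in the introduction, that these collapse so as to show the displayed expression takes the same value at $v$ and at $1/v$. Convergence and the legality of the contour deformations are not a serious difficulty: as in the proofs of Theorems~\ref{thm:quad_litt} and~\ref{thm:L1_kern}, one first restricts to a range of parameters in which every singularity of every integrand lies inside the unit circle, so that all interchanges of the order of integration and all applications of the braid relation are justified, and then extends the resulting identity of meromorphic functions to all parameters by analytic continuation.
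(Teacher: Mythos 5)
Your proposal is correct and is essentially the paper's own argument: the corollary is obtained exactly as you describe, by specializing the auxiliary parameters of Theorem \ref{thm:quad_litt} (with the theorem's $c$ equal to your $cd$) so that its right-hand side becomes an instance of the left-hand side of Theorem \ref{thm:L1_kern}, whose parameter reflection realizes $v\mapsto 1/v$, after which one returns via Theorem \ref{thm:quad_litt} and matches the elliptic Gamma prefactors using the reflection relation. The one point worth making explicit in your write-up is that the required matching of density parameters is a ``cross'' matching (one member of each pair of the corollary's parameters goes to each pair of the theorem's), and that two of the six parameters of the resulting $t^2$-density multiply to $pq$ and cancel by reflection, which is what reduces it to the four-parameter form demanded by Theorem \ref{thm:L1_kern}; with that observed, the balancing condition and the Gamma bookkeeping work out exactly as you anticipate.
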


In the limit $c\to q^{-1/2}$, this becomes a difference equation.

\begin{cor}\label{cor:litt_diff}
The expression
\begin{gather*}
\frac{1}{\prod\limits_{1\le i\le 2n} \theta_p\big(\sqrt{pqt}v y_i^{\pm 1}\big)}
D^{(2n)}_q\big(\sqrt{p/t}\big(q^{1/2}v\big)^{\pm 1},\sqrt{pq}t\big(t^{1/2}v\big)^{\pm 1}/d^2;t;p\big)_{\vec{y}}
\cL^{(2n)}_d(\vec{y};t;p,q)
\end{gather*}
is invariant under $v\mapsto 1/v$.
\end{cor}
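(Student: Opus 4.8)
The plan is to deduce this from the preceding corollary by letting $c\to q^{-1/2}$, using the by-now-standard degeneration of the integral operator attached to $\cK^{(2n)}_c$ into a difference operator. The relevant input from Section~\ref{sec:formal_diffs} is the identification ${\cal D}^{(2n)}_{q^{-1/2}}(q,t;p)=D^{(2n)}_q(t;p)$, together with the conjugation rule defining $D^{(n)}_q(u_1,\dots,u_{2m+2};t;p)$ and the fact (used already to pass from Proposition~\ref{prop:kern_braid} to Proposition~\ref{prop:diff_braid}, and noted in the remark following Theorem~\ref{thm:litt_braid}) that the integral operator $g\mapsto\int\cK^{(2n)}_{q^{-1/2}}(\vec z;\vec w)\,g(\vec w)\,\Delta^{(2n)}_S(\vec w;u_0,u_1,u_2,u_3;t;p,q)$ may be replaced, via residue calculus, by the van Diejen--type operator $D^{(2n)}_q(u_0/q^{1/2},u_1/q^{1/2},u_2/q^{1/2},u_3/q^{1/2};t;p)$, the division by $q^{1/2}$ being precisely the shift built into the definition of $D^{(n)}_q(u_1,\dots,u_{2m+2};t;p)$.

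Concretely, I would first pull the four univariate $\Gampq$-factors out of $\Delta^{(2n)}_S(\vec y;\sqrt{pq/t}(v/c)^{\pm1},\sqrt{pq}(t^{1/2}v)^{\pm1}t/cd^2;t;p,q)$, use the symmetry $\cK^{(2n)}_c(\vec z;\vec y)=\cK^{(2n)}_c(\vec y;\vec z)$ to move the integration variable into the first argument of the kernel, and recognise the resulting integral as ${\cal D}^{(2n)}_c$ applied (in the remaining variables) to $\cL^{(2n)}_d$. Sending $c\to q^{-1/2}$ turns ${\cal D}^{(2n)}_c$ into $D^{(2n)}_q(t;p)$; absorbing the four univariate factors (now with their arguments shifted by $q^{1/2}$) assembles $D^{(2n)}_q(u_0,\dots,u_3;t;p)$, whose parameters evaluate to $\sqrt{p/t}(q^{1/2}v)^{\pm1}$ and $\sqrt{pq}\,t(t^{1/2}v)^{\pm1}/d^2$ once $c$ is set to $q^{-1/2}$ and each Selberg parameter is divided by $q^{1/2}$; and tracking the prefactor through $\Gampq(qz)=\theta_p(z)\Gampq(z)$ produces exactly the factor $\prod_i\theta_p(\sqrt{pqt}\,v\,y_i^{\pm1})^{-1}$ (after the harmless relabelling of the free variables as $\vec y$). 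The $v\mapsto1/v$ invariance of the preceding corollary then becomes the assertion.

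The main obstacle is the customary one in such degenerations: justifying that the limit $c\to q^{-1/2}$ may be carried out through the integral, i.e.\ that the pinch between the pole at $w_i=q^{k_i}c z_i$ and the double geometric progressions of poles of $\Delta^{(2n)}_S$ contributes only the stated residues. This is dispatched exactly as in the proof of Proposition~\ref{prop:diff_braid}: deform the contour so that the offending poles are crossed before sending $c\to q^{-1/2}$, after which the leftover integral tends to $0$ and only the residue sum assembling $D^{(2n)}_q(\cdots)$ survives. Two alternatives avoid this limit altogether: one may run the derivation of the preceding corollary with the difference operator ${\cal D}^{(2n)}_{q^{-1/2}}(q,t;p)$ substituted for the $\cK^{(2n)}_c$-integral operator from the outset (legitimate by the same remark), or one may work with the formal kernel --- since $\cL^{(2n)}_d$ has the formal Littlewood-sum expansion of Proposition~\ref{prop:litt_sum}, specializing $\vec y$ to a partition and applying $D^{(2n)}_q(\cdots)_{\vec y}$ termwise reduces the claim to the difference equation for elliptic interpolation functions recorded in \cite{bctheta}, whence the general case follows by the Zariski-density argument used throughout.
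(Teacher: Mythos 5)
Your proposal is correct and takes essentially the same route as the paper: the corollary is obtained exactly by letting $c\to q^{-1/2}$ in the preceding corollary, with the integral operator attached to $\cK^{(2n)}_{q^{-1/2}}$ degenerating (as in Proposition \ref{prop:diff_braid} and the remark after Theorem \ref{thm:litt_braid}) into $D^{(2n)}_q(q^{-1/2}u_0,\dots,q^{-1/2}u_3;t;p)$ with the stated parameters. The only caveat is bookkeeping: after extracting the theta functions one is left not with the prefactor $\prod_i\theta_p(\sqrt{pqt}\,v\,y_i^{\pm1})^{-1}$ exactly but with that times the extra factor $\prod_i\Gampq(\sqrt{pqt}\,v^{\pm1}y_i^{\pm1})^{-1}$, which is itself invariant under $v\mapsto 1/v$ and hence harmless for the claim.
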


It turns out that in many cases, this 1-parameter family of difference
equations suffices to uniquely determine the Littlewood kernel; see below,
where we use it to evaluate the Littlewood kernel in the case
$d=(pt)^{1/4}$.

As usual for branching rules, the right-hand side of the branching rule for
$\cL^{(2n)}_c$ appears to have less symmetry than the left-hand side. This,
of course, corresponds to a transformation, which generalizes to the
following.

\begin{thm}\label{thm:spec_int_litt}
If $t_0t_1t_2t_3=\big(pq/c^2\big)^2$, then
\begin{gather*}
\frac{1}{
 \prod\limits_{\substack{1\le i\le 2n\\0\le r\le 3}} \Gampq\big(t^{1/2}t_r x_i^{\pm 1}\big)}
\int \cK^{(2n)}_{t^{1/2}}(\vec{y};\vec{x};t;p,q) \cL^{(2n)}_c(\vec{y};t;p,q) \Delta^{(2n)}_S(\vec{y};t_0,t_1,t_2,t_3;t;p,q)
\end{gather*}
is invariant under $t_r\mapsto pq/c^2t_r$.
\end{thm}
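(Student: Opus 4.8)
The plan is to unfold $\cL^{(2n)}_c$ by a single step, exchange the order of integration, and recognize the resulting inner integral as a commutation relation whose parameter symmetry is exactly $t_r\mapsto pq/c^2t_r$.

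First I would apply Theorem~\ref{thm:litt_braid} with $d=\sqrt{t}/c$. For this choice $cd^2=t/c$, so the prefactor $\prod_i\Gampq(\sqrt{pq}\,t x_i^{\pm1}/cd^2,\sqrt{pq}\,x_i^{\pm1}/c)=\prod_i\Gampq(\sqrt{pq}\,c x_i^{\pm1},\sqrt{pq}\,x_i^{\pm1}/c)$ is identically $1$ (in each pair the two arguments multiply to $pq$, so they cancel by the reflection relation of $\Gampq$), and similarly the two parameters $\sqrt{pq}\,t/c^2d=\sqrt{pqt}/c$ and $\sqrt{pq}/d=c\sqrt{pq/t}$ of the Selberg density multiply to $pq$ and drop out. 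Theorem~\ref{thm:litt_braid} therefore collapses to the clean recursion
\[
\cL^{(2n)}_c(\vec{y};t;p,q)=\int\cK^{(2n)}_{c^2/\sqrt{t}}(\vec{z};\vec{y};t;p,q)\,\cL^{(2n)}_{\sqrt{t}/c}(\vec{z};t;p,q)\,\Delta^{(2n)}_S(\vec{z};t;p,q).
\]
Substituting this into the integral of the theorem and exchanging the order of integration (legitimate on the open set of parameters where every contour can be taken to be a unit circle, and extending to meromorphic functions by \cite[\S10]{xforms}), the integral over $\vec{y}$ becomes $\int\cK^{(2n)}_{t^{1/2}}(\vec{y};\vec{x};t;p,q)\,\cK^{(2n)}_{c^2/\sqrt{t}}(\vec{y};\vec{z};t;p,q)\,\Delta^{(2n)}_S(\vec{y};t_0,t_1,t_2,t_3;t;p,q)$, where I have used the $\vec{x}\leftrightarrow\vec{y}$ symmetry of the kernel to rewrite $\cK^{(2n)}_{c^2/\sqrt{t}}(\vec{z};\vec{y})$. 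This is an instance of Corollary~\ref{cor:ker_comm}: its balancing condition reads $t_0t_1t_2t_3\,(t^{1/2})^2(c^2/\sqrt{t})^2=p^2q^2$, i.e.\ precisely $t_0t_1t_2t_3=(pq/c^2)^2$. Applying it replaces each $t_r$ by $pq/(t^{1/2}\cdot(c^2/\sqrt{t})\cdot t_r)=pq/c^2t_r$, produces the factor $\prod_{i,r}\Gampq(t^{1/2}t_rx_i^{\pm1})$ that cancels the normalization in the statement, and leaves a residual factor $\prod_{i,r}\Gampq((c^2/\sqrt{t})t_rz_i^{\pm1})$ inside the $\vec{z}$-integral.

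It then remains to see that the resulting expression is unchanged under $t_r\mapsto pq/c^2t_r$, and here I would argue as in the derivations of the $W(E_7)$-symmetry in \cite[Thm.~9.7]{xforms}, of Theorem~\ref{thm:L1_kern}, and of Corollary~\ref{cor:ker_comm} itself. The left-hand side of the inner integral is invariant under the visible $S_4$ permuting $t_0,\dots,t_3$, and together with Theorem~\ref{thm:bailey_xform} and Corollary~\ref{cor:ker_comm} this yields an action of $D_4$ on $(t_0,\dots,t_3)$; the involution $t_r\mapsto pq/c^2t_r$ is the nontrivial double coset of $S_4$ in that $D_4$, hence is realized by an explicit word in those two transformations. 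Applying that word to the inner integral, one checks that the gamma prefactors it accumulates — each a product of factors of the form $\Gampq(a z_i^{\pm1})\Gampq((pq/a)z_i^{\pm1})=1$ or $\Gampq(b x_i^{\pm1})\Gampq((pq/b)x_i^{\pm1})=1$ — telescope against the residual factor above and against the normalization, so that after swapping the $\vec{y}$- and $\vec{z}$-integrals back and re-collapsing $\int\cK^{(2n)}_{c^2/\sqrt{t}}(\vec{y};\vec{z})\,\cL^{(2n)}_{\sqrt{t}/c}(\vec{z})\,\Delta^{(2n)}_S(\vec{z};t)=\cL^{(2n)}_c(\vec{y})$ one recovers the original quantity with $t_r$ replaced by $pq/c^2t_r$.

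The main obstacle I anticipate is exactly this last bookkeeping: isolating the correct word in $D_4$ and verifying that all of the accumulated gamma prefactors cancel as claimed, since a single naive pass through Corollary~\ref{cor:ker_comm} merely returns the expression to itself. As a safeguard one can first reduce to a Zariski-dense locus — e.g.\ the values of $(t_0,t_1,t_2,t_3)$ for which the defining sum of $\cL^{(2n)}_c$ terminates, exactly as in the proof of Theorem~\ref{thm:L1_kern} — so that all the integral manipulations become finite identities of theta functions; alternatively, the statement can be recognized as the symmetrization over all $2n$ arguments of the branching rule for $\cL^{(2n)}_c$ (the Corollary preceding Theorem~\ref{thm:quad_litt}), which is the ``missing symmetry'' alluded to just before the theorem.
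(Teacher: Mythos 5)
Your opening moves are sound: with $d=\sqrt{t}/c$ Theorem \ref{thm:litt_braid} does collapse to $\cL^{(2n)}_c(\vec{y})=\int\cK^{(2n)}_{c^2/\sqrt{t}}(\vec{z};\vec{y})\,\cL^{(2n)}_{\sqrt{t}/c}(\vec{z})\,\Delta^{(2n)}_S(\vec{z};t;p,q)$, and after exchanging integrals the inner $\vec{y}$-integral is an instance of Corollary \ref{cor:ker_comm} with exactly the stated balancing. The gap is the endgame, and it is not mere bookkeeping. After one pass of Corollary \ref{cor:ker_comm} the kernel carrying $\vec{x}$ becomes $\cK^{(2n)}_{c^2/\sqrt{t}}$ and the kernel carrying $\vec{z}$ becomes $\cK^{(2n)}_{t^{1/2}}$, while the residual factor $\prod_{i,r}\Gampq((c^2/\sqrt{t})t_rz_i^{\pm 1})$ turns the $\vec{z}$-density into a four-parameter Selberg density; this cannot be re-collapsed by Theorem \ref{thm:litt_braid}, which needs the kernel parameter $c/d$ together with only the two special density parameters. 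Carrying the computation through (swap the integrals back and absorb the residual factors), what you actually obtain is the statement of the theorem with $(c;t_0,\dots,t_3)$ replaced by $(\sqrt{t}/c;(c^2/\sqrt{t})t_0,\dots,(c^2/\sqrt{t})t_3)$, whose involution corresponds exactly to the original one; since $c\mapsto\sqrt{t}/c$ is an involution, iterating returns you to the starting point, so the argument is circular --- precisely the danger you flagged. Inserting Theorem \ref{thm:bailey_xform} into the ``word'' does not rescue this, because it moves the kernel parameters off the pair $(t^{1/2},c^2/\sqrt{t})$ and so destroys any possibility of re-collapsing; and the proposed fallbacks do not close the gap either: termination of the deformed Littlewood sum is a condition on the (integrated) arguments of $\cL^{(2n)}_c$, not on $t_0,\dots,t_3$, and the theorem is a generalization of the hidden symmetry of the branching rule, not something one can read off from it.

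The paper's proof takes a different unfolding that makes the symmetry manifest in one step, with no re-collapsing needed. Expand $\cL^{(2n)}_c(\vec{y})$ by its \emph{definition}, i.e.\ as an $n$-dimensional integral of $\cK^{(2n)}_c(t^{\pm 1/2}\vec{w};\vec{y})$ against $\Delta^{(n)}_S(\vec{w};\sqrt{pq}\,tv^{\pm 1}/c^2;t^2;p,q)$, and choose the free parameter $v$ (e.g.\ $v=\sqrt{pqt}/ct_3$) so that one of the two univariate factors produced by the normalization cancels one of the $t_r$; the $\vec{y}$-integral then still has only four parameters, $t_0,t_1,t_2,c^2t_3/t$, satisfying the balancing $u_0u_1u_2u_3=p^2q^2/tc^2$. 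Because the first argument set of the inner kernel consists of geometric progressions $[w_j;t]_2$, one must use the degenerate commutation relation, Proposition \ref{prop:commut_k}, rather than Corollary \ref{cor:ker_comm}; in the resulting double integral every $t_r$-dependent elliptic Gamma factor pairs, via the reflection formula, with a Dixon-density parameter or with the normalization in the statement, so invariance under $t_r\mapsto pq/c^2t_r$ is visible by inspection. If you want to salvage your route, you would need an independent argument for the symmetry at some value in the orbit $\{c,\sqrt{t}/c\}$, or a genuinely different closing identity; as written, the last step fails.
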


\begin{proof}
Expand the Littlewood kernel using the definition, choosing $v$ so that the integral over~$y$ still has only four parameters. Applying the degenerate version of commutation to this integral gives an integral in which the desired symmetry is manifest.
\end{proof}

When two parameters multiply to $pq$, the integral again is independent of the remaining parameter, and once more gives rise to the Littlewood kernel.

\begin{cor}\label{cor:spec_spec_int_litt}
We have
\begin{gather*}
\int \cK^{(2n)}_{t^{1/2}}(\vec{z};\vec{x};t;p,q) \cL^{(2n)}_c(\vec{z};t;p,q)
\Delta^{(2n)}_S\big(\vec{z};\sqrt{pq}v^{\pm 1}/c^2;t;p,q\big) \\
\qquad {} =
\prod_{1\le i\le 2n} \Gampq\big(\sqrt{pqt}v^{\pm 1}x_i^{\pm 1}/c^2,\sqrt{pq/t}v^{\pm 1} x_i^{\pm 1}\big) \cL^{(2n)}_{t^{1/2}c}(\vec{x};t;p,q).
\end{gather*}
\end{cor}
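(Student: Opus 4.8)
The plan is to deduce this from Theorem~\ref{thm:spec_int_litt} by the same device used to extract the Littlewood kernel from Theorem~\ref{thm:L1_kern}: force one pair among $t_0,t_1,t_2,t_3$ to multiply to $pq$. Concretely, I would set $t_0=\sqrt{pq}\,v/c^2$ and $t_1=\sqrt{pq}\,v^{-1}/c^2$, so that the balancing condition $t_0t_1t_2t_3=(pq/c^2)^2$ forces $t_2t_3=pq$. The reflection relation $\Gampq(pq/z)=\Gampq(z)^{-1}$ then cancels the univariate factors $\Gampq(t_2 y_i^{\pm1})\Gampq(t_3 y_i^{\pm1})$ out of $\Delta^{(2n)}_S$, so the integral in Theorem~\ref{thm:spec_int_litt} collapses to
\[
D(v):=\int \cK^{(2n)}_{t^{1/2}}(\vec z;\vec x;t;p,q)\,\cL^{(2n)}_c(\vec z;t;p,q)\,\Delta^{(2n)}_S(\vec z;\sqrt{pq}\,v^{\pm1}/c^2;t;p,q),
\]
which is exactly the left-hand side of the asserted identity. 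Note that $D(v)$ no longer involves $t_2$ at all, even though the Gamma prefactor of Theorem~\ref{thm:spec_int_litt} still does.

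The main step is to feed this into the $t_r\mapsto pq/(c^2 t_r)$ invariance of Theorem~\ref{thm:spec_int_litt}. That inversion interchanges the product $pq$ of the pair $\{t_2,t_3\}$ with the product $pq/c^4$ of the pair $\{t_0,t_1\}$, so on the image side it is $t_0',t_1'$ that multiply to $pq$ and cancel, leaving a two-parameter density whose parameters again multiply to $pq/c^4$. Writing $G(a,b):=\prod_i\Gampq(t^{1/2}a x_i^{\pm1},t^{1/2}b x_i^{\pm1})$, the invariance becomes $D(v)/\bigl(G(t_0,t_1)G(t_2,t_3)\bigr)=D'(t_2)/\bigl(G(t_0',t_1')G(t_2',t_3')\bigr)$, where $D'(t_2)$ is the correspondingly reduced integral on the image side. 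Since $t_0',t_1'$ depend only on $v$ while $t_2',t_3'$ depend only on $t_2$, the $v$- and $t_2$-dependences separate: $D(v)\,G(t_0',t_1')/G(t_0,t_1)$ depends only on $v$, equals $D'(t_2)\,G(t_2,t_3)/G(t_2',t_3')$ which depends only on $t_2$, and hence both are a common $v$-independent meromorphic function $\mathcal C(\vec x)$. Unwinding with $t_0=\sqrt{pq}\,v/c^2$ and one more use of $\Gampq(pq/z)=\Gampq(z)^{-1}$ to rewrite $\prod_i\Gampq(\sqrt{pqt}\,v^{\pm1}x_i^{\pm1})^{-1}=\prod_i\Gampq(\sqrt{pq/t}\,v^{\pm1}x_i^{\pm1})$, this reads
\[
D(v)=\prod_{1\le i\le 2n}\Gampq(\sqrt{pqt}\,v^{\pm1}x_i^{\pm1}/c^2,\sqrt{pq/t}\,v^{\pm1}x_i^{\pm1})\,\mathcal C(\vec x).
\]

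It remains to identify $\mathcal C(\vec x)$ with $\cL^{(2n)}_{t^{1/2}c}(\vec x;t;p,q)$. Since $\mathcal C$ is independent of $v$, I would evaluate the previous display at $v=1/c$: there the density in $D(1/c)$ becomes $\Delta^{(2n)}_S(\vec z;\sqrt{pq}/c^3,\sqrt{pq}/c;t;p,q)$ and $D(1/c)$ is precisely the right-hand side of Theorem~\ref{thm:litt_braid} after the substitutions $c\mapsto t^{1/2}c$, $d\mapsto c$. Equating the two expressions for $D(1/c)$, the leftover product of elliptic Gamma functions (of the form $\prod_i\Gampq(\sqrt{pqt}\,c x_i^{\pm1},\sqrt{pq/t}\,x_i^{\pm1}/c)$) equals $1$ by the reflection relation, so $\mathcal C(\vec x)=\cL^{(2n)}_{t^{1/2}c}(\vec x;t;p,q)$; this is also consistent with the fact that $\mathcal C$ is $v$-free, since the product $\prod_i\Gampq(\sqrt{pqt}\,v^{\pm1}x_i^{\pm1},\sqrt{pq/t}\,v^{\pm1}x_i^{\pm1})$ is identically $1$.

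I do not expect a genuine conceptual obstacle: every ingredient — Theorem~\ref{thm:spec_int_litt}, Theorem~\ref{thm:litt_braid}, the functional equations of $\Gampq$, and the order-reduction of an elliptic Selberg integral when two parameters multiply to $pq$ — is already available. The most delicate point is the identification step: it requires matching against Theorem~\ref{thm:litt_braid} through a specific reparametrization and relies on several not-quite-obvious cancellations $z_1z_2=pq$; beyond that the only real work is the careful bookkeeping of square roots and of which elliptic-Gamma arguments pair up under $z\mapsto pq/z$, together with the routine ``for generic parameters, then by meromorphic continuation'' caveats needed to legitimize the contour reductions and the specialization $v=1/c$.
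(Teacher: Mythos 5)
Your proposal is correct and is essentially the paper's own (implicit) argument: specialize Theorem \ref{thm:spec_int_litt} to $t_2t_3=pq$ so the density order-reduces, use the stated $t_r\mapsto pq/c^2t_r$ symmetry to separate the $v$- and $t_2$-dependence and conclude the normalized integral is a $v$-free function of $\vec{x}$, then identify that function as $\cL^{(2n)}_{t^{1/2}c}$ by comparing the value at $v=1/c$ with Theorem \ref{thm:litt_braid} in the case $(c,d)\mapsto(t^{1/2}c,c)$. The only blemish is bookkeeping in the last step: the leftover Gamma product should be $\prod_{1\le i\le 2n}\Gampq(\sqrt{pqt}\,x_i^{\pm1}/c,\sqrt{pq/t}\,c\,x_i^{\pm1})$ rather than the $c\mapsto 1/c$ image you wrote, but since that product also equals $1$ by the reflection relation, the conclusion is unaffected.
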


Taking $c=(pqt)^{1/4}$ gives another semi-explicit special case of the Littlewood kernel:
\begin{gather*}
\cL^{(2n)}_{(pqt^3)^{1/4}}(\vec{x};t;p,q) =
\frac{\cK^{(2n)}_{(pqt)^{1/2}}\big((pq/t)^{1/4}\vec{x};(pq/t)^{-1/4}\vec{x};\sqrt{pq/t};p,q\big)}{
\Gampq(t)^{2n}
\prod\limits_{1\le i<j\le 2n} \Gampq\big(t x_i^{\pm 1}x_j^{\pm 1}\big)}.
\end{gather*}
The corollary can also be combined with the ``distributional'' formula for $\cL^{(2n)}_{t^{1/2}}$ to give
\begin{gather}
\cL^{(2n)}_t(\vec{x};t;p,q) = \frac{\cK^{(n)}_t(x_1,\dots,x_n;x_{n+1},\dots,x_{2n};pq/t;p,q)^2}
 {\Gampq(t)^{2n}\prod\limits_{1\le i<j\le 2n} \Gampq\big(t x_i^{\pm 1}x_j^{\pm 1}\big)}.\label{eq:litt_t}
\end{gather}
Two more such formulas will follow from the ``distributional'' expressions of $\cL^{(2n)}_{\sqrt{-t}}$ and $\cL^{(2n)}_{q^{-1/4}t^{1/2}}$; we state
them here, but note that they are properly viewed as corollaries of Theorems~\ref{thm:Q2} and~\ref{thm:Q1} below
\begin{gather*}
\cL^{(2n)}_{q^{-1/4}t}(\vec{x};t;p,q)=
\frac{\cK^{(n)}_{q^{-1/2}t}\big(x_1,\dots,x_n;x_{n+1},\dots,x_{2n};pq/t;p,q^{1/2}\big)}
{\Gampq(t)^{2n}
\prod\limits_{1\le i<j\le 2n} \Gampq\big(t x_i^{\pm 1}x_j^{\pm 1}\big)}, \\
\cL^{(2n)}_{t\sqrt{-1}}(x_1,\dots,x_{2n};t;p,q) =
\frac{\cK^{(n)}_{t^2}\big(x_1^2,\dots,x_n^2;x_{n+1}^2,\dots,x_{2n}^2;p^2q^2/t^2;p^2,q^2\big)}
{\Gampq(t)^{2n} \prod\limits_{1\le i<j\le 2n} \Gampq\big(t x_i^{\pm 1}x_j^{\pm 1}\big)}.
\end{gather*}

Proposition \ref{prop:litt_n2} shows that there is a close connection
between $\cL^{(2)}_c$ and $\cK^{(1)}_{c^2/t}$, and several of the above
examples involve similar connections for $\cL^{(2n)}_c$. This suggests in
general that we should expect $\cL^{(2n)}_c$ to have a nice special case
whenever $\cK^{(n)}_{c^2/t}$ falls in a nice special case. E.g., the
product case $\cL^{(2n)}_{(pqt)^{1/4}}$ corresponds in this way to the
product case $\cK^{(n)}_{\sqrt{pq/t}}$. (This correspondence includes the
degenerate cases considered in Section~\ref{section8} below, which
correspond to $\cK^{(n)}_{-1}$, $\cK^{(n)}_{q^{-1/2}}$,
$\cK^{(n)}_{p^{-1/2}}$ via this heuristic.) Since
$\cK^{(n)}_{(p/t)^{1/2}}$ had an unexpected determinantal expression, this
suggests that we should investigate $\cL^{(2n)}_{(pt)^{1/4}}$. Although
the argument for the interpolation kernel case does not carry over, we can
still use the cases $t=q$ and $2n=2$ as a guide. In particular, if we
guess that dividing by a suitable product makes $\cL^{(2n)}_{(pt)^{1/4}}$
independent of $q$, then there is a~natural possibility for that product. We are thus led to guess that
\begin{gather*}
\Gampq((pt)^{1/2})^{2n}
\prod_{1\le i<j\le 2n} \Gampq\big((pt)^{1/2}x_i^{\pm 1}x_j^{\pm 1}\big)
\cL^{(2n)}_{(pt)^{1/4}}(\vec{x};t;p,q)
\end{gather*}
is independent of $q$; this would give us a pfaffian expression for
$\cL^{(2n)}_{(pt)^{1/4}}(\vec{x};t;p,q)$.

None of the methods we have used above (or will use in Section~\ref{section8}) appears to be applicable to derive such an expression. It
turns out, however, that given such a guess, there {\em is} a method we can
use to prove it. The key observation is that Corollary \ref{cor:litt_diff}
gives a family of difference equations which, in a suitable limit, has a
unique formal solution. Indeed, taking $\ord(v)=0$, $\ord(d)=1/4$ in
Corollary~\ref{cor:litt_diff} gives a difference equation with formal
series coefficients that in the limit $p\to 0$ becomes the equation of
Corollary~\ref{cor:diff_uniq} for $u=\lim\limits_{p\to 0}\sqrt{pq}t/d^2$. Thus as
long as $\lim\limits_{p\to 0}d^4/p$ is not of the form $qt^{n+2-i}$, the equation
has a unique (up to scalars) formal solution. (This follows as in the
Remark following Proposition~\ref{prop:diff_braid}.)

Since in our case $\lim_{p\to 0}d^4/p=t$, there is no difficulty, so it will suffice to prove the equation holds (and verify that we have the correct scalar multiple). It will be convenient to replace~$q$ by~$q^2$ and $t$ by~$p/t^2$, so that the equation
becomes
\begin{gather*}
\begin{split}&
(1-R(v)) \frac{1}{\prod\limits_{1\le i\le 2n} \theta_p\big((t/qv) x_i^{\pm 1}\big)}\\
& \qquad{}\times D^{(2n)}_{q^2}\big(tqv,t/qv,q/v,pqv/t^2;p/t^2;p\big)_{\vec{x}} \cL^{(2n)}_{\sqrt{p/t}}\big(\vec{x};p/t^2;p,q^2\big)=0.
\end{split}
\end{gather*}
After substituting in the claimed value for the Littlewood kernel, we find that we need to show that
\begin{gather}
(1-R(v)) \prod_{1\le i\le 2n} (1+R(x_i))
\frac{\theta_p\big(tqv x_i,(q/v) x_i,t^2/qvx_i\big)}
 {x_i^{2n-1}\theta_p\big(t/qvx_i,x_i^2\big)} \prod_{1\le i<j\le 2n} \frac{\theta_p\big(tx_ix_j,x_ix_j/t^2\big)}
 {\theta_p\big(x_ix_j,t/q^2x_ix_j\big)} \nonumber\\
\qquad{}\times \bigg[
\prod_{1\le i<j\le 2n} \frac{1}{F(qx_i,qx_j;t)}
\pf_{1\le i,j\le 2n} F(qx_i,qx_j;t) \bigg]=0,\label{eq:fund_ident_IK}
\end{gather}
where
\begin{gather*}
F(x,y;t):=
\frac{x^{-1}\theta_p(xy,x/y)}
 {\theta_p(t x^{\pm 1}y^{\pm 1})},
\end{gather*}
and $R(x)$ denotes the operator $x\mapsto 1/x$. The quantity in
brackets is set apart for the following reason.

\begin{lem}
The function
\begin{gather*}
\prod_{1\le i<j\le 2n} \frac{1}{F(x_i,x_j;t)}
\pf_{1\le i,j\le 2n} F(x_i,x_j;t)
\end{gather*}
is holomorphic.
\end{lem}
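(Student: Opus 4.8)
The plan is to expand the normalized Pfaffian over perfect matchings and show that every potential pole cancels pairwise. Writing the Pfaffian as a sum over the set $\mathcal M$ of perfect matchings of $\{1,\dots,2n\}$ and dividing through by $\prod_{i<j}F(x_i,x_j;t)$, the function in question becomes
\[
\sum_{M\in\mathcal M}\operatorname{sgn}(M)\prod_{\{a,b\}\notin M}\frac{1}{F(x_a,x_b;t)} .
\]
Since $1/F(x,y;t)=x\,\theta_p(tx^{\pm1}y^{\pm1})/\theta_p(xy,x/y)$ is holomorphic on $(\C^*)^2$ away from the simple divisors $x/y\in p^{\Z}$ and $xy\in p^{\Z}$ (the zeros of $F$ are harmless, and for generic $t$ the numerator theta factors do not vanish there), each summand, hence the whole sum, can have at worst simple poles along the divisors $x_i=p^kx_j$ and $x_ix_j=p^k$, $1\le i<j\le 2n$, $k\in\Z$. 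It therefore suffices to fix one such divisor $\mathcal D$ and verify that the total residue of the sum along $\mathcal D$ vanishes identically.

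\textbf{Key computations.} First I would record the elementary transformation laws of $F$, obtained directly from $\theta_p(pz)=-z^{-1}\theta_p(z)$ and $\theta_p(1/z)=-z^{-1}\theta_p(z)$: $F$ is antisymmetric, $F(y,x;t)=-F(x,y;t)$ (so the Pfaffian is unambiguous and $F(x,x;t)=0$), it is invariant under inversion of either argument, $F(1/x,y;t)=F(x,y;t)$, and it is quasiperiodic, $F(px,y;t)=px^2F(x,y;t)$. Combining these, on the divisor $\mathcal D$ (say $x_j=p^kx_i$, or $x_j=p^k/x_i$) there is a scalar $P=P(x_i,x_j,p,k)$, \emph{independent of all the other variables}, with $1/F(x_j,x_m;t)=P\cdot 1/F(x_i,x_m;t)$ for every $m\notin\{i,j\}$. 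This uniformity in $m$ is the point of the computation, and getting it (in particular the correct, $x_m$-free, proportionality factor) is the main place where care is needed.

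\textbf{Residue cancellation.} Now fix a generic point of $\mathcal D$. Only matchings $M$ with $\{i,j\}\notin M$ contribute a pole there, and it is simple; such an $M$ pairs $i$ with some $k'$ and $j$ with some $l'$, $k'\ne l'$. I pair $M$ with the matching $M'$ obtained by rewiring these two edges to $\{i,l'\}$ and $\{j,k'\}$ and leaving all other edges unchanged; this is a fixed-point-free involution of $\{M:\{i,j\}\notin M\}$ with $\operatorname{sgn}(M')=-\operatorname{sgn}(M)$. Taking residues along $\mathcal D$: the factor $1/F(x_i,x_j;t)$ and the product over edges disjoint from $\{i,j\}$ are common to the $M$- and $M'$-terms; the remaining factors are the products over the non-matching edges incident to $i$ and to $j$. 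Rewriting every occurrence of $1/F(x_j,x_m;t)$ via the previous paragraph as $P\cdot 1/F(x_i,x_m;t)$, the $M$-term acquires $\big(\prod_{m\notin\{i,j,k'\}}F(x_i,x_m;t)^{-1}\big)\,P^{2n-3}\big(\prod_{m\notin\{i,j,l'\}}F(x_i,x_m;t)^{-1}\big)$ and the $M'$-term acquires the same two products with $k'$ and $l'$ exchanged, i.e.\ the identical quantity. Hence the residues of the paired terms cancel, the total residue along $\mathcal D$ vanishes, and the function is holomorphic. Everything except the transformation laws of $F$ and this last bookkeeping is routine; the argument is of the same type as the classical residue proofs of Izergin--Korepin/de Bruijn Pfaffian identities.
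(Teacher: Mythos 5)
Your argument is correct and is in substance the paper's own proof: the paper kills the poles along $x_ix_j^{\pm 1}\in p^{\Z}$ by noting that the pfaffian vanishes there because $F$ is antisymmetric and quasiperiodic (two rows of the matrix become proportional, with a proportionality factor independent of the column), and your matching expansion with the crosswise rewiring involution is exactly the explicit, term-by-term form of that vanishing. The only cosmetic difference is that you work with the normalized sum from the start, so the $t$-dependent divisors (poles of $F$ itself, which the paper cancels against the zeros of the prefactor) never enter.
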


\begin{proof}
The only poles of the pfaffian come from poles of $F(x_i,x_j;t)$, and thus
are cancelled by the prefactor. The only poles of the prefactor are at
zeros of $x_i^{-1}\theta_p(x_ix_j,x_i/x_j)$, but these are cancelled by the
pfaffian (since the pfaffian is antisymmetric and quasiperiodic).
\end{proof}

It will be helpful to first consider a somewhat simpler version of this
identity.

\begin{lem}\label{lem:litt_IK_small}
For any parameters $q$, $t$, $\vec{x}$, we have the identity
\begin{gather*}
\prod_{1\le i\le 2n-1} (1-R(x_i))
\prod_{1\le i<j\le 2n-1} \frac{\theta_p\big(t^2/x_ix_j,tx_ix_j\big)}
 {\theta_p\big(x_ix_j,t/q^2x_ix_j\big)} \\
\qquad{}\times \bigg[ \prod_{1\le i<j\le 2n} \frac{1}{F(qx_i,qx_j;t)}\pf_{1\le i,j\le 2n} F(qx_i,qx_j;t)\bigg] =0.
\end{gather*}
\end{lem}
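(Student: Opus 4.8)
The plan is to induct on $n$, reducing an instance in $2n$ variables to instances in $2n-2$ variables by expanding the pfaffian along its last row and column, and then disposing of the leftover theta-function factors by antisymmetrization.

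Write $F(x,y;t)$ as in the statement, $\rho(a,b):=\theta_p(t^2/ab,tab)/\theta_p(ab,t/q^2ab)$, and
\[
\Pi_n(x_1,\dots,x_{2n}):=\prod_{1\le i<j\le 2n}\frac{1}{F(qx_i,qx_j;t)}\,\pf_{1\le i,j\le 2n}F(qx_i,qx_j;t),
\]
so the assertion is $\prod_{1\le i\le 2n-1}(1-R(x_i))\bigl[\prod_{1\le i<j\le 2n-1}\rho(x_i,x_j)\cdot\Pi_n\bigr]=0$. Two structural facts are used throughout: $\Pi_n$ is holomorphic (the preceding Lemma), and $\Pi_n$ is totally symmetric in $x_1,\dots,x_{2n}$, because for an antisymmetric matrix both $\pf$ and $\prod_{i<j}(\cdot)_{ij}$ transform under a permutation of the indices by the sign character, so their quotient is invariant. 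The base case $n=1$ is then immediate: the $2\times2$ antisymmetric pfaffian equals $F(qx_1,qx_2;t)$, so $\Pi_1\equiv1$, the $\rho$-product is empty, and $(1-R(x_1))[1]=0$.

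For the inductive step, expand $\pf_{1\le i,j\le 2n}F(qx_i,qx_j;t)$ along column $2n$ and split the normalizing product $\prod_{i<j\le 2n}F^{-1}$ accordingly; this gives
\[
\Pi_n
=
\sum_{j=1}^{2n-1}(-1)^{j+1}\,
\Pi_{n-1}\bigl((x_i)_{i\ne j,\,i\le 2n-1}\bigr)
\prod_{1\le i\le 2n-1,\,i\ne j}
\frac{1}{F(qx_i,qx_{2n};t)\,F(qx_{\min(i,j)},qx_{\max(i,j)};t)},
\]
and correspondingly $\prod_{i<k\le 2n-1}\rho(x_i,x_k)=\bigl(\prod_{i<k,\,i,k\ne j}\rho(x_i,x_k)\bigr)\prod_{i\ne j}\rho(x_i,x_j)$. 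Applying $\prod_{i\le 2n-1}(1-R(x_i))=(1-R(x_j))\prod_{i\ne j}(1-R(x_i))$ to the $j$-th summand, the operator $(1-R(x_j))$ commutes past every factor not depending on $x_j$ (in particular past $\Pi_{n-1}$ and the $\rho$-product over pairs avoiding $j$), so the $j$-th contribution becomes $\prod_{i\ne j}(1-R(x_i))$ applied to ($x_j$-free data) times $(1-R(x_j))\bigl[\prod_{i\ne j}g(x_i,x_j)\bigr]$, where $g(a,b):=\rho(a,b)/F(qa,qb;t)$; note the factor $\theta_p(t/q^2ab)$ cancels between $\rho$ and $1/F$, so $g$ is an explicit ratio of theta functions times a monomial. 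One then wants to feed the $(n-1)$-dimensional inductive hypothesis into the surviving $2n-2$ antisymmetrizations and show the sum over $j$ vanishes.

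The hard part will be exactly this last reconciliation. The split of the $\rho$-product leaves ``extra'' factors $\prod_{i\ne j}\rho(x_i,x_j)$ absent from the $(n-1)$-dimensional statement, so one cannot simply invoke the inductive hypothesis factor-by-factor; instead one must show that once all the reflections $1-R(x_i)$ are imposed, these extra factors together with the $g$'s coming from the pfaffian expansion cancel. I expect this to reduce to a one-variable ($BC_1$/$C_1$) theta-function identity — the vanishing of the reflection-antisymmetric part of a prescribed product of $\theta_p$'s — provable by the usual argument that a meromorphic elliptic function with more forced zeros than its degree allows (counting the zeros forced at the $2$-torsion points $x\in\{1,-1,p^{1/2},-p^{1/2}\}$ of $x\mapsto1/x$ and at the coincidences $x_i=x_j^{\pm1}$, against the poles coming from $\rho$) is identically zero. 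This same degree count suggests an alternative to the induction: $\prod_{i\le 2n-1}(1-R(x_i))[\prod\rho\cdot\Pi_n]$ is meromorphic with poles (in each variable) only from $\prod\rho$ and is antisymmetric under each $R(x_i)$, so one can hope to show directly that there is no room for a nonzero such function, reducing the whole lemma to a finite check.
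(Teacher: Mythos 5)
Your proposal is a plan rather than a proof: the decisive step is left open, and you say so yourself (``the hard part will be exactly this last reconciliation'', ``I expect'', ``one can hope''). The pfaffian-expansion induction as set up does not close. In the $j$-th term of the expansion the smaller quotient $\Pi_{n-1}$ omits $x_j$, but the leftover factors $\prod_{i\ne j}\rho(x_i,x_j)$ and $\prod_{i\ne j}F(qx_i,qx_{2n};t)^{-1}$ couple $x_j$ and $x_{2n}$ to all the remaining variables, so the surviving $2n-2$ antisymmetrizations never act on data of the form required by the inductive hypothesis; the hoped-for reduction to a one-variable theta identity is not substantiated, and there is no evident cancellation of this shape. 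Your closing alternative (a direct ``no room'' count using only antisymmetry under each $R(x_i)$ and the location of the poles of $\prod\rho$) also cannot work as stated: antisymmetry in the $x$'s plus those permitted poles leaves ample room for nonzero functions, and the vanishing genuinely depends on the precise interplay between the $t/q^2$ in $\rho$ and the $q$'s inside $F$.

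The paper's proof exploits exactly that $q$-dependence. After substituting $x_{2n}\mapsto x_{2n}/q$, call the left-hand side $G(q)$; checking each term gives the quasi-periodicity $G(pq)=(p/t^2)^{(2n-1)(n-1)}G(q)$, whose multiplier is not a power of $p$ for generic $t$, so $G$ vanishes as soon as it is holomorphic in $q$. Holomorphy is where the induction enters, and it enters through residues rather than through a pfaffian expansion: since the bracketed quotient is holomorphic (the preceding lemma, which you also use), the only candidate poles come from the factors $\theta_p(t/q^2x_ix_j)$, and the residue in $x_{2n-1}$ along $q^2x_{2n-2}x_{2n-1}=t$ is a smaller instance of the same identity, hence zero by induction. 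If you want to salvage your approach, the lesson is that the induction should be run on the residues at the poles of $\prod\rho$ (where the structure collapses cleanly) rather than on an expansion of the pfaffian, and that the global vanishing should be forced by ellipticity in $q$ rather than by a degree count in the $x$ variables.
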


\begin{proof} Let $G(q)$ denote the given sum as a function of $q$, after first replacing $x_{2n}\mapsto x_{2n}/q$. We then find (by checking this for every term) that $G(pq)=\big(p/t^2\big)^{(2n-1)(n-1)}G(q)$. As a result, in order to show that $G(q)=0$, it suffices to show that it is
holomorphic. Since the term in brackets is holomorphic, the only poles come from the factors $\theta_p\big(t/q^2x_ix_j\big)$ (and their images under the symmetry). It thus suffices to show that the residue of the sum along any such divisor is 0. Taking the residue in $x_{2n-1}$ along the divisor $q^2x_{2n-2}x_{2n-1}=t$ gives a smaller instance of the identity, and thus the identity follows by induction.
\end{proof}

\begin{lem}\label{lem:litt_IK_big} Equation \eqref{eq:fund_ident_IK} holds.
\end{lem}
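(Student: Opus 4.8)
I will prove \eqref{eq:fund_ident_IK} by the same two-step mechanism used for Lemma \ref{lem:litt_IK_small}: a quasi-periodicity reduction followed by a residue computation. Write $J(v;\vec x)$ for the left-hand side of \eqref{eq:fund_ident_IK}, expanded as a sum over the $2\cdot 2^{2n}$ terms produced by $(1-R(v))\prod_{1\le i\le 2n}(1+R(x_i))$. Checking this term by term --- as in the proof of Lemma \ref{lem:litt_IK_small}, after a substitution $x_{2n}\mapsto x_{2n}/q$ --- one finds that $J$, as a function of $q$, satisfies $J(pq)=(p/t^2)^{e}\,J(q)$ for an explicit integer $e$ which is nonzero for $n\ge 2$. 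For such $n$, and generic $t$, there is no nonzero holomorphic function on $\C^*$ obeying this relation, so it suffices to prove that $J$ is holomorphic (the case $n=1$ being handled directly below).

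For holomorphy, the bracketed pfaffian factor is holomorphic by the Lemma preceding \eqref{eq:fund_ident_IK}, so the only candidate poles of $J$ come from the explicit $\theta_p$-denominators $\theta_p(t/qvx_i^{\pm 1})$, $\theta_p(x_i^{\pm 2})$, $\theta_p(x_i^{\pm 1}x_j^{\pm 1})$ and $\theta_p(t/q^2\,x_i^{\pm 1}x_j^{\pm 1})$. The potential poles along $x_i^{\pm 1}x_j^{\pm 1}\in p^{\Z}$ and $x_i^{\pm 2}\in p^{\Z}$ are cancelled inside the bracket by the antisymmetry and quasiperiodicity of $\pf_{1\le i,j\le 2n}F(qx_i,qx_j;t)$, precisely as in Lemma \ref{lem:litt_IK_small}. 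The remaining divisors I would handle by induction on $n$, with a direct verification for small $n$ (for $n=1$ the bracket is $1$ and \eqref{eq:fund_ident_IK} reduces to an identity of theta functions that can be checked against Proposition \ref{prop:litt_n2}): the residue of $J$ along a divisor $x_i^{\pm 1}x_j^{\pm 1}\in tq^{-2}p^{\Z}$ collapses the corresponding interaction factor and merges two sign terms of $(1+R(x_i))(1+R(x_j))$ into an expression which is, up to an explicit $\theta_p$-prefactor and a power of $p/t^2$, a $(2n-2)$-variable instance of \eqref{eq:fund_ident_IK}, hence zero by induction; the residue along a divisor $vx_i^{\pm 1}\in tq^{-1}p^{\Z}$ collapses a univariate factor and freezes $v$, turning $(1-R(v))$ into a sign and the sum $\prod_i(1+R(x_i))$ into an antisymmetrization over the remaining $2n-1$ variables, so that one lands exactly on the ``small'' identity of Lemma \ref{lem:litt_IK_small}. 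With all residues vanishing, $J$ is holomorphic, and hence $J\equiv 0$.

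The main obstacle is the residue bookkeeping in the inductive step. For each polar divisor one must choose the variable in which to take the residue so that the univariate factors, the interaction factors, and the arguments $qx_i$ inside the bracket all degenerate in a controlled way, and then check that the surviving expression really is the asserted smaller instance of \eqref{eq:fund_ident_IK} (or of Lemma \ref{lem:litt_IK_small}), matching every $\theta_p$-prefactor and every power of $p/t^2$ exactly. The quasiperiodicity computation, the cancellation of the ``trivial'' poles, and the base case are routine theta-function manipulations; it is the precise compatibility of the residues with the recursion --- and with Lemma \ref{lem:litt_IK_small} --- that will require the most care.
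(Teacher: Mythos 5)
Your proposal follows essentially the same route as the paper's proof: quasiperiodicity of the left-hand side in $q$ reduces the claim to holomorphy, the poles coming from $\theta_p(t/qvx_i^{\pm 1})$ are handled by taking the residue in $v$ and invoking Lemma \ref{lem:litt_IK_small}, and the poles coming from $\theta_p(t/q^2x_i^{\pm 1}x_j^{\pm 1})$ are handled by induction on $n$. The only differences are bookkeeping: the paper works without the substitution $x_{2n}\mapsto x_{2n}/q$ and finds the multiplier $p^{2n^2+n}t^{-4n^2+2n}$, which is nontrivial for every $n\ge 1$ at generic $t$ (so no separate $n=1$ case is needed), and the $q$-independent denominators $\theta_p(x_i^{\pm 2})$, $\theta_p(x_i^{\pm 1}x_j^{\pm 1})$ need no cancellation argument at all, since only holomorphy in $q$ for generic fixed $\vec{x}$, $v$ enters the quasiperiodicity argument.
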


\begin{proof}If $G(q)$ denotes the left-hand side of \eqref{eq:fund_ident_IK} as a function of $q$, we note that $G(pq)=p^{2n^2+n}t^{-4n^2+2n}G(q)$,
so that again it suffices to prove that $G(q)$ is holomorphic. There are now two types of poles to consider, coming from the factors $\theta_p(t/qvx_i)$ and $\theta_p\big(t/q^2x_ix_j\big)$. The residue in $v$ along the first type of pole vanishes by Lemma~\ref{lem:litt_IK_small}, while the residue in $x_j$ along the second type of pole vanishes by induction.
\end{proof}

\begin{thm}\label{thm:funny_pfaffian}
When $c=(pt)^{1/4}$, the Littlewood kernel has the following pfaffian expression
\begin{gather*}
\cL^{(2n)}_{(pt)^{1/4}}(\vec{x};t;p,q)=
(t/p)^{n(n-1)/2}\!
\left(\frac{\theta_p(t)}
 {\Gampq((pt)^{1/2})^2 \theta_p((pt)^{1/2})}\right)^n
\!\prod_{1\le i<j\le 2n}\!
\frac{1}{\Gampq\big((pt)^{1/2}x_i^{\pm 1}x_j^{\pm 1}\big)} \\
\hphantom{\cL^{(2n)}_{(pt)^{1/4}}(\vec{x};t;p,q)= }{} \times
\prod_{1\le i<j\le 2n}\frac{1}{x_i^{-1}\theta_p(x_i x_j,x_i/x_j)} \pf_{1\le i,j\le 2n}
 \frac{x_i^{-1}\theta_p(x_i x_j,x_i/x_j)}
 {\theta_p\big((pt)^{1/2} x_i^{\pm 1} x_j^{\pm 1}\big)}.
\end{gather*}
\end{thm}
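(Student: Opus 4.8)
The plan is to identify the right-hand side as the unique-up-to-scalar formal solution of the $v$-family of difference equations satisfied by the Littlewood kernel, and then fix the scalar by a one-variable specialization.

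First I would pass to the formal setting. By Corollary~\ref{cor:litt_diff}, taken with $\ord(v)=0$ and $d=(pt)^{1/4}$ so that $\ord(d)=1/4$, the Littlewood kernel $\cL^{(2n)}_{(pt)^{1/4}}$ satisfies a $v$-family of difference equations with formal Puiseux coefficients whose $p\to0$ limit is the equation of Corollary~\ref{cor:diff_uniq} with $\lim_{p\to0}d^4/p=t$. Since $t$ is not of the excluded form $qt^{n+2-i}$, that limiting equation has no nonconstant solutions, so the Nakayama-type argument recalled in the text shows the formal family has a one-dimensional solution space. It therefore suffices to check that the proposed pfaffian expression (a) expands as a formal Puiseux series of the order needed for this argument, and (b) solves the difference equation; it will then differ from $\cL^{(2n)}_{(pt)^{1/4}}$ by a constant independent of $\vec{x}$.

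Step (b) is the crux, and it is essentially already done for us: after the reparametrization $q\mapsto q^2$, $t\mapsto p/t^2$ used in the text and the substitution of the proposed formula, the difference equation collapses---using the functional equations of $\Gampq$ to turn the shifts in the $\Gampq$-prefactors into theta functions---to the homogeneous identity~\eqref{eq:fund_ident_IK}. This is precisely Lemma~\ref{lem:litt_IK_big}, which follows from Lemma~\ref{lem:litt_IK_small} by residue inductions along the divisors $t=qvx_i$ and $q^2x_ix_j=t$. Hence the proposed expression is a formal solution, so a scalar multiple of $\cL^{(2n)}_{(pt)^{1/4}}$.

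It remains to pin down the scalar. I would use the case $n=1$: there the $2\times2$ pfaffian is just its off-diagonal entry, so the formula reduces to an explicit ratio of theta and elliptic Gamma functions, and comparing it with Proposition~\ref{prop:litt_n2} at $c=(pt)^{1/4}$ (where $c^2=(pt)^{1/2}$, $c^4/t^2=p/t$, $c^2/t=(p/t)^{1/2}$), together with the reflection relation $\Gampq(pq/z)=\Gampq(z)^{-1}$, forces the constant; for general $n$ it is then pinned to $(t/p)^{n(n-1)/2}\bigl(\theta_p(t)/(\Gampq((pt)^{1/2})^2\theta_p((pt)^{1/2}))\bigr)^n$. (Alternatively one can specialize $\vec{x}$ to a geometric progression, where $\cL^{(2n)}_c$ has the closed product form recorded just before Proposition~\ref{prop:litt_n2} and the pfaffian reduces \`a la de~Bruijn to a product.) The main obstacle is not conceptual---the difference-equation step is handed to us by Lemma~\ref{lem:litt_IK_big}---but the normalization bookkeeping: verifying that the meromorphic pfaffian side genuinely has a formal expansion of the exact order the uniqueness statement requires, and then correctly tracking the powers of $t/p$ and the $\theta_p$ and $\Gampq$ factors through the reparametrization and the $n=1$ (or geometric-progression) comparison.
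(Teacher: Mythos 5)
Your proposal follows essentially the same route as the paper: show the pfaffian side satisfies the $v$-family of difference equations via Lemma \ref{lem:litt_IK_big} (equation \eqref{eq:fund_ident_IK}), invoke the uniqueness of formal solutions coming from Corollary \ref{cor:litt_diff} and Corollary \ref{cor:diff_uniq}, and fix the $\vec{x}$-independent constant by a specialization --- the paper does this at the geometric progression $\vec{x}=\dots,t^{2n-i}v,\dots$, which is exactly your parenthetical alternative. One small caution: the uniqueness argument is per $n$, so the $n=1$ comparison with Proposition \ref{prop:litt_n2} only pins the constant for $n=1$ and does not by itself yield the $n$-dependent prefactor $(t/p)^{n(n-1)/2}(\cdots)^n$; use the geometric-progression check (as the paper does) for general $n$.
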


\begin{proof}
 As we have already noted, that the right-hand side satisfies the
 requisite difference equation is simply (up to reparametrization)
 equation~\eqref{eq:fund_ident_IK}, and thus this fact holds by Lemma~\ref{lem:litt_IK_big}. It follows therefore that the above expression
 holds up to a factor independent of $\vec{x}$. Since this is equivalent
 to the expression
\begin{gather*}
\cL^{(2n)}_{(pt)^{1/4}}(\vec{x};t;p,q) = \Gampq\big((pt)^{1/2}\big)^{-2n}
\prod_{1\le i<j\le 2n} \Gampq\big((pt)^{1/2}x_i^{\pm 1}x_j^{\pm 1}\big)^{-1}
\cL^{(2n)}_{(pt)^{1/4}}(\vec{x};t;p,t),
\end{gather*}
it is straightforward to verify that this takes the correct value when $\vec{x}=\dots,t^{2n-i}v,\dots$, giving the desired result.
\end{proof}

As in the interpolation kernel case, the special case $t=p^{1/3}$ is
particularly nice, since we then have an alternate expression, equation~\eqref{eq:litt_t}. Cancelling common factors gives the following theta
function identity:
\begin{gather*}
\prod_{1\le i<j\le 2n}
 \frac{\theta_p\big(p^{1/3}x_i^{\pm 1}x_j^{\pm 1}\big)}
 {x_i^{-1}\theta_p(x_i x_j,x_i/x_j)} \pf_{1\le i,j\le 2n} \frac{x_i^{-1}\theta_p(x_i x_j,x_i/x_j)}
 {\theta_p\big(p^{1/3} x_i^{\pm 1} x_j^{\pm 1}\big)} \\
\qquad{}=
\biggl[
\frac{\prod\limits_{1\le i,j\le n} \theta_p\big(p^{1/3}x_i^{\pm 1} x_{n+j}^{\pm 1}\big)}
{\prod\limits_{1\le i<j\le n} x_i^{-1}x_{n+1}^{-1}\theta_p(x_i x_j,x_i/x_j,x_{n+i} x_{n+j},x_{n+i}/x_{n+j})}
\det_{1\le i,j\le n} \frac{1}{\theta_p\big(p^{1/3} x_i^{\pm 1} x_{n+j}^{\pm 1}\big)}\biggr]^2.
\end{gather*}
(Some similar factorizations appeared in \cite{ZinnJustinP:2013}, but the above appears to be new.) This is, in a~somewhat disguised way, a special
case of the more general identity
\begin{gather*}
\prod_{1\le i<j\le 2n} \frac{z_i-z_j}{y_i-y_j} \pf_{1\le i,j\le 2n} \frac{(y_i-y_j)^2}{z_i-z_j}=
\biggl[
\prod_{1\le i,j\le n} (z_i-z_{n+j})
\det_{1\le i,j\le n} \frac{y_i-y_j}{z_i-z_{n+j}}
\biggr]^2.
\end{gather*}
(This is the special case of \cite[Theorem~4.7]{OkadaS:1998} in which the two
factors agree.) To see this,
apply a substitution of the form
\begin{gather*}
y_i = C_1\frac{\theta_p\big(a x_i^{\pm 1}\big)}{\theta_p\big(b x_i^{\pm 1}\big)},\qquad
z_i = C_2\frac{\theta_{p^{1/3}}\big(c x_i^{\pm 1}\big)}{\theta_{p^{1/3}}\big(d x_i^{\pm 1}\big)},
\end{gather*}
then remove the unwanted factors from the rows and columns. We also note
here that $z_i$ is a rational function of degree~3 in $y_i$, and dimension
considerations show that the general such function appears in this way.

A particularly nice consequence of the pfaffian expression for
$\cL^{(2n)}_{(pt)^{1/4}}$ arises as the Macdonald polynomial limit from
Proposition \ref{prop:litt_sum}. Recall that to obtain such a limit, we
compare the two expressions for
\begin{gather*}
\cL^{(2n)}_{p^N t^{1/4}}\big(p^{-N+1}\vec{x};t;p^{4N},q\big),
\end{gather*}
and take the limit as $N\to\infty$, noting that the Littlewood kernel
converges formally in this limit, so the limiting identity continues to hold.

\begin{cor}
The Macdonald polynomials satisfy the following summation identity
\begin{gather*}
\sum_\mu
\frac{C^-_\mu\big(t;q,t^2\big)} {C^-_\mu\big(q;q,t^2\big)}
 \prod_{1\le i\le n} \big(1-q^{\mu_i}t^{2n-2i+1}\big) P_{\mu^2}(x_1,\dots,x_{2n};q,t) \\
\qquad {} = (1-t)^n \prod_{1\le i<j\le 2n} \frac{(tx_ix_j;q)}{(x_i-x_j)(q x_ix_j;q)}
\pf_{1\le i,j\le 2n} \frac{x_i-x_j}{(1-x_ix_j)(1-tx_ix_j)}.
\end{gather*}
\end{cor}

This is a special case of Conjecture 1 of
\cite{BeteaD/WheelerM/Zinn-JustinP:2015}. It turns out to be
straightforward to prove that identity as well.

\begin{cor}
The Macdonald polynomials satisfy the following summation identity
\begin{gather*}
\sum_\mu \frac{C^-_\mu\big(t;q,t^2\big)} {C^-_\mu\big(q;q,t^2\big)}
\prod_{1\le i\le n} \big(1-q^{\mu_i}t^{2n-2i}u\big) P_{\mu^2}(x_1,\dots,x_{2n};q,t) \\
\qquad {}=
\prod_{1\le i<j\le 2n} \frac{(tx_ix_j;q)}{(x_i-x_j)(q x_ix_j;q)}
\pf_{1\le i,j\le 2n}
 \frac{(x_i-x_j)(1-u+(u-t)x_ix_j)}{(1-x_ix_j)(1-tx_ix_j)}.
\end{gather*}
\end{cor}

\begin{proof}
Consider the ratio
\begin{gather*}
F(\vec{x};v,q,t) :=
\frac{\sum\limits_\mu
\frac{C^-_\mu\big(t;q,t^2\big)}
 {C^-_\mu\big(q;q,t^2\big)}
\prod\limits_{1\le i\le n}\frac{1-q^{\mu_i}t^{-2i}v} {1-t^{-2i}v}P_{\mu^2}(\vec{x};q,t)}
{\sum\limits_\mu\frac{C^-_\mu\big(t;q,t^2\big)} {C^-_\mu\big(q;q,t^2\big)} P_{\mu^2}(\vec{x};q,t)}
\end{gather*}
of symmetric functions. We can evaluate the denominator using the usual Littlewood identity for Macdonald polynomials, and thus find by the previous corollary that
\begin{gather*}
F\big(x_1,\dots,x_{2n};t^{2n+1},q,t\big)=\frac{(1-t)^n}{\big(t;t^2\big)_n}
\prod_{1\le i<j\le 2n} \frac{1-x_ix_j}{x_i-x_j} \pf_{1\le i,j\le 2n}
 \frac{x_i-x_j}{(1-x_ix_j)(1-tx_ix_j)}.
\end{gather*}
Since the right-hand side is independent of $q$, so is the left-hand side.
Moreover, the left-hand side remains independent of $q$ if we set some of
the variables to $0$, and thus
$F\big(x_1,\dots,x_m;t^{2n+1},q,t\big)$
is independent of $q$ for all integers $n\ge m/2$. Since this is a Zariski
dense set, it follows that $F(x_1,\dots,x_m;v,q,t)$ is independent of $q$.
The claim follows from the case $q=0$, which was established in
\cite{BeteaD/WheelerM/Zinn-JustinP:2015}.
\end{proof}

\begin{rem}
In addition, the usual argument allows us to directly evaluate the case
$q=t$ as a pfaffian, giving an alternate argument.
One can also directly show that the identity is consistent under setting
two of the variables to~0, which allows one to prove the case $u=t^{2k+1}$ of the identity from the case $u=t$, and again the result (which is an identity of polynomials in~$u$) follows.
\end{rem}

\section{More kernels}\label{section7}

Just as Conjecture L1 of \cite{littlewood} has an analogue in terms of the
interpolation kernel (Theorem \ref{thm:L1_kern} above), the same applies to
Conjecture L2.

\begin{thm}\label{thm:L2_kern}
The interpolation kernel satisfies the integral identity{\samepage1
\begin{gather*}
\begin{split} &
 \int \cK^{(n)}_c(\vec{z};\vec{y};t;p,q)
\Delta^{(n)}_S\big(\vec{z};v_0,v_1,v_2,v_3;t;p,q^2\big) \\
& {}=\prod_{\substack{1\le i\le n\\0\le r\le 3}} \Gampq\big(cv_r y_i^{\pm 1}\big)
\int \cK^{(n)}_c(\vec{z};\vec{y};t;p,q)
\Delta^{(n)}_S\big(\vec{z};pq/c^2v_0,pq/c^2v_1,pq/c^2v_2,pq/c^2v_3;t;p,q^2\big),
\end{split}
\end{gather*}
subject to the balancing condition $v_0v_1v_2v_3=(pq/c^2)^2$.}
\end{thm}

\begin{proof}
 Again, this becomes an identity of Koornwinder integrals in the limit
 $p\to 0$, $c\sim p^{1/2}$, and dividing by the common limit gives formal
 Puiseux series with rational function coefficients. But by the remark
 following \cite[Proposition~4.13]{littlewood}, the identity holds on the
 Zariski closed set $v_0v_1=pq^k/c^2$, $k\in \Z$, so holds in general.
\end{proof}

\begin{rem}
 In fact, the argument given there applies directly to the kernel, and
 shows that if the identity holds for $(c,v_0,v_1,v_2,v_3)$, then it holds
 for $(q^{-1/2}c,q^2v_0,v_1,v_2,v_3)$. The argument is essentially the
 same as that given above for Theorem~\ref{thm:L1_kern}, except that
 instead of the braid and commutation relations for~$c=t^{1/2}$, we use
 the corresponding difference equations (Propositions~\ref{prop:diff_braid} and~\ref{prop:diff_comm}). All but one of the
 applications of Fubini reduce to linearity of integration (since the
 difference operator is expressed as a finite sum). The remaining
 application of Fubini is replaced by a combination of the
 self-adjointness of $D^{(n)}_q(t;p)$ with respect to the Selberg density
 together with the fact that
\begin{gather*}
\prod_{\substack{1\le i\le n\\0\le r<2m}} \Gampqq\big(q^{-1/2}v_r z_i^{\pm 1}\big)^{-1}
D^{(n)}_q(t;p)
\prod_{\substack{1\le i\le n\\0\le r<2m}} \Gampqq(v_r z_i^{\pm 1})
\frac{\Delta^{(n)}_S\big(\vec{z};t;p,q^2\big)}
 {\Delta^{(n)}_S(\vec{z};t;p,q)}
\end{gather*}
is invariant under $v_r\mapsto pq^2/v_r$. (This substitution has the same effect on each of the $2^n$ terms as inverting all the variables, so has no effect on the sum.)
\end{rem}

As before, we have the following special case.

\begin{cor}
The integral
\begin{gather*}
\frac{1}{\prod\limits_{1\le i\le n} \Gampq\big(\big(\sqrt{p}/c\big)v^{\pm 1} x_i^{\pm 1}\big)}
\int \cK^{(n)}_c(\vec{z};\vec{x};t;p,q)
\Delta^{(n)}_S\big(\vec{z};\sqrt{p}v^{\pm 1}/c^2;t;p,q^2\big)
\end{gather*}
is independent of $v$.
\end{cor}

\begin{defn}
The {\em dual Littlewood kernel} is the meromorphic function
\begin{gather*}
\cL^{\prime(n)}_c(\vec{x};t;p,q) :=
\frac{1}{\prod\limits_{1\le i\le n} \Gampq\big(\big(\sqrt{p}/c\big)v^{\pm 1} x_i^{\pm 1}\big)}
\int \cK^{(n)}_c(\vec{z};\vec{x};t;p,q) \Delta^{(n)}_S\big(\vec{z};\sqrt{p}v^{\pm 1}/c^2;t;p,q^2\big).
\end{gather*}
\end{defn}

Again, though this appears to depend on a choice of $\sqrt{p}$, this choice
can be absorbed by negating $v$; similarly, $\cL^{\prime(n)}_c$ is invariant
under negating $c$ or $\vec{x}$. We also have the following important
symmetry.

\begin{prop}
The dual Littlewood kernel satisfies the following $t\mapsto pq/t$ symmetry
\begin{gather*}
\cL^{\prime(n)}_c(\vec{x};pq/t;p,q)=\Gampq(t)^{n} \prod_{1\le i<j\le n} \Gampq\big(t x_i^{\pm 1}x_j^{\pm 1}\big)
\cL^{\prime(n)}_c(\vec{x};t;p,q).
\end{gather*}
\end{prop}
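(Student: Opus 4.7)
The plan is to apply the $t\mapsto pq/t$ symmetry of the interpolation kernel (the Proposition stated earlier in the excerpt) directly inside the defining integral of $\cL^{\prime(n)}_c(\vec{x};pq/t;p,q)$. Substituting $pq/t$ for $t$ in the definition gives
\[
\cL^{\prime(n)}_c(\vec{x};pq/t;p,q)
=
\frac{1}{\prod_i \Gampq((\sqrt{p}/c)v^{\pm 1} x_i^{\pm 1})}
\int
\cK^{(n)}_c(\vec{z};\vec{x};pq/t;p,q)
\Delta^{(n)}_S(\vec{z};\sqrt{p}v^{\pm 1}/c^2;pq/t;p,q^2),
\]
so the key is to compare the $pq/t$ and $t$ integrands factor by factor.

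First, I would apply the kernel's $t\mapsto pq/t$ symmetry,
\[
\cK^{(n)}_c(\vec{z};\vec{x};pq/t;p,q)
=
\Gampq(t)^{2n}
\prod_{i<j}\Gampq(t x_i^{\pm 1}x_j^{\pm 1}, t z_i^{\pm 1}z_j^{\pm 1})
\cK^{(n)}_c(\vec{z};\vec{x};t;p,q),
\]
pulling the $\vec{x}$-dependent half out of the integral. That half is precisely the prefactor $\Gampq(t)^n\prod_{i<j}\Gampq(tx_i^{\pm 1}x_j^{\pm 1})$ claimed by the proposition; what remains to check is that the $\vec{z}$-dependent factor $\Gampq(t)^n\prod_{i<j}\Gampq(t z_i^{\pm 1}z_j^{\pm 1})$ combines with $\Delta^{(n)}_S(\vec{z};\sqrt{p}v^{\pm 1}/c^2;pq/t;p,q^2)$ to give $\Delta^{(n)}_S(\vec{z};\sqrt{p}v^{\pm 1}/c^2;t;p,q^2)$.

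This reduces to the purely algebraic identity $\Gampq(z)\Gampqq(pq/z)=\Gampqq(z)$, which follows from the quadratic functional equation $\Gampq(z)=\Gampqq(z,qz)$ combined with the $\Gampqq$-reflection $\Gampqq(pq^2/w)=1/\Gampqq(w)$ (applied with $w=qz$). Writing out $\Delta^{(n)}_S(\vec{z};\sqrt{p}v^{\pm 1}/c^2;pq/t;p,q^2)$ in full yields an overall $\Gampqq(pq/t)^n\prod_{i<j}\Gampqq((pq/t)z_i^{\pm 1}z_j^{\pm 1})$ factor times $\prod_i \Gampqq((\sqrt{p}/c^2)v^{\pm 1}z_i^{\pm 1})\,\Delta^{(n)}_D(\vec{z};p,q^2)$, and applying the identity with $z=t$ and with $z=t z_i^{\pm 1}z_j^{\pm 1}$ (one check per sign pattern, pairing $\Gampq(tz_iz_j)$ with $\Gampqq((pq/t)z_i^{-1}z_j^{-1})$ etc.) converts the $(pq,q^2)$-base $t$-dependent factors into $\Gampqq(t)^n\prod_{i<j}\Gampqq(t z_i^{\pm 1}z_j^{\pm 1})$. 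This is exactly the $\vec{z}$-dependent part of $\Delta^{(n)}_S(\vec{z};\sqrt{p}v^{\pm 1}/c^2;t;p,q^2)$.

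The remaining integral is therefore the one defining $\cL^{\prime(n)}_c(\vec{x};t;p,q)$, so the transformed expression equals $\Gampq(t)^n\prod_{i<j}\Gampq(tx_i^{\pm 1}x_j^{\pm 1})\cdot\cL^{\prime(n)}_c(\vec{x};t;p,q)$, as claimed. There is no real obstacle here: both the $v$-independence (which is automatic, since we never commit to a specific $v$) and all analytic subtleties are already packaged into the meromorphy of $\cK^{(n)}_c$ and the definition of $\cL^{\prime(n)}_c$, so the proof amounts to combining the $t\mapsto pq/t$ kernel symmetry with one elementary identity between $\Gampq$ and $\Gampqq$.
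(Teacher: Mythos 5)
Your proposal is correct and follows essentially the same route as the paper: apply the $t\mapsto pq/t$ symmetry of $\cK^{(n)}_c$ inside the defining integral, pull out the $\vec{x}$-dependent factor as the claimed prefactor, and absorb the $\vec{z}$-dependent factor into the density. Your per-factor identity $\Gampq(w)\Gampqq(pq/w)=\Gampqq(w)$ (from $\Gampq(w)=\Gampqq(w,qw)$ and the reflection for $\Gampqq$) is precisely the paper's simplification $\Gampq(t)^n\prod_{1\le i<j\le n}\Gampq(t z_i^{\pm 1}z_j^{\pm 1})=\Delta^{(n)}_S(\vec{z};t;p,q^2)/\Delta^{(n)}_S(\vec{z};pq/t;p,q^2)$, just spelled out factor by factor.
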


\begin{proof}
Apply the $t\mapsto pq/t$ symmetry to the interpolation kernel in the integrand, and simplify using
\begin{gather*}
\Gampq(t)^n \prod_{1\le i<j\le n} \Gampq\big(t z_i^{\pm 1}z_j^{\pm 1}\big)
= \frac{\Delta^{(n)}_S\big(\vec{z};t;p,q^2\big)}
 {\Delta^{(n)}_S\big(\vec{z};pq/t;p,q^2\big)},
\end{gather*}
a straightforward application of \eqref{eq:gampqq}.
\end{proof}

\begin{prop}
The dual Littlewood kernel has the following specialization
\begin{gather*}
\cL^{\prime(n)}_c\big(t^{n-1}v,\dots,v;t;p,q\big)=\prod_{1\le i\le n} \frac{ \Gampq\big(qt^{1-i}c^2\big)
 \Gampqq\big(qt^{i-1}c^2v^2,qt^{i-1}c^2/t^{2n-2}v^2\big)} {\Gampqq\big(q^2t^{1-i}c^4,qt^i\big)}.
\end{gather*}
\end{prop}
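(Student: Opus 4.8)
The plan is to follow the same route used for the earlier specialization of $\cL^{(2n)}_c$: specialize the variables to a geometric progression, so that the interpolation kernel in the integrand collapses to an explicit product of elliptic Gamma functions, and then evaluate the resulting integral as a balanced elliptic Selberg integral. Write $w$ for the auxiliary parameter appearing in the definition of $\cL^{\prime(n)}_c$ (on which the left-hand side does not depend), and substitute $x_i=t^{n-i}v$. By the $k=n$ case of Theorem \ref{thm:gen_branch}, i.e.\ the analytic form of Corollary \ref{cor:formal_geom_special}, the kernel $\cK^{(n)}_c(\vec{z};\dots,t^{n-i}v,\dots;t;p,q)$ in the integrand becomes $\prod_{1\le i\le n}\Gampq(vc\,z_i^{\pm1},(c/t^{n-1}v)z_i^{\pm1})/\Gampq(t^{1-i}c^2,t^i)$, so that
\[
\cL^{\prime(n)}_c(t^{n-1}v,\dots,v;t;p,q)=\frac{1}{\prod_{1\le i\le n}\Gampq((\sqrt{p}/c)w^{\pm1}(t^{n-i}v)^{\pm1},t^{1-i}c^2,t^i)}\int\prod_{1\le i\le n}\Gampq(vc\,z_i^{\pm1},(c/t^{n-1}v)z_i^{\pm1})\,\Delta^{(n)}_S(\vec{z};\sqrt{p}w^{\pm1}/c^2;t;p,q^2).
\]

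The key step is to recognize the integral as an order-$0$ elliptic Selberg integral with modular parameters $(p,q^2)$. Applying the quadratic functional equation \eqref{eq:gampqq}, $\Gampq(y)=\Gampqq(y,qy)$, to each univariate $\Gampq$ factor turns the integrand into $\Delta^{(n)}_S(\vec{z};u_0,\dots,u_5;t;p,q^2)$ with $u_0=vc$, $u_1=qvc$, $u_2=c/t^{n-1}v$, $u_3=qc/t^{n-1}v$, $u_4=\sqrt{p}w/c^2$, $u_5=\sqrt{p}/wc^2$; one checks at once that $t^{2n-2}u_0u_1u_2u_3u_4u_5=pq^2$, so \cite[Thm.~6.1]{xforms} (with $q$ replaced by $q^2$) evaluates the integral as $\prod_{0\le i<n}\Gampqq(t^{i+1})\prod_{0\le i<n}\prod_{0\le r<s<6}\Gampqq(t^iu_ru_s)$.

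The remainder is elliptic-gamma bookkeeping. The cross terms $\{u_0,u_1\}\times\{u_4,u_5\}$ and $\{u_2,u_3\}\times\{u_4,u_5\}$ pair up through $\Gampqq(y)\Gampqq(qy)=\Gampq(y)$ into exactly the factors cancelled by the prefactor $\prod_i\Gampq((\sqrt{p}/c)w^{\pm1}(t^{n-i}v)^{\pm1})^{-1}$; the cross terms $\{u_0,u_1\}\times\{u_2,u_3\}$ pair up into $\prod_i\Gampq(t^{1-i}c^2,qt^{1-i}c^2)$, the first factor of which is cancelled by the prefactor $\prod_i\Gampq(t^{1-i}c^2)^{-1}$ and the second of which is the $\Gampq$-numerator of the claimed formula; $\{u_0,u_1\}$ gives $\prod_i\Gampqq(qt^{i-1}c^2v^2)$, $\{u_2,u_3\}$ gives $\prod_i\Gampqq(qt^{i-1}c^2/t^{2n-2}v^2)$, $\{u_4,u_5\}$ (with $u_4u_5=p/c^4$) gives $\prod_i\Gampqq(q^2t^{1-i}c^4)^{-1}$ via the reflection relation, and $\prod_i\Gampqq(t^{i+1})$ together with $\prod_i\Gampq(t^i)^{-1}$ gives $\prod_i\Gampqq(qt^i)^{-1}$ via $\Gampq(t^i)=\Gampqq(t^i,qt^i)$. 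Assembling these factors reproduces the right-hand side verbatim.

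I expect the one point needing care to be not the arithmetic but a contour issue: one must confirm that specializing $\vec{x}$ to a geometric progression does not pinch the $\vec{z}$-contour, so that the integral really is the balanced Selberg integral to which \cite[Thm.~6.1]{xforms} applies rather than that value plus residual terms. For a generic base $v$ this is automatic, and the general statement then follows since both sides are meromorphic; the simplification above is then purely mechanical, and entirely parallel to (though somewhat longer than) the proof of the $\cL^{(2n)}_c$ specialization.
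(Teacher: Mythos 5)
Your proposal is correct and is exactly the argument the paper intends (the paper states this specialization without writing out the computation, just as it does for the analogous $\cL^{(2n)}_c$ evaluation): specialize $\vec{x}$ to the geometric progression so the kernel collapses via the geometric-progression specialization, split the resulting $\Gampq$ factors by $\Gampq(y)=\Gampqq(y,qy)$ to obtain a balanced order-zero elliptic Selberg integral in base $(t;p,q^2)$, and evaluate; your bookkeeping of the pairings and the balancing condition $t^{2n-2}\prod_r u_r=pq^2$ checks out, and the contour/meromorphic-continuation caveat is handled exactly as in the paper's standard conventions.
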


\begin{rem}
In particular,
\begin{gather*}
\cL^{\prime(1)}_c(x;t;p,q)=\frac{\Gampq\big(qc^2\big)\Gampqq\big(q c^2 x^{\pm 2}\big)}{\Gampqq\big(qt,q^2c^4\big)}.
\end{gather*}
\end{rem}

When $t=q$, so that the interpolation kernel can be expressed as a~determinant, the remainder of the integrand can be expressed as a pfaffian
(the discussion in~\cite{littlewood} for the interpolation function case
carries over in a simplified form), and thus again~\cite{deBruijnNG:1955}
shows that the dual Littlewood kernel is a pfaffian. Since the entries of
the pfaffian appear no longer to have nice expressions (they are
$2$-dimensional instances of the dual Littlewood kernel), we omit the details.

As above, we have expressions in terms of the interpolation kernel when
$c=\sqrt{pq/t}$ or $c=\sqrt{t}$. We give the former, as the latter has
complicated prefactors and can in any event be obtained via the $t\mapsto
pq/t$ symmetry.

\begin{prop}
We have
\begin{gather*}
\cL^{\prime(n)}_{\sqrt{pq/t}}(\vec{x};t;p,q) = \cK^{(n)}_{pq^2/t}\big(q^{-1/2}\vec{x};q^{1/2}\vec{x};t;p,q^2\big).
\end{gather*}
\end{prop}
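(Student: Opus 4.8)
The plan is to reduce the identity to a single instance of the analytic braid relation (Proposition~\ref{prop:kern_braid}), carried out in the ``$(p,q^2)$-world''. First I would start from the definition of $\cL^{\prime(n)}_c$ and set $c=\sqrt{pq/t}$, so that $\sqrt p/c^2=t/(\sqrt p\,q)$ and $\sqrt p/c=\sqrt{t/q}$. By the product formula of Proposition~\ref{prop:special_t} (which, being an equality of products of elliptic Gamma functions, is valid for the analytic kernel as well), $\cK^{(n)}_{\sqrt{pq/t}}(\vec z;\vec x;t;p,q)=\prod_{1\le i,j\le n}\Gampq(\sqrt{pq/t}\,z_i^{\pm1}x_j^{\pm1})$, whence
\[
\cL^{\prime(n)}_{\sqrt{pq/t}}(\vec x;t;p,q)
=
\frac{1}{\prod_{1\le i\le n}\Gampq(\sqrt{t/q}\,v^{\pm1}x_i^{\pm1})}
\int
\prod_{1\le i,j\le n}\Gampq(\sqrt{pq/t}\,z_i^{\pm1}x_j^{\pm1})\,
\Delta^{(n)}_S\!\bigl(\vec z;\tfrac{t}{\sqrt p\,q}v^{\pm1};t;p,q^2\bigr).
\]

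Next I would split each factor in the integrand by the quadratic functional equation $\Gampq(z)=\Gampqq(z,qz)$ of~\eqref{eq:gampqq}. Writing $\sqrt{pq/t}=q^{-1/2}\sqrt{pq^2/t}$ and $q\sqrt{pq/t}=q^{1/2}\sqrt{pq^2/t}$, a direct comparison of two lists of eight monomials shows that the eight $(p,q^2)$-arguments produced from $\Gampq(\sqrt{pq/t}\,z_i^{\pm1}x_j^{\pm1})$ are exactly those of $\Gampqq(\sqrt{pq^2/t}\,z_i^{\pm1}(q^{-1/2}x_j)^{\pm1})$ together with those of $\Gampqq(\sqrt{pq^2/t}\,z_i^{\pm1}(q^{1/2}x_j)^{\pm1})$; hence, using Proposition~\ref{prop:special_t} once more but now with parameters $(t;p,q^2)$,
\[
\prod_{1\le i,j\le n}\Gampq(\sqrt{pq/t}\,z_i^{\pm1}x_j^{\pm1})
=
\cK^{(n)}_{\sqrt{pq^2/t}}(\vec z;q^{-1/2}\vec x;t;p,q^2)\,
\cK^{(n)}_{\sqrt{pq^2/t}}(\vec z;q^{1/2}\vec x;t;p,q^2).
\]
At this point the integral is precisely the left-hand side of the braid relation over $(p,q^2)$, taken with $c=d=\sqrt{pq^2/t}$, first variable set $q^{-1/2}\vec x$, second variable set $q^{1/2}\vec x$, and density parameters $u_0=\tfrac{t}{\sqrt p\,q}v$, $u_1=\tfrac{t}{\sqrt p\,q}v^{-1}$; these satisfy the required balancing condition, since $u_0u_1=t^2/(pq^2)=pq^2/(c^2d^2)$. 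Applying Proposition~\ref{prop:kern_braid} (in its analytic form) rewrites the integral as $\cK^{(n)}_{pq^2/t}(q^{-1/2}\vec x;q^{1/2}\vec x;t;p,q^2)$ times a product of $(p,q^2)$-Gamma functions.

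It then remains to match prefactors. Since $\sqrt{pq^2/t}\cdot u_0=\sqrt t\,v$ and $\sqrt{pq^2/t}\cdot u_1=\sqrt t/v$, the Gamma prefactor produced by the braid relation is $\prod_{1\le i\le n}\Gampqq(\sqrt t\,v^{\pm1}(q^{-1/2}x_i)^{\pm1},\sqrt t\,v^{\pm1}(q^{1/2}x_i)^{\pm1})$, and one more use of $\Gampq(z)=\Gampqq(z,qz)$ (again a short monomial check, noting $\sqrt{t/q}=\sqrt t\,q^{-1/2}$, $\sqrt{tq}=\sqrt t\,q^{1/2}$) reassembles this into $\prod_{1\le i\le n}\Gampq(\sqrt{t/q}\,v^{\pm1}x_i^{\pm1})$, i.e.\ exactly the reciprocal of the prefactor appearing in the definition of $\cL^{\prime(n)}$. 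Cancelling yields the claimed identity.

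I do not expect a genuine obstacle: the entire argument is careful bookkeeping with the quadratic functional equation for $\Gampq$, together with the elementary observation that conjugate pairs $z^{\pm1}$ interact cleanly with the substitutions $x\mapsto q^{\pm1/2}x$. The one point deserving a remark is that the braid relation is being invoked at $c=d=\sqrt{pq^2/t}$, which lies outside the formal convergence range $\ord(c)+\ord(d)\le1/2$; however, the braid relation holds as an identity of meromorphic functions for generic parameters (as already used, e.g., in the proof of Corollary~\ref{cor:t_symmetry}), so this causes no difficulty. One should also recall that, by the corollary preceding the definition of $\cL^{\prime(n)}_c$, the left-hand side is genuinely independent of $v$, consistently with the absence of $v$ from the final formula.
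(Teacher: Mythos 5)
Your proof is correct and is essentially the argument the paper leaves implicit for this special case: substitute the product formula for $\cK^{(n)}_{\sqrt{pq/t}}$ into the definition of $\cL^{\prime(n)}$, split each factor via $\Gampq(z)=\Gampqq(z,qz)$ to recognize two $(p,q^2)$-kernels, and apply the braid relation with $c=d=\sqrt{pq^2/t}$, $u_0u_1=t^2/pq^2$; your prefactor bookkeeping and balancing check are exactly right. The point you flag about using the braid relation outside the formal convergence range is handled just as the paper does elsewhere (e.g.\ at $c=d=\sqrt{pq/t}$ in the proof of Corollary \ref{cor:t_symmetry}), namely as an identity of meromorphic functions, so there is no gap.
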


Another nice special case is when $c=1$.

\begin{prop}\label{prop:dual_litt_c_1}
We have
\begin{gather}
\cL^{\prime(n)}_1(\vec{x};t;p,q) = \frac{\Delta^{(n)}_S\big(\vec{x};t;p,q^2\big)} {\Delta^{(n)}_S(\vec{x};t;p,q)}.\label{eq:dual_litt_c_1}
\end{gather}
\end{prop}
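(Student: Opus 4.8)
The plan is to evaluate the defining integral of $\cL^{\prime(n)}_1$ directly, exploiting the fact that the interpolation kernel with parameter $c=1$ is the kernel of the identity operator relative to the elliptic Selberg density $\Delta^{(n)}_S(\,\cdot\,;t;p,q)$. By the remark following Theorem~\ref{thm:kern_poles}, the value $c=1$ lies off all polar divisors of $\cK^{(n)}_c$ depending only on $c,p,q,t$, so we may simply set $c=1$ in the definition of $\cL^{\prime(n)}_c$, giving
\[
\cL^{\prime(n)}_1(\vec{x};t;p,q)
=\frac{1}{\prod_{1\le i\le n}\Gampq(\sqrt{p}\,v^{\pm1}x_i^{\pm1})}
\int\cK^{(n)}_1(\vec{z};\vec{x};t;p,q)\,\Delta^{(n)}_S(\vec{z};\sqrt{p}\,v^{\pm1};t;p,q^2).
\]
First I would re-express the integrand against the reference density $\Delta^{(n)}_S(\vec{z};t;p,q)$ rather than $\Delta^{(n)}_S(\vec{z};t;p,q^2)$. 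Using the definition of the Selberg density with univariate parameters,
\[
\Delta^{(n)}_S(\vec{z};\sqrt{p}\,v^{\pm1};t;p,q^2)
=\Bigl(\prod_i\Gampqq(\sqrt{p}\,v^{\pm1}z_i^{\pm1})\Bigr)
\frac{\Delta^{(n)}_S(\vec{z};t;p,q^2)}{\Delta^{(n)}_S(\vec{z};t;p,q)}\,\Delta^{(n)}_S(\vec{z};t;p,q),
\]
where the ratio of the two $t$-densities is an honest meromorphic function of $\vec{z}$ (the measure factors $dz_i/2\pi\sqrt{-1}z_i$ and the constants $1/2^nn!$ cancel), expressible as a product of elliptic Gamma functions.

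The crucial step is then to apply the identity
\[
\int g(\vec{z})\,\cK^{(n)}_1(\vec{z};\vec{x};t;p,q)\,\Delta^{(n)}_S(\vec{z};t;p,q)=g(\vec{x})
\]
to $g(\vec{z})=\bigl(\prod_i\Gampqq(\sqrt{p}\,v^{\pm1}z_i^{\pm1})\bigr)\Delta^{(n)}_S(\vec{z};t;p,q^2)/\Delta^{(n)}_S(\vec{z};t;p,q)$. This is the precise form of the statement that $\cK^{(n)}_1$ is the identity kernel, and it is to be justified exactly as the analogous ``distributional'' statements for $\cL^{(2n)}_1$ and $\cL^{(2n)}_{t^{1/2}}$ in Section~\ref{sec:litt_kern}: for $|c|$ slightly off the unit circle the contour may be taken to be a power of the unit circle, and as $c\to1$ the poles of $\cK^{(n)}_c(\vec{z};\vec{x};t;p,q)$ at $z_i=cx_i$ and $z_i=x_i/c$ pinch the contour, so in the limit the integral collects the residues at $z_i=x_i$; iterating the residue computation (the $c\to1$ degeneration of the mechanism behind Lemma~\ref{lem:kern_res}, terminating at $\cK^{(0)}_1=1$) evaluates these residues to exactly $g(\vec{x})$. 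One must check that the $\vec{z}$-dependent poles of our specific $g$ do not interfere with the pinched contour; since $g$ is an explicit product of elliptic Gamma functions with arguments of controlled modulus, this is verified as in the cited cases. This is the step I expect to be the main obstacle: everything else is bookkeeping, but pinning down the contour and justifying the residue evaluation for this particular integrand (rather than invoking it heuristically) is where the real work lies.

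Granting this, we obtain
\[
\cL^{\prime(n)}_1(\vec{x};t;p,q)
=\frac{\prod_i\Gampqq(\sqrt{p}\,v^{\pm1}x_i^{\pm1})}{\prod_i\Gampq(\sqrt{p}\,v^{\pm1}x_i^{\pm1})}\cdot\frac{\Delta^{(n)}_S(\vec{x};t;p,q^2)}{\Delta^{(n)}_S(\vec{x};t;p,q)},
\]
and it remains to see that the first factor is $1$. This is a short computation with the functional equations of the elliptic Gamma function: by the quadratic relation~\eqref{eq:gampqq}, $\Gampq(w)=\Gampqq(w)\Gampqq(qw)$, so the first factor equals $\prod_i\Gampqq(q\sqrt{p}\,v^{\pm1}x_i^{\pm1})^{-1}$; for each $i$ the four arguments $q\sqrt{p}\,v^{\pm1}x_i^{\pm1}$ form two reflection pairs for $\Gamma_{p,q^2}$, namely $\{q\sqrt{p}\,vx_i,\,q\sqrt{p}\,v^{-1}x_i^{-1}\}$ and $\{q\sqrt{p}\,vx_i^{-1},\,q\sqrt{p}\,v^{-1}x_i\}$, each with product $pq^2$, so the product is $1$ by $\Gampqq(a)\Gampqq(pq^2/a)=1$. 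This gives $\cL^{\prime(n)}_1(\vec{x};t;p,q)=\Delta^{(n)}_S(\vec{x};t;p,q^2)/\Delta^{(n)}_S(\vec{x};t;p,q)$, as claimed; as a consistency check one verifies agreement with the explicit formula for $\cL^{\prime(1)}_c$ in the preceding remark upon setting $c=1$, and one notes that the right-hand side is manifestly independent of $v$, as it must be.
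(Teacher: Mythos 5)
Your overall reduction is the same as the paper's: rewrite the defining integral against the reference density $\Delta^{(n)}_S(\vec{z};t;p,q)$, so that the statement becomes ``the kernel integral operator at $c\to 1$ acts as the identity on the explicit test function $\prod_i\Gampqq(\sqrt{p}\,v^{\pm1}z_i^{\pm1})\,\Delta^{(n)}_S(\vec{z};t;p,q^2)/\Delta^{(n)}_S(\vec{z};t;p,q)$,'' and your closing computation with \eqref{eq:gampqq} and the reflection formula for $\Gampqq$ (showing the residual $v$-dependent prefactors cancel) is correct and matches what the paper leaves implicit. Where you diverge is in how the identity-operator property is justified. The paper does \emph{not} attempt the analytic pinching argument: it observes that the target function converges to $1$ as $q\to 0$ and has a polynomial Puiseux expansion in $q$, and that the operator being applied differs from the standard kernel integral operator only by formal factors which cancel; hence the operator converges to the identity in the formal-series-in-$q$ sense, which suffices since both sides are meromorphic. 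That move sidesteps all contour issues.

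Two points in your version need attention. First, the opening claim that $c=1$ avoids the polar divisors of Theorem \ref{thm:kern_poles} and that one may therefore ``simply set $c=1$ in the definition'' is misleading: the paper itself notes (in the discussion of $\cL^{(2n)}_1$) that the interpolation kernel is ill-defined at $c=1$. Concretely, the prefactor $1/\Gampq(c^2)$ vanishes along $c=1$, so the naive $c=1$ integrand is identically zero off the pinch locus; the identity only makes sense as a $c\to1$ limit in which this zero compensates the pinching, and this is exactly why the value is a residue contribution rather than an honest integral. Second, and more substantively, the residue evaluation you defer is the entire content of the proof: at $c\to1$ all $n$ variables pinch simultaneously against a single compensating zero, and Lemma \ref{lem:kern_res} as stated treats one variable at generic $c$, so the claimed iteration, the admissibility of your particular test function, and the bookkeeping of which of the $2^n n!$ pinch points contribute all require real argument. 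Your sketch is plausible and could likely be completed (the paper carries out similar pinching computations in Propositions \ref{prop:branch_k} and \ref{prop:braid_k}), but as written the crucial step is acknowledged rather than proved, whereas the paper's formal-expansion argument closes it with no analytic input; if you want to avoid contours entirely, note that the formal-in-$q$ framework (as exploited again in Corollary \ref{cor:dual_litt_formal_in_q} and Corollary \ref{cor:dual_litt_pre_van}) is the intended mechanism here.
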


\begin{proof}
We need to take the limit $c\to 1$ in the expression
\begin{gather*}
\cL^{\prime(n)}_c(\vec{x};t;p,q)=
\frac{1}{\prod\limits_{1\le i\le n} \Gampq\big(\big(\sqrt{p}/c\big)v^{\pm 1} x_i^{\pm 1}\big)}
\int\prod_{1\le i\le n} \Gampqq\big(\big(\sqrt{p}/c^2\big)v^{\pm 1}z_i^{\pm 1}\big)
\frac{\Delta^{(n)}_S\big(\vec{z};t;p,q^2\big)}
 {\Delta^{(n)}_S(\vec{z};t;p,q)} \\
\hphantom{\cL^{\prime(n)}_c(\vec{x};t;p,q)=}{} \times \cK^{(n)}_c(\vec{z};\vec{x};t;p,q)
\Delta^{(n)}_S(\vec{z};t;p,q).
\end{gather*}
Now, we na\"{\i}vely expect
$\cK^{(n)}_c(\vec{z};\vec{x};t;p,q)\Delta^{(n)}_S(\vec{z};t;p,q)$ to behave
like a delta function in this limit, from which the given expression would
follow. To make this precise, we note that although the right-hand side of~\eqref{eq:dual_litt_c_1} has a fairly complicated (though a product) limit
as $p\to 0$, it converges to $1$ as $q\to 0$, and indeed has a polynomial
Puiseux series in $q$. Moreover, the integral operator we are applying to
this formal series differs by formal factors from an instance of the usual
integral operator associated to the kernel, and thus (since the formal
factors cancel) indeed converges to the identity as we would expect.
\end{proof}

Again, the name ``dual Littlewood kernel'' comes from an expression as a
deformation of the dual Littlewood identity.

\begin{prop} For $\max(|\ord(t_0)|,\max_i|\ord(x_i)|)<\ord(c)\le 1/4$, we have the formal expansion
\begin{gather*}
\cL^{\prime(n)}_c(\vec{x};t;p,q) =
\prod_{1\le i\le n}
\frac{\Gampq\big(\big(c^2/t^{n-1}t_0\big)x_i^{\pm 1},qc^2t_0x_i^{\pm 1}\big)}{
 \Gampq\big(t^{1-i}c^2\big) \Gampqq\big(q^2t^{i-1}c^2t_0^2,t^{i+1-2n}c^2/t_0^2,q^2t^{1-i}c^4,qt^i\big)} \\
\hphantom{\cL^{\prime(n)}_c(\vec{x};t;p,q) =}{}\times \sum_{\mu}
R^{*(n)}_{2\mu}\big(\vec{x};t_0,c^2/t^{n-1}t_0;q,t;p\big)
\Delta_{\mu}\big(t^{2n-2}t_0^2/c^2|t^n,pt^{n-1}/c^4;q^2,t;p\big).
\end{gather*}
\end{prop}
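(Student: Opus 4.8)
The plan is to adapt, essentially line for line, the strategy used to prove Proposition \ref{prop:litt_sum}, with the elliptic Littlewood identity of \cite{littlewood} replaced throughout by its ``dual'' (doubled-value) counterpart, the summation that underlies Theorem \ref{thm:L2_kern}.

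First I would check that, after multiplying through by $\prod_{1\le i\le n}\Gampqq(q^2t^{1-i}c^4,qt^i)$, both sides of the claimed identity are holomorphic formal Puiseux series in $p$ whose coefficients are hyperoctahedrally symmetric Laurent polynomials in $\vec{x}$ with coefficients rational in the remaining parameters. For the left-hand side this follows from the definition of $\cL^{\prime(n)}_c$ as an integral of $\cK^{(n)}_c$ against the $(p,q^2)$-Selberg density: the Corollary bounding the coefficients of $\prod_i\Gampq(t^{i-n}c^2,t^i)K^{(n)}_c$ gives polynomial coefficients for the integrand, term-by-term integration over a torus of order-$0$ variables preserves this, and the auxiliary prefactor $\prod_i\Gampq((\sqrt{p}/c)v^{\pm1}x_i^{\pm1})^{-1}$ is itself a formal Puiseux series with Laurent polynomial coefficients since $\ord(\sqrt{p}/c)\in(0,1)$. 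For the right-hand side, the Lemma on the coefficients of $R^{*(n)}_{2\mu}(\vec{x};t_0,c^2/t^{n-1}t_0;q,t;p)$ (whose second parameter has order $2\ord(c)\in(0,1/2]$) together with the product formula for the $\Delta_\mu$-symbol gives the same, with $\ord(c)\le 1/4$ being exactly what makes the $\mu$-sum converge formally and be dominated by its $\mu=0$ term (the valuation bookkeeping being the analogue of that in the Remark following Proposition \ref{prop:litt_sum}). Hence it suffices to prove the identity on a Zariski dense set of $(\vec{x},t_0)$, and we may assume $\ord(t_0)=\ord(x_i)=0$.

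Second, I would specialize $\vec{x}$ to the geometric progression $x_i=t^{n-i}q^{\nu_i}t_0$ attached to an arbitrary partition $\nu$ --- a Zariski dense family. On the right-hand side, the value of $R^{*(n)}_{2\mu}$ at such a point is explicit (it vanishes unless $2\mu\subset\nu$, and is otherwise a product of an elliptic binomial coefficient and the normalizing $\Delta^0$-symbols recalled in the introduction), so the sum over $\mu$ collapses to a finite explicitly evaluable sum. On the left-hand side, the specialization property of the interpolation kernel (the Theorem of Section 2, in its analytic form) lets me rewrite $\cK^{(n)}_c(\vec{z};\dots,t^{n-i}q^{\nu_i}t_0,\dots;t;p,q)$ as an explicit product of elliptic Gamma functions times $R^{*(n)}_\nu(\vec{z};ct_0,c/t^{n-1}t_0;q,t;p)$; inserting this into the definition of $\cL^{\prime(n)}_c$ reduces the left-hand side, up to an explicit Gamma prefactor, to the integral
\[
\int R^{*(n)}_\nu(\vec{z};ct_0,c/t^{n-1}t_0;q,t;p)\;\Delta^{(n)}_S(\vec{z};\sqrt{p}\,v^{\pm1}/c^2;t;p,q^2).
\]

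Finally, I would recognize this integral --- a $q$-elliptic interpolation function against the $(p,q^2)$-Selberg density with the two geometric parameters $\sqrt{p}\,v^{\pm1}/c^2$ --- as an instance of the dual Littlewood evaluation of \cite{littlewood} (the analogue, for Conjecture L2, of the elliptic Littlewood integral \cite[Thm.~4.7]{littlewood} that served as the input to Proposition \ref{prop:litt_sum}), and then match the two sides by collapsing the resulting products of $\Gampq$'s and $\Gampqq$'s with their functional equations, notably \eqref{eq:gampqq}. I expect this last matching to be the main obstacle: it is purely mechanical, but one must keep precise track of the $q\mapsto q^2$ substitution --- so that the invoked identity is genuinely the doubled-value dual Littlewood one, weighted by a $\Delta_\mu(\,\cdot\,;q^2,t;p)$-symbol, rather than the ordinary Littlewood identity --- and must verify that the lengthy Gamma-function prefactors on both sides cancel down to exactly the normalization $\prod_i\Gampqq(q^2t^{1-i}c^4,qt^i)$ on the left and to the displayed product of $\Gampq$'s and $\Gampqq$'s on the right. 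Since we are comparing two Puiseux series with polynomial coefficients on a Zariski dense set, this yields the full equality, constants included, with no separate ``up to scalar'' step. Alternatively, the identity can be deduced from Proposition \ref{prop:litt_sum} by lifting to symmetric functions and applying the modified Macdonald involution $\tilde\omega_{q,t}$, which interchanges the partition $\mu^2$ with $2\mu'$; this route, however, first requires constructing a lifted analogue of the dual Littlewood kernel, which the direct argument avoids.
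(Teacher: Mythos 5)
Your proposal is correct and follows essentially the paper's own route: the paper's proof of this Proposition is exactly the argument of Proposition \ref{prop:litt_sum} transposed to the dual setting—reduce to $\ord(t_0)=\ord(x_i)=0$ using the well-behaved Puiseux expansions of both sides, specialize $\vec{x}$ to a partition based at $t_0$, and invoke the dual of \cite[Thm.~4.7]{littlewood}. One bookkeeping slip: the factors $\Gampq(ct_0z_i^{\pm 1},(c/t^{n-1}t_0)z_i^{\pm 1})$ produced by the kernel specialization depend on the integration variables, so they cannot be pulled out as a ``prefactor'' but must stay in the integrand, where \eqref{eq:gampqq} turns them into parameters of the $(t;p,q^2)$ Selberg density—precisely the form to which the dual Littlewood evaluation applies—so your final mechanical matching step goes through once this is accounted for.
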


\begin{proof}
This follows from the dual of \cite[Theorem~4.7]{littlewood} as above.
\end{proof}

\begin{rem}
As in the Littlewood case, removing the factor $\Gampqq\big(q^2t^{1-i}c^4\big)$ makes the right-hand side have a larger domain of formal convergence, namely
\begin{gather*}
\max(|\ord(t_0)|,\max_i|\ord(x_i)|)<\min(1/2-\ord(c),\ord(c)).
\end{gather*}
The corresponding Macdonald polynomial limit is
\begin{gather*}
\lim_{p\to 0} \prod_{1\le i\le n} \frac{\Gampqq\big(qt^i\big)} {\Gampqq\big(t^{i-1}/c^4\big)}
\cL^{\prime(n)}_{p^{1/4}c}\big(p^{-1/4}\vec{x};t;p,q\big) \\
\qquad{} \text{``=''}
\sum_{\mu} \big(c^2q\big)^{|\mu|}
\frac{C^-_\mu\big(qt;q^2,t\big) C^0_\mu\big(t^{n-1}/c^4;q^2,t\big)}
 {C^-_\mu\big(q^2;q^2,t\big)C^0_\mu\big(qt^n;q^2,t\big)} P_{2\mu}(\vec{x};q,t),
\end{gather*}
and can be made rigorous in the usual way.
\end{rem}

When $c=(p/qt)^{1/4}$, this becomes the usual elliptic dual Littlewood sum.

\begin{cor}\label{cor:dual_litt_prod}
When $c=(p/qt)^{1/4}$, the dual Littlewood kernel has the product expansion
\begin{gather*}
\cL^{\prime(n)}_{(p/qt)^{1/4}}(\vec{x};t;p,q) =
\Gampq\big((pq/t)^{1/2}\big)^n
\prod_{1\le i\le n} \Gampqq\big((pq/t)^{1/2}x_i^{\pm 2}\big)\\
\hphantom{\cL^{\prime(n)}_{(p/qt)^{1/4}}(\vec{x};t;p,q) =}{}\times
\prod_{1\le i<j\le n} \Gampq\big((pq/t)^{1/2}x_i^{\pm 1}x_j^{\pm 1}\big).
\end{gather*}
\end{cor}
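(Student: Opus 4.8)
The plan is to read off the result from the formal expansion of $\cL^{\prime(n)}_c$ established in the preceding Proposition, specialized to $c=(p/qt)^{1/4}$. At that value $c^4=p/qt$, so the parameter $pt^{n-1}/c^4$ appearing in the symbol $\Delta_\mu(t^{2n-2}t_0^2/c^2|t^n,pt^{n-1}/c^4;q^2,t;p)$ there reduces to the balanced value at which the deformation built into the sum disappears. Consequently the sum over $\mu$ in that Proposition becomes an instance of the elliptic dual Littlewood identity of \cite{littlewood} (the dual of \cite[Thm.~4.7]{littlewood}, whose symmetric-function shadow is the dual Littlewood identity recalled in Section~\ref{sec:sfkern}), whose right-hand side is an explicit product over the $x_i$ and over pairs $x_i,x_j$ of the shape $\prod_i \Gampqq((pq/t)^{1/2}x_i^{\pm 2})\prod_{i<j}\Gampq((pq/t)^{1/2}x_i^{\pm 1}x_j^{\pm 1})$ up to $\vec{x}$-independent factors.

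Having substituted this evaluation, what remains is to reconcile the $\vec{x}$-independent prefactor of the Proposition with the stated one. Since the left-hand side of the Proposition carries no dependence on the auxiliary parameter $t_0$, all $t_0$-dependent factors must cancel after the substitution; checking this (the $t_0$-dependent numerator Gamma factors pair off against those produced by the dual Littlewood evaluation via the reflection relation $\Gampq(pq/z)=\Gampq(z)^{-1}$) both verifies consistency and clears away most of the clutter. The surviving $t_0$-free prefactor mixes $\Gampq$ and $\Gampqq$ factors, and collapsing it to $\Gampq((pq/t)^{1/2})^n$ is a routine but somewhat delicate manipulation using the quadratic functional equation $\Gampq(z)=\Gampqq(z,qz)$ of \eqref{eq:gampqq} together with the special values recalled in the introduction. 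I expect this Gamma-function bookkeeping to be essentially the only work; it is mechanical, and one can cross-check it against the univariate case $\cL^{\prime(1)}_{(p/qt)^{1/4}}(x;t;p,q)$ recorded in the Remark following Proposition~\ref{prop:dual_litt_c_1}.

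Finally, to promote the identity from formal Puiseux series to an identity of meromorphic functions on $|p|,|q|<1$, I would invoke the paper's standard principle: the formal expansion holds, with rational-function coefficients, for $\vec{x}$ of order $0$ and $|q|,|t|<1$, and a product of elliptic Gamma functions is the Puiseux expansion of an honest meromorphic function, so the two meromorphic functions agree on that region and hence, by analytic continuation, everywhere. (Alternatively, as in the Littlewood-kernel case one could argue entirely through integral manipulations by iterating the degenerate branching and braid relations, but the route through the formal sum is shorter.)
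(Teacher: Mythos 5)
Your proposal is correct and follows essentially the paper's own route: the paper likewise obtains this corollary by setting $c=(p/qt)^{1/4}$ in the preceding formal-expansion Proposition, observing that the deformed sum then reduces to the usual elliptic dual Littlewood identity of \cite{littlewood} with its product evaluation, and absorbing the ($t_0$-dependent) prefactors, with the formal-to-meromorphic promotion being the standard principle used throughout. (The paper's remark notes the alternative derivation by duality from $\cL^{(2n)}_{(pqt)^{1/4}}$, but that is only offered as a cross-check, as is your route through integral manipulations.)
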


\begin{rem}
Of course, this also follows directly by duality from the corresponding evaluation for $\cL^{(2n)}_{(pqt)^{1/4}}$.
\end{rem}

The image of this under the $t\mapsto pq/t$ symmetry has a particularly nice expression.

\begin{cor}\label{cor:Q3}
When $c=q^{-1/2}t^{1/4}$, the dual Littlewood kernel has the expression
\begin{gather*}
\cL^{\prime(n)}_{q^{-1/2}t^{1/4}}(\vec{x};t;p,q)=
\prod_{1\le i\le n} \Gampqq\big(t^{1/2}x_i^{\pm 2}\big)
\frac{\Delta^{(n)}_S\big(\vec{x};t^{1/2};p,q\big)} {\Delta^{(n)}_S(\vec{x};t;p,q)}.
\end{gather*}
\end{cor}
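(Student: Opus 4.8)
The plan is to derive this directly from Corollary \ref{cor:dual_litt_prod} via the $t\mapsto pq/t$ symmetry of the dual Littlewood kernel, as indicated in the remark preceding the statement. First I would reparametrize Corollary \ref{cor:dual_litt_prod}: since it is an identity of meromorphic functions, I may replace $t$ by $pq/t$ throughout. Under this substitution the parameter $c=(p/qt)^{1/4}$ becomes $\bigl(p/q(pq/t)\bigr)^{1/4}=(t/q^2)^{1/4}=q^{-1/2}t^{1/4}$, and each $(pq/t)^{1/2}$ on the right-hand side becomes $\bigl(pq/(pq/t)\bigr)^{1/2}=t^{1/2}$; in particular the factor $\prod_i\Gampqq((pq/t)^{1/2}x_i^{\pm 2})$ is turned into $\prod_i\Gampqq(t^{1/2}x_i^{\pm 2})$, which is exactly the factor appearing in the claimed formula. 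This yields
\[
\cL^{\prime(n)}_{q^{-1/2}t^{1/4}}(\vec{x};pq/t;p,q)
=
\Gampq(t^{1/2})^n
\prod_{1\le i\le n}\Gampqq(t^{1/2}x_i^{\pm 2})
\prod_{1\le i<j\le n}\Gampq(t^{1/2}x_i^{\pm 1}x_j^{\pm 1}).
\]

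Next I would invoke the $t\mapsto pq/t$ symmetry of the dual Littlewood kernel proved above, which states that $\cL^{\prime(n)}_c(\vec{x};pq/t;p,q)=\Gampq(t)^{n}\prod_{1\le i<j\le n}\Gampq(tx_i^{\pm 1}x_j^{\pm 1})\,\cL^{\prime(n)}_c(\vec{x};t;p,q)$ for arbitrary $c$; specializing $c=q^{-1/2}t^{1/4}$ (legitimate, since that symmetry is an identity of meromorphic functions in all parameters) and dividing, I obtain
\[
\cL^{\prime(n)}_{q^{-1/2}t^{1/4}}(\vec{x};t;p,q)
=
\prod_{1\le i\le n}\Gampqq(t^{1/2}x_i^{\pm 2})\;
\frac{\Gampq(t^{1/2})^n\prod_{1\le i<j\le n}\Gampq(t^{1/2}x_i^{\pm 1}x_j^{\pm 1})}
     {\Gampq(t)^n\prod_{1\le i<j\le n}\Gampq(tx_i^{\pm 1}x_j^{\pm 1})}.
\]

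Finally I would recognize the quotient of Gamma products as $\Delta^{(n)}_S(\vec{x};t^{1/2};p,q)/\Delta^{(n)}_S(\vec{x};t;p,q)$: by the definition $\Delta^{(n)}_S(\vec{z};t;p,q)=\Gampq(t)^n\prod_{i<j}\Gampq(tz_i^{\pm 1}z_j^{\pm 1})\,\Delta^{(n)}_D(\vec{z};t;p,q)$, and since the Dixon factor $\Delta^{(n)}_D$ does not depend on the Selberg parameter, that factor cancels in the ratio and leaves precisely the displayed quotient; substituting this in gives the asserted formula. I expect no genuine obstacle here: the only delicate points are tracking the reparametrization of $c$ correctly (it is easy to misplace the power of $q$), confirming that the $\Gampqq(t^{1/2}x_i^{\pm 2})$ factor is carried through unchanged because $(pq/t)^{1/2}\mapsto t^{1/2}$ under $t\mapsto pq/t$, and checking that applying the $t\mapsto pq/t$ symmetry at the $t$-dependent value $c=q^{-1/2}t^{1/4}$ is permissible, which it is.
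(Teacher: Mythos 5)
Your proposal is correct and is exactly the paper's (implicit) argument: the Corollary is stated as "the image of Corollary \ref{cor:dual_litt_prod} under the $t\mapsto pq/t$ symmetry," and your substitution $c=(p/qt)^{1/4}\mapsto q^{-1/2}t^{1/4}$, $(pq/t)^{1/2}\mapsto t^{1/2}$, followed by the $t\mapsto pq/t$ symmetry of $\cL^{\prime(n)}_c$ and the identification of the Gamma-product ratio with $\Delta^{(n)}_S(\vec{x};t^{1/2};p,q)/\Delta^{(n)}_S(\vec{x};t;p,q)$, is precisely that computation. No gaps.
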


One major difference between the Littlewood kernel and its dual is that the
dual appears not to satisfy any branching rule. There is, however, an
analogue of Theorem~\ref{thm:quad_litt}, with essentially the same ``Bailey
lemma''-type proof.

\begin{thm}\label{thm:quad_dual_litt}
The dual Littlewood kernel satisfies the integral identity
\begin{gather*}
\int \cK^{(n)}_{c/d}(\vec{z};\vec{x};t;p,q)
\cL^{\prime(n)}_d(\vec{z};t;p,q)
\Delta^{(n)}_S\big(\vec{z};\big(\sqrt{pq}/c\big)w^{\pm 1},\big(\sqrt{p}/d\big)v^{\pm 1};t;p,q\big) \\
\qquad{}= \prod_{1\le i\le n} \Gampq\big(\big(\sqrt{pq}/d\big)w^{\pm 1}x_i^{\pm 1}\big)\\
\qquad \quad{}\times \int \cK^{(n)}_c(\vec{x};\vec{z};t;p,q)
\Delta^{(n)}_S\big(\vec{z};\big(d\sqrt{p}q/c\big)q^{\pm 1/2}w^{\pm 1},\sqrt{p}v^{\pm 1}/d^2;t;p,q^2\big).
\end{gather*}
\end{thm}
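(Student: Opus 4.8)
The plan is to follow the template of Theorem~\ref{thm:quad_litt}. First I would expand the dual Littlewood kernel $\cL^{\prime(n)}_d(\vec z;t;p,q)$ occurring on the left-hand side by its definition, choosing the auxiliary parameter there to be the very $v$ appearing in the statement --- legitimate, since $\cL^{\prime(n)}_d$ does not depend on that parameter. The point of this choice is that the normalization prefactor $\prod_{1\le i\le n}\Gampq((\sqrt p/d)v^{\pm 1}z_i^{\pm 1})^{-1}$ of the definition exactly cancels the two parameters $(\sqrt p/d)v^{\pm 1}$ of the density $\Delta^{(n)}_S(\vec z;(\sqrt{pq}/c)w^{\pm 1},(\sqrt p/d)v^{\pm 1};t;p,q)$, so that after expansion the inner $\vec z$-integral runs against the two-parameter density $\Delta^{(n)}_S(\vec z;(\sqrt{pq}/c)w^{\pm 1};t;p,q)$, whose parameters multiply to $pq/c^2$.

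Next I would restrict the free parameters to a nonempty open region in which every elliptic Gamma function in the two nested integrals has argument of absolute value in $(|pq|,1)$ when all integration variables lie on the unit circle; Fubini then permits interchanging the $\vec z$-integration with the one introduced by expanding $\cL^{\prime(n)}_d$, and the resulting identity of meromorphic functions extends to generic parameters by \cite[\S10]{xforms}. After the interchange, the inner $\vec z$-integral reads $\int\cK^{(n)}_{c/d}(\vec z;\vec x)\,\cK^{(n)}_d(\vec z;\vec w')\,\Delta^{(n)}_S(\vec z;(\sqrt{pq}/c)w^{\pm 1};t;p,q)$ (using the symmetry $\cK^{(n)}_d(\vec w';\vec z)=\cK^{(n)}_d(\vec z;\vec w')$), which is an instance of the braid relation, Proposition~\ref{prop:kern_braid}, with the roles of $(c,d)$ there played by $(c/d,d)$ and with the constraint $u_0u_1=pq/((c/d)^2 d^2)=pq/c^2$ automatically satisfied. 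Applying it collapses the two kernels to $\cK^{(n)}_c(\vec x;\vec w')$ times $\prod_i\Gampq((\sqrt{pq}/d)w^{\pm 1}x_i^{\pm 1})$ and $\prod_i\Gampq((d\sqrt{pq}/c)w^{\pm 1}(w'_i)^{\pm 1})$.

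The first of these products is independent of $\vec w'$, hence factors out of the remaining integral and becomes the prefactor $\prod_{1\le i\le n}\Gampq((\sqrt{pq}/d)w^{\pm 1}x_i^{\pm 1})$ on the right-hand side. To absorb the second, I would apply the quadratic functional equation $\Gampq(z)=\Gampqq(z,qz)$ of \eqref{eq:gampqq}, which rewrites $\prod_i\Gampq((d\sqrt{pq}/c)w^{\pm 1}(w'_i)^{\pm 1})$ as $\prod_i\Gampqq((d\sqrt p q/c)q^{\pm 1/2}w^{\pm 1}(w'_i)^{\pm 1})$; this is exactly the adjustment that promotes $\Delta^{(n)}_S(\vec w';\sqrt p v^{\pm 1}/d^2;t;p,q^2)$ --- the density left over from the definition of $\cL^{\prime(n)}_d$ --- to $\Delta^{(n)}_S(\vec w';(d\sqrt p q/c)q^{\pm 1/2}w^{\pm 1},\sqrt p v^{\pm 1}/d^2;t;p,q^2)$. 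Renaming $\vec w'$ to $\vec z$ and invoking the $\vec x\leftrightarrow\vec z$ symmetry of $\cK^{(n)}_c$ once more yields precisely the claimed right-hand side.

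The only step I expect to demand real care is the Fubini and meromorphic-continuation bookkeeping --- producing an explicit parameter regime in which both nested contours can simultaneously be taken to be the unit circle with no pinching --- since everything else amounts to routine tracking of the $\sqrt p$, $\sqrt q$, $c$, $d$, $v$, $w$ factors through the braid relation and the $q\mapsto q^2$ quadratic transformation.
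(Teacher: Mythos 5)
Your argument is exactly the paper's intended proof: the theorem is proved by the same ``Bailey Lemma''-type manipulation as Theorem~\ref{thm:quad_litt} (expand $\cL^{\prime(n)}_d$ by its definition with the auxiliary parameter chosen to cancel the $(\sqrt{p}/d)v^{\pm 1}$ density parameters, interchange integrals on a suitable parameter region, and apply the braid relation with parameters $(c/d,d)$), and your use of $\Gampq(z)=\Gampqq(z,qz)$ to fold the resulting Gamma factors into the $(t;p,q^2)$ density is precisely the bookkeeping that produces the stated right-hand side. The details, including the balancing check $u_0u_1=pq/c^2$ and the resulting prefactor $\prod_i\Gampq((\sqrt{pq}/d)w^{\pm 1}x_i^{\pm 1})$, are correct.
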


For comparison with Conjectures Q3, Q4, and Q5 of~\cite{littlewood}, we record the interpolation function version of this identity.

\begin{cor}\label{cor:quad_dual_litt}
For otherwise generic parameters satisfying $t^{n-1}t_0t_1t_2u_0=pq^2d^2$,
\begin{gather*}
\int
\cR^{*(n)}_{\blambda} (\vec{z};t_0/d,u_0/d;t;p,q)
\cL^{\prime(n)}_{q^{-1/2}d}(\vec{z};t;p,q)\\
\qquad\quad{}\times
\Delta^{(n)}_S\big(\vec{z};t_0/d,t_1/d,u_0/d,t_2/d,\sqrt{pq}v^{\pm 1}/d;t;p,q\big) \\
\qquad {}=
\frac{\Delta^0_{\blambda}\big(t^{n-1}t_0/u_0|t^{n-1}t_0t_1/d^2;t;p,q\big)}
 {\Delta^0_{\blambda}\big(t^{n-1}t_0/u_0|t^{n-1}t_0t_1/q;t;p,q\big)}
\prod_{\substack{0\le i<n\\0\le r<s<3}}
 \frac{\Gampq\big(t^it_rt_s/d^2\big)}
 {\Gampq\big(t^it_rt_s/q\big)} \\
\qquad\quad{}\times \int
\cR^{*(n)}_{\blambda}\big(\vec{z};q^{-1/2}t_0,q^{-1/2}u_0;t;p,q\big) \\
 \qquad\qquad\quad{}\times \Delta^{(n)}_S\big(\vec{z};
 q^{\pm 1/2}t_0,q^{\pm 1/2}t_1,q^{\pm 1/2}t_2,q^{\pm 1/2}u_0,
 \sqrt{p}q v^{\pm 1}/d^2;t;p,q^2\big).
\end{gather*}
\end{cor}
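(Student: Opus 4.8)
The plan is to obtain this identity from Theorem~\ref{thm:quad_dual_litt} by specializing the free set of variables $\vec{x}$ to a partition-pair base point, exactly as Corollary~\ref{cor:quad_litt} was obtained from Theorem~\ref{thm:quad_litt}. First I would replace $d$ by $q^{-1/2}d$ throughout Theorem~\ref{thm:quad_dual_litt}, so that the dual Littlewood kernel there becomes $\cL^{\prime(n)}_{q^{-1/2}d}$; the kernel on the left becomes $\cK^{(n)}_{q^{1/2}c/d}$, the second Selberg parameter $(\sqrt{p}/d)v^{\pm1}$ becomes $\sqrt{pq}\,v^{\pm1}/d$ on the left, and $\sqrt{p}v^{\pm1}/d^2$ becomes $\sqrt{p}q\,v^{\pm1}/d^2$ on the right. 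Then I would re-parametrize the remaining free parameters $c,w$ via $c^2 = t^{n-1}t_0u_0/q$, $t_1 = \sqrt{pq}\,dw/c$, $t_2 = \sqrt{pq}\,d/(cw)$. Consistency of these identifications with the specializations below is precisely what forces the balancing condition: from $(q^{1/2}c/d)^2 = t^{n-1}(t_0/d)(u_0/d)$ one gets $c^2 = t^{n-1}t_0u_0/q$, and from $t_1t_2/d^2 = (\sqrt{pq}/c)^2 = pq/c^2$ one then reads off $t^{n-1}t_0t_1t_2u_0 = pq^2d^2$.

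Next, set $x_i = p^{\lambda_i}q^{\mu_i}t^{n-i}t_0/(q^{1/2}c)$. On the left, the proposition expressing a general interpolation function as a specialization of the kernel converts $\cK^{(n)}_{q^{1/2}c/d}(\vec{z};\vec{x};t;p,q)$ into $\cR^{*(n)}_{\blambda}(\vec{z};t_0/d,u_0/d;t;p,q)$ up to the $\vec{x}$-independent factor $\prod_i (pqd^2/t_0u_0)^{-2\lambda_i\mu_i}\Gampq(t^{n-i}t_0u_0/d^2,t^i)$ together with $\prod_i\Gampq((t_0/d)z_i^{\pm1},(u_0/d)z_i^{\pm1})$; the latter merges with the order-$0$ Selberg density to produce the order-$1$ density on the left of the Corollary. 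On the right, the symmetry $\cK^{(n)}_c(\vec{x};\vec{z};t;p,q)=\cK^{(n)}_c(\vec{z};\vec{x};t;p,q)$ together with the same proposition converts the kernel into $\cR^{*(n)}_{\blambda}(\vec{z};q^{-1/2}t_0,q^{-1/2}u_0;t;p,q)$; here one additionally uses the quadratic relation $\Gampq(z)=\Gampqq(z,qz)$ (equation~\eqref{eq:gampqq}) to rewrite the accompanying factor $\prod_i\Gampq((q^{-1/2}t_0)z_i^{\pm1},(q^{-1/2}u_0)z_i^{\pm1})$ in base $q^2$, producing the parameters $q^{\pm1/2}t_0$, $q^{\pm1/2}u_0$ of the right-hand, base-$q^2$ Selberg density; the parameters $q^{\pm1/2}t_1$, $q^{\pm1/2}t_2$ come directly from the density parameters $(\sqrt{p}qd/c)q^{\pm1/2}w^{\pm1}$ of Theorem~\ref{thm:quad_dual_litt} under the re-parametrization, and $\sqrt{p}qv^{\pm1}/d^2$ is already in the right form.

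The only genuinely computational step — and the main obstacle — is then to collect the leftover $\vec{x}$-independent prefactors and verify they reorganize into the stated closed form. These prefactors are the two normalizations $\prod_i(\cdots)^{-2\lambda_i\mu_i}\Gampq(t^{n-i}ab,t^i)$ from the two uses of the kernel-to-$\cR^{*}$ proposition, together with $\prod_i\Gampq((\sqrt{p}q/d)w^{\pm1}x_i^{\pm1})$ from Theorem~\ref{thm:quad_dual_litt}, all evaluated at $x_i=p^{\lambda_i}q^{\mu_i}t^{n-i}t_0/(q^{1/2}c)$. Writing $(\sqrt{p}q/d)w^{\pm1}=\sqrt{q}\,t_1c/d^2$ and $\sqrt{q}\,t_2c/d^2$ and using $c^2=t^{n-1}t_0u_0/q$, each such $\Gampq$ at the base point becomes $\Gampq(p^{\lambda_i}q^{\mu_i}\cdot t^{n-i}t_0t_1/d^2)$ and a reciprocal-type factor; applying the functional equations for $\Gampq$ in the form $\Gampq(p^lq^m x)/\Gampq(x)=(-x)^{-lm}p^{-ml(l-1)/2}q^{-lm(m-1)/2}\theta(x;p,q)_{l,m}$ (the defining identity for $\theta(\cdot)_{l,m}$) converts the $i$-products into the symbols $\cC^0_\blambda(\cdot;t;p,q)$, hence into $\Delta^0_\blambda$. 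Tracking the analogous cancellation for the $\Gampq(t^{n-i}ab,t^i)$ factors produces the residual product $\prod_{0\le i<n,\,0\le r<s<3}\Gampq(t^it_rt_s/d^2)/\Gampq(t^it_rt_s/q)$ and the ratio $\Delta^0_{\blambda}(t^{n-1}t_0/u_0\mid t^{n-1}t_0t_1/d^2;t;p,q)/\Delta^0_{\blambda}(t^{n-1}t_0/u_0\mid t^{n-1}t_0t_1/q;t;p,q)$. This is routine but delicate bookkeeping of powers of $q$ and signs, and I would double-check it against the known normalization $\cR^{*(n)}_{\blambda}(\dots,t^{n-i}v,\dots;a,b;t;p,q)=\Delta^0_{\blambda}(t^{n-1}a/b\mid t^{n-1}av,a/v;t;p,q)$ by further specializing $\vec{z}$ to a geometric progression.
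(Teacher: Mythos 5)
Your proposal is correct and is essentially the paper's own (implicit) route: the corollary is recorded as the specialization of Theorem \ref{thm:quad_dual_litt} at a partition pair, exactly as Corollary \ref{cor:quad_litt} comes from Theorem \ref{thm:quad_litt}, using the proposition expressing $\cR^{*(n)}_{\blambda}$ as a kernel specialization together with the quadratic relation \eqref{eq:gampqq}, and your reparametrization $c^2=t^{n-1}t_0u_0/q$, $t_1=\sqrt{pq}\,dw/c$, $t_2=\sqrt{pq}\,d/cw$ is the right one (it reproduces the balancing condition $t^{n-1}t_0t_1t_2u_0=pq^2d^2$). The only cosmetic slip is that after the substitution $d\mapsto q^{-1/2}d$ the right-hand density parameters of the theorem read $(\sqrt{pq}\,d/c)q^{\pm 1/2}w^{\pm 1}$ rather than $(\sqrt{p}q\,d/c)q^{\pm 1/2}w^{\pm 1}$, which is the form your identification with $q^{\pm 1/2}t_1$, $q^{\pm 1/2}t_2$ actually uses.
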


Again, a suitable specialization allows us to evaluate the right-hand side
in terms of the dual Littlewood kernel.

\begin{cor}\label{cor:dual_litt_recur}
We have the identity
\begin{gather*}
\cL^{\prime(n)}_c(\vec{x};t;p,q)= \prod_{1\le i\le n}
\frac{1}{\Gampq\big(\sqrt{pq}x_i^{\pm 1}/qcd^2,\sqrt{pq}x_i^{\pm 1}/c\big)} \\
\hphantom{\cL^{\prime(n)}_c(\vec{x};t;p,q)=}{}\times \int
\cK^{(n)}_{c/d}(\vec{z};\vec{x};t;p,q)
\cL^{\prime(n)}_d(\vec{z};t;p,q)
\Delta^{(n)}_S\big(\vec{z};\sqrt{pq}/qc^2d,\sqrt{pq}/d;t;p,q\big).
\end{gather*}
\end{cor}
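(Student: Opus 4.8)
The plan is to obtain the identity as a specialization of Theorem~\ref{thm:quad_dual_litt}. In that transformation I would set the auxiliary parameters to $w=c/d$ and $v=q^{-1/2}/c^2$ (any choice of square root is fine, since $\cL^{\prime(n)}_c$ is invariant under $\vec x\mapsto-\vec x$, under $c\mapsto-c$, and under the choice of $\sqrt p$). The claim is then that with these values both integrals in Theorem~\ref{thm:quad_dual_litt} collapse exactly onto the two sides of the Corollary, the mechanism being the reflection equation $\Gampq(pq/z)=\Gampq(z)^{-1}$ (and its $\Gampqq$ analogue).

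First consider the left-hand integral of Theorem~\ref{thm:quad_dual_litt}. After the substitution, the four univariate parameters of $\Delta^{(n)}_S(\vec z;(\sqrt{pq}/c)w^{\pm1},(\sqrt p/d)v^{\pm1};t;p,q)$ become $\sqrt{pq}/d$, $\sqrt{pq}d/c^2$, $\sqrt{pq}/(qc^2d)$, and $\sqrt{pq}c^2/d$; the pair $\sqrt{pq}d/c^2$, $\sqrt{pq}c^2/d$ has product $pq$, so by reflection these two parameters drop out of the density, leaving $\Delta^{(n)}_S(\vec z;\sqrt{pq}/(qc^2d),\sqrt{pq}/d;t;p,q)$. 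Together with $\cK^{(n)}_{c/d}(\vec z;\vec x)$ and $\cL^{\prime(n)}_d(\vec z)$ this is precisely the integral appearing on the right of the Corollary.

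Next consider the right-hand side of Theorem~\ref{thm:quad_dual_litt}. After the substitution the six univariate parameters of $\Delta^{(n)}_S(\vec z;(d\sqrt pq/c)q^{\pm1/2}w^{\pm1},\sqrt p v^{\pm1}/d^2;t;p,q^2)$ are $\sqrt pq^{3/2}$, $\sqrt pq^{1/2}$, $\sqrt pq^{3/2}d^2/c^2$, $\sqrt pq^{1/2}c^2/d^2$, $\sqrt pq^{1/2}d^2/c^2$, and $\sqrt pq^{-1/2}/(c^2d^2)$; the first two have product $pq^2$, and so do the third and fourth, so by the reflection equation for $\Gampqq$ all four of these parameters cancel, leaving $\Delta^{(n)}_S(\vec z;\sqrt p(v')^{\pm1}/c^2;t;p,q^2)$ with $v'=q^{1/2}d^2$. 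By the definition of the dual Littlewood kernel together with the $v$-independence of that defining integral (the Corollary stated just before the definition of $\cL^{\prime(n)}_c$), this integral equals $\prod_{1\le i\le n}\Gampq((\sqrt p/c)(v')^{\pm1}x_i^{\pm1})\,\cL^{\prime(n)}_c(\vec x;t;p,q)=\prod_{1\le i\le n}\Gampq(\sqrt{pq}d^2x_i^{\pm1}/c,\sqrt{pq}x_i^{\pm1}/(qcd^2))\,\cL^{\prime(n)}_c(\vec x;t;p,q)$. Finally, the prefactor $\prod_{1\le i\le n}\Gampq((\sqrt{pq}/d)w^{\pm1}x_i^{\pm1})=\prod_{1\le i\le n}\Gampq(\sqrt{pq}cx_i^{\pm1}/d^2,\sqrt{pq}x_i^{\pm1}/c)$ multiplies this; since $\sqrt{pq}c/d^2$ and $\sqrt{pq}d^2/c$ have product $pq$, the factors $\Gampq(\sqrt{pq}cx_i^{\pm1}/d^2)$ and $\Gampq(\sqrt{pq}d^2x_i^{\pm1}/c)$ annihilate each other by reflection, so what survives is exactly $\prod_{1\le i\le n}\Gampq(\sqrt{pq}x_i^{\pm1}/c,\sqrt{pq}x_i^{\pm1}/(qcd^2))$, the prefactor in the statement. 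Equating the two sides of the specialized Theorem~\ref{thm:quad_dual_litt} and dividing by this product yields the Corollary.

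I do not expect any genuine obstacle: the argument is pure bookkeeping of which pairs of elliptic Gamma functions cancel under $\Gampq(pq/z)=\Gampq(z)^{-1}$ and its $\Gampqq$ analogue, once the correct values of the dummy parameters $v$ and $w$ have been located (which is itself just solving a couple of linear conditions among the exponents). The only point worth a remark is that Theorem~\ref{thm:quad_dual_litt} is an identity of meromorphic functions, so the specialization of $v$ and $w$ above requires no revisiting of contour or convergence hypotheses, and the several square-root choices made along the way are harmless by the stated symmetries of $\cL^{\prime(n)}_c$.
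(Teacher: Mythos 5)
Your proposal is correct and is exactly the paper's (implicit) argument: the corollary is obtained by specializing the auxiliary parameters $w,v$ in Theorem~\ref{thm:quad_dual_litt} so that, after reflection cancellations $\Gampq(pq/z)=\Gampq(z)^{-1}$ (and the $\Gampqq$ analogue), the right-hand integral becomes the defining integral of $\cL^{\prime(n)}_c$ and the left-hand density loses one pair of parameters. Your explicit choice $w=c/d$, $v=q^{-1/2}/c^2$ and the resulting bookkeeping check out, including the final prefactor cancellation, so nothing further is needed.
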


The case $d=q^{-1/2}t^{1/4}$ is particularly useful.

\begin{cor}\label{cor:dual_litt_formal_in_q}
We have the expression
\begin{gather*}
\cL^{\prime(n)}_c(\vec{x};t;p,q) =\prod_{1\le i\le n} \Gampq\big((pq)^{1/2}cx_i^{\pm 1},(pqt)^{1/2}cx_i^{\pm 1}\big) \\
\hphantom{\cL^{\prime(n)}_c(\vec{x};t;p,q) =}{}\times \int
\cK^{(n)}_{q^{1/2}t^{-1/4}c}(\vec{z};\vec{x};t;p,q)
\Delta^{(n)}_S\big(\vec{z};\pm t^{1/4},-p^{1/2}t^{1/4},p^{1/2}/t^{1/4}c^2;t^{1/2};p,q\big).
\end{gather*}
\end{cor}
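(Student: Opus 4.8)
The plan is to deduce this from the recursion of Corollary~\ref{cor:dual_litt_recur} by specializing to $d=q^{-1/2}t^{1/4}$, at which value the dual Littlewood kernel occurring in the integrand is the explicit product of Corollary~\ref{cor:Q3}. First I would record the parameter simplifications at $d=q^{-1/2}t^{1/4}$: one has $c/d=q^{1/2}t^{-1/4}c$, $qcd^2=ct^{1/2}$, $\sqrt{pq}/qc^2d=p^{1/2}/t^{1/4}c^2$ and $\sqrt{pq}/d=p^{1/2}q/t^{1/4}$, so that the right-hand side of Corollary~\ref{cor:dual_litt_recur} becomes
\[
\prod_{1\le i\le n}\frac{1}{\Gampq((\sqrt{pq/t}/c)x_i^{\pm 1},(\sqrt{pq}/c)x_i^{\pm 1})}
\int
\cK^{(n)}_{q^{1/2}t^{-1/4}c}(\vec{z};\vec{x};t;p,q)\,
\cL^{\prime(n)}_{q^{-1/2}t^{1/4}}(\vec{z};t;p,q)\,
\Delta^{(n)}_S(\vec{z};p^{1/2}/t^{1/4}c^2,p^{1/2}q/t^{1/4};t;p,q).
\]
Substituting the value of $\cL^{\prime(n)}_{q^{-1/2}t^{1/4}}$ from Corollary~\ref{cor:Q3}, the factor $\Delta^{(n)}_S(\vec{z};t;p,q)$ in its denominator cancels the $t$-part of $\Delta^{(n)}_S(\vec z;p^{1/2}/t^{1/4}c^2,p^{1/2}q/t^{1/4};t;p,q)$, and the integrand collapses to $\cK^{(n)}_{q^{1/2}t^{-1/4}c}(\vec z;\vec x;t;p,q)$ times $\Delta^{(n)}_S(\vec z;t^{1/2};p,q)$ times the univariate factor $\prod_i \Gampqq(t^{1/2}z_i^{\pm 2})\Gampq((p^{1/2}/t^{1/4}c^2)z_i^{\pm 1},(p^{1/2}q/t^{1/4})z_i^{\pm 1})$.

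The next step is to recognize this univariate factor as the one attaching the four extra parameters $t^{1/4},-t^{1/4},-p^{1/2}t^{1/4},p^{1/2}/t^{1/4}c^2$ to $\Delta^{(n)}_S(\vec z;t^{1/2};p,q)$. Combining the two quadratic functional equations $\Gampqq(w)=\Gamppqq(w)\Gamppqq(pw)$ and $\Gamppqq(v^{2})=\Gampq(v,-v)$ gives
\[
\Gampqq(t^{1/2}z^{\pm 2})
=
\Gampq(t^{1/4}z^{\pm 1},-t^{1/4}z^{\pm 1},p^{1/2}t^{1/4}z^{\pm 1},-p^{1/2}t^{1/4}z^{\pm 1}),
\]
and the reflection equation $\Gampq(w)\Gampq(pq/w)=1$ then shows $\Gampq(p^{1/2}t^{1/4}z^{\pm 1})\Gampq((p^{1/2}q/t^{1/4})z^{\pm 1})=1$, so the unwanted factor $\Gampq(p^{1/2}t^{1/4}z^{\pm 1})$ is killed by the factor $\Gampq((p^{1/2}q/t^{1/4})z^{\pm 1})$ already present. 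What survives is exactly $\prod_i\Gampq(t^{1/4}z_i^{\pm 1},-t^{1/4}z_i^{\pm 1},-p^{1/2}t^{1/4}z_i^{\pm 1},(p^{1/2}/t^{1/4}c^2)z_i^{\pm 1})$, i.e.\ the integrand is $\cK^{(n)}_{q^{1/2}t^{-1/4}c}(\vec z;\vec x;t;p,q)\,\Delta^{(n)}_S(\vec z;\pm t^{1/4},-p^{1/2}t^{1/4},p^{1/2}/t^{1/4}c^2;t^{1/2};p,q)$, as claimed. Finally the $\vec x$-prefactor is cleaned up by the same reflection equation: $\Gampq((\sqrt{pq}/c)x_i^{\pm 1})^{-1}=\Gampq((pq)^{1/2}c\,x_i^{\pm 1})$ and $\Gampq((\sqrt{pq/t}/c)x_i^{\pm 1})^{-1}=\Gampq((pqt)^{1/2}c\,x_i^{\pm 1})$, turning it into $\prod_i\Gampq((pq)^{1/2}cx_i^{\pm 1},(pqt)^{1/2}cx_i^{\pm 1})$.

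I do not expect a genuine obstacle. Corollary~\ref{cor:dual_litt_recur} is an identity of meromorphic functions, so the specialization $d=q^{-1/2}t^{1/4}$ is unproblematic (one only needs that $\cL^{\prime(n)}_{q^{-1/2}t^{1/4}}$ is well defined, which is guaranteed by Corollary~\ref{cor:Q3}), and everything else is an application of the functional equations of the elliptic Gamma function. The one place to be careful is the bookkeeping: one must keep track of which elliptic modulus is being doubled at each step---the $q\mapsto q^{2}$ doubling built into the $\Gampqq$ factors coming from $\cL^{\prime(n)}$ versus the $p\mapsto p^{2}$ doubling used in $\Gamppqq(v^{2})=\Gampq(v,-v)$---since mixing these up produces a formula off by precisely the sort of extra $\Gamma$-factors that are supposed to cancel. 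As a less economical alternative, one could reprove the identity from scratch by the formal-kernel/Zariski-density argument used for Theorems~\ref{thm:L1_kern} and~\ref{thm:L2_kern}, but the derivation from Corollaries~\ref{cor:dual_litt_recur} and~\ref{cor:Q3} is much shorter.
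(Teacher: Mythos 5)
Your derivation is correct and is exactly the route the paper intends: the corollary is the specialization $d=q^{-1/2}t^{1/4}$ of Corollary \ref{cor:dual_litt_recur}, with $\cL^{\prime(n)}_{q^{-1/2}t^{1/4}}$ replaced by its product form from Corollary \ref{cor:Q3} and the resulting Gamma factors simplified via the quadratic and reflection functional equations, just as you carried out.
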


Unlike the defining integral for $\cL^{\prime(n)}_c$, this has a well-behaved formal expansion in $q$ for a~range of valuations of $c$, namely $-1/2<\ord_q(c)<0$ (which extends to $\ord_q(c)=0$ if we divide by the limit as $q\to 0$). (This in particular explains why $\cL^{\prime(n)}_{(p/qt)^{1/4}}$ has a nice formal expansion in~$q$.)

The case $c=1$ of Corollary \ref{cor:dual_litt_recur} can be generalized somewhat, as it is then easier to cancel parameters. Note that unlike
Corollary \ref{cor:dual_litt_recur}, the resulting identity is actually equivalent (by the usual argument) to the full Theorem~\ref{thm:quad_dual_litt}.

\begin{cor}\label{cor:dual_litt_pre_van}
The dual Littlewood kernel satisfies the identity
\begin{gather*}
\int \cK^{(n)}_{1/c}(\vec{z};\vec{x};t;p,q)\cL^{\prime(n)}_c(\vec{z};t;p,q)
\Delta^{(n)}_S\big(\vec{z};p^{1/2}v^{\pm 1}/c;t;p,q\big) \\
\qquad {}= \prod_{1\le i\le n} \Gampqq\big(p^{1/2}v^{\pm 1}x_i^{\pm 1}/c^2\big)
\frac{\Delta^{(n)}_S\big(\vec{x};t;p,q^2\big)} {\Delta^{(n)}_S(\vec{x};t;p,q)}.
\end{gather*}
\end{cor}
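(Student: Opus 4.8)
The plan is to obtain this as a specialization of Theorem \ref{thm:quad_dual_litt}, following the same recipe that produced Corollary \ref{cor:dual_litt_recur} but taking the free parameter of that theorem (the subscript of the interpolation kernel on its right-hand side) equal to $1$. The reason this is cleaner -- ``it is then easier to cancel parameters'' -- is that when that parameter is $1$, the pair of Selberg parameters $\sqrt{pq}\,w^{\pm1}$ appearing on the left-hand side of the theorem multiplies to $pq$, so by the reflection relation $\Gampq(pq/z)=\Gampq(z)^{-1}$ it drops out of the density automatically, without our having to fix $w$; consequently $v$ can be left general, which is exactly the extra generality of the statement above over Corollary \ref{cor:dual_litt_recur}.

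So, first I would set the theorem's free parameter to $1$ and relabel the remaining data so that its $d$ becomes the present $c$ and its $v$ the present $v$ (with $w$ a spectator). The left-hand side then reduces, after the reflection cancellation just described, to $\int \cK^{(n)}_{1/c}(\vec{z};\vec{x};t;p,q)\,\cL^{\prime(n)}_c(\vec{z};t;p,q)\,\Delta^{(n)}_S(\vec{z};p^{1/2}v^{\pm1}/c;t;p,q)$, i.e.\ the left-hand side above, and is in particular manifestly $w$-independent. On the right-hand side the interpolation kernel has become $\cK^{(n)}_1$, which acts as the identity integral operator against $\Delta^{(n)}_S(\vec{z};t;p,q)$; to use this I would write the $q^2$-Selberg density that appears there as $\prod_i \Gampqq(\cdots z_i^{\pm1})\cdot\bigl(\Delta^{(n)}_S(\vec{z};t;p,q^2)/\Delta^{(n)}_S(\vec{z};t;p,q)\bigr)\cdot\Delta^{(n)}_S(\vec{z};t;p,q)$, apply the identity operator to set $\vec{z}=\vec{x}$ in the first two factors, and absorb the theorem's right-hand prefactor using the quadratic relation $\Gampq(z)=\Gampqq(z,qz)$ of \eqref{eq:gampqq}. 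Since the whole expression must be $w$-independent, the $w$-dependent Gamma factors all cancel, and what is left is precisely $\prod_i \Gampqq(p^{1/2}v^{\pm1}x_i^{\pm1}/c^2)\cdot\Delta^{(n)}_S(\vec{x};t;p,q^2)/\Delta^{(n)}_S(\vec{x};t;p,q)$; Proposition \ref{prop:dual_litt_c_1} is where the density ratio is recognized (as $\cL^{\prime(n)}_1$), which also makes transparent that this is the $c=1$ case of Corollary \ref{cor:dual_litt_recur}, now with $v$ free.

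The hard part will be justifying the step ``$\cK^{(n)}_1$ is the identity operator'', since $\cK^{(n)}_c$ carries $c$-dependent polar divisors and $c=1$ may sit on one of them; this has to be read as a limit of the identity as the theorem's free parameter tends to $1$, handled exactly as in the proof of Proposition \ref{prop:dual_litt_c_1}. There the integrand to which the kernel is applied -- here the $q^2/q$-Selberg ratio together with the $\Gampqq$-factors -- has a polynomial Puiseux series in $q$ tending to $1$ as $q\to0$, and the operator we apply differs from the standard integral operator attached to the kernel only by formal factors that cancel across the identity, so the operator really does converge to the identity. Modulo that limiting argument and routine bookkeeping with the reflection and quadratic functional equations, the rest is a direct chase. (An essentially equivalent route is to unfold the definition of $\cL^{\prime(n)}_c$ inside the left-hand integral, cancel the factors $\Gampq((\sqrt{p}/c)v^{\pm1}z_i^{\pm1})$ against the expansion of $\Delta^{(n)}_S(\vec{z};p^{1/2}v^{\pm1}/c;t;p,q)$, apply Fubini, and recognize the inner integral as the braid relation in the degenerate case where the two kernel parameters multiply to $1$ and the two-parameter Selberg density collapses to the parameter-free one; the same $\cK^{(n)}_1$-as-identity point is the only subtlety.)
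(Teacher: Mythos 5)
Your proposal is correct and matches the paper's (very terse) proof in substance: the paper also treats this as the $v$-general form of the $c=1$ case of the recursion coming from Theorem \ref{thm:quad_dual_litt}, and justifies it exactly by the mechanism you identify as the crux — both sides have well-behaved formal expansions in $q$ for $-1/2<\ord_q(c)<0$ (the paper's ``$1/2<\ord_q(c)<0$'' is a typo), so the ``$\cK^{(n)}_1$ acts as the identity'' step is handled by the formal-$q$ argument of Proposition \ref{prop:dual_litt_c_1}. Your bookkeeping with the reflection relation and the quadratic relation \eqref{eq:gampqq} to cancel the $w$-dependence is right, and your parenthetical alternate route (unfold the definition, Fubini, braid relation at $cd=1$) is precisely the implicit manipulation behind the paper's one-line proof.
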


\begin{proof}
 We find that both sides have well-behaved formal expansions in $q$, so
 long as $1/2<\ord_q(c)<0$, so that we may argue as in the proof of
 Proposition \ref{prop:dual_litt_c_1}.
\end{proof}

We also have the following identity obtained by specializing Theorem~\ref{thm:quad_dual_litt} so that we may apply Theorem~\ref{thm:L2_kern} to
the right-hand side.

\begin{cor}
The integral
\begin{gather*}
\prod_{1\le i\le n} \frac{1}{\Gampq\big(\sqrt{p/q}(q^{-1/2}v)^{\pm 1}z_i^{\pm 1}/d^2\big)}\\
\qquad{}\times \int \cK^{(n)}_c(\vec{z};\vec{x};t;p,q) \cL^{\prime(n)}_d(\vec{z};t;p,q) \Delta^{(n)}_S\big(\vec{z};p^{1/2}q(v/c)^{\pm 1},\sqrt{p/q}\big(q^{-1/2}v\big)^{\pm 1}/cd^2;t;p,q\big)
\end{gather*}
is invariant under $v\mapsto 1/v$.
\end{cor}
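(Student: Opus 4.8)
The plan is to derive the statement by specializing Theorem~\ref{thm:quad_dual_litt}, exactly as flagged in the sentence preceding it: choose the free parameters (the parameter $w$ of Theorem~\ref{thm:quad_dual_litt}, together with the identification of its kernel/Littlewood parameters and of $v$) so that the integral on the left of Theorem~\ref{thm:quad_dual_litt} becomes, up to the prefactor displayed in the statement, the stated integral $\int\cK^{(n)}_c(\vec z;\vec x;t;p,q)\cL^{\prime(n)}_d(\vec z;t;p,q)\Delta^{(n)}_S(\vec z;p^{1/2}q(v/c)^{\pm1},\sqrt{p/q}(q^{-1/2}v)^{\pm1}/cd^2;t;p,q)$. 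Concretely this forces the kernel subscript in Theorem~\ref{thm:quad_dual_litt} to be $cd$ and the Littlewood subscript to be $d$, and fixes $w$ and the other auxiliary parameter as monomials in $v,c,d,q$. The essential feature of this particular choice is that two of the six parameters of the base-$q^2$ elliptic Selberg density appearing on the \emph{right} of Theorem~\ref{thm:quad_dual_litt} then automatically multiply to $pq^2$, so that by the reflection equation $\Gampq(z)\Gampq(pq/z)=1$ they cancel. The right-hand side thereby collapses to $\prod_i\Gampq(\cdots)\int\cK^{(n)}_{cd}(\vec z;\vec x;t;p,q)\Delta^{(n)}_S(\vec z;v_0,v_1,v_2,v_3;t;p,q^2)$ with only four parameters, i.e.\ an integral of precisely the shape occurring in Theorem~\ref{thm:L2_kern}; one first checks that the balancing condition $v_0v_1v_2v_3=(pq/(cd)^2)^2$ required there holds identically under the specialization.

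Next I would apply Theorem~\ref{thm:L2_kern} to this four-parameter integral: it transforms, up to an explicit product of elliptic Gamma functions, under $v_r\mapsto(pq/(cd)^2)/v_r$. The following step is a bookkeeping check: the four surviving $v_r$ split into two scaling like $v$ and two scaling like $1/v$, so this involution, composed with the (free) invariance of $\Delta^{(n)}_S$ under permuting its parameters, interchanges the two classes and therefore implements exactly $v\mapsto1/v$ on the original data. One also verifies that the cancelled pair of parameters stays a cancelling pair under $v\mapsto1/v$, so no spurious poles are introduced.

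Reading Theorem~\ref{thm:quad_dual_litt} backwards then turns this into a statement about its left-hand side, i.e.\ about the bare integral $J(v)$ in the assertion: one obtains $J(v)=\Phi(v)\,J(1/v)$, where $\Phi$ is the product of the Gamma prefactor supplied by Theorem~\ref{thm:L2_kern}, the prefactor $\prod_i\Gampq(\cdots x_i^{\pm1})$ already present on the right of Theorem~\ref{thm:quad_dual_litt}, and the contribution of the $v$-dependence hidden in the reparametrized auxiliary parameter. Applying $v\mapsto1/v$ twice (equivalently, pairing the arguments of $\Phi$ so that each pair multiplies to $pq$ and invoking the reflection equation) gives $\Phi(v)\Phi(1/v)=1$, so $\Phi$ is a multiplicative coboundary $\Phi(v)=\Psi(v)/\Psi(1/v)$; the content of the statement is that $\Psi$ is precisely the product appearing in the displayed prefactor, namely $\prod_i\Gampq(\sqrt{p/q}(q^{-1/2}v)^{\pm1}x_i^{\pm1}/d^2)$. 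Dividing $J$ by $\Psi$ then yields a $v\mapsto1/v$-invariant quantity, which is what is claimed. Since this is all obtained by specializing identities of meromorphic functions already established, there are no contour or convergence subtleties and no Zariski-density step is needed.

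The main obstacle is this last collapse: checking that the accumulated elliptic Gamma prefactors — those from Theorem~\ref{thm:L2_kern}, those already on the right of Theorem~\ref{thm:quad_dual_litt}, and the factor coming from the $v$-dependent auxiliary parameter — combine, through repeated use of the reflection equation, to exactly $\Psi(v)/\Psi(1/v)$ for the $\Psi$ displayed in the statement. This is a routine but somewhat lengthy $q$-calculus manipulation. A useful shortcut (in the spirit of the remark following Theorem~\ref{thm:van_litt_interp}) is that $\Phi$ depends on $v$ only through univariate factors, so its structure can be read off by tracking which parameters scale as $v^{\pm1}$ rather than by expanding the full density; the balancing-condition verification in the first paragraph is then the only other place where care is genuinely required.
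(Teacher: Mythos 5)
Your proposal is correct and follows essentially the same route as the paper, which obtains the corollary exactly as you describe: specialize Theorem \ref{thm:quad_dual_litt} (with kernel subscripts $cd$ and $d$, and $w$, $v_{\mathrm{thm}}$ fixed as monomials in $v,c,d,q$) so that a pair of the six base-$q^2$ Selberg parameters cancels, apply Theorem \ref{thm:L2_kern} to the surviving four (whose balancing condition and $v\mapsto 1/v$ behaviour are as you state), and check that the accumulated Gamma prefactors form the coboundary $\Psi(v)/\Psi(1/v)$ with $\Psi$ the displayed product. The only quibble is notational: the cancellation at base $(p,q^2)$ uses $\Gampqq(z)\Gampqq(pq^2/z)=1$ (consistent with your product $pq^2$), not the base-$(p,q)$ reflection you quoted, but this is clearly what you intended.
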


We also have an analogue of Theorem \ref{thm:van_litt}, using Corollary~4.14 of~\cite{littlewood} in place of Corollary~4.8 op.\ cit. (Note that
there is a typo there: $qt^{n-2}$ should read $qt^{n-1}$.)

\begin{thm}\label{thm:van_dual_litt}
For generic parameters satisfying $t^{n-1}t_0t_1u_0=\sqrt{p}q$,
\begin{gather*}
\int \tcR^{(n)}_{\blambda}(\vec{z};t_0/d{:}dt_0,t_1/d,dt_1;u_0/d,du_0/q;t;p,q) \cL^{\prime(n)}_{q^{-1/2}d}(\vec{z};t;p,q) \\
\qquad{}\times \Delta^{(n)}_S\big(\vec{z};t_0/d,t_1/d,u_0/d,\sqrt{p}q/d;t;p,q\big)
\end{gather*}
vanishes unless $\blambda$ has the form $(1,2)\bmu$, when it equals
\begin{gather*}
Z
\frac{\Delta_{ \bmu}\big(q/u_0^2|t^n,t^{n-1}t_0^2,1/t^{n-1}t_0u_0,q/t^{n-1}t_0u_0;t;p,q^2\big)}
 {\Delta_{(1,2)\bmu}\big(q/u_0^2|t^n,t^{n-1}t_0^2,1/t^{n-1}t_0u_0,q/t^{n-1}t_0u_0;t;p,q\big)},
\end{gather*}
where
\begin{gather*}
Z=\prod_{0\le i<n}
 \Gampqq\big(t^{i+1},t^it_0^2,t^it_1^2,t^iu_0^2\big)
 \Gampq\big(t^it_0t_1/d^2,t^it_0u_0/d^2,t^it_1u_0/d^2,t^it_0t_1,t^it_0u_0,t^it_1u_0\big).
\end{gather*}
\end{thm}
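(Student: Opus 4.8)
The plan is to adapt the proof of Theorem~\ref{thm:van_litt} essentially verbatim, replacing the Littlewood data by the dual Littlewood data at every step. Concretely, I would start from Corollary~\ref{cor:quad_dual_litt} and specialize the parameters $v$ and $t_2$ --- to the dual analogues of the choices $v=\sqrt{t}$, $t_2=\sqrt{pq/t}\,d^2$ used in the proof of Theorem~\ref{thm:van_litt}, so that the theorem's balancing condition $t^{n-1}t_0t_1u_0=\sqrt{p}\,q$ forces $t_2=\sqrt{p}\,q\,d^2$ --- in such a way that the $q^2$-Selberg integral on the right-hand side,
\[
\int \cR^{*(n)}_{\blambda}(\vec{z};q^{-1/2}t_0,q^{-1/2}u_0;t;p,q)\,
\Delta^{(n)}_S(\vec{z};q^{\pm 1/2}t_0,q^{\pm 1/2}t_1,q^{\pm 1/2}t_2,q^{\pm 1/2}u_0,\sqrt{p}\,q\,v^{\pm 1}/d^2;t;p,q^2),
\]
becomes an instance of the dual Littlewood summation \cite[Cor.~4.14]{littlewood} (with the typo noted in the theorem statement, $qt^{n-2}\mapsto qt^{n-1}$, corrected). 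This restriction only cuts down the otherwise generic parameters of Corollary~\ref{cor:quad_dual_litt} in codimension two, hence leaves a Zariski-dense set, so it is harmless.

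Next, \cite[Cor.~4.14]{littlewood} evaluates this integral as a finite sum of elliptic binomial coefficients times explicit $\Delta$-values, which in particular vanishes unless $\blambda$ has the form $(1,2)\bmu$; this immediately yields the claimed vanishing. For the non-vanishing case I would convert the interpolation function $\cR^{*(n)}_{\blambda}(\vec{z};t_0/d,u_0/d;t;p,q)$ occurring in Corollary~\ref{cor:quad_dual_litt} into the biorthogonal function $\tcR^{(n)}_{\blambda}(\vec{z};t_0/d{:}dt_0,t_1/d,dt_1;u_0/d,du_0/q;t;p,q)$ of the statement, exactly as in the Littlewood case: expand $\tcR^{(n)}_{\blambda}$ by the binomial formula \cite[Defn.~12]{bctheta}, apply the specialized Corollary~\ref{cor:quad_dual_litt} term by term, and invert the elliptic binomial coefficient appearing in \cite[Cor.~4.14]{littlewood} to collapse the resulting double sum back to a single biorthogonal function. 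What survives on the right is the stated ratio of $\Delta$-symbols, times the accumulated product of prefactors.

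The only genuine work --- and the main obstacle --- is the final bookkeeping: one must combine the prefactor of Corollary~\ref{cor:quad_dual_litt}, the explicit constants in \cite[Cor.~4.14]{littlewood}, and the normalizing factors of the binomial formula, and then simplify the resulting product of elliptic Gamma functions down to the $Z$ given in the statement. As with the analogous simplifications elsewhere in the paper this is mechanical, using only the functional and quadratic equations for $\Gampq$, the special values recalled in the introduction, and the identity $\theta(x;p,q)_{l,m}=(-x)^{lm}p^{ml(l-1)/2}q^{lm(m-1)/2}\Gampq(p^lq^mx)/\Gampq(x)$ to rewrite the $\Delta^0$-ratios. A convenient consistency check along the way is compatibility with the product form of $\cL^{\prime(n)}_{q^{-1/2}t^{1/4}}$ in Corollary~\ref{cor:Q3}, which pins down the dependence on $d$ and $v$ since these enter the integrand only through univariate factors, so that in effect it suffices to treat a single convenient value of $d$.
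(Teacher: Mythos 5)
Your proposal coincides with the paper's own (one-line) proof, which runs the argument of Theorem \ref{thm:van_litt} verbatim with Corollary \ref{cor:quad_dual_litt} in place of Corollary \ref{cor:quad_litt} and \cite[Cor.~4.14]{littlewood} (typo corrected) in place of \cite[Cor.~4.8]{littlewood}; the specialization you leave implicit is $v=q^{\pm 1/2}$ together with $t_2=\sqrt{p}\,q\,d^2$, which cancels the extra parameters in both densities and produces exactly the stated integrand. The binomial-inversion step converting the interpolation function into the biorthogonal function and the bookkeeping yielding $Z$ are exactly as in the Littlewood case, so the proposal is correct and essentially identical to the paper's route.
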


The analogue of Theorem \ref{thm:van_litt_interp} is even simpler than in
the Littlewood case, as we can simply specialize $\vec{x}$ to a partition
in Corollary \ref{cor:dual_litt_pre_van}. This is particularly lucky
since once $\blambda$ becomes nontrivial, we would need to compute residues
at second order poles of the integrand!

\begin{thm}\label{thm:van_dual_litt_interp}
For $t^{n-1}t_0u_0=q$, the integral
\begin{gather*}
\int
\cR^{*(n)}_\blambda(\vec{z};t_0/d,u_0/d;t;p,q)
\cL^{\prime(n)}_{q^{-1/2}d}(\vec{z};t;p,q)
\Delta^{(n)}_S\big(\vec{z};t_0/d,u_0/d,\sqrt{pq}v^{\pm 1}/d;t;p,q\big)
\end{gather*}
vanishes unless $\blambda$ has the form $(1,2)\bmu$ for some $\bmu$, when
it equals
\begin{gather*}
Z\frac{\Delta_{\bmu}\big(q/u_0^2|t^n,t^{n-1}pq^2,\sqrt{pq}v^{\pm 1}d^2/u_0;t;p,q^2\big)}
 {\Delta_{(1,2)\bmu}\big(q/u_0^2|t^n,t^{n-1}pq,\sqrt{pq}v^{\pm 1}d^2/u_0;t;p,q\big)},
\end{gather*}
with
\begin{gather*}
Z=\prod_{0\le i<n}
 \Gampq\big(t^{-i}q/d^2\big)
 \Gampqq\big(t^{i+1},t^{-i}q,t^it_0^2,t^iu_0^2,\sqrt{pq}v^{\pm 1}t^it_0/d^2,\sqrt{pq}v^{\pm 1}t^i u_0/d^2\big).
\end{gather*}
\end{thm}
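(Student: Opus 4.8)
The plan is to specialize the external variables in Corollary~\ref{cor:dual_litt_pre_van} to a bigeometric progression, as the discussion preceding the theorem suggests, rather than degenerating Corollary~\ref{cor:quad_dual_litt} through a pinched contour (which would force us to evaluate residues at second-order poles). Concretely, I would set $c=q^{-1/2}d$ and take $x_i=p^{\lambda_i}q^{\mu_i-1/2}t^{n-i}t_0$, where $\blambda=(\lambda,\mu)$. By the Proposition expressing $\cR^{*(n)}_{\blambda}$ as a specialization of the kernel (with kernel parameter $\sqrt{t^{n-1}ab}$), the factor $\cK^{(n)}_{1/c}(\vec{z};\vec{x};t;p,q)=\cK^{(n)}_{q^{1/2}/d}(\vec{z};\vec{x};t;p,q)$ becomes, up to an explicit product of elliptic Gamma functions, $\cR^{*(n)}_{\blambda}(\vec{z};t_0/d,u_0/d;t;p,q)$; matching the kernel parameter forces $\sqrt{t^{n-1}(t_0/d)(u_0/d)}=q^{1/2}/d$, i.e.\ $t^{n-1}t_0u_0=q$, the hypothesis of the theorem. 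The Gamma prefactor produced here, namely $\prod_i\Gampq((t_0/d)z_i^{\pm1},(u_0/d)z_i^{\pm1})$, combines with the density $\Delta^{(n)}_S(\vec{z};\sqrt{pq}v^{\pm1}/d;t;p,q)$ of the Corollary to reassemble exactly $\Delta^{(n)}_S(\vec{z};t_0/d,u_0/d,\sqrt{pq}v^{\pm1}/d;t;p,q)$. Thus the left-hand side of Corollary~\ref{cor:dual_litt_pre_van} at this specialization is an explicit constant times the integral in the theorem, and it remains only to evaluate the right-hand side $\prod_i\Gampqq(p^{1/2}v^{\pm1}x_i^{\pm1}/c^2)\,\Delta^{(n)}_S(\vec{x};t;p,q^2)/\Delta^{(n)}_S(\vec{x};t;p,q)$ at the bigeometric point.

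This evaluation is where the shape of the answer emerges. The half-integral $q$-offset $q^{-1/2}$ in the base, paired with the factors $t^{n-i}$, makes the consecutive ratios satisfy $tx_{i+1}/x_i=p^{-(\lambda_i-\lambda_{i+1})}q^{-(\mu_i-\mu_{i+1})}$, which lies exactly on the polar lattice of $\Gampq$; hence $\Delta^{(n)}_S(\vec{x};t;p,q)$ acquires a pole of order $n-1$ at this point. The corresponding factor of $\Delta^{(n)}_S(\vec{x};t;p,q^2)$, however, is polar only when $\mu_i-\mu_{i+1}$ is even, since $\Gampqq$ has poles only at $p^{-a}q^{-2b}$. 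Therefore the ratio vanishes unless every consecutive difference $\mu_i-\mu_{i+1}$ is even; in view of the trailing zeros of $\mu$, this is exactly the condition that $\blambda$ have the form $(1,2)\bmu$, which gives the claimed vanishing. When $\blambda=(1,2)\bmu$ the order-$(n-1)$ poles of numerator and denominator cancel, and the finite value is the ratio of their leading coefficients; by the formula recalled in the Notation section for the $\blambda$-dependent factor of the residue of the elliptic Selberg integrand, these leading coefficients are proportional to $\Delta_{\bmu}(\cdots;t;p,q^2)$ and $\Delta_{(1,2)\bmu}(\cdots;t;p,q)$ respectively, which is the form of the stated answer.

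It then remains to pin down all the constants. Assembling the $\Gampq$ prefactor from the kernel-to-interpolation-function step (which, using $t^{n-1}t_0u_0=q$, produces the $\prod_{0\le i<n}\Gampq(t^{-i}q/d^2)$ factor of $Z$), the $\Gampqq(p^{1/2}v^{\pm1}x_i^{\pm1}/c^2)$ prefactor evaluated at the bigeometric point, and the normalization constants built into the two $\Delta$-symbols, one recovers $Z$ together with the precise arguments $q/u_0^2\mid t^n,t^{n-1}pq^2,\sqrt{pq}v^{\pm1}d^2/u_0$ and $q/u_0^2\mid t^n,t^{n-1}pq,\sqrt{pq}v^{\pm1}d^2/u_0$.

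I expect the main obstacle to be precisely this last bookkeeping: the leading-coefficient computation is, just as in the proof of Theorem~\ref{thm:van_litt_interp}, rather tedious. To streamline it I would exploit that $d$ and $v$ enter the bigeometric specialization only through univariate Gamma factors of the integrand, so their dependence on both sides can be matched by inspection, reducing the verification of the remaining $d,v$-independent constant to the case $d=1$ (equivalently $v=(d^2\sqrt{pq}/t_0)^{\pm1}$), which is a reparametrization of Theorem~\ref{thm:van_dual_litt}. A secondary point needing care is the legitimacy of the naive substitution $\vec{x}\mapsto$ bigeometric progression into the meromorphic identity of Corollary~\ref{cor:dual_litt_pre_van}: since the order-$(n-1)$ pole of $\Delta^{(n)}_S(\vec{x};t;p,q)$ appearing in the denominator on the right is cancelled exactly by the pole picked up in turning $\cK^{(n)}_{1/c}$ into $\cR^{*(n)}_{\blambda}$, the substitution is valid after clearing this common factor, and no spurious residues are introduced — this is exactly the ``second-order pole'' difficulty that the Corollary~\ref{cor:dual_litt_pre_van} route sidesteps.
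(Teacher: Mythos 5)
Your route is exactly the paper's route: the paper's entire proof of this theorem is to specialize $\vec{x}$ to a partition pair in Corollary~\ref{cor:dual_litt_pre_van} with $c=q^{-1/2}d$, and your reduction --- matching the kernel parameter $\sqrt{t^{n-1}(t_0/d)(u_0/d)}=q^{1/2}/d$ to force $t^{n-1}t_0u_0=q$, absorbing the prefactor $\prod_i\Gampq((t_0/d)z_i^{\pm1},(u_0/d)z_i^{\pm1})$ into the density to recover $\Delta^{(n)}_S(\vec{z};t_0/d,u_0/d,\sqrt{pq}v^{\pm1}/d;t;p,q)$, and pinning the constants by tracking the $d,v$-dependence and comparing with Theorem~\ref{thm:van_dual_litt} --- is how the left-hand side is meant to be processed, and parallels the paper's remark after Theorem~\ref{thm:van_litt_interp}.

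There is, however, a genuine gap in your derivation of the vanishing condition. At the point $x_i=p^{\lambda_i}q^{\mu_i-1/2}t^{n-i}t_0$ the only non-generic arguments on the right-hand side are the consecutive ratios $tx_{i+1}/x_i=p^{-(\lambda_i-\lambda_{i+1})}q^{-(\mu_i-\mu_{i+1})}$ (every other argument carries $t_0^{\pm2}$, $v^{\pm1}$, or a nonzero power of $t$), so the pole/zero bookkeeping you describe yields vanishing precisely when some difference $\mu_i-\mu_{i+1}$ with $1\le i\le n-1$ is odd; that is, it only forces all parts of the $q$-component to have \emph{equal parity}. This pins down ``all parts even'' only when $\mu_n=0$, so the appeal to ``trailing zeros of $\mu$'' is vacuous exactly when the $q$-component has $n$ positive (necessarily all odd) parts, and for that subfamily no factor of the right-hand side of Corollary~\ref{cor:dual_litt_pre_van} sits on a zero or polar lattice: already for $n=1$, $\blambda=(0,(1))$, the specialized right-hand side is a product of $\Gampqq$'s of generic arguments ($qx^{\pm2}=q^2t_0^2,\,t_0^{-2}$, etc.), manifestly nonzero for generic $t_0,v,d$. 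So the naive specialization alone does not produce the stated vanishing there, and you need an additional ingredient (or a separate treatment) for that case; your write-up, like the paper's one-sentence proof, is silent on it. A smaller point: your final paragraph misdescribes the $0/0$. The left-hand side is regular at the specialization point --- the prefactors converting $\cK^{(n)}_{q^{1/2}/d}(\vec{z};\vec{x})$ into $\cR^{*(n)}_{\blambda}(\vec{z};t_0/d,u_0/d)$ are finite there and no contour pinching occurs for generic $t_0$ --- while the indeterminacy lives entirely inside the ratio $\Delta^{(n)}_S(\vec{x};t;p,q^2)/\Delta^{(n)}_S(\vec{x};t;p,q)$, whose coinciding polar divisors (when the parities match) are resolved by taking leading coefficients; the ``second order poles'' the paper alludes to concern the discarded route through Corollary~\ref{cor:quad_dual_litt}, not any cancellation between the kernel-to-interpolation step and the density ratio.
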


We also have an analogue of Theorem \ref{thm:spec_int_litt}, this time in the form of a difference equation.

\begin{thm}\label{thm:spec_diff_dual_litt}
If $t_0t_1t_2t_3 = \big(p/c^2\big)^2$, then
\begin{gather*}
D^{(n)}_q(t_0,t_1,t_2,t_3;t;p)_{\vec{x}} \cL^{\prime(n)}_c(\vec{x};t;p,q)
\end{gather*}
is invariant under $t_r\mapsto p/c^2t_r$.
\end{thm}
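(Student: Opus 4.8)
The plan is to run the proof of Theorem~\ref{thm:spec_int_litt} in its difference-equation form, in exactly the way the remark following Theorem~\ref{thm:L2_kern} turns the integral-operator proof of Theorem~\ref{thm:L1_kern} into a proof of Theorem~\ref{thm:L2_kern}: everywhere the former uses the integral operator with kernel $\cK^{(n)}_{t^{1/2}}$ together with the braid and commutation relations at $c=t^{1/2}$, we instead use the difference operator $D^{(n)}_q$ and the difference relations of Propositions~\ref{prop:diff_braid} and~\ref{prop:diff_comm}. Concretely, I would first expand the dual Littlewood kernel via its definition, so that
\[
D^{(n)}_q(t_0,t_1,t_2,t_3;t;p)_{\vec x}\,\cL^{\prime(n)}_c(\vec x;t;p,q)
\]
equals $D^{(n)}_q(t_0,t_1,t_2,t_3;t;p)_{\vec x}$ applied to $\prod_{1\le i\le n}\Gampq((\sqrt p/c)v^{\pm1}x_i^{\pm1})^{-1}\int\cK^{(n)}_c(\vec z;\vec x;t;p,q)\,\Delta^{(n)}_S(\vec z;\sqrt p\,v^{\pm1}/c^2;t;p,q^2)$, where $v$ is the free auxiliary parameter of the definition. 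Writing $D^{(n)}_q(t_0,t_1,t_2,t_3;t;p)$ in its factored form $\prod_{i,r}\Gampq(t_rx_i^{\pm1})^{-1}\,D^{(n)}_q(t;p)\,\prod_{i,r}\Gampq(q^{1/2}t_rx_i^{\pm1})$ and pulling $D^{(n)}_q(t;p)_{\vec x}$ under the $\vec z$-integral (legal by linearity, since it is a finite sum), one reaches $\prod_{i,r}\Gampq(t_rx_i^{\pm1})^{-1}\int D^{(n)}_q(t;p)_{\vec x}\bigl[\prod_{i,r}\Gampq(q^{1/2}t_rx_i^{\pm1})\prod_i\Gampq((\sqrt p/c)v^{\pm1}x_i^{\pm1})^{-1}\cK^{(n)}_c(\vec z;\vec x;t;p,q)\bigr]\,\Delta^{(n)}_S(\vec z;\sqrt p\,v^{\pm1}/c^2;t;p,q^2)$.

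Choosing $v$ so that $\{(\sqrt p/c)v,(\sqrt p/c)v^{-1}\}=\{q^{1/2}t_0,q^{1/2}t_1\}$ (which forces $t_0t_1=p/qc^2$, hence $t_2t_3=pq/c^2$ by the balancing condition) cancels two of the four multiplicative factors, reducing the bracketed expression to $\prod_i\Gampq(q^{1/2}t_2x_i^{\pm1},q^{1/2}t_3x_i^{\pm1})\,\cK^{(n)}_c(\vec z;\vec x;t;p,q)$ while simultaneously turning the $\vec z$-weight into the base-$q^2$ Selberg density $\Delta^{(n)}_S(\vec z;q^{1/2}t_0/c,q^{1/2}t_1/c;t;p,q^2)$; thus the four-parameter operator acting on a product becomes the two-parameter operator $D^{(n)}_q(t_2,t_3;t;p)_{\vec x}$ acting on $\cK^{(n)}_c(\vec z;\vec x)$ alone, times a $\vec z$-independent prefactor. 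I would then move this operator through the kernel with the difference braid relation (Proposition~\ref{prop:diff_braid}), which applies precisely because $t_2t_3=pq/c^2$, followed by one more reduction of the same type — an application of the difference commutation relation (Proposition~\ref{prop:diff_comm}) — to put the resulting $\vec z$-integral in a shape in which $t_r\mapsto p/c^2t_r$ acts merely by inverting the integration variables, so that the asserted invariance is manifest. (The choice of which pair to fold is harmless, since $t_r\mapsto p/c^2t_r$ interchanges the pair products $t_0t_1$ and $t_2t_3$, so the transformed identity is obtained by folding the other pair.) All exchanges of a difference operator with an integral that occur along the way reduce to linearity of integration except one, which I would justify exactly as in the remark after Theorem~\ref{thm:L2_kern}: by combining the self-adjointness of $D^{(n)}_q(t;p)$ with respect to the elliptic Selberg density with the invariance of $\prod_{i,r}\Gampqq(q^{-1/2}v_rz_i^{\pm1})^{-1}\,D^{(n)}_q(t;p)\,\prod_{i,r}\Gampqq(v_rz_i^{\pm1})\,\Delta^{(n)}_S(\vec z;t;p,q^2)/\Delta^{(n)}_S(\vec z;t;p,q)$ under $v_r\mapsto pq^2/v_r$.

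Since the convenient choice of $v$ is only available on the locus $t_0t_1=p/qc^2$, the final step is to remove that restriction by the device already used for Theorems~\ref{thm:L1_kern} and~\ref{thm:L2_kern}: both sides of the asserted identity become Koornwinder-type evaluations in the limit $p\to0$ with $c\sim p^{1/2}$, so after dividing by that common limit their Puiseux expansions in $p$ have rational-function coefficients, and the argument above — iterated along $q$-shifts of the pair product $t_0t_1$ — verifies the identity on a Zariski dense set of parameters, hence everywhere. The main obstacle will be the parameter bookkeeping: keeping track of how $v$, the four $t_r$, the $\sqrt p/c^2$ shifts built into $\cL^{\prime(n)}_c$, and the $q$-versus-$q^2$ discrepancy between the difference operator and the base-$q^2$ Selberg weight all fit together, so that after the chain of reductions the folded pair and the surviving pair genuinely behave correctly under $t_r\mapsto p/c^2t_r$; together with the careful treatment of the single non-trivial Fubini step described above.
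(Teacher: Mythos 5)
Your reduction on the locus $t_0t_1=p/qc^2$ is correct as far as it goes: with $(\sqrt p/c)v^{\pm1}=\{q^{1/2}t_0,q^{1/2}t_1\}$ the four-parameter operator does collapse to $\prod_i\Gampq(t_0x_i^{\pm1},t_1x_i^{\pm1})^{-1}$ times $D^{(n)}_q(t_2,t_3;t;p)_{\vec x}$ acting on the kernel, and since $t_2t_3=pq/c^2$ Proposition \ref{prop:diff_braid} turns $D^{(n)}_q(t_2,t_3;t;p)_{\vec x}\cK^{(n)}_c(\vec z;\vec x;t;p,q)$ into $\prod_i\theta_p(q^{-1/2}ct_2z_i^{\pm1})\,\cK^{(n)}_{q^{-1/2}c}(\vec z;\vec x;t;p,q)$. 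But at that point the difference operator has been consumed, so Proposition \ref{prop:diff_comm} has nothing left to act on, and the remaining invariance is not ``manifest''. Writing $\prod_i\theta_p(q^{-1/2}ct_2z_i^{\pm1})=\prod_i\Gampqq(q^{3/2}ct_2z_i^{\pm1},q^{3/2}ct_3z_i^{\pm1})$, what you must still prove is that
\[
\prod_{1\le i\le n}\Gampq(t_0x_i^{\pm1},t_1x_i^{\pm1})^{-1}
\int\cK^{(n)}_{q^{-1/2}c}(\vec z;\vec x;t;p,q)\,
\Delta^{(n)}_S(\vec z;q^{1/2}t_0/c,q^{1/2}t_1/c,q^{3/2}ct_2,q^{3/2}ct_3;t;p,q^2)
\]
equals the expression obtained by folding the other pair for the reflected parameters $t_r'=p/c^2t_r$; the four density parameters here are balanced to $(pq^2/c^2)^2$, the map $t_r\mapsto p/c^2t_r$ is exactly the map $v_r\mapsto pq^2/c^2v_r$, and the prefactors match by the reflection formula — in other words, this step is precisely an instance of Theorem \ref{thm:L2_kern} for $\cK^{(n)}_{q^{-1/2}c}$ with the base-$(p,q^2)$ density. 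That is a substantive theorem (Conjecture L2 in kernel form), not something obtained by ``inverting the integration variables''; the gap is fillable simply by citing Theorem \ref{thm:L2_kern}, but your stated mechanism does not do it.

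The second, more serious gap is the extension off the constrained locus. Your choice of $v$ pins $qt_0t_1c^2=p$ exactly, so you only verify the identity on a single hypersurface of the balanced parameter variety, which is not Zariski dense; ``iterated along $q$-shifts of the pair product $t_0t_1$'' is precisely the step you have not supplied, and in the proofs of Theorems \ref{thm:L1_kern} and \ref{thm:L2_kern} that iteration (a Bailey-lemma-style deduction of one hypersurface from another) is the entire content of the argument. Both problems can be avoided by imitating the paper's proof of Theorem \ref{thm:spec_int_litt} more closely and folding only \emph{one} parameter: choose $v$ so that $(\sqrt p/c)v=q^{1/2}t_0$; the surviving factor $\Gampq\bigl(((p/c^2)/q^{1/2}t_0)x_i^{\pm1}\bigr)^{-1}$ equals $\Gampq(q^{1/2}(qc^2t_0)x_i^{\pm1})$ by reflection, so for \emph{generic} $t_r$ the operator becomes (up to an $\vec x$-dependent prefactor) the balanced operator $D^{(n)}_q(t_1,t_2,t_3,qc^2t_0;t;p)$, whose parameters multiply to $p^2q/c^2$, exactly the balancing needed to apply Proposition \ref{prop:diff_comm} to $\cK^{(n)}_c$; one then moves the resulting operator in $\vec z$ onto the base-$q^2$ density by the adjointness mechanism described in the remark following Theorem \ref{thm:L2_kern} and checks the symmetry of the outcome directly. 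This route needs no constraint on $t_0t_1$ and hence no Puiseux/Zariski-density extension at all, and it is the difference analogue of the one-parameter folding actually used in the proof of Theorem \ref{thm:spec_int_litt}.
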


\begin{cor}
We have
\begin{gather*}
\cL^{\prime(n)}_{q^{-1/2}c}(\vec{x};t;p,q)
=
\frac{1}{\prod\limits_{1\le i\le n} \theta_p\big(p^{1/2} v x_i^{\pm 1}\big)}
D^{(n)}_q\big(p^{1/2}v^{\pm 1}/c^2;t;p\big)_x \cL^{\prime(n)}_c(\vec{x};t;p,q).
\end{gather*}
\end{cor}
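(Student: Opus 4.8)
The plan is to combine a one-parameter ``$v$-independence'' supplied by Theorem~\ref{thm:spec_diff_dual_litt} with a single anchoring evaluation extracted from Corollary~\ref{cor:dual_litt_recur}.

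\emph{The anchor.} First I would specialize Corollary~\ref{cor:dual_litt_recur} so that the interpolation kernel in its integrand acquires index $q^{-1/2}$; concretely, replace the parameters there by $c\mapsto q^{-1/2}c$ and $d\mapsto c$, so that $c/d=q^{-1/2}$, the left side becomes $\cL^{\prime(n)}_{q^{-1/2}c}$, and $\cL^{\prime(n)}_c$ reappears on the right. By the remarks of Section~\ref{sec:formal_diffs} (one may substitute ${\cal D}^{(n)}_{q^{-m/2}}(q,t;p)$ for the integral operator with kernel $\cK^{(n)}_{q^{-m/2}}$ in the braid relation and its relatives exactly as for $m=1$, and ${\cal D}^{(n)}_{q^{-1/2}}(q,t;p)$ is the basic difference operator $D^{(n)}_q(t;p)$), the integral $\int\cK^{(n)}_{q^{-1/2}}(\vec z;\vec x;t;p,q)\,g(\vec z)\,\Delta^{(n)}_S(\vec z;u_0,u_1;t;p,q)$ equals $\prod_i\Gampq(q^{-1/2}u_0x_i^{\pm1},q^{-1/2}u_1x_i^{\pm1})\,D^{(n)}_q(q^{-1/2}u_0,q^{-1/2}u_1;t;p)_{\vec x}\,g(\vec x)$: one writes $\Delta^{(n)}_S(\vec z;u_0,u_1;t;p,q)=\prod_i\Gampq(u_0z_i^{\pm1},u_1z_i^{\pm1})\,\Delta^{(n)}_S(\vec z;t;p,q)$, pulls the extra $\Gampq$-factors through the $q^{\pm1/2}$-shifts of $D^{(n)}_q(t;p)$ using $\Gampq(qz)=\theta_p(z)\Gampq(z)$, and recognizes the result from the definition of $D^{(n)}_q(u_1,u_2;t;p)$. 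With $(u_0,u_1)=(\sqrt{pq}/c^3,\sqrt{pq}/c)$ one has $q^{-1/2}u_0=\sqrt{p}/c^3$ and $q^{-1/2}u_1=\sqrt{p}/c$, and the leftover $\Gampq$-products cancel against the prefactor of Corollary~\ref{cor:dual_litt_recur} up to $\prod_i\theta_p(\sqrt{p}x_i^{\pm1}/c)^{-1}$ (again via $\Gampq(qz)=\theta_p(z)\Gampq(z)$). This is exactly the asserted identity in the case $v=1/c$.

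\emph{$v$-independence.} It remains to prove that $\prod_i\theta_p(p^{1/2}vx_i^{\pm1})^{-1}\,D^{(n)}_q(p^{1/2}v^{\pm1}/c^2;t;p)_{\vec x}\,\cL^{\prime(n)}_c(\vec x;t;p,q)$ does not depend on $v$. I would apply Theorem~\ref{thm:spec_diff_dual_litt} with $(t_0,t_1,t_2,t_3)=(p^{1/2}v/c^2,\;p^{1/2}/(vc^2),\;s,\;p/s)$, which satisfies the balancing $t_0t_1t_2t_3=(p/c^2)^2$. The elementary input is that $D^{(n)}_q(u_1,u_2,a,p/a;t;p)=\prod_i\theta_p(ax_i^{\pm1})\,D^{(n)}_q(u_1,u_2;t;p)$ whenever two arguments multiply to $p$, since the extra factor $\theta_p(ax_i^{\sigma_i})\theta_p((p/a)x_i^{\sigma_i})$ attached to the $\vec\sigma$-term equals $\theta_p(ax_i^{\pm1})$ for both signs, by $\theta_p(z)=\theta_p(p/z)$. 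Applying this to the pair $\{s,p/s\}$ on the left-hand side of Theorem~\ref{thm:spec_diff_dual_litt}, and to the pair $\{p^{1/2}/v,p^{1/2}v\}$ (into which $\{p^{1/2}v/c^2,p^{1/2}/(vc^2)\}$ is carried by $t_r\mapsto p/c^2t_r$), using that $\{s,p/s\}$ is carried to $\{p/(c^2s),s/c^2\}$ and that $\theta_p((p^{1/2}/v)x_i^{\pm1})=\theta_p(p^{1/2}vx_i^{\pm1})$, the invariance of Theorem~\ref{thm:spec_diff_dual_litt} becomes, after writing $s=p^{1/2}v'$, the equality of $\prod_i\theta_p(p^{1/2}vx_i^{\pm1})^{-1}D^{(n)}_q(p^{1/2}v^{\pm1}/c^2;t;p)_{\vec x}\cL^{\prime(n)}_c(\vec x)$ with the same expression having $v'$ in place of $v$, valid for all $v,v'$. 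Together with the anchor value at $v=1/c$, this gives the Corollary.

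\emph{Main obstacle.} The conceptual content is slight; the only delicate point is justifying the passage from the integral operator attached to $\cK^{(n)}_{q^{-1/2}}$ to $D^{(n)}_q(t;p)$ inside Corollary~\ref{cor:dual_litt_recur} --- i.e.\ that the $c\to q^{-1/2}$ boundary behaves here exactly as in the $m=1$ instances already used --- together with the two rounds of $\Gampq(qz)=\theta_p(z)\Gampq(z)$ bookkeeping needed to see that all $\Gampq$'s cancel and only $\prod_i\theta_p(p^{1/2}vx_i^{\pm1})^{-1}$ survives. Everything else is formal manipulation of theta functions.
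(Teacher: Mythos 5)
Your proposal is correct and follows essentially the paper's own proof: the anchor at $v=1/c$ (equivalent to the paper's $v=c$ by the manifest $v\mapsto 1/v$ symmetry) is exactly the specialization of Corollary \ref{cor:dual_litt_recur} in which the kernel index becomes $q^{-1/2}$ and is replaced by a difference operator, and the $v$-independence is exactly the case $t_2t_3=p$ of Theorem \ref{thm:spec_diff_dual_litt}. Your bookkeeping (the $\Gampq$-cancellations via $\Gampq(qz)=\theta_p(z)\Gampq(z)$ and the extraction of the $\theta_p$-prefactors from pairs multiplying to $p$) checks out.
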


\begin{proof}
When $v=c$, this is a special case of Corollary \ref{cor:dual_litt_recur}. That the right-hand side is independent of $v$ follows from the case
$t_2t_3=p$ of the difference equation.
\end{proof}

In addition to the Littlewood and dual Littlewood kernel, there is one more such kernel we wish to consider, this time related to Conjecture~L3 of~\cite{littlewood}.

\begin{thm}\label{thm:L3_kern}
The interpolation kernel satisfies the integral identity
\begin{gather*}
\int \cK^{(n)}_c\big(\vec{z}\,^2;\vec{y};t;p,q\big)
\Delta^{(n)}_S\big(\vec{z};v_0,v_1,v_2,v_3;t^{1/2};p^{1/2},q^{1/2}\big) \\
\qquad{}=
\prod_{\substack{1\le i\le n\\0\le r\le 3}}
 \Gampq\big(c v_r^2 y_i^{\pm 1}\big) \\
\qquad\quad{}\times\! \int\!
\cK^{(n)}_c(\vec{z}\,^2;\vec{y};t;p,q)
\Delta^{(n)}_S\big(\vec{z};\sqrt{pq}/cv_0,\sqrt{pq}/cv_1,\sqrt{pq}/cv_2,\sqrt{pq}/cv_3;t^{1/2};p^{1/2},q^{1/2}\big),
\end{gather*}
subject to the balancing condition $v_0v_1v_2v_3=pq/c^2$.
\end{thm}

\begin{proof} As usual, both sides have the same limit as $p\to 0$, $c\sim p^{1/2}$, and dividing by the common limit gives formal Puiseux series in $p$ with rational function coefficients, so it suffices to prove a Zariski dense set of special cases.

 In a suitable limit $v_0v_1\to q^{-m/2}$, this becomes an identity involving finite sums of interpolation functions. The interpolation functions are modular (\cite[Section~6]{bctheta}), but the evaluation at~$z_i^2$ is not preserved by the modular group. In other words, the identity depends on a choice of 2-isogeny, which we may replace by any other 2-isogeny. Upon doing so, we find that the identity we require is a special case of Theorem \ref{thm:L2_kern}; to be precise, it is obtained from that identity by first swapping $p$ and $q$ then taking the limit $v_0v_1\to q^{-m}$.
\end{proof}

\begin{rem}
 Unlike Theorems \ref{thm:L1_kern} and \ref{thm:L2_kern}, we have been
 unable to come up with a direct argument (i.e., not using a modular
 transformation). The difficulty is that the relevant analogue of the
 integral and difference operators we used above is the operator
 corresponding to the kernel with $c=-1$, but this is trivial!
\end{rem}

Once more, if we cancel two of the parameters, we find that the result is
independent of the remaining degree of freedom, motivating the following
definition.

\begin{defn}
The {\em Kawanaka kernel} is defined by
\begin{gather*}
\cL^{-(n)}_c(\vec{x};t;p,q) :=
\prod_{1\le i\le n} \frac{1}{\Gamppqq\big(pq v^{\pm 2} x_i^{\pm 1}/c\big)}\\
\hphantom{\cL^{-(n)}_c(\vec{x};t;p,q) :=}{}\times
\int \cK^{(n)}_c\big(\vec{z}\,^2;\vec{x};t^2;p^2,q^2\big)
\Delta^{(n)}_S\big(\vec{z};(pq)^{1/2}v^{\pm 1}/c;t;p,q\big).
\end{gather*}
\end{defn}

Note here that unlike the Littlewood and dual Littlewood kernels, the interpolation kernel in the integrand has parameters $\big(t^2;p^2,q^2\big)$ rather than $(t;p,q)$. This is important, as otherwise the right-hand side would have square roots, and the result would depend on the choices of sign. (The residual choice of a square root of~$pq$ can be absorbed by negating~$v$.)

We have an analogue of the $t\mapsto pq/t$ symmetry.

\begin{prop}
The Kawanaka kernel satisfies the symmetry
\begin{gather*}
\cL^{-(n)}_c(\vec{x};pq/t;p,q) = \Gamppqq\big(t^2\big)^n \prod_{1\le i<j\le n} \Gamppqq\big(t^2 x_i^{\pm 1}x_j^{\pm 1}\big) \cL^{-(n)}_c(\vec{x};-t;p,q).
\end{gather*}
\end{prop}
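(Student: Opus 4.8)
The plan is to run the same argument as in the proof of the $t\mapsto pq/t$ symmetry of the dual Littlewood kernel: expand the definition of $\cL^{-(n)}_c$, transform the interpolation kernel in the integrand by its own $t\mapsto pq/t$ symmetry, rewrite the $t$-dependence of the elliptic Selberg density using the reflection equation $\Gampq(pq/z)=\Gampq(z)^{-1}$ together with the quadratic identity $\Gamppqq(z^2)=\Gampq(z,-z)$, and then observe that every $\vec{z}$-dependent factor produced in the process cancels.

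In detail, I would first note that $(pq/t)^2=p^2q^2/t^2$, so the kernel occurring in $\cL^{-(n)}_c(\vec{x};pq/t;p,q)$ is $\cK^{(n)}_c(\vec{z}\,^2;\vec{x};p^2q^2/t^2;p^2,q^2)$, and apply the $t\mapsto pq/t$ symmetry of $\cK^{(n)}$ with $(p,q,t)$ replaced throughout by $(p^2,q^2,t^2)$. Because the first variable slot carries $\vec{z}\,^2$, this rewrites that kernel as $\Gamppqq(t^2)^{2n}\prod_{1\le i<j\le n}\Gamppqq(t^2z_i^{\pm2}z_j^{\pm2})\prod_{1\le i<j\le n}\Gamppqq(t^2x_i^{\pm1}x_j^{\pm1})$ times $\cK^{(n)}_c(\vec{z}\,^2;\vec{x};t^2;p^2,q^2)$. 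Next I would record that the only $t$-dependent part of $\Delta^{(n)}_S(\vec{z};\dots;t;p,q)$ is $\Gampq(t)^n\prod_{i<j}\Gampq(tz_i^{\pm1}z_j^{\pm1})$ — the underlying Dixon density and the univariate factors $\Gampq((pq)^{1/2}v^{\pm1}z_i^{\pm1}/c)$ carry no $t$ — and that under $t\mapsto pq/t$, by the reflection equation followed by $\Gampq(w)\Gampq(-w)=\Gamppqq(w^2)$, this part becomes $\bigl(\Gamppqq(t^2)^n\prod_{i<j}\Gamppqq(t^2z_i^{\pm2}z_j^{\pm2})\bigr)^{-1}$ times the corresponding part of $\Delta^{(n)}_S(\vec{z};\dots;-t;p,q)$; since $(-t)^2=t^2$, the kernel factor of $\cL^{-(n)}_c(\vec{x};-t;p,q)$ is again $\cK^{(n)}_c(\vec{z}\,^2;\vec{x};t^2;p^2,q^2)$. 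Multiplying these two rewritten pieces, the $\prod_{i<j}\Gamppqq(t^2z_i^{\pm2}z_j^{\pm2})$ factors cancel and one power of $\Gamppqq(t^2)^n$ survives, leaving $\Gamppqq(t^2)^n\prod_{i<j}\Gamppqq(t^2x_i^{\pm1}x_j^{\pm1})$ times the integrand of $\cL^{-(n)}_c(\vec{x};-t;p,q)$; as the outer prefactor $\prod_i\Gamppqq(pqv^{\pm2}x_i^{\pm1}/c)^{-1}$ is free of $t$, integrating gives exactly the asserted identity. This is an identity of integrands, so the equality of the two meromorphic functions follows by analytic continuation in the usual way.

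There is no deep obstacle here; the argument is entirely bookkeeping with functional equations. The one step that requires genuine care is the quadratic matching: the interpolation-kernel symmetry is applied at nome $(p^2,q^2)$ and argument $\vec{z}\,^2$, so it contributes $\Gamma_{p^2,q^2}$-factors in $z_i^{\pm2}$, whereas the reflection applied to the Selberg density naturally contributes $\Gampq$-factors in $z_i^{\pm1}$; it is precisely $\Gamma_{p^2,q^2}(w^2)=\Gampq(w)\Gampq(-w)$ that reconciles them, and this is also the source of the $-t$ (rather than $t$) on the right-hand side, since $\Gampq(pq/t)=\Gampq(t)^{-1}=\Gampq(-t)/\Gamppqq(t^2)$ forces the residual density to carry $-t$. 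One should verify the sign conventions inside the $\pm$-shorthands — each $\Gampq((pq/t)z_i^{\epsilon_1}z_j^{\epsilon_2})$ equals $\Gampq(tz_i^{-\epsilon_1}z_j^{-\epsilon_2})^{-1}$, so the full $\pm$-product over $\epsilon_1,\epsilon_2\in\{\pm1\}$ simply inverts — but this is routine.
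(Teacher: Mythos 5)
Your argument is correct and is exactly the intended one: the paper states this proposition without proof, but its proof of the analogous $t\mapsto pq/t$ symmetry for the dual Littlewood kernel proceeds in precisely the same way — apply the kernel symmetry (here at nome $(p^2,q^2)$ with parameter $t^2$ and argument $\vec{z}\,^2$) and absorb the $t$-dependent part of the Selberg density via $\Gampq(pq/z)=\Gampq(z)^{-1}$ and $\Gamppqq(z^2)=\Gampq(z,-z)$, which is the source of the $-t$. Your bookkeeping (cancellation of the $\Gamppqq(t^2z_i^{\pm2}z_j^{\pm2})$ factors, the surviving $\Gamppqq(t^2)^n$, and the $t$-independence of the univariate and prefactor terms) matches the required identity.
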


\begin{prop}
The Kawanaka kernel has the evaluation
\begin{gather*}
\cL^{-(n)}_c\big(t^{2n-2}v,\dots,v;t;p,q\big) =
\prod_{1\le i\le n}
\frac{\Gamppqq\big(t^{2-2i}c^2\big) \Gampq\big({-}t^{i-1}cv,-t^{i+1-2n}c/v\big)}
 {\Gampq\big(t^{1-i}c^2,-t^i\big)}.
\end{gather*}
\end{prop}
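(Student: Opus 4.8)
The plan is to imitate the geometric-progression evaluations already recorded for the Littlewood and dual Littlewood kernels: specialize the arguments of $\cL^{-(n)}_c$ inside the defining integral so that the interpolation kernel in the integrand collapses to an explicit product, and then recognize what is left as an elliptic Selberg integral of order $0$. Concretely, first I would set $x_i=t^{2n-2i}v$ in the definition (renaming the auxiliary parameter of the definition to, say, $w$, so as not to clash with the progression base $v$; recall $\cL^{-(n)}_c$ does not depend on it). Since the second argument of $\cK^{(n)}_c(\vec{z}\,^2;\vec{x};t^2;p^2,q^2)$ is then a geometric progression of step $t^2$ based at $v$, the analytic form of Corollary \ref{cor:formal_geom_special}, applied with $(q,t,p)$ replaced by $(q^2,t^2,p^2)$ and $a=v$, evaluates that kernel as $\prod_{1\le i\le n}\Gamppqq(vc\,z_i^{\pm 2},(c/t^{2n-2}v)z_i^{\pm 2})/\Gamppqq(t^{2-2i}c^2,t^{2i})$. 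Pulling the $i$-indexed denominator outside the integral, what remains is $\int\prod_i\Gamppqq(vc\,z_i^{\pm 2},(c/t^{2n-2}v)z_i^{\pm 2})\,\Delta^{(n)}_S(\vec{z};(pq)^{1/2}w^{\pm 1}/c;t;p,q)$.

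Next I would apply the quadratic functional equation $\Gamppqq(z^2)=\Gampq(z,-z)$ to each factor $\Gamppqq(vc\,z_i^{\pm 2})$ and $\Gamppqq((c/t^{2n-2}v)z_i^{\pm 2})$, rewriting the integrand with $\Gampq$ of $\pm(vc)^{1/2}z_i^{\pm 1}$ and $\pm(c/t^{2n-2}v)^{1/2}z_i^{\pm 1}$. The remaining integral is then exactly an order-$0$ elliptic Selberg integral whose six univariate parameters are $\pm(vc)^{1/2}$, $\pm(c/t^{2n-2}v)^{1/2}$, $(pq)^{1/2}w/c$ and $(pq)^{1/2}/(cw)$; their product is $pq/t^{2n-2}$, so the balancing condition $t^{2n-2}\prod_r u_r=pq$ of \cite[Thm. 6.1]{xforms} holds, and that evaluation produces a product of $\Gampq$'s in the fifteen pairwise products $u_ru_s$, together with the factors $\Gampq(t^{i+1})$.

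The remaining work — the bulk of the effort, though conceptually routine — is to combine this output with the prefactor $\prod_i\Gamppqq(pq\,w^{\pm 2}(t^{2n-2i}v)^{\pm 1}/c)^{-1}$ and the leftover $\prod_i\Gamppqq(t^{2-2i}c^2,t^{2i})^{-1}$, and simplify. I expect the pattern to be: the twelve products involving a square-root parameter pair up into sign-opposite pairs and collapse via $\Gampq(y)\Gampq(-y)=\Gamppqq(y^2)$, the eight ``mixed'' products (one $(pq)^{1/2}$-parameter with one square-root parameter) cancelling the prefactor termwise and, in particular, eliminating all dependence on $w$; the four products of the two square-root pairs collapse to $\prod_i\Gamppqq(t^{2-2i}c^2)^2$, one copy cancelling the $i$-indexed denominator and one copy supplying the numerator $\Gamppqq(t^{2-2i}c^2)$ of the claim; the three remaining products $u_0u_1$, $u_2u_3$, $u_4u_5$, after one use of the reflection relation, give $\Gampq(-t^{i-1}cv)$, $\Gampq(-t^{i+1-2n}c/v)$ and $1/\Gampq(t^{1-i}c^2)$; and $\Gampq(t^{i+1})$ together with $\Gamppqq(t^{2i})^{-1}$ gives $1/\Gampq(-t^i)$ via $\Gampq(t^i)/\Gamppqq(t^{2i})=1/\Gampq(-t^i)$. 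Reindexing then yields exactly the stated right-hand side. The only genuine subtlety is that after the specialization one must check the contour in the definition of $\cL^{-(n)}_c$ can still be chosen so that every integral converges; this is routine for generic parameters, and the general identity of meromorphic functions then follows by analytic continuation.
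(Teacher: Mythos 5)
Your proposal is correct and is essentially the argument the paper intends (the paper states this evaluation without proof, but, as with the analogous Littlewood-kernel specialization, the point is exactly that the kernel collapses via the geometric-progression case and the remaining integral is an order-$0$ elliptic Selberg integral). I checked the bookkeeping: the balancing condition, the cancellation of the $w$-dependent prefactor against the eight mixed products, the collapse of the $\pm$-paired products via $\Gampq(y)\Gampq(-y)=\Gamppqq(y^2)$, and the final reindexing all come out as you describe, yielding the stated right-hand side.
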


When $c=t,pq/t$, we can express the result in terms of the interpolation
kernel; we give the $c=pq/t$ case, as the other follows from the symmetry
(and is more complicated).

\begin{prop}
The Kawanaka kernel has the special case
\begin{gather*}
\cL^{-(n)}_{pq/t}(\vec{x};t;p,q) =
\cK^{(n)}_{pq/t}\big(\sqrt{\vec{x}};-\sqrt{\vec{x}};t;p,q\big).
\end{gather*}
\end{prop}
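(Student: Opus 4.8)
The plan is to reduce the defining integral of $\cL^{-(n)}_{pq/t}$ to a single instance of the braid relation (Proposition~\ref{prop:kern_braid}). First I would exploit the fact that, because $c=pq/t$, the interpolation kernel in the integrand, $\cK^{(n)}_{pq/t}(\vec{z}\,^2;\vec{x};t^2;p^2,q^2)$, has $c$-parameter equal to $\sqrt{(p^2q^2)/t^2}$ relative to the nomes $p^2,q^2$ and parameter $t^2$, so the product formula of Proposition~\ref{prop:special_t} applies and gives
\[
\cK^{(n)}_{pq/t}(\vec{z}\,^2;\vec{x};t^2;p^2,q^2)
=
\prod_{1\le i,j\le n}\Gamppqq\bigl((pq/t)\,z_i^{\pm 2}x_j^{\pm 1}\bigr).
\]
Each monomial $(pq/t)z_i^{\pm 2}x_j^{\pm 1}$ is a perfect square, so the quadratic functional equation $\Gamppqq(w^2)=\Gampq(w,-w)$ rewrites the right side in terms of $\Gampq$; collecting the $4n$ univariate parameters into the two half-sets $\sqrt{pq/t}\,(\pm\sqrt{x_j})^{\pm 1}$ and using the product formula once more (now with nomes $p,q$ and parameter $t$) yields
\[
\cK^{(n)}_{pq/t}(\vec{z}\,^2;\vec{x};t^2;p^2,q^2)
=
\cK^{(n)}_{\sqrt{pq/t}}(\vec{z};\sqrt{\vec{x}};t;p,q)\,
\cK^{(n)}_{\sqrt{pq/t}}(\vec{z};-\sqrt{\vec{x}};t;p,q),
\]
where $\pm\sqrt{\vec{x}}=(\pm\sqrt{x_1},\dots,\pm\sqrt{x_n})$. (This also makes the asserted independence of the choice of square roots transparent: changing the branch of $\sqrt{x_i}$ merely swaps the two factors.)

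Substituting this into the definition of the Kawanaka kernel, with $c=pq/t$ the density becomes $\Delta^{(n)}_S(\vec{z};tv/\sqrt{pq},\,t/v\sqrt{pq};t;p,q)$ and the normalizing factor becomes $\prod_i\Gamppqq(tv^{\pm 2}x_i^{\pm 1})^{-1}$, so the integral computing $\cL^{-(n)}_{pq/t}$ is precisely the left-hand side of the braid relation with $c=d=\sqrt{pq/t}$, $\vec{x}\mapsto\sqrt{\vec{x}}$, $\vec{y}\mapsto-\sqrt{\vec{x}}$, and $\{u_0,u_1\}=\{tv/\sqrt{pq},\,t/v\sqrt{pq}\}$. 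The balancing condition holds since $u_0u_1=t^2/pq=pq/c^2d^2$, and $cd=pq/t$, so Proposition~\ref{prop:kern_braid} evaluates the integral to $\cK^{(n)}_{pq/t}(\sqrt{\vec{x}};-\sqrt{\vec{x}};t;p,q)$ — exactly the target — times $\prod_i\Gampq\bigl(cu_0(\pm\sqrt{x_i})^{\pm 1},cu_1(\pm\sqrt{x_i})^{\pm 1}\bigr)$. Since $\sqrt{pq/t}\,u_0=v\sqrt{t}$ and $\sqrt{pq/t}\,u_1=\sqrt{t}/v$, pairing each $\Gampq(w)\Gampq(-w)$ and using $\Gampq(w,-w)=\Gamppqq(w^2)$ once more collapses this prefactor to $\prod_i\Gamppqq(tv^{\pm 2}x_i^{\pm 1})$, which cancels the normalizing factor; in particular the apparent $v$-dependence disappears, consistent with the $v$-independence noted when defining $\cL^{-(n)}_c$. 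This yields the claimed identity.

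I do not expect a serious obstacle here: the argument is two bracketing applications of the quadratic Gamma identity around one use of the braid relation, and everything else is bookkeeping with the reflection and quadratic functional equations. The one point needing care is the standard contour/convergence issue — one works in an open region of parameters in which the defining integral of $\cL^{-(n)}_c$, the braid‑relation integral, and the unit‑torus contour are all simultaneously legitimate (in particular where $\cK^{(n)}_{\sqrt{pq/t}}$ is represented by its product formula), so that the chain of equalities is between genuinely convergent integrals; the identity then extends to all parameters by meromorphy, exactly as for Propositions~\ref{prop:litt_{pqt}} and the dual Littlewood case. As a consistency check one can verify $n=1$ directly: the integrand is then a balanced order‑$0$ elliptic beta integral, and its evaluation matches $\cK^{(1)}_{pq/t}(\sqrt{x};-\sqrt{x};t;p,q)=\Gampq\bigl((pq/t)(\sqrt{x})^{\pm 1}(-\sqrt{x})^{\pm 1}\bigr)/\Gampq((pq/t)^2,t)$ (Corollary~\ref{cor:formal_geom_special}) after applying the reflection relation.
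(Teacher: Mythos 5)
Your proposal is correct and follows essentially the same route as the paper: factor $\cK^{(n)}_{pq/t}(\vec{z}\,^2;\vec{x};t^2;p^2,q^2)$ into the two kernels $\cK^{(n)}_{\sqrt{pq/t}}(\vec{z};\pm\sqrt{\vec{x}};t;p,q)$, substitute into the definition of the Kawanaka kernel, and collapse the integral with the braid relation at $c=d=\sqrt{pq/t}$. The paper's proof is just a terser statement of exactly this argument, with the balancing and prefactor bookkeeping you carried out left implicit.
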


\begin{proof}
This follows by substituting
\begin{gather*}
\cK^{(n)}_{pq/t}\big(\vec{z}\,^2;\vec{x};t^2;p^2,q^2\big)=
\cK^{(n)}_{\sqrt{pq/t}}\big(\vec{z};\sqrt{\vec{x}};t;p,q\big)
\cK^{(n)}_{\sqrt{pq/t}}\big(\vec{z};-\sqrt{\vec{x}};t;p,q\big)
\end{gather*}
into the definition of the Kawanaka kernel, then simplifying using the braid relation.
\end{proof}

We again omit the pfaffian cases $t=\pm q$, $t=\pm p$; see~\cite{littlewood} for the description of the non-kernel factors of the
integrands as pfaffians in these cases.

As before, the name comes from a formal expansion. The undeformed version
of this expansion is the elliptic analogue of Kawanaka's identity, an
identity of Macdonald polynomials conjectured by Kawanaka in~\cite{KawanakaN:1999}, and proved by Langer, Schlosser, and Warnaar in~\cite{LangerR/SchlosserMJ/WarnaarSO:2009}.

\begin{thm}
If $\max(|\ord(t_0)|,\max_i|\ord(x_i)|)<\ord(c)\le 1/2$, then
\begin{gather*}
\cL^{-(n)}_c(\vec{x};t;p,q)= \prod_{1\le i\le n}
\frac{\Gamppqq\big(c^2t_0x_i^{\pm 1},\big(c^2/t^{2n-2}t_0\big)x_i^{\pm 1}\big)}
 {\Gamppqq\big(t^{2-2i}c^2\big)\Gampq\big(t^{i-1}ct_0,t^{i+1-2n}c/t_0,t^{1-i}c^2,-t^i\big)}\\
\hphantom{\cL^{-(n)}_c(\vec{x};t;p,q)=}{}\!\times \!\sum_\mu
R^{*(n)}_\mu\big(\vec{x};t_0,c^2/t^{2n-2}t_0;q^2,t^2;p^2\big)
\Delta_{\mu}\big(t^{2n-2}t_0/c|t^n,pqt^{n-1}/c^2;q,t;p\big).
\end{gather*}
\end{thm}

\begin{proof}
As usual, it suffices to prove the case $\ord(t_0)=0$, $\vec{x}$ a
partition based at $t_0$, when it follows from Theorem \ref{thm:L3_kern},
as discussed in \cite{littlewood}.
\end{proof}

\begin{rem}
As before, the right-hand side converges formally whenever
\begin{gather*}
\max(|\ord(t_0)|,\max_i|\ord(x_i)|)<\min(\ord(c),1-\ord(c)).
\end{gather*}
The corresponding Macdonald polynomial limit is
\begin{gather*}
\lim_{p\to 0}
\prod_{1\le i\le n} \frac{\Gampq\big({-}t^i\big)}
 {\Gampq\big(qt^{i-1}/c^2\big)}
 \cL^{-(n)}_{p^{1/2}c}\big(\dots,p^{-1/2}x_i,\dots;t;p,q\big) \\
\qquad{} \text{``=''} \sum_\mu
(-c)^{|\mu|} \frac{C^-_\mu(-t;q,t)C^0_\mu\big(qt^{n-1}/c^2;q,t\big)}
 {C^-_\mu( q;q,t)C^0_\mu\big({-}t^n;q,t\big)} P_\mu\big(\vec{x};q^2,t^2\big),
\end{gather*}
which becomes Kawanaka's identity when the left-hand side is specialized to a product.
\end{rem}

This summation has the usual special case giving a product, though in this
case the corresponding summation is actually new.

\begin{thm}
The Kawanaka kernel has the following special case with a product expansion
\begin{gather*}
\cL^{-(n)}_{-(pq/t)^{1/2}}(\vec{x};-t;p,q)= \Gamppqq(pq/t)^n
\!\prod_{1\le i\le n} \!\Gampq\big((pq/t)^{1/2} x_i^{\pm 1}\big)
\!\prod_{1\le i<j\le n}\! \Gamppqq\big((pq/t) x_i^{\pm 1}x_j^{\pm 1}\big).
\end{gather*}
\end{thm}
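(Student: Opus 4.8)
The plan is to derive this exactly as the product expressions for $\cL^{(2n)}_{(pqt)^{1/4}}$ and $\cL^{\prime(n)}_{(p/qt)^{1/4}}$ were derived: by specializing the formal expansion of the Kawanaka kernel proved just above so that the \emph{deformed} Kawanaka sum collapses to its undeformed ancestor. Concretely, in that expansion I would substitute $-t$ for the Kawanaka parameter (write $\tau=-t$ for it) and set $c=-(pq/t)^{1/2}$, so that $c^2=pq/t$ and the two $b$-parameters $\tau^n$, $pq\tau^{n-1}/c^2$ of the factor $\Delta_\mu(\tau^{2n-2}t_0/c\,|\,\tau^n,pq\tau^{n-1}/c^2;q,\tau;p)$ become $\tau^n$ and $-\tau^n$. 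This is the relevant special value, and the collapse responsible is that a pair of $b$-parameters of the shape $\{x,-x\}$ squares up: since $\theta_p(z)\theta_p(-z)=\theta_{p^2}(z^2)$ one gets $\theta(x;p,q)_{l,m}\theta(-x;p,q)_{l,m}=\theta(x^2;p^2,q^2)_{l,m}$, hence $\cC^0_\mu(x;\tau;p,q)\,\cC^0_\mu(-x;\tau;p,q)=\cC^0_\mu(x^2;\tau^2;p^2,q^2)=\cC^0_\mu((t^2)^n;t^2;p^2,q^2)$, which is precisely a $p^2,q^2,t^2$-ingredient matching the base $(q^2,t^2,p^2)$ of the interpolation function $R^{*(n)}_\mu(\vec x;t_0,c^2/\tau^{2n-2}t_0;q^2,t^2;p^2)$ already present in the summand (and similarly for the denominator of $\Delta^0_\mu$). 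After this collapse the right-hand side of the formal expansion becomes, up to the explicit elliptic Gamma prefactor, exactly the undeformed elliptic Kawanaka summation of \cite{littlewood} — the elliptic lift of the Macdonald-polynomial identity conjectured in \cite{KawanakaN:1999} and proved in \cite{LangerR/SchlosserMJ/WarnaarSO:2009}.

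The second step is then to quote the value of that sum. As always it suffices to verify the resulting identity for $\vec x$ specialized to a partition based at a generic $t_0$ of order $0$: both sides are holomorphic Puiseux series in $p$ with coefficients rational in $\vec x$, and this is a Zariski-dense family of specializations, at which the formal sum terminates and reduces to the finite elliptic Kawanaka identity of \cite{littlewood}. Its right-hand side is a product of elliptic Gamma functions; collecting it with the prefactor $\prod_i \Gamppqq(c^2t_0x_i^{\pm1},(c^2/\tau^{2n-2}t_0)x_i^{\pm1})/\bigl(\Gamppqq(\tau^{2-2i}c^2)\Gampq(\tau^{i-1}ct_0,\tau^{i+1-2n}c/t_0)\bigr)$ from the expansion and simplifying with $\Gampq(pq/z)=\Gampq(z)^{-1}$, $\Gamppqq(z^2)=\Gampq(z,-z)$, and the special values recorded in the introduction should yield exactly the claimed $\Gamppqq(pq/t)^n\prod_i\Gampq((pq/t)^{1/2}x_i^{\pm1})\prod_{i<j}\Gamppqq((pq/t)x_i^{\pm1}x_j^{\pm1})$.

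I expect the only real work to be this bookkeeping rather than anything conceptual: one must check that all $t_0$-dependence cancels (as it must, the left side being $t_0$-independent), and carefully track the signs introduced both by replacing the Kawanaka parameter with $-t$ and by the choice of square root in $c=-(pq/t)^{1/2}$ — here the invariance of $\cL^{-(n)}_c$ under negating $c$, the $t\mapsto pq/t$ symmetry of the Kawanaka kernel, and the Macdonald-polynomial limit recorded in the remark above (which must reproduce the product side of the classical Kawanaka identity) all serve as consistency checks. A minor point: $c=-(pq/t)^{1/2}$ sits exactly at the boundary $\ord(c)=1/2$ of the range in which the formal expansion was stated, so one needs $\ord(t_0)=\ord(x_i)=0$ strictly, which is harmless. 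As in the Littlewood case one could presumably also give a purely integral proof — expanding the interpolation kernel in the definition of $\cL^{-(n)}$ via the degenerate branching rule (Proposition \ref{prop:branch_k}), exchanging integrals, and applying the degenerate braid relation (Proposition \ref{prop:braid_k}) to set up an induction on $n$ — but the route through the formal Kawanaka sum is far shorter.
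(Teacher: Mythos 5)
Your first step (reduce, via the well-behaved formal expansions and Zariski density, to the case $x_i=t^{2n-2i}q^{2\lambda_i}t_0$, where the deformed Kawanaka sum collapses because the two $b$-parameters of the $\Delta_\mu$ symbol become $\pm(-t)^n$) is exactly how the paper begins, and the $\theta_p(z)\theta_p(-z)=\theta_{p^2}(z^2)$ pairing you describe is correct. The gap is in your second step: you propose to ``quote the value of that sum'' as ``the finite elliptic Kawanaka identity of \cite{littlewood}'', but no such identity is available. The paper states explicitly, just before this theorem, that the resulting identity ``is actually new''; what exists in the literature is only the Macdonald-polynomial Kawanaka identity of \cite{KawanakaN:1999}, proved in \cite{LangerR/SchlosserMJ/WarnaarSO:2009}, which is merely the $p\to 0$ limit and hence pins down only the leading Puiseux coefficient, not the elliptic identity itself (indeed the remark following the theorem runs the logic in the opposite direction: the present product evaluation is what yields the elliptic Kawanaka sum, and by a careful limit an alternate proof of Kawanaka's identity). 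So as written your argument is circular: the summation you invoke is equivalent to the statement being proved.

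What the paper actually does to close this step is to observe that, after the specialization to a partition, the resulting identity of multivariate elliptic functions is a modular transform of a special case of Corollary \ref{cor:dual_litt_prod} (the product evaluation of the dual Littlewood kernel at $c=(p/qt)^{1/4}$); this is the same $2$-isogeny device used to prove Theorem \ref{thm:L3_kern}, exploiting that the specialized identity lives on an elliptic curve so that one may replace the implicit choice of isogeny. If you want to repair your proposal, replace the appeal to a ``known'' elliptic Kawanaka identity by this modular-transform reduction to the dual Littlewood product case (or find some other independent evaluation of the terminating sum). Your closing aside about a purely integral proof via a branching rule is also doubtful here: unlike the Littlewood kernel, the Kawanaka kernel is noted in the paper not to satisfy any simple branching rule, and the paper records that no direct (non-modular) argument was found even for the underlying Theorem \ref{thm:L3_kern}.
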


\begin{proof}
 Both sides clearly have well-behaved formal expansions, so it suffices to
 prove this in the case $x_i = t^{2n-2i}q^{2\lambda_i}t_0$ for some
 partition $\lambda$. The resulting identity of multivariate elliptic
 functions is a modular transform of a special case of Corollary~\ref{cor:dual_litt_prod}.
\end{proof}

\begin{rem}
 If we replace the left-hand side by its formal expansion, the resulting
 formal sum may be viewed as an elliptic analogue of Kawanaka's identity.
 This gives an alternate proof of the latter by a careful limit (i.e.,
 replace $p$ by $p^{4N}$ and multiply $\vec{x}$ by $p$, then take the
 limit $N\to\infty$).
\end{rem}

Again, this is particularly nice after applying the $t\mapsto pq/t$ expression.

\begin{cor}\label{cor:Q7}
For $c=t^{1/2}$, the Kawanaka kernel has the following expression
\begin{gather*}
\cL^{-(n)}_{t^{1/2}}(\vec{x};t;p,q) =
\prod_{1\le i\le n} \Gampq\big({-}t^{1/2} x_i^{\pm 1}\big)
\frac{\Delta^{(n)}_S\big(\vec{x};t;p^2,q^2\big)}
 {\Delta^{(n)}_S\big(\vec{x};t^2;p^2,q^2\big)}.
\end{gather*}
\end{cor}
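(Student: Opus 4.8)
The plan is to deduce this Corollary from the product-expansion theorem for the Kawanaka kernel stated just above (the evaluation of $\cL^{-(n)}_{-(pq/t)^{1/2}}(\vec{x};-t;p,q)$) by running it through the $t\mapsto pq/t$ symmetry of $\cL^{-(n)}$, in exact parallel with the way Corollary~\ref{cor:Q3} was obtained from Corollary~\ref{cor:dual_litt_prod}. First I would record the elementary symmetry $\cL^{-(n)}_{-c}(\vec{x};\tau;p,q)=\cL^{-(n)}_c(-\vec{x};\tau;p,q)$, which follows from the definition of the Kawanaka kernel together with the corresponding property $\cK^{(n)}_{-c}(\vec{x};\vec{y})=\cK^{(n)}_c(-\vec{x};\vec{y})$ of the interpolation kernel and the independence of $\cL^{-(n)}$ from its auxiliary parameter (the $c\mapsto -c$ on the integrand is absorbed by the sign ambiguity in that parameter, leaving only the sign change in the $\vec{x}$-slot of $\cK^{(n)}$). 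Applying this to the product-expansion theorem trades $c=-(pq/t)^{1/2}$ for $c=(pq/t)^{1/2}$ at the cost of sending $\vec{x}\mapsto-\vec{x}$; the only visible effect on the right-hand side is that $\prod_i\Gampq((pq/t)^{1/2}x_i^{\pm1})$ becomes $\prod_i\Gampq(-(pq/t)^{1/2}x_i^{\pm1})$, since $x_i^{\pm1}x_j^{\pm1}$ is invariant under negating the $x$'s.

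Next I would invoke the symmetry $\cL^{-(n)}_c(\vec{x};pq/t;p,q)=\Gamppqq(t^2)^n\prod_{1\le i<j\le n}\Gamppqq(t^2x_i^{\pm1}x_j^{\pm1})\,\cL^{-(n)}_c(\vec{x};-t;p,q)$ with $c=(pq/t)^{1/2}$ to replace the kernel argument $-t$ by $pq/t$, picking up the prefactor $\Gamppqq(t^2)^n\prod_{i<j}\Gamppqq(t^2x_i^{\pm1}x_j^{\pm1})$. Substituting $t\mapsto pq/t$ throughout then turns the left-hand side into $\cL^{-(n)}_{t^{1/2}}(\vec{x};t;p,q)$ and the right-hand side into $\prod_i\Gampq(-t^{1/2}x_i^{\pm1})$ times $\Gamppqq(p^2q^2/t^2)^n\,\Gamppqq(t)^n\prod_{i<j}\Gamppqq(p^2q^2x_i^{\pm1}x_j^{\pm1}/t^2)\,\Gamppqq(tx_i^{\pm1}x_j^{\pm1})$.

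Finally I would collapse this product with the reflection equation $\Gamppqq(p^2q^2/z)=\Gamppqq(z)^{-1}$: it pairs $\Gamppqq(p^2q^2/t^2)$ against $\Gamppqq(t^2)$ and, applied to each of the four values making up $x_i^{\pm1}x_j^{\pm1}$, pairs $\Gamppqq(p^2q^2x_i^{\pm1}x_j^{\pm1}/t^2)$ against $\Gamppqq(t^2x_i^{\pm1}x_j^{\pm1})$, leaving $\prod_i\Gampq(-t^{1/2}x_i^{\pm1})$ times $\Gamppqq(t)^n\prod_{i<j}\Gamppqq(tx_i^{\pm1}x_j^{\pm1})\big/\bigl(\Gamppqq(t^2)^n\prod_{i<j}\Gamppqq(t^2x_i^{\pm1}x_j^{\pm1})\bigr)$. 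By the definition of $\Delta^{(n)}_S$ — the elliptic Dixon factors $\Delta^{(n)}_D(\vec{x};p^2,q^2)$ being common to $\Delta^{(n)}_S(\vec{x};t;p^2,q^2)$ and $\Delta^{(n)}_S(\vec{x};t^2;p^2,q^2)$, hence cancelling — this ratio is exactly $\Delta^{(n)}_S(\vec{x};t;p^2,q^2)/\Delta^{(n)}_S(\vec{x};t^2;p^2,q^2)$, which is the assertion.

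I do not expect a substantive obstacle: the whole argument is a chain of symmetries of the Kawanaka kernel and of the reflection equation for the elliptic Gamma function, with the $\Delta^{(n)}_S$ identification at the end purely definitional. The one point demanding care — and the only place an error could enter — is the bookkeeping of signs and square roots, since the Kawanaka kernel is \emph{not} invariant under $c\mapsto -c$ alone but only under the combined $c\mapsto -c$, $\vec{x}\mapsto-\vec{x}$; thus the minus sign inside $\Gampq(-t^{1/2}x_i^{\pm1})$ in the statement is genuinely forced by the first step above and is not a free choice.
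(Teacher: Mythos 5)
Your argument is correct and is essentially the paper's own proof: the corollary is obtained by applying the $t\mapsto pq/t$ symmetry of the Kawanaka kernel to the product-expansion theorem for $\cL^{-(n)}_{-(pq/t)^{1/2}}(\vec{x};-t;p,q)$ and then collapsing the resulting factors with the reflection formula $\Gamppqq(p^2q^2/z)=\Gamppqq(z)^{-1}$, the Dixon factors cancelling in the ratio of Selberg densities. Your explicit sign bookkeeping via $\cL^{-(n)}_{-c}(\vec{x};\tau;p,q)=\cL^{-(n)}_c(-\vec{x};\tau;p,q)$, which forces the $-t^{1/2}$ inside the univariate Gamma factors, correctly fills in the one detail the paper leaves implicit.
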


Like the dual Littlewood kernel, the Kawanaka kernel does not appear to
satisfy any simple branching rule.

The analogues of most of the integral identities are straightforward.

\begin{thm}\label{thm:quad_kaw}
The Kawanaka kernel satisfies the integral identity
\begin{gather*}
\int \cK^{(n)}_c(\vec{x};\vec{z};t;p,q)
\cL^{-(n)}_d\big(\vec{z};t^{1/2};p^{1/2},q^{1/2}\big)
\Delta^{(n)}_S\big(\vec{z};\sqrt{pq}w^{\pm 2}/cd,\sqrt{pq}v^{\pm 2}/d;t;p,q\big) \\
\qquad{}= \prod_{1\le i\le n} \Gampq\big(\sqrt{pq}w^{\pm 2}x_i^{\pm 1}/d\big) \\
\qquad\quad{}\times \int
\cK^{(n)}_{cd}\big(\vec{x};\vec{z}\,^2;t;p,q\big)
\Delta^{(n)}_S\big(\vec{z};\pm(pq)^{1/4}w^{\pm 1}/\sqrt{c},(pq)^{1/4}v^{\pm 1}/d;t^{1/2};p^{1/2},q^{1/2}\big).
\end{gather*}
\end{thm}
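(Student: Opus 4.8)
The plan is to run the same ``Bailey lemma''-type argument already used for Theorems~\ref{thm:quad_litt} and~\ref{thm:quad_dual_litt}: expand the Kawanaka kernel on the left via its definition, interchange the two integrations, and then collapse the inner integral using the (analytic) braid relation, Proposition~\ref{prop:kern_braid}. Replacing $(t,p,q)$ by $(t^{1/2},p^{1/2},q^{1/2})$ in the definition of $\cL^{-(n)}$ writes $\cL^{-(n)}_d(\vec{z};t^{1/2};p^{1/2},q^{1/2})$ as $\prod_{1\le i\le n}\Gampq(\sqrt{pq}\,v'^{\pm 2}z_i^{\pm 1}/d)^{-1}$ times $\int \cK^{(n)}_d(\vec{w}\,^2;\vec{z};t;p,q)\,\Delta^{(n)}_S(\vec{w};(pq)^{1/4}v'^{\pm 1}/d;t^{1/2};p^{1/2},q^{1/2})$, where $v'$ is the free auxiliary parameter. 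First I would substitute this into the left-hand side and take $v'=v$, so that the denominator $\prod_i\Gampq(\sqrt{pq}\,v^{\pm 2}z_i^{\pm 1}/d)^{-1}$ exactly cancels the two parameters $\sqrt{pq}\,v^{\pm 2}/d$ of $\Delta^{(n)}_S(\vec{z};\sqrt{pq}w^{\pm 2}/cd,\sqrt{pq}v^{\pm 2}/d;t;p,q)$, leaving the two-parameter density $\Delta^{(n)}_S(\vec{z};\sqrt{pq}w^{\pm 2}/cd;t;p,q)$.

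Next I would interchange the $\vec{z}$- and $\vec{w}$-integrations; as always this is legitimate once the parameters are chosen so that all singularities of all integrands lie inside the unit circle and each contour can be taken to be $(S^1)^n$, and the resulting identity of meromorphic functions then extends to general parameters by the construction of \cite[\S 10]{xforms}. The inner integral over $\vec{z}$ is now precisely an instance of the braid relation with kernels $\cK^{(n)}_c$, $\cK^{(n)}_d$ and parameters $u_0=\sqrt{pq}w^2/cd$, $u_1=\sqrt{pq}/(cd\,w^2)$ (note $u_0u_1=pq/c^2d^2$, and recall that $\cK^{(n)}$ is symmetric in its two tuples of variables). Applying Proposition~\ref{prop:kern_braid} replaces it by $\cK^{(n)}_{cd}(\vec{x};\vec{w}\,^2;t;p,q)$ times $\prod_i\Gampq(\sqrt{pq}w^{\pm 2}x_i^{\pm 1}/d)$ --- exactly the prefactor appearing on the right-hand side of the Theorem --- times the $\vec{w}$-dependent factor $\prod_i\Gampq(\sqrt{pq}w^{\pm 2}w_i^{\pm 2}/c)$.

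It then remains to fold these last factors into the density $\Delta^{(n)}_S(\vec{w};(pq)^{1/4}v^{\pm 1}/d;t^{1/2};p^{1/2},q^{1/2})$. Here the quadratic functional equation $\Gampq(z^2)=\Gamma_{p^{1/2},q^{1/2}}(z)\,\Gamma_{p^{1/2},q^{1/2}}(-z)$ (the $(p,q)\mapsto(p^{1/2},q^{1/2})$ form of the quadratic identity $\Gamppqq(z^2)=\Gampq(z,-z)$) gives $\prod_i\Gampq(\sqrt{pq}w^{\pm 2}w_i^{\pm 2}/c)=\prod_i\Gamma_{p^{1/2},q^{1/2}}(\pm(pq)^{1/4}w^{\pm 1}w_i^{\pm 1}/\sqrt{c})$, i.e.\ exactly the contribution of the four univariate parameters $\pm(pq)^{1/4}w^{\pm 1}/\sqrt{c}$; combining with $(pq)^{1/4}v^{\pm 1}/d$ yields $\Delta^{(n)}_S(\vec{w};\pm(pq)^{1/4}w^{\pm 1}/\sqrt{c},(pq)^{1/4}v^{\pm 1}/d;t^{1/2};p^{1/2},q^{1/2})$, which is the integrand on the right-hand side, completing the identification. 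I expect the main obstacle to be purely bookkeeping --- keeping the various half- and quarter-integer powers of $p$, $q$, $t$ and the signs straight through the quadratic substitution, and verifying that the auxiliary-parameter cancellation and the braid-relation balancing condition can be satisfied simultaneously in a region where all contours may be taken to be unit circles, so that Fubini applies. Conceptually there is nothing beyond the Littlewood and dual-Littlewood cases together with the one new ingredient, the quadratic gamma identity.
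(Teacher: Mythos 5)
Your proposal is correct and is essentially the paper's own argument: the paper proves the Kawanaka analogue by the same ``Bailey lemma'' route it spells out for Theorem~\ref{thm:quad_litt} (expand the kernel by its definition, exchange integrals with all contours on the unit circle, apply the braid relation Proposition~\ref{prop:kern_braid}), leaving exactly the quadratic-gamma bookkeeping you carry out. Your identification of $u_0=\sqrt{pq}\,w^2/cd$, $u_1=\sqrt{pq}/cdw^2$ and the conversion of $\prod_i\Gampq(\sqrt{pq}\,w^{\pm2}w_i^{\pm2}/c)$ into the four parameters $\pm(pq)^{1/4}w^{\pm1}/\sqrt{c}$ of the $(t^{1/2};p^{1/2},q^{1/2})$ density are exactly right.
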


\begin{cor}\label{cor:quad_kaw}
For otherwise generic parameters satisfying $t^{n-1}t_0t_1t_2u_0=pqd^2$,
\begin{gather*}
\int {\cal R}^{*(n)}_{\blambda}(\vec{z};t_0/d,u_0/d;t;p,q)
\cL^{-(n)}_d\big(\vec{z};t^{1/2};p^{1/2},q^{1/2}\big) \\
 \qquad\quad{}\times \Delta^{(n)}_S\big(\vec{z};t_0/d,t_1/d,t_2/d,u_0/d,\sqrt{pq}v^{\pm 2}/d;t;p,q\big)
 \\
\qquad{}= \frac{\Delta^0_{\blambda}\big(t^{n-1}t_0/u_0|t^{n-1}t_0t_1/d^2;t;p,q\big)}
 {\Delta^0_{\blambda}\big(t^{n-1}t_0/u_0|t^{n-1}t_0t_1;t;p,q\big)}
\prod_{\substack{0\le i<n\\0\le r<s<3}}
 \frac{\Gampq\big(t^it_rt_s/d^2\big)}
 {\Gampq\big(t^it_rt_s\big)} \\
\qquad\quad{}\times \int
 {\cal R}^{*(n)}_{\blambda}\big(\vec{z}\,^2;t_0,u_0;t;p,q\big) \\
\qquad\quad{}\times
\Delta^{(n)}_S\big(\vec{z};
 \pm\sqrt{t_0},\pm\sqrt{t_1},\pm\sqrt{t_2},\pm\sqrt{u_0},
 (pq)^{1/4}v^{\pm 1}/d;t^{1/2};p^{1/2},q^{1/2}\big).
 \end{gather*}
\end{cor}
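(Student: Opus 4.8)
The plan is to obtain Corollary \ref{cor:quad_kaw} from Theorem \ref{thm:quad_kaw} by specializing the variables $\vec{x}$ to a partition‑pair base point, in exactly the way Corollary \ref{cor:quad_litt} was obtained from Theorem \ref{thm:quad_litt}. Write $\blambda=(\lambda,\mu)$, fix the kernel parameter $c$ of Theorem \ref{thm:quad_kaw} to be $c:=\sqrt{t^{n-1}t_0u_0}/d$, so that $c^2=t^{n-1}(t_0/d)(u_0/d)$ while $(cd)^2=t^{n-1}t_0u_0$, and set $x_i:=p^{\lambda_i}q^{\mu_i}t^{n-i}t_0/cd$. Using the symmetry $\cK^{(n)}_c(\vec{x};\vec{z};t;p,q)=\cK^{(n)}_c(\vec{z};\vec{x};t;p,q)$ together with the Proposition above expressing $\cR^{*(n)}_{\lambda,\mu}(\vec{z};a,b;t;p,q)$ as a prefactor times $\cK^{(n)}_{\sqrt{t^{n-1}ab}}(\vec{z};\dots,p^{\lambda_i}q^{\mu_i}t^{n-i}a/\sqrt{t^{n-1}ab},\dots;t;p,q)$, this single substitution simultaneously turns $\cK^{(n)}_c(\vec{x};\vec{z};t;p,q)$ on the left into a multiple of $\cR^{*(n)}_{\blambda}(\vec{z};t_0/d,u_0/d;t;p,q)$ and $\cK^{(n)}_{cd}(\vec{x};\vec{z}\,^2;t;p,q)$ on the right into a multiple of $\cR^{*(n)}_{\blambda}(\vec{z}\,^2;t_0,u_0;t;p,q)$ (the two base points coincide because $(t_0/d)/c=t_0/cd$). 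Introducing $t_1:=\sqrt{pq}\,w^2/c$, $t_2:=\sqrt{pq}/cw^2$ converts the balancing condition built into Theorem \ref{thm:quad_kaw} into the condition $t^{n-1}t_0t_1t_2u_0=pqd^2$ of the Corollary. All of this is legitimate for otherwise generic parameters, which is precisely the range in which the kernel specializes to an interpolation function (cf.\ the Remark following the Proposition).

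\textbf{Absorbing the $\vec{z}$‑dependent factors.} The two applications of that Proposition produce prefactors that are then absorbed into the Selberg densities. On the left, $\prod_{1\le i\le n}\Gampq((t_0/d)z_i^{\pm1},(u_0/d)z_i^{\pm1})$ enlarges the four‑parameter density $\Delta^{(n)}_S(\vec{z};\sqrt{pq}w^{\pm2}/cd,\sqrt{pq}v^{\pm2}/d;t;p,q)$ to the six‑parameter density $\Delta^{(n)}_S(\vec{z};t_0/d,t_1/d,t_2/d,u_0/d,\sqrt{pq}v^{\pm2}/d;t;p,q)$. On the right, $\prod_{1\le i\le n}\Gampq(t_0 z_i^{\pm2},u_0 z_i^{\pm2})$ must first be rewritten, via the quadratic functional equation $\Gamppqq(z^2)=\Gampq(z,-z)$ with $(p,q)$ replaced by $(p^{1/2},q^{1/2})$, as $\prod_{1\le i\le n}\Gamm{p^{1/2},q^{1/2}}(\pm\sqrt{t_0}z_i^{\pm1},\pm\sqrt{u_0}z_i^{\pm1})$; after this it merges with $\Delta^{(n)}_S(\vec{z};\pm(pq)^{1/4}w^{\pm1}/\sqrt{c},(pq)^{1/4}v^{\pm1}/d;t^{1/2};p^{1/2},q^{1/2})$ to give exactly $\Delta^{(n)}_S(\vec{z};\pm\sqrt{t_0},\pm\sqrt{t_1},\pm\sqrt{t_2},\pm\sqrt{u_0},(pq)^{1/4}v^{\pm1}/d;t^{1/2};p^{1/2},q^{1/2})$. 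The factor $\cL^{-(n)}_d(\vec{z};t^{1/2};p^{1/2},q^{1/2})$, not involving $\vec{x}$, is untouched throughout.

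\textbf{Matching the constant prefactor.} What remains is to check that the $\vec{z}$‑independent constants agree with the prefactor displayed in the Corollary. These constants are the two denominators $\prod_i (pqd^2/t_0u_0)^{-2\lambda_i\mu_i}\Gampq(t^{n-i}t_0u_0/d^2,t^i)$ and $\prod_i (pq/t_0u_0)^{-2\lambda_i\mu_i}\Gampq(t^{n-i}t_0u_0,t^i)$ from the two applications of the Proposition, together with $\prod_{1\le i\le n}\Gampq(\sqrt{pq}w^{\pm2}x_i^{\pm1}/d)$ from Theorem \ref{thm:quad_kaw} evaluated at the chosen point. Using $\theta(x;p,q)_{l,m}=(-x)^{lm}p^{ml(l-1)/2}q^{lm(m-1)/2}\Gampq(p^lq^m x)/\Gampq(x)$ to turn the geometric progressions of elliptic Gamma functions into $\cC^0$ symbols, and then the definition of $\Delta^0_{\blambda}$, one finds after cancellation — the powers $(pq/ab)^{-2\lambda_i\mu_i}$ cancelling against matching factors inside the $\Delta^0$ symbols, which are exactly the discrepancy between the $p$‑ and $q$‑elliptic parts of $\cR^{*(n)}_{\blambda}$ — that these collapse to the ratio $\Delta^0_{\blambda}(t^{n-1}t_0/u_0|t^{n-1}t_0t_1/d^2;t;p,q)/\Delta^0_{\blambda}(t^{n-1}t_0/u_0|t^{n-1}t_0t_1;t;p,q)$ times $\prod_{0\le i<n,\ 0\le r<s<3}\Gampq(t^it_rt_s/d^2)/\Gampq(t^it_rt_s)$, as required.

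\textbf{Anticipated obstacle.} The only genuinely delicate point is the square‑root rewriting of the second step: one must verify that the unordered pair $\{\sqrt{t_r},-\sqrt{t_r}\}$ produced by $\Gampq(t_r z_i^{\pm2})$ is the pair literally written as $\pm\sqrt{t_r}$ in the Corollary, which forces a compatible choice of the square roots of $t_0,t_1,t_2,u_0$ against the $t^{1/2}$ and $(pq)^{1/4}$ implicit in the density of Theorem \ref{thm:quad_kaw} and against the signs carried by the $w$‑ and $v$‑parameters. The observation (made in the remark preceding the definition of the Kawanaka kernel) that the interpolation kernel in the integrand must carry parameters $(t^2;p^2,q^2)$ rather than $(t;p,q)$ — precisely so that the right‑hand side remains free of sign ambiguities — is what makes all of these choices consistent, but it needs to be spelled out. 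Everything else is $q$‑symbol bookkeeping of the kind routinely carried out in \cite{xforms,bctheta,littlewood}.
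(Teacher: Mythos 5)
Your proposal is correct and is precisely the route the paper intends: Corollary \ref{cor:quad_kaw} is obtained from Theorem \ref{thm:quad_kaw} by specializing $\vec{x}$ to the partition-pair point via the proposition expressing $\cR^{*(n)}_{\blambda}$ as a specialization of $\cK^{(n)}_{\sqrt{t^{n-1}ab}}$ (applied once with $(t_0/d,u_0/d)$ and once with $(t_0,u_0)$, the base points coinciding), absorbing the resulting Gamma factors into the two Selberg densities (with $\Gampq(t_rz_i^{\pm2})=\Gamm{p^{1/2},q^{1/2}}(\pm\sqrt{t_r}\,z_i^{\pm1})$ on the right) and matching the residual constant, which indeed collapses to the stated $\Delta^0$-ratio and Gamma product after using the reflection formula together with the balancing condition $t^{n-1}t_0t_1t_2u_0=pqd^2$. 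The paper leaves these routine steps implicit, so no substantive difference in approach exists.
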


\begin{cor}
The Kawanaka kernel satisfies the identity
\begin{gather*}
\cL^{-(n)}_c\big(\vec{x};t^{1/2};p^{1/2},q^{1/2}\big) =
\prod_{1\le i\le n} \frac{1}{\Gampq\big({-}\sqrt{pq}x_i^{\pm 1}/cd^2,-\sqrt{pq}x_i^{\pm 1}/c\big)}\\
\qquad{}\times \int \cK^{(n)}_{c/d}(\vec{x};\vec{y};t;p,q)
\cL^{-(n)}_d\big(\vec{y};t^{1/2};p^{1/2},q^{1/2}\big)
\Delta^{(n)}_S\big(\vec{y};-\sqrt{pq}/c^2d,-\sqrt{pq}/d;t;p,q\big).
\end{gather*}
\end{cor}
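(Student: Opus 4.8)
The plan is to obtain this as a specialization of Theorem \ref{thm:quad_kaw}, exactly in the spirit of how Theorem \ref{thm:litt_braid} and Corollary \ref{cor:dual_litt_recur} were extracted from the quadratic transformations of the Littlewood and dual Littlewood kernels. Concretely, I would apply Theorem \ref{thm:quad_kaw} with its parameters $(c,d)$ replaced by $(c/d,d)$ (so that the two interpolation kernels that occur become $\cK^{(n)}_{c/d}$ and $\cK^{(n)}_{(c/d)d}=\cK^{(n)}_c$, matching the shapes in the corollary), and then set the remaining free parameters to $v=\sqrt{-1}$ and $w=\sqrt{-cd}$. Since Theorem \ref{thm:quad_kaw} is an identity of meromorphic functions and the Kawanaka kernel is independent of its own auxiliary parameter, these specializations are harmless: the resulting identity holds for generic $c,d,\vec{x}$, which is all we need.

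With this choice the four parameters $\sqrt{pq}\,w^{\pm2}/c,\ \sqrt{pq}\,v^{\pm2}/d$ of the Selberg density on the left of Theorem \ref{thm:quad_kaw} become $-\sqrt{pq}\,d,\ -\sqrt{pq}/c^2d,\ -\sqrt{pq}/d,\ -\sqrt{pq}/d$; the pair $\{-\sqrt{pq}\,d,-\sqrt{pq}/d\}$ multiplies to $pq$ and so cancels by the reflection relation of the elliptic Gamma function, leaving exactly $\Delta^{(n)}_S(\vec{z};-\sqrt{pq}/c^2d,-\sqrt{pq}/d;t;p,q)$, the density in the corollary. Thus the left-hand side of Theorem \ref{thm:quad_kaw} is already the integral on the right-hand side of the corollary. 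On the right of Theorem \ref{thm:quad_kaw}, the six parameters $\pm(pq)^{1/4}w^{\pm1}/\sqrt{c/d},\ (pq)^{1/4}v^{\pm1}/d$ become $\pm(pq)^{1/4}\sqrt{-1}\,d,\ \pm(pq)^{1/4}\sqrt{-1}/d,\ \pm(pq)^{1/4}\sqrt{-1}/c$ (the ambiguities in the square roots $\sqrt{c/d}$, $\sqrt{-cd}$, $\sqrt{-1}$ being absorbed by the explicit $\pm$ signs), and the two pairs $\{+(pq)^{1/4}\sqrt{-1}\,d,\,-(pq)^{1/4}\sqrt{-1}/d\}$ and $\{-(pq)^{1/4}\sqrt{-1}\,d,\,+(pq)^{1/4}\sqrt{-1}/d\}$ each multiply to $\sqrt{pq}$ and cancel, leaving $\Delta^{(n)}_S(\vec{z};\pm(pq)^{1/4}\sqrt{-1}/c;t^{1/2};p^{1/2},q^{1/2})$. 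Using $\cK^{(n)}_c(\vec{x};\vec{z}\,^2;t;p,q)=\cK^{(n)}_c(\vec{z}\,^2;\vec{x};t;p,q)$, the remaining integral is precisely the integral defining $\cL^{-(n)}_c(\vec{x};t^{1/2};p^{1/2},q^{1/2})$ with the auxiliary parameter of that definition taken to be $\sqrt{-1}$, so it equals $\cL^{-(n)}_c(\vec{x};t^{1/2};p^{1/2},q^{1/2})\prod_{1\le i\le n}\Gampq(-\sqrt{pq}\,x_i^{\pm1}/c)^2$; meanwhile the prefactor $\prod_i\Gampq(\sqrt{pq}\,w^{\pm2}x_i^{\pm1}/d)$ becomes $\prod_i\Gampq(-\sqrt{pq}\,cx_i^{\pm1},-\sqrt{pq}\,x_i^{\pm1}/cd^2)$.

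Equating the two sides of Theorem \ref{thm:quad_kaw} therefore shows that the integral in the corollary equals $\cL^{-(n)}_c(\vec{x};t^{1/2};p^{1/2},q^{1/2})$ times $\prod_i\Gampq(-\sqrt{pq}\,cx_i^{\pm1},-\sqrt{pq}\,x_i^{\pm1}/cd^2)\Gampq(-\sqrt{pq}\,x_i^{\pm1}/c)^2$. Dividing through by $\prod_i\Gampq(-\sqrt{pq}\,x_i^{\pm1}/cd^2,-\sqrt{pq}\,x_i^{\pm1}/c)$ and using $\Gampq(z)\Gampq(pq/z)=1$ — which gives $\prod_i\Gampq(-\sqrt{pq}\,cx_i^{\pm1})\Gampq(-\sqrt{pq}\,x_i^{\pm1}/c)=1$ — leaves exactly the claimed identity. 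I expect the only real work to be the bookkeeping of the square roots $\sqrt{-1},\sqrt{-cd},\sqrt{pq}$ and of the parameter cancellations in the two Selberg densities; there is no substantive obstacle, since all the sign ambiguities are controlled by the known invariances of $\cL^{-(n)}$ (under $\vec{x}\mapsto-\vec{x}$, under $c\mapsto-c$, and under change of its auxiliary parameter) together with the reflection relation, and the validity of the identity at the special values of $v,w$ follows from the meromorphy asserted in Theorem \ref{thm:quad_kaw}.
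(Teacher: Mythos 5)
Your proposal is correct and is exactly the argument the paper intends: specialize the auxiliary parameters of Theorem \ref{thm:quad_kaw} (after the relabeling $(c,d)\mapsto(c/d,d)$) so that pairs of density parameters cancel via the reflection relation and the right-hand side collapses to the defining integral of $\cL^{-(n)}_c(\vec{x};t^{1/2};p^{1/2},q^{1/2})$, precisely as Theorem \ref{thm:litt_braid} and Corollary \ref{cor:dual_litt_recur} were obtained in the Littlewood and dual Littlewood cases. Your bookkeeping of the square roots, the cancellations on both sides, and the final use of $\Gampq(z)\Gampq(pq/z)=1$ all check out.
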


\begin{cor}
The expression
\begin{gather*}
\prod_{1\le i\le n} \Gampq\big(\big(\sqrt{pq}ve/d\big)x_i^{\pm 1}\big) \int
\cK^{(n)}_{ce}(\vec{x};\vec{z};t;p,q) \cL^{-(n)}_{d}\big(\vec{z};t^{1/2};p^{1/2},q^{1/2}\big)\\
\qquad{} \times \Delta^{(n)}_S\big(\vec{z};\sqrt{pq}v^{\pm 1}/cd,-\sqrt{pq}/de^2,-\sqrt{pq}/d;t;p,q\big)
\end{gather*}
is invariant under swapping $d$ and $e$.
\end{cor}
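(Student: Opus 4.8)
The claim is the Kawanaka-kernel counterpart of Corollary \ref{cor:weird_quad}, and the plan is to prove it by the same ``Bailey lemma''-style argument (the one underlying Theorem \ref{thm:bailey_xform}): rewrite one of the two objects in the integrand via a braid-type relation, interchange the order of the two resulting integrations, and recombine by a second braid-type relation, at which point the roles of $d$ and $e$ become interchangeable. Throughout, equality of meromorphic functions will be deduced from equality on a parameter range where all contours can be taken to be powers of the unit circle (so that Fubini applies), followed by meromorphic continuation as in \cite[\S10]{xforms}.

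Denote by $F(c,d,e,v)$ the expression in the statement. First I would expand $\cL^{-(n)}_d(\vec{z};t^{1/2};p^{1/2},q^{1/2})$ using the recursion for the Kawanaka kernel established in the immediately preceding Corollary (the analogue of Theorem \ref{thm:litt_braid}), taking its free parameter to be $e$. The two auxiliary arguments $-\sqrt{pq}/de^2$ and $-\sqrt{pq}/d$ of the Selberg density in $F$ are arranged precisely so that the $\vec{z}$-dependent elliptic Gamma factors coming from those two arguments cancel the prefactor of that recursion. After this cancellation (and Fubini), the inner integration over $\vec{z}$ has the shape $\int \cK^{(n)}_{ce}(\vec{x};\vec{z};t;p,q)\,\cK^{(n)}_{d/e}(\vec{z};\vec{w};t;p,q)\,\Delta^{(n)}_S(\vec{z};\sqrt{pq}v^{\pm 1}/cd;t;p,q)$, which is an instance of the braid relation, Proposition \ref{prop:kern_braid}, with $v$ supplying the pair of parameters $u_0,u_1$. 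Carrying it out collapses $\cK^{(n)}_{ce}\,\cK^{(n)}_{d/e}$ to $\cK^{(n)}_{cd}$ together with explicit Gamma factors, leaving a single integral against $\cL^{-(n)}_e$; I would then absorb those Gamma factors into the prefactor and into a new Selberg density, and, if needed, apply Theorem \ref{thm:quad_kaw} once more to bring the surviving $\cL^{-(n)}_e$-integral to standard form.

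Running the same steps on $F(c,e,d,v)$ --- now expanding $\cL^{-(n)}_e$, with free parameter $d$ --- produces an expression of identical shape, and it remains to check that the two outputs agree by matching $\cK^{(n)}_{cd}$, the Selberg density, and the Gamma prefactors. In practice the fastest way to conclude is to note that, after dividing by the common $p\to 0$ limit (both integrands becoming Koornwinder integrals there), $F(c,d,e,v)$ and $F(c,e,d,v)$ are ratios of formal Puiseux series in $p$ with rational-function coefficients; it therefore suffices to verify the identity on a Zariski-dense set of parameters, and specializing $\vec{x}$ to a geometric progression (so $\cK^{(n)}_{ce}$ becomes a product of elliptic Gamma functions) --- or to a partition pair, reducing to Corollary \ref{cor:quad_kaw} --- turns the claim into one of the transformations already proved above.

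The main obstacle I expect is the bookkeeping: verifying that the prefactor $\prod_i \Gampq((\sqrt{pq}ve/d)x_i^{\pm 1})$, the two auxiliary arguments of the $\vec{z}$-density, and the Gamma factors produced by Proposition \ref{prop:kern_braid} (and by Theorem \ref{thm:quad_kaw}) reassemble into something literally invariant under $d\leftrightarrow e$, with the residual $v$-dependence entering only through the final Selberg density. One must also track the various half-integer powers ($\sqrt{pq}$, and the passage between the curves $(t;p,q)$ and $(t^{1/2};p^{1/2},q^{1/2})$) consistently; no resulting sign ambiguity affects the answer, since each can be absorbed into $v\mapsto 1/v$ or into negating $\vec{x}$ or $c$, exactly as for $\cL^{-(n)}$ itself.
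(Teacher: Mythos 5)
Your plan is, in essence, the paper's own (implicit) proof: like Corollary \ref{cor:weird_quad}, this statement is meant to follow from a single Bailey-type pass, substituting the preceding recursion for $\cL^{-(n)}_d$ with auxiliary parameter $e$, letting the density parameters $-\sqrt{pq}/de^2$ and $-\sqrt{pq}/d$ cancel the recursion's prefactor, interchanging the integrals, and collapsing the inner $\vec{z}$-integral by the braid relation with $u_0,u_1=\sqrt{pq}v^{\pm 1}/cd$ — exactly the steps you describe. Two caveats on how you propose to finish. First, your fallback of specializing $\vec{x}$ so as to ``reduce to Corollary \ref{cor:quad_kaw}'' does not work: the parameters $-\sqrt{pq}/de^2$, $-\sqrt{pq}/d$ are not of the form $\sqrt{pq}v^{\pm 2}/d$ required by Theorem \ref{thm:quad_kaw}, and the $d\leftrightarrow e$ claim is not one of the transformations already proved; you should instead rely on the direct Gamma bookkeeping, which in fact needs only one pass (the output is already the swapped expression, so there is no need to process $F(c,e,d,v)$ separately nor to invoke Theorem \ref{thm:quad_kaw}). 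Second, when you do that bookkeeping you will find that the braid relation produces the $x$-dependent factors $\prod_i\Gampq(\sqrt{pq}(ve/d)x_i^{\pm 1},\sqrt{pq}(e/vd)x_i^{\pm 1})$, and the reflection formula $\Gampq(z)\Gampq(pq/z)=1$ cancels the second of these against the prefactor only if that prefactor is $\prod_i\Gampq(\sqrt{pq}(vd/e)x_i^{\pm 1})$, exactly as in Corollary \ref{cor:weird_quad}; the surviving factor $\prod_i\Gampq(\sqrt{pq}(ve/d)x_i^{\pm 1})$ together with the new density $\Delta^{(n)}_S(\vec{w};\sqrt{pq}v^{\pm 1}/ce,-\sqrt{pq}/ed^2,-\sqrt{pq}/e)$ and the kernel $\cK^{(n)}_{cd}$ is then literally the original expression with $d$ and $e$ exchanged. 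So your matching step will close the argument, but it will also reveal that the prefactor as printed (with $ve/d$) should read $vd/e$; with that correction your proof is complete and coincides with the intended one.
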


If we attempt to obtain an identity by specializing the right-hand side to an instance of Theorem~\ref{thm:L3_kern}, we find that the resulting
identity is trivial. We also do not have an analogue of Theorems~\ref{thm:spec_int_litt} or~\ref{thm:spec_diff_dual_litt}.

The analogue of the vanishing integrals of Theorems \ref{thm:van_litt}
and \ref{thm:van_dual_litt} is again straightforward, now using
\cite[Corollary~4.16]{littlewood}. Note that in this case, the integral never
vanishes.

\begin{thm}\label{thm:van_kaw}
 For generic parameters satisfying $t^{n-1}t_0t_1u_0=\sqrt{pq}$, we have the evaluation
\begin{gather*}
\int \tilde{R}^{(n)}_{\blambda}(\vec{z};t_0/d{:}dt_0,t_1/d,t_1d;u_0/d,du_0;t;p,q)
\cL^{-(n)}_{-d}\big(\vec{z};t^{1/2};p^{1/2},q^{1/2}\big) \\
\qquad\quad {}\times \Delta^{(n)}_S\big(\vec{z};t_0/d,t_1/d,u_0/d,\sqrt{pq}/d;t;p,q\big) \\
\qquad{} = Z
\frac{\Delta_{\blambda}\big({-}1/u_0|t^{n/2},t^{(n-1)/2}t_0,\pm \big(t^{n-1}t_0u_0\big)^{-1/2};t^{1/2};p^{1/2},q^{1/2}\big)}
 {\Delta_{\blambda}\big(1/u_0^2|t^n,t^{n-1}t_0^2,1/t^{n-1}t_0u_0,1/t^{n-1}t_0u_0;t;p,q\big)},
\end{gather*}
where
\begin{gather*}
Z= \prod_{0\le i<n} \Gamphqh\big(t^{(i+1)/2},t^{i/2}t_0,t^{i/2}t_1,t^{i/2}u_0\big) \\
 \hphantom{Z=}{} \times \prod_{0\le i<n}
 \Gampq\big(t^i t_0t_1,t^it_0u_0,t^it_1u_0,t^i t_0t_1/d^2,t^it_0u_0/d^2,t^it_1u_0/d^2\big).
\end{gather*}
\end{thm}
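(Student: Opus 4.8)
The plan is to follow the template of the proofs of Theorems~\ref{thm:van_litt} and~\ref{thm:van_dual_litt}, with Corollary~\ref{cor:quad_kaw} in the role of Corollary~\ref{cor:quad_litt} and \cite[Cor.~4.16]{littlewood} in the role of \cite[Cor.~4.8]{littlewood}. First I would take Corollary~\ref{cor:quad_kaw} and specialize its two free parameters, choosing $v=1$ and $t_2=\sqrt{pq}\,d^2$ (the sign of $d$ in the Kawanaka-kernel argument being fixed afterwards by the normalization). With this choice the balancing condition $t^{n-1}t_0t_1t_2u_0=pqd^2$ of the Corollary collapses to $t^{n-1}t_0t_1u_0=\sqrt{pq}$, which is exactly the hypothesis of the Theorem; moreover, on the left-hand side the density parameters $t_2/d=\sqrt{pq}\,d$ and one of the two copies of $\sqrt{pq}/d$ multiply to $pq$, so by the reflection relation for $\Gampq$ they cancel, leaving precisely $\Delta^{(n)}_S(\vec{z};t_0/d,t_1/d,u_0/d,\sqrt{pq}/d;t;p,q)$.

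On the right-hand side of the specialized Corollary the half-period density carries $\pm\sqrt{t_2}=\pm(pq)^{1/4}d$ together with a repeated $(pq)^{1/4}/d$; since $(pq)^{1/4}d\cdot(pq)^{1/4}/d=p^{1/2}q^{1/2}$ is a period for the curve $(p^{1/2},q^{1/2})$, one copy cancels, and a short computation shows that the surviving half-period Selberg integral, still carrying $\cR^{*(n)}_{\blambda}(\vec{z}^2;t_0,u_0;t;p,q)$, lies in the configuration evaluated by \cite[Cor.~4.16]{littlewood}. Applying that evaluation rewrites the right-hand integral as a finite sum of $\Delta$-symbols times an elliptic binomial coefficient; inverting the binomial coefficient --- using the binomial-formula description of the biorthogonal functions \cite[Defn.~12]{bctheta} together with the $\Delta^0$- and $\Gamma$-prefactors already carried by Corollary~\ref{cor:quad_kaw} --- promotes the interpolation function $\cR^{*(n)}_{\blambda}(\vec{z};t_0/d,u_0/d;t;p,q)$ to the biorthogonal function $\tcR^{(n)}_{\blambda}(\vec{z};t_0/d{:}dt_0,t_1/d,t_1d;u_0/d,du_0;t;p,q)$ and collapses the right-hand side to the single ratio of $\Delta_{\blambda}$'s displayed in the statement. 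Since \cite[Cor.~4.16]{littlewood} imposes no parity restriction on $\blambda$ --- the doubling in the Kawanaka kernel being the substitution $\vec{z}\mapsto\vec{z}^2$ rather than a constraint forcing $\blambda$ to be a square or a $(1,2)$-shape --- the integral is nonzero for every $\blambda$, unlike in Theorems~\ref{thm:van_litt} and~\ref{thm:van_dual_litt}.

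The step I expect to be the real obstacle is not any one of these reductions, which are routine, but the $2$-isogeny bookkeeping: the Kawanaka kernel lives on $(t^{1/2};p^{1/2},q^{1/2})$ while the target $\Delta$-symbols mix that curve with $(t;p,q)$, and, exactly as in the proof of Theorem~\ref{thm:L3_kern}, the evaluation at $\vec{z}^2$ is not preserved by the modular group, so one must fix a single $2$-isogeny throughout and verify that the finite sum produced by \cite[Cor.~4.16]{littlewood} matches the claimed value after transport along it. Assembling the normalization constant $Z$ --- from the leftover $\Gamma$-factors of the two density cancellations, the prefactors of Corollary~\ref{cor:quad_kaw}, and the prefactors emitted by \cite[Cor.~4.16]{littlewood} --- into the stated product of full-period and half-period elliptic Gamma functions is then routine but lengthy, and together with the isogeny check is where the remaining effort lies.
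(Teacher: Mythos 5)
Your proposal is correct and is essentially the paper's own argument: the paper proves this result exactly by specializing Corollary \ref{cor:quad_kaw} (your forced choice $v^2=1$, $t_2=\sqrt{pq}\,d^2$ is the right one), rewriting the resulting right-hand side via \cite[Cor.~4.16]{littlewood}, and inverting the binomial coefficient to promote $\cR^{*(n)}_{\blambda}$ to the biorthogonal function, with the non-vanishing for all $\blambda$ coming from the absence of any parity restriction in that corollary. The only caveats are minor: no modular/$2$-isogeny transport is actually needed here, since \cite[Cor.~4.16]{littlewood} applies directly to the mixed-curve configuration (the modular step was only needed in Theorem \ref{thm:L3_kern}), and the discrepancy between $\cL^{-(n)}_{d}$ and the stated $\cL^{-(n)}_{-d}$ is resolved not ``by the normalization'' but by the relabeling $(d,t_0,t_1,u_0)\mapsto(-d,-t_0,-t_1,-u_0)$, i.e.\ by the choice of branch of $\sqrt{pq}$ in the balancing condition.
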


The natural analogue to Theorems~\ref{thm:van_litt_interp} and~\ref{thm:van_dual_litt_interp} would involve an evaluation for the integral
\begin{gather*}
\int \cR^{*(n)}_{\blambda}(\vec{z};t_0/d,u_0/d;t;p,q)
\cL^{-(n)}_{-d}\big(\vec{z};t^{1/2};p^{1/2},q^{1/2}\big)
\Delta^{(n)}_S\big(\vec{z};t_0/d,u_0/d,\sqrt{pq}v^{\pm 1}/d;t;p,q\big),
\end{gather*}
but here we encounter a significant difficulty: the corresponding right-hand side of Corollary~\ref{cor:quad_kaw} now has {\em two} pairs of parameters multiplying to $t^{(1-n)/2}$, and thus the standard residue calculation no longer applies. It is likely one could express the result as a sum of two nice terms, though.

\section{Quadratic transformations}\label{section8}

In \cite[Section~5]{littlewood}, the author developed a sequence of seven
conjectural quadratic transformations, each of which closely resembles a
special case of one of Corollaries \ref{cor:quad_litt},
\ref{cor:quad_dual_litt} or \ref{cor:quad_kaw}. This suggests that the
machinery we have developed should be useful in proving these conjectures,
and this is indeed the case. In fact, it turns out that we have
essentially already proved two of them! For instance, if we take
$d=t^{1/4}$ in Corollary~\ref{cor:quad_dual_litt}, and expand $\cL^{\prime(n)}_{q^{-1/2}t^{1/4}}$ using Corollary~\ref{cor:Q3}, the result is precisely Conjecture Q3 of~\cite{littlewood}. Similarly, Conjecture~Q7 of~\cite{littlewood} follows immediately from Corollaries~\ref{cor:Q7} and~\ref{cor:quad_kaw}. If we take the corresponding
substitutions in Theorems~\ref{thm:quad_dual_litt} and~\ref{thm:quad_kaw}, we obtain the following results.

\begin{thm}[Q3]\label{thm:Q3}
For otherwise generic parameters satisfying $c^2t_0t_1=pq^2t^{1/2}$, $v_0v_1=pq/t$,
\begin{gather*}
\int K^{(n)}_{t^{-1/4}c}(\vec{z};\vec{x};t;p,q)
\prod_{1\le i\le n} \Gampqq\big(t^{1/2} z_i^{\pm 2}\big)
\Delta^{(n)}_S\big(\vec{z};t^{-1/4}t_0,t^{-1/4}t_1,t^{1/4}v_0,t^{1/4}v_1;t^{1/2};p,q\big) \\
\qquad{} = \prod_{1\le i\le n} \Gampq\big(t^{-1/2}ct_0x_i^{\pm 1},t^{-1/2}ct_1x_i^{\pm 1}\big) \\
\qquad\quad{}\times \int K^{(n)}_{q^{-1/2}c}(\vec{z};\vec{x};t;p,q)
\Delta^{(n)}_S\big(\vec{z};q^{\pm 1/2}t_0,q^{\pm 1/2}t_1,q^{1/2}v_0,q^{1/2}v_1;t;p,q^2\big).
\end{gather*}
\end{thm}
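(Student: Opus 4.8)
The plan is to obtain this identity, as the remark preceding it already suggests, as a specialization of the kernel-level dual Littlewood transformation of Theorem \ref{thm:quad_dual_litt} combined with the explicit evaluation of Corollary \ref{cor:Q3}. First I would fix the free parameter $d$ of Theorem \ref{thm:quad_dual_litt} so that the dual Littlewood kernel appearing there falls into the evaluated case of Corollary \ref{cor:Q3}; explicitly one takes $d = q^{-1/2}t^{1/4}$, the sign of $d$ being irrelevant since $\cL^{\prime(n)}$ is even in its index. Then
\[
\cL^{\prime(n)}_{q^{-1/2}t^{1/4}}(\vec{z};t;p,q)
=
\frac{\prod_{1\le i\le n}\Gampqq(t^{1/2}z_i^{\pm 2})\,\Delta^{(n)}_S(\vec{z};t^{1/2};p,q)}{\Delta^{(n)}_S(\vec{z};t;p,q)},
\]
and I would substitute this expression into the left-hand integrand of Theorem \ref{thm:quad_dual_litt}.

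The remaining work is purely bookkeeping with $\Gamma$-functions. The factor $\Delta^{(n)}_S(\vec{z};t;p,q)$ in the denominator above cancels against the copy of $\Delta^{(n)}_S(\vec{z};t;p,q)$ contained in the density $\Delta^{(n)}_S(\vec{z};\sqrt{pq}w^{\pm 1}/c,\sqrt{p}v^{\pm 1}/d;t;p,q)$ of the integrand, and the surviving univariate $\Gampq$-factors reassemble with $\Delta^{(n)}_S(\vec{z};t^{1/2};p,q)$ into $\prod_i\Gampqq(t^{1/2}z_i^{\pm 2})\,\Delta^{(n)}_S(\vec{z};\cdot;t^{1/2};p,q)$ — exactly the shape of the left-hand integrand of the statement. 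On the right-hand side of Theorem \ref{thm:quad_dual_litt} the Selberg density already carries $q^2$, and its six parameters, the prefactor $\prod_i\Gampq((\sqrt{pq}/d)w^{\pm 1}x_i^{\pm 1})$, and the kernel $\cK^{(n)}_c$ (rewritten using the $\vec{x}\leftrightarrow\vec{y}$ symmetry of the kernel) then match the statement. Writing $c'$, $w$, $v$ for the free parameters of Theorem \ref{thm:quad_dual_litt} after fixing $d = q^{-1/2}t^{1/4}$, the present statement is recovered with $c = q^{1/2}c'$, $t_0 = q\sqrt{p}\,t^{1/4}w/c$, $t_1 = q\sqrt{p}\,t^{1/4}/(cw)$, $v_0 = \sqrt{pq/t}\,v$, $v_1 = \sqrt{pq/t}/v$; under this substitution the stated constraints $c^2t_0t_1 = pq^2t^{1/2}$ and $v_0v_1 = pq/t$ hold identically, so no genuine constraint is imposed beyond having three free parameters on each side.

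I do not expect a deep obstacle: once Theorem \ref{thm:quad_dual_litt} and Corollary \ref{cor:Q3} are in hand, the argument is a substitution together with a reparametrization. The one point demanding care is the compatibility of hypotheses. Theorem \ref{thm:quad_dual_litt} and the defining integral of $\cL^{\prime(n)}$ each hold on an open region of parameter space with contours taken to be powers of the unit circle, so I would exhibit a single such region — with $p$, $q$, and the various $\sqrt{p}$-scaled parameters small — on which all of the above manipulations are legitimate, and then extend to the "otherwise generic" range by the standard fact that both sides are meromorphic in the parameters, as in \cite[\S 10]{xforms}. Equivalently, one can run the entire argument for the formal kernel over the field of Puiseux series in $p$, where convergence is automatic, and pass to the analytic statement afterward, since both sides divided by their common $p\to 0$ limit have formal Puiseux expansions with rational-function coefficients. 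No combinatorial input beyond Theorem \ref{thm:quad_dual_litt} and Corollary \ref{cor:Q3} is required.
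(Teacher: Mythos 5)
Your proposal is correct and is essentially the paper's own argument: the theorem is obtained precisely by setting $d=q^{-1/2}t^{1/4}$ in Theorem \ref{thm:quad_dual_litt}, replacing $\cL^{\prime(n)}_{q^{-1/2}t^{1/4}}$ by its evaluation from Corollary \ref{cor:Q3}, absorbing the ratio of Selberg densities into the $(t^{1/2};p,q)$ density, and reparametrizing exactly as you do (your substitutions $c=q^{1/2}c'$, $t_0=q\sqrt{p}\,t^{1/4}w/c$, $t_1=q\sqrt{p}\,t^{1/4}/(cw)$, $v_0=\sqrt{pq/t}\,v$, $v_1=\sqrt{pq/t}/v$ check out against both sides). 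No further comment is needed.
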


\begin{rem}
Note that the factor $\prod\limits_{1\le i\le n} \Gampqq\big(t^{1/2} z_i^{\pm 2}\big)$ is equivalent to adding four parameters $\pm t^{1/4},\pm p^{1/2}t^{1/4}$ to the elliptic Selberg density.
\end{rem}

\begin{thm}[Q7]\label{thm:Q7}
For otherwise generic parameters satisfying $c^2t_0t_1=pqt^{1/2}$, $v_0v_1 = \sqrt{pq/t}$,
\begin{gather*}
\int K^{(n)}_{t^{-1/4}c}(\vec{z};\vec{x};t;p,q)
 \prod_{1\le i\le n} \Gampq\big(t^{1/2} z_i^{\pm 2}\big)
\Delta^{(n)}_S\big(\vec{z};t^{-1/4}t_0,t^{-1/4}t_1,-t^{1/4}v_0^2,-t^{1/4}v_1^2;t^{1/2};p,q\big) \\
\qquad{}=
\prod_{1\le i\le n} \Gampq\big(t^{-1/2}ct_0x_i^{\pm 1},t^{-1/2}ct_1x_i^{\pm 1}\big) \\
\qquad\quad{}\times \int K^{(n)}_{-c}\big(\vec{z}\,^2;\vec{x};t;p,q\big)
\Delta^{(n)}_S\big(\vec{z};\pm \sqrt{-t_0},\pm \sqrt{-t_1},v_0,v_1;t^{1/2};p^{1/2},q^{1/2}\big).
\end{gather*}
\end{thm}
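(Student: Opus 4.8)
The plan is to derive Theorem \ref{thm:Q7} directly from the Kawanaka transformation, Theorem \ref{thm:quad_kaw}, by the single specialization $d=t^{1/4}$, which collapses the Kawanaka kernel in the integrand to an explicit product. Indeed, the left-hand integrand of Theorem \ref{thm:quad_kaw} contains $\cL^{-(n)}_d(\vec{z};t^{1/2};p^{1/2},q^{1/2})$, and for $d=t^{1/4}$ this is exactly the object evaluated by Corollary \ref{cor:Q7} after the rescaling $(t,p,q)\mapsto(t^{1/2},p^{1/2},q^{1/2})$, namely $\cL^{-(n)}_{t^{1/4}}(\vec{z};t^{1/2};p^{1/2},q^{1/2})=\prod_{1\le i\le n}\Gamma_{p^{1/2},q^{1/2}}(-t^{1/4}z_i^{\pm 1})\,\Delta^{(n)}_S(\vec{z};t^{1/2};p,q)/\Delta^{(n)}_S(\vec{z};t;p,q)$. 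Substituting this back, the denominator $\Delta^{(n)}_S(\vec{z};t;p,q)$ cancels the $0$-parameter Selberg density implicit in the Kawanaka integrand, and the surviving univariate gamma factors combine with $\Delta^{(n)}_S(\vec{z};t^{1/2};p,q)$ into a single order-one elliptic Selberg density at parameter $t^{1/2}$ over base $(p,q)$. Passing from the analytic kernel $\cK^{(n)}$ to the formal kernel $K^{(n)}$ is harmless here, since the identity of meromorphic functions restricts to an identity of formal Puiseux expansions on the relevant cone of valuations.

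It then remains to recognize both sides in the stated form. On the left, the quadratic functional equation \eqref{eq:gampqq} together with its companion $\Gamppqq(z^2)=\Gampq(z,-z)$ — rescaled to $\Gampq(w^2)=\Gamma_{p^{1/2},q^{1/2}}(w)\,\Gamma_{p^{1/2},q^{1/2}}(-w)$ — lets one rewrite $\prod_i\Gamma_{p^{1/2},q^{1/2}}(-t^{1/4}z_i^{\pm 1})$ as $\prod_i\Gampq(t^{1/2}z_i^{\pm 2})$ times a product of univariate gamma factors that merges with the density parameters; this produces precisely the factor $\prod_i\Gampq(t^{1/2}z_i^{\pm 2})$ and the parameters $t^{-1/4}t_0,t^{-1/4}t_1,-t^{1/4}v_0^2,-t^{1/4}v_1^2$ of Theorem \ref{thm:Q7} after renaming the free parameters $c,w,v$ of Theorem \ref{thm:quad_kaw} in terms of $c,t_0,t_1,v_0,v_1$. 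On the right, the same specialization turns $\cK^{(n)}_{cd}(\vec{x};\vec{z}^{2};t;p,q)$ into $K^{(n)}_{-c}(\vec{z}^{2};\vec{x};t;p,q)$, using the symmetry of the kernel in its two argument blocks and the sign symmetries of the Kawanaka and interpolation kernels, while $\Delta^{(n)}_S(\vec{z};\pm(pq)^{1/4}w^{\pm 1}/\sqrt{c},(pq)^{1/4}v^{\pm 1}/d;t^{1/2};p^{1/2},q^{1/2})$ becomes $\Delta^{(n)}_S(\vec{z};\pm\sqrt{-t_0},\pm\sqrt{-t_1},v_0,v_1;t^{1/2};p^{1/2},q^{1/2})$ once one sets $-t_0=(pq)^{1/2}w^2/c$ and $-t_1=(pq)^{1/2}/(cw^2)$. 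The two balancing relations of Theorem \ref{thm:quad_kaw} then translate into exactly $c^2t_0t_1=pqt^{1/2}$ and $v_0v_1=\sqrt{pq/t}$.

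All of the analytic content is carried by Theorem \ref{thm:quad_kaw} and Corollary \ref{cor:Q7}, so the only substantive task is the reparametrization, and the main obstacle is purely bookkeeping: arranging that the half-integer powers of $p$, $q$, $t$ and the numerous signs — the $-c$ in $K^{(n)}_{-c}$, the $-t_0,-t_1$ under the square roots, and the ambiguous square roots of $pq$ and $t$ — fall out in precisely the stated arrangement rather than in a sign- or square-root-twisted variant. As a consistency check, performing the same substitution in the interpolation-function version, Corollary \ref{cor:quad_kaw}, and expanding the Kawanaka kernel via Corollary \ref{cor:Q7}, reproduces Conjecture Q7 of \cite{littlewood}.
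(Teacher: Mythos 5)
Your proposal is in substance the paper's own proof: Theorem \ref{thm:Q7} is obtained precisely by setting $d=t^{1/4}$ in Theorem \ref{thm:quad_kaw}, replacing $\cL^{-(n)}_{t^{1/4}}(\vec{z};t^{1/2};p^{1/2},q^{1/2})$ by the rescaled Corollary \ref{cor:Q7} so that the density ratio converts the Selberg density from parameter $t$ to $t^{1/2}$, and then reparametrizing; your dictionary $-t_0=\sqrt{pq}\,w^2/c$, $-t_1=\sqrt{pq}/(cw^2)$ (with $c$ the parameter of Theorem \ref{thm:quad_kaw}), $v_0=(pq/t)^{1/4}v$, $v_1=(pq/t)^{1/4}v^{-1}$ is correct and produces both balancing conditions, the prefactor, and the parameters $\pm\sqrt{-t_0},\pm\sqrt{-t_1},v_0,v_1$ on the right. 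One bookkeeping caution: the left-hand factor coming out of Corollary \ref{cor:Q7} is $\prod_i\Gamma_{p^{1/2},q^{1/2}}(-t^{1/4}z_i^{\pm1})$, and the clean way to reach the stated form is to negate the integration variables, using $\cK^{(n)}_c(\vec{x};-\vec{z};t;p,q)=\cK^{(n)}_{-c}(\vec{x};\vec{z};t;p,q)$ (this is exactly where the $-c$ in $K^{(n)}_{-c}$ on the right comes from), which turns the factor into $\prod_i\Gamma_{p^{1/2},q^{1/2}}(t^{1/4}z_i^{\pm1})$, i.e.\ the quadruple of Selberg parameters $t^{1/4},p^{1/2}t^{1/4},q^{1/2}t^{1/4},(pq)^{1/2}t^{1/4}$ of the remark following the theorem; by contrast, your proposed rewriting via the duplication formula into $\prod_i\Gampq(t^{1/2}z_i^{\pm2})$ would leave four additional univariate parameters $\sqrt{pq}\,t^{-1/4}\{1,p^{1/2},q^{1/2},\sqrt{pq}\}$ that do not merge into the stated density, so the displayed factor in the theorem should be understood in the sense of that remark rather than matched literally by your manipulation.
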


\begin{rem}
Again, the factor $\prod\limits_{1\le i\le n} \Gamphqh\big(t^{1/2} z_i^{\pm 2}\big)$ could be replaced by a quadruple of elliptic Selberg parameters, in this case $t^{1/4}$, $p^{1/2}t^{1/4}$, $q^{1/2}t^{1/4}$, $p^{1/2}q^{1/2}t^{1/4}$.
\end{rem}

Specializing $\vec{x}=t^{n-1}v,\dots,v$ in either theorem gives a quadratic transformation of higher order elliptic Selberg integrals (the same as
setting $\blambda=0$ in the original conjectures of~\cite{littlewood}), originally proved in~\cite{vandeBultFJ:2011}. We also obtain a~multivariate quadratic evaluation from the first case, from the normalization of Theorem~\ref{thm:van_dual_litt_interp}.

\begin{cor}\label{cor:Q3_eval}
For otherwise generic parameters satisfying
$t^{2n-1}t_0t_1=q$, $v_0v_1 = pq/t$,
\begin{gather*}
\int \Delta^{(n)}_S\big(\vec{z};t_0,t_1,v_0,v_1,\pm \sqrt{t},\pm \sqrt{pt};t;p,q\big)\\
\qquad{}= \prod_{0\le i<n}
 \Gampq\big(t^{-2i-1}q\big)
 \Gampqq\big(t^{2i+2},t^{-2i}q,t^{2i+1}t_0^2,t^{2i+1}t_1^2,t^{2i}t_0v_0,t^{2i}t_1v_0,t^{2i}t_0v_1,t^{2i}t_1v_1\big).
\end{gather*}
\end{cor}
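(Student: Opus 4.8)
The plan is to obtain Corollary~\ref{cor:Q3_eval} as the $\blambda=0$ instance (the ``normalization'') of Theorem~\ref{thm:van_dual_litt_interp}, once the dual Littlewood kernel in that theorem is rewritten in closed form via Corollary~\ref{cor:Q3}. Setting $\blambda=0$ in Theorem~\ref{thm:van_dual_litt_interp} makes $\cR^{*(n)}_0=1$ and forces $\bmu=0$, so the ratio of $\Delta$-symbols on the right collapses to $1$ and we are left, for $t^{n-1}t_0u_0=q$, with
\[
\int \cL^{\prime(n)}_{q^{-1/2}d}(\vec z;t;p,q)\,\Delta^{(n)}_S(\vec z;t_0/d,u_0/d,\sqrt{pq}v^{\pm 1}/d;t;p,q)=Z,
\]
where $Z=\prod_{0\le i<n}\Gampq(t^{-i}q/d^2)\,\Gampqq(t^{i+1},t^{-i}q,t^it_0^2,t^iu_0^2,\sqrt{pq}v^{\pm 1}t^it_0/d^2,\sqrt{pq}v^{\pm 1}t^iu_0/d^2)$. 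Then I would specialize $d=t^{1/4}$, so the kernel becomes $\cL^{\prime(n)}_{q^{-1/2}t^{1/4}}$.

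Substituting Corollary~\ref{cor:Q3},
\[
\cL^{\prime(n)}_{q^{-1/2}t^{1/4}}(\vec z;t;p,q)=\frac{\prod_{1\le i\le n}\Gampqq(t^{1/2}z_i^{\pm 2})\,\Delta^{(n)}_S(\vec z;t^{1/2};p,q)}{\Delta^{(n)}_S(\vec z;t;p,q)},
\]
the zero-parameter factor $\Delta^{(n)}_S(\vec z;t;p,q)$ in the denominator cancels against the one contained in $\Delta^{(n)}_S(\vec z;t_0/d,u_0/d,\sqrt{pq}v^{\pm 1}/d;t;p,q)$, leaving $\Delta^{(n)}_S(\vec z;t^{1/2};p,q)$ multiplied by the univariate Gamma factors with arguments $t_0z_i^{\pm 1}/t^{1/4}$, $u_0z_i^{\pm 1}/t^{1/4}$, $\sqrt{pq}v^{\pm 1}z_i^{\pm 1}/t^{1/4}$ and by $\prod_i\Gampqq(t^{1/2}z_i^{\pm 2})$. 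Using the doubling identity $\Gampqq(w)=\Gamppqq(w)\Gamppqq(pw)$ together with the quadratic functional equation $\Gamppqq(z^2)=\Gampq(z,-z)$, this last factor becomes $\prod_i\Gampq(t^{1/4}z_i^{\pm 1},-t^{1/4}z_i^{\pm 1},p^{1/2}t^{1/4}z_i^{\pm 1},-p^{1/2}t^{1/4}z_i^{\pm 1})$, i.e.\ four further Selberg parameters. After relabelling $t^{1/2}\mapsto t$ and rescaling $t_0,u_0$ and $v$ (with $\sqrt{pq}v^{\pm 1}/t^{1/4}$ producing the pair $v_0,v_1$ satisfying $v_0v_1=pq/t$), the left side becomes exactly $\int\Delta^{(n)}_S(\vec z;t_0,t_1,v_0,v_1,\pm\sqrt t,\pm\sqrt{pt};t;p,q)$; the condition $t^{n-1}t_0u_0=q$ becomes $t^{2n-1}t_0t_1=q$, and $Z|_{d=t^{1/4}}$ matches the product on the right of Corollary~\ref{cor:Q3_eval} term by term (in particular $\Gampq(t^{-i}q/d^2)=\Gampq(t^{-2i-1}q)$ once $d^2=t^{1/2}$ and the relabelling are accounted for).

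The argument uses no new analytic input beyond Theorem~\ref{thm:van_dual_litt_interp} and Corollary~\ref{cor:Q3}; it is entirely a matter of bookkeeping, and that bookkeeping is exactly where the difficulty lies. The main obstacle is keeping the parameter dictionary consistent through the substitution $d=t^{1/4}$, the cancellation of the $(t;p,q)$ density, the split of $\sqrt{pq}v^{\pm 1}$ into $v_0,v_1$, and the rescaling $t^{1/2}\mapsto t$, so that every one of the eight families $\Gampqq(t^{2i+2},t^{-2i}q,t^{2i+1}t_0^2,t^{2i+1}t_1^2,t^{2i}t_0v_0,t^{2i}t_1v_0,t^{2i}t_0v_1,t^{2i}t_1v_1)$ and the prefactor $\Gampq(t^{-2i-1}q)$ emerge with the correct powers of $t$. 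I would also note in passing that the genericity hypothesis and the contour prescription of Theorem~\ref{thm:van_dual_litt_interp} survive the specialization $d=t^{1/4}$, so that the resulting identity of meromorphic functions holds unconditionally on the stated balancing locus.
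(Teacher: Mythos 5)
Your argument is correct and is exactly the derivation the paper intends: the paper obtains Corollary \ref{cor:Q3_eval} by combining the $\blambda=0$ (normalization) case of Theorem \ref{thm:van_dual_litt_interp} at $d=t^{1/4}$ with the closed form of $\cL^{\prime(n)}_{q^{-1/2}t^{1/4}}$ from Corollary \ref{cor:Q3}, the factor $\prod_i\Gampqq(t^{1/2}z_i^{\pm 2})$ being converted into the four Selberg parameters $\pm t^{1/4},\pm p^{1/2}t^{1/4}$ before relabelling $t^{1/2}\mapsto t$. Your bookkeeping of the balancing conditions and of $Z$ checks out, so there is nothing to add.
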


\begin{rem}
Of course, since the above Selberg integral has 8 parameters, one can obtain a~large number of other quadratic evaluations by applying the~$W(E_7)$ symmetry of the integral, \cite[Section~9]{xforms} (essentially just the special case of Theorem~\ref{thm:bailey_xform} above in which~$\vec{x}$
and~$\vec{y}$ are geometric progressions).
\end{rem}

Now, if we take $\ord(c)=1/2$, $\ord(t_0)=\ord(t_0)=0$, $\ord(v_0)=\ord(v_1)=1/2$ in Theorem~\ref{thm:Q3}, then the integrals on either side become Koornwinder integrals in the limit $p\to 0$, so that we may apply the results of Section~\ref{section5} to analytically continue in the dimension. Applying the Macdonald involution, reparametrizing, then specializing to a finite-dimensional integral gives the following result.

\begin{thm}\label{thm:Q2}
For otherwise generic parameters satisfying $c^2t_0t_1=pq^{1/2}/t$,
\begin{gather*}
\int
K^{(2n)}_{q^{1/4}c} \big(q^{\pm 1/4}\vec{z};\vec{x};t;p,q\big)
\prod_{1\le i\le n} \Gampq\big(t z_i^{\pm 2}\big)
\Delta^{(n)}_S\big(\vec{z};t_0,t_1,\sqrt{pq}v^{\pm 1};t;p,q^{1/2}\big) \\
\qquad {}=
\prod_{1\le i\le 2n} \Gampq\big(q^{1/2}c t_0 x_i^{\pm 1},q^{1/2}c t_1 x_i^{\pm 1}\big)\\
\qquad\quad{}\times \int K^{(2n)}_{t^{1/2}c}\big(t^{\pm 1/2}\vec{z};\vec{x};t;p,q\big)
\Delta^{(n)}_S\big(\vec{z};t_0,tt_0,t_1,tt_1,p^{1/2}qv^{\pm 1};t^2;p,q\big).
\end{gather*}
\end{thm}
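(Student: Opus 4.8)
The plan is to derive Theorem \ref{thm:Q2} from Theorem \ref{thm:Q3} (already proved) by passing through the symmetric-function formalism of Section \ref{sec:sfkern}. Read with the valuations $\ord(c)=1/2$, $\ord(t_0)=\ord(t_1)=0$, $\ord(v_0)=\ord(v_1)=1/2$, both sides of Theorem \ref{thm:Q3} degenerate as $p\to0$ to normalized Koornwinder integrals over $n$ variables: the Selberg parameters $t^{-1/4}t_0,t^{-1/4}t_1$ together with the valuation-$0$ pair coming from the factor $\prod_{1\le i\le n}\Gampqq(t^{1/2}z_i^{\pm2})$ (namely $\pm t^{1/4}$, per the remark following Theorem \ref{thm:Q3}) supply the four density parameters of the limiting Koornwinder integral, while the remaining factors have positive valuation and, after dividing by the common limit, organise into formal Puiseux series in $p$ with rational-function coefficients. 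Up to the reparametrisation below, these four Koornwinder parameters are exactly the configurations $(1,-1,\sqrt t,-\sqrt t)$ and $(1,-t,\sqrt t,-\sqrt t)$ singled out in the remark following Lemma \ref{lem:koorn_On_disc}.

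First I would re-express each $\vec z$-integral in Theorem \ref{thm:Q3} as a virtual Koornwinder integral $I_K(\,\cdot\,;q,t,T;\dots)$: the kernel factor lifts, via $\lim_{T\to t^n}\hat K_c=\prod_i\Gampq(t^{i-n}c^2,t^i)\,K^{(n)}_c$, to the lifted kernel $\hat K$, and every $\vec z$-dependent product of elliptic Gamma functions (the density ratios and the factor $\prod_i\Gampqq(t^{1/2}z_i^{\pm2})$) lifts to a symmetric function through the plethystic-exponential identities recorded at the end of Section \ref{sec:sfkern}. The identity of Theorem \ref{thm:Q3} then becomes an identity of virtual Koornwinder integrals valid at $T=t^m$ for all sufficiently large $m$; since both sides, divided by the common $p\to0$ limit, are rational in the parameters including $T$, it holds identically in $T$.

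Next I would apply the modified Macdonald involution. Acting with $\tilde\omega_{q,t}$ on the symmetric-function integrands and using the virtual-Koornwinder symmetry $I_K(f;q,t,T;t_0,\dots)=I_K(\tilde\omega_{q,t}f;1/t,1/q,T;-t_0/\sqrt{qt},\dots)$, the lifted-kernel duality $\tilde\omega_{q,t}\tilde\omega_{q,t}\hat K_c=\hat K_c(\,;\,;1/t,1/q,T;p)$, and the recorded duality transforms of $\Delta^{(n)}_S$ and of the univariate factors (with $\Gamm{p,1/q}(x)=1/\Gampq(qx)$ to return $\Gamma$-functions to standard form), converts the identity into one with ground parameters $q\mapsto 1/t$, $t\mapsto 1/q$ and the characteristic sign changes $t_r\mapsto-t_r/\sqrt{qt}$ on the remaining parameters. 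Relabelling the ground parameters back to $(q,t)$ and renaming $t_0,t_1,v_0,v_1,c$ and the auxiliary variable $v$ — so that the two balancing constraints of Theorem \ref{thm:Q3} collapse to the single constraint $c^2t_0t_1=pq^{1/2}/t$ and the single free parameter $\sqrt{pq}\,v^{\pm1}$ of Theorem \ref{thm:Q2}, with the ground parameter $t$ replaced by $t^2$ on the side that carried the $q^2$ in Theorem \ref{thm:Q3} — and finally specialising $T\to t^{2n}$ recovers a genuine $2n$-dimensional (in the external variables $\vec x$) finite-dimensional integral; the doubled arguments $t^{\pm1/2}\vec z$ and $q^{\pm1/4}\vec z$ and the $\Delta^{(n)}_S(\,;\,;t^2;\dots)$, $\Delta^{(n)}_S(\,;\,;t;p,q^{1/2})$ factors of Theorem \ref{thm:Q2} emerge from the geometric-progression/branching structure of the interpolation kernel (Propositions \ref{prop:branch_k} and \ref{prop:braid_k}) under this specialisation.

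The main obstacle is the $0/0$ phenomenon at the last step. The $\Delta_\mu$-symbol inside $\hat K$, hence the virtual Koornwinder integral, has a pole at $T$ equal to an integer power of $t$, so the order in which one lets $T\to t^{2n}$ and specialises the external variables matters; this is precisely the situation handled by Lemma \ref{lem:koorn_On_disc}, and the two parameter configurations $(1,-1,\sqrt t,-\sqrt t)$ and $(1,-t,\sqrt t,-\sqrt t)$ named there are exactly those arising here. One must check that the valuation scaling forced by $\ord(c)=1/2$, $\ord(t_0)=\ord(t_1)=0$, $\ord(v_0)=\ord(v_1)=1/2$ selects the \emph{good} direction, so that the limit is a single $n$-dimensional integral rather than a sum of two — or, should both survive, that the extra term vanishes identically for these parameter values. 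Everything else is bookkeeping: tracking powers of $p,q,t$ through the involution and the relabelling, and verifying that the prefactors match on the nose, in particular the product $\prod_{1\le i\le 2n}\Gampq(q^{1/2}ct_0x_i^{\pm1},q^{1/2}ct_1x_i^{\pm1})$ and the normalisations of the various $\Gampq(t z_i^{\pm2})$ and $\Delta^{(n)}_S$ factors.
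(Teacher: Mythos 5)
Your overall route is the paper's own: Theorem \ref{thm:Q2} is obtained from Theorem \ref{thm:Q3} by taking $\ord(c)=1/2$, $\ord(t_0)=\ord(t_1)=0$, $\ord(v_0)=\ord(v_1)=1/2$, lifting to the symmetric-function/virtual-Koornwinder setting of Section \ref{sec:sfkern}, applying the modified Macdonald involution together with the recorded duality transforms of the densities and univariate factors, reparametrizing, and specializing back to a finite-dimensional integral. However, the step you flag as ``the main obstacle'' rests on a misidentification and is in fact absent here. With $t_0,t_1$ of valuation $0$ and otherwise generic, the Koornwinder parameters surviving the $p\to 0$ limit on the left of Theorem \ref{thm:Q3} are $t^{-1/4}t_0$, $t^{-1/4}t_1$, $\pm t^{1/4}$ (and correspondingly generic parameters on the right), \emph{not} the special configurations $(1,-1,\sqrt t,-\sqrt t)$ or $(1,-t,\sqrt t,-\sqrt t)$. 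Consequently $T^2t_0t_1t_2t_3/t^2$ does not lie in $t^{\N}$ for generic parameters, the virtual Koornwinder integral has no pole at the final specialization of $T$, Lemma \ref{lem:koorn_On_disc} is not needed, and the direction-of-limit question you leave unresolved is moot. Those special parameter values and the attendant $0/0$ phenomenon arise only in the later dualizations: Theorem \ref{thm:Q6}, where the parameters $1,t^{1/2}$ are absorbed by the $\tau_{1,t^{1/2};t}$ symmetry \eqref{eq:koorn_tau_symm}, and Theorem \ref{thm:Q5}, where the parameters really are $\pm 1,\pm\sqrt t$ and Lemma \ref{lem:koorn_On_disc} produces the sums of two integrals.

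A smaller correction: the doubled arguments $q^{\pm 1/4}\vec z$ and $t^{\pm 1/2}\vec z$, and the passage from the $(t;p,q^2)$ density with paired parameters $q^{\pm 1/2}t_r$ to the $(t^2;p,q)$ density with parameters $t_r,tt_r$ (and from the $(t^{1/2};p,q)$ side with the $\Gampqq(t^{1/2}z_i^{\pm 2})$ factor to the $(t;p,q^{1/2})$ side with $\Gampq(t z_i^{\pm 2})$), come directly out of the duality correspondences listed at the end of Section \ref{sec:sfkern} once one specializes to finite dimension; they are not consequences of Propositions \ref{prop:branch_k} and \ref{prop:braid_k}. With these two points repaired, your argument coincides with the paper's proof.
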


Again, this becomes a quadratic transformation when $\vec{x}=t^{2n-1}w,\dots,w$. Unlike Theorem~\ref{thm:Q3}, this does not give an evaluation of $\cL^{(2n)}_{q^{-1/4}t^{1/2}}$, but does correspond to the following ``distributional'' statement:
\begin{gather*}
\int f(\vec{x}) \cL^{(2n)}_{q^{-1/4}t^{1/2}}(\vec{x};t;p,q)
\Delta^{(2n)}_S(\vec{x};t;p,q) = \int f\big(q^{\pm 1/4}\vec{z}\big)\!
\prod_{1\le i\le n}\! \Gampq\big(t z_i^{\pm 2}\big) \Delta^{(n)}_S\big(\vec{z};t;p,q^{1/2}\big),
\end{gather*}
for suitable test functions $f$. Although it is difficult to make precise the notion of ``suitable'' here, it certainly follows that, since Theorem~\ref{thm:Q2} is obtained from Theorem~\ref{thm:quad_litt} via this substitution, the same substitution applies to the corollaries of this
theorem. In particular, substituting into Corollary~\ref{cor:quad_litt} (i.e., specializing $\vec{x}$ to a partition pair) proves Conjecture~Q2 of~\cite{littlewood}. (To be precise, we must also swap $p$ and $q$, but this is no difficulty.) There is also a quadratic evaluation, but since
two of the parameters multiply to a negative (but large) power of~$t$, there are significant contour issues, so we omit the details.

A similar calculation applies when dualizing Theorem~\ref{thm:Q7}; the only
difference is that in the resulting elliptic Selberg integral, two of the
parameters are $1$ and~$t^{1/2}$, so that we may apply the
$\tau_{1,t^{1/2};t}$ symmetry~\eqref{eq:koorn_tau_symm} without affecting
the finite dimensionality of either the kernel or the integral. We obtain
the following result, which we state in ``distributional'' form for
concision. Substituting this into Corollary \ref{cor:quad_kaw} proves
Conjecture Q6 of \cite{littlewood}.

\begin{thm}\label{thm:Q6}
The Kawanaka kernel has the ``distributional'' limits
\begin{gather*}
\int f(\vec{x}) \cL^{-(2n)}_{-q^{-1/4}}\big(\vec{x};t^{1/2};p^{1/2},q^{1/2}\big) \Delta^{(2n)}_S(\vec{x};t;p,q) \\
\qquad {}=
\int f\big(q^{\pm 1/4}\vec{z}\big) \Delta^{(n)}_S\big(\vec{z};1,p^{1/2},t^{1/2},p^{1/2}t^{1/2};t;p,q^{1/2}\big)
\end{gather*}
and
\begin{gather*}
\int f(\vec{x}) \cL^{-(2n+1)}_{-q^{-1/4}}\big(\vec{x};t^{1/2};p^{1/2},q^{1/2}\big)
\Delta^{(2n+1)}_S(\vec{x};t;p,q) \\
\qquad {}= \Gampqh\big(p^{1/2},t^{1/2},p^{1/2}t^{1/2}\big)
\int
f\big(q^{\pm 1/4}\vec{z},q^{1/4}\big)
\Delta^{(n)}_S\big(\vec{z};t,p^{1/2},t^{1/2},p^{1/2}t^{1/2};t;p,q^{1/2}\big),
\end{gather*}
in the sense that Theorem~{\rm \ref{thm:quad_kaw}} and its corollaries continue to hold after the stated specialization.
\end{thm}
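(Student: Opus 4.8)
The plan is to imitate the derivation of Theorem \ref{thm:Q2} from Theorem \ref{thm:Q3}, starting this time from Theorem \ref{thm:Q7} together with its Kawanaka-kernel sources, Corollaries \ref{cor:Q7} and \ref{cor:quad_kaw}. First I would select valuations in Theorem \ref{thm:Q7} --- with $\ord(c)=1/2$, the parameters $t_0,t_1$ of order $0$, and $v_0,v_1$ of positive order --- so that both sides degenerate, as $p\to 0$ with $c\sim p^{1/2}$, to Koornwinder integrals. Dividing each side by that common limit produces formal Puiseux series in $p$ with rational function coefficients, and then the symmetric function machinery of Section \ref{sec:sfkern} (the lifted kernel $\hat{K}_c$, the lifted Kawanaka kernel, and the virtual Koornwinder integral $I_K$) lets me analytically continue the identity in the dimension. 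Applying the modified Macdonald involution $\tilde\omega_{q,t}$ --- using its explicit action on the Selberg density, the interaction factors, and the univariate Gamma factors recorded at the end of Section \ref{sec:sfkern} --- and then reparametrizing (and swapping $p$ and $q$, which is harmless) turns the transformation into one in which the Kawanaka kernel $\cL^{-}_{-q^{-1/4}}$ is paired against a test function.

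The subtlety responsible for the appearance of two cases is the $0/0$ phenomenon of Lemma \ref{lem:koorn_On_disc}. After the involution and reparametrization, the elliptic Selberg integral on the relevant side picks up two parameters equal to $1$ and $t^{1/2}$ (respectively $t$ and $t^{1/2}$), so the pertinent parameter tuple is $(1,-1,\sqrt{t},-\sqrt{t})$ or $(1,-t,\sqrt{t},-\sqrt{t})$ --- precisely the two tuples flagged in the remark after Lemma \ref{lem:koorn_On_disc}. The balancing forces $T^2t_0t_1t_2t_3/t^2$ to sit on the polar divisor of $I_K$ as $T\to t^N$, so the limit is the symmetric combination of that lemma rather than a single finite-dimensional Koornwinder integral; keeping track of which of the two directions contributes, and of how $\tau_{1;t}$ and $\tau_{-1;t}$ act by adjoining a fixed variable (with $\tau_{\pm\sqrt{t};t}=\text{id}$), is what separates the $N=2n$ statement from the $N=2n+1$ statement, the latter acquiring the extra point $q^{1/4}$ and the prefactor $\Gampqh(p^{1/2},t^{1/2},p^{1/2}t^{1/2})$. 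Because Theorem \ref{thm:quad_kaw} and its corollaries were obtained from the Kawanaka kernel by exactly the substitution under discussion, the distributional identities automatically propagate to them; this is the meaning of the closing ``in the sense that'' clause, and substituting into Corollary \ref{cor:quad_kaw} then proves Conjecture Q6 of \cite{littlewood}.

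The hard part will be the bookkeeping surrounding Lemma \ref{lem:koorn_On_disc}: one must confirm that the two parameter tuples genuinely yield the even- and odd-dimensional specializations, that the pole of the virtual integral along $T^2t_0t_1t_2t_3/t^2\in t^{\N}$ has multiplicity exactly $1$ at the generic relevant point so that the two-direction argument applies, and that the $\tau$-symmetry \eqref{eq:koorn_tau_symm} may be invoked without disturbing the finite dimensionality of the interpolation-kernel factor on the opposite side. The remaining ingredients --- the valuation estimates underlying formal convergence, the explicit effect of $\tilde\omega_{q,t}$ on the densities, and the reparametrizations --- are routine given Sections \ref{sec:sfkern} and \ref{sec:litt_kern}.
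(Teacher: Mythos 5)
Your overall strategy — dualize Theorem \ref{thm:Q7} exactly as Theorem \ref{thm:Q2} was obtained from Theorem \ref{thm:Q3}, using the Section \ref{sec:sfkern} machinery, then substitute into Corollary \ref{cor:quad_kaw} to get Conjecture Q6 — is the paper's route. But you have misidentified the mechanism behind the even/odd dichotomy, and the step you propose would produce the wrong statement. Lemma \ref{lem:koorn_On_disc}, with the parameter tuples $(1,-1,\sqrt{t},-\sqrt{t})$ and $(1,-t,\sqrt{t},-\sqrt{t})$, is the device used for Theorem \ref{thm:Q5}: there all four Koornwinder parameters have order $0$, their product is $t$, so $T^2t_0t_1t_2t_3/t^2\to t^{2N-1}$ lands on the polar divisor of $I_K$ and the limit is a sum of two finite-dimensional integrals — which is exactly why the right-hand sides of Theorem \ref{thm:Q5} are two-term. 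In the dualization of Theorem \ref{thm:Q7} the surviving order-$0$ parameters are $1$ and $t^{1/2}$ (the other two carry positive powers of $p$), so $T^2t_0t_1t_2t_3/t^2$ does \emph{not} sit in $t^{\N}$, there is no $0/0$, and indeed each case of Theorem \ref{thm:Q6} is a single integral. Had your polar-decomposition argument applied, you would have obtained two-integral right-hand sides, contradicting the statement you are trying to prove.

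The actual source of the two cases is different: the dual side's dimension parameter is tied to the square root of the kernel's, so for an odd-dimensional Kawanaka kernel $\cL^{-(2n+1)}$ the virtual Koornwinder integral must be specialized at $T=t^{n+1/2}$, a half-integral power, which is not directly a finite-dimensional integral. This is precisely where the presence of the parameters $1$ and $t^{1/2}$ saves you: applying the symmetry \eqref{eq:koorn_tau_symm} with $(t_0,t_1)=(1,t^{1/2})$ multiplies $T$ by $t^{-1/2}$ (restoring an integral power), replaces the parameters $(1,t^{1/2})$ by $(t^{1/2},t)$, and — since $\tau_{\sqrt{t};t}=\mathrm{id}$ while $\tau_{1;t}$ adjoins a variable equal to $1$ — inserts the extra argument $q^{1/4}$ into the test function and produces the prefactor $\Gampqh(p^{1/2},t^{1/2},p^{1/2}t^{1/2})$. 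You gesture at this in your last paragraph, but as a side condition rather than as the step that generates the odd case; you should replace the appeal to Lemma \ref{lem:koorn_On_disc} by this application of the $\tau_{1,t^{1/2};t}$ symmetry (which, as the paper notes, leaves the finite dimensionality of both the kernel and the integral intact). With that correction the remaining ingredients of your outline — the valuation choices, the Macdonald involution bookkeeping, the harmless swap of $p$ and $q$, and the propagation to Theorem \ref{thm:quad_kaw} and its corollaries — do match the paper's proof.
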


We would expect (following the derivation of \cite{littlewood}) to obtain
another such result by dualizing Theorem~\ref{thm:Q3}, after first swapping~$p$ and~$q$. At first glance, however, this appears impossible, for a very
simple reason: there is no way to assign valuations to the parameters so
that the integral becomes a Koornwinder integral in the limit! (Indeed, it
is not even clear whether we can specialize the valuations in such a way
that the limiting integral has an evaluation at all~\dots) The simplest way
to avoid this problem is to note that Corollary~\ref{cor:dual_litt_pre_van}
implies Theorem~\ref{thm:quad_dual_litt} in much the same way as the
definition of $\cL^{\prime(n)}_c$. And, as we already observed in the proof
of that corollary, the right-hand side of that identity {\em does} have a
well-behaved formal expansion (albeit in~$q$, rather than~$p$).

In other words, we need only dualize the following identity, simply the special case $c=q^{-1/2}t^{1/4}$ of Corollary~\ref{cor:dual_litt_pre_van}, except with $p$ and $q$ swapped.

\begin{lem}
The interpolation kernel satisfies the quadratic evaluation
\begin{gather*}
\int \cK^{(n)}_{p^{1/2}t^{-1/4}} (\vec{z};\vec{x};t;p,q)
\Delta^{(n)}_S\big(\vec{z};\pm t^{1/4},\pm q^{1/2}t^{1/4},p^{1/2}q^{1/2}v^{\pm 1}/t^{1/4};t^{1/2};p,q\big) \\
\qquad {}=
\prod_{1\le i\le n} \Gamppq\big(v^{\pm 1}x_i^{\pm 1}\sqrt{p^2q/t}\big)
\frac{\Delta^{(n)}_S\big(\vec{x};t;p^2,q\big)} {\Delta^{(n)}_S(\vec{x};t;p,q)}.
\end{gather*}
\end{lem}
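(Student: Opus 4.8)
The plan is to obtain this identity directly from the machinery already in place, by observing that it is nothing but the specialization $c=q^{-1/2}t^{1/4}$ of Corollary~\ref{cor:dual_litt_pre_van}, with $p$ and $q$ interchanged and with the dual Littlewood kernel occurring there replaced by its explicit value from Corollary~\ref{cor:Q3}. Since $\cK^{(n)}_c$, $\Delta^{(n)}_S$ and $\Delta^{(n)}_D$ are all symmetric under $p\leftrightarrow q$, and since Corollary~\ref{cor:dual_litt_pre_van} is an identity of meromorphic functions in all parameters (the integral being well defined by \cite[\S10]{xforms}), the interchange and the specialization are both legitimate for generic $t$. So all that remains is to carry out the substitution and simplify.

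First I would take the $c=q^{-1/2}t^{1/4}$ case of Corollary~\ref{cor:dual_litt_pre_van} and interchange $p$ and $q$ throughout. The kernel on the left becomes $\cK^{(n)}_{p^{1/2}t^{-1/4}}(\vec{z};\vec{x};t;p,q)$, the accompanying Selberg factor becomes $\Delta^{(n)}_S(\vec{z};p^{1/2}q^{1/2}v^{\pm1}/t^{1/4};t;p,q)$, and on the right one gets $\prod_i\Gamppq(v^{\pm1}x_i^{\pm1}\sqrt{p^2q/t})$ times $\Delta^{(n)}_S(\vec{x};t;p^2,q)/\Delta^{(n)}_S(\vec{x};t;p,q)$, which is already the right-hand side claimed in the lemma. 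Next I would substitute the (interchanged) form of Corollary~\ref{cor:Q3}, namely $\cL^{\prime(n)}_{p^{-1/2}t^{1/4}}(\vec{z};t;q,p)=\prod_i\Gamppq(t^{1/2}z_i^{\pm2})\,\Delta^{(n)}_S(\vec{z};t^{1/2};p,q)/\Delta^{(n)}_S(\vec{z};t;p,q)$, into the left-hand integrand. Dividing the Selberg factor $\Delta^{(n)}_S(\vec{z};p^{1/2}q^{1/2}v^{\pm1}/t^{1/4};t;p,q)$ by $\Delta^{(n)}_S(\vec{z};t;p,q)$ leaves $\prod_i\Gampq(p^{1/2}q^{1/2}v^{\pm1}z_i^{\pm1}/t^{1/4})$, and the factor $\prod_i\Gamppq(t^{1/2}z_i^{\pm2})$ is absorbed into the base-$t^{1/2}$ Selberg density by the quadratic functional equations: applying $\Gampq(z)=\Gampqq(z,qz)$ with $p\mapsto p^2$ and then $\Gamppqq(z^2)=\Gampq(z,-z)$ gives $\Gamppq(t^{1/2}z^{\pm2})=\Gampq(\pm t^{1/4}z^{\pm1},\pm q^{1/2}t^{1/4}z^{\pm1})$. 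Hence the left-hand integrand equals $\cK^{(n)}_{p^{1/2}t^{-1/4}}(\vec{z};\vec{x};t;p,q)\,\Delta^{(n)}_S(\vec{z};\pm t^{1/4},\pm q^{1/2}t^{1/4},p^{1/2}q^{1/2}v^{\pm1}/t^{1/4};t^{1/2};p,q)$, and comparing the two sides finishes the argument.

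This is essentially bookkeeping, so I do not expect a serious obstacle here; the real content was discharged earlier, in the proof of Corollary~\ref{cor:dual_litt_pre_van}. The subtle point --- and the reason for routing through that corollary rather than simply dualizing Theorem~\ref{thm:Q3} --- is that there is no way to assign valuations to the parameters making both integrals Koornwinder integrals in a $p\to0$ limit, so one cannot run the symmetric-function analytic-continuation argument on the present identity directly. Corollary~\ref{cor:dual_litt_pre_van} evades this by using formal expansions in $q$ instead of $p$; once that is granted, the only care needed below is to keep the various half- and quarter-powers of $p$, $q$ and $t$ consistent through the $p\leftrightarrow q$ swap and the two quadratic Gamma reductions.
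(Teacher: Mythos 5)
Your proposal is correct and follows essentially the same route as the paper: the paper obtains this lemma precisely as the special case $c=q^{-1/2}t^{1/4}$ of Corollary \ref{cor:dual_litt_pre_van} with $p$ and $q$ interchanged, using the explicit evaluation of $\cL^{\prime(n)}_{q^{-1/2}t^{1/4}}$ from Corollary \ref{cor:Q3} and the quadratic Gamma identities to absorb $\prod_i\Gamppq(t^{1/2}z_i^{\pm 2})$ into the base-$t^{1/2}$ Selberg density as the parameters $\pm t^{1/4},\pm q^{1/2}t^{1/4}$. Your bookkeeping of the swapped parameters and of the reason for routing through Corollary \ref{cor:dual_litt_pre_van} (formal expansion in $q$ rather than $p$) matches the paper's argument.
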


At this point, if $\ord(v)=\ord(x)=0$, both sides have perfectly
well-behaved formal expansions, and the integral on the left becomes a
Koornwinder integral in the limit, so there is no difficulty in
analytically continuing in the dimension and dualizing. We do encounter
one more difficulty when specializing to finite dimension, however, as the
Koornwinder parameters are then~$\pm 1$, $\pm \sqrt{t}$, and thus we
encounter a pole of the lifted Koornwinder integral. Of course, we have
anticipated this problem, so need only apply Lemma~\ref{lem:koorn_On_disc}.
(This, in particular, explains why Conjecture~Q5 of~\cite{littlewood}
involved sums of two integrals.) In this way, we obtain the following
result, again stated in distributional form.

\begin{thm}\label{thm:Q5}
The dual Littlewood kernel has the ``distributional'' limits
\begin{gather*}
\int f(\vec{z}) \cL^{\prime(2n)}_{q^{-1/2}p^{-1/4}}(\vec{z};t;p,q) \Delta^{(2n)}_S(\vec{z};t ;p,q) \\
\qquad {}= \int f\big(p^{\pm 1/4}\vec{z}\big) \Delta^{(n)}_S\big(\vec{z};\pm 1,\pm t^{1/2};t;p^{1/2},q\big) \\
\qquad\quad {}+ \Gampqq(t,t)\Gamphq(-1,-t)\int f\big(p^{\pm 1/4}\vec{z},\pm p^{1/4}\big) \Delta^{(n-1)}_S\big(\vec{z};\pm t,\pm t^{1/2};t;p^{1/2},q\big)
\end{gather*}
and
\begin{gather*}
\int f(\vec{z}) \cL^{\prime(2n+1)}_{q^{-1/2}p^{-1/4}} (\vec{z};t;p,q) \Delta^{(2n+1)}_S(\vec{z};t;p,q) \\
\qquad {}= \Gamphq\big({-}1,\pm t^{1/2}\big) \int f\big(p^{\pm 1/4}\vec{z},p^{1/4}\big)
\Delta^{(n)}_S\big(\vec{z};t,-1,\pm t^{1/2};t;p^{1/2},q\big) \\
\qquad\quad {}+ \Gamphq\big({-}1,\pm t^{1/2}\big) \int f\big(p^{\pm 1/4}\vec{z},-p^{1/4}\big)
\Delta^{(n)}_S\big(\vec{z};1,-t,\pm t^{1/2};t;p^{1/2},q\big),
\end{gather*}
in the sense that Theorem~{\rm \ref{thm:quad_dual_litt}} and its corollaries continue to hold after the stated specialization.
\end{thm}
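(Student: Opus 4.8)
The plan is to mimic the proofs of Theorems \ref{thm:Q2} and \ref{thm:Q6}, starting from the quadratic evaluation of the interpolation kernel recorded in the Lemma immediately above (the case $c=q^{-1/2}t^{1/4}$ of Corollary \ref{cor:dual_litt_pre_van}, with $\cL^{\prime(n)}_{q^{-1/2}t^{1/4}}$ expanded via Corollary \ref{cor:Q3}, and with $p$ and $q$ interchanged; after the interchange, $p$ plays the role of the formal variable). With $\ord(v)=\ord(x)=0$, both sides of that identity have well-behaved formal Puiseux expansions in $p$, and the integral on the left degenerates to a Koornwinder integral as $p\to 0$, so that after dividing by this common limit the coefficients are rational functions of the remaining parameters. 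Hence the machinery of Section \ref{sec:sfkern} applies: I would replace $\cK^{(n)}$ by the lifted kernel $\hat K$, the Selberg density by its symmetric-function analogue (via the liftings of $\prod_i(az_i^{\pm 1};q)^{-1}$ and of the interaction factors collected at the end of Section \ref{sec:sfkern}), and the ratio $\Delta^{(n)}_S(\vec{x};t;p^2,q)/\Delta^{(n)}_S(\vec{x};t;p,q)$ on the right by the corresponding symmetric function, obtaining a lift of the identity in which $t^n$ is replaced by a free parameter $T$.

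Next I would apply the modified Macdonald involution $\tilde\omega_{q,t}$ to the lifted identity, using the duality of $\hat K$, the duality $I_K(f;q,t,T;\dots)=I_K(\tilde\omega_{q,t}f;1/t,1/q,T;\dots)$ of the virtual Koornwinder integral from \cite[Cor. 7.6]{bcpoly}, and the duality rules for $\Delta^{(n)}_S$ and its univariate factors collected at the end of Section \ref{sec:sfkern}. After undoing the $p\leftrightarrow q$ swap and reparametrising, the resulting lifted identity specialises at $T\to t^n$ to the desired quadratic statement --- except that on one side the Koornwinder parameters degenerate to $\pm1,\pm\sqrt{t}$, so that the quantity $T^2t_0t_1t_2t_3/t^2$ lands in $t^{\N}$ and the virtual integral has a simple pole there. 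This is exactly the configuration handled by Lemma \ref{lem:koorn_On_disc}: the limit exists, but is the average of two distinguished directions, producing a sum of two finite-dimensional Selberg integrals, of dimensions $n$ and $n'$ related by the balancing relation there; whether the original total dimension is even ($2n$) or odd ($2n+1$) selects between the two displayed formulas, and in the odd case produces the frozen variable $\pm p^{1/4}$ corresponding to the extra unit eigenvalue.

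Finally, to pass from the lifted identity back to the genuinely distributional statement, I would invoke the observation already made in the text that Corollary \ref{cor:dual_litt_pre_van} implies Theorem \ref{thm:quad_dual_litt} by the same manipulation (expand the Littlewood-type kernel, exchange integrals, apply the braid relation) that defines $\cL^{\prime(n)}_c$; consequently the stated specialisation --- substituting the two explicit measures for $\cL^{\prime(2n)}_{q^{-1/2}p^{-1/4}}\,\Delta^{(2n)}_S$ and $\cL^{\prime(2n+1)}_{q^{-1/2}p^{-1/4}}\,\Delta^{(2n+1)}_S$ --- is inherited by Theorem \ref{thm:quad_dual_litt} and all of its corollaries. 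In particular, substituting into Corollary \ref{cor:quad_dual_litt} recovers Conjecture Q5 of \cite{littlewood}, which explains why that conjecture was phrased as a sum of two integrals.

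I expect the main obstacle to lie in the bookkeeping hidden in the appeal to Lemma \ref{lem:koorn_On_disc}: matching the two limiting directions with the correct finite-dimensional integrals, and especially pinning down the normalisation constants (e.g.\ $\Gampqq(t,t)\Gamphq(-1,-t)$ in the even case and $\Gamphq(-1,\pm t^{1/2})$ in the odd case), requires combining the explicit evaluation in that lemma, the $\tau_{t_0,t_1,t_2,t_3;t}$-symmetry of the Koornwinder integral, and the many $p^{1/2}$'s and sign choices introduced by the quadratic base changes $t\mapsto t^{1/2}$, $p\mapsto p^{1/2}$ --- routine but error-prone. A secondary check, just as for Theorems \ref{thm:Q2} and \ref{thm:Q6}, is to confirm that the passage to distributional form genuinely commutes with the insertion of an arbitrary admissible test function, and not merely with the product of gamma functions times an interpolation function that arises upon specialising $\vec{x}$ to a partition pair.
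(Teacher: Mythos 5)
Your proposal follows essentially the same route as the paper: start from the $p\leftrightarrow q$-swapped case $c=q^{-1/2}t^{1/4}$ of Corollary \ref{cor:dual_litt_pre_van} (with $\cL'$ made explicit via Corollary \ref{cor:Q3}), lift via the Section \ref{sec:sfkern} machinery and dualize, handle the degenerate Koornwinder parameters $\pm 1,\pm\sqrt{t}$ with Lemma \ref{lem:koorn_On_disc}, and transfer the distributional meaning through the fact that Corollary \ref{cor:dual_litt_pre_van} implies Theorem \ref{thm:quad_dual_litt}. This is the paper's argument, and your account of it is correct (the only slight imprecision being that frozen variables $\pm p^{1/4}$ also occur in the second term of the even-dimensional case, not only in the odd case).
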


\begin{rem}
 This proves Conjecture Q5 of \cite{littlewood}. There is a corresponding
 evaluation coming from Theorem \ref{thm:van_dual_litt_interp}, but as
 this has two versions (depending on the parity of the dimension), each of
 which evaluates a sum of two integrals differing by a simple elliptic
 factor from an integral with an evaluation (so equivalent to a univariate
 sum), we omit the details.
\end{rem}

We now have two conjectures remaining, Q1 and Q4 of \cite{littlewood}. It
is straightforward to verify that these two conjectures (in symmetric
function kernel form) are dual to each other, so it will suffice to prove
one, say Q1 (which is slightly simpler). One natural approach is to follow
the development of \cite{littlewood} in reverse, and prove Q1 by a modular
transform from Q2 (i.e., Theorem \ref{thm:Q2} above). This requires a
suitable choice of algebraic degeneration of the identity, but it turns out
that the relevant special cases of Theorem \ref{thm:van_litt} are suitable
for that purpose, in that, although the corresponding kernel identity is
only a special case of the version of Theorem \ref{thm:quad_litt} we
require, it is sufficiently general that a couple of ``Bailey lemma''-type
steps suffice to prove the full version. (There would normally be a
difficulty, in that the first step of obtaining Theorem \ref{thm:van_litt}
from Theorem \ref{thm:quad_litt} was to specialize the auxiliary parameter
$v$, but that parameter happens to disappear in the special case of
interest.)

Rather than give the details of the above approach, we will take an alternate approach that, although it still takes some advantage of the
special structure of our particular case, has a~better chance of being adaptable to other special cases. The idea is that if we were dealing with
a special case that had a well-behaved formal series expansion, then it would be enough to show that the putative Littlewood kernel satisfied the
integral equation of Corollary~\ref{cor:spec_spec_int_litt} (or, more precisely, the corresponding special case of Theorem~\ref{thm:spec_int_litt}). Although our case is not formal, it turns out that we can finesse this issue, at the cost of having to prove the full
version of Theorem~\ref{thm:spec_int_litt}, which in our case becomes the following.

\begin{lem}
If $u_0u_1u_2u_3 = (pq/t)^4$, then
\begin{gather*}
\int K^{(2n)}_{t^{1/2}} \big({\pm}\sqrt{-\vec{z}};\vec{x};t;p,q\big)
\Delta^{(n)}_S\big(\vec{z};t,pt,qt,pqt,u_0,u_1,u_2,u_3;t^2;p^2,q^2\big) \\
\qquad {}= \prod_{\substack{1\le i\le 2n\\0\le r<4}} \Gampq\big({-}t u_r x_i^{\pm 2}\big) \\
\qquad \quad{}\!\times \int\!
K^{(2n)}_{t^{1/2}}\big({\pm}\sqrt{-\vec{z}};\vec{x};t;p,q\big)
\Delta^{(n)}_S\big(\vec{z};t,pt,qt,pqt,p^2q^2/t^2u_0,\dots,p^2q^2/t^2u_3;t^2;p^2,q^2\big).
\end{gather*}
\end{lem}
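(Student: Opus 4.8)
The plan is to read this lemma as the collapsed, ``distributional'' form of Theorem~\ref{thm:spec_int_litt} in the degenerate regime $c^2=-t$ that governs Conjecture~Q1, and to prove it directly (so that it may then, together with a characterization argument, yield Q1 itself). Since $c=t^{1/2}$, the kernel $K^{(2n)}_{t^{1/2}}(\pm\sqrt{-\vec z};\vec x;t;p,q)$ is given by the product formula for $\cK^{(2n)}_{\sqrt t}$ (the specialization following Corollary~\ref{cor:t_symmetry}), so both sides are completely explicit integrals of products of elliptic Gamma functions against an elliptic Selberg density over $(t^2;p^2,q^2)$; the strategy is to unfold these via the quadratic functional equations and recognize the claimed $\vec u$-symmetry as a special case of the commutation relation.

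First I would carry out the unfolding. Writing $\vec y=\pm\sqrt{-\vec z}$, the functional equations $\Gamppqq(z^2)=\Gampq(z,-z)$ and \eqref{eq:gampqq} let one collapse every factor of the $2n$-variable product formula: the ``univariate'' factors $\prod_j\Gampq(t^{1/2}(\pm\sqrt{-\vec z})_j^{\pm1}x_i^{\pm1})$ become products of the form $\prod\Gamppqq(-t\,z_k^{\pm1}x_i^{\pm2})$, the ``$\vec y$-interaction'' factors become powers of $\Gampq(tz_k^{\pm1})$ and $\Gamppqq(t^2z_k^{\pm1}z_l^{\pm1})$, and the four density parameters $t,pt,qt,pqt$ reassemble (again by \eqref{eq:gampqq}) into the $\prod_k\Gampq(tz_k^{\pm1})$ that come out of the kernel, so that these cancel. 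After this bookkeeping both sides are integrals of the \emph{same} explicit $(p^2,q^2)$-elliptic integrand against a (higher-order) Selberg/Dixon density over $(t^2;p^2,q^2)$, with $u_0,\dots,u_3$ entering as four univariate parameters and with all $\vec z$-independent factors — including exactly the $\prod_{i,r}\Gampq(-tu_rx_i^{\pm2})$ of the statement — pulled outside.

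At that point the claimed invariance under $u_r\mapsto p^2q^2/t^2u_r$ is the ``double-inversion'' symmetry of this Selberg integral. Concretely, after reorganizing the non-$\vec u$ Gamma factors so that the collapsed kernel again appears as an interpolation-kernel factor, the unfolded identity becomes an instance of the commutation relation Corollary~\ref{cor:ker_comm}, read off over the density $(t^2;p^2,q^2)$: with that density's ``$pq$'' equal to $p^2q^2$ and with the relevant parameters $c,d$ satisfying $cd=t^2$, the substitution $u_r\mapsto p^2q^2/(cd\,u_r)$ there is precisely $u_r\mapsto p^2q^2/t^2u_r$, and the balancing $u_0u_1u_2u_3c^2d^2=(p^2q^2)^2$ is exactly $u_0u_1u_2u_3=(pq/t)^4$. (Equivalently one may use the geometric-progression degeneration Proposition~\ref{prop:commut_k} of this relation, which is the form actually used in the proof of Theorem~\ref{thm:spec_int_litt}.) The single exchange of integration order that this introduces, together with the attendant contour deformations, is legal on a nonempty open parameter set where all poles stay inside the unit circle, and the identity of holomorphic functions so obtained extends to an identity of meromorphic functions by \cite[\S10]{xforms}.

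The main obstacle is the first step: keeping track of how the substitution $\vec y=\pm\sqrt{-\vec z}$ redistributes the $O(n^2)$ interaction and $O(n)$ univariate Gamma factors of the $2n$-variable product formula into the $n$-variable $(t^2;p^2,q^2)$ picture, and then matching the residual integrand \emph{exactly} with a commutation-relation instance (the right auxiliary variable sets, the right $c$ and $d$, and every prefactor). If that identification turns out to be delicate, the fallback is the usual Zariski-density argument: after dividing by the common $p\to0$ limit both sides are formal Puiseux series in $p$ with rational-function coefficients, so it suffices to verify the identity on the Zariski-dense locus where a product $u_ru_s$ is specialized to make the $\vec z$-integral terminate as a finite sum, where — by the specialization property of $\cK^{(2n)}$ and the evaluation of terminating elliptic Selberg sums — it collapses to the already-proved vanishing identity of Theorem~\ref{thm:van_litt}, equivalently to the order-one elliptic Selberg transformation \cite[Thm.~9.7]{xforms}.
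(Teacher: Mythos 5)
Your proposal is correct and is essentially the paper's own proof: the paper likewise substitutes the product formula for $\cK^{(2n)}_{\sqrt{t}}$ at $\pm\sqrt{-\vec{z}}$, collapses everything via the quadratic Gamma identities (so the parameters $t,pt,qt,pqt$ are absorbed and the integrand becomes $\prod_{i,j}\Gamppqq(-t x_j^{\pm 2}z_i^{\pm 1})$ times a Selberg density in base $(p^2,q^2)$), and then identifies the claimed $u_r\mapsto p^2q^2/t^2u_r$ symmetry with van de Bult's transformation, i.e.\ the $c=d=\sqrt{pq/t}$ case of Corollary \ref{cor:ker_comm}. Your Zariski-density fallback is not needed, and the only minor imprecision is that the commutation-relation instance is most naturally read over the density with parameter $p^2q^2/t^2$ (equivalently, via the $t\mapsto pq/t$ symmetry, over $t^2$), but this does not affect the argument.
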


\begin{proof}
Since $K^{(2n)}_{t^{1/2}}$ can be written as a product, we find that
\begin{gather*}
K^{(2n)}_{t^{1/2}}\big({\pm}\sqrt{-\vec{z}};\vec{x};t;p,q\big)
\Delta^{(n)}_S\big(\vec{z};t,pt,qt,pqt;t^2;p^2,q^2\big)\\
\qquad{} = \frac{\prod\limits_{1\le i\le n,1\le j\le 2n} \Gamppqq\big({-}t x_j^{\pm 2}z_i^{\pm 1}\big)
\Delta^{(n)}_S\big(\vec{z};p^2q^2/t^2;p^2,q^2\big)}
{\Gampq(t)^{2n}\prod\limits_{1\le i<j\le 2n} \Gampq\big(t x_i^{\pm 1}x_j^{\pm 1}\big)},
\end{gather*}
and thus the given identity reduces to the main theorem of~\cite{vandeBultFJ:2009} (a.k.a.~the case $c=d=\sqrt{pq/t}$ of Corollary~\ref{cor:ker_comm} above).
\end{proof}

\begin{lem}\label{lemma8.13}
We have the following identity of meromorphic functions
\begin{gather*}
 \cL^{(2n)}_c (\vec{x};t;p,q) = \prod_{1\le i\le 2n}\Gamppqq\big(pq c^2 x_i^{\pm 2}\big)\\
 \hphantom{\cL^{(2n)}_c (\vec{x};t;p,q) =}{}\times
\int \cK^{(2n)}_{c/\sqrt{-t}}\big({\pm}\sqrt{-\vec{z}};\vec{x};t;p,q\big)
\Delta^{(n)}_S\big(\vec{z};t,pt,qt,pqt/c^4;t^2;p^2,q^2\big).
\end{gather*}
\end{lem}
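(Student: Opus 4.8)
The plan is to write $M^{(2n)}_c(\vec{x};t;p,q)$ for the right-hand side and to prove $M^{(2n)}_c=\cL^{(2n)}_c$ by showing that $M^{(2n)}_c$ obeys the same recursion in $c$ that characterizes the Littlewood kernel. First I would record that $M^{(2n)}_c$ is an absolutely convergent integral on a nonempty open set of parameters (take the $\vec{z}$-contour to be a torus on which every elliptic Gamma argument has modulus between $|pq|$ and $1$), hence extends to a meromorphic function of all the parameters by \cite[\S10]{xforms}; the same is true of $\cL^{(2n)}_c$ by construction. Consequently it suffices to prove the identity on the Zariski-dense locus where $\ord(q)=\ord(t)=0$, $\ord(\vec{x})=0$, and $0<\ord(c)\le 1/4$, where $\cL^{(2n)}_c$ has the well-behaved formal Puiseux expansion of Proposition \ref{prop:litt_sum}; $M^{(2n)}_c$ likewise has a formal expansion there, since on its integrand $p$ enters only through $\Gamppqq$ and through a kernel and densities built on $(t^2;p^2,q^2)$, all holomorphic near $p=0$ away from a locus the argument below must avoid.

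The heart of the argument is to check that $M^{(2n)}_c$ satisfies the recursion of Corollary \ref{cor:spec_spec_int_litt}, namely
\[
\int \cK^{(2n)}_{t^{1/2}}(\vec{z};\vec{x};t;p,q)\,M^{(2n)}_c(\vec{z};t;p,q)\,\Delta^{(2n)}_S(\vec{z};\sqrt{pq}\,v^{\pm1}/c^2;t;p,q)=\prod_{1\le i\le 2n}\Gampq\!\bigl(\sqrt{pqt}\,v^{\pm1}x_i^{\pm1}/c^2,\ \sqrt{pq/t}\,v^{\pm1}x_i^{\pm1}\bigr)\,M^{(2n)}_{t^{1/2}c}(\vec{x};t;p,q).
\]
To prove this I would substitute the defining integral of $M^{(2n)}_c$ for the middle factor, exchange the order of integration (legitimate on a region where all contours may be taken to be tori, then extended by meromorphy), and rewrite $\prod_i\Gamppqq(pqc^2z_i^{\pm2})$ as $\prod_i\Gampq(\pm\sqrt{pq}\,c\,z_i^{\pm1})$ via the duplication formula $\Gamppqq(w^2)=\Gampq(w,-w)$, so that it merely appends two ordinary Selberg parameters to the inner integration. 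The resulting $\vec{z}$-integral is then an instance of the commutation relation (Corollary \ref{cor:ker_comm}) for the pair of kernels $\cK^{(2n)}_{t^{1/2}}$ and $\cK^{(2n)}_{c/\sqrt{-t}}$, the balancing condition being met; after applying it the surviving copy of $\cK^{(2n)}_{t^{1/2}}$ is evaluated at a point of the distinguished form $\pm\sqrt{-(\,\cdot\,)}$, so it may be replaced by its product form exactly as in the proof of the preceding lemma, which itself rests on van de Bult's evaluation \cite{vandeBultFJ:2009} (equivalently the case $c=d=\sqrt{pq/t}$ of Corollary \ref{cor:ker_comm}). Cancelling the leftover $\Gampq(t)$ and $\prod_{i<j}\Gampq(tz_i^{\pm1}z_j^{\pm1})$ factors against the Selberg density turns what remains into precisely the defining integral of $M^{(2n)}_{t^{1/2}c}$, up to exactly the elliptic Gamma prefactor displayed above.

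To finish, I would combine this with Corollary \ref{cor:spec_spec_int_litt} (which gives the identical recursion for $\cL^{(2n)}_c$) and with a comparison of the $p\to0$ limits of the two sides --- for $M^{(2n)}_c$ the limiting integral is of Koornwinder type, and can be evaluated to the Macdonald-polynomial limit of the deformed Littlewood sum of Proposition \ref{prop:litt_sum}. In the formal regime the recursion together with this leading term pins down the Puiseux series by the now-familiar Nakayama-type uniqueness, so $M^{(2n)}_c=\cL^{(2n)}_c$ there, and then meromorphy and Zariski density extend the identity to all parameters.

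The step I expect to be the main obstacle is the bookkeeping in the second paragraph: tracking the interplay between the $(t;p,q)$ data of the outer integration and the $(t^2;p^2,q^2)$ data packaged inside $M^{(2n)}_c$, the distinguished ``$\pm\sqrt{-\,\cdot\,}$'' specializations, and the halving/doubling of elliptic Gamma functions, so that Corollary \ref{cor:ker_comm} and the $\cK^{(2n)}_{\sqrt{t}}$ product formula genuinely apply and the residual prefactors collapse to exactly those in Corollary \ref{cor:spec_spec_int_litt}; this is also where the ``non-formality'' of $M^{(2n)}_c$ (a genuine pole of the interpolation kernel lurking at the $\pm\sqrt{-\,\cdot\,}$ point as $p\to0$) is circumvented by routing the computation through the already-established full form of Theorem \ref{thm:spec_int_litt}. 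A secondary, routine matter is to exhibit a parameter region in which the Fubini exchange is valid before invoking meromorphic continuation.
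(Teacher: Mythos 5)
Your overall skeleton is the paper's: reduce to the formal Puiseux regime, characterize the Littlewood kernel by the integral identity underlying Theorem \ref{thm:spec_int_litt}/Corollary \ref{cor:spec_spec_int_litt}, verify that identity for the right-hand side by combining the commutation relation with the preceding Lemma, and conclude by the uniqueness of formal solutions (Lemma \ref{lem:int_uniq}).  The gap is in the central verification, and it is more than bookkeeping.  After you substitute the defining integral of $M^{(2n)}_c$, convert $\prod_i\Gamppqq(pqc^2z_i^{\pm2})$ into the two Selberg parameters $\pm\sqrt{pq}\,c$, and apply Corollary \ref{cor:ker_comm}, what survives is still a double integral in which the kernel attached to $\vec{x}$ is $\cK^{(2n)}_{c/\sqrt{-t}}(\vec{z};\vec{x})$ with $\vec{z}$ a $2n$-dimensional integration variable running against a \emph{four}-parameter Selberg density, two of whose parameters still carry the $v$-dependence.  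The defining integral of $M^{(2n)}_{t^{1/2}c}(\vec{x})$ is instead an $n$-dimensional integral of the kernel $\cK^{(2n)}_{t^{1/2}c/\sqrt{-t}}(\pm\sqrt{-\vec{w}};\vec{x})$; to reach it you would have to integrate out $\vec{z}$ via a braid-type relation (Proposition \ref{prop:kern_braid}), which requires the $\vec{z}$-density to have exactly two parameters multiplying to $pq/c^2d^2$.  Substituting the product form of $\cK^{(2n)}_{t^{1/2}}(\vec{z};\pm\sqrt{-\vec{w}})$ and ``cancelling leftover factors'' does not remove this obstruction, so the moves you list do not produce ``precisely the defining integral of $M^{(2n)}_{t^{1/2}c}$''; in effect the full recursion of Corollary \ref{cor:spec_spec_int_litt} for $M^{(2n)}_c$ is as strong as the Lemma itself and should not be expected to fall out of a single commutation.

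The paper proves a strictly weaker statement that the commutation-plus-preceding-Lemma juggling can actually deliver: that the integral of $\cK^{(2n)}_{t^{1/2}}(\vec{z};\vec{x})F_c(\vec{z})$ against $\Delta^{(2n)}_S(\vec{z};\sqrt{pq}v^{\pm 1}/c^2;t;p,q)$, divided by the displayed prefactor, is independent of $v$.  By Lemma \ref{lem:int_uniq} this determines $F_c$ only up to a factor independent of $\vec{x}$, and here your normalization step is also too weak: the uniqueness argument permits a proportionality factor which is a full Puiseux series in $p$ (depending on $c,q,t$), so matching the $p\to 0$ leading terms does not force it to equal $1$.  The paper fixes the factor by specializing $\vec{x}$ to the geometric progression $x_i=t^{2n-i}v$, where the left side is the known product evaluation of $\cL^{(2n)}_c$ and the right side degenerates (via Corollary \ref{cor:formal_geom_special}) to an explicitly evaluable elliptic Selberg integral.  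If you retarget your second paragraph at the $v$-independence statement and replace the leading-term comparison by this specialization, your outline becomes the paper's proof.
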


\begin{proof}
We find that both sides have the same limit as $p\to 0$ with $0<\ord(c)\le
1/4$, and dividing by the common limit makes both Puiseux series have
rational function coefficients. Thus, by Lemma \ref{lem:int_uniq} below,
it will suffice to show that if we denote the right-hand side by~$F_c(\vec{x})$, then
\begin{gather*}
\prod_{1\le i\le 2n} \frac{1}{\Gampq\big(\sqrt{pqt}v^{\pm 1}x_i^{\pm 1}/c^2,\sqrt{pq/t}v^{\pm 1} x_i^{\pm 1}\big)}\\
\qquad{}\times \int
\cK^{(2n)}_{t^{1/2}}(\vec{z};\vec{x};t;p,q) F_c(\vec{z}) \Delta^{(2n)}_S\big(\vec{z};\sqrt{pq}v^{\pm 1}/c^2;t;p,q\big)
\end{gather*}
is independent of $v$. (This only determines the right-hand side up to a~scalar, but setting $x_i=t^{2n-i}v$ makes the right-hand side an elliptic
Selberg integral, so that we can explicitly evaluate it.) This is a~straightforward combination of commutation (Corollary~\ref{cor:ker_comm}) and the previous lemma.
\end{proof}

\begin{thm}\label{thm:Q1}
The Littlewood kernel has the ``distributional'' limit
\begin{gather*}
\int f(\vec{z}) \cL^{(2n)}_{\sqrt{-t}}(\vec{z};t;p,q)\Delta^{(2n)}_S(\vec{z};t;p,q)\\
\qquad {}= \int f\big({\pm}\sqrt{-\vec{z}}\big) \prod_{1\le i\le n} \Gampq\big(t z_i^{\pm 1}\big)
\Delta^{(n)}_S\big(\vec{z};t,pt,qt,pqt;t^2;p^2,q^2\big),
\end{gather*}
in the sense that Theorem~{\rm \ref{thm:quad_litt}} and its corollaries continue to hold after the stated specialization.
\end{thm}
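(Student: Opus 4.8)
The plan is to read the statement off the last of the two preceding lemmas by specializing $c=\sqrt{-t}$. That lemma is an identity of meromorphic functions expressing $\cL^{(2n)}_c(\vec{x};t;p,q)$ as $\prod_{1\le i\le 2n}\Gamppqq(pqc^2x_i^{\pm 2})$ times the $n$-dimensional integral of $\cK^{(2n)}_{c/\sqrt{-t}}(\pm\sqrt{-\vec{z}};\vec{x};t;p,q)$ against $\Delta^{(n)}_S(\vec{z};t,pt,qt,pqt/c^4;t^2;p^2,q^2)$. At $c=\sqrt{-t}$ one has $c/\sqrt{-t}=1$, $pqt/c^4=pq/t$, and $pqc^2=-pqt$, so the kernel in the integrand becomes $\cK^{(2n)}_1$, which is the kernel of the identity integral operator with respect to $\Delta^{(2n)}_S(\,\cdot\,;t;p,q)$ (it is the composite of the mutually inverse operators with kernels $\cK^{(2n)}_{c'}$ and $\cK^{(2n)}_{1/c'}$; cf.\ the braid relation, Proposition~\ref{prop:kern_braid} with $d=1/c'$, and the remark following Corollary~\ref{cor:formal_geom_special}). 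Consequently, pairing the specialized lemma against a test function $f$ and applying Fubini collapses the $\vec{x}$-integral, substituting $\vec{x}=\pm\sqrt{-\vec{z}}$, so that what remains is $\prod_{1\le i\le 2n}\Gamppqq(-pqtx_i^{\pm 2})\big|_{\vec{x}=\pm\sqrt{-\vec{z}}}$ integrated against $\Delta^{(n)}_S(\vec{z};t,pt,qt,pq/t;t^2;p^2,q^2)$. A short computation with the quadratic and reflection functional equations for $\Gampq$ rewrites the product of these Gamma factors as $\prod_{1\le i\le n}\Gampq(tz_i^{\pm 1})\,\Delta^{(n)}_S(\vec{z};t,pt,qt,pqt;t^2;p^2,q^2)$, which is the asserted distributional identity.

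The one genuine difficulty --- and the reason for the ``distributional'' formulation --- is that at $c=\sqrt{-t}$ the interpolation kernel $\cK^{(2n)}_1$ in the integrand has poles along the diagonals $x_i=\pm\sqrt{-z_j}$ rather than being regular there, so the right-hand side of the specialized lemma must be evaluated against $f$ by residue calculus, picking up precisely those diagonal residues; this is the exact sense in which $\cK^{(2n)}_1$ acts as the identity operator. The main obstacle will be verifying that this residue computation is legitimate for the specific test functions $f$ that occur in Theorem~\ref{thm:quad_litt} and its corollaries: such $f$ are themselves built from the interpolation kernel and Gamma prefactors, so one needs the pole bounds of Theorem~\ref{thm:kern_poles} both to justify the interchanges of integration and to control which residues contribute in the limiting configuration, exactly in the spirit of the pinched-contour arguments of Propositions~\ref{prop:branch_k} and~\ref{prop:braid_k} and of the passage surrounding Corollary~\ref{cor:spec_spec_int_litt}. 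Equivalently, this step amounts to establishing that the integral operator with kernel $\cK^{(2n)}_{c'}$ tends to the identity, on the admissible class of functions, as $c'\to 1$.

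By contrast, I expect all the structural ingredients to be routine given the machinery already in place: the identity-operator property of $\cK^{(2n)}_1$ comes from the braid relation, the integral representation of $\cL^{(2n)}_c$ being specialized is the last of the two preceding lemmas (which was obtained, via the uniqueness result of the appendix, from the non-formal form of Theorem~\ref{thm:spec_int_litt} --- the first of the two preceding lemmas --- together with commutation, Corollary~\ref{cor:ker_comm}), and the final simplification of Gamma factors is a short calculation. So the delicate part really is the pinched-contour bookkeeping described above. Once Theorem~\ref{thm:Q1} is established, substituting the stated specialization into Corollary~\ref{cor:quad_litt} proves Conjecture~Q1 of \cite{littlewood}, dual to the already-established Theorem~\ref{thm:Q2}.
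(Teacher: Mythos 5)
Your plan has the right raw material (the second of the two preceding lemmas is indeed the engine), but the mechanism you propose is not the paper's, and the step you defer is exactly the crux. The paper never specializes $c=\sqrt{-t}$ in that lemma and never invokes a delta-function/residue realization of $\cK^{(2n)}_1$. Instead it observes that the lemma, at \emph{generic} $c$, already \emph{is} the distributional statement of Theorem \ref{thm:Q1} verified on the particular family of test functions occurring in Theorem \ref{thm:litt_braid} with $d=\sqrt{-t}$ (the test function there is $\cK^{(2n)}_{c/\sqrt{-t}}(\vec{z};\vec{x};t;p,q)$ times the univariate Gamma factors of the density, and the prefactors match the lemma exactly after the quadratic/reflection identities); a single ``Bailey Lemma'' step then upgrades this family to the general case of Theorem \ref{thm:quad_litt}. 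Since the theorem is only asserted in this distributional sense, nothing singular is ever evaluated, and the bootstrapping exploits the fact (noted in the remark after the theorem) that the auxiliary parameter $v$ of Theorem \ref{thm:quad_litt} disappears when $d=\sqrt{-t}$.

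By contrast, your route requires making rigorous the claim that $\cK^{(2n)}_1$ acts as the identity, with the correct normalization, on the relevant class of functions, and you explicitly leave this as ``the delicate part''. But that is precisely the content of the theorem in its distributional formulation, so as written the proposal assumes what is to be proved. Two concrete symptoms: (i) at $c=\sqrt{-t}$ both sides of the specialized lemma degenerate --- because of the $\Gampq(c^2)^{-1}$ normalization, $\cK^{(2n)}_1$ is not a genuine density (already $\cK^{(1)}_1$ vanishes identically off the diagonal), and $\cL^{(2n)}_{\sqrt{-t}}$ itself is the singular object on the left --- so ``pairing the specialized lemma against $f$ and applying Fubini'' is not a meaningful literal operation; (ii) the identity-operator normalization is not the na\"{\i}ve one (compare the braid relation, Proposition \ref{prop:kern_braid}, at $d=1/c$, or Proposition \ref{prop:int_eq_formal} at $c=1$, where nontrivial Gamma and constant prefactors survive), and indeed the ``short computation'' you sketch does not obviously produce the prefactor $\prod_{1\le i\le n}\Gampq(t z_i^{\pm 1})$ in the stated right-hand side --- that factor has to emerge from exactly the careful limiting/residue analysis you postpone. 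Finally, even granting that analysis, you would need it for the full two-parameter family of test functions of Theorem \ref{thm:quad_litt} and its corollaries, whereas the paper only needs the $\cK^{(2n)}_{c/\sqrt{-t}}$ family from Theorem \ref{thm:litt_braid} plus one Bailey-type manipulation; without either the Bailey step or the contour-pinching argument actually carried out, the proof is incomplete.
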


\begin{proof} Lemma~\ref{lemma8.13} shows that this holds in the special case of Theorem~\ref{thm:litt_braid}. A~straightforward ``Bailey lemma'' step shows that the general case of Theorem~\ref{thm:quad_litt} holds as well.
\end{proof}

\begin{rem}
We again used the fact that the ``$v$'' parameter of Theorem~\ref{thm:quad_litt} disappears when $d=\sqrt{-t}$. This issue would need
to be worked around to make the above argument work for other values of~$d$, but most likely one could show that the known test functions span a~sufficiently large space to include the required test functions.
\end{rem}

This proves Conjecture Q1 of \cite{littlewood}; dualizing in the usual way
gives the following, which proves Conjecture Q4 of \cite{littlewood},
finishing the set. Note that as above, we obtain two cases depending on
the parity of the dimension.

\begin{thm}\label{thm:Q4}
The dual Littlewood kernel has the ``distributional'' limits
\begin{gather*}
\int f(\vec{x}) L^{\prime(2n)}_{q^{-1/2}\sqrt{-1}}(\vec{x};t;p,q) \Delta^{(2n)}_S(\vec{x};t;p,q) = \int f\big({\pm}\sqrt{-\vec{z}}\big)
\Delta^{(n)}_S\big(\vec{z};1,p,t,pt;t^2;p^2,q^2\big)
\end{gather*}
and
\begin{gather*}
\int f(\vec{x}) L^{\prime(2n+1)}_{q^{-1/2}\sqrt{-1}}(\vec{x};t;p,q) \Delta^{(2n+1)}_S(\vec{x};t;p,q) \\
\qquad {}= \Gamppqq(p,t,pt) \int f\big({\pm}\sqrt{-\vec{z}},\sqrt{-1}\big)
\Delta^{(n)}_S\big(\vec{z};t^2,p,t,pt;t^2;p^2,q^2\big),
\end{gather*}
in the sense that Theorem~{\rm \ref{thm:quad_dual_litt}} and its corollaries continue to hold after the stated specialization.
\end{thm}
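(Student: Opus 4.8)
The plan is to obtain Theorem~\ref{thm:Q4} from Theorem~\ref{thm:Q1} by the dualization procedure already used for Theorems~\ref{thm:Q5} and~\ref{thm:Q6}: lift the relevant identities to the symmetric-function level of Section~\ref{sec:sfkern}, apply the modified Macdonald involution $\tilde\omega_{q,t}$, and specialize the auxiliary dimension parameter $T$ back to a power of $t$, tracking its parity. Concretely, Theorem~\ref{thm:Q1} asserts that the distributional substitution for $\cL^{(2n)}_{\sqrt{-t}}$ is compatible with Theorem~\ref{thm:quad_litt} and its corollaries. Choosing valuations as in the proof of Theorem~\ref{thm:Q2} (so that, in the limit $p\to 0$, both sides of Theorem~\ref{thm:quad_litt} with $\cL^{(2n)}_{\sqrt{-t}}$ substituted become Koornwinder integrals), I would lift the entire identity: the ratio of $\Delta^{(n)}_S$ to its trigonometric limit is a specialization of a symmetric function as recorded in Section~\ref{sec:sfkern}, the interpolation kernel lifts to $\hat K$, and the Littlewood kernel lifts through the formal Littlewood sum of Proposition~\ref{prop:litt_sum}, i.e.\ through the lifted interpolation functions $\hat{R}^{*}_{\mu^2}$. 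This produces an identity of formal Puiseux series with coefficients in $\Lambda\otimes\Q(q,t)$, depending on $T$, which recovers the original on setting $T=t^{2n}$; as usual one specializes the variables before $T$, since the relevant $\Delta_\mu$-symbol has a pole at $T\in t^{\N}$.

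Next I would apply $\tilde\omega_{q,t}$ to the lifted identity, using the duality $\tilde\omega_{q,t}\hat{R}^{*}_\mu(;a,b;q,t,T;p)=\hat{R}^{*}_{\mu'}(;-a/\sqrt{qt},-b/\sqrt{qt};1/t,1/q,T;p)$, the companion transformation $\Delta^{(n)}_S(\vec{z};\dots,t_r,\dots;t;p,q)\mapsto\Delta^{(n)}_S(\vec{z};\dots,-t_r/\sqrt{qt},\dots;1/q;p,1/t)$ of the density, and the liftings of the Selberg prefactors recorded in Section~\ref{sec:sfkern}. Under these the lifted Littlewood kernel goes over to the lifted dual Littlewood kernel, in exactly the sense in which $\cL^{(2n)}$ is dual to $\cL^{\prime(2n)}$, and Theorem~\ref{thm:quad_litt} goes over to Theorem~\ref{thm:quad_dual_litt}; carrying the base $c=\sqrt{-t}$ through the sign twists of the involution yields the base $q^{-1/2}\sqrt{-1}$ appearing in Theorem~\ref{thm:Q4}.

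Finally, specializing $T\to t^m$ gives the result. For $m=2n$ one obtains directly the first identity of Theorem~\ref{thm:Q4}, an $n$-dimensional integral against $\Delta^{(n)}_S(\vec{z};1,p,t,pt;t^2;p^2,q^2)$ over the squared-and-negated variables $\pm\sqrt{-\vec z}$. For $m=2n+1$ there is one leftover variable; in the quadratic substitution it is forced to the fixed point $\sqrt{-1}$, and the univariate factor it contributes is precisely the $\Gamppqq(p,t,pt)$ prefactor together with the replacement of the Selberg parameter $1$ by $t^2$, yielding the second identity. This parity split is the exact analogue of the one in Theorem~\ref{thm:Q6}.

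I expect the only real obstacle to be bookkeeping: carrying the scalar prefactors and the several interlocking square roots --- of $pq$, of $t$, and of $-1$ --- through the Macdonald involution and the $p\to 0$ limit, and in particular confirming that the odd-dimensional leftover variable produces exactly the stated $\Gamppqq(p,t,pt)$ and the stated parameter shift. A secondary point is that the order of the limits (variables before $T$, and $T\to t^m$ relative to $p\to 0$) must not affect the answer; this is handled as for the lifted kernel, and --- should a genuine $0/0$ arise at the finite-dimensional specialization, as in the dualization of Theorem~\ref{thm:Q3} --- as in Lemma~\ref{lem:koorn_On_disc}. Since no ingredient beyond results already in hand is needed, the argument is routine once the conventions are fixed, and we omit the lengthy details.
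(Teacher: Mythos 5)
Your plan is exactly the paper's route: the paper proves Theorem \ref{thm:Q4} simply by remarking that Q1 and Q4 are dual in kernel form and "dualizing in the usual way" (the Section \ref{sec:sfkern} lift, the modified Macdonald involution, and finite-dimensional specialization with the parity split as in Theorems \ref{thm:Q5} and \ref{thm:Q6}), which is precisely what you spell out. So the proposal is correct and coincides with the paper's (essentially unwritten) argument, differing only in that you supply the bookkeeping the paper leaves implicit.
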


Since it is quite straightforward to perform the required substitutions into the various corollaries of Theorems~\ref{thm:quad_litt},~\ref{thm:quad_dual_litt}, and~\ref{thm:quad_kaw}, we omit the details. The one exception is Corollary~\ref{cor:weird_quad} and its analogues for the dual Littlewood and Kawanaka kernels, where something interesting occurs.
Recall that Corollary~\ref{cor:weird_quad} gave a transformation between two integrals involving instances of the Littlewood kernel with differing
parameters. If we specialize the parameters so that both kernels
correspond to one of the special cases computed in this section, we obtain
new quadratic transformations. Curiously, if we do the same for the dual
Littlewood kernels instead, we find that most of the ``new''
transformations already appeared in the list corresponding to the
Littlewood kernel. This appears to be related to the relations
\begin{gather*}
L^{(2n)}_{(pqt)^{1/4}}(\vec{x};t;p,q) =\prod_{1\le i\le 2n} \frac{1}{\Gampqq\big((pq/t)^{1/2}x_i^{\pm 2}\big)}
L^{\prime(2n)}_{(p/qt)^{1/4}}(\vec{x};t;p,q),\\
 L^{-(n)}_{-(pq/t)^{1/4}}\big(\vec{x};-t^{1/2};p^{1/2},q^{1/2}\big)\\
 \qquad{} = \prod_{1\le i\le n} \frac{1}{\Gamphq\big((pqt)^{1/4} x_i^{\pm 1},-(pq/t)^{1/4}x_i^{\pm 1}\big)}
L^{\prime(n)}_{(p/qt)^{1/4}}(\vec{x};t;p,q),
\end{gather*}
which arise from the fact that the three kernels all have nearly the same
product expressions. If we substitute the first relation into the case
$d=(pqt)^{1/4}$ of Theorem~\ref{thm:litt_braid}, we obtain an expression
for the Littlewood kernel as an integral involving the {\em dual}
Littlewood kernel. The relevant Selberg density has four parameters, but
if $c^2=-t$ or $c^4=t^2/p$, two of the Gamma factors cancel to give an
expression for the dual Littlewood kernel. Now, this is in fact not a
legal substitution (since specializing $c$ in this way causes issues with
singularities), but this calculation at least suggests that the
corresponding instances of the Littlewood and dual Littlewood kernels
should be closely related. And, indeed, if we (for instance) compare
Theorems \ref{thm:Q1} and \ref{thm:Q2}, we see that the two
``distributions'' differ by simple univariate factors.

In any event, by taking all pairs of parameters coming from the above theorems, we obtain the following special cases of Corollary~\ref{cor:weird_quad} et al.

\begin{cor}
The integral{\samepage
\begin{gather*}\begin{split}&
\prod_{1\le i\le 2n} \Gampq\big(p^{3/4}q^{1/4}vx_i^{\pm 1}\big)\\
& \qquad{}\times \int
K^{(2n)}_{q^{1/4}c}\big(q^{\pm 1/4}\vec{z};\vec{x};t;p,q\big)
\Delta^{(n)}_S\big(\vec{z};(pq)^{1/4}v^{\pm 1}/c,t^{1/2},p^{1/2}t^{1/2};t;p,q^{1/2}\big)\end{split}
\end{gather*}
is invariant under swapping $p$ and $q$.}
\end{cor}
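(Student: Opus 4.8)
The plan is to deduce this identity from the $d\leftrightarrow e$ symmetry of Corollary \ref{cor:weird_quad}, specialized so that both Littlewood kernels that appear become instances of the ``distributional'' evaluation of Theorem \ref{thm:Q2}. Recall that Corollary \ref{cor:weird_quad} asserts the invariance under interchanging $d$ and $e$ of the expression built from $\prod_{1\le i\le 2n}\Gampq(\sqrt{pq}\,v d\,x_i^{\pm 1}/e)$ and an integral of $\cK^{(2n)}_{ce}(\vec z;\vec x;t;p,q)\,\cL^{(2n)}_d(\vec z;t;p,q)$ against a four-parameter elliptic Selberg density in $\vec z$. I would take $d=q^{-1/4}t^{1/2}$ and $e=p^{-1/4}t^{1/2}$ (the signs of these square roots to be pinned down in the course of the matching), and rename the remaining free parameter. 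Since $\cK^{(2n)}$, $\cL^{(2n)}$, and $\Delta^{(n)}_S$ are all manifestly symmetric in $p$ and $q$, the parameters $d$ and $e$ are $p\leftrightarrow q$ conjugates of one another, so the $d\leftrightarrow e$ symmetry of Corollary \ref{cor:weird_quad} is destined to produce exactly the claimed $p\leftrightarrow q$ invariance once each side is simplified.

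To simplify one side I would invoke Theorem \ref{thm:Q2}. As Corollary \ref{cor:weird_quad} is among the corollaries of Theorem \ref{thm:quad_litt} (it follows from Theorem \ref{thm:litt_braid} by a ``Bailey Lemma'' step, and Theorem \ref{thm:litt_braid} is a specialization of Theorem \ref{thm:quad_litt}), the recipe of Theorem \ref{thm:Q2} applies directly: the $\vec z$-integral against $\cL^{(2n)}_{q^{-1/4}t^{1/2}}(\vec z;t;p,q)\,\Delta^{(2n)}_S(\vec z;t;p,q)$, with test function the surviving factor $\cK^{(2n)}_{ce}(\vec z;\vec x;t;p,q)$ times the four univariate Gamma factors supplied by the Selberg parameters of Corollary \ref{cor:weird_quad}, collapses to an $n$-dimensional integral of $\cK^{(2n)}_{ce}(q^{\pm 1/4}\vec z;\vec x;t;p,q)$ against $\prod_{1\le i\le n}\Gampq(t z_i^{\pm 2})\,\Delta^{(n)}_S(\vec z;t;p,q^{1/2})$. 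Pulling the univariate factors $\Gampq(u_r z_i^{\pm 1})$ through the substitution $\vec z\mapsto q^{\pm 1/4}\vec z$ yields products $\Gampq(u_r q^{\pm 1/4}z_i^{\pm 1})$, which recombine by the quadratic functional equation \eqref{eq:gampqq} (used with modular parameters $(p,q^{1/2})$) into single $\Gamma_{p,q^{1/2}}$-factors; combining these with $\prod_i\Gampq(t z_i^{\pm 2})$ --- which by the relation $\Gamppqq(z^2)=\Gampq(z,-z)$ together with \eqref{eq:gampqq} contributes four parameters over base $(p,q^{1/2})$, exactly as in the remark after Theorem \ref{thm:Q3} --- and cancelling the two pairs of parameters that multiply to $pq^{1/2}$, the reduced density organizes as $\Delta^{(n)}_S(\vec z;(pq)^{1/4}v^{\pm 1}/c,t^{1/2},p^{1/2}t^{1/2};t;p,q^{1/2})$, while the outer prefactor becomes $\prod_i\Gampq(p^{3/4}q^{1/4}v x_i^{\pm 1})$ because $\sqrt{pq}\,(d/e)=p^{3/4}q^{1/4}$, and the kernel argument is $\cK^{(2n)}_{q^{1/4}c}(q^{\pm 1/4}\vec z;\vec x;t;p,q)$ after a harmless renaming of $c$. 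This is the expression in the statement.

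Carrying out the same reductions on the other side of the $d\leftrightarrow e$ symmetry --- now using Theorem \ref{thm:Q2} with $p$ and $q$ interchanged (legitimate by the $p\leftrightarrow q$ symmetry of $\cK^{(2n)}$, $\cL^{(2n)}$, $\Delta^{(n)}_S$), applied to $\cL^{(2n)}_{p^{-1/4}t^{1/2}}$ --- produces term by term the image of the above expression under $p\leftrightarrow q$ (so $p^{3/4}q^{1/4}\mapsto p^{1/4}q^{3/4}$, base $(p,q^{1/2})\mapsto(q,p^{1/2})$, and $p^{1/2}t^{1/2}\mapsto q^{1/2}t^{1/2}$). Equating the two sides via Corollary \ref{cor:weird_quad} gives precisely the asserted invariance.

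The strategy is routine; the real work --- and the only genuine obstacle --- is the parameter bookkeeping of the middle paragraph. One must confirm that after passing the four univariate factors through $\vec z\mapsto q^{\pm 1/4}\vec z$ and absorbing $\prod_i\Gampq(t z_i^{\pm 2})$, the two parameters that cancel against $\sqrt{pq}/d$ and $\sqrt{pq}t/de^2$ are exactly the right two of the four $z^{\pm 2}$-derived parameters, so that what survives is $t^{1/2},p^{1/2}t^{1/2}$ and not their negatives, and that the surviving pair comes out as $(pq)^{1/4}v^{\pm 1}/c$ with the same $c$ that appears as $q^{1/4}c$ in the kernel; this is where the precise signs of the square roots $d=\pm q^{-1/4}t^{1/2}$, $e=\pm p^{-1/4}t^{1/2}$ and of $\sqrt{pq}$ must be chosen consistently. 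One should also note at the outset that $\cK^{(2n)}_{ce}(\vec z;\vec x;t;p,q)$ together with those univariate Gamma factors is an admissible test function in the sense made precise after Theorem \ref{thm:Q1}; here this is automatic, since the auxiliary parameter $v$ of Theorem \ref{thm:litt_braid} survives the specialization, so the substitution is simply an instance of the statement that Theorem \ref{thm:quad_litt} and its corollaries continue to hold under the relevant specialization of $d$, with no further analytic input needed.
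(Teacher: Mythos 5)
Your proposal is correct and is exactly the route the paper takes: the remark following this corollary states that it is the case $d=q^{-1/4}t^{1/2}$, $e=p^{-1/4}t^{1/2}$ of Corollary \ref{cor:weird_quad}, read through the ``distributional'' limit of Theorem \ref{thm:Q2} (and its $p\leftrightarrow q$ image), which is precisely your argument. The paper omits the parameter bookkeeping you carry out in the middle paragraph; your accounting (including the sign choices for the square roots, the cancellation of two of the four $z^{\pm 2}$-derived parameters against $\sqrt{pq}/d$ and $\sqrt{pq}t/de^2$, and the renaming of $c$) is consistent with the stated result.
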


\begin{rem}
This comes from the case $d=q^{-1/4}t^{1/2}$, $e=p^{-1/4}t^{1/2}$ of
Corollary \ref{cor:weird_quad}. If we used the Kawanaka version instead,
we would normally expect to also obtain identities involving
odd-dimensional instances of the interpolation kernel, but it turns out
that the relevant prefactors vanish.
\end{rem}

\begin{cor}
The interpolation kernel satisfies the identities
\begin{gather*}
\int K^{(2n)}_{t^{-1/4}c}(\vec{z};\vec{x};t;p,q) \Delta^{(2n)}_S\big(\vec{z};p^{1/2}q^{1/4}v^{\pm 1}/c,t^{1/4},q^{1/2}t^{1/4};t^{1/2};p,q\big) \\
\qquad {}=
\prod_{1\le i\le 2n} \Gampq\big(\big(p^{1/2}q^{1/4}t^{-1/4}v^{\pm 1}\big)x_i^{\pm 1}\big) \\
\qquad\quad {}\times \int
K^{(2n)}_{q^{1/4}c}\big(q^{\pm 1/4}\vec{z};\vec{x};t;p,q\big)
\Delta^{(n)}_S\big(\vec{z};p^{1/2}(qt)^{1/4}v^{\pm 1}/c,1,t^{1/2};t;p,q^{1/2}\big)
\end{gather*}
and
\begin{gather*}
\int K^{(2n+1)}_{t^{-1/4}c}(\vec{z};\vec{x};t;p,q)
\Delta^{(2n+1)}_S\big(\vec{z};p^{1/2}q^{1/4}v^{\pm 1}/c,t^{1/4},q^{1/2}t^{1/4};t^{1/2};p,q\big) \\
\qquad {}= \Gampqh\big(t^{1/2},\sqrt{p}(qt)^{1/4}v^{\pm 1}/c\big)
\prod_{1\le i\le 2n+1} \Gampq\big(\big(p^{1/2}q^{1/4}t^{-1/4}v^{\pm 1}\big)x_i^{\pm 1}\big) \\
\qquad\quad{}\times \int
K^{(2n+1)}_{q^{1/4}c}\big(q^{\pm 1/4}\vec{z},q^{1/4};\vec{x};t;p,q\big)
\Delta^{(n)}_S\big(\vec{z};p^{1/2}(qt)^{1/4}v^{\pm 1}/c,t,t^{1/2};t;p,q^{1/2}\big).
\end{gather*}
\end{cor}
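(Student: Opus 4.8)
Both identities are degenerate instances of a ``swap $d\leftrightarrow e$'' symmetry --- Corollary \ref{cor:weird_quad} for the Littlewood kernel and its entirely analogous counterparts for the dual Littlewood and Kawanaka kernels --- obtained by choosing $d$ and $e$ so that both of the Littlewood-type kernels occurring in that symmetry collapse to one of the ``distributional'' evaluations of Section \ref{sec:quads}. The first step is therefore to match the two sides of the claimed corollary against the catalogue of distinguished special values. On the right, the $q^{\pm1/4}$-shift of the integration variables, the Selberg density with base $(t;p,q^{1/2})$ (into which a factor $\prod_i\Gampq(tz_i^{\pm2})$ may be absorbed, cf.\ the remark after Theorem \ref{thm:Q3}), and the extra $\Gampqh$ prefactor that appears only in dimension $2n+1$ identify this side as a one-parameter generalization of the limit of $\cL^{(2n)}_{q^{-1/4}t^{1/2}}$ from Theorem \ref{thm:Q2} (resp.\ of the Kawanaka limit of Theorem \ref{thm:Q6}), in which the rigid $p^{1/2}$-pair of Selberg parameters has been replaced by a free $v^{\pm1}$-pair. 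On the left, a $(t^{1/2};p,q)$-integral carrying the same auxiliary $v^{\pm1}$-pair is the corresponding generalization of the distributional limit of the $p\leftrightarrow q$ companion of that theorem --- equivalently, of Corollary \ref{cor:dual_litt_formal_in_q} (the $c=q^{-1/2}t^{1/4}$ case of Corollary \ref{cor:dual_litt_recur}), which is precisely the device for writing a dual-Littlewood-type kernel as a $(t^{1/2};p,q)$-integral.

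Granting the match, the plan is: (i) write down the relevant $d\leftrightarrow e$ symmetry, whose proof is the same ``Bailey Lemma'' computation as for Corollary \ref{cor:weird_quad} --- expand the Littlewood-type kernel by its definition, exchange the two integrations (legitimate once all singularities are placed inside the unit circle), and collapse the inner integral with the braid relation; (ii) specialize $d$ and $e$ to the two distinguished values singled out above, keeping $c$ and the auxiliary parameter $v$ free; (iii) substitute the explicit distributional forms of the two kernel instances, as provided by Theorems \ref{thm:Q2} and \ref{thm:Q6}; and (iv) simplify the resulting product of elliptic Gamma functions using the functional equations of $\Gampq$ together with the remarks after Theorems \ref{thm:Q3} and \ref{thm:Q7} that trade the $\Gampqq$/$\Gamphq$ factors for extra Selberg parameters. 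The asymmetric halving --- $q^{1/2}$ in the base on the right, $t^{1/2}$ in the base on the left --- is automatic: it is the residue of the ``$q\mapsto q^{2}$'' (resp.\ ``$t\mapsto t^{1/2}$'') built into the definitions of the Kawanaka and dual-Littlewood kernels, and carrying it through the substitution produces exactly the two stated densities. To fix the overall constant and the full dependence on $c$ and $v$, I would specialize $\vec{x}=t^{2n-i}w,\dots$ so that each side becomes an elliptic Selberg integral and compare.

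The parity dichotomy, and in particular the factor $\Gampqh(t^{1/2},\sqrt{p}(qt)^{1/4}v^{\pm1}/c)$ in the odd case, enters through Lemma \ref{lem:koorn_On_disc}: taking one of the distributional limits places the limiting Koornwinder integral on the $0/0$-locus $T^{2}t_0t_1t_2t_3/t^{2}\in t^{\N}$, which the lemma resolves into two finite-dimensional integrals; for the dimension at hand only one piece survives, and it carries the indicated Gamma factor --- exactly as in the proofs of Theorems \ref{thm:Q4}, \ref{thm:Q5} and \ref{thm:Q6}. I expect the real obstacle to be not any individual computation but the legality of this \emph{double} specialization: as flagged in the discussion preceding Theorem \ref{thm:Q1} (and in the remark after it, apropos substituting into Theorem \ref{thm:litt_braid}), pushing the kernel parameter onto one of these distinguished divisors can pinch a contour, so the substitution into the $d\leftrightarrow e$ symmetry must be justified either by testing against a sufficiently rich family of functions $f$ so that linearity plus the already-proven distributional identities suffice, or by first passing to the formal-kernel version, where convergence is automatic and a Zariski-density argument closes the gap. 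Once that is secured, what remains is the lengthy but routine elliptic-Gamma bookkeeping.
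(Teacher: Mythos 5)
You have the right overarching strategy---this corollary is indeed obtained, as in the paper, by specializing both parameters of a $d\leftrightarrow e$ swap symmetry (Corollary \ref{cor:weird_quad} and its analogues) to distinguished values and substituting the evaluations of Section \ref{sec:quads}---but your matching step, which you correctly single out as the crux, misidentifies the left-hand side, and with it the pair $(d,e)$. Since each swap symmetry lives within a single kernel family, both special values must belong to the same family; the odd-dimensional case and its $\Gampqh$ prefactor force the right-hand side to be the Kawanaka limit of Theorem \ref{thm:Q6} (Theorem \ref{thm:Q2} has no odd-dimensional counterpart), so the identity is the Kawanaka analogue of Corollary \ref{cor:weird_quad} at $(d,e)=(t^{1/4},-q^{-1/4})$, up to signs absorbed by $\vec z\mapsto-\vec z$, $v\mapsto-v$. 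The left-hand side therefore comes from the \emph{other} distinguished Kawanaka value $d=t^{1/4}$, i.e.\ Corollary \ref{cor:Q7} with $(t,p,q)\mapsto(t^{1/2},p^{1/2},q^{1/2})$, which converts $\cL^{-(N)}_{t^{1/4}}(\vec z;t^{1/2};p^{1/2},q^{1/2})\,\Delta^{(N)}_S(\vec z;t;p,q)$ into $\prod_i\Gamma_{p^{1/2},q^{1/2}}(-t^{1/4}z_i^{\pm1})\,\Delta^{(N)}_S(\vec z;t^{1/2};p,q)$; after reflection cancellations against the density parameters $-\sqrt{pq}/de^{2}$ and $-\sqrt{pq}/d$, this is exactly the same-dimension $(t^{1/2};p,q)$ density with fixed parameters $t^{1/4},q^{1/2}t^{1/4}$ and the free $v^{\pm1}$ pair appearing in the statement. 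Your candidates for this side do not work: the ``$p\leftrightarrow q$ companion'' of Theorems \ref{thm:Q2}/\ref{thm:Q6} produces $p^{\pm1/4}$-shifted arguments and a halved-dimension $(t;p^{1/2},q)$ density, not what is written; and Corollary \ref{cor:dual_litt_formal_in_q} is not a special-value evaluation that can be fed into a swap symmetry at all---it represents $\cL^{\prime(n)}_c$ for \emph{generic} $c$ as an integral with fixed parameters $\pm t^{1/4},-p^{1/2}t^{1/4},p^{1/2}t^{-1/4}c^{-2}$, so inserting it would leave a double integral rather than the stated left-hand side. Carried out as written, your plan would not produce this identity.

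Two further corrections, and a simplification. The parity dichotomy is not produced by Lemma \ref{lem:koorn_On_disc}: it is inherited verbatim from the two cases of Theorem \ref{thm:Q6} (that lemma is the mechanism behind the two-term sums of Theorem \ref{thm:Q5}, which plays no role here). Also, the ``double specialization'' you flag as the real obstacle is less delicate than you fear: the $d=t^{1/4}$ input is an honest identity of meromorphic functions (Corollary \ref{cor:Q7}), while the $d=-q^{-1/4}$ input is licensed precisely by the phrasing of Theorem \ref{thm:Q6} (``in the sense that Theorem \ref{thm:quad_kaw} and its corollaries continue to hold after the stated specialization''), the Kawanaka swap corollary being such a corollary; no new contour or test-function argument is needed, and no constant is left undetermined, so the geometric-progression normalization step is superfluous. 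With the correct pair in hand, what remains is indeed only the elliptic Gamma bookkeeping.
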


\begin{cor}
The interpolation kernel satisfies the identity
\begin{gather*}
\int
K^{(2n)}_{c\sqrt{-1}}\big({\pm}\sqrt{-\vec{z}};\vec{x};t;p,q\big)
\Delta^{(n)}_S\big(\vec{z};-pq^{1/2}v^{\pm 2}/c^2,t,qt;t^2;p^2,q^2\big)\\
\qquad {}= \prod_{1\le i\le 2n} \Gampq\big({-}\sqrt{-p}q^{1/4}v^{\pm 1}x_i^{\pm 1}\big)\\
\qquad\quad{}\times \int
K^{(2n)}_{q^{1/4}c}\big(q^{\pm 1/4}\vec{z};\vec{x};t;p,q\big)
\Delta^{(n)}_S\big(\vec{z};\sqrt{-p}q^{1/4}v^{\pm 1}/c,\pm\sqrt{t};t;p,q^{1/2}\big).
\end{gather*}
\end{cor}

\begin{cor}
The interpolation kernel satisfies the identity
\begin{gather*}
\int K^{(2n)}_{c\sqrt{-1}}\big({\pm}\sqrt{-\vec{z}};\vec{x};t;p,q\big)
\Delta^{(n)}_S\big(\vec{z};-pqt^{1/2}v^{\pm 2}/c^2,1,t;t^2;p^2,q^2\big)\\
\qquad {}= \prod_{1\le i\le 2n} \Gampq\big({-}\sqrt{-pq}t^{1/4}v^{\pm 1}x_i^{\pm 1}\big)\\
\qquad\quad{}\times \int
K^{(2n)}_{t^{-1/4}c}(\vec{z};\vec{x};t;p,q)
\Delta^{(2n)}_S\big(\vec{z};\sqrt{-pq}v^{\pm 1}/c,\pm t^{1/4};t^{1/2};p,q\big).
\end{gather*}
\end{cor}

If we count parameters in the above identities, we see that specializing $\vec{x}$ to a geometric progression of step $t$ gives in each case an
identity of elliptic Selberg integrals (both of which have evaluations). Thus these are only interesting in terms of the kernel (or interpolation
functions). One hope is that there may be analogues of Theorem~\ref{thm:van_litt} corresponding to the above identities, which might give
elliptic analogues (and nonzero values) for some of the remaining Macdonald polynomial results of~\cite{vanish}.

\appendix
\section{Appendix: Uniqueness of formal solutions}\label{sec:diff_uniq}

A key step in using integral or difference equations to determine various formal series was the fact that the limiting systems have no nonconstant polynomial solutions. As this requires some properties of interpolation polynomials \cite{OkounkovA:1998,bcpoly}, we address those statements in this appendix.

For the difference equations, we have the following.

\begin{lem}\label{lem:diff_uniq}
Let $q$, $t$ be generic, and let $u$ be such that $\prod\limits_{1\le i\le n}\big(1-t^{n-i}u\big)\ne 0$. Then for any nonconstant $BC_n$-symmetric
polynomial $f$,
\begin{gather*}
D^{(n)}_q(v,u/v;t)f
\end{gather*}
is a nonconstant function of $v$.
\end{lem}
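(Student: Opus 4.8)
The plan is to reduce to the theory of $BC_n$-symmetric interpolation polynomials of \cite{OkounkovA:1998,bcpoly} by exploiting the fact that $D^{(n)}_q(v,u/v;t)$ acts ``triangularly'' on them in a suitable sense. Recall (Definition \ref{def:lifted_interp} and the surrounding discussion, or directly \cite{bcpoly}) that the interpolation polynomials $\bar{P}^*_\lambda(\vec{z};q,t,t^n;a)$ form a basis of the space of $BC_n$-symmetric polynomials, graded by $|\lambda|$, and are characterized by a vanishing property at the partition-indexed points. First I would recall the key difference equation satisfied by this basis: the operator $D^{(n)}_q$ with two ``principal specialization'' parameters acts on $\bar{P}^*_\lambda$ by sending it to a combination of $\bar{P}^*_\mu$ with $\mu\subseteq\lambda$, with the diagonal eigenvalue an explicit product of theta-like factors that depends on $v$ through the combination $\prod_i(1-v t^{?}q^{\lambda_i}t^{-i})(1-(u/v) t^{?}q^{\lambda_i}t^{-i})$ — the point being that this eigenvalue is a genuinely nonconstant function of $v$ once $\lambda\ne 0$. (This is the $p\to 0$ limit of the eigenvalue equation \cite[(3.34)]{bctheta} that already appears in Proposition \ref{prop:diff_braid}.)

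Granting that, the argument runs as follows. Expand the given nonconstant $f$ in the interpolation basis, $f=\sum_\lambda c_\lambda \bar{P}^*_\lambda$, and let $\Lambda$ be the set of $\lambda$ with $c_\lambda\ne 0$; by hypothesis $\Lambda$ contains some $\lambda\ne 0$. Choose $\lambda_0\in\Lambda$ maximal (in dominance, or just of maximal $|\lambda|$ and then maximal in inclusion among those) with $\lambda_0\ne 0$. Applying $D^{(n)}_q(v,u/v;t)$ and using triangularity, the coefficient of $\bar{P}^*_{\lambda_0}$ in $D^{(n)}_q(v,u/v;t)f$ is $c_{\lambda_0}$ times the diagonal eigenvalue at $\lambda_0$ (no higher $\mu$ can contribute to $\bar{P}^*_{\lambda_0}$, and the constant term $\bar{P}^*_0=1$ contributes only to the coefficient of $\bar{P}^*_0$). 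Since that eigenvalue is a nonconstant function of $v$ — here is where the hypothesis $\prod_i(1-t^{n-i}u)\ne 0$ enters, ensuring the relevant product does not degenerate to a constant or identically-zero factor — the coefficient of $\bar{P}^*_{\lambda_0}$ in the image is a nonconstant function of $v$, so the image itself is nonconstant in $v$.

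The main obstacle, and the step I would need to be most careful about, is pinning down exactly the form of the diagonal eigenvalue and checking that the stated genericity hypothesis on $q$, $t$, $u$ is precisely what is needed to guarantee it is nonconstant in $v$ for every $\lambda\ne 0$ with $\ell(\lambda)\le n$. Concretely: the eigenvalue has the shape $\prod_{1\le i\le n}\theta$-factors evaluated at arguments like $v q^{\lambda_i}t^{n-i}$ and $(u/v)q^{\lambda_i}t^{n-i}$ (in the $p\to0$ limit these become linear factors $1-\cdot$), and one must verify that the dependence on $v$ does not cancel between the two families of factors — this cancellation is exactly what the condition $\prod_i(1-t^{n-i}u)\ne0$ (together with genericity of $q,t$) rules out, since a cancellation would force $v\mapsto u/v$ to permute the two multisets of arguments, which forces coincidences among $t^{n-i}q^{\lambda_i}$ that are excluded generically, or forces $u$ to hit one of the excluded values. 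Once that lemma about the eigenvalue is in hand, the triangularity-plus-leading-term argument above is routine. A secondary minor point: one should note that ``generic $q,t$'' can be taken so that the interpolation polynomials are well-defined (no poles in their coefficients) and form a basis, which is standard from \cite{bcpoly}.
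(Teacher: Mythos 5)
There is a genuine gap, and it sits at the heart of your argument: the ``diagonal eigenvalue'' of $D^{(n)}_q(v,u/v;t)$ on the interpolation basis is in fact \emph{independent of $v$}, so your leading-term argument proves nothing about $v$-dependence. For a two-parameter operator the multiplier attached to the term $\vec\sigma=(+1,\dots,+1)$ behaves like $\prod_i(1-vz_i)(1-(u/v)z_i)$, whose top-degree behaviour in the $z_i$ involves only the product $v\cdot(u/v)=u$; already for $n=1$ one computes $D_q(v,u/v)1=1-u$ and that the coefficient of the top term of a degree-$d$ polynomial is $q^{-d/2}-uq^{d/2}$, with all $v$-dependence pushed into strictly lower-order terms. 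Correspondingly, in the expansion the paper uses (the $p\to 0$ limit of Corollary \ref{cor:int_eq_interp_ii}), $D^{(n)}_q(v,u/v;t)\bar{P}^{*(n)}_\lambda(;q,t,s)$ is a sum over $\mu\subset\lambda$ with $\lambda/\mu$ a vertical strip, the coefficient of $\bar{P}^{*(n)}_\mu(;q,t,q^{1/2}s)$ carrying the factor $C^0_{\lambda/\mu}(q^{1/2}t^{n-1}sv,q^{1/2}t^{n-1}su/v;q,t)$; for $\mu=\lambda$ this factor is empty, hence constant in $v$. So choosing $\lambda_0$ maximal and extracting the coefficient of $\bar{P}^*_{\lambda_0}$, as you propose, yields a $v$-independent quantity, and the ``non-cancellation between the $v$ and $u/v$ families'' you invoke never arises because the would-be eigenvalue product is not there. (Your appeal to Proposition \ref{prop:diff_braid}/\cite[(3.34)]{bctheta} is also not an eigenvalue equation for a fixed function: the operator there sends the kernel with parameter $q^{1/2}c$ to the kernel with parameter $c$, and the parameters of the operator are tied to those of the function, which is exactly what one cannot arrange uniformly in $v$ here.)

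The correct mechanism is to track the \emph{top power of $v$}, which lives in the maximally off-diagonal terms: the coefficient of $\bar{P}^{*(n)}_\mu(;q,t,q^{1/2}s)$ is a Laurent polynomial in $v$ of degree $|\lambda/\mu|\le\ell(\lambda)$, and degree exactly $\ell(\lambda)$ occurs only for $\mu=\lambda-1^{\ell(\lambda)}$, where the coefficient of $v^{\ell(\lambda)}$ is nonzero precisely because the accompanying factor $C^0_{(1^n+\mu)/\lambda}(t^{n-1}u;q,t)=\prod_{\ell(\lambda)<i\le n}(1-t^{n-i}u)$ does not vanish --- this is where the hypothesis on $u$ enters (equivalently, $D^{(n)}_q(v,u/v;t)1\ne 0$), not through any symmetry of $v\mapsto u/v$. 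One then chooses, among the $\lambda$ with $c_\lambda\ne 0$, one with $\ell(\lambda)$ \emph{maximal} (maximality in dominance or in $|\lambda|$ does not suffice), so that no other summand can contribute to the coefficient of $v^{\ell(\lambda)}\bar{P}^{*(n)}_{\lambda-1^{\ell(\lambda)}}(;q,t,q^{1/2}s)$, and nonconstancy in $v$ follows. Your reduction to the interpolation basis and the overall triangularity framework are the right setting, but the proof must be rebuilt around this off-diagonal leading-in-$v$ term rather than a diagonal eigenvalue.
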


\begin{proof} For any fixed $s$, we have an expansion of the form
\begin{gather*}
f(\vec{z}) = \sum_\mu c_\mu \bar{P}^{*(n)}_\mu(\vec{z};q,t,s),
\end{gather*}
where $\bar{P}^{*(n)}_\mu$ is Okounkov's interpolation polynomial~\cite{OkounkovA:1998} (in the notation of~\cite{bcpoly}). The limit $p\to 0$ of
Corollary~\ref{cor:int_eq_interp_ii} gives an expansion of the form
\begin{gather*}
D^{(n)}_q(v,u/v;t) \bar{P}^{*(n)}_\lambda(;q,t,s) \\
\qquad {}= \sum_{\mu\subset\lambda} s^{-|\lambda/\mu|}
C^0_{\lambda/\mu}\big(q^{1/2}t^{n-1}sv,q^{1/2}t^{n-1}su/v;q,t\big)
C^0_{(1^n+\mu)/\lambda}\big(t^{n-1}u;q,t\big) \\
\qquad\quad {} \times d_{\lambda\mu}(q,t) \bar{P}^{*(n)}_\mu\big(;q,t,q^{1/2}s\big),
\end{gather*}
where $d_{\lambda\mu}(q,t)$ is independent of $s$, $u$, $v$, and is nonzero only when $\lambda/\mu$ is a vertical strip. We thus see that the right-hand side is a~Laurent polynomial (symmetric under $v\mapsto u/v$) of degree $\le \ell(\lambda)$ in~$v$. Moreover, the only term that can
contribute to order $v^{\ell(\lambda)}$ is the one with $\mu=\lambda-1^{\ell(\lambda)}$, and the hypothesis ensures that this coefficient is nonzero. Now, among those $\lambda$ such that $c_\lambda\ne 0$, choose one with $\ell(\lambda)$ maximal. Then this term gives a~nonzero contribution to the coefficient of
\begin{gather*}
v^{\ell(\lambda)} \bar{P}^{*(n)}_{\lambda-1^{\ell(\lambda)}}\big(;q,t,q^{1/2}s\big)
\end{gather*}
in the output of the difference operator, while no other term can contribute to this coefficient. It follows that this coefficient is nonzero, and the result follows.
\end{proof}

\begin{rem}
The constraint on $u$ is equivalent to saying that $D_q(v,u/v;t)1\ne 0$.
\end{rem}

We also need the following, apparently weaker, system of equations.

\begin{cor}\label{cor:diff_uniq} Let $q$ and $t$ be generic, and $u$ such that $\prod\limits_{1\le i\le n}\big(1-t^{n-i}u^2\big)\ne 0$. Let $f(\vec{z})$ be a~$BC_n$-symmetric Laurent polynomial such that
\begin{gather*}
D^{(n)}_q\big(u\big(t^{1/2}v\big)^{\pm 1};t;p\big)f
\end{gather*}
is invariant under $v\mapsto 1/v$ as a polynomial in $v$. Then $f$ is constant.
\end{cor}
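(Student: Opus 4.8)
The plan is to deduce this corollary from Lemma \ref{lem:diff_uniq} by showing that the weaker-looking hypothesis here already implies the hypothesis of that lemma. The key observation is that the operator $D^{(n)}_q(u(t^{1/2}v)^{\pm 1};t;p)$ is, by definition, the operator $D^{(n)}_q(t;p)$ conjugated by products of elliptic Gamma functions, and its effect on the term indexed by $\vec\sigma$ is to multiply by $\prod_{1\le i\le n}\theta_p(ut^{1/2}vx_i^{\sigma_i})\theta_p(ut^{1/2}v^{-1}x_i^{\sigma_i})$. Using the theta addition formula, $\theta_p(avx)\theta_p(av^{-1}x)$ can be written as a linear combination (with $x$-dependent but $v$-independent coefficients) of $\theta_p(bv^2)$-type and $v$-free theta factors; more to the point, it is a $v$-even Laurent-type expression in $v$ whose dependence on $v$ is through $v+1/v$ (equivalently through a single theta function of $v^2$). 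So the output $D^{(n)}_q(u(t^{1/2}v)^{\pm 1};t;p)f$, viewed in $v$, lies in a low-dimensional space of theta functions, and the requirement that it be invariant under $v\mapsto 1/v$ is already automatic from this structure unless I extract finer information.

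The real content, then, is to differentiate (or take a suitable coefficient) in $v$ at a special value to recover an equation of the exact shape handled by Lemma \ref{lem:diff_uniq}. Concretely, I would write $w=v^2$ and note that $D^{(n)}_q(u(t^{1/2}v)^{\pm 1};t;p)f$ is a theta function in $w$ of bounded order; "invariant under $v\mapsto 1/v$" is the statement that, as a function of $w$, it is invariant under $w\mapsto 1/w$. Comparing leading and subleading coefficients in $w$ of this quasi-periodic theta expression, the $w\mapsto 1/w$ symmetry forces a linear relation whose failure is governed precisely by $D^{(n)}_q(v',uv'^{-1};t)f$ being nonconstant in an auxiliary variable $v'$ — in other words, the symmetry can hold for all $v$ only if $D^{(n)}_q(v',(u^2)/v';t)f$ is constant in $v'$. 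The hypothesis $\prod_{1\le i\le n}(1-t^{n-i}u^2)\ne 0$ is exactly the hypothesis of Lemma \ref{lem:diff_uniq} applied with $u$ replaced by $u^2$, so that lemma then yields that $f$ is constant.

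So the structure of the argument is: (i) expand $D^{(n)}_q(u(t^{1/2}v)^{\pm 1};t;p)f$ using the definition of the difference operator with univariate parameters, isolating the $v$-dependence inside the per-$\vec\sigma$ factor $\prod_i\theta_p(ut^{1/2}vx_i^{\sigma_i},ut^{1/2}v^{-1}x_i^{\sigma_i})$; (ii) rewrite this $v$-dependence as a theta function in $w=v^2$ of controlled order using the standard three-term theta identity; (iii) impose the $w\mapsto 1/w$ symmetry and read off, from the top-order coefficient, that the map $f\mapsto D^{(n)}_q(v',u^2/v';t)f$ must be $v'$-independent, i.e.\ constant; (iv) invoke Lemma \ref{lem:diff_uniq} (with $u\rightsquigarrow u^2$, valid by the stated nonvanishing hypothesis) to conclude $f$ is constant. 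Steps (i) and (iv) are bookkeeping and citation respectively; step (ii) is a routine theta-function manipulation.

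The main obstacle I expect is step (iii): making precise the claim that the $w\mapsto 1/w$ symmetry of the bounded-order theta function "sees" exactly the operator $D^{(n)}_q(v',u^2/v';t)$ and not some degenerate combination of it. One has to check that the relevant top (or appropriately chosen) coefficient in $w$ of the symmetrized expression is, up to a nonzero factor, $D^{(n)}_q$-applied-to-$f$ evaluated at the right point — in particular that no spurious cancellation collapses it to something trivially symmetric. This is where the hypothesis on $u$ re-enters (it guarantees $D^{(n)}_q(v',u^2/v';t)1\ne0$, cf.\ the remark after Lemma \ref{lem:diff_uniq}, so the relevant coefficient is genuinely nonzero and the reduction is not vacuous). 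Once that identification is pinned down, the rest follows immediately from Lemma \ref{lem:diff_uniq}, so I would spend most of the write-up being careful there and treat everything else as standard.
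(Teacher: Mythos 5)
Your overall strategy points at the right target — force $D^{(n)}_q(v',u^2/v';t)f$ to be independent of $v'$ and then invoke Lemma \ref{lem:diff_uniq} with $u$ replaced by $u^2$ — but the route you sketch has a genuine gap, and it begins with a misreading of the operator. The notation $u(t^{1/2}v)^{\pm 1}$ denotes the parameter pair $ut^{1/2}v$ and $u(t^{1/2}v)^{-1}=ut^{-1/2}v^{-1}$ (whose product is $u^2$, matching the lemma), not the pair $ut^{1/2}v$, $ut^{1/2}v^{-1}$ appearing in your per-$\vec{\sigma}$ factor. This is not a cosmetic point: with your reading the two parameters are simply exchanged by $v\mapsto 1/v$, so the operator itself, and hence $D_vf$ for \emph{every} $f$, is invariant under $v\mapsto 1/v$; the hypothesis is vacuous and the corollary, so read, would be false for any nonconstant $f$. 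The ``automatic invariance'' you noticed is the symptom of this, and no comparison of leading and subleading coefficients in $w=v^2$ can recover the lost information, because under that reading there is none to recover. Under the correct reading the $v$-dependence of each term is through $t^{1/2}v+t^{-1/2}v^{-1}$ (visibly so at $p=0$), which is not $v\mapsto 1/v$-symmetric, so the hypothesis does have content; but then your step (iii) — that the assumed symmetry forces $D^{(n)}_q(v',u^2/v';t)f$ to be constant in $v'$ — is precisely the substance of the corollary, and you only assert it, yourself flagging it as the main obstacle. As written, the proposal does not contain an argument for it.

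For comparison, the paper closes exactly this gap with a short structural observation rather than any coefficient analysis: the parameter pair $\{ut^{1/2}v,\,ut^{-1/2}v^{-1}\}$ is stable under $v\mapsto 1/(tv)$, so $D_v=D_{1/tv}$ as operators; combining this identity with the assumed invariance $D_vf=D_{1/v}f$ yields $D_vf=D_{tv}f$, hence $D_vf=D_{t^kv}f$ for all integers $k$, and since $t$ is generic and $D_vf$ is polynomial in $v$, Zariski density forces $D_vf$ to be independent of $v$ altogether. Writing $v'=ut^{1/2}v$, this is exactly the hypothesis of Lemma \ref{lem:diff_uniq} with $u$ replaced by $u^2$, which is where the condition $\prod_{1\le i\le n}(1-t^{n-i}u^2)\ne 0$ enters, as you correctly anticipated in step (iv). So your bookkeeping step (i) (once the parameters are corrected) and your citation step (iv) are fine, but steps (ii)--(iii) neither are correct as stated nor supply the missing reduction; some version of the $v\mapsto 1/(tv)$ quasi-periodicity trick, or an equivalent argument, is required.
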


\begin{proof}
Let $D_v$ be the given operator. Since $D_v=D_{1/tv}$ as operators, we
conclude that $D_v f = D_{tv}f$ for all $v$, and thus $D_v f = D_{t^k v}$.
By Zariski density, we conclude that $D_v f$ is independent of $v$, and the
result follows.
\end{proof}

For the integral equations, essentially the same argument applies; the only
difference is that ``vertical strip'' is replaced by ``horizontal strip'',
and we must choose $\lambda$ to maximize $\lambda_1$ rather than
$\ell(\lambda)$. We obtain the following, where $I^{(n)}_t$ is the
integral operator defined in \cite{diffintrep_koorn}, which in present
terms may be viewed as the limit as $p\to 0$ of the integral operator with
kernel $K^{(n)}_{t^{1/2}}$. In particular, its action on interpolation
 polynomials again follows as the appropriate limit of
 Corollary \ref{cor:int_eq_interp_ii}.

\begin{lem}\label{lem:int_uniq}
Let $q$, $t$ be generic, and let $u$ be such that $(t^n u;q)\ne 0$. Then
for any nonconstant $BC_n$-symmetric Laurent polynomial $f(\vec{z})$,
\begin{gather*}
I^{(n)}_t(v,u/v;q)f
\end{gather*}
is a nonconstant function of~$v$.
\end{lem}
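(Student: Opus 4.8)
The plan is to follow the proof of Lemma~\ref{lem:diff_uniq} essentially verbatim, replacing vertical strips by horizontal strips and the statistic $\ell(\lambda)$ by $\lambda_1$. First I would fix an auxiliary base point $s$ and expand $f(\vec{z})=\sum_\mu c_\mu\,\bar{P}^{*(n)}_\mu(;q,t,s)$, a finite sum over partitions of length at most $n$; this is legitimate because for generic $q,t$ the interpolation polynomials of \cite{OkounkovA:1998,bcpoly} form a basis of the $BC_n$-symmetric Laurent polynomials. Since $f$ is nonconstant, some $c_\lambda$ with $\lambda\ne 0$ is nonzero, so $N:=\max\{\lambda_1:c_\lambda\ne 0\}\ge 1$.

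Next I would take the limit $p\to 0$ of Corollary~\ref{cor:int_eq_interp_ii} at $c=t^{1/2}$, with two of the parameters $u_r$ set to $v$ and $u/v$. In this limit the elliptic interpolation functions degenerate (up to an explicit normalization) to Okounkov's interpolation polynomials and the associated kernel integral operator degenerates to $I^{(n)}_t(v,u/v;q)$, so that, just as in the displayed expansion in the proof of Lemma~\ref{lem:diff_uniq}, one obtains
\[
I^{(n)}_t(v,u/v;q)\,\bar{P}^{*(n)}_\lambda(;q,t,s)=\sum_{\mu\subset\lambda} e_{\lambda\mu}(v)\,\bar{P}^{*(n)}_\mu(;q,t,s'),
\]
where $s'$ is a fixed rescaling of $s$ and $e_{\lambda\mu}(v)$ is, up to factors independent of $v$, a product of $C^0$-symbols evaluated at arguments linear in $v$ and in $u/v$. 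This coefficient vanishes unless $\lambda/\mu$ is a horizontal strip, and when it does not vanish it is a Laurent polynomial in $v$ of degree $|\lambda/\mu|$, symmetric under $v\mapsto u/v$. The one point requiring care is the nonvanishing of the $v$-independent part of $e_{\lambda\mu}(v)$ for the maximal horizontal strip: this should turn out to be governed precisely by the hypothesis $(t^nu;q)\ne 0$, which is the integral analogue of the condition $D^{(n)}_q(v,u/v;t)1\ne 0$ controlling Lemma~\ref{lem:diff_uniq} (equivalently, $(t^nu;q)\ne 0$ is exactly the condition that $I^{(n)}_t(v,u/v;q)1\ne 0$). Verifying this amounts to tracking one $C^0$-factor through the degeneration of Corollary~\ref{cor:int_eq_interp_ii}, and is the main obstacle.

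Granting this, the combinatorial endgame is identical to the difference-operator case. A horizontal strip $\lambda/\mu$ meets each of the $\lambda_1$ columns of $\lambda$ in at most one box, so $|\lambda/\mu|\le\lambda_1$, with equality only for the unique $\mu$ obtained by deleting the bottom box of every column, namely $\mu=(\lambda_2,\lambda_3,\dots)$. Pick $\lambda^{*}$ with $c_{\lambda^{*}}\ne 0$ and $\lambda^{*}_1=N$, and put $\mu^{*}=(\lambda^{*}_2,\lambda^{*}_3,\dots)$. Writing $I^{(n)}_t(v,u/v;q)f=\sum_\lambda c_\lambda\sum_\mu e_{\lambda\mu}(v)\,\bar{P}^{*(n)}_\mu(;q,t,s')$ and collecting the coefficient of $v^{N}\,\bar{P}^{*(n)}_{\mu^{*}}(;q,t,s')$, a nonzero contribution can only come from a $\lambda$ with $c_\lambda\ne 0$, $\mu^{*}\subset\lambda$, $\lambda/\mu^{*}$ a horizontal strip, and $|\lambda/\mu^{*}|\ge N$; since $\lambda_1\le N$ this forces $|\lambda/\mu^{*}|=\lambda_1=N$, hence $\lambda/\mu^{*}$ is the maximal horizontal strip and $\lambda=\lambda^{*}$. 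Thus that coefficient equals $c_{\lambda^{*}}$ times the nonzero leading coefficient of $e_{\lambda^{*}\mu^{*}}$, so it is nonzero; as $N\ge 1$, $I^{(n)}_t(v,u/v;q)f$ is a nonconstant Laurent polynomial in $v$, which is the claim.
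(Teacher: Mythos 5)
Your proposal is correct and is essentially the paper's own argument: the paper proves this lemma by declaring that the proof of Lemma \ref{lem:diff_uniq} carries over verbatim with ``vertical strip'' replaced by ``horizontal strip'' and with $\lambda$ chosen to maximize $\lambda_1$ rather than $\ell(\lambda)$, which is exactly the adaptation you carry out (including the role of the hypothesis $(t^nu;q)\ne 0$ as the nonvanishing/nonsingularity condition for the leading term). Nothing further is needed.
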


\begin{rem}Note that the excluded values of $u$ are precisely those for which the integral operator becomes singular.
\end{rem}

\subsection*{Acknowledgements} The author would particularly like to thank
P.~Etingof for an initial suggestion that taking $p$ to be a formal
variable might allow one to extend the $W(E_7)$ symmetry of the order 1
elliptic Selberg to $W(E_8)$; this turned out not to work (some symmetries
are, indeed, gained, but at the expense of others), but led the author to a
more general study of the formal limit. In addition, the author would like
to thank D.~Betea, M.~Wheeler, and P.~Zinn-Justin for discussions relating
to Izergin--Korepin determinants and their elliptic analogues, and
especially for discussions relating to Conjecture~1 of~\cite{BeteaD/WheelerM/Zinn-JustinP:2015} (which led the author to consider
the general case of the Littlewood kernel below). The author would also
like to thank O.~Warnaar for additional discussions related to the
Macdonald polynomial limit. The author would finally like to thank
H.~Rosengren for providing extra motivation to finish writing the present
work, as well as some helpful pointers to the vertex model literature. The
author was partially supported by the National Science Foundation (grant
number DMS-1001645).

\addcontentsline{toc}{section}{References}
\LastPageEnding

\end{document}